\let\defn\textbf
\def\op{\mathrm{op}}
\def\eq{\mathrm{eq}}
\def\ins{\mathrm{ins}}
\def\coeq{\mathrm{coeq}}
\def\coins{\mathrm{coins}}
\def\Sub{\mathrm{Sub}}
\def\OSub{\mathrm{OSub}}
\def\RSub{\mathrm{RSub}}
\def\oker{\mathrm{oker}}
\def\Sp{\mathrm{Sp}}
\def\Term{\mathrm{Term}}
\def\upk{\mathrm{upk}}
\def\dj{\mathrm{dj}}
\def\COMGR{\mathrm{COMGR}}
\def\Comgr{\mathrm{Comgr}}
\def\DESC{\mathrm{DESC}}
\def\qua{\text{ qua }}
\begin{document}

\title{Borel and analytic sets in locales}
\author{Ruiyuan Chen}
\date{}
\maketitle

\begin{abstract}
We systematically develop analogs of basic concepts from classical descriptive set theory in the context of pointless topology.
Our starting point is to take the elements of the free complete Boolean algebra generated by the frame $\@O(X)$ of opens to be the ``$\infty$-Borel sets'' in a locale $X$.
We show that several known results in locale theory may be interpreted in this framework as direct analogs of classical descriptive set-theoretic facts, including e.g., the Lusin separation, Lusin--Suslin, and Baire category theorems for locales; we also prove several extensions of these results, such as an ordered Novikov separation theorem.
We give a detailed analysis of various notions of image, and prove that a continuous map need not have an $\infty$-Borel image.
We introduce the category of ``analytic $\infty$-Borel locales'' as the regular completion under images of the category of locales and $\infty$-Borel maps (as a unary site), and prove analogs of several classical results about analytic sets, such as a boundedness theorem for well-founded analytic relations.
We also consider the ``positive $\infty$-Borel sets'' of a locale, formed from opens without using $\neg$.

In fact, we work throughout in the more refined context of $\kappa$-copresented $\kappa$-locales and $\kappa$-Borel sets for arbitrary regular $\omega_1 \le \kappa \le \infty$; taking $\kappa = \omega_1$ then recovers the classical context as a special case.
The basis for the aforementioned localic results is a detailed study of various known and new methods for presenting $\kappa$-frames, $\kappa$-Boolean algebras, and related algebraic structures.
In particular, we introduce a new type of ``two-sided posite'' for presenting $(\kappa, \kappa)$-frames $A$, meaning both $A$ and $A^\op$ are $\kappa$-frames, and use this to prove a general $\kappa$-ary interpolation theorem for $(\kappa, \kappa)$-frames, which dualizes to the aforementioned separation theorems.
\end{abstract}

\tableofcontents

\section{Introduction}

In this paper, we study the connection between \emph{descriptive set theory} and \emph{locale theory}, two areas which can each be thought of as providing a ``non-pathological'' version of point-set topology.

\defn{Descriptive set theory} can be broadly described as the study of simply-definable sets and functions.
The standard framework is to begin with a well-behaved topological space, namely a \defn{Polish space}, i.e., a separable, completely metrizable space.
One then declares open sets to be ``simply-definable'', and builds up more complicated sets by closing under simple set-theoretic operations, e.g., countable Boolean operations (yielding the \emph{Borel sets}), continuous images (yielding the \emph{analytic sets}), etc.
The definability restriction rules out many general point-set pathologies, yielding powerful structural and classification results which nonetheless apply to typical topological contexts encountered in mathematical practice.
This has led in recent decades to an explosion of fruitful connections and applications to many diverse branches of mathematics, such as dynamical systems, operator algebras, combinatorics, and model theory.
See \cite{Kcdst}, \cite{Knew}, \cite{Mdst}, \cite{Gidst}, \cite{KMgraph} for general background on descriptive set theory and its applications.

\defn{Locale theory}, also known as \emph{pointless} or \emph{point-free topology}, is the ``dual'' algebraic study of topological spaces and generalizations thereof via their lattices of open sets; see \cite{Jstone}, \cite{PPloc}.
A \defn{frame} is a complete lattice with finite meets distributing over arbitrary joins; the motivating example is the frame $\@O(X)$ of open sets of a topological space $X$.
In locale theory, one formally regards an \emph{arbitrary} frame $\@O(X)$ as the ``open sets'' of a generalized ``space'' $X$, called a \defn{locale}.
One then proceeds to study algebraically-defined analogs of various topological notions.
A key insight due to Isbell~\cite{Iloc} is that such analogies are not perfect, but that this is a feature, not a bug: where it differs from point-set topology, locale theory tends to be less pathological.

The starting point for this paper is an observation ``explaining'' this feature of locale theory: it seems to behave like a formal generalization of descriptive set theory, with countability restrictions removed.
This observation is supported by many results scattered throughout the literature, e.g.:
\begin{itemize}

\item
A fundamental construction of Isbell~\cite{Iloc} is to freely adjoin complements to an arbitrary frame $\@O(X)$, yielding a bigger frame $\@N(\@O(X))$, whose elements are in canonical bijection with the sublocales of $X$.
This is analogous to the $\*\Sigma^0_2$ (i.e., $F_\sigma$) sets in a Polish space $X$, the closure of the open and closed sets under finite intersections and countable unions; the complements of $\*\Sigma^0_2$ sets, the $\*\Pi^0_2$ (i.e., $G_\delta$) sets, are precisely the Polish subspaces of $X$ \cite[3.11]{Kcdst}.

\item
Isbell~\cite{Iloc} also showed that dense sublocales of an arbitrary locale are closed under arbitrary intersections, just as dense Polish subspaces of Polish spaces are closed under countable intersections, by the Baire category theorem \cite[8.4]{Kcdst}.

\item
Transfinitely iterating the $\@N(-)$ construction yields a ``Borel hierarchy'' for an arbitrary locale $X$, the union of which is the free complete Boolean algebra generated by the frame $\@O(X)$.
By a classical result of Gaifman~\cite{Gcbool} and Hales~\cite{Hcbool}, free complete Boolean algebras are generally proper classes, whence the ``Borel hierarchy of locales is proper''.

\item
Ball~\cite{Bbaire} developed a ``Baire hierarchy of real-valued functions'' on an arbitrary locale, via a transfinite construction closely related to the aforementioned $\@N$ functor.

\item
Isbell~\cite{Idst} proved an analog of the classical Hausdorff--Kuratowski theorem \cite[22.27]{Kcdst}: any complemented sublocale (i.e., ``$\*\Delta^0_2$ set'') can be written as $(((F_0 \setminus F_1) \cup F_2) \setminus F_3) \cup \dotsb$ for some transfinite decreasing sequence $F_0 \supseteq F_1 \supseteq \dotsb$ of closed sublocales.

\item
The closed subgroup theorem of Isbell--Kříž--Pultr--Rosický~\cite{IKPRgrp} states that every closed localic subgroup of a localic group is closed, just as every Polish subgroup of a Polish group is closed \cite[2.2.1]{Gidst}.
Moreover, the proof of the closed subgroup theorem by Johnstone~\cite{Jgpd} uses a localic version of Pettis's theorem \cite[2.3.2]{Gidst}, itself proved via an analog of the classical proof based on (what we are calling) localic Baire category.

\item
The dual formulation of measure theory is well-known, via measure algebras.
Explicit connections with general locales have been made by Simpson~\cite{Smeas} and Pavlov~\cite{Pmeasloc}.

\item
De~Brecht~\cite{dBqpol} introduced \defn{quasi-Polish spaces}, a well-behaved non-Hausdorff generalization of Polish spaces sharing many of the same descriptive set-theoretic properties.
Heckmann~\cite{Hqpol} proved that the category of these is equivalent to the category of locales $X$ whose corresponding frame $\@O(X)$ is countably presented.
Thus,
\begin{center}
``locale = (Polish space) $-$ (countability restrictions) $-$ (separation axioms)''.
\end{center}

\item
In \cite{Cscc}, we explicitly used this correspondence to transfer a known result \emph{from} locale theory (the Joyal--Tierney representation theorem \cite{JTgpd} for Grothendieck toposes via localic groupoids) \emph{to} the classical descriptive set-theoretic context (a Makkai-type strong conceptual completeness theorem for the infinitary logic $\@L_{\omega_1\omega}$ in terms of Borel groupoids of models).

\item
In \cite{Cborin}, we gave an intrinsic categorical characterization of the classical category $\!{SBor}$ of \defn{standard Borel spaces} (Polish spaces with their topologies forgotten, remembering only the Borel sets).
This characterization was proved for ``standard $\kappa$-Borel locales'' for all regular cardinals $\kappa \ge \omega$, using algebraic techniques; taking $\kappa = \omega_1$ then yielded the classical result.

\end{itemize}

In spite of these and other results, the known analogy between locale theory and descriptive set theory has been on a largely \emph{ad hoc} basis to date, without a complete or coherent locale-theoretic account of even basic descriptive set-theoretic notions such as Borel sets.
The goal of this paper is to provide the beginnings of such an account.

\subsection{Overview of localic results}
\label{sec:intro-loc}

Our main focus of study is on the two categories $\!{\kappa Loc}$ of \defn{$\kappa$-locales}, and $\!{\kappa BorLoc}$ of \defn{$\kappa$-Borel locales}, for each infinite regular cardinal $\kappa$ (or $\kappa = \infty$).
These are defined as the formal duals of the respective algebraic categories $\!{\kappa Frm}$ of \defn{$\kappa$-frames}, meaning frames but required only to have $\kappa$-ary joins, and $\!{\kappa Bool}$ of \defn{$\kappa$-(complete )Boolean algebras}.
Thus, a $\kappa$-locale $X$ is, formally, the same thing as a $\kappa$-frame $\@O_\kappa(X)$, whose elements we call the \defn{$\kappa$-open sets} in $X$; while a $\kappa$-Borel locale $X$ is formally a $\kappa$-Boolean algebra $\@B_\kappa(X)$, whose elements we call the \defn{$\kappa$-Borel sets} in $X$.%
\footnote{When $\kappa = \infty$, we allow $\@B_\infty(X)$ to be a \emph{large} (i.e., proper class) complete Boolean algebra, where ``complete'' means with respect to \emph{small} (i.e., set-sized) joins; but we require $\@B_\infty(X)$ to be \emph{small-presented}, i.e., it must have a presentation (as a complete Boolean algebra) $\@B_\infty(X) = \ang{G \mid R}$ with a \emph{set} of generators $G$ and of relations $R$, or equivalently, $\@B_\infty(X)$ must be freely generated by a (small) $\kappa$-Boolean algebra for some $\kappa < \infty$.}
A morphism $f : X -> Y$ in any of these categories is a homomorphism $f^*$ between the respective algebras, \emph{in the opposite direction}, thought of as taking preimage of sets under $f$.
When $\kappa = \omega_1$, we also write $\sigma$ in place of $\kappa$.
We define these categories in \cref{sec:loc-cat}, and recall their connection with the classical categories of spaces in \cref{sec:loc-sp}.

As $\kappa$ varies, these categories are related by canonical forgetful functors
\begin{equation*}
\begin{tikzcd}
\!{\omega BorLoc} \rar &
\!{\sigma BorLoc} \rar &
\dotsb \rar &
\!{\kappa BorLoc} \rar &
\dotsb \rar &
\!{\infty BorLoc} \\
\!{\omega Loc} \uar \rar &
\!{\sigma Loc} \uar \rar &
\dotsb \rar &
\!{\kappa Loc} \uar \rar &
\dotsb \rar &
\!{Loc} \uar
\end{tikzcd}
\end{equation*}
which are given by the \emph{free} functors between the respective algebraic categories.
Thus, for instance, given a $\sigma$-locale $X$, the underlying $\sigma$-Borel locale has $\sigma$-Borel sets $\@B_\sigma(X)$ given by repeatedly adjoining complements to the $\sigma$-open sets $\@O_\sigma(X)$ (see \eqref{eq:intro-borhier} below); while the $\infty$-Borel sets $\@B_\infty(X)$ are given by further freely completing under arbitrary ``intersections'' and ``unions''.

We are particularly interested in the (full) subcategories of the above categories consisting of the \emph{$\kappa$-copresented} objects $X$, meaning $\@O_\kappa(X)$ (resp., $\@B_\kappa(X)$) is $\kappa$-presented as an algebra of the respective type.
We also call such $X$ \defn{standard}, and denote these subcategories by $\!{\kappa Loc}_\kappa \subseteq \!{\kappa Loc}$ and $\!{\kappa BorLoc}_\kappa \subseteq \!{\kappa BorLoc}$.
When $\kappa = \omega_1$, by the aforementioned result of Heckmann~\cite{Hqpol} as well as a classical result of Loomis--Sikorski (see \cite[29.1]{Sbool}, \cite[4.1]{Cborin}), we have equivalences with the classical descriptive set-theoretic categories:
\begin{align}
\label{eq:intro-loc-ctbpres}
\begin{aligned}
\text{standard $\sigma$-locales} = \!{\sigma Loc}_\sigma &\simeq \!{QPol} := \text{quasi-Polish spaces}, \\
\text{standard $\sigma$-Borel locales} = \!{\sigma BorLoc}_\sigma &\simeq \!{SBor} := \text{standard Borel spaces}.
\end{aligned}
\end{align}
This is described in \cref{sec:loc-ctbpres} (see \cref{thm:loc-ctbpres}).
Thus, as $\kappa$ varies between $\omega_1$ and $\infty$, we interpolate between the contexts of classical descriptive set theory and classical locale theory.

As hinted above, for a $\kappa$-locale $X$, we may stratify the free $\kappa$-Boolean algebra $\@B_\kappa(X)$ generated by the $\kappa$-frame $\@O_\kappa(X)$ into the transfinite iterates $\@N_\kappa^\alpha(\@O_\kappa(X))$ of the functor $\@N_\kappa : \!{\kappa Frm} -> \!{\kappa Frm}$ which freely adjoins a complement for each pre-existing element; we call this the \defn{$\kappa$-Borel hierarchy of $X$}, described in \cref{sec:loc-bor}.
Each level $\@N_\kappa^\alpha(\@O_\kappa(X))$ consists of the \defn{$\kappa\Sigma^0_{1+\alpha}$-sets of $X$}, while their complements are the \defn{$\kappa\Pi^0_{1+\alpha}$-sets}:
\begin{align}
\label{eq:intro-borhier}
\begin{aligned}
\kappa\Sigma^0_{1+\alpha}(X) &:= \@N_\kappa^\alpha(\@O_\kappa(X)), \\
\kappa\Pi^0_{1+\alpha}(X) &:= \{\neg B \mid B \in \@N_\kappa^\alpha(\@O_\kappa(X))\}.
\end{aligned}
\end{align}
(We should note that, unlike much of the locale theory literature (e.g., \cite{Idst}), our point of view is fundamentally Boolean: we always regard open sets, sublocales, etc., as embedded in the complete Boolean algebra $\@B_\infty(X)$, with lattice operations taking place in there; see \cref{cvt:frm-cbool-incl}.
In fact, we will never refer to Heyting algebras or intuitionistic logic at all.)

$\kappa$-frames were introduced and studied by Madden~\cite{Mkfrm}; along with $\kappa$-Boolean algebras, they have since played a key role in much work on (what we are calling) the $\infty$-Borel hierarchy of locales, by e.g., Madden--Molitor~\cite{MMfrmepi}, Wilson~\cite{Wasm}, Plewe~\cite{Pdissolv}.
Especially, they tend to be relevant whenever considering notions of ``injectivity'', ``surjectivity'', ``image'', etc.
It is well-known that such notions are a subtle area of locale theory.
For instance, the usual notion of ``image sublocale'' of a continuous map $f : X -> Y \in \!{Loc}$ is not well-behaved; see e.g., \cite{Pquotloc}.

In \cref{sec:loc-im}, generalizing and extending these known results, we give a detailed analysis of ``injectivity'', ``surjectivity'', ``image'', etc., in our main categories of interest $\!{\kappa Loc}, \!{\kappa BorLoc}$ as well as their subcategories of standard objects.
This analysis shows that the subtlety in these notions is much clarified by the Boolean point of view, and ultimately arises from hidden interactions with topology when one insists on remaining in $\!{Loc}$.
The following is a summary of this analysis; see \cref{thm:loc-emb-im-cls}, \cref{thm:loc-sub-im}, \cref{thm:loc-im-factor}, \cref{thm:loc-bor-epi-pullback}, and \cref{thm:loc-bor-epi-invlim}.

\begin{theorem}[structure of subobjects in $\!{\kappa Loc}, \!{\kappa BorLoc}$]
\label{thm:intro-sub}
\leavevmode
\begin{enumerate}

\item[(a)]  \textup{(LaGrange)}  Monomorphisms (``injections'') in $\!{\kappa BorLoc}$ are the same as regular monomorphisms (``embeddings'').

\item[(b)]  \textup{(essentially Madden)}  Regular subobjects of an object $X$ (equivalence classes of ``embeddings'' into $X$) in $\!{\kappa Loc}$ or $\!{\kappa BorLoc}$ are canonically identified with certain $\infty$-Borel sets $B \in \@B_\infty(X)$: with arbitrary intersections of $\kappa$-Borel sets in the case of $\!{\kappa BorLoc}$, and intersections of implications between $\kappa$-open sets in the case of $\!{\kappa Loc}$.

\item[(c)]  In the subcategory $\!{\kappa BorLoc}_\kappa \subseteq \!{\kappa BorLoc}$ of standard $\kappa$-Borel locales, (regular) subobjects instead correspond to $\kappa$-Borel sets $B \in \@B_\kappa(X) \subseteq \@B_\infty(X)$.
Similarly, in $\!{\kappa Loc}_\kappa$, regular subobjects correspond to $\kappa\Pi^0_2$ sets.

\item[(d)]  For an arbitrary map $f : X -> Y$ in $\!{\kappa Loc}$ or $\!{\kappa BorLoc}$, its epi--regular mono (``image'') factorization $X ->> Z `-> Y$ in that category is given by the ``$\infty$-Borel image''%
\footnote{assuming the $\infty$-Borel image exists; otherwise, we need to take the $\infty$-analytic image as described below}
$f(X) \in \@B_\infty(Y)$ (the smallest $\infty$-Borel set whose $f$-pullback is all of $X$), followed by closure into the respective class of sets as in (b).

\item[(e)]  Epimorphisms (``surjections'') in $\!{\kappa BorLoc}$ are well-behaved: they are regular epimorphisms (``quotient maps''), pullback-stable (so that $\!{\kappa BorLoc}$ is a regular category), and closed under products and codirected inverse limits of diagrams in which each morphism is an epimorphism.

\end{enumerate}
\end{theorem}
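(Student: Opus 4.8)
The plan is to prove (a)--(e) by dualizing to the algebraic categories, via $\!{\kappa Loc} = \!{\kappa Frm}^\op$ and $\!{\kappa BorLoc} = \!{\kappa Bool}^\op$, and to run a single organizing dictionary for regular subobjects. A regular mono into $X$ in $\!{\kappa BorLoc}$ is a regular epi out of $\@B_\kappa(X)$, i.e.\ a quotient $\@B_\kappa(X) \twoheadrightarrow \@B_\kappa(X)/I$ by a $\kappa$-ideal $I$, and quotients up to isomorphism over $\@B_\kappa(X)$ are classified by their kernel $\kappa$-ideals; the free-completion left adjoint from $\!{\kappa Bool}$ to complete Boolean algebras, being a left adjoint, carries this to the quotient of $\@B_\infty(X)$ by the complete ideal generated by $I$, which is \emph{principal} --- generated by $\bigvee_{b\in I} b$ --- because $I$ is a set, so the sublocale is recorded by the single $\infty$-Borel set $\bigwedge_{b\in I}\neg b \in \@B_\infty(X)$. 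Symmetrically for $\!{\kappa Loc}$, with $\kappa$-frame congruences replacing $\kappa$-ideals: a congruence is determined by its related pairs $a \le_\sim b$, which translate to the implications $\neg a \vee b$ between $\kappa$-opens. Most of the work is making this dictionary precise and reading off the consequences.

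For (a) and (b): it suffices for (a) to show epimorphisms in $\!{\kappa Bool}$ are surjective (LaGrange's theorem, at $\kappa=\sigma$). Given non-surjective $f\colon A \to B$ and $b\in B\setminus f(A)$, the two coprojections $B\rightrightarrows B\sqcup_A B$ into the pushout agree on $f(A)$ but differ at $b$, since these coprojections are injective and their images meet exactly in $f(A)$; both facts are the \emph{strong amalgamation property} of $\!{\kappa Bool}$, which I would extract from the $\kappa$-ary interpolation theorem for $(\kappa,\kappa)$-frames (Boolean algebras being $(\kappa,\kappa)$-frames). Then monos, regular monos and embeddings into $X$ all coincide, and (b) is the assertion that $I\mapsto\bigwedge_{b\in I}\neg b$ is a bijection from $\kappa$-ideals of $\@B_\kappa(X)$ onto arbitrary intersections of $\kappa$-Borel sets: surjectivity is immediate ($\bigwedge_i B_i$ is hit by the $\kappa$-ideal generated by $\{\neg B_i\}$), and injectivity reduces to the claim that $a\le\bigvee_{b\in I} b$ in $\@B_\infty(X)$ forces $a\in I$, which holds because free-completion carries $\@B_\kappa(X)\twoheadrightarrow\@B_\kappa(X)/I$ to $\@B_\infty(X)\twoheadrightarrow\overline{\@B_\kappa(X)/I}$ and the completion units are injective (a Gaifman--Hales--Solovay-type fact established in the earlier algebraic sections). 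The $\!{\kappa Loc}$ version is the same computation with $\kappa$-frame congruences, recovering Madden's description of $\kappa$-frame quotients via nuclei and the implications between $\kappa$-opens.

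For (c) and (d): in (c) one inclusion is formal --- a $\kappa$-Borel set $B\in\@B_\kappa(X)$ gives the principal $\kappa$-ideal ${\downarrow}\neg B$, whose quotient $\@B_\kappa(X)/{\downarrow}\neg B$ is $\kappa$-presented when $\@B_\kappa(X)$ is (one extra relation), hence a standard regular subobject recording the $\infty$-Borel set $B$ itself, and dually a $\kappa\Pi^0_2$ set gives a standard $\kappa$-locale; the converse, that a regular subobject lying in $\!{\kappa BorLoc}_\kappa$ comes from a \emph{principal} $\kappa$-ideal, is the localic $\kappa$-Lusin--Suslin theorem (an injective $\kappa$-Borel map out of a standard object has $\kappa$-Borel image), which I would deduce from the $\kappa$-ary Lusin separation theorem together with (d). For (d), set $f(X):=\bigwedge\{B\in\@B_\infty(Y): f^*(B)=\top\}$ when this meet exists; since $f^*$ preserves meets, $f^*(f(X))=\top$, so $f(X)$ is the least $\infty$-Borel set with full $f$-preimage and $X\to f(X)$ is epi in $\!{\infty BorLoc}$. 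The epi--regular-mono factorization of $f$ in $\!{\kappa BorLoc}$ (resp.\ $\!{\kappa Loc}$) is then $X\twoheadrightarrow S\hookrightarrow Y$, where $S$ is the least regular subobject of $Y$ containing $f(X)$ --- obtained by closing $f(X)$ upward into the meet-closed class of (b) --- because $f$ factors through a regular subobject $T$ exactly when its defining $\infty$-Borel set is $\ge f(X)$; one checks that the kernel of $f^*$ is precisely the $\kappa$-ideal of $S$, so $\@B_\kappa(S)\to\@B_\kappa(X)$ is injective and $X\twoheadrightarrow S$ is a regular epi. Finally, the meet defining $f(X)$ can fail to exist --- the class $\{B:f^*B=\top\}$ may be a proper class even when $\kappa<\infty$, since free complete Boolean algebras are proper classes --- which is the advertised non-existence of $\infty$-Borel images; in that case $f(X)$ is replaced by the $\infty$-analytic image and the same argument runs in the regular completion under images.

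For (e), and the main obstacle: all four assertions dualize to statements about monomorphisms (injective homomorphisms) in $\!{\kappa Bool}$. Such a mono $A\hookrightarrow B$ is an equalizer --- of the two coprojections $B\rightrightarrows B\sqcup_A B$ --- by strong amalgamation, which dualizes to ``epimorphisms in $\!{\kappa BorLoc}$ are regular epimorphisms''. Pushouts of monos along arbitrary homomorphisms are again monos (amalgamation), which dualizes to pullback-stability of those epis; together with cocompleteness of the variety $\!{\kappa Bool}$ and the existence of (regular epi, mono) factorizations in $\!{\kappa BorLoc}$ (dual to surjection/subalgebra factorizations in $\!{\kappa Bool}$) this yields that $\!{\kappa BorLoc}$ is a regular category. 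And monos in $\!{\kappa Bool}$ are preserved by tensor products (binary by two amalgamation steps, general by a $\kappa$-filtered colimit) and by $\kappa$-filtered colimits (computed on underlying sets, where filtered colimits of injections are injections), dualizing to closure of the epimorphisms of $\!{\kappa BorLoc}$ under products and under codirected inverse limits of diagrams of epimorphisms. The substantive obstacles I anticipate are two: (i) the strong amalgamation property of $\!{\kappa Bool}$ and $\!{\kappa Frm}$, underlying (a), (b) and (e), which is precisely what the two-sided-posite $\kappa$-ary interpolation theorem must deliver; and (ii) the localic $\kappa$-Lusin--Suslin theorem of (c), whose proof needs both the $\kappa$-ary Lusin separation theorem and care with the failure of $\infty$-Borel images in (d). The remaining steps are routine manipulations with adjoint functors and the definitions of the categories involved.
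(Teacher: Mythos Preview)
Your overall strategy---dualize to $\!{\kappa Bool}$ and $\!{\kappa Frm}$, identify regular subobjects with congruences/ideals, and derive (a), (e) from interpolation/amalgamation---matches the paper's approach closely. Two points deserve comment.

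For (c), you invoke a ``localic $\kappa$-Lusin--Suslin theorem'' to show that a standard regular subobject comes from a principal $\kappa$-ideal. This is circular, or at least roundabout: Lusin--Suslin is a \emph{consequence} of (a) and (c), not an input. The paper's argument is simpler and purely algebraic: if $\@B_\kappa(X) \twoheadrightarrow \@B_\kappa(Y)$ is a surjection between $\kappa$-presented $\kappa$-Boolean algebras, then its kernel is a $\kappa$-generated congruence by a general lemma about $\kappa$-presented algebras (given any $\kappa$-ary generating set for the domain and any $\kappa$-ary presentation of the codomain, lift the presentation back), hence a principal filter. No separation theorem is needed.

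For (e), there is a genuine gap in your inverse-limit argument. You write that monos in $\!{\kappa Bool}$ are preserved ``by $\kappa$-filtered colimits (computed on underlying sets, where filtered colimits of injections are injections)''. But the theorem asserts closure under \emph{arbitrary} codirected inverse limits, i.e., dually, arbitrary (that is, $\omega$-)directed colimits in $\!{\kappa Bool}$. For $\kappa > \omega$, these are \emph{not} computed on underlying sets, so the naive argument fails. The same issue afflicts your reduction of arbitrary products to a ``$\kappa$-filtered colimit'' of finite tensor products: that colimit is only $\omega$-filtered. The paper's proof is substantially more delicate: one first shows, via the theory of posites, that directed colimits in $\!{\kappa Frm}$ coincide with those in $\!{\kappa{\bigvee}Lat}$; then a direct element-tracking argument (constructing a cocone to $2$) shows that directed colimits of injections in $\!{\kappa{\bigvee}Lat}$ remain injections; finally one transfers to $\!{\kappa Bool}$ using injectivity of the unit $A \hookrightarrow \@N_\kappa^\infty(A)$. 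Closure under arbitrary products is then \emph{derived} from the inverse-limit result by writing $\prod_i X_i$ as a codirected limit of partial products.
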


This result includes the localic analogs of several basic results in classical descriptive set theory.
The combination of (a) and the first part of (c) yields a \defn{Lusin--Suslin theorem} for standard $\kappa$-Borel locales (every $\kappa$-Borel injection has a $\kappa$-Borel image; see \cite[15.1]{Kcdst}).
Likewise, as mentioned near the start of this Introduction, the second part of (c) generalizes the classical result that the Polish subspaces of a Polish space are precisely the $\*\Pi^0_2$ sets \cite[3.11]{Kcdst} as well as its quasi-Polish analog \cite[Th.~23]{dBqpol}.

Classically, a key ingredient in the Lusin--Suslin theorem is the fundamental \defn{Lusin separation theorem} \cite[14.7]{Kcdst}: if two Borel maps $f : Y -> X$ and $g : Z -> X$ between standard Borel spaces have disjoint images, then there is a Borel set $B \subseteq X$ separating those images.
Note that this can be stated as follows, avoiding mention of the images: if the fiber product $Y \times_X Z$ is empty, then there is $B$ such that $f^{-1}(B) = Y$ and $g^{-1}(B) = \emptyset$.
The $\kappa$-Borel localic version of this was essentially proved, in a dual algebraic form, by LaGrange~\cite{Lamalg} (and then used to prove \cref{thm:intro-sub}(a); see \cite[3.2]{Cborin} for an explanation of this).
The \defn{Novikov separation theorem} \cite[28.5]{Kcdst} is the generalization of the Lusin separation theorem to countably many Borel maps $f_i : Y_i -> X$ with $\bigcap_i f(Y_i) = \emptyset$.
We prove the $\kappa$-Borel localic generalization of this in \cref{thm:sigma11-sep}:

\begin{theorem}[Novikov separation theorem]
\label{thm:intro-sigma11-sep}
Let $f_i : Y_i -> X$ be $<\kappa$-many $\kappa$-Borel maps between standard $\kappa$-Borel locales.
If the fiber product $\prod_X (Y_i)_i$ of all of the $Y_i$ over $X$ is empty, then there are $\kappa$-Borel sets $B_i \in \@B_\kappa(X)$ with $\bigcap_i B_i = \emptyset$ and $f_i^*(B_i) = Y_i$ for each $i$.
\end{theorem}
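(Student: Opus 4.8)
The plan is to dualize everything into $\!{\kappa Bool}$ and reduce the statement to an application of our $\kappa$-ary interpolation theorem for $(\kappa,\kappa)$-frames. Write $A := \@B_\kappa(X)$ and $A_i := \@B_\kappa(Y_i)$, so that each $f_i$ is a $\kappa$-Boolean homomorphism $f_i^* : A -> A_i$. The fiber product $\prod_X (Y_i)_i$ in $\!{\kappa BorLoc}$ is dual to the coproduct of the $A_i$ over $A$ in $\!{\kappa Bool}$, i.e.\ to the pushout $P$ of the diagram formed by the $f_i^* : A -> A_i$; so the hypothesis that this fiber product is empty says exactly that $P$ is the trivial $\kappa$-Boolean algebra, i.e.\ that $1 \le 0$ is derivable in $P$. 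The conclusion then becomes the purely algebraic assertion that there exist $B_i \in A$ with $\bigwedge_i B_i = 0$ in $A$ and $f_i^*(B_i) = 1$ in $A_i$ for every $i$. Passing to complements $C_i := \neg B_i$, this is the ``covering'' form of the statement: the $C_i$ join to $1$ in $A$ while each $f_i^*(C_i) = 0$.

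We first fix presentations adapted to the two-sided posite machinery. Since $X$ and the $Y_i$ are standard, $A$ and each $A_i$ are $\kappa$-presented; presenting each $A_i$ over $A$ by a two-sided posite, the pushout $P$ acquires an explicit presentation as the algebra over the common base $A$ obtained by adjoining the per-component generators subject to the per-component relations. A $\kappa$-Boolean algebra is in particular a $(\kappa,\kappa)$-frame, since $\neg$ supplies an isomorphism $A \cong A^\op$, and the whole point of a two-sided presentation is to keep both $P$ and $P^\op$ under control, which is exactly what is needed to invoke the interpolation theorem. One comparison to verify along the way is that the coproduct formed in $\!{\kappa Bool}$ agrees, for the purpose of the derivation of $1 \le 0$, with the corresponding $(\kappa,\kappa)$-frame coproduct over $A$; since all factors are already Boolean, complementation of the mixed $<\kappa$-ary joins and meets goes through by De Morgan, so this should be routine given the earlier development of the posite calculus.

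The heart of the argument is then to feed the derivation of $1 \le 0$ in $P$ to the $\kappa$-ary interpolation theorem. As the joins and meets involved are all $<\kappa$-ary, such a derivation mentions only the data of the $<\kappa$-many components, and the theorem resolves it into per-component interpolants $B_i \in A$ with the feature that ``$1 \le B_i$'' is already derivable using only the $i$-th component's relations --- that is, $f_i^*(B_i) = 1$ in $A_i$ --- while ``$\bigwedge_i B_i \le 0$'' is derivable in $A$ alone --- that is, $\bigcap_i B_i = \emptyset$ in $A = \@B_\kappa(X)$, the $<\kappa$-ary meet making sense since $A$ is a $\kappa$-Boolean algebra. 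Because the interpolation theorem delivers a $<\kappa$-ary term, each $B_i$ is a genuine $\kappa$-Borel set, i.e.\ lies in $\@B_\kappa(X)$ and not merely in $\@B_\infty(X)$, as the statement demands.

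We expect the main obstacle to be the passage from the binary case to the general $<\kappa$-ary case. The binary case is essentially LaGrange's effective-amalgamation result \cite{Lamalg} underlying \cref{thm:intro-sub}(a), and it already yields the Lusin separation theorem; but one cannot reach Novikov separation by iterating it two factors at a time, since after separating off a single $Y_i$ the ``remainder'' is an infinite coproduct carrying no finitary structure on which to induct --- exactly mirroring the fact that the classical Novikov theorem genuinely requires a simultaneous unfolding of all of the analytic sets rather than a two-at-a-time argument. Handling this uniformly is precisely what the $\kappa$-ary interpolation theorem for $(\kappa,\kappa)$-frames is built to do; granting that theorem, the present statement is a fairly direct translation, the only real care being the bookkeeping of presentations and the check that the interpolants produced land in $\@B_\kappa$ rather than just $\@B_\infty$.
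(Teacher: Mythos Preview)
Your approach is correct and is essentially the same as the paper's: dualize to the wide pushout in $\!{\kappa Bool}$ and apply the $\kappa$-ary interpolation theorem with $b_i := \top$ and $a := \bot$. The paper packages the Boolean case as a ready-made corollary (\cref{thm:bool-minterp}) of the general $(\kappa,\kappa)$-frame interpolation theorem, so the presentation/posite bookkeeping and the $\!{\kappa Bool}$-vs-$\!{\kappa\kappa Frm}$ pushout comparison you worry about are already absorbed there; and since the interpolants are produced directly as elements of $A = \@B_\kappa(X)$, there is no separate check needed that they land in $\@B_\kappa$ rather than $\@B_\infty$.
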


In fact we also establish versions of most of the aforementioned results for a third category intermediate between $\!{\kappa Loc}$ and $\!{\kappa BorLoc}$: the category $\!{\kappa Bor^+Loc}$ of \defn{positive $\kappa$-Borel locales}, dual to the category $\!{\kappa\kappa Frm}$ of \defn{$(\kappa, \kappa)$-frames},%
\footnote{According to some literature, these would be called ``$\kappa$-biframes''; but that term also has a different meaning.}
meaning $\kappa$-frames whose order-duals are also $\kappa$-frames.
The motivation for these is that given a $\kappa$-locale $X$, we may close the $\kappa$-open sets under \emph{positive} $\kappa$-Boolean operations, i.e., not using $\neg$; the resulting $(\kappa, \kappa)$-frame does not remember all of the topological structure on $X$, but still encodes the specialization order, with respect to which all of the positive $\kappa$-Borel sets must be upward-closed.
In \cref{sec:poloc} (see \cref{thm:loc-pos-upper} and \cref{thm:loc-pos-poloc}), we make this precise by showing that

\begin{theorem}
There is a canonical full and faithful ``forgetful'' functor $\!{\kappa Bor^+Loc} -> \!{\kappa BorPOLoc}$ into the category of (internally) partially ordered $\kappa$-Borel locales.
In other words, positive $\kappa$-Borel locales may be faithfully identified with certain partially ordered $\kappa$-Borel locales.
\end{theorem}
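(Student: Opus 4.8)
The plan is to equip the underlying $\kappa$-Borel locale of each positive $\kappa$-Borel locale with a canonical internal partial order, to check that this is functorial and faithful, and to reduce fullness to a single structural lemma. Let $X \in \!{\kappa Bor^+Loc}$ correspond to the $(\kappa,\kappa)$-frame $A$, whose elements are the positive $\kappa$-Borel sets, and let $X_0 \in \!{\kappa BorLoc}$ be its underlying $\kappa$-Borel locale; thus $\@B_\kappa(X_0)$ is the free $\kappa$-Boolean algebra on $A$, with unit a $(\kappa,\kappa)$-frame map $\eta : A -> \@B_\kappa(X_0)$ whose image generates $\@B_\kappa(X_0)$ as a $\kappa$-Boolean algebra and which is injective (conservativity of this construction, a consequence of the $\kappa$-ary interpolation theorem for $(\kappa,\kappa)$-frames); I henceforth regard $A \subseteq \@B_\kappa(X_0)$. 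Endow $X_0$ with the internal relation
\[
  {\le_X}\ :=\ \bigwedge_{a \in A}\bigl(\neg\pi_1^* a \vee \pi_2^* a\bigr)\ \in\ \@B_\infty(X_0 \times X_0),
\]
where $\pi_1, \pi_2 : X_0 \times X_0 -> X_0$ are the projections (when $A$ is a proper class, take the meet over a small generating set instead, obtaining the same relation). By \cref{thm:intro-sub}(b) this intersection of $\kappa$-Borel sets is a regular subobject of $X_0 \times X_0$, hence an internal relation in the regular category $\!{\kappa BorLoc}$ (\cref{thm:intro-sub}(e)); reflexivity ($\Delta_{X_0} \le {\le_X}$) and transitivity are immediate from the formula, and antisymmetry ${\le_X} \cap {\le_X}^{\op} = \Delta_{X_0}$ holds because, with $p \leftrightarrow q := (p \wedge q) \vee (\neg p \wedge \neg q)$, the set of $C \in \@B_\kappa(X_0)$ with ${\le_X} \cap {\le_X}^{\op} \le (\pi_1^* C \leftrightarrow \pi_2^* C)$ is a $\kappa$-Boolean subalgebra containing $A$, hence all of $\@B_\kappa(X_0)$ — so ${\le_X} \cap {\le_X}^{\op}$ lies below the diagonal, while the reverse inclusion is reflexivity. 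Set $F(X) := (X_0, {\le_X}) \in \!{\kappa BorPOLoc}$.

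\textbf{Morphisms and faithfulness.} A morphism $f : X -> Y$ in $\!{\kappa Bor^+Loc}$ is a $(\kappa,\kappa)$-frame homomorphism $f^* : A_Y -> A_X$; by freeness it extends uniquely to a $\kappa$-Boolean homomorphism $\@B_\kappa(Y_0) -> \@B_\kappa(X_0)$ — the underlying $\kappa$-Borel map $f_0 : X_0 -> Y_0$ — and thence to a complete Boolean homomorphism $\@B_\infty(Y_0) -> \@B_\infty(X_0)$, which therefore preserves the meet defining ${\le_Y}$. Since $f^*$ carries $A_Y$ into $A_X$, this yields $(f_0 \times f_0)^*({\le_Y}) = \bigwedge_{a \in A_Y}(\neg\pi_1^* f^* a \vee \pi_2^* f^* a) \ge {\le_X}$, i.e.\ $f_0$ is monotone, so $F(f) := f_0$ is a morphism of $\!{\kappa BorPOLoc}$; functoriality is then clear. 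For faithfulness, $f_0^*$ restricted to $A_Y$ recovers $f^*$ (by injectivity of $\eta_Y$), so already the composite $\!{\kappa Bor^+Loc} -> \!{\kappa BorPOLoc} -> \!{\kappa BorLoc}$ is faithful, hence so is $F$.

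\textbf{Fullness reduces to a key lemma.} Let $h : F(X) -> F(Y)$ be a morphism of $\!{\kappa BorPOLoc}$, i.e.\ a monotone $\kappa$-Borel map $h : X_0 -> Y_0$. It suffices to show $h^*(A_Y) \subseteq A_X$ inside $\@B_\kappa(X_0)$: for then $h^*$ restricts to a map $A_Y -> A_X$ preserving $\kappa$-joins, $\kappa$-meets and finite meets (as $h^*$ and the inclusions $\eta$ do), hence a $(\kappa,\kappa)$-frame homomorphism whose free extension is $h^*$ itself, so $h$ is the $F$-image of the corresponding morphism of $\!{\kappa Bor^+Loc}$. Now monotonicity of $h$ unwinds to ${\le_X} \le (h \times h)^*({\le_Y}) = \bigwedge_{a \in A_Y}(\neg\pi_1^* h^* a \vee \pi_2^* h^* a)$, i.e.\ for every $a \in A_Y$ one has ${\le_X} \cap \pi_1^*(h^* a) \le \pi_2^*(h^* a)$ — that is, the $\kappa$-Borel set $h^* a$ is ${\le_X}$-upward-closed. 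So fullness is \emph{equivalent} to the key lemma: for every $X \in \!{\kappa Bor^+Loc}$, the positive $\kappa$-Borel sets $A \subseteq \@B_\kappa(X_0)$ are exactly the ${\le_X}$-upward-closed elements of $\@B_\kappa(X_0)$.

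\textbf{The key lemma — the main obstacle.} One inclusion is immediate from the definition of ${\le_X}$: every positive set is ${\le_X}$-upward-closed. The converse — every upward-closed $\kappa$-Borel set is positive — is the genuinely hard step, and the point at which the paper's study of presentations of $(\kappa,\kappa)$-frames enters. My plan for it: present $A$ by a two-sided posite, so that $\@B_\kappa(X_0)$ is presented by the same generators and relations qua $\kappa$-Boolean algebra; then, given an upward-closed $\kappa$-Borel set $C$ written in terms of these generators, apply the $\kappa$-ary interpolation theorem for $(\kappa,\kappa)$-frames (equivalently, the dual of the ordered Novikov separation theorem) to extract from the upward-closedness constraints an element $a \in A$ that interpolates, and hence equals, $C$. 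Everything before this lemma is formal manipulation of free algebraic constructions and of internal relations in a regular category; the lemma itself I expect to require real input from the combinatorics of two-sided posites, and to be the only difficult part. Granting it, the key lemma — and with it fullness — follows, so $F : \!{\kappa Bor^+Loc} -> \!{\kappa BorPOLoc}$ is a full and faithful ``forgetful'' functor, as claimed.
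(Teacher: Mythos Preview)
Your reduction is correct and matches the paper's: construct the specialization order, check functoriality and faithfulness, and reduce fullness to the key lemma that the positive $\kappa$-Borel sets are exactly the $\le_X$-upward-closed $\kappa$-Borel sets (this is \cref{thm:loc-pos-upper}). Your explicit formula $\le_X = \bigwedge_{a \in A}(\neg\pi_1^*a \vee \pi_2^*a)$ is the concrete description of what the paper defines abstractly as the internal order $X^{\#S}$ in the locally ordered category $\!{\kappa Bor^+Loc}$, identified via \cref{thm:loc-pos-ord-disc} with the order-kernel of the counit $\epsilon_X : D(X) \to X$.

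Where you diverge is in the proof of the key lemma. Your plan --- present $A$ by a two-sided posite and apply interpolation directly to an upward-closed $C$ --- is underspecified: you do not say which bilax/cocomma configuration you would feed into the interpolation theorem, nor how the single inequality ${\le_X} \wedge \pi_1^*C \le \pi_2^*C$ (an $\infty$-ary meet on the left) becomes a finite-data interpolation problem. The paper's argument avoids this by packaging the interpolation once and for all into the categorical fact that every monomorphism in $\!{\kappa\kappa Frm}$ is order-regular (\cref{thm:bifrm-mono-oreg}, from the cocomma case \cref{thm:bifrm-interp}), dually that every epimorphism in $\!{\kappa Bor^+Loc}$ is order-regular (\cref{thm:loc-mono-epi}). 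Applied to the epimorphism $\epsilon_X$, this says $\epsilon_X$ is the coinserter of its order-kernel $\le_X$; dually, $\@B^+_\kappa(X) = \ins(\pi_1^*,\pi_2^* : \@B_\kappa(X) \rightrightarrows \@B_\kappa(\le_X))$, which is by definition the set of upper $\kappa$-Borel sets. So the ``real input'' you anticipate is precisely the cocomma interpolation, but the paper routes it through a clean reusable statement about order-regular epimorphisms rather than a bespoke posite computation; this buys both brevity here and a result (\cref{thm:loc-mono-epi}) usable elsewhere.
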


This result is in turn based on a positive $\kappa$-localic analog of a classical ordered version of the Lusin separation theorem \cite[28.12]{Kcdst}; we prove a simultaneous generalization of this and the Novikov separation theorem in \cref{thm:bor+-sep}.

The aforementioned results clearly hint at a theory of ``$\kappa$-analytic'' or ``$\kappa\Sigma^1_1$'' sets, meaning ``images'' of arbitrary $\kappa$-Borel maps $f : X -> Y \in \!{\kappa BorLoc}_\kappa$ between standard $\kappa$-Borel locales.
In \cref{sec:sigma11}, we develop such a theory by ``closing the category $\!{\kappa BorLoc}_\kappa$ under non-existent images''.
This is done via two equivalent methods: externally, by closing under pre-existing images in the ambient supercategory $\!{\kappa BorLoc}$ of possibly nonstandard $\kappa$-Borel locales; or internally, by adjoining a ``formal image'' for each morphism $f : X -> Y \in \!{\kappa BorLoc}_\kappa$ (via an application of Shulman's~\cite{Sksite} theory of \emph{unary sites}).
We call the result the category of \defn{analytic $\kappa$-Borel locales}, denoted $\!{\kappa\Sigma^1_1BorLoc}$, with subobjects in it called \defn{$\kappa$-analytic sets}.
We prove in \cref{thm:sigma11-cat} that $\!{\kappa\Sigma^1_1BorLoc}$ shares many of the nice properties of $\!{\kappa BorLoc}$ and $\!{\kappa BorLoc}_\kappa$.
These categories are related as follows:
\begin{equation*}
\begin{tikzcd}
\!{\omega\Sigma^1_1BorLoc} \rar[hook] &
\!{\sigma\Sigma^1_1BorLoc} \rar[dashed,"?"] &
\dotsb \rar[dashed,"?"] &
\!{\kappa\Sigma^1_1BorLoc} \rar[dashed,"?"] &
\dotsb \rar[dashed,"?"] &
\!{\infty\Sigma^1_1BorLoc}
\\
\!{\omega BorLoc}_\omega \uar[equal] \rar[hook] &
\!{\sigma BorLoc}_\sigma \uar[hook] \rar[hook] &
\dotsb \rar[hook] &
\!{\kappa BorLoc}_\kappa \uar[hook] \rar[hook] &
\dotsb \rar[hook] &
\!{\infty BorLoc} \uar[hook]
\end{tikzcd}
\end{equation*}
Unlike in the standard case, we do not know if analytic $\kappa$-Borel locales are increasingly general as $\kappa$ increases, i.e., if there are canonical forgetful functors $\!{\kappa\Sigma^1_1BorLoc} -> \!{\lambda\Sigma^1_1BorLoc}$ for $\kappa \le \lambda$ (see \cref{rmk:sigma11-absolute}).
In particular, we do not know if the resulting theory of ``$\infty$-analytic sets'' generalizes the classical ($\kappa = \omega_1$) one, or is merely an analog of it; hence the importance of our considering all $\kappa \le \infty$ at once.

In \cref{sec:sigma11-invlim}, we generalize a fundamental aspect of classical analytic sets: their connection with well-foundedness.
Classically, given a Borel map $f : X -> Y$ between standard Borel spaces, by representing each fiber $f^{-1}(y)$ for $y \in Y$ as the space of branches through a countably branching tree $T_y$ in a uniform manner, we obtain a Borel family of trees $(T_y)_{y \in Y}$ such that the analytic set $\im(f) \subseteq Y$ consists of precisely those $y$ for which $T_y$ has an infinite branch.
Note that the space of infinite branches through a tree is the inverse limit of its (finite) levels.
Thus, the following (\cref{thm:sigma11-invlim-k}) can be seen as a generalization of the classical result just mentioned:

\begin{theorem}[``tree'' representation of $\kappa$-analytic sets]
\label{thm:intro-sigma11-invlim}
For any $\kappa$-Borel map $f : X -> Y$ between standard $\kappa$-Borel locales, there is a $\kappa$-ary codirected diagram $(X_i)_{i \in I}$ of standard $\kappa$-Borel locales $X_i -> Y$ over $Y$ and $\kappa$-ary-to-one $\kappa$-Borel maps between them, such that $X = \projlim_i X_i$ (over $Y$).

Thus, the $\kappa$-analytic set $\im(f) \subseteq Y$ in $\!{\kappa\Sigma^1_1BorLoc}$ is given by ``$\{y \in Y \mid \projlim_i (X_i)_y \ne \emptyset\}$''.
\end{theorem}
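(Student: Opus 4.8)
The plan is to dualize and imitate the classical representation of an analytic set as the projection of the branch space of a tree, with presentations of $\kappa$-Boolean algebras playing the role of trees of finite sequences. Dualizing, $f \colon X \to Y$ becomes the homomorphism $f^* \colon \@B_\kappa(Y) \to \@B_\kappa(X)$, and since $X, Y$ are standard, both $\@B_\kappa(Y)$ and $\@B_\kappa(X)$ are $\kappa$-presented; as $\!{\kappa Bool}$ is locally $\kappa$-presentable, $\@B_\kappa(X)$ is then $\kappa$-presented \emph{relative to} $\@B_\kappa(Y)$ along $f^*$, say $\@B_\kappa(X) \cong \ang{\@B_\kappa(Y), (g_\xi)_{\xi<\lambda} \mid (r_\eta)_{\eta<\mu}}$ with $\lambda, \mu < \kappa$. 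In the classical case $\kappa = \omega_1$ this exhibits $X$ as a $\kappa$-Borel subobject of $Y \times 2^\lambda$ with the $g_\xi$ as coordinates, and the proof then: (i) refines the topology so that $f$ lifts to a continuous, level-preserving map $X \to Y$ of $\kappa$-locales with $X$ realized as the branch space $[S]$ of a $<\kappa$-branching pruned tree $S$, the map being encoded by a monotone $\phi \colon S \to (\text{levels over } Y)$; (ii) takes the level bundles $X_\alpha := \{(y,s) : s \in S_\alpha,\ \phi(s) = y{\restriction}\alpha\}$, which are (essentially clopen in, hence) standard $\kappa$-Borel locales over $Y$; and (iii) observes $X = \projlim_\alpha X_\alpha$ over $Y$, with each successor transition $X_{\alpha+1} \to X_\alpha$ having $<\kappa$-ary fibers. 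I would carry out this analog, using the $\kappa$-frame/$\kappa$-Borel presentation machinery of the earlier sections together with \cref{thm:intro-sub} for the localic form of (i) — i.e.\ realizing $\@B_\kappa(X)$ over $\@B_\kappa(Y)$ by a compatible $\kappa$-locale structure presenting $X$ as such a pruned-tree sublocale.

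Granting (i) with the tree suitably pruned, the remaining verifications are mostly bookkeeping: each $X_\alpha$ is $\kappa$-presented (automatic since $\lambda$ and the tree height are $<\kappa$ and $\kappa$ is regular), hence standard; each transition $X_\alpha \to X_\beta$ ($\beta \le \alpha$) has $<\kappa$-ary fibers; and $X = \projlim_\alpha X_\alpha$ over $Y$ because, on the algebra side, the $\@B_\kappa(X_\alpha)$ form a directed system of $\kappa$-Boolean subalgebras of $\@B_\kappa(X)$ whose union $\kappa$-generates $\@B_\kappa(X)$ and already imposes all the relations, so that their colimit in $\!{\kappa Bool}$ — which, for a $<\kappa$-small system, is a $\kappa$-completion of the union — is exactly $\@B_\kappa(X)$, with no overshoot.

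The step I expect to be the main obstacle is precisely the localic form of (i), and within it, pruning the tree (equivalently, putting the relative presentation $\ang{(g_\xi)\mid(r_\eta)}$ into a suitable normal form) so that \emph{every} transition of the diagram — not merely the successor ones — is $<\kappa$-ary-to-one. The relations $r_\eta$ may involve unboundedly many of the generators $g_\xi$, so a tree of ``partial information over finitely many coordinates'' will not capture them; yet a tree of height $>\omega$ threatens the $<\kappa$-ary-to-one condition at limit levels, where a fiber could a priori have size $\lambda^{\aleph_0} \ge 2^{\aleph_0}$. Overcoming this is exactly what forces the pruned, level-by-level-clopen tree with monotone coding of $f$ rather than an arbitrary presentation, and is where the classical argument implicitly exploits the specific structure of $\omega^\omega$ and its relation to compactness; the general-$\kappa$ version should follow from the presentation theorems of the main body, but this normal-form reduction is the technical heart of the proof.

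Finally, the displayed formula is then essentially formal. Base change along a fixed $\kappa$-Borel map, being right adjoint to postcomposition, preserves limits, so base-changing $X = \projlim_i X_i$ along a ``$\kappa$-Borel point'' $y \colon \{y\} \to Y$ gives the fiber $f^{-1}(y) = X_y = \projlim_i (X_i)_y$ (again a codirected limit of standard $\kappa$-Borel locales); and by the existence and pullback-stability of the epi--(regular mono) factorization in $\!{\kappa\Sigma^1_1BorLoc}$ (the analog of \cref{thm:intro-sub}(d),(e)), $y$ lies in the $\kappa$-analytic set $\im(f) \subseteq Y$ precisely when $X_y$ is nonempty, i.e.\ when $\projlim_i (X_i)_y \ne \emptyset$. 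The quotation marks in the statement reflect that ``$y \in \im(f)$'' and ``$(X_i)_y$'' must be read in this fibered sense, since locales need not have enough points.
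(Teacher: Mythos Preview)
Your overall shape is right, and you correctly put your finger on the crux: the ``normal-form reduction'' in step~(i) is indeed the whole content of the theorem, and your worry about limit levels in a height-$>\omega$ tree is exactly the obstruction to the naive approach.

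The paper resolves this not by pruning a linear tree but by replacing the tree picture entirely with a \emph{codirected diagram of $\kappa$-ary sets}, obtained from \emph{ultraparacompactness}. Concretely (\cref{thm:loc-bor-upkz}, \cref{thm:loc-mor-dissolv}, \cref{thm:loc-upkz-invlim}): one first changes topology so that $X$ and $Y$ are ultraparacompact zero-dimensional standard $\kappa$-locales and $f$ is $\kappa$-continuous, and then writes $X = \projlim_i A_i$ and $Y = \projlim_j B_j$ as $\kappa$-ary codirected limits of $\kappa$-ary \emph{discrete} sets $A_i, B_j$. The desired diagram is then
\[
X_i := \projlim_j \im(p_i, q_j \circ f) \;\subseteq\; A_i \times Y,
\]
with structure map the second projection to $Y$. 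Because each $A_i$ is a $\kappa$-ary set, $X_i \to Y$ is \emph{manifestly} $\kappa$-ary-to-one, with the clopen partition by fibers of $A_i$; there are no ``limit levels'' to worry about, since the indexing is merely a $\kappa$-ary directed poset and every $X_i$ already sits inside (set)${}\times Y$. Showing $X \cong \projlim_i X_i$ over $Y$ is then a short computation identifying $\projlim_i X_i \subseteq X \times Y$ with the graph of $f$ (closed since each $B_j$ is discrete).

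So the gap in your proposal is not a wrong idea but a missing tool: you need the ultraparacompact dissolution results of \cref{sec:upkzfrm} (specifically \cref{thm:upkzfrm-bool-pres}, \cref{thm:frm-neg-upkz}) to obtain the inverse-limit-of-$\kappa$-ary-sets representation of $X$ itself. Your ``add generators $g_\xi$ one at a time'' picture produces a height-$\lambda$ linear tree whose limit levels can have large fibers, as you feared; the ultraparacompact approach bypasses this by packaging \emph{all} generators into the single $\kappa$-ary set $A_i$ at each node of the directed index. Also note: no pruning is needed for this theorem—pruning enters only later (\cref{thm:sigma11-invlim-prune}) when analyzing Borelness of the image.
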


As we recall in \cref{sec:loc-intlog}, a well-known technique from categorical logic endows every (sufficiently nice) category with an \defn{internal logic} via which ``pointwise'' expressions like the above may be made precise.
This yields a systematic way of translating many classical definitions into locale-theoretic analogs, including in the following.

Classically, given a Borel family of countably branching trees $(T_y)_{y \in Y}$ representing an analytic set $\im(f) \subseteq Y$ as above, by repeatedly pruning the leaves of these trees, we obtain an $\omega_1$-length decreasing sequence of Borel sets $B_\alpha \subseteq Y$ whose intersection is $\im(f)$; see \cite[25.16]{Kcdst}.
Moreover, stabilization of this sequence below $\omega_1$ is closely related to Borelness of the analytic set $\im(f)$; see \cite[35.D]{Kcdst}.
In \cref{thm:sigma11-invlim-prune}, we prove the following localic generalization of these results:

\begin{theorem}[pruning of ``tree'' representations]
\label{thm:intro-sigma11-invlim-prune}
Let $f : X -> Y \in \!{\kappa BorLoc}_\kappa$ be the inverse limit, over a standard $\kappa$-Borel locale $Y$, of a $\kappa$-ary codirected diagram $(X_i)_{i \in I}$ of standard $\kappa$-Borel locales over $Y$ and $\kappa$-ary-to-one $\kappa$-Borel maps between them, as in \cref{thm:intro-sigma11-invlim}.
\begin{enumerate}

\item[(a)]  The $\kappa$-analytic set $\im(f) \subseteq Y$ is the $\kappa$-length decreasing meet (in the poset $\kappa\Sigma^1_1(Y)$) of the $\kappa$-Borel images of the $\alpha$th prunings of the $X_i$.

\item[(b)]  If $X = \emptyset$ (i.e., $\im(f) = \emptyset$), then the pruning stabilizes at the empty diagram by some $\alpha < \kappa$.

\item[(c)]  If $\im(f) \subseteq Y$ is $\kappa$-Borel, then the sequence in (a) stabilizes (at $\im(f)$) by some $\alpha < \kappa$.

\end{enumerate}
\end{theorem}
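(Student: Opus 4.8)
My plan is to treat the three parts in order, with the main tools being the two structural facts from \cref{thm:intro-sub}(e) --- that $\!{\kappa BorLoc}$ is regular, so images and epimorphisms are pullback-stable, and that its epimorphisms are closed under codirected inverse limits of epimorphisms --- together with the boundedness theorem for well-founded $\kappa$-analytic relations proved earlier. Write $p_{ij}\colon X_j\to X_i$ (for $j\ge i$ in $I$) for the transition maps and $X_i^{(\alpha)}\subseteq X_i$ for the $\alpha$th pruning: $X_i^{(0)}=X_i$, $X_i^{(\alpha+1)}=\bigcap_{j\ge i}\im\bigl(X_j^{(\alpha)}\xrightarrow{p_{ij}}X_i\bigr)$, and $X_i^{(\lambda)}=\bigcap_{\alpha<\lambda}X_i^{(\alpha)}$ at limits. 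I would first check, by an induction using that each $p_{ij}$ (being $\kappa$-ary-to-one) is a $<\kappa$-ary union of injective $\kappa$-Borel maps (Lusin--Novikov) and that $\kappa$ is regular, together with the Lusin--Suslin theorem (\cref{thm:intro-sub}(a),(c)), that each $X_i^{(\alpha)}$ with $\alpha<\kappa$ is a standard $\kappa$-Borel subobject of $X_i$; then the images $B_\alpha^{(i)}:=\im(X_i^{(\alpha)}\to Y)$ and their meets $B_\alpha:=\bigcap_i B_\alpha^{(i)}$ are genuine $\kappa$-Borel --- hence $\kappa$-analytic --- subobjects of $Y$, decreasing in $\alpha$. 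Regularity also makes this pruning commute with pullback of the diagram along any $\kappa$-Borel map into $Y$: images are pullback-stable, and pullback, being a limit, commutes with the $<\kappa$-ary intersections involved.

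For (a), the inequality $\im f\le B_\alpha$ for every $\alpha$ is the assertion that ``branches survive pruning'', which I would prove by transfinite induction: the canonical projection $X\to X_i$ factors through $X_i^{(\alpha)}$, using at a successor stage the factorization $X\to X_j\xrightarrow{p_{ij}}X_i$ and the inductive hypothesis for $X\to X_j$; hence $f$ factors through each $B_\alpha^{(i)}$ and $\im f\le B_\alpha$. The converse --- that $\im f$ is the greatest lower bound of $(B_\alpha)_{\alpha<\kappa}$ in $\kappa\Sigma^1_1(Y)$ --- is the crux, and I would argue it fiberwise in the internal logic recalled in \cref{sec:loc-intlog}. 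Since the levels $X_i\to Y$ of the tree representation of \cref{thm:intro-sigma11-invlim} are $\kappa$-ary-to-one, each fiber tree $((X_i)_y)_i$ has fewer than $\kappa$ nodes, so its fiber pruning $((X_i)_y^{(\alpha)})_i$ is a decreasing $\kappa$-sequence of $<\kappa$-sized sets and therefore stabilizes below $\kappa$ (by regularity of $\kappa$) onto a pruned subsystem $((X_i)_y^{(\infty)})_i$; and that subsystem is nonempty at every level --- equivalently, $y\in\bigcap_{\alpha<\kappa}B_\alpha$ --- iff it has a branch (by the internal form of the inverse-limit-of-epimorphisms clause of \cref{thm:intro-sub}(e), a $<\kappa$-ary König lemma) iff $y\in\im f$. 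Internalizing this equivalence over $Y$ gives $\bigcap_{\alpha<\kappa}B_\alpha=\im f$ as $\infty$-Borel sets, hence also as the asserted meet in $\kappa\Sigma^1_1(Y)$.

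For (b), assuming $X=\emptyset$, I would first note that the pruning must stabilize at the empty diagram: otherwise its stable value $(X_i^{(\infty)})_i$ is nonempty with all transition maps epimorphisms, whence $\projlim_i X_i^{(\infty)}=X\ne\emptyset$ by \cref{thm:intro-sub}(e). That this happens at some stage $<\kappa$ is a boundedness phenomenon: the ``child'' relation on the internal tree $\coprod_i X_i$ over $Y$ is a $\kappa$-Borel relation, well-founded exactly because $\projlim_i X_i=X=\emptyset$, and the stabilization stage of the pruning is bounded by its rank, which is $<\kappa$ by the boundedness theorem. Finally, I would derive (c) from (b): when $\im f$ is $\kappa$-Borel, restrict the whole situation over the $\kappa$-Borel set $Y\smallsetminus\im f$ --- pruning commutes with this restriction by the remark above, and the restricted inverse limit is $f^{-1}(Y\smallsetminus\im f)=\emptyset$ --- so (b) supplies an $\alpha<\kappa$ with $X_i^{(\alpha)}\times_Y(Y\smallsetminus\im f)=\emptyset$ for all $i$, i.e.\ $B_\alpha^{(i)}\le\im f$ and hence $B_\alpha\le\im f$; combined with $\im f\le B_\alpha$ from (a), this forces $(B_\beta)_\beta$ to have stabilized at $\im f$ by stage $\alpha$.

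The hard part will be the greatest-lower-bound half of (a): carrying out the fiberwise argument rigorously through the internal logic. The delicate points are that each fiber tree is genuinely $<\kappa$-sized (which has to be read off the construction in \cref{thm:intro-sigma11-invlim} --- in particular that the $X_i\to Y$ themselves, not only the transitions, are $\kappa$-ary-to-one); that the fiber pruning is correctly identified with the pullback of the global pruning along a point; and that a nonempty pruned $<\kappa$-branching codirected system of sets has a branch, which is the internal avatar of the inverse-limit-of-epimorphisms clause of \cref{thm:intro-sub}(e) and must be stated carefully for the pointless objects involved. By contrast (b) reduces to a boundedness theorem already established, and (c) is a short deduction from (b).
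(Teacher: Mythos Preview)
Your reduction of (c) to (b) by pulling back along the complement $Y \setminus \im(f)$ matches the paper exactly. But your argument for (b) is circular: you invoke ``the boundedness theorem for well-founded $\kappa$-analytic relations proved earlier'' to bound the stabilization stage, yet in this paper part (b) \emph{is} the boundedness theorem --- see \cref{rmk:sigma11-invlim-bounded}, which derives the classical-style statement about ranks of well-founded relations \emph{from} (b), not the other way around. There is no independently-proved boundedness result available to you here. Your first sentence for (b) (``otherwise its stable value\dots is nonempty with all transition maps epimorphisms, whence $X\ne\emptyset$'') is essentially the content of (a); it tells you the sequence stabilizes at $\emptyset$ if continued transfinitely, but not that this happens before stage $\kappa$.

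The paper's actual proof of (b) is quite different and uses topological methods: one first changes topology (via \cref{thm:loc-bor-dissolv,thm:loc-mor-dissolv}) so that each $X_i$ is a standard $\kappa$-\emph{locale} and each transition map is continuous, then runs a modified pruning using \emph{closed} images rather than $\kappa$-analytic ones. Because closed sets live in a $\kappa$-generated complete lattice, any strictly decreasing sequence has length $<\kappa$, so this closed pruning stabilizes below $\kappa$. At that point the transition maps have dense image, and the inverse-limit Baire category theorem (\cref{thm:loc-baire-invlim}) forces the limit projections to have dense image too; since $X=\emptyset$, each stabilized level is empty, which bounds the canonical pruning as well.

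For (a), your fiberwise internal-logic strategy is a different route from the paper's, which instead works in the ambient category $\!{\kappa BorLoc}$ and uses that $\!{\kappa Bool}$ is locally $\kappa$-presentable: the $\kappa$-ary limit/colimit operations defining one step of pruning commute with $\kappa$-codirected limits, so defining $X_i^{(\kappa)} := \bigwedge_{\alpha<\kappa} X_i^{(\alpha)}$ gives $X_i^{(\kappa)\prime}=X_i^{(\kappa)}$, and then \cref{thm:loc-bor-epi-invlim} identifies this with $\im^\kappa(\pi_i)$. Your approach might be salvageable, but the ``internal K\"onig lemma'' and ``fiber pruning equals pullback of global pruning'' steps you flag as delicate are genuinely so in the pointless setting; the paper's algebraic argument sidesteps them entirely.
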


In fact, this result holds for \emph{any} $\kappa$-ary codirected diagram over $Y$, not necessarily with $\kappa$-ary-to-one maps; the only difference is that the images in (a) need no longer be $\kappa$-Borel.
Part (b) can then be seen as a generalization of the classical \defn{boundedness theorem for $\*\Sigma^1_1$}; see \cref{rmk:sigma11-invlim-bounded}.
In \cref{sec:sigma11-proper} (see \cref{thm:sigma11-proper}), using this boundedness theorem, we finally prove that

\begin{theorem}[Borel $\subsetneq$ analytic]
\label{thm:intro-sigma11-proper}
There is a continuous map $f : X -> Y$ between quasi-Polish spaces such that, regarded as a map in $\!{\sigma Loc}_\sigma$ via \eqref{eq:intro-loc-ctbpres}, there is no smallest $\kappa$-Borel set $\im(f) \in \@B_\kappa(Y)$ whose $f$-pullback is all of $X$, for any $\omega_1 \le \kappa \le \infty$.

In other words, $\im(f) \subseteq Y$ is a ``$\sigma$-analytic, non-$\infty$-Borel set''.
\end{theorem}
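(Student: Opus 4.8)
The plan is to take for $f$ the canonical $\Sigma^1_1$-complete map. Let $Y$ be the quasi-Polish (indeed Polish) space $\mathrm{Tr}$ of all trees $T \subseteq \mathbb{N}^{<\mathbb{N}}$, let $X \subseteq Y \times \mathbb{N}^{\mathbb{N}}$ be the closed subspace of pairs $(T,x)$ with $x$ an infinite branch of $T$, and let $f : X \to Y$ be the first projection. Spatially, the image of $f$ is the set $\mathrm{IF} \subseteq \mathrm{Tr}$ of ill-founded trees, the paradigmatic $\Sigma^1_1$-complete, non-Borel set. I want to show that, regarded in $\!{\kappa BorLoc}_\kappa$ for any $\omega_1 \le \kappa \le \infty$ via \eqref{eq:intro-loc-ctbpres} and the forgetful functors, $f$ admits no $\kappa$-Borel image.

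First I would reduce to showing that the $\kappa$-analytic image $\im(f) \subseteq Y$ in $\!{\kappa\Sigma^1_1BorLoc}$ is not a $\kappa$-Borel set. Using \cref{thm:intro-sigma11-invlim} (or directly), present $X = \projlim_n X_n$ over $Y$, where $X_n$ is the standard $\kappa$-Borel locale of pairs $(T,s)$ with $s \in T$ of length $n$, the bonding maps $X_{n+1} \to X_n$ are given by truncating $s$ (hence are countable-to-one, so $\kappa$-ary-to-one), and the index poset $\omega^{\mathrm{op}}$ is codirected. Let $B_\alpha \in \@B_\kappa(Y)$ be the $\kappa$-Borel image of the $\alpha$th pruning of this diagram (at level $0$); spatially $B_\alpha = \{T : \rho(T) \ge \alpha\}$, where $\rho(T) \in \mathrm{Ord} \cup \{\infty\}$ is the rank of (the root of) $T$, so that $\rho(T) < \omega_1$ iff $T$ is well-founded and $\rho(T) = \infty$ iff $T$ is ill-founded. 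Since the projection $X \to Y$ factors through each inclusion $B_\alpha \hookrightarrow Y$, every $B_\alpha$ lies in $S := \{B \in \@B_\kappa(Y) : f^*(B) = \top_X\}$. Now suppose $S$ had a least element $B$. As $f^*$ is a $\kappa$-Boolean homomorphism, $S$ is closed under $<\kappa$-ary meets, and $B \le B_\alpha$ for all $\alpha < \kappa$, so $B$ is a lower bound of $(B_\alpha)_{\alpha<\kappa}$ in the poset $\kappa\Sigma^1_1(Y)$ and hence $B \le \bigwedge_{\alpha<\kappa} B_\alpha = \im(f)$ by part (a) of \cref{thm:intro-sigma11-invlim-prune}; since also $\im(f) \le B$, we get $\im(f) = B \in \@B_\kappa(Y)$. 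So it suffices to prove $\im(f)$ is not $\kappa$-Borel, for every $\omega_1 \le \kappa \le \infty$, and by part (c) of \cref{thm:intro-sigma11-invlim-prune} this will follow once I show that the decreasing sequence $(B_\alpha)_\alpha$ does not stabilize (before $\kappa$, resp.\ ever, when $\kappa = \infty$).

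For $\kappa = \sigma$ this is the classical fact in disguise: for $\alpha < \omega_1$ each $B_\alpha$ is an honest Borel subset of $\mathrm{Tr}$, and $B_\alpha \supsetneq B_{\alpha+1}$ because there is a well-founded tree of rank exactly $\alpha$; since $\!{SBor} = \!{\sigma BorLoc}_\sigma \hookrightarrow \!{\sigma\Sigma^1_1BorLoc}$ is full and faithful, the $B_\alpha$ stay distinct in $\sigma\Sigma^1_1(Y)$, so the sequence does not stabilize below $\sigma$. (The boundedness theorem, part (b) of \cref{thm:intro-sigma11-invlim-prune}, is exactly the localic avatar of the $\Sigma^1_1$-boundedness fact that makes this work; cf.\ \cref{rmk:sigma11-invlim-bounded}.)

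The substantive case is $\sigma < \kappa \le \infty$, where $(B_\alpha)$ is spatially constant $(= \mathrm{IF})$ above $\omega_1$, so that its non-stabilization has to be witnessed purely at the pointless level of $\@B_\kappa(Y)$; I expect this to be the main obstacle. My approach would be to argue by contradiction via boundedness: if $\im(f) = B$ were $\kappa$-Borel, then $W := Y \setminus B \in \@B_\kappa(Y)$ satisfies $f^*(W) = \neg f^*(B) = \bot_X$, so the tree-diagram $(X_n)$ restricted over $W$ is a $\kappa$-ary codirected diagram of standard $\kappa$-Borel locales with $\kappa$-ary-to-one bonding maps and empty inverse limit; by part (b) of \cref{thm:intro-sigma11-invlim-prune} it prunes to the empty diagram in some $\gamma < \kappa$ steps, i.e.\ $B_\gamma \le B$, so $(B_\alpha)$ stabilizes at $\im(f)$ already by stage $\gamma$ (necessarily $\gamma \ge \omega_1$ by the previous paragraph). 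The remaining work is to refute this: the bound $\gamma < \kappa$ supplied by boundedness is too coarse to conclude directly when $\kappa > \omega_1$, so one must instead exploit the concrete diagram — that $W$, carrying the well-founded ``subtree-below-a-child-of-the-root'' relation, still has rank unbounded in $\omega_1$, whereas stabilization of $(B_\alpha)$ would force the free $\kappa$-Boolean algebra $\@B_\kappa(Y)$ to identify two consecutive prunings $B_\gamma, B_{\gamma+1}$ that, by the $\Sigma^1_1$-completeness (Wadge-universality) of $\mathrm{IF}$ transported to the $\kappa$-Borel level, it cannot — a genuine non-collapse statement for the $\kappa$-Borel hierarchy over $Y$ above $\omega_1$, which is precisely where boundedness is leveraged. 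Granting this, $\im(f)$ is not $\kappa$-Borel for any $\omega_1 \le \kappa \le \infty$, and by the reduction above $f$ has no $\kappa$-Borel image; for $\kappa = \infty$ this says that the would-be $\infty$-Borel image $\bigwedge_{\alpha \in \mathrm{Ord}} B_\alpha$, although it exists in the poset $\infty\Sigma^1_1(Y)$, is captured by no element of the complete Boolean algebra $\@B_\infty(Y)$.
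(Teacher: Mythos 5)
Your setup and reduction are sound and essentially the paper's: represent $X$ as a codirected limit of fiberwise countable standard Borel locales over $Y$, observe that a least $\kappa$-Borel $B$ with $f^*(B)=\top$ would have to coincide with the $\kappa$-analytic image, and conclude from \cref{thm:intro-sigma11-invlim-prune}(c) that it suffices to show the pruning sequence $(B_\alpha)_\alpha$ does not stabilize below $\kappa$. The case $\kappa=\omega_1$ via well-founded trees of arbitrary countable rank is also fine. But the entire content of the theorem beyond the classical fact is the case $\omega_1<\kappa\le\infty$, and there your argument stops at ``Granting this'': you assert that stabilization at some $\gamma\ge\omega_1$ would contradict ``$\Sigma^1_1$-completeness of $\mathrm{IF}$ transported to the $\kappa$-Borel level,'' but no such transported statement is formulated, let alone proved, and it is doubtful one can be obtained this way. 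The usual universal-set/Wadge diagonalization cannot directly yield non-stabilization above $\omega_1$: a locale has only a small set of points, while $2^{\mathbb{N}}$ has a proper class of $\infty$-analytic sets, so there is no universal $\infty$-analytic set to diagonalize against (the paper makes exactly this point in its closing remark). Spatially the $B_\alpha$ are constant above $\omega_1$, so some genuinely pointless input is required, and your proposal does not supply it.

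The paper's missing ingredient is a localic cardinal collapse (\cref{thm:collapse}). Working with the spaces $\DESC(X,n)$ of binary relations on a set with a descending $n$-chain (relations rather than trees, because they transport cleanly along a bijection), one shows (\cref{thm:if-collapse}) that for any infinite $Y'$ there is a \emph{nonempty} standard locale $Z$ with $Z\times\mathbb{N}\cong Z\times Y'$ over $Z$, inducing a natural isomorphism of the entire diagrams $(Z\times\DESC(\mathbb{N},n))_n\cong(Z\times\DESC(Y',n))_n$. Pruning is pullback-stable, so the pruning sequence of $(Z\times\DESC(\mathbb{N},n))_n$ is $Z$ times that of $(\DESC(Y',n))_n$; since the bonding maps are $\kappa$-ary-to-one, pruning commutes with spatialization, and a well-founded relation on $Y'$ of rank exactly $\alpha$ (which exists once $|Y'|\ge|\alpha|$) witnesses $\DESC(Y',n)^{(\alpha)}\supsetneq\DESC(Y',n)^{(\alpha+1)}$. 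Because $Z\ne\emptyset$, strict decrease pulls back to $(\DESC(\mathbb{N},n))_n$ for every ordinal $\alpha$ (\cref{thm:if-strict}). This is the step your proof needs and does not contain; without it (or something equivalent) you have established only the $\kappa=\omega_1$ case.
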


The map $f : X -> Y$ here is simply the usual projection to the space $Y$ of binary relations on $\#N$ from the space $X$ of binary relations equipped with a countable descending sequence, whose image would be the set of ill-founded relations.
The key point is that, when we pass to the underlying $\infty$-Borel locales, these same $X, Y$ also have nonempty $\infty$-Borel sets ``of relations of arbitrary cardinality''.
This is based on a well-known technique: a localic ``cardinal collapse'' (see \cite[C1.2.8--9]{Jeleph}, \cref{thm:collapse}), proved using (what we are calling) the localic Baire category theorem of Isbell~\cite{Iloc} mentioned near the beginning of this Introduction.
In \cref{sec:loc-baire}, we give a self-contained exposition of localic Baire category from a descriptive point of view, including e.g., the \defn{property of Baire} for $\infty$-Borel sets (\cref{thm:loc-bor-baireprop}), as well as an inverse limit generalization of the Baire category theorem (\cref{thm:loc-baire-invlim}) which is used in the proof of \cref{thm:intro-sigma11-invlim-prune}.

All of the aforementioned results reduce to their classical analogs when $\kappa = \omega_1$.
For many of these results, this yields, modulo the equivalences of categories \eqref{eq:intro-loc-ctbpres} (which are based on a simple Baire category argument), a purely lattice-theoretic proof of the classical result which is quite different from the classical proof (found in, e.g., \cite{Kcdst}).

\subsection{Overview of algebraic results}
\label{sec:intro-alg}

While our main goal in this paper is descriptive set theory in locales, the technical heart of this paper lies largely on the dual algebraic side, in \cref{sec:frm}.
Broadly speaking, we study there various methods for \emph{presenting} the types of lattice-theoretic algebras $A$ mentioned above (i.e., $\kappa$-frames, $(\lambda, \kappa)$-frames, and $\kappa$-Boolean algebras) via generators and relations:
\begin{align*}
A = \ang{G \mid R}.
\end{align*}
We are particularly interested in presentation methods where $G$ and/or $R$ are more structured than mere sets; usually, they already incorporate part of the structure of the algebra $A$ being presented.
Such structured presentation methods can be used to prove many important facts about free functors between our various algebraic categories of interest (e.g., $\!{\kappa Frm} -> \!{\kappa Bool}$), hence about the \emph{forgetful} functors between the corresponding categories of locales (e.g., $\!{\kappa Loc} -> \!{\kappa BorLoc}$).

For example, a standard method of presenting a frame $A$, due to Johnstone~\cite[II~2.11]{Jstone} (although based on older ideas from sheaf theory), is via a \defn{posite} $(G, {<|})$, where $G$ is a (finitary) meet-semilattice, while $<|$ (called a \defn{coverage} on $G$) is a set of relations of the form ``$a \le \bigvee B$'' where $B \subseteq G$.
The presented frame $A = \ang{G \mid {<|}}$ is then freely generated by $G$ \emph{as a meet-semilattice}, subject to the relations specified by $<|$ becoming true.
The key point is that, since $G$ already has finite meets, it is enough for $<|$ to impose join relations.
This can then be used to prove, for example, that a direct limit of frames is the direct limit of the underlying complete join-semilattices (see \cref{thm:frm-colim-dir-vlat}), from which one then deduces the results on inverse limits of locales mentioned in the preceding subsection, such as \cref{thm:intro-sub}(e) and the localic Baire category theorem (see \cref{thm:frm-colim-dir-ker}, \cref{thm:bool-colim-dir-ker}, \cref{thm:loc-bor-epi-invlim}, \cref{thm:loc-baire-invlim}).

In the first parts of \cref{sec:frm}, after defining the basic algebraic categories in \cref{sec:frm-cat} and some general discussion of categorical properties of presentations in \cref{sec:pres-adj}, we give a self-contained review of various standard constructions of free and presented frames, including posites in \cref{sec:frm-post}, as well as their generalizations to $\kappa$-frames.
In many of these cases, the $\kappa$-ary generalization does not appear explicitly in the literature, but is largely routine.
Our reason for the level of detail given is threefold: first, for the sake of completeness, especially since there are some occasional subtleties involved in the $\kappa$-ary generalizations.
Second, we wish to make this paper accessible without assuming familiarity with standard locale theory.
And third, this serves as motivation for some further generalizations of these constructions that we give later (such as $(\lambda, \kappa)$-distributive polyposets; see below).

We also take this opportunity to prove, via reasoning about presentations, some simple facts about $\kappa$-ary products in the categories $\!{\kappa Frm}, \!{\kappa\kappa Frm}, \!{\kappa Bool}$ (see \cref{sec:frm-prod}), which are quite possibly folklore, although we could not find them explicitly stated in this generality in the literature.

In \cref{sec:frm-neg}, we consider the functor $\@N_\kappa : \!{\kappa Frm} -> \!{\kappa Frm}$ that adjoins complements to a $\kappa$-frame, as well as its transfinite iterates $\@N_\kappa^\alpha$.
For $\kappa = \infty$, this is Isbell's~\cite{Iloc} $\@N$ functor we mentioned near the start of this Introduction; for $\kappa < \infty$, this functor was studied by Madden~\cite{Mkfrm}.
We recall several of his results, but using a new approach based on an explicit posite presentation for $\@N_\kappa(A)$ (see \cref{thm:frm-neg-pres}).
We also prove, in \cref{thm:frm-bool-pres,thm:frm-bool-pres-cof}, the algebraic results underlying the generalizations (see \cref{thm:loc-bor-loc,thm:loc-bor-dissolv}) of the classical change of topology results for (quasi-)Polish spaces (see \cite[13.1--5]{Kcdst}, \cite[Th.~73]{dBqpol}):

\begin{theorem}[change of topology]
\label{thm:intro-dissolv}
\leavevmode
\begin{enumerate}
\item[(a)]  Every $\kappa$-presented $\kappa$-Boolean algebra $B$ is isomorphic to the free $\kappa$-Boolean algebra $\@N_\kappa^\infty(A)$ generated by some $\kappa$-presented $\kappa$-frame $A$.
\item[(b)]  Given any such $A$, we may enlarge it to some $A \subseteq A' \subseteq \@N_\kappa^\alpha(A)$ containing any $<\kappa$-many prescribed elements of $\@N_\kappa^\alpha(A)$, such that $A'$ still freely generates $B$.
\end{enumerate}
Dually:
\begin{enumerate}
\item[(a)]  Every standard $\kappa$-Borel locale $X$ is the underlying $\kappa$-Borel locale of some standard $\kappa$-locale.
\item[(b)]  Given any standard $\kappa$-locale $X$ and $<\kappa$-many $\kappa\Sigma^0_\alpha$-sets $C_i \in \kappa\Sigma^0_\alpha(X)$, we may find a standard $\kappa$-locale $X'$, such that $\@O_\kappa(X) \cup \{C_i\}_i \subseteq \@O_\kappa(X') \subseteq \kappa\Sigma^0_\alpha(X)$ and the canonical map $X' -> X$ is a $\kappa$-Borel isomorphism.
In other words, $X'$ is ``$X$ with a finer topology making each $C_i$ open''.
\end{enumerate}
\end{theorem}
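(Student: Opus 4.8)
The plan is to prove the algebraic assertions (a), (b); the dual localic ones follow at once from the defining antiequivalences $\!{\kappa Loc} = \!{\kappa Frm}^\op$, $\!{\kappa BorLoc} = \!{\kappa Bool}^\op$, under which $\@N_\kappa^\infty$ (the free $\kappa$-Boolean algebra on a $\kappa$-frame) corresponds to the forgetful functor $\!{\kappa Loc} \to \!{\kappa BorLoc}$ and ``$\kappa$-presented'' to ``standard''. I take $\kappa < \infty$, the case $\kappa = \infty$ reducing to a large enough regular $\kappa' < \infty$ via the standing small-presentedness hypothesis on $\@B_\infty$. I would first reduce (a) to (b). Fix a $\kappa$-Boolean presentation $B \cong \ang{G \mid R}$ with $|G|, |R| < \kappa$, so $B$ is the quotient by $R$ of the free $\kappa$-Boolean algebra on $G$, which (composing left adjoints) is $\@N_\kappa^\infty(A_0)$ for $A_0$ the visibly $\kappa$-presented free $\kappa$-frame on $G$. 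Since $R$ is a set of $<\kappa$ pairs of elements of $\@N_\kappa^\infty(A_0) = \bigcup_\alpha \@N_\kappa^\alpha(A_0)$, regularity of $\kappa$ puts all of them in a single $\@N_\kappa^\alpha(A_0)$. Applying (b) to $A_0$ and these elements yields a $\kappa$-presented $A_1$ with $A_0 \subseteq A_1 \subseteq \@N_\kappa^\alpha(A_0)$ containing them and with $A_0 \hookrightarrow A_1$ inducing $\@N_\kappa^\infty(A_0) \cong \@N_\kappa^\infty(A_1)$; now $R$ is a set of $<\kappa$ equations in $A_1$, so $A := A_1/R$ is again $\kappa$-presented, and as $\@N_\kappa^\infty$ preserves this quotient, $\@N_\kappa^\infty(A) \cong \@N_\kappa^\infty(A_1)/R \cong B$.

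For (b), let $A$ be $\kappa$-presented, $B := \@N_\kappa^\infty(A)$, and $c_i \in \@N_\kappa^\alpha(A) \subseteq B$ for $i$ in a set of size $<\kappa$. Iterating the explicit posite presentation of $\@N_\kappa(-)$ from \cref{thm:frm-neg-pres}, each $c_i$ is a $<\kappa$-ary join of finite meets of ``complement generators'' $\neg d$, each such $d$ again being such a combination of lower-level complement generators, down to $A$; since the level of a complement generator strictly decreases along this ``is a piece of'' relation (and ordinals are well-ordered, so chains are finite), regularity of $\kappa$ lets me collect a set $S$ of size $<\kappa$ of elements of $\@N_\kappa^\alpha(A)$ with $\{c_i\} \subseteq S$ that is closed in the sense that each $s \in S$ is either in $A$, or a finite meet or $<\kappa$-ary join of members of $A \cup S$, or equals $\neg s'$ for some $s' \in A \cup S$. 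Put $A' := \ang{A \cup S}$, the sub-$\kappa$-frame of $\@N_\kappa^\alpha(A)$ it generates; then $A \subseteq A' \subseteq \@N_\kappa^\alpha(A)$ and each $c_i \in A'$. That $\@N_\kappa^\infty(A') \cong B$ compatibly is then straightforward: every element of $A'$ is built from $A$ by iterated finite meets, $<\kappa$-joins and complements, the complements being genuine ones already present in the sub-$\kappa$-frame $A'$; since complements are unique in any $\kappa$-frame (finite meets distribute over binary joins), a $\kappa$-frame homomorphism out of $A'$ is determined by its restriction to $A$, i.e.\ $A \hookrightarrow A'$ is an epimorphism in $\!{\kappa Frm}$, so the left adjoint $\@N_\kappa^\infty$ sends it to an epimorphism; but $\@N_\kappa^\infty(A \hookrightarrow A')$ is also split monic, the retraction being $\@N_\kappa^\infty(A') \to \@N_\kappa^\infty(B) \xrightarrow{\epsilon_B} B = \@N_\kappa^\infty(A)$ (the counit $\epsilon_B$ is invertible as $B$ is already $\kappa$-Boolean, and the triangle identity applies since $A \hookrightarrow A' \hookrightarrow B$ is the unit of $A$), hence it is an isomorphism.

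The remaining point is that $A'$ is $\kappa$-presented, which I expect to be the main obstacle, since $\kappa$-presentedness does not pass to sub-$\kappa$-frames in general. The way to establish it is to exhibit an explicit $\kappa$-ary presentation of $A'$: start from a $\kappa$-presentation $\ang{G_A \mid R_A}$ of $A$ and adjoin, for each $s \in S \setminus A$, a new generator $x_s$ together with the $<\kappa$ relations recording how $s$ is built ($x_s = x_{s_1} \wedge x_{s_2}$, or $x_s = \bigvee_j x_{s_j}$, or $x_s \wedge x_{s'} = \bot$ and $x_s \vee x_{s'} = \top$ in the complement case, with $x_t$ read as the element $t$ itself when $t \in A$). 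The evident surjection from this $\kappa$-presented $\kappa$-frame onto $A'$ is an isomorphism provided $A' \le \@N_\kappa^\alpha(A)$ satisfies no further relations among the $G_A$ and the $x_s$; verifying this is where one must use the precise shape of the coverage in \cref{thm:frm-neg-pres} that adjoins complements, so as to identify $A'$ with the sub-posite cut out by $G_A$ and the chosen complement generators --- a matching of a sub-posite with an honest presentation of the subalgebra it generates, which fails for general posites. This is carried out in \cref{thm:frm-bool-pres} and, in the cofinality/enlargement form used above, in \cref{thm:frm-bool-pres-cof}; the dual localic statements are \cref{thm:loc-bor-loc} and \cref{thm:loc-bor-dissolv}.
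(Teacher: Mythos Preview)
Your approach is essentially correct but differs substantially from the paper's, and you misread what the cited results do. The proofs of \cref{thm:frm-bool-pres} and \cref{thm:frm-bool-pres-cof} never touch the posite presentation of $\@N_\kappa$; they are purely abstract, invoking \cref{thm:cat-lim-refl} (via the retract/subsequence trick of \cref{thm:cat-lim-dircolim-retr}): for any reflective adjunction between locally $\kappa$-presentable categories with $\kappa$-accessible right adjoint and $\kappa$ uncountable, every $\kappa$-presented object downstairs is the reflection of a $\kappa$-presented one upstairs. Applied to $\@N_\kappa^\infty : \!{\kappa Frm} \to \!{\kappa Bool}$, one writes $\@N_\kappa^\alpha(A)$ as a $\kappa$-directed colimit of $\kappa$-presented $\kappa$-frames each containing the prescribed elements together with a generating set for $A$, applies $\@N_\kappa^\infty$, and uses $\kappa$-presentedness of $B$ to extract a countable sub-colimit already computing $B$; its colimit in $\!{\kappa Frm}$ is the desired $A'$. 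This has nothing to do with \cref{thm:frm-neg-pres}, and it buys generality: the identical lemma instantly gives the ultraparacompact analog \cref{thm:upkzfrm-bool-pres}.

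Your concrete route---collect a $<\kappa$-sized subterm-closed set $S$, take $A' := \ang{A \cup S}$, show $A \hookrightarrow A'$ is a $\kappa$-frame epimorphism via uniqueness of complements, conclude $\@N_\kappa^\infty(A) \cong \@N_\kappa^\infty(A')$---is valid, as is your reduction of (a) to (b), which reverses the paper's logical order. But the step you leave open, that your explicit presentation $\tilde A$ equals $A'$, does not need posites either: apply the same uniqueness-of-complements argument to $A \hookrightarrow \tilde A$ to get $\@N_\kappa^\infty(\tilde A) \cong B$ by the same epi-plus-split-mono trick, then use injectivity of the unit $\tilde A \hookrightarrow \@N_\kappa^\infty(\tilde A)$ (\cref{thm:frm-neginf-inj}) to see that the composite $\tilde A \to B$ is injective with image exactly $A'$. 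Your argument's payoff is an explicit description of $A'$ that the abstract proof does not produce.
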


While this result can easily be proved in the same way as the classical proof (in \cite[13.1--5]{Kcdst}), it is interesting to note that our proof (in \cref{thm:frm-bool-pres-cof}) derives it from completely abstract universal-algebraic principles that have nothing to do with $\kappa$-frames or $\kappa$-Boolean algebras \emph{per se}.

In \cref{sec:upkzfrm}, we consider $\kappa$-frames which are dual to \defn{zero-dimensional} $\kappa$-locales, hence which are generated by complemented elements.
Here, there is an extra subtlety which is invisible in the classical case $\kappa = \omega_1$: it is more natural to consider the \defn{ultraparacompact} zero-dimensional $\kappa$-locales, meaning that there are ``enough clopen partitions'' (see \cref{thm:upkzfrm}).
This is because ultraparacompactness, which is automatic for zero-dimensional Polish spaces, is precisely what allows every such Polish space to be represented as the space of infinite branches through a tree; the $\kappa = \infty$ version of this fact was proved by Paseka~\cite{Pupk}.
Classically, one may always change to a zero-dimensional Polish topology by making basic open sets clopen.
When $\kappa > \omega_1$, that ultraparacompactness can be obtained via change of topology (in the sense of \cref{thm:intro-dissolv}) is much less obvious, and was shown by Plewe~\cite{Psubloc} when $\kappa = \infty$.
We again adopt an approach based on presentations of ultraparacompact zero-dimensional $\kappa$-frames (via Boolean algebras and pairwise disjoint join relations), in order to give completely different proofs of the two results just cited for arbitrary $\kappa$ (see \cref{thm:upkzfrm-cabool-dircolim}, \cref{thm:frm-neg-upkz}, \cref{thm:loc-upkz-invlim}, and \cref{thm:loc-bor-dissolv-upkz}):

\begin{theorem}[ultraparacompactness and change of topology]
\leavevmode
\begin{enumerate}
\item[(a)]  Every ultraparacompact zero-dimensional ($\kappa$-presented) $\kappa$-frame is a ($\kappa$-ary) direct limit, in $\!{\kappa Frm}$, of powersets of $\kappa$-ary sets.
\item[(b)]  Every $\kappa$-presented $\kappa$-frame $A$ may be enlarged to an ultraparacompact zero-dimensional $\kappa$-presented $\kappa$-frame $A' \supseteq A$ freely generating the same $\kappa$-Boolean algebra $\@N_\kappa^\infty(A') \cong \@N_\kappa^\infty(A)$.
\end{enumerate}
Dually:
\begin{enumerate}
\item[(a)]  Every ultraparacompact zero-dimensional (standard) $\kappa$-locale is a ($\kappa$-ary) inverse limit of discrete $\kappa$-ary sets.
\item[(b)]  Given any standard $\kappa$-locale $X$, we may find an ultraparacompact zero-dimensional standard $\kappa$-locale $X'$ such that $\@O_\kappa(X') \supseteq \@O_\kappa(X)$ and the canonical map $X' -> X$ is a $\kappa$-Borel isomorphism.
\end{enumerate}
\end{theorem}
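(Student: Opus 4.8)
The four statements come in dual pairs under $\!{\kappa Loc} = \!{\kappa Frm}^\op$, which interchanges $\kappa$-ary direct limits with $\kappa$-ary inverse limits, the powerset $\@P(I)$ of a $\kappa$-ary set with the discrete $\kappa$-locale on $I$, and the free functor $\@N_\kappa^\infty$ with the forgetful functor $\!{\kappa Loc} -> \!{\kappa BorLoc}$; so it suffices to prove the algebraic statements (a) and (b).

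For (a), let $A$ be an ultraparacompact zero-dimensional $\kappa$-presented $\kappa$-frame, and let $\Pi$ be the poset of \emph{clopen $\kappa$-ary partitions} of $A$ --- $\kappa$-ary pairwise-disjoint families $P$ of complemented elements with $\bigvee P = 1$ --- ordered by refinement, where $P' \ge P$ means each block of $P'$ lies below a unique block of $P$, giving a function $\beta = \beta_{P'P} : P' -> P$. The key structural input is \cref{thm:upkzfrm}, which presents such an $A$ via a $\kappa$-Boolean algebra of generators together with pairwise-disjoint join relations; this exhibits $A$ as assembled from partitions and makes $\Pi$ directed (with cellwise common refinements) and cofinal among all covers of $A$. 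Each powerset $\@P(P)$ is a $\kappa$-presented $\kappa$-frame --- freely generated as a $\kappa$-frame by its atoms, subject to pairwise-disjointness and the single covering relation --- so the inclusion $P \subseteq A$ induces a canonical $\kappa$-frame homomorphism $e_P : \@P(P) -> A$, $S \mapsto \bigvee S$; a short check shows the $e_P$ commute with the transition maps $\@P(P) -> \@P(P')$, $S \mapsto \beta^{-1}(S)$, hence assemble into a cocone, yielding a comparison map $c : \varinjlim_{P \in \Pi} \@P(P) -> A$ in $\!{\kappa Frm}$. To see $c$ is an isomorphism, one computes the colimit at the level of underlying $\kappa$-ary join-semilattices via \cref{thm:frm-colim-dir-vlat} and verifies that $A$, with the cocone $(e_P)$, has the required universal property there: zero-dimensionality writes every element of $A$ as a $\kappa$-ary join of complemented elements, each complemented $b$ being $e_{\{b,\neg b\}}(\{b\})$, which gives surjectivity; and if $e_P(S) = e_{P'}(S')$, then passing to a common refinement $P'' \in \Pi$ and using that a block $p'' \in P''$ lies below $\bigvee S$ exactly when its $\beta_{P''P}$-image lies in $S$, one finds that $S$ and $S'$ already have equal image in $\@P(P'')$. (For $\kappa = \infty$ this recovers Paseka's theorem~\cite{Pupk}.)

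For (b), I would first reduce to the zero-dimensional case: given a $\kappa$-ary generating set $(a_i)$ of $A$, the cofinal change-of-topology theorem (\cref{thm:frm-bool-pres-cof}, cf.\ \cref{thm:intro-dissolv}(b)) applied to adjoin the complements $\neg a_i \in \@N_\kappa(A)$ yields $A \subseteq A_0 \subseteq \@N_\kappa(A)$ with $A_0$ a $\kappa$-presented $\kappa$-frame --- now generated by the complemented elements $\{a_i\} \cup \{\neg a_i\}$, hence zero-dimensional --- still freely generating $B := \@N_\kappa^\infty(A)$. It remains to enlarge a zero-dimensional $\kappa$-presented $\kappa$-frame $A_0$, staying inside $B$ and still freely generating $B$, to one that is moreover ultraparacompact. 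For this I would run a transfinite iteration of length $\kappa$, building an increasing chain $A_0 \subseteq A_1 \subseteq \dots$ of zero-dimensional $\kappa$-presented $\kappa$-subframes of $B$ in which $A_{\gamma+1}$ adjoins, for each member of a $\kappa$-sized batch of covers drawn from a fixed presentation of $A_\gamma$ (its index set well-ordered as an ordinal $< \kappa$), the running unions $u_\eta := \bigvee_{\xi < \eta} b_\xi$ \emph{together with their complements} $\neg u_\eta$ computed in $B$, so that the disjointifications $b_\eta \wedge \neg u_\eta$ form a clopen $\kappa$-ary partition lying in $A_{\gamma+1}$ and refining the cover; at each step \cref{thm:frm-bool-pres-cof} keeps $A_{\gamma+1}$ $\kappa$-presented and freely generating $B$, and one takes unions at limits. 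With a standard dovetailing of the bookkeeping across the $\kappa$ stages --- so that every cover ever appearing is eventually handled --- and regularity of $\kappa$, the $\kappa$-directed union $A' := \bigcup_{\gamma < \kappa} A_\gamma$ is a $\kappa$-presented $\kappa$-frame, still freely generating $B$, in which every $\kappa$-ary cover is refined by a clopen partition, i.e.\ ultraparacompact and zero-dimensional.

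The substance of the theorem is the final step of (b), which for $\kappa = \infty$ is Plewe's theorem~\cite{Psubloc}. The difficulty is that in a $\kappa$-frame a $<\kappa$-ary join of complemented elements need not be complemented, so the classical one-shot ``disjointify the cover'' recipe does not by itself yield a \emph{clopen} partition; this is what forces the auxiliary adjunction of the complements $\neg u_\eta$ and hence the transfinite iteration, and one must verify both that the process genuinely closes off --- at $A'$, a cover appearing only at a late stage is still refined by a clopen partition --- and that it never enlarges the generated $\kappa$-Boolean algebra. The parallel point in (a), that ultraparacompactness (not mere zero-dimensionality) is what makes the clopen partitions cofinal among all covers, is the mild shadow of the same phenomenon.
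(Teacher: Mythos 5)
Your part (a) is essentially the paper's proof of \cref{thm:upkzfrm-cabool-pres}: the diagram is the same (powersets of clopen $\kappa$-ary partitions ordered by refinement, with preimage transition maps), and your verification via the colimit of underlying $\kappa$-$\bigvee$-lattices and \cref{thm:frm-colim-dir-vlat} amounts to the same computation as the paper's comparison of presentations. (Note only that your injectivity check treats elements in the images of the cocone maps, whereas a general element of the colimit is a $\kappa$-ary join of such; the missing well-definedness verification is exactly the last paragraph of the paper's proof.)

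Part (b) has a genuine gap. Your iteration has length $\kappa$ and adjoins new generators (the $u_\eta$, $\neg u_\eta$) at each successor stage, so the union $A' = \bigcup_{\gamma<\kappa} A_\gamma$ has, in general, exactly $\kappa$ generators: regularity of $\kappa$ bounds unions of \emph{fewer than} $\kappa$ sets of size $<\kappa$, not unions of $\kappa$-many. Thus $A'$ is $\kappa^+$-presented rather than $\kappa$-presented, unless the construction stabilizes before stage $\kappa$, which you do not argue. A secondary problem is that the subframe produced by \cref{thm:frm-bool-pres-cof}/\cref{thm:frm-neginf-pres} merely \emph{contains} the prescribed elements and need not be generated by them, so your $A_0$ is not known to be generated by $\{a_i\}\cup\{\neg a_i\}$ and hence not known to be zero-dimensional; the same loss of control over the presentation recurs at every successor stage.

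The paper's route avoids both issues. First, \cref{thm:frm-neg-upkz} shows that a \emph{single} application of $\@N_\kappa$ already yields an ultraparacompact zero-dimensional frame: $\@N_\kappa(A)$ is presented over the Boolean subalgebra $B$ generated by $A$ by a coverage generated by the canonical covers of $A$ together with finite covers in $B$, and the disjointifications $c_i \wedge \neg\bigvee_{j<i}c_j$ of such covers already lie in $B$, so no transfinite closing-off is needed. Second, the extraction of a $\kappa$-presented ultraparacompact zero-dimensional subframe still freely generating $\@N_\kappa^\infty(A)$ is done in \cref{thm:upkzfrm-bool-pres-cof} and \cref{thm:upkzfrm-neginf-pres} by running the abstract reflectivity argument of \cref{thm:cat-lim-refl} inside $\!{\kappa CCDjBPost} \simeq \!{\kappa UPKZFrm}$, where the relevant chain has length $\omega$ (so $\kappa$-presentability survives for uncountable $\kappa$) and membership in the subcategory, hence ultraparacompactness and zero-dimensionality, is automatic. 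If you wish to salvage your direct construction, the observation to make is that the new covers $u_\eta \le \bigvee_{\xi<\eta} b_\xi$ created at a successor stage are refined by clopen partitions available at that same stage, so that with exact control of the presentation one round suffices; but making that precise is exactly the content of \cref{thm:frm-neg-upkz}.
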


These results play a crucial role in the proof of the ``tree'' representation of $\kappa$-analytic sets (\cref{thm:intro-sigma11-invlim}).

In \cref{sec:dpoly}, we introduce a new method for presenting \defn{$(\lambda, \kappa)$-frames}, meaning $\kappa$-frames whose order-duals are $\lambda$-frames, for arbitrary $\lambda, \kappa$.
A \defn{$(\lambda, \kappa)$-distributive polyposet} $(A, {<|})$ is a ``symmetric'' generalization of a posite, consisting of an underlying set $A$ equipped with a binary relation $<|$ between $\lambda$-ary subsets and $\kappa$-ary subsets of $A$, thought of as a presentation of a $(\lambda, \kappa)$-frame $\ang{A \mid {<|}}$ with generating set $A$ and relations ``$\bigwedge B \le \bigvee C$'' for $B <| C$.
The $<|$ relation is required to obey certain ``saturation'' conditions which capture implied relations in the presented algebra $\ang{A \mid {<|}}$; see \cref{sec:dpoly}.
We prove in \cref{thm:frm-dpoly-inj} that these ``saturation'' conditions indeed capture \emph{all} such implied relations, and not just in the presented $(\lambda, \kappa)$-frame but in the presented complete Boolean algebra:

\begin{theorem}[saturation of $(\lambda, \kappa)$-distributive polyposets]
\label{thm:intro-dpoly}
Let $(A, {<|})$ be a $(\lambda, \kappa)$-distributive polyposet.
For any $\lambda$-ary $B \subseteq A$ and $\kappa$-ary $C \subseteq A$, if $\bigwedge B \le \bigvee C$ holds in the presented complete Boolean algebra $\ang{A \mid {<|}}$, then already $B <| C$.
\end{theorem}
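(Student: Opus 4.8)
The plan is to reduce the statement about the presented complete Boolean algebra to the corresponding statement about the presented $(\lambda, \kappa)$-frame $P := \ang{A \mid {<|}}$, which is in turn read off an explicit construction of $P$ from the polyposet data. For the reduction, first observe that the presented complete Boolean algebra $\ang{A \mid {<|}}$ is the free complete Boolean algebra generated by the $(\lambda, \kappa)$-frame $P$: both represent the functor sending a complete Boolean algebra $D$ to the set of maps $A -> D$ carrying each $B' <| C'$ to a valid inequality $\bigwedge B' \le \bigvee C'$ in $D$, since the free complete Boolean algebra functor is a left adjoint and so preserves the coequalizers exhibiting $P$ (resp.\ $\ang{A \mid {<|}}$) as a quotient of the free $(\lambda, \kappa)$-frame (resp.\ free complete Boolean algebra) on the set $A$. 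Hence the unit $\eta : P -> \ang{A \mid {<|}}$ is a $(\lambda, \kappa)$-frame homomorphism, so it preserves the $\lambda$-ary meet $\bigwedge B$ and the $\kappa$-ary join $\bigvee C$; thus ``$\bigwedge B \le \bigvee C$ in $\ang{A \mid {<|}}$'' is equivalent to ``$\eta(\bigwedge B) \le \eta(\bigvee C)$''. By the analysis of the complement-adjoining functors $\@N_\kappa^\alpha$ in \cref{sec:frm-neg} and the computation of directed colimits of $\kappa$-frames on underlying posets (\cref{thm:frm-colim-dir-vlat}), the unit $\eta$ is an order-embedding; so $\eta(\bigwedge B) \le \eta(\bigvee C)$ forces $\bigwedge B \le \bigvee C$ already in $P$. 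It therefore remains to prove: if $\bigwedge B \le \bigvee C$ holds in $P$, then $B <| C$.

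For this I would build $P$ explicitly. Define a closure operator $\overline{(-)}$ on the subsets of $A$ (or on suitable pairs of subsets of $A$) by transfinitely saturating under the relations recorded by $<|$ --- this stabilizes since $A$ is a set --- take the underlying set of $P$ to be its fixed points, and compute $\kappa$-ary joins, $\lambda$-ary meets, and finite meets and joins by applying $\overline{(-)}$ to the naive set operations, with universal map $a \mapsto \overline{\{a\}}$. The polyposet's saturation conditions are exactly what make this a well-defined $(\lambda, \kappa)$-frame with the correct universal property. Unwinding the definition of $\overline{(-)}$, the element $\bigwedge B$ of $P$ is $\le$ the element $\bigvee C$ precisely when a $<|$-``derivation'' yields $B <| C$ from the given relations: the identity conditions handle the case $B \cap C \ne \emptyset$, the monotonicity conditions handle enlarging $B$ and $C$, and the transfinite ``cut''/distributivity conditions handle chaining covers together and the interaction of $\lambda$-ary meets with $\kappa$-ary joins --- so the inequality holds only when $B <| C$ is already derivable, i.e.\ holds.

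I expect the main obstacle to be the \emph{completeness} of this last step (soundness being trivial: each $B <| C$ yields $\bigwedge B \le \bigvee C$ by definition of the presentation). One must verify that the fixed-point poset genuinely obeys the $(\lambda, \kappa)$-distributive law and that the explicit formulas for $\bigwedge B$ and $\bigvee C$ do not secretly impose relations beyond $B <| C$; the delicate feature is that the two arity bounds $\lambda$ and $\kappa$ differ, so that, for instance, a ``cut'' of a $\lambda$-ary antecedent against a $\kappa$-ary succedent must again produce a $\lambda$-ary antecedent, which dictates the precise form the saturation conditions and the transfinite closure must take. A secondary, non-automatic ingredient is the order-embedding fact from \cref{sec:frm-neg} used in the reduction --- that passing from $P$ to the complete Boolean algebra it generates reflects $\le$ --- which rests in turn on the levelwise injectivity of $\@N_\kappa$ (due to Madden) and on directed colimits of $\kappa$-frames being formed at the level of posets.
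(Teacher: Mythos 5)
There is a genuine gap, in two places. First, your reduction from the presented complete Boolean algebra to the presented $(\lambda,\kappa)$-frame $P$ requires that the unit $P \to \ang{P}_\!{CBOOL}$ be order-reflecting. For $(\lambda,\kappa)$-frames with $\lambda > \omega$ this is exactly \cref{thm:frm-dpoly-bifrm-inj}, which the paper \emph{deduces from} the theorem you are trying to prove; it is not available from \cref{sec:frm-neg}. The $\@N_\kappa$-machinery (Madden's injectivity plus directed colimits) gives injectivity of $P \to \ang{P \qua \!{\kappa Frm}}_\!{CBOOL}$, i.e.\ of the unit that \emph{forgets} the $\lambda$-ary meets of $P$; the algebra you need, $\ang{P \qua \!{\lambda\kappa Frm}}_\!{CBOOL}$, is a further quotient of that one (by the relations preserving $\lambda$-ary meets), so injectivity into the former does not transfer. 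Indeed the paper is explicit that for $\lambda > \omega$ the congruence/nucleus technology breaks down for lack of a $\wedge$-stability characterization, which is why it never constructs $\ang{A \mid {<|}}_\!{\lambda\kappa Frm}$ by a closure operator at all.

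Second, the core step --- that $\bigwedge B \le \bigvee C$ in your explicitly constructed $P$ forces $B <| C$ --- is asserted rather than proved, and this is where all the content lies: verifying that the fixed points of your transfinite saturation form a $(\lambda,\kappa)$-frame satisfying both distributive laws, and that the formulas for $\bigwedge B$ and $\bigvee C$ impose no hidden relations, is precisely the two-sided difficulty that \cref{rmk:frm-dpoly-bifrm-compl} warns about (naive finite-sub-meet extensions already fail transitivity). The paper's route is different and avoids both problems: it works entirely inside the free complete Boolean algebra $\ang{A}_\!{CBOOL}$, identifies the congruence as a principal filter, translates the hypothesis via \cref{thm:frm-cbool-seq} into a \emph{cut-free} infinitary sequent derivation of $\{\bigvee(\neg B \cup C) \mid B <| C\} \vdash \neg D \cup E$, and then inducts on that derivation, using the transitivity axiom of the prepolyposet to absorb each $(\bigwedge\mathrm{R})$ step into $<|$. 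Cut admissibility (\cref{thm:frm-cbool-cut}) is the substitute for the normal-form/ideal construction you were hoping for; without it, or some equivalent syntactic control, your completeness claim does not go through.
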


This easily implies, for instance, that every $(\lambda, \kappa)$-frame $A$ embeds into the free $\max(\lambda, \kappa)$-Boolean algebra it generates (by taking the canonical $<|$ consisting of all relations that hold in $A$; see \cref{thm:frm-dpoly-bifrm-inj}), or dually, that the forgetful functor $\!{\kappa Bor^+Loc} -> \!{\kappa BorLoc}$ from positive $\kappa$-Borel locales to $\kappa$-Borel locales is faithful (\cref{thm:loc-forget}).

In \cref{thm:bifrm-minterp}, we use $(\kappa, \kappa)$-distributive polyposets to prove the most general algebraic form of the separation theorems described in the preceding subsection.
This result can also be seen as an algebraic form of the Craig--Lyndon interpolation theorem for $\kappa$-ary infinitary propositional logic.
We state here a somewhat simplified special case of the full result:

\begin{theorem}[interpolation]
\label{thm:intro-interp}
Let $A, B_i, C_j$ be $<\kappa$-many $(\kappa, \kappa)$-frames, with homomorphisms $f_i : A -> B_i$ and $g_j : A -> C_j$, and let $D$ be their \defn{bilax pushout}, i.e., $D$ is presented by the disjoint union of all the elements of $A, B_i, C_j$, subject to the additional relations
\begin{align*}
f_i(a) \le a \le g_j(a) \in D \quad\text{for $a \in A$}.
\end{align*}
Thus $D$ is the universal filler in the following diagram, equipped with homomorphisms from $A, B_i, C_j$ satisfying the indicated order relations:
\begin{equation*}
\begin{tikzcd}
B_i \rar[dashed] \drar[phantom, "\le"{pos=.2,sloped}, "\le"{pos=.8,sloped}] &
D \\
A \uar["f_i"] \rar["g_j"'] \urar[dashed] &
C_j \uar[dashed]
\end{tikzcd}
\end{equation*}
If $b_i \in B_i$ and $c_j \in C_j$ satisfy
\begin{align*}
\bigwedge_i b_i \le \bigvee_j c_j \in D,
\end{align*}
then there are $a^L_i, a^R_j \in A$ such that
\begin{align*}
b_i &\le f_i(a^L_i), &
\bigwedge_i a^L_i &\le \bigvee_j a^R_j, &
g_j(a^R_j) &\le c_j.
\end{align*}
\end{theorem}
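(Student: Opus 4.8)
The plan is to present the bilax pushout $D$ by a $(\kappa, \kappa)$-distributive polyposet whose relation ${<|}$ is built so as to encode interpolation, and then to extract the conclusion from the saturation theorem \cref{thm:intro-dpoly}. Let $P := A \sqcup \bigsqcup_i B_i \sqcup \bigsqcup_j C_j$ be the disjoint union of the underlying sets, so that every $\kappa$-ary $U \subseteq P$ splits as $U = U_A \sqcup \bigsqcup_i U_i \sqcup \bigsqcup_j U^j$, where $U_A := U \cap A$, $U_i := U \cap B_i$, $U^j := U \cap C_j$ are themselves $\kappa$-ary. Define a relation ${<|}$ between $\kappa$-ary subsets of $P$ by declaring $U <| V$ iff there exist $a^L_i, a^R_j \in A$ (indexed by the given $<\kappa$-many $i$ and $j$, so the meets and joins below are $\kappa$-ary and exist) with
\begin{gather*}
\bigwedge U_i \le \bigvee V_i \vee f_i(a^L_i) \quad\text{in } B_i, \text{ for all } i, \\
\bigwedge U^j \wedge g_j(a^R_j) \le \bigvee V^j \quad\text{in } C_j, \text{ for all } j, \\
\bigwedge U_A \wedge \bigwedge_i a^L_i \le \bigvee V_A \vee \bigvee_j a^R_j \quad\text{in } A.
\end{gather*}
The theorem is the special case $U = \{b_i\}_i$, $V = \{c_j\}_j$ with all other components empty: then the three conditions collapse to $b_i \le f_i(a^L_i)$, $g_j(a^R_j) \le c_j$ and $\bigwedge_i a^L_i \le \bigvee_j a^R_j$, so it suffices to show $U <| V$ for these $U, V$.

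First I would verify that $(P, {<|})$ is a $(\kappa, \kappa)$-distributive polyposet, i.e.\ that ${<|}$ obeys the saturation axioms. Reflexivity, weakening, and the join- and meet-saturation axioms are routine: they reduce componentwise to the corresponding facts in $A$, the $B_i$ and the $C_j$, using that $f_i$ and $g_j$, as $(\kappa, \kappa)$-frame homomorphisms, preserve finite meets, finite joins and $\kappa$-ary meets and joins. The substantive axiom is cut: from $U <| V \cup \{w\}$ and $U' \cup \{w\} <| V'$ one must deduce $U \cup U' <| V \cup V'$. I would prove this by \emph{composing the interpolants}, in a three-way case split according to whether $w \in A$, $w \in B_i$, or $w \in C_j$: the cut on $w$ is carried out inside the single algebra containing $w$ (where ``$x \le y \vee w$'' together with ``$w \wedge z \le u$'' gives ``$x \wedge z \le y \vee u$'' by distributivity in a $(\kappa, \kappa)$-frame), while the combined witnesses are $a^L_i \wedge \hat a^L_i$ on the ``left'' and $a^R_j \vee \hat a^R_j$ on the ``right'' (the $a$'s witnessing $U <| V \cup \{w\}$ and the $\hat a$'s witnessing $U' \cup \{w\} <| V'$), which behave correctly under the $f_i$ (preservation of finite meets) and the $g_j$ (preservation of finite joins). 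I expect this cut-closure to be the main obstacle: it is an algebraic incarnation of the composition step in a proof of Craig--Lyndon interpolation, and the bookkeeping across the $<\kappa$-many ``sorts'' $B_i, C_j$ and the three levels $A$, $B_i$, $C_j$ is what makes it more than a two-line check.

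Next I would identify the presented $(\kappa, \kappa)$-frame $\ang{P \mid {<|}}$ with $D$. Since a presentation is unchanged by adding or deleting derivable relations, it suffices to check that ${<|}$ contains all the defining relations of $D$ and that, conversely, each relation ``$\bigwedge U \le \bigvee V$'' coming from $U <| V$ holds in $D$. The first is immediate from the definition: the internal relations of $A$, of each $B_i$ and of each $C_j$ arise by taking all left interpolants $\top$ and all right interpolants $\bot$, and the bilax relations $f_i(a) \le a$ and $a \le g_j(a)$ by taking the single relevant interpolant equal to $a$ and the rest trivial. The second (soundness) is a short computation best done in the free complete Boolean algebra $E$ on $D$, into which $D$ embeds, as a $(\kappa,\kappa)$-frame (hence preserving $\kappa$-ary meets), by the embedding theorem for $(\kappa, \kappa)$-frames \cref{thm:frm-dpoly-bifrm-inj} (iterated and completed): there, using $f_i(a^L_i) \le a^L_i$, $a^R_j \le g_j(a^R_j)$ and complementation, one shows
\begin{equation*}
\bigwedge U \wedge \neg\bigvee V \;\le\; \bigwedge U_A \wedge \neg\bigvee V_A \wedge \bigwedge_i a^L_i \wedge \bigwedge_j \neg a^R_j,
\end{equation*}
and then the $A$-level inequality forces the right-hand side, hence the left, to be $\bot$, i.e.\ $\bigwedge U \le \bigvee V$ in $E$ and so in $D$. (Note this step genuinely uses the Boolean structure; a direct argument inside $D$ runs into the failure of $\kappa$-ary meet to distribute over $\kappa$-ary join.)

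Finally, given $b_i \in B_i$, $c_j \in C_j$ with $\bigwedge_i b_i \le \bigvee_j c_j$ in $D$, this inequality persists in the complete Boolean algebra $E$ presented by $(P, {<|})$ (the embedding $D \hookrightarrow E$ preserves the $\kappa$-ary meet $\bigwedge_i b_i$), so by \cref{thm:intro-dpoly} we get $\{b_i\}_i <| \{c_j\}_j$. Unwinding the definition of ${<|}$ here yields $a^L_i, a^R_j \in A$ with $b_i \le f_i(a^L_i)$, $g_j(a^R_j) \le c_j$ and $\bigwedge_i a^L_i \le \bigvee_j a^R_j$, as required. The very same argument for arbitrary $\kappa$-ary $U, V$ gives the full statement allowing $A$-elements and additional $B_i, C_j$-elements on either side.
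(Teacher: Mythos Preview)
Your approach is exactly the paper's (see the proof of \cref{thm:bifrm-minterp}): define this same relation $<|$ on $P$, verify it is a $(\kappa,\kappa)$-distributive prepolyorder containing the defining relations of $D$, and invoke \cref{thm:frm-dpoly-inj}. One simplification: the soundness direction (that $U <| V$ implies $\bigwedge U \le \bigvee V$ in $D$) is unnecessary, and the paper does not check it. Once $<|$ contains the defining relations of $D$, the presented $(\kappa,\kappa)$-frame $\ang{P \mid {<|}}$ is a \emph{quotient} of $D$, so the hypothesis $\bigwedge_i b_i \le \bigvee_j c_j$ in $D$ automatically persists there, and saturation applies directly.

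There is, however, a gap in your transitivity sketch. The transitivity axiom for a distributive prepolyorder is not single-element cut: it is (equivalent to) the $\kappa$-ary version, e.g., from $L <| R \cup S$ together with $L \cup \{s\} <| R$ for \emph{every} $s$ in a $\kappa$-ary set $S$, deduce $L <| R$. Single cut with monotonicity does not give this, since the obvious induction on $|S|$ breaks at limit ordinals. Moreover, even for a single cut on $w \in B_k$, your uniform witness $a^L_i \wedge \hat a^L_i$ fails at $i = k$: after carrying out the cut inside $B_k$ one obtains only $f_k(a^L_k) \vee f_k(\hat a^L_k)$ on the right of the $B_k$-inequality, so the witness there must be $a^L_k \vee \hat a^L_k$. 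The paper handles the full $\kappa$-ary transitivity in one step, with combined witnesses
\[
a^L_i := \Bigl(\bar a^L_i \vee \bigvee_{s \in B_i \cap S} \bar a^L_{s,i}\Bigr) \wedge \bigwedge_{s \in S \setminus B_i} \bar a^L_{s,i},
\qquad
a^R_j := \Bigl(\bar a^R_j \wedge \bigwedge_{s \in C_j \cap S} \bar a^R_{s,j}\Bigr) \vee \bigvee_{s \in S \setminus C_j} \bar a^R_{s,j},
\]
and verifying these is the main calculation of the proof. This is precisely the ``bookkeeping'' you anticipate, but it is substantially more involved than composing two interpolants.
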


The Novikov separation theorem (\cref{thm:intro-sigma11-sep}) follows by taking each of $A, B_i, C_j$ to be $\kappa$-Boolean and then dualizing, while the more general positive $\kappa$-Borel version (\cref{thm:bor+-sep}) requires the full strength of \cref{thm:intro-interp}.
As we noted above \cref{thm:intro-sigma11-sep}, the Lusin separation theorem, which only requires \cref{thm:intro-interp} with three $\kappa$-Boolean algebras $A, B, C$, was essentially shown by LaGrange~\cite{Lamalg}; this is already enough to imply many structural results about $\!{\kappa Bool}$, e.g., that epimorphisms are regular and monomorphisms are regular and pushout-stable (see \cref{thm:bool-pushout-inj,thm:bool-mono-reg,thm:bool-epi-surj}), which dualize to \cref{thm:intro-sub}(a) and the first two parts of (e).

The proof of \cref{thm:intro-dpoly} is based on an explicit syntactic construction, in \cref{sec:frm-cbool}, of free complete Boolean algebras via the cut-free Gentzen sequent calculus for infinitary propositional logic (see \cref{thm:frm-cbool-seq}).
While this construction is new as far as we know, it has an obvious precedent in Whitman's construction of free lattices (see \cite[III]{Hcbool}).

Finally, we note that from our proof that there exist $\sigma$-analytic, non-$\infty$-Borel sets (\cref{thm:intro-sigma11-proper}), we have the following purely Boolean-algebraic consequence (see \cref{thm:bool-free-mono-bad}), which is somewhat surprising given the good behavior of monomorphisms in $\!{\kappa Bool}$ in other respects (e.g., regularity and pushout-stability, as noted above):

\begin{theorem}
There exist $\kappa < \lambda < \infty$, namely $\kappa = \omega_1$ and $\lambda = (2^{\aleph_0})^+$, such that the free functor $\!{\kappa Bool} -> \!{\lambda Bool}$ does not preserve injective homomorphisms.
\end{theorem}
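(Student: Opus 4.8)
The plan is to produce the required injective homomorphism from the continuous map $f : X \to Y$ of quasi-Polish spaces furnished by \cref{thm:intro-sigma11-proper} --- the projection $(R,(x_n)) \mapsto R$ from the space $X$ of binary relations on $\#N$ carrying a distinguished infinite descending sequence to the space $Y = 2^{\#N \times \#N}$ of all binary relations on $\#N$ --- and to show that the failure of injectivity under the free functor is exactly a restatement of the non-$\infty$-Borelness of $\im(f)$. Fix $\kappa = \sigma = \omega_1$ and $\lambda = (2^{\aleph_0})^+$. On the dual side the free functor $\!{\sigma Bool} \to \!{\lambda Bool}$ is the forgetful functor $\!{\sigma BorLoc} \to \!{\lambda BorLoc}$, sending a standard $\sigma$-Borel locale $W$ to its underlying $\lambda$-Borel locale $W_\lambda$, where $\@B_\lambda(W_\lambda)$ is the free $\lambda$-Boolean algebra on $\@B_\sigma(W)$; write $F_\lambda$ for it, and recall that, as a left adjoint, it preserves quotients by ideals. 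Throughout I use the paper's convention $\@B_\sigma(-) \subseteq \@B_\lambda(-) \subseteq \@B_\infty(-)$, with $f^*$ denoting the induced homomorphisms at all three levels. The candidate injection is $\bar\iota : B \hookrightarrow \@B_\sigma(X)$, where $\bar I \subseteq \@B_\sigma(Y)$ is the kernel ideal of $f^* : \@B_\sigma(Y) \to \@B_\sigma(X)$, $B := \@B_\sigma(Y)/\bar I$, and $\bar\iota$ is the induced monomorphism --- equivalently, $B = \@B_\sigma(Z)$ and $\bar\iota$ is dual to $X \twoheadrightarrow Z$ for the epi--regular mono factorization $X \twoheadrightarrow Z \hookrightarrow Y$ of $f$ in $\!{\sigma BorLoc}$ (here $Z$ need not be standard, which the statement does not require).

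Applying $F_\lambda$ gives $F_\lambda(B) = \@B_\lambda(Y_\lambda)/\bar I^\lambda$, where $\bar I^\lambda$ is the $\lambda$-complete ideal of $\@B_\lambda(Y_\lambda)$ generated by $\bar I$, and $F_\lambda(\bar\iota) : \@B_\lambda(Y_\lambda)/\bar I^\lambda \to \@B_\lambda(X_\lambda)$ is the homomorphism induced by $f^* : \@B_\lambda(Y_\lambda) \to \@B_\lambda(X_\lambda)$. Since $\bar I^\lambda \subseteq \ker(f^* : \@B_\lambda(Y_\lambda) \to \@B_\lambda(X_\lambda))$ automatically, $F_\lambda(\bar\iota)$ is injective iff these coincide, i.e.\ iff every $\lambda$-Borel $D \in \@B_\lambda(Y_\lambda)$ with $f^*(D) = \bot$ satisfies $D \le \bigvee \bar I$ in $\@B_\infty(Y)$. (Dually, via \cref{thm:intro-sub}(b),(d): this is the assertion that the image of $f$ in $\!{\lambda BorLoc}$, i.e.\ the ``$\infty$-analytic image relative to $\lambda$'' $\bigwedge\{B \in \@B_\lambda(Y_\lambda) : f^*(B) = \top\}$, coincides with the image $Z$ of $f$ in $\!{\sigma BorLoc}$.) So it suffices to exhibit a single $\lambda$-Borel $D$ with $f^*(D) = \bot$ and $D \not\le \bigvee \bar I$.

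For this, note $\bigvee \bar I = \bigvee\{D \in \@B_\sigma(Y) : f^*(D) = \bot\}$; under the Loomis--Sikorski identification of $\@B_\sigma(Y)$ with the Borel $\sigma$-algebra of the space $Y$, with $f^*$ the preimage map, this is the join of all Borel subsets of the coanalytic set of well-founded relations. By the classical boundedness theorem for $\*\Pi^1_1$ ranks, the $\infty$-Borel sets $WF_\alpha := \{R : R \text{ is well-founded of rank } < \alpha\}$ for $\alpha < \omega_1$ --- defined by the evident monotone transfinite recursion, with $WF_\beta = \bigvee_{\alpha < \beta} WF_\alpha$ at limit stages --- are cofinal among these, so $\bigvee \bar I = WF_{\omega_1}$. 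I claim $D := WF_{\omega_1 + 1}$ works. It is $\lambda$-Borel, its construction using only $\omega_1$-many Boolean operations and Borel substitutions of $Y$ (with $\omega_1 < \omega_2 \le \lambda$). We have $f^*(WF_{\omega_1 + 1}) = \bot$, by transfinite induction along the recursion, since $f^*$ commutes with all the operations involved and a relation carrying a distinguished infinite descending sequence is well-founded of no rank (this induction is part of the proof of \cref{thm:intro-sigma11-proper}). Finally $WF_{\omega_1 + 1} \not\le WF_{\omega_1}$, and given this, $D = WF_{\omega_1 + 1} \not\le WF_{\omega_1} = \bigvee \bar I$ while $f^*(D) = \bot$, so $F_\lambda(\bar\iota)$ is not injective.

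The only substantive point is the last one: that the monotone recursion defining the $WF_\alpha$ does not stabilize at $\omega_1$ after passing to $\infty$-Borel locales --- equivalently, that $WF_{\omega_1 + 1} \wedge \neg WF_{\omega_1}$, the $\infty$-Borel set ``of well-founded relations of rank exactly $\omega_1$'', is \emph{nonzero}, even though classically a well-founded relation on the countable set $\#N$ has rank $< \omega_1$, so that $WF_{\omega_1 + 1} = WF_{\omega_1}$ holds pointwise. This is precisely the localic ``cardinal collapse'' phenomenon underlying \cref{thm:intro-sigma11-proper} (cf.\ \cref{thm:collapse}): the $\infty$-Borel sets of $Y$ see ``relations of arbitrary cardinality and rank'', so that $(\neg WF_\alpha)_{\alpha \in \mathrm{Ord}}$ is a proper-class-long strictly descending chain of $\infty$-Borel sets of $Y$, each with full $f$-pullback --- which is the structural reason $\im(f)$ has no $\infty$-Borel image. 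In the language of the ``tree'' representation $X = \projlim_i X_i$ over $Y$ and its pruning (\cref{thm:intro-sigma11-invlim,thm:intro-sigma11-invlim-prune}), the required fact is that the pruning of this diagram does not stabilize by stage $\omega_1$; I expect the cleanest route is to reuse the cardinal-collapse locale exactly as in the proof of \cref{thm:intro-sigma11-proper} to exhibit the needed nonzero $\infty$-Borel set.
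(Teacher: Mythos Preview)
Your setup—the injection $\bar\iota : \@B_\sigma(Z) \hookrightarrow \@B_\sigma(X)$ dual to the epi part of the $\sigma$-factorization of $\pi_0$—is exactly the paper's, and your argument is correct.  The difference is in how you show $F_\lambda(\bar\iota)$ fails to be injective.  The paper (via \cref{thm:loc-im-bad}) uses a bare cardinality reduction: since $\abs{\@B_\sigma(Y)} = 2^{\aleph_0} < \lambda$, the $\infty(\@B_\sigma)_\delta$-image $Z = \neg\bigvee\bar I$ is itself $\lambda$-Borel; were $F_\lambda(\bar\iota)$ injective, $Z$ would then be the $\lambda$-Borel image of $\pi_0$, contradicting \cref{thm:sigma11-proper} as a black box.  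You instead identify $\bigvee\bar I = WF_{\omega_1}$ via classical $\*\Pi^1_1$-boundedness (legitimately transferred through \cref{thm:loc-ctbpres}) and then produce the explicit witness $D = WF_{\omega_1+1}$, appealing directly to the strict non-stabilization of the pruning sequence at stage $\omega_1$—which in the paper is \cref{thm:if-strict}, the cardinal-collapse computation underlying \cref{thm:sigma11-proper}.  Both routes rest on the same deep ingredient; the paper's is a one-line reduction that makes transparent why $(2^{\aleph_0})^+$ is the natural threshold, while yours gives a concrete $D$ and, since $WF_{\omega_1+1}$ is already $\omega_2$-Borel, in fact shows that $\lambda = \omega_2$ already suffices—a slight sharpening.
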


\subsection{Future directions}

In this paper, we have restricted attention to the ``elementary'' parts of classical descriptive set theory.
We have not considered at all more advanced set-theoretic techniques such as determinacy, the projective hierarchy, or effective descriptive set theory (see \cite{Kcdst}, \cite{Mdst}), let alone large cardinals or forcing (although the ``cardinal collapse'' argument used in the proof of \cref{thm:intro-sigma11-proper} could be understood as an instance of sheaf-theoretic forcing; see \cite{MMshv}).
Nor have we given any serious consideration to the modern area of \emph{invariant descriptive set theory}, which studies definable (e.g., Borel) equivalence relations and group actions and their quotient spaces (see \cite{Knew}, \cite{Gidst}).
All of these areas would be ripe for future investigation of potential connections with locale theory.

Even in ``elementary'' descriptive set theory, there remain many gaps in the localic analogy which we have not addressed.
These include e.g., the perfect set theorem; universal and complete sets; a deeper study of Baire category and measure; and uniformization and selection theorems (see again \cite{Kcdst}).
We plan to address several of these topics in a sequel paper.

Conversely, there are many parts of locale theory and adjacent areas which could probably be fruitfully placed into a descriptive set-theoretic framework.
These include e.g., powerlocales (see \cite{Spowloc}, \cite{dBKpowsp}), the theory of localic groups and groupoids, and connections with toposes and categorical logic (see \cite{Jeleph}).
As we indicated near the start of this Introduction, some/all of these topics have already seen suggestive work that at least implicitly used descriptive set-theoretic ideas; what is missing is a more systematic account of the connection.
For example, there is surely a link between the Scott analysis in invariant descriptive set theory and countable model theory (see \cite[Ch.~12]{Gidst}), the $\@N$ functor on frames, and its ``first-order'' topos-theoretic analog corresponding to Morleyization of $\@L_{\infty\omega}$-theories into geometric theories (see \cite[D1.5.13]{Jeleph}, \cite[\S4]{Cscc}).

Finally, as we noted in \cref{sec:intro-loc}, in this paper we completely ignore a traditionally important aspect of locales: the use of intuitionistic logic, and the consequent possibility of internalization over an arbitrary base topos (see \cite[C1.6]{Jeleph}).
In fact, we make unrestricted use of the axiom of choice (e.g., in assuming that arbitrary subsets of frames may be well-ordered in \cref{sec:upkzfrm} in order to disjointify elements).
It would be interesting to develop a constructive version of the theory in this paper, perhaps as a bridge to effective descriptive set theory.

\subsection{Structure of paper}

This paper has three major sections.

\Cref{sec:frm} takes place entirely in the algebraic (i.e., lattice-theoretic) setting, and forms the foundations for the rest of the paper.
Its contents are described in detail in \cref{sec:intro-alg}.
As indicated there, the broad theme of this section is to study various methods for presenting our main algebraic structures of interest ($\kappa$-frames, $(\lambda, \kappa)$-frames, $\kappa$-Boolean algebras), as well as other structural results proved using presentation-based techniques.

\Cref{sec:loc} introduces the dual localic viewpoint, and develops as much of the theory as is reasonable without needing to refer to images of arbitrary $\kappa$-Borel maps (including the analysis of existing $\kappa$-Borel images); \cref{sec:sigma11} introduces the category $\!{\kappa\Sigma^1_1BorLoc}$ of analytic $\kappa$-Borel locales as the ``closure'' of the standard $\kappa$-Borel locales $\!{\kappa BorLoc}_\kappa$ under images.
The contents of these two sections are described in detail in \cref{sec:intro-loc}.
Even though we adopt the localic viewpoint whenever possible, the algebraic preliminaries in \cref{sec:frm} are of course used often in these later sections.

As we indicated above, we have tried to make this paper accessible without any prior locale theory background.
For this reason, the first few subsections in \cref{sec:frm,sec:loc} give a self-contained (though terse) introduction to all of the basic notions needed.
Likewise, we do not assume any prior descriptive set-theoretic background; all needed definitions are given in \cref{sec:loc,sec:sigma11}.

However, we have found it necessary to make fairly liberal use of category theory throughout the paper, as an organizing framework for the various types of structures introduced.
We therefore assume familiarity with basic notions of category theory, up to such notions as limits, colimits, and adjoint functors; see e.g., \cite{Bcat} for a general reference.
In \cref{sec:cat}, we give a ``logician's'' introduction to the deeper theory of categories of well-behaved (infinitary) first-order structures.

\paragraph*{Acknowledgments}

I would like to thank Matthew de~Brecht for several stimulating discussions during the early stages of this work, as well as for providing some comments and corrections to a draft of this paper.

\section{Frames and Boolean algebras}
\label{sec:frm}

This section contains the order-theoretic foundations for the paper.
In \cref{sec:frm-cat}, we define the main categories of lattices and Boolean algebras of interest to us, discuss their relation to each other, and establish some notational conventions.
In the rest of the subsections, we discuss various concrete methods (some well-known, some new) for constructing free structures, and more generally, presenting structures via generators and relations; these are then applied to study various other constructions, including certain types of limits and colimits.

Throughout this paper, $\kappa, \lambda$, and sometimes $\mu$ will denote infinite regular cardinals or the symbol $\infty$ (bigger than all cardinals).
By \defn{$\kappa$-ary}, we mean of size $<\kappa$.
We will occasionally refer to structures which are class-sized, rather than set-sized; we call the former \defn{large} and the latter \defn{small} or \defn{$\infty$-ary}.%
\footnote{To be more precise, we could work in NBG set theory instead of ZFC, or assume there to be an inaccessible cardinal called $\infty$ and take ``set'' to mean of size $<\infty$.
None of our results will depend essentially on proper classes.}
All structures except categories are assumed to be small by default.
In terminology in which ``$\kappa$'' appears, when $\kappa = \omega_1$, we write ``$\sigma$'' as a synonym, e.g., ``$\sigma$-frame'' means ``$\omega_1$-frame''.
It is occasionally helpful to regard $\kappa = 2, \{1\}$ as ``degenerate regular cardinals'', where $\{1\}$-ary means of size exactly $1$.%
\footnote{These ``possibly degenerate regular cardinals'' are called ``arity classes'' in \cite{Sksite}.}

As indicated in the Introduction, we assume familiarity with basic notions of category theory, including limits, colimits, adjoint functors; a review of some more advanced notions is given in \cref{sec:cat}.
Categories will be denoted with symbols like $\!C$.
Hom-sets will be denoted $\!C(A, B)$ for objects $A, B \in \!C$; identity morphisms will be denoted $1_A \in \!C(A, A)$.
Products will be denoted $A \times B$ or $\prod_i A_i$; fiber products/pullbacks will be denoted $A \times_C B$; the equalizer of $f, g : A \rightrightarrows B$ will be denoted $\eq(f, g)$, while their coequalizer will be denoted $\coeq(f, g)$; and coproducts and pushouts will generally be denoted with the symbol $\amalg$, but also with other notation specific to certain categories (e.g., $\otimes, \sqcup$).
The category of sets will be denoted $\!{Set}$;
the category of posets and order-preserving maps will be denoted $\!{Pos}$.

\subsection{The main categories}
\label{sec:frm-cat}

A \defn{$\kappa$-complete join-semilattice}, or \defn{$\kappa$-$\bigvee$-lattice} for short, is a poset with $\kappa$-ary joins (i.e., suprema).
An $\infty$-$\bigvee$-lattice is also called a \defn{complete join-semilattice} or a \defn{$\bigvee$-lattice}.
An $\omega$-$\bigvee$-lattice is also called a \defn{join-semilattice} or a \defn{$\vee$-lattice}, with joins usually denoted $a_1 \vee \dotsb \vee a_n$.
Note that we require $\vee$-lattices to have a least element (nullary join), which we denote by $\bot$.

We regard $\kappa$-$\bigvee$-lattices $A$ as infinitary algebraic structures with join operations $\bigvee : A^\lambda -> A$ for all $\lambda < \kappa$.
Hence, terms like \defn{homomorphism} and \defn{$\kappa$-$\bigvee$-sublattice} have their usual meanings (map preserving $\kappa$-ary joins, and subset closed under $\kappa$-ary joins).
The \defn{category of $\kappa$-$\bigvee$-lattices} will be denoted $\!{\kappa{\bigvee}Lat}$; when $\kappa = \omega, \omega_1, \omega$, we also use the names $\!{{\vee}Lat}, \!{\sigma{\bigvee}Lat}, \!{{\bigvee}Lat}$.

The category $\!{\kappa{\bigwedge}Lat}$ of \defn{$\kappa$-complete meet-semilattices} is defined similarly.
We adopt the obvious order-duals of the above conventions;
the greatest element is denoted $\top$.

A \defn{$\kappa$-frame} \cite{Mkfrm} is a poset which is a $\wedge$-lattice as well as a $\kappa$-$\bigvee$-lattice, and satisfies
\begin{align*}
a \wedge \bigvee_i b_i = \bigvee_i (a \wedge b_i)
\end{align*}
whenever the join is $\kappa$-ary.
An $\infty$-frame is also called a \defn{frame}.
An $\omega$-frame is the same thing as a \defn{distributive lattice}.
The \defn{category of $\kappa$-frames} is denoted $\!{\kappa Frm}$ (or $\!{DLat}, \!{\sigma Frm}, \!{Frm}$ when $\kappa = \omega, \omega_1, \infty$).

A \defn{$\kappa$-complete Boolean algebra}, or just \defn{$\kappa$-Boolean algebra}, is a $\kappa$-frame in which every element $a$ has a (unique) complement $\neg a$, meaning $a \wedge \neg a = \bot$ and $a \vee \neg a = \top$.
We will also use the implication operation $a -> b := \neg a \vee b$ and bi-implication $a <-> b := (a -> b) \wedge (b -> a)$.
(We will never use $\neg, ->$ to denote Heyting operations in a frame; see \cref{cvt:frm-cbool-incl}.)
An $\infty$-Boolean algebra is a \defn{complete Boolean algebra}, while an $\omega$-Boolean algebra is a \defn{Boolean algebra}.
The \defn{category of $\kappa$-Boolean algebras} is denoted $\!{\kappa Bool}$ (or $\!{Bool}, \!{\sigma Bool}, \!{CBool}$ when $\kappa = \omega, \omega_1, \infty$).

A $\kappa$-Boolean algebra also has $\kappa$-ary meets, over which finite joins distribute; it is sometimes useful to consider such meets in the absence of complements.
A \defn{$(\lambda, \kappa)$-frame}%
\footnote{When $\kappa = \lambda = \infty$, these are sometimes called \emph{biframes} (e.g., \cite[VII~1.15]{Jstone}), although that term now more commonly refers to a distinct notion (\cite[XII~5.2]{PPloc}).}
will mean a poset $A$ with $\lambda$-ary meets and $\kappa$-ary joins, over which finite joins and meets distribute, respectively; in other words, $A$ is a $\kappa$-frame, while the opposite poset $A^\op$ is a $\lambda$-frame.
The category of these will be denoted $\!{\lambda\kappa Frm}$.
Thus, $\!{\kappa Frm} = \!{\omega\kappa Frm}$, while $\!{\kappa Bool} \subseteq \!{\kappa\kappa Frm}$.
A $(\kappa, \omega)$-frame is also called a \defn{$\kappa$-coframe}; the category of these is denoted $\!{\kappa Cofrm} = \!{\kappa\omega Frm}$.
(One can also think of a $\kappa$-$\bigvee$-lattice as a ``$(\{1\}, \kappa)$-frame'', of a $\wedge$-lattice as a ``$\{1\}$-frame'', and of a poset as a ``$(\{1\}, \{1\})$-frame''.)

The following commutative diagram of categories and forgetful functors depicts the relationships between these various categories, for $\kappa \le \lambda$:
\begin{equation}
\label{diag:frm-cat}
\begin{tikzcd}[row sep=1.5em, column sep=1.75em, dashed/.style={}]
&
\!{Bool} &
\!{\sigma Bool} \lar[hook] &
\dotsb \lar[hook] &
\!{\kappa Bool} \lar[hook] &
\!{\lambda Bool} \lar[hook] &
\dotsb \lar[hook] &
\!{CBool} \lar[hook, dashed]
\\
\!{{\bigwedge}Lat} \dar[hook] &
\!{Cofrm} \lar[hook] \dar[hook] &
\!{\infty\sigma Frm} \lar[hook, dashed] \dar[hook, dashed] &
\dotsb \lar[hook, dashed] &
\!{\infty\kappa Frm} \lar[hook, dashed] \dar[hook, dashed] &
\!{\infty\lambda Frm} \lar[hook, dashed] \dar[hook, dashed] &
\dotsb \lar[hook, dashed] &
\!{\infty\infty Frm} \lar[hook, dashed] \dar[hook, dashed] \ar[u, leftarrowtail, dashed, bend left]
\\
\vdots \dar[hook] &
\vdots \dar[hook] &
\vdots \dar[hook] &
\iddots &
\vdots \dar[hook] &
\vdots \dar[hook] &
\iddots &
\vdots \dar[hook, dashed]
\\
\!{\lambda{\bigwedge}Lat} \dar[hook] &
\!{\lambda Cofrm} \lar[hook] \dar[hook] &
\!{\lambda\sigma Frm} \lar[hook] \dar[hook] &
\dotsb \lar[hook] &
\!{\lambda\kappa Frm} \lar[hook] \dar[hook] &
\!{\lambda\lambda Frm} \lar[hook] \dar[hook] \ar[uuu, leftarrowtail, shift left=4, bend left, crossing over, crossing over clearance=2pt] &
\dotsb \lar[hook] &
\!{\lambda\infty Frm} \lar[hook, dashed] \dar[hook, dashed]
\\
\!{\kappa{\bigwedge}Lat} \dar[hook] &
\!{\kappa Cofrm} \lar[hook] \dar[hook] &
\!{\kappa\sigma Frm} \lar[hook] \dar[hook] &
\dotsb \lar[hook] &
\!{\kappa\kappa Frm} \lar[hook] \dar[hook] \ar[uuuu, leftarrowtail, shift left=3, bend left, crossing over, crossing over clearance=2pt] &
\!{\kappa\lambda Frm} \lar[hook] \dar[hook] &
\dotsb \lar[hook] &
\!{\kappa\infty Frm} \lar[hook, dashed] \dar[hook, dashed]
\\
\vdots \dar[hook] &
\vdots \dar[hook] &
\vdots \dar[hook] &
\iddots &
\vdots \dar[hook] &
\vdots \dar[hook] &
\iddots &
\vdots \dar[hook, dashed]
\\
\!{\sigma{\bigwedge}Lat} \dar[hook] &
\!{\sigma Cofrm} \lar[hook] \dar[hook] &
\!{\sigma\sigma Frm} \lar[hook] \dar[hook] \ar[uuuuuu, leftarrowtail, shift left, bend left, crossing over, crossing over clearance=2pt] &
\dotsb \lar[hook] &
\!{\sigma\kappa Frm} \lar[hook] \dar[hook] &
\!{\sigma\lambda Frm} \lar[hook] \dar[hook] &
\dotsb \lar[hook] &
\!{\sigma\infty Frm} \lar[hook, dashed] \dar[hook, dashed]
\\
\!{{\wedge}Lat} \dar[hook] &
\!{DLat} \lar[hook] \dar[hook] \ar[uuuuuuu, leftarrowtail, bend left, crossing over, crossing over clearance=2pt] &
\!{\sigma Frm} \lar[hook] \dar[hook] &
\dotsb \lar[hook] &
\!{\kappa Frm} \lar[hook] \dar[hook] &
\!{\lambda Frm} \lar[hook] \dar[hook] &
\dotsb \lar[hook] &
\!{Frm} \lar[hook] \dar[hook]
\\
\!{Pos} &
\!{{\vee}Lat} \lar[hook] &
\!{\sigma{\bigvee}Lat} \lar[hook] &
\dotsb \lar[hook] &
\!{\kappa{\bigvee}Lat} \lar[hook] &
\!{\lambda{\bigvee}Lat} \lar[hook] &
\dotsb \lar[hook] &
\!{{\bigvee}Lat} \lar[hook]
\end{tikzcd}
\end{equation}
All of these forgetful functors are subcategory inclusions, in particular faithful.
Moreover, the forgetful functors $\!{\kappa Bool} -> \!{\kappa\kappa Frm}$, as well as their composites with $\!{\kappa\kappa Frm} -> \!{\kappa\lambda Frm}$ or $\!{\kappa\kappa Frm} -> \!{\lambda \kappa Frm}$ for $\omega \le \lambda \le \kappa$, are also full; this is the meaning of the $\rightarrowtail$ arrows.

The above categories with $\kappa, \lambda < \infty$, except for $\!{Pos}$, consist of algebraic structures, i.e., structures with (a small set of) infinitary operations obeying equational axioms.
Such categories are extremely well-behaved: they are \defn{monadic over $\!{Set}$}.
The axioms of partial orders are also of a special type (namely, universal Horn), so that $\!{Pos}$ belongs to the related class of \defn{locally presentable} categories.
See \cref{sec:cat} for a review of the basic theory of such categories.
In particular, they have all (small) limits and colimits, and all of the above forgetful functors between these categories have left adjoints, meaning that we have \defn{free structures}: for example, any poset $P$ generates a free distributive lattice
\begin{align*}
\ang{P}_\!{DLat} = \ang{P \qua \!{Pos}}_\!{DLat},
\end{align*}
which is the universal distributive lattice obtained by adjoining finite meets and joins to $P$ while respecting the distributive lattice axioms as well as pre-existing order relations in $P$.
We adopt this ``\defn{qua}'' notation from \cite{JVpfrm} in order to be specify, when not clear from context, what kind of pre-existing structure we want to preserve.

More generally, we may \defn{present} structures via generators and relations: for example,
\begin{align*}
\ang{P \qua \!{Pos} \mid a \wedge b \le c}_\!{DLat}
\end{align*}
denotes the universal distributive lattice obtained from $\ang{P}_\!{DLat}$ (in this case, by taking a quotient) by imposing the further relation $a \wedge b \le c$, where $a, b, c$ are some elements of $P$.
The general construction of presented structures is by a standard transfinite iteration; see \cref{sec:cat-lim}.
For a category $\!C$ of structures as above, we let $\!C_\kappa \subseteq \!C$ denote the full \defn{subcategory of $\kappa$-presented structures}, meaning those presented by $\kappa$-ary sets of generators and relations.

In the above categories of structures where $\kappa$ or $\lambda$ is $\infty$, there is a proper class of meet and join operations, which means that the structure presented by a set of generators and relations may be a proper class.
In the cases of $\!{Frm}, \!{Cofrm}, \!{{\bigvee}Lat}, \!{{\bigwedge}Lat}$, it is known via an explicit construction that small-generated free structures, and hence small-presented structures, are small; see \cref{thm:frm-small} below.
On the other hand, there is the following classical result of Gaifman~\cite{Gcbool} and Hales~\cite{Hcbool} (see \cite[I~4.10]{Jstone} and \cref{cor:gaifman-hales-2} below for different proofs):

\begin{theorem}[Gaifman--Hales]
\label{thm:gaifman-hales}
The free complete Boolean algebra on $\omega$ generators is a proper class.
Thus, the forgetful functor $\!{CBool} -> \!{Set}$ does not have a left adjoint.
\end{theorem}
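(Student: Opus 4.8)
The classical Gaifman--Hales theorem asserts that the free complete Boolean algebra on countably many generators is a proper class. I would prove this by the standard ``too many quotients'' argument: exhibit, for every cardinal $\lambda$, a complete Boolean algebra $B_\lambda$ together with a surjection from the free complete Boolean algebra on $\omega$ generators. If that free algebra were a set of some size $\mu$, then it would have at most $2^\mu$ quotients (each quotient being determined by a congruence, i.e., a subset), contradicting the existence of surjections onto arbitrarily large algebras. The second sentence (no left adjoint to the forgetful functor $\!{CBool} \to \!{Set}$) then follows immediately, since a free object on $\omega$ would be precisely the value of a putative left adjoint at a two-element set, hence would have to be a genuine small algebra.

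The crux is constructing, for arbitrarily large $\lambda$, a complete Boolean algebra generated (as a complete Boolean algebra) by countably many elements but of size $\geq \lambda$. The cleanest route is via regular open algebras of topological spaces: take a suitable space $X$ carrying a countable family of opens $(U_n)_{n<\omega}$ that generates the complete Boolean algebra $\RO(X)$ of regular open sets under arbitrary joins, meets, and complements, while $\RO(X)$ itself is large. A workable choice is a product $X = D^\kappa$ of a discrete (or two-point) space $D$ indexed by a large set $\kappa$, with the $U_n$ coding the coordinate projections through a fixed bijection of $\omega$ with a countable sub-index-set plus enough combinatorial data; one checks that the regular open algebra has a separating family of size $\geq \kappa$ (e.g. the ``coordinate'' regular opens are pairwise distinct), yet arises as a complete-Boolean-algebra quotient of the free one on $\omega$ generators by the universal property. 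The truly delicate point, and the one I expect to be the main obstacle, is verifying that countably many generators genuinely suffice to generate something large under the complete Boolean operations — naive generation gives only a set of bounded size unless the ``infinitary'' completeness interacts with the space in a way that blows up cardinality. This is exactly the phenomenon isolated by Gaifman and Hales, and making it precise requires care about which regular opens are actually in the complete subalgebra generated by the $U_n$.

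An alternative, and probably the one I would actually write up to keep the exposition self-contained, is the more syntactic/combinatorial construction: build directly a large family of pairwise non-isomorphic (or at least provably distinct) complete-Boolean-algebra homomorphisms out of $\ang{\omega \qua \!{Set}}_{\!{CBool}}$ by transfinite recursion, using a tree-like indexing of ``formal infinitary terms'' in the $\omega$ generators whose branches can be independently collapsed or kept apart. This mirrors Whitman's free-lattice technique alluded to later in the paper (around \cref{thm:frm-cbool-seq}), and the excerpt explicitly promises an independent proof via the sequent calculus in \cref{cor:gaifman-hales-2}; so in the body I would likely just cite that forthcoming construction together with \cite{Gcbool}, \cite{Hcbool}, \cite[I~4.10]{Jstone}, and reserve the full argument for \cref{sec:frm-cbool}. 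In either approach, once a proper-class-sized quotient is in hand, the contradiction with set-hood is a one-line cardinality count, and the non-existence of the left adjoint is then purely formal: a left adjoint would assign a \emph{set} to the singleton (equivalently, to $\omega$), contradicting what we have just shown.
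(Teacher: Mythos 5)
The paper gives no proof of this theorem at the point where it is stated: it cites Gaifman \cite{Gcbool} and Hales \cite{Hcbool} and defers to \cite[I~4.10]{Jstone} and to \cref{cor:gaifman-hales-2}, whose proof goes by a genuinely different route (the localic cardinal collapse \cref{thm:collapse} feeding into the boundedness argument behind \cref{thm:sigma11-proper}), not by the sequent calculus of \cref{sec:frm-cbool} — that section only constructs $\ang{X}_\!{CBOOL}$ as a quotient of the class of terms and says nothing about its size. So your fallback plan, citing the original papers and the later independent proof, matches what the paper actually does, and your reduction of the second sentence to the first is essentially right, modulo one slip: the free complete Boolean algebra on \emph{finitely} many generators coincides with the free (finite) Boolean algebra and is a set, so the value of a putative left adjoint at a two-element set is harmless; the contradiction is with its value at $\omega$.

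The gap is in your self-contained construction. To run the "too many quotients" argument you need, for each $\kappa$, a complete Boolean algebra of size $\ge\kappa$ that is generated \emph{as a complete Boolean algebra} by countably many elements, and your proposed witness does not deliver this. In $\mathrm{RO}(D^\kappa)$ the complete subalgebra generated by countably many coordinate clopens is (the isomorphic image under preimage along the open projection of) the regular open algebra of the corresponding countable subproduct, hence has size at most $2^{\aleph_0}$ independently of $\kappa$; attaching "combinatorial data" to a countable sub-index-set cannot help, since any countable family of opens depends on only countably many coordinates. So the universal property gives a surjection of $\ang{\omega}_\!{CBOOL}$ onto something small, not onto $\mathrm{RO}(D^\kappa)$. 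The classical witnesses are the collapsing algebras $\mathrm{RO}(\mathrm{Coll}(\omega,\kappa))$ (Solovay) or the explicit term constructions of Gaifman and Hales, and proving that \emph{those} are countably generated is the entire content of the theorem — exactly the step you flag as "the main obstacle" and leave open. As written, the self-contained version is therefore not a proof; only the cite-and-defer version goes through.
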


\begin{corollary}
\label{cor:gaifman-hales}
None of the forgetful functors from $\!{CBool}$ or $\!{\infty\infty Frm}$ to $\!{\lambda\kappa Frm}, \!{\kappa{\bigvee}Lat}, \!{\kappa{\bigwedge}Lat}$ for $\kappa, \lambda < \infty$, or to $\!{Frm}, \!{Cofrm}, \!{{\bigvee}Lat}, \!{{\bigwedge}Lat}$, have left adjoints.
\end{corollary}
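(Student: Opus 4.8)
The plan is to derive \Cref{cor:gaifman-hales} directly from \Cref{thm:gaifman-hales} by a composition-of-adjoints argument. The key observation is that each of the target categories listed — namely $\!{\lambda\kappa Frm}$, $\!{\kappa{\bigvee}Lat}$, $\!{\kappa{\bigwedge}Lat}$ for $\kappa,\lambda<\infty$, and $\!{Frm},\!{Cofrm},\!{{\bigvee}Lat},\!{{\bigwedge}Lat}$ — admits a further forgetful functor down to $\!{Set}$ that \emph{does} have a left adjoint. For the categories with finite arity parameters this is because they are monadic over $\!{Set}$, as noted in the text immediately after \eqref{diag:frm-cat}; for $\!{Frm},\!{Cofrm},\!{{\bigvee}Lat},\!{{\bigwedge}Lat}$ it is the cited \Cref{thm:frm-small}, which guarantees small free structures and hence a left adjoint to the forgetful functor to $\!{Set}$. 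So in every case we have a forgetful functor $\!D \to \!{Set}$ with a left adjoint, where $\!D$ is one of the listed targets.

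Next I would argue by contradiction. Suppose some forgetful functor $U : \!{CBool} \to \!D$ (or $U : \!{\infty\infty Frm} \to \!D$) had a left adjoint $F$. Composing with the forgetful $V : \!D \to \!{Set}$, which has a left adjoint $G$, we would get that $U' := V \circ U : \!{CBool} \to \!{Set}$ (resp.\ $\!{\infty\infty Frm} \to \!{Set}$) has left adjoint $F \circ G$, since adjoints compose. But $U'$ is precisely the underlying-set functor on $\!{CBool}$, so a left adjoint to it would be a free-complete-Boolean-algebra functor, contradicting \Cref{thm:gaifman-hales}. The case of $\!{\infty\infty Frm}$ reduces to the case of $\!{CBool}$: by the full inclusion $\!{CBool}\hookrightarrow\!{\infty\infty Frm}$ (the $\rightarrowtail$ arrow in \eqref{diag:frm-cat}), together with the fact that a full reflective-type situation transports adjoints — more concretely, one checks that the underlying-set functor on $\!{CBool}$ factors through $\!{\infty\infty Frm}$, so a left adjoint to $\!{\infty\infty Frm}\to\!{Set}$ would restrict to give one on $\!{CBool}$; alternatively one observes $\!{CBool}$ is itself reflective in $\!{\infty\infty Frm}$ (complementation is a further equational condition, adjoining complements freely), so composing that reflector with a hypothetical left adjoint from $\!{\infty\infty Frm}$ again yields a free complete Boolean algebra.

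The argument is essentially formal, so I do not expect a genuine obstacle; the only point requiring a little care is making sure each target category really does sit over $\!{Set}$ with a left adjoint — in particular remembering to invoke \Cref{thm:frm-small} for the four $\infty$-arity targets $\!{Frm},\!{Cofrm},\!{{\bigvee}Lat},\!{{\bigwedge}Lat}$ rather than mistakenly treating them as monadic over $\!{Set}$ (they are not, since small-generated need not mean finitely-generated, but they still have free objects on sets). The mildly delicate sub-case is the $\!{\infty\infty Frm}$ half of the statement, where one must be explicit about how the full embedding $\!{CBool}\hookrightarrow\!{\infty\infty Frm}$ lets one pull a hypothetical left adjoint back to $\!{CBool}$; phrasing it via the reflector "freely adjoin complements" (a large-structure version of the $\@N_\infty$-style construction, or simply the observation that $\!{CBool}$ is a full subcategory closed under the relevant limits) is the cleanest route. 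Everything else is a two-line diagram chase with composition of adjunctions.
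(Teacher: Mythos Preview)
Your argument for the $\!{CBool}$ case is correct and matches the paper's exactly: compose a hypothetical left adjoint with the known free functor $\!{Set} \to \!D$ to contradict \Cref{thm:gaifman-hales}.

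For the $\!{\infty\infty Frm}$ case, your reduction to showing that $\!{\infty\infty Frm} \to \!{Set}$ has no left adjoint is correct, but your final step has a gap. You claim ``$\!{CBool}$ is itself reflective in $\!{\infty\infty Frm}$'' and propose to compose the reflector with the hypothetical free functor. But the existence of that reflector is precisely a size question of the same kind you are trying to settle: the reflector would send a small $(\infty,\infty)$-frame $A$ to the free complete Boolean algebra $\ang{A}_\!{CBool}$, and there is no a priori reason this is small. Your alternative phrasing, that a left adjoint to $\!{\infty\infty Frm} \to \!{Set}$ would ``restrict to give one on $\!{CBool}$'', is also not justified: the free $(\infty,\infty)$-frame on a set $X$ is not a complete Boolean algebra, so the hypothetical left adjoint does not land in $\!{CBool}$.

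The paper closes this gap with a direct concrete construction rather than an abstract reflectivity claim: if the free $(\infty,\infty)$-frame $\ang{a_0,b_0,a_1,b_1,\dotsc}_{\!{\infty\infty Frm}}$ were small, then its quotient by the relations $a_i \wedge b_i = \bot$, $a_i \vee b_i = \top$ would be small and would satisfy the universal property of the free complete Boolean algebra on $\{a_0,a_1,\dotsc\}$, contradicting \Cref{thm:gaifman-hales}. This is the natural way to make your idea precise, and it avoids needing reflectivity of $\!{CBool}$ in $\!{\infty\infty Frm}$ altogether.
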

\begin{proof}
For $\!{CBool}$, this follows from \cref{thm:gaifman-hales} because these latter categories all have free functors from $\!{Set}$ (see \cref{thm:frm-small} below for the last four).
For $\!{\infty\infty Frm}$, this likewise follows from the fact that the free algebra $\ang{a_0, b_0, a_1, b_1, \dotsc}_{\!{\infty\infty Frm}}$ is a proper class; otherwise, its quotient by the relations $a_i \wedge b_i = \bot$, $a_i \vee b_i = \top$ would be the free complete Boolean algebra on $a_0, a_1, \dotsc$.
\end{proof}

In this kind of situation, we denote the category of large structures in all-caps, and denote the full subcategory of small-presented large structures with a subscript $_\infty$.
For example, $\!{CBOOL}$ is the category of large complete Boolean algebras, while $\!{CBOOL}_\infty$ is the full subcategory of small-presented algebras.
Thus, the ``left adjoints'' to the forgetful functors in \cref{thm:gaifman-hales} and \cref{cor:gaifman-hales} land in $\!{CBOOL}_\infty$ (respectively, $\!{\infty\infty FRM}_\infty$).
One can treat categories like $\!{CBOOL}$ in the same way as $\!{\kappa CBool}$, by taking $\kappa$ to be inaccessible.
However, we will only need to work with small-presented large structures, which are $\kappa$-presented for some $\kappa < \infty$; thus there are no serious set-theoretic issues involved.
(We could have completely avoided large structures by only talking about $\kappa$-ary operations for $\kappa < \infty$, passing to a larger $\kappa'$ and the free $\kappa'$-ary structure qua $\kappa$-ary structure whenever necessary; however, we feel that this would have cost some clarity.)

The above categories, other than the categories of Boolean algebras, consist of \emph{ordered} algebraic structures, i.e., structures with an underlying poset and order-preserving operations.
Such categories are \defn{locally ordered} (i.e., $\!{Pos}$-enriched), meaning each hom-set is equipped with a partial order (the pointwise order between homomorphisms), such that composition is order-preserving.
Moreover, the various categorical constructions mentioned above, e.g., limits, colimits, free/forgetful adjoint functors, all have ordered analogs.
See \cref{sec:cat-ord} for details.

\subsection{General remarks on presentations and adjunctions}
\label{sec:pres-adj}

In the following subsections, we will discuss various methods for constructing free and presented structures in the above categories, which are more structured, and therefore easier to work with, than the ``naive'' method via sets of generators and relations.

Broadly speaking, we can organize such methods as follows: given any forgetful functor $\!D -> \!C$ between two categories of structures, with a left adjoint free functor $\ang{-}_\!D : \!C -> \!D$, we can think of any $A \in \!C$ as a ``presentation'' for $\ang{A}_\!D \in \!D$.
For example:
\begin{itemize}

\item
A distributive lattice $A$ presents a Boolean algebra $\ang{A}_\!{Bool}$, where the generators are the elements of $A$ and the relations are the pre-existing lattice relations in $A$.

\item
We can also understand ``naive'' presentations in this way: define the category $\!{BoolPres}$ of Boolean algebra presentations, consisting of pairs $(G, R)$ where $G$ is any set and $R$ is a set of equations between Boolean algebra terms; a morphism $(G, R) -> (H, S)$ is a map $G -> H$ which takes every relation in $R$ to some relation in $S$.
The forgetful functor $\!{Bool} -> \!{BoolPres}$ takes a Boolean algebra $A$ to its underlying set together with all equations that hold in $A$.

\end{itemize}
Thus, broadly speaking, our goal is to study properties of adjunctions free/forgetful adjunctions $\!C \rightleftarrows \!D$ between the categories $\!D$ in \eqref{diag:frm-cat} and various other categories $\!C$ of structures (including other categories in \eqref{diag:frm-cat}), which correspond to nice properties of the presentation method for $\!D$ given by $\!C$.
Here are some of the properties we will be interested in:
\begin{itemize}

\item
If the counit $\ang{A}_\!D -> A$ is an isomorphism for each $A \in \!D$, then the presentation method is ``complete'' in that every $A \in \!D$ has a canonical presentation, namely its own underlying object in $\!C$.
For example, this is the case for $\!{Bool} -> \!{DLat}$, but not $\!{Bool} -> \!{{\vee}Lat}$.

By a standard fact about adjunctions (see e.g., \cite[3.4.1]{Bcat}), the counit is an isomorphism iff the right adjoint forgetful functor $\!D -> \!C$ is full and faithful; so we can regard $\!D$ as a reflective full subcategory of $\!C$ in this case.

\item
If the unit $A -> \ang{A}_\!D$ is an embedding for each $A \in \!C$, then presentations are ``saturated'' in that the relations are already closed under all implied relations in the presented structure.
This is the case for all forgetful functors in \eqref{diag:frm-cat}, but not for $\!{Bool} -> \!{BoolPres}$, since e.g., $\ang{a, b \mid a = \neg b}$ does not already contain the implied relation $\neg a = b$.

\item
A general category of structures may have multiple natural notions of ``embedding''.
The weakest one is a monomorphism, which usually just means an injective homomorphism.
In the case of the unit $A -> \ang{A}_\!D$, this means that if a relation in the presentation forces two generators to become identified, then they are already required to be identified in the presenting structure itself.
This is again false for $\!{Bool} -> \!{BoolPres}$, e.g., in $\ang{a, b \mid a = b}$.

Recall that an adjunction unit $A -> \ang{A}_\!D$ is a monomorphism for each $A \in \!C$ iff the left adjoint $\ang{-}_\!D : \!C -> \!D$ is faithful.
This means that every homomorphism $A -> B \in \!C$ of presentations is determined by the induced homomorphism $\ang{A}_\!D -> \ang{B}_\!D$ between the presented structures.

\item
In good categories, the strongest reasonable notion of ''embedding'' is an \defn{extremal monomorphism} (cf.\ \cref{sec:loc-intlog}), which usually means a homomorphism which is not only an embedding but whose image is also closed under all ``positively definable elements''.
For example, this is false for the unit of $\!{DLat} \rightleftarrows \!{Bool}$, since $A \in \!{DLat}$ need not be closed under complements, which are defined by the equations $a \wedge \neg a = \bot$ and $a \vee \neg a = \top$.

Usually (see \cite[end of \S3.4]{Kvcat}), the unit $A -> \ang{A}_\!D$ is an extremal monomorphism for each $A \in \!C$ iff the left adjoint $\ang{-}_\!D : \!C -> \!D$ is \defn{conservative}, meaning that if a homomorphism of presentations $f : A -> B \in \!C$ induces an isomorphism $\ang{f}_\!D : \ang{A}_\!D \cong \ang{B}_\!D$, then $f$ was already an isomorphism.
Note that this is ``orthogonal'' to the first property listed above, of the \emph{co}unit being an isomorphism: if both hold, then the adjunction $\!C \rightleftarrows \!D$ is an equivalence.%
\footnote{There is a precise sense in which these two properties are ``orthogonal complements'': any adjunction between sufficiently nice categories can be factored, essentially uniquely, into one whose left adjoint is conservative, followed by one whose right adjoint is full and faithful; see \cite{Dfactor}.}

\item
If $\ang{-}_\!D : \!C -> \!D$ is faithful and \defn{full on isomorphisms}, meaning each isomorphism $\ang{A}_\!D \cong \ang{B}_\!D$ is induced by an isomorphism $A \cong B$, this means that the presentation may be canonically recovered from the presented structure.
This implies conservativity, but is much rarer.
It tends to hold for order-theoretic ``completions'', like $\!{DLat} \rightleftarrows \!{Frm}$ (see \cref{thm:frm-idl-fulliso}), where the generators become ``atomic'' or ``compact'' in the completed structure.%
\footnote{A general theory of such ``completions'' is given by the theory of \defn{KZ-monads}; see \cite{Kkzmon}, \cref{rmk:upkzfrm-kzmon}.}

\end{itemize}

\subsection{Ideals}
\label{sec:frm-idl}

In this subsection, we recall the standard constructions of the free functors $\!{\kappa{\bigvee}Lat} -> \!{\lambda{\bigvee}Lat}$ and $\!{\kappa Frm} -> \!{\lambda Frm}$ for $\kappa \le \lambda$.

A \defn{lower set} in a poset $A$ is a downward-closed subset $D \subseteq A$ (i.e., $a \le b \in D \implies a \in D$).
Let $\@L(A)$ denote the poset of lower sets in $A$, ordered via $\subseteq$.
Each lower set $D$ can be regarded as a normal form for the $\bigvee$-lattice term $\bigvee D$, so that $\@L(A) \cong \ang{A \qua \!{Pos}}_\!{{\bigvee}Lat}$, i.e., $\@L : \!{Pos} -> \!{{\bigvee}Lat}$ is left adjoint to the forgetful functor.  The adjunction unit is given by the principal ideal embedding
\begin{align*}
\down : A &--> \@L(A) \\
a &|--> \{b \in A \mid b \le a\}
\end{align*}
for each poset $A$.
More generally, let $\@L_\kappa(A) \subseteq \@L(A)$ denote the subset of \defn{$\kappa$-generated lower sets} (i.e., those which are the downward-closure of a $\kappa$-ary subset); then $\@L_\kappa(A) \cong \ang{A \qua \!{Pos}}_\!{\kappa{\bigvee}Lat}$, so that $\@L_\kappa : \!{Pos} -> \!{\kappa{\bigvee}Lat}$ is left adjoint to the forgetful functor.
In particular, the free $\kappa$-$\bigvee$-lattice generated by a set $X$ is the \defn{$\kappa$-ary powerset} $\@P_\kappa(X) \subseteq \@P(X)$ of $\kappa$-ary subsets of $X$.

A poset $A$ is \defn{$\kappa$-directed} if every $\kappa$-ary subset has an upper bound; when $\kappa = \omega$, we say \defn{directed}, which equivalently means $A \ne \emptyset$ and every $a, b \in A$ have an upper bound.
A \defn{$\kappa$-ideal} in a poset $A$ is a $\kappa$-directed lower set; an \defn{ideal} is an $\omega$-ideal.
In a $\kappa$-$\bigvee$-lattice, a $\kappa$-ideal is equivalently a lower set closed under $\kappa$-ary joins.
Let $\kappa\@I(A) \subseteq \@L(A)$ denote the subset of $\kappa$-ideals.
Then $\kappa\@I : \!{\kappa{\bigvee}Lat} -> \!{{\bigvee}Lat}$ is left adjoint to the forgetful functor.
More generally, for $\kappa \le \lambda$, let $\kappa\@I_\lambda(A) \subseteq \@L(A)$ denote the \defn{$\lambda$-generated $\kappa$-ideals}; then $\kappa\@I_\lambda : \!{\kappa{\bigvee}Lat} -> \!{\lambda{\bigvee}Lat}$ is left adjoint to the forgetful functor.

When $A$ is a $\wedge$-lattice, respectively a $\kappa$-frame, then $\@L_\lambda(A)$, resp., $\kappa\@I_\lambda(A)$, is a $\lambda$-frame, and is the free $\lambda$-frame generated by $A$.
(See \cite[\S2]{Mkfrm}.)
In particular, if $A$ is the free $\wedge$-lattice generated by a (po)set, then $\@L_\lambda(A)$ is the free $\lambda$-frame on that same (po)set.
Taking $\lambda = \infty$ shows that despite there being a proper class of join operations,

\begin{corollary}[Bénabou]
\label{thm:frm-small}
Small-generated free frames and free $\bigvee$-lattices are small.

Hence, so are arbitrary small-generated frames and $\bigvee$-lattices, being quotients of free ones.  \qed
\end{corollary}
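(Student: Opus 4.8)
The plan is to read the statement off the explicit descriptions of free $\bigvee$-lattices and free frames recalled just above, using only the trivial facts that a sub-poset of a powerset is small and that a homomorphic image of a small structure is small. For $\bigvee$-lattices this is immediate: taking $\lambda = \infty$ in the identification $\@L_\lambda(A) \cong \ang{A \qua \!{Pos}}_{\!{\lambda{\bigvee}Lat}}$, the free $\bigvee$-lattice on a poset $A$ is $\@L(A)$, the poset of \emph{all} lower sets of $A$; since $\@L(A) \subseteq \@P(A)$ and $A$ is small, $\@L(A)$ is small. In particular the free $\bigvee$-lattice on a set $X$ is the full powerset $\@P(X)$.

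For frames, the one extra ingredient I would record is that the free $\wedge$-lattice on a small set is small: concretely, $\ang{X \qua \!{Set}}_{\!{{\wedge}Lat}}$ is realized as the set $\@P_\omega(X)$ of finite subsets of $X$ ordered by reverse inclusion (meet $=$ union, $\top = \emptyset$), and the free $\wedge$-lattice on a small poset $A$ is a quotient of $\ang{A \qua \!{Set}}_{\!{{\wedge}Lat}}$, hence also small — this is just the general fact that a finitary variety has small free algebras on small sets. Then, invoking the fact recalled above that for a $\wedge$-lattice $S$ the poset $\@L_\lambda(S)$ of $\lambda$-generated lower sets is the free $\lambda$-frame on $S$, and taking $\lambda = \infty$, the free frame on a small poset $X$ is $\@L\bigl(\ang{X \qua \!{Set}}_{\!{{\wedge}Lat}}\bigr)$, which is again a sub-poset of the powerset of a small set, hence small.

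Finally, for the ``hence'' clause: an arbitrary small-generated frame (resp.\ $\bigvee$-lattice) admits, by definition, a surjective homomorphism from the free frame (resp.\ $\bigvee$-lattice) on a small generating set, which has just been shown to be small; and the image of a small structure under a homomorphism is small, so the given structure is small. There is no real obstacle here — the proof is essentially bookkeeping around the constructions already in hand. The only nonformal input is the smallness of free $\wedge$-lattices on small sets, which is precisely where the \emph{finitariness} of the meet-semilattice signature is used: contrast the infinitary $\bigvee$-lattice signature, where the analogous assertion is exactly the content being proved, and the complete Boolean algebra signature, where it fails outright by Gaifman--Hales (\cref{thm:gaifman-hales}).
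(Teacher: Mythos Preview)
Your proposal is correct and follows exactly the approach the paper intends: the corollary is stated immediately after observing that the free frame on a (po)set $X$ is $\@L(\ang{X}_\!{{\wedge}Lat})$ and the free $\bigvee$-lattice on $A$ is $\@L(A)$, and is marked \qed\ with no further argument. You have simply spelled out the one-line bookkeeping behind that \qed, including the point that smallness of free $\wedge$-lattices is where finitariness enters. (One minor slip: in the frame case you wrote $\ang{X \qua \!{Set}}_\!{{\wedge}Lat}$ while calling $X$ a poset; for a poset it should be $\ang{X \qua \!{Pos}}_\!{{\wedge}Lat}$, but this does not affect the smallness conclusion.)
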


Summarizing, we have the following diagram of free/forgetful adjunctions, for $\kappa \le \lambda$:
\begin{equation*}
\begin{tikzcd}
\!{{\wedge}Lat} \dar[hook] \rar[shift left=2, "\@L_\omega"] &
\!{DLat} \lar[hook, shift left=2, right adjoint] \dar[hook] \rar[shift left=2, "\@I_\sigma"] &
\!{\sigma Frm} \lar[hook, shift left=2, right adjoint] \dar[hook] \rar[shift left=2, "\sigma\@I_{\omega_2}"] &
\dotsb \lar[hook, shift left=2, right adjoint] \rar[shift left=2] &
\!{\kappa Frm} \lar[hook, shift left=2, right adjoint] \dar[hook] \rar[shift left=2, "\kappa\@I_\lambda"] &
\!{\lambda Frm} \lar[hook, shift left=2, right adjoint] \dar[hook] \rar[shift left=2, "\lambda\@I"] &
\!{Frm} \lar[hook, shift left=2, right adjoint] \dar[hook]
\\
\!{Pos} \rar[shift left=2, "\@L_\omega"] &
\!{{\vee}Lat} \lar[hook, shift left=2, right adjoint] \rar[shift left=2, "\@I_\sigma"] &
\!{\sigma{\bigvee}Lat} \lar[hook, shift left=2, right adjoint] \rar[shift left=2, "\sigma\@I_{\omega_2}"] &
\dotsb \lar[hook, shift left=2, right adjoint] \rar[shift left=2] &
\!{\kappa{\bigvee}Lat} \lar[hook, shift left=2, right adjoint] \rar[shift left=2, "\kappa\@I_\lambda"] &
\!{\lambda{\bigvee}Lat} \lar[hook, shift left=2, right adjoint] \rar[shift left=2, "\lambda\@I"] &
\!{{\bigvee}Lat} \lar[hook, shift left=2, right adjoint]
\end{tikzcd}
\end{equation*}
Note that the squares with horizontal \emph{left} adjoints (and vertical right adjoints) also commute; this fact does not extend beyond the bottom two rows of the diagram \eqref{diag:frm-cat}.

The unit for all of these adjunctions is given by $\down$, which is an (order-)embedding.  By a standard fact about adjunctions (see e.g., \cite[end of \S3.4]{Kvcat}), this equivalently means that the left adjoints are (order-)faithful, i.e., restrict to (order-)embeddings on each hom-(po)set.

An unusual feature of these adjunctions, compared to arbitrary free/forgetful adjunctions between categories of algebras,
is that the generators of free algebras are uniquely determined.
For a $\lambda$-$\bigvee$-lattice $B$ and $\kappa \le \lambda$, an element $b \in B$ is called \defn{$\kappa$-compact}%
\footnote{called a \defn{$\kappa$-element} in \cite{Mkfrm}}
if whenever $D \in \kappa\@I_\lambda(B)$ with $b \le \bigvee D$, then $b \in D$.
This is easily seen to be equivalent to: every $\lambda$-ary cover of $b$ has a $\kappa$-ary subcover, i.e., whenever $b \le \bigvee C$ for a $\lambda$-ary $C \subseteq B$, then $b \le \bigvee D$ for a $\kappa$-ary $D \subseteq C$.
Let
\begin{align*}
B_\kappa := \{b \in B \mid b \text{ is $\kappa$-compact}\}.
\end{align*}
We call $B$ \defn{$\kappa$-compactly based} if it is generated by $B_\kappa$ under $\lambda$-ary joins.
If $B$ is a $\lambda$-frame which is $\kappa$-compactly based, and $B_\kappa \subseteq B$ is also closed under finite meets, then we call $B$ \defn{$\kappa$-coherent}.
The following is routine; see \cite[1.4]{Mkfrm}.

\begin{proposition}
\label{thm:vlat-cptbasis}
\leavevmode
\begin{enumerate}
\item[(a)]  For $\kappa \le \mu \le \lambda$ and $A \in \!{\kappa{\bigvee}Lat}$, we have $\kappa\@I_\lambda(A)_\mu = \kappa\@I_\mu(A)$, thus $\kappa\@I_\lambda(A)$ is $\mu$-compactly based ($\mu$-coherent if $A \in \!{\kappa Frm}$).

In particular, $\kappa\@I(A)_\lambda = \kappa\@I_\lambda(A)$, and $\kappa\@I_\lambda(A)_\kappa = \kappa\@I_\kappa(A) = \down(A) \cong A$ consists of the principal ideals, thus $\kappa\@I_\lambda(A)$ is $\kappa$-compactly based ($\kappa$-coherent if $A \in \!{\kappa Frm}$).

\item[(b)]  If $B \in \!{\lambda{\bigvee}Lat}$ is $\kappa$-compactly based, then we have an isomorphism $\bigvee : \kappa\@I_\lambda(B_\kappa) \cong B$.
\end{enumerate}
Thus, the image of $\kappa\@I_\lambda : \!{\kappa{\bigvee}Lat} -> \!{\lambda{\bigvee}Lat}$ (respectively, $\kappa\@I_\lambda : \!{\kappa Frm} -> \!{\lambda Frm}$) consists, up to isomorphism, of precisely the $\kappa$-compactly based $\lambda$-$\bigvee$-lattices (resp., the $\kappa$-coherent $\lambda$-frames).
\end{proposition}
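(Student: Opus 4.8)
The plan is to prove (a) and (b) by directly computing the relevant joins (and, for the coherence clauses, meets) of $\lambda$-generated $\kappa$-ideals, using little more than regularity of the cardinals in play and the $\kappa$-frame distributive law.

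The crucial preliminary for (a) is that, since $A$ itself has $\kappa$-ary joins, the join in $\kappa\@I_\lambda(A)$ of a $\lambda$-ary family $(D_i)_i$ of $\lambda$-generated $\kappa$-ideals is the $\kappa$-ideal of $A$ generated by $\bigcup_i D_i$, namely the downward closure of the set of $\kappa$-ary joins of subsets of $\bigcup_i D_i$; regularity of $\lambda$ keeps this $\lambda$-generated. Granting this, I would establish $\kappa\@I_\lambda(A)_\mu = \kappa\@I_\mu(A)$ by two inclusions. For ``$\supseteq$'': if $D$ is generated by a $\mu$-ary set $S$ and $D \le \bigvee_i D_i$, then each $s \in S$ lies below a $\kappa$-ary join of elements belonging to $<\kappa$-many of the $D_i$, and the union over $s \in S$ of these index sets stays $\mu$-ary (here $\kappa \le \mu$ and $\mu$ regular are used), so $D$ lies below a $\mu$-ary subjoin; thus $D$ is $\mu$-compact. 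For ``$\subseteq$'': if $D$ is $\mu$-compact, fix any $\lambda$-ary generating set $S$ of $D$ and apply $\mu$-compactness to the tautological $\lambda$-ary cover $D = \bigvee_{s \in S} \down s$; this produces a $\mu$-ary $S' \subseteq S$ with $D = \bigvee_{s \in S'} \down s$, so $D$ is $\mu$-generated. The remaining assertions of (a) then follow formally: every element of $\kappa\@I_\lambda(A)$ is the $\lambda$-ary join $\bigvee_{s} \down s$ of principal ideals $\down s \in \kappa\@I_\kappa(A) \subseteq \kappa\@I_\mu(A)$, so $\kappa\@I_\lambda(A)$ is $\mu$-compactly based; and when $A \in \!{\kappa Frm}$, $\kappa$-frame distributivity gives that the intersection of the $\kappa$-ideals generated by $\mu$-ary sets $S$ and $T$ equals the $\kappa$-ideal generated by the ($\mu$-ary) set $\{s \wedge t : s \in S,\ t \in T\}$, while $\top = A$ is the $\kappa$-ideal generated by $\{\top\}$, so $\kappa\@I_\mu(A)$ is closed under finite meets, giving $\mu$-coherence. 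The ``in particular'' clauses are the instances $\mu = \lambda$ (with $\lambda$ there replaced by $\infty$) and $\mu = \kappa$; in the latter a $\kappa$-generated $\kappa$-ideal is visibly principal, so the adjunction unit $\down : A \to \kappa\@I_\kappa(A)$ is a surjective embedding, hence an isomorphism.

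For (b), I would first verify that $B_\kappa$ is closed under $\kappa$-ary joins (refine a $\lambda$-ary cover of $\bigvee_i b_i$ through each $b_i$ and recombine, using regularity of $\kappa$), so that $B_\kappa \in \!{\kappa{\bigvee}Lat}$ and the inclusion $B_\kappa \hookrightarrow B$ transposes, across the free/forgetful adjunction for $\kappa\@I_\lambda$, to the homomorphism $\bigvee : \kappa\@I_\lambda(B_\kappa) \to B$, $D \mapsto \bigvee D$. It remains to see this map is bijective and order-reflecting. Surjectivity is immediate from $\kappa$-compact-basedness: each $b \in B$ is a $\lambda$-ary join of compacts, $b = \bigvee C$ with $C \subseteq B_\kappa$, and then $b = \bigvee D$ for $D$ the $\kappa$-ideal of $B_\kappa$ generated by $C$. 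Order-reflection: if $\bigvee D \le \bigvee E$ and $d \in D$, then $d \in B_\kappa$ and $d \le \bigvee E = \bigvee G$ for a $\lambda$-ary generating set $G \subseteq E$; $\kappa$-compactness of $d$ yields a $\kappa$-ary $G' \subseteq G$ with $d \le \bigvee G'$, and since $E$ is a $\kappa$-ideal we get $\bigvee G' \in E$, whence $d \in E$; so $D \subseteq E$. The closing ``Thus'' assertion then combines (a), which places the image of $\kappa\@I_\lambda$ among the $\kappa$-compactly based $\lambda$-$\bigvee$-lattices (and among the $\kappa$-coherent $\lambda$-frames when the input is a $\kappa$-frame), with (b), which shows conversely that each such $B$ is isomorphic to $\kappa\@I_\lambda(B_\kappa)$; in the frame case one notes that $\kappa$-coherence makes $B_\kappa$ a sub-$\kappa$-frame and that the same distributivity computation as in (a) shows $\bigvee$ preserves finite meets, so the isomorphism is one of $\lambda$-frames.

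There is no conceptual obstacle here; the main point requiring care is the bookkeeping of arities---repeatedly checking that unions of $<\kappa$-, $<\mu$-, or $<\lambda$-many index sets or generating sets stay of the same size, which is where regularity is used, and that joins and meets formed inside $B_\kappa$ agree with those formed in $B$. The one genuine idea is to feed the tautological cover $D = \bigvee_{s \in S} \down s$ into the compactness hypothesis, thereby converting $\mu$-compactness of $D$ into a bound on the number of its generators, together with the observation that intersections of $\kappa$-ideals are governed by $\kappa$-frame distributivity.
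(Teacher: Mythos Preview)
Your argument is correct and complete. The paper itself does not actually prove this proposition: it simply declares ``The following is routine; see \cite[1.4]{Mkfrm}'' and moves on. What you have written is exactly the routine verification being deferred to, carried out carefully---the tautological cover $D = \bigvee_{s \in S} \down s$ to convert $\mu$-compactness into $\mu$-generatedness, the regularity bookkeeping for the converse, the distributivity computation for closure of $\kappa\@I_\mu(A)$ under finite meets, and the adjunction-counit description of $\bigvee$ in (b) together with order-reflection via $\kappa$-compactness. There is nothing to compare approaches on here, since the paper supplies none; your write-up would serve perfectly well as the omitted proof.
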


\begin{remark}
\label{rmk:frm-idl-coh-free}
For $\kappa \le \mu \le \lambda$, whether a $\lambda$-frame $B$ is $\kappa$-coherent can depend on whether $B$ is regarded as a $\lambda$-frame or as a $\mu$-frame.  For example, if $\mu = \kappa$, clearly every $\kappa$-frame is $\kappa$-coherent, whereas not every $\lambda$-frame is $\kappa$-coherent.

What is true is that the notion of $\kappa$-coherent frame is preserved by the \emph{free} functor $\mu\@I_\lambda : \mu\!{Frm} -> \lambda\!{Frm}$ (since $\mu\@I_\lambda \circ \kappa\@I_\mu \cong \kappa\@I_\lambda$), rather than the forgetful functor.
\end{remark}

The categorical meaning of \cref{thm:vlat-cptbasis} is

\begin{corollary}
\label{thm:frm-idl-fulliso}
For $\kappa \le \lambda$, the free functors $\kappa\@I_\lambda : \!{\kappa{\bigvee}Lat} -> \!{\lambda{\bigvee}Lat}$ and $\kappa\@I_\lambda : \!{\kappa Frm} -> \!{\lambda Frm}$ are full on isomorphisms.
\end{corollary}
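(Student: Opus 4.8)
\emph{Proof proposal.} Being \emph{full on isomorphisms}, in the sense of \cref{sec:pres-adj}, means that every isomorphism $\phi \colon \kappa\@I_\lambda(A) \cong \kappa\@I_\lambda(B)$ in $\!{\lambda{\bigvee}Lat}$ (resp.\ $\!{\lambda Frm}$) is of the form $\kappa\@I_\lambda(\psi)$ for an isomorphism $\psi \colon A \cong B$; since $\kappa\@I_\lambda$ is faithful (its unit $\down$ being an embedding), such $\psi$ is automatically unique, so only existence is at issue. The plan is to extract $\psi$ by restricting $\phi$ to the canonically embedded copies of $A$ and $B$ inside the free structures --- namely the posets of $\kappa$-compact elements --- and then to verify that the induced map really is $\phi$ using the universal property of $\kappa\@I_\lambda$.

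First, note that any isomorphism $\phi$ of $\lambda$-$\bigvee$-lattices preserves $\kappa$-compactness: if $\phi(x) \le \bigvee C'$ for a $\lambda$-ary $C' \subseteq \kappa\@I_\lambda(B)$, then applying $\phi^{-1}$ (which likewise preserves order and $\lambda$-ary joins) gives $x \le \bigvee \phi^{-1}(C')$, hence a $\kappa$-ary subcover whose $\phi$-image is a $\kappa$-ary subcover of $C'$. So $\phi$ restricts to an order-isomorphism $\kappa\@I_\lambda(A)_\kappa \cong \kappa\@I_\lambda(B)_\kappa$. By \cref{thm:vlat-cptbasis}(a) we have $\kappa\@I_\lambda(A)_\kappa = \kappa\@I_\kappa(A) = \down(A)$, and $\down$ corestricts to an isomorphism $A \cong \down(A)$ of $\kappa$-$\bigvee$-lattices: it is the adjunction unit, hence $\kappa$-join-preserving, and $\down(A)$ is closed under $\kappa$-ary joins in $\kappa\@I_\lambda(A)$ because $A$ has them and $\down$ preserves them. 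In the frame case $\down(A)$ is moreover closed under finite meets --- that is, $\kappa\@I_\lambda(A)$ is $\kappa$-coherent --- so $\down$ is then an isomorphism of $\kappa$-frames; similarly for $B$. Composing, I obtain
\[
\psi \colon A \xrightarrow{\ \down\ } \kappa\@I_\lambda(A)_\kappa \xrightarrow{\ \phi\ } \kappa\@I_\lambda(B)_\kappa \xrightarrow{\ \down^{-1}\ } B
\]
in $\!{\kappa{\bigvee}Lat}$ (resp.\ $\!{\kappa Frm}$).

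Finally I check $\kappa\@I_\lambda(\psi) = \phi$. By the construction of $\psi$ we have $\down_B \circ \psi = \phi \circ \down_A$ as homomorphisms $A \to \kappa\@I_\lambda(B)$; but $\kappa\@I_\lambda(\psi)$ is by definition the unique homomorphism in $\!{\lambda{\bigvee}Lat}$ (resp.\ $\!{\lambda Frm}$) with $\kappa\@I_\lambda(\psi) \circ \down_A = \down_B \circ \psi$. Since $\down_A$ exhibits $\kappa\@I_\lambda(A)$ as the free such object on $A$, a homomorphism out of $\kappa\@I_\lambda(A)$ is determined by its composite with $\down_A$; as $\phi$ is a homomorphism with $\phi \circ \down_A = \down_B \circ \psi = \kappa\@I_\lambda(\psi) \circ \down_A$, we conclude $\phi = \kappa\@I_\lambda(\psi)$. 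The whole argument is bookkeeping around \cref{thm:vlat-cptbasis}; the only delicate point is ensuring the restriction of $\phi$ to $\kappa$-compacts is an isomorphism in the \emph{structured} sense (of $\kappa$-$\bigvee$-lattices, resp.\ $\kappa$-frames, not merely of posets), which is exactly what the $\kappa$-compactly-based, resp.\ $\kappa$-coherent, conclusions of that proposition deliver.
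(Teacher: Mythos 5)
Your proof is correct and takes essentially the same approach as the paper's one-line argument: restrict the isomorphism to the $\kappa$-compact elements, identify these with $A$ and $B$ via \cref{thm:vlat-cptbasis}, and recover $\phi$ from the universal property. You have simply spelled out the details (preservation of $\kappa$-compactness under isomorphism, and that the restriction is structured, not merely order-theoretic) that the paper leaves implicit.
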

\begin{proof}
Every isomorphism $\kappa\@I_\lambda(A) -> \kappa\@I_\lambda(B)$ is induced by its restriction to $A \cong \kappa\@I_\lambda(A)_\kappa -> \kappa\@I_\lambda(A)_\kappa \cong B$.
\end{proof}

Recall from \cref{sec:frm-cat} that $\!{\kappa{\bigvee}Lat}_\lambda \subseteq \!{\kappa{\bigvee}Lat}$ and $\!{\kappa Frm}_\lambda \subseteq \!{\kappa Frm}$ denote the full subcategories of $\lambda$-presented structures.
Clearly, free functors preserve $\lambda$-presentability.
Furthermore, a $\kappa$-frame is $\lambda$-presented (resp., $\lambda$-generated) as a $\kappa$-frame iff it is so as a $\kappa$-$\bigvee$-lattice.
For $\lambda$-generated, this is because if $A$ is $\lambda$-generated as a $\kappa$-frame by $B \subseteq A$, then it is $\lambda$-generated as a $\kappa$-$\bigvee$-lattice by the closure of $B$ under finite meets.
For $\lambda$-presented, see \cref{thm:frm-pres-vlat} below.

The categories $\!{\kappa Frm}_\lambda$, as $\kappa, \lambda$ vary, are related as in the following diagram in which $\kappa \le \lambda$ (similarly for $\!{\kappa{\bigvee}Lat}_\lambda$):
\begin{equation*}
\begin{tikzcd}
\dotsb
    \rar[shift left=2] &
\kappa\!{Frm}
    \lar[shift left=2, right adjoint]
    \rar[shift left=2, "{\kappa\@I_\lambda}"] &
\lambda\!{Frm}
    \lar[shift left=2, right adjoint]
    \rar[shift left=2] &
\dotsb
    \lar[shift left=2, right adjoint]
    \rar[shift left=2] &
\!{Frm}
    \lar[shift left=2, right adjoint]
\\
&
\vdots
    \uar[phantom, "\subseteq"{sloped}] &
\vdots
    \uar[phantom, "\subseteq"{sloped}] &
|[xscale=-1]|\ddots &
\vdots
    \uar[phantom, "\subseteq"{sloped}]
\\
\dotsb
    \rar[shift left=2] &
\kappa\!{Frm}_\lambda
    \uar[phantom, "\subseteq"{sloped}]
    \rar[shift left=2, "{\kappa\@I_\lambda}"] &
\lambda\!{Frm}_\lambda
    \uar[phantom, "\subseteq"{sloped}]
    \rar[shift left=2, "\simeq"'] &
\dotsb
    \rar[shift left=2, "\simeq"'] &
\!{Frm}_\lambda
    \uar[phantom, "\subseteq"{sloped}]
\\
\dotsb
    \rar[shift left=2] &
\kappa\!{Frm}_\kappa
    \uar[phantom, "\subseteq"{sloped}]
    \rar[shift left=2, "{\kappa\@I_\lambda}", "\simeq"'] &
\lambda\!{Frm}_\kappa
    \uar[phantom, "\subseteq"{sloped}]
    \rar[shift left=2, "\simeq"'] &
\dotsb
    \rar[shift left=2, "\simeq"'] &
\!{Frm}_\kappa
    \uar[phantom, "\subseteq"{sloped}]
\\
|[xscale=-1]|
\ddots
&
\vdots
    \uar[phantom, "\subseteq"{sloped}] &
\vdots
    \uar[phantom, "\subseteq"{sloped}] &
&
\vdots
    \uar[phantom, "\subseteq"{sloped}]
\end{tikzcd}
\end{equation*}
Note that the free functors in each row stabilize past the diagonal.
This is due to the following simple fact, which says that in $\kappa$-generated $\kappa$-$\bigvee$-lattices, $\kappa$-ary joins already determine arbitrary joins.
Thus, ``$\kappa$-presented $\kappa$-frame'' and ``$\kappa$-presented frame'' mean exactly the same thing.

\begin{proposition}
\label{thm:frm-idl-kgen}
Let $A$ be a $\kappa$-generated $\kappa$-$\bigvee$-lattice.
Then every $\kappa$-ideal in $A$ is principal, i.e., $\down : A \cong \kappa\@I(A)$, whence $A$ is a $\bigvee$-lattice, and $A = A_\kappa$.

Thus, for $\kappa \le \lambda$, the free/forgetful adjunctions restrict to equivalences of categories
\begin{align*}
\!{\kappa{\bigvee}Lat}_\kappa \simeq \!{\lambda{\bigvee}Lat}_\kappa, &&
\!{\kappa Frm}_\kappa \simeq \!{\lambda Frm}_\kappa
\end{align*}
(more generally, between the full subcategories of $\kappa$-generated objects).
\end{proposition}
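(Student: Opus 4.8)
The plan is to prove first the key assertion that $\down \colon A -> \kappa\@I(A)$ is an isomorphism, and then to read off from it both the remaining structural claims and the equivalences of categories. Fix a $\kappa$-ary generating set $G \subseteq A$, so that $A$ is the closure of $G$ under $\kappa$-ary joins. Since $|G| < \kappa$, \emph{every} subset of $G$ is $\kappa$-ary, so in fact each $a \in A$ already has the form $a = \bigvee S$ for some $S \subseteq G$. Now let $D \in \kappa\@I(A)$ be a $\kappa$-ideal (a lower set closed under $\kappa$-ary joins) and set $a := \bigvee(D \cap G) \in D$. On the one hand $\down(a) \subseteq D$, as $D$ is a lower set containing $a$; on the other, for $b \in D$, writing $b = \bigvee S$ with $S \subseteq G$, each $s \in S$ lies below $b \in D$, hence $s \in D \cap G$ and $s \le a$, so $b \le a$. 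Thus $D = \down(a)$: every $\kappa$-ideal is principal. Since $\down$ is always an order-embedding (as recalled above), it is therefore an order-isomorphism $A \cong \kappa\@I(A)$, and so --- joins and finite meets being order-theoretic --- an isomorphism in $\!{\kappa{\bigvee}Lat}$, and in $\!{\kappa Frm}$ when $A$ is a $\kappa$-frame (note that closing a $\kappa$-ary generating set under finite meets keeps it $\kappa$-ary, so the hypothesis passes to the $\kappa$-$\bigvee$-lattice reduct). As $\kappa\@I(A)$ is a $\bigvee$-lattice, and the free frame on $A$ when $A$ is a $\kappa$-frame, it follows that $A$ itself carries a compatible $\bigvee$-lattice, resp.\ frame, structure extending its given operations; and $A = A_\kappa$, since for a $\kappa$-ideal $D = \down(d)$ with $a \le \bigvee D = d$ we have $a \in D$ outright.

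For the equivalences, note that $\down$ is exactly the unit of the adjunction between $\kappa\@I_\lambda$ and the forgetful functor, and that $\kappa\@I_\lambda(A) = \down(A)$ whenever $A$ is $\kappa$-generated (all $\kappa$-ideals being principal, hence $\lambda$-generated); so the first part says this unit is an isomorphism on $\kappa$-generated objects. For the counit: if $B \in \!{\lambda{\bigvee}Lat}$ is $\kappa$-generated by a $\kappa$-ary $G$, then every $b \in B$ is $\kappa$-compact --- writing $b = \bigvee S$ and, given a $\lambda$-ary cover $b \le \bigvee C$, each $c \in C$ as $\bigvee S_c$ with $S, S_c \subseteq G$, one obtains $b \le \bigvee(\bigcup_{c \in C} S_c)$ with $\bigcup_{c \in C} S_c \subseteq G$ still $\kappa$-ary, so picking $c(t) \in C$ with $t \le c(t)$ for each $t$ in that union yields a $\kappa$-ary subcover $\{c(t)\}_t \subseteq C$ of $b$. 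Hence $B = B_\kappa$, so $B$ is $\kappa$-compactly based and, being the closure of $G \subseteq B_\kappa$ under $\kappa$-ary joins, $\kappa$-generated qua $\kappa$-$\bigvee$-lattice, whence by \cref{thm:vlat-cptbasis}(b) the counit $\kappa\@I_\lambda(B) = \kappa\@I_\lambda(B_\kappa) -> B$ is an isomorphism. The free and forgetful functors therefore restrict to the full subcategories of $\kappa$-generated objects on each side (that $\kappa\@I_\lambda$ does is clear; for the forgetful functor this is the displayed description of elements of $B$), where unit and counit are isomorphisms, i.e.\ they form an adjoint equivalence --- this is the ``more generally'' clause. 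Finally, since free functors preserve $\kappa$-presentability and, for $\kappa$-generated objects, $\kappa$-presentability does not depend on the ambient $\lambda$ (cf.\ \cref{thm:frm-pres-vlat}), this equivalence cuts down to the stated $\!{\kappa{\bigvee}Lat}_\kappa \simeq \!{\lambda{\bigvee}Lat}_\kappa$ and $\!{\kappa Frm}_\kappa \simeq \!{\lambda Frm}_\kappa$. The $\kappa$-frame case runs identically throughout, carrying the finite meets along at each step.

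The only step that requires genuine care is the verification that the elements of a $\kappa$-generated $\lambda$-$\bigvee$-lattice are $\kappa$-compact: this is precisely where one exploits that the generating set $G$ is $\kappa$-ary, so that pulling an arbitrary $\lambda$-ary cover of an element back through the generators occurring in its terms leaves a subcover indexed by a subset of $G$, hence $\kappa$-ary. Everything else is bookkeeping around the slogan that joins and meets are determined by the order, so that the order-isomorphism $\down$ automatically transports all the algebraic structure present.
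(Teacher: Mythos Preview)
Your proof is correct and follows essentially the same approach as the paper's. The paper's proof is much terser: it establishes $D = \down \bigvee(D \cap B)$ in one line (your argument with $G$ in place of $B$), then for the equivalences simply notes that the unit is an isomorphism on $\kappa$-generated objects (hence the left adjoint is full and faithful there) and that essential surjectivity also follows from the first statement. You unpack this last step more explicitly by verifying that every element of a $\kappa$-generated $\lambda$-$\bigvee$-lattice is $\kappa$-compact and then invoking \cref{thm:vlat-cptbasis}(b) to get the counit isomorphism; this is equivalent content, just spelled out. One minor quibble: your ``cf.\ \cref{thm:frm-pres-vlat}'' at the end is not quite the right reference (that result compares $\kappa$-frame and $\kappa$-$\bigvee$-lattice presentations, not presentations across different $\lambda$), but the claim you need is immediate anyway, since any $\lambda$-$\bigvee$-lattice term over a $\kappa$-ary generating set $G$ flattens to $\bigvee S$ for some $S \subseteq G$, hence a $\kappa$-ary term.
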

\begin{proof}
For the first statement: letting $B \subseteq A$ be a $\kappa$-ary generating set, every $\kappa$-ideal $D \subseteq A$ is equal to the principal ideal $\down \bigvee (D \cap B)$.
For the second statement: the first statement implies that the unit $\down$ of the adjunction is an isomorphism on $\kappa$-presented (resp., $\kappa$-generated) objects, thus the left adjoint $\kappa\@I_\lambda$ is full and faithful on the full subcategory of such objects; it is also essentially surjective onto the full subcategory of $\kappa$-presented (resp., $\kappa$-generated) objects by the first statement.
\end{proof}

We have the obvious order-duals of the above notions for the categories $\!{\kappa{\bigwedge}Lat}$ and $\!{\kappa Cofrm}$.
An \defn{upper set} is an upward-closed set; a \defn{$\kappa$-filter} is an upper set which is also \defn{$\kappa$-codirected}, or equivalently in a $\kappa$-$\bigwedge$-lattice, closed under $\kappa$-ary meets.
For $\kappa \le \lambda$, the set of $\lambda$-generated $\kappa$-filters, ordered by \emph{reverse} inclusion, yields the free functor $\!{\kappa{\bigwedge}Lat} -> \!{\lambda{\bigwedge}Lat}$, with unit given by the principal filter embedding
$\up a := \{b \mid b \ge a\}$.

\subsection{Quotients}
\label{sec:frm-quot}

In this subsection, we recall some basic facts about quotients of frames, Boolean algebras, and related structures.
This material is standard; in particular, see \cite[II~\S2]{Jstone} for the description of frame quotients via nuclei.
However, our account takes a somewhat different viewpoint than what is usually found in the literature, focusing more on the ordered aspects of quotients, as well as on the general context of joins before also considering finite meets.

Recall that quotients of an algebraic structure $A$ are determined by \defn{congruences} $\sim$ on $A$, i.e., equivalence relations which are also subalgebras of $A^2$.
Given a homomorphism $f : A -> B$, its \defn{kernel} $\ker(f) := \{(a, a') \in A^2 \mid f(a) = f(a')\}$ is a congruence, whose quotient is isomorphic to the image of $f$.
Conversely, any congruence $\sim$ on $A$ is the kernel of its quotient map $A ->> A/{\sim}$.
This gives a bijection between congruences on $A$ and equivalence classes of surjective homomorphisms from $A$.
See \cref{sec:cat-alg} for details.

For ordered algebraic structures, in the sense of \cref{sec:cat-ord}, we have the following ordered analogs of these notions.
The \defn{order-kernel} of a homomorphism $f : A -> B$ is
\begin{align*}
\oker(f) := \{(a, a') \in A^2 \mid f(a) \le f(a')\},
\end{align*}
and is an \defn{order-congruence} on $A$, i.e., a preorder $\lesim$ which contains the partial order $\le_A$ on $A$ and is also a subalgebra of $A^2$.
Given an order-congruence $\lesim$, we always let ${\sim} := {\lesim} \cap {\lesim}^{-1}$ denote its symmetric part, which is a congruence such that $\lesim$ descends to a partial order on $A/{\sim}$ making $\lesim$ the order-kernel of the quotient map $A ->> A/{\sim}$.
This again gives a bijection between order-congruences on $A$ and equivalence classes of surjective homomorphisms from $A$.
See \cref{sec:cat-ord} for details.

For a $\kappa$-$\bigvee$-lattice $A$, it is easily seen that a binary relation ${\lesim} \subseteq A^2$ is an order-congruence iff
\begin{itemize}
\item  $\lesim$ is a preorder containing $\le_A$; and
\item  $\kappa$-ary joins $\bigvee_i a_i$ in $A$ are also joins with respect to $\lesim$, i.e., if $a_i \lesim b$ for every $i$, then $\bigvee_i a_i \lesim b$.
\end{itemize}
In terms of the second-coordinate fibers of ${\lesim} \subseteq A^2$, i.e., the \defn{principal $\lesim$-ideals}
\begin{align*}
\simdown a := \{b \in A \mid b \lesim a\},
\end{align*}
the above conditions are equivalent to
\begin{itemize}
\item  each $\simdown a$ is a $\kappa$-ideal, and $\simdown : A -> \kappa\@I(A)$ is monotone;
\item  $\down \le \simdown$ (i.e., $\down a \subseteq \simdown a$ for each $a$);
\item  $\simdown_* \circ \simdown \le \simdown$ (i.e., $\bigcup_{b \in \simdown a} \simdown b \subseteq \simdown a$ for each $a$), where
\begin{align*}
\simdown_* : \kappa\@I(A) &--> \kappa\@I(A) \\
D &|--> \bigcup_{a \in D} \simdown a
\end{align*}
is the $\kappa$-directed-join-preserving extension of $\simdown : A -> \kappa\@I(A)$ along $\down$.
\end{itemize}
Call a map $\simdown : A -> \kappa\@I(A)$ obeying these conditions a \defn{$\kappa$-ideal closure operator} on $A$.
These are in bijection with order-congruences $\lesim$ on $A$, with $\lesim$ recovered as $\oker(\simdown)$.

When $\kappa = \infty$, for a $\bigvee$-lattice $A$, we have $\infty\@I(A) \cong A$, so we may equivalently regard $\simdown$ as a monotone map $A -> A$ which is a \defn{closure operator}, i.e., satisfying
\begin{align*}
1_A \le \simdown, &&
\simdown \circ \simdown \le \simdown \quad(\text{thus } \simdown \circ \simdown = \simdown).
\end{align*}
Each congruence class $[a]$ (for the order-congruence corresponding to $\simdown$) has a greatest element, namely $\simdown a$, so that the quotient $\bigvee$-lattice $A/{\sim}$ may be isomorphically realized as the image $\simdown(A)$ of $\simdown : A -> A$, equivalently its set of fixed points, with join in $\simdown(A)$ given by join in $A$ followed by applying $\simdown$.
The quotient map $A ->> A/{\sim}$ is then identified with $\simdown : A ->> \simdown(A)$.

For $\kappa < \infty$, we may instead consider the $\kappa$-directed-join-preserving extension $\simdown_* : \kappa\@I(A) -> \kappa\@I(A)$ of $\simdown$, which is a closure operator on $\kappa\@I(A)$, whose fixed points are those $\kappa$-ideals $D \in \kappa\@I(A)$ which are also $\lesim$-downward-closed.
The quotient $A/{\sim}$ may be identified with the subset $\simdown(A) \subseteq \simdown_*(\kappa\@I(A))$ consisting of the principal $\lesim$-ideals, i.e., the sets $\simdown a$ for $a \in A$; these are the $\kappa$-compact elements in $\simdown_*(\kappa\@I(A))$.
The quotient map $A ->> A/{\sim}$ is again identified with $\simdown$.

For a $\kappa$-frame $A$, a $\kappa$-$\bigvee$-lattice order-congruence ${\lesim} \subseteq A^2$ is a $\kappa$-frame order-congruence iff ${\lesim}$ is closed under binary meets, which is easily seen to be equivalent to either of:
\begin{itemize}
\item  binary meets in $A$ are meets with respect to $\lesim$; or equivalently,
\item  $a \lesim b \implies c \wedge a \lesim c \wedge b$ (we say that $\lesim$ is \defn{$\wedge$-stable}).
\end{itemize}
In terms of the corresponding $\kappa$-ideal closure operator $\simdown : A -> \kappa\@I(A)$, this means
\begin{itemize}
\item  $\simdown : A -> \kappa\@I(A)$ preserves binary meets; or equivalently,
\item  $a \wedge \simdown b \subseteq \simdown(a \wedge b)$.
\end{itemize}
We call such $\simdown$ a \defn{$\kappa$-ideal nucleus}.
When $\kappa = \infty$, regarding $\simdown : A -> A$ as before, $\simdown$ is called a \defn{nucleus}.

An arbitrary binary relation ${\prec} \subseteq A^2$ generates an order-congruence, which may be obtained by closing $\prec$ under the above conditions via the usual transfinite recursion.
The following results say that it suffices to close under a subset of the conditions:

\begin{proposition}
\label{thm:frm-cong-ltrans}
Let $A$ be a $\kappa$-$\bigvee$-lattice, $\prec$ be a binary relation on $A$.
Let ${\lesim} \subseteq A^2$ be the smallest binary relation which is
(i)~reflexive, and satisfies
(ii)~for $<\kappa$-many $a_i \lesim b$, we have $\bigvee_i a_i \lesim b$,
(iii)~$a \le b \lesim c \implies a \lesim c$, and
(iv)~$a \prec b \lesim c \implies a \lesim c$.
Then $\lesim$ is transitive, hence is the order-congruence generated by $\prec$.
\end{proposition}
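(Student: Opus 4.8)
The plan is to exploit the fact that none of the closure rules (ii)--(iv) touches the right-hand coordinate, so that the conditions defining $\lesim$ decouple over that coordinate. For a binary relation $R$ on $A$ and $c \in A$, write $Rc := \{a \in A \mid (a,c) \in R\}$ for the second-coordinate fibre. Then (i) says $c \in {\lesim}c$; (ii) says ${\lesim}c$ is closed under $<\kappa$-ary joins; (iii) says ${\lesim}c$ is a lower set; and (iv) says ${\lesim}c$ is closed under $\prec$-predecessors, i.e.\ $b \in {\lesim}c$ and $a \prec b$ imply $a \in {\lesim}c$. Since these constraints on the various fibres do not interact, the least relation $\lesim$ satisfying (i)--(iv) is obtained fibrewise: ${\lesim}c$ is the least subset $S_c \subseteq A$ that contains $c$, is a lower set, and is closed under $<\kappa$-ary joins and under $\prec$-predecessors. (Such a least $S_c$ exists as the intersection of all subsets with these properties, which is again such a subset; assembling the $S_c$ into $\bigcup_{c}(S_c \times \{c\})$ manifestly gives the least relation with (i)--(iv).)

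Granting this description, transitivity becomes a one-line minimality argument, which I expect to be the only substantive point. Given $a \lesim b$ and $b \lesim c$, that is, $a \in S_b$ and $b \in S_c$: the set $S_c$ is a lower set closed under $<\kappa$-ary joins and under $\prec$-predecessors, and it contains $b$, so by the minimality of $S_b$ among subsets of $A$ with exactly these closure properties, $S_b \subseteq S_c$; hence $a \in S_c$, i.e.\ $a \lesim c$.

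Finally I would record why the now-transitive $\lesim$ is the order-congruence generated by $\prec$. It is reflexive by (i) and transitive as just shown, hence a preorder; it contains $\le_A$, since $a \le b$ together with $b \lesim b$ (from (i)) yields $a \lesim b$ by (iii); and $<\kappa$-ary joins in $A$ remain joins for $\lesim$, one direction being (ii) and the other following from $\le_A \subseteq {\lesim}$. Thus $\lesim$ is an order-congruence, and it contains $\prec$ since $a \prec b$ with $b \lesim b$ gives $a \lesim b$ by (iv). Conversely, any order-congruence $\lesim'$ with $\prec \subseteq {\lesim'}$ is a preorder containing both $\le_A$ and $\prec$ for which $<\kappa$-ary joins are joins, and one checks immediately that such a relation satisfies (i)--(iv); hence ${\lesim} \subseteq {\lesim'}$, so $\lesim$ is the least order-congruence containing $\prec$.

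There is no real obstacle here beyond spotting the fibrewise decoupling. The naive alternative — a transfinite induction on how $a \lesim b$ was generated, with $b \lesim c$ held fixed, absorbing each generating rule in turn — also works but requires more bookkeeping; passing to the fibres $S_c$ reduces the crucial step to the trivial inclusion $S_b \subseteq S_c$ and is the cleanest route.
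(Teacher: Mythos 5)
Your proof is correct. The paper offers two arguments: an induction on the derivation of $a \lesim b$, and an alternative that defines $a \lesim' b$ to mean ``$b \lesim c \implies a \lesim c$ for all $c$'' and checks that $\lesim'$ satisfies (i)--(iv), hence contains $\lesim$. Your fibrewise argument is essentially that second one in different clothing: your $S_c$ is exactly the principal $\lesim$-ideal $\simdown c$, the statement that $\lesim'$ obeys (i)--(iv) is the statement that $\bigcap_{c \,:\, b \in S_c} S_c$ contains $b$ and has the three closure properties, and your minimality inclusion $S_b \subseteq S_c$ for $b \in S_c$ is precisely ${\lesim} \subseteq {\lesim'}$. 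What you add is making the decoupling explicit up front --- that (i)--(iv) constrain each second-coordinate fibre independently, so the least relation is assembled fibrewise from the least closed sets $S_c$; the paper only records this description of the fibres afterwards, as \cref{thm:frm-nuc-ltrans}, deriving it \emph{from} the proposition. Proving the fibrewise description first (which indeed does not require transitivity) and then reading off transitivity as $S_b \subseteq S_c$ is a harmless reordering, and arguably makes the key step more transparent. Your closing verification that the resulting preorder is the order-congruence generated by $\prec$ matches the paper's definitions and is fine.
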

\begin{proof}
Given $a \lesim b \lesim c$, one may induct on the derivation of $a \lesim b$ to show $a \lesim c$.

Alternatively, let $\lesim'$ be the binary relation defined by $a \lesim' b$ iff for every $c \in A$, if $b \lesim c$, then $a \lesim c$.
Then one easily checks that $\lesim'$ obeys (i--iv), hence contains $\lesim$.
\end{proof}

The significance of conditions (i--iv) in this result is that they only involve $\lesim$ with a fixed right-hand side, hence translate to the following closure conditions on the fibers $\simdown a$:

\begin{corollary}
\label{thm:frm-nuc-ltrans}
Let $A$ be a $\kappa$-$\bigvee$-lattice, $\prec$ be a binary relation on $A$, $\lesim$ be the order-congruence generated by $\prec$, and $\simdown : A -> \kappa\@I(A)$ be the corresponding $\kappa$-ideal closure operator with $\kappa$-directed-join-preserving extension $\simdown_* : \kappa\@I(A) -> \kappa\@I(A)$.
Then for each $a \in A$, the principal $\lesim$-ideal $\simdown a \subseteq A$ is the smallest $\kappa$-ideal containing $a$ which is also $\prec$-downward-closed.
Thus more generally, a $\kappa$-ideal $D \in \kappa\@I(A)$ is a fixed point of $\simdown_*$, i.e., $\lesim$-downward-closed, iff it is $\prec$-downward-closed.
\qed
\end{corollary}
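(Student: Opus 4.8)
The plan is to read off the corollary from \cref{thm:frm-cong-ltrans} by a fibre-wise reinterpretation of its conditions (i)--(iv). First I would note that each of these four conditions constrains the relation $\lesim$ with a \emph{fixed} right-hand side, so that each is equivalent to a property of an individual principal $\lesim$-ideal $\simdown c = \{b \in A \mid b \lesim c\}$, with no coupling between distinct fibres: reflexivity (i) says $c \in \simdown c$; condition (iii) says $\simdown c$ is a lower set; condition (ii) says $\simdown c$ is closed under $\kappa$-ary joins, so (ii) and (iii) together say $\simdown c$ is a $\kappa$-ideal; and condition (iv) says $\simdown c$ is $\prec$-downward-closed. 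Since the conditions decouple across fibres, the \emph{smallest} relation satisfying all of them is precisely the one whose fibre over each $c$ is the smallest $\kappa$-ideal containing $c$ that is $\prec$-downward-closed. Invoking \cref{thm:frm-cong-ltrans}, which identifies that smallest relation with the order-congruence generated by $\prec$, gives the first assertion.

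For the second assertion I would use the correspondence recalled in \cref{sec:frm-quot} between $\lesim$ and its $\kappa$-ideal closure operator $\simdown$, under which, for a $\kappa$-ideal $D$, the set $\simdown_*(D) = \bigcup_{a \in D} \simdown a$ is exactly the $\lesim$-downward-closure of $D$; hence $D$ is a fixed point of $\simdown_*$ iff $D$ is $\lesim$-downward-closed (this is the ``i.e.''). The equivalence with being $\prec$-downward-closed is then immediate: since $a \prec b$ forces $a \lesim b$ (apply (iv) with (i)), any $\lesim$-downward-closed set is $\prec$-downward-closed; conversely, if a $\kappa$-ideal $D$ is $\prec$-downward-closed then, for each $a \in D$, the set $\simdown a$ --- being by the first part the smallest $\kappa$-ideal containing $a$ that is $\prec$-downward-closed --- is contained in $D$, so $D$ is $\lesim$-downward-closed.

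I do not anticipate a genuine obstacle here; the one point deserving explicit care is the fibre-wise decomposition in the first paragraph, i.e., verifying that nothing in (i)--(iv) can force an element into $\simdown c$ on account of what lies in $\simdown c'$ for some $c' \ne c$, so that ``smallest relation closed under (i)--(iv)'' really does decompose as ``fibre over $c$ equals smallest $\prec$-downward-closed $\kappa$-ideal through $c$''. This is visible from the syntactic shape of the conditions, each of which carries the same right-hand variable throughout, but it is worth stating. (If one preferred not to route through the explicit description of $\lesim$ in \cref{thm:frm-cong-ltrans}, an alternative would be to check directly that the fibre-wise-defined relation is an order-congruence containing $\prec$ and minimal as such, but going via \cref{thm:frm-cong-ltrans} is shorter.)
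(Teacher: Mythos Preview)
Your proposal is correct and takes essentially the same approach as the paper: the paper's proof is just \qed, with the sentence immediately preceding the corollary noting that conditions (i)--(iv) of \cref{thm:frm-cong-ltrans} only involve $\lesim$ with a fixed right-hand side and hence translate to closure conditions on the fibres $\simdown a$, which is exactly your fibre-wise decomposition. Your second paragraph spells out the ``more generally'' part in more detail than the paper bothers to, but the argument is the intended one.
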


Concerning $\kappa$-frame order-congruences, we have

\begin{proposition}
\label{thm:frm-cong-meet}
Let $A$ be a $\kappa$-frame, $\prec$ be a $\wedge$-stable binary relation on $A$.
Then the $\kappa$-$\bigvee$-lattice order-congruence $\lesim$ generated by $\prec$ is still $\wedge$-stable, hence is also the $\kappa$-frame order-congruence generated by $\prec$.
\end{proposition}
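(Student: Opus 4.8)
The plan is to use the explicit description of $\lesim$ furnished by \cref{thm:frm-cong-ltrans}: $\lesim$ is the smallest binary relation on $A$ that is (i)~reflexive, (ii)~closed under $\kappa$-ary joins in the left coordinate, (iii)~satisfies $a \le b \lesim c \implies a \lesim c$, and (iv)~satisfies $a \prec b \lesim c \implies a \lesim c$. To prove $\lesim$ is $\wedge$-stable, I would introduce the auxiliary relation
\[
R := \{(a,b) \in A^2 \mid c \wedge a \lesim c \wedge b \text{ for every } c \in A\},
\]
so that $\wedge$-stability of $\lesim$ is exactly the inclusion $\lesim \subseteq R$. By the minimality clause of \cref{thm:frm-cong-ltrans}, it then suffices to verify that $R$ itself satisfies (i)--(iv), since $R$ is built out of $\lesim$ and the $\wedge$-stable relation $\prec$.

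Verifying (i) and (iii) for $R$ is routine, using monotonicity of $c \wedge (-)$ together with the corresponding properties of $\lesim$: reflexivity of $\lesim$ gives $c \wedge a \lesim c \wedge a$; and if $a \le b$ with $(b,c') \in R$ then $c \wedge a \le c \wedge b \lesim c \wedge c'$, so $(a,c') \in R$ by (iii) for $\lesim$. Verifying (iv) for $R$ is where the hypothesis that $\prec$ is $\wedge$-stable is used: if $a \prec b$ then $c \wedge a \prec c \wedge b$, so from $(b,c') \in R$ we obtain $c \wedge a \prec c \wedge b \lesim c \wedge c'$, hence $(a,c') \in R$ by (iv) for $\lesim$. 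The only step with real content is (ii): given a $\kappa$-ary family with $(a_i, b) \in R$ for all $i$, I would invoke the $\kappa$-frame distributive law to write $c \wedge \bigvee_i a_i = \bigvee_i (c \wedge a_i)$, and then apply (ii) for $\lesim$ to the family $(c \wedge a_i \lesim c \wedge b)_i$ to conclude $c \wedge \bigvee_i a_i \lesim c \wedge b$, i.e.\ $(\bigvee_i a_i, b) \in R$. This is precisely the point where $A$ being a $\kappa$-frame, and not merely a $\kappa$-$\bigvee$-lattice, is needed.

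Once $\lesim$ is known to be $\wedge$-stable, it is a $\kappa$-frame order-congruence by the equivalence recalled just before the statement; and it is the smallest such relation containing $\prec$, because any $\kappa$-frame order-congruence containing $\prec$ is a fortiori a $\kappa$-$\bigvee$-lattice order-congruence containing $\prec$, hence contains the generated one $\lesim$. This yields the ``hence'' clause. I do not expect a genuine obstacle: the whole argument hinges on choosing the right auxiliary relation $R$ and invoking minimality, rather than attempting a direct induction on derivations of $a \lesim b$ (which is also possible but bookkeeping-heavy); beyond that, distributivity does the work in (ii) and $\wedge$-stability of $\prec$ does the work in (iv).
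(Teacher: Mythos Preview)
Your proof is correct and takes essentially the same approach as the paper: the paper's proof reads ``By transfinite induction, or by checking that $a \lesim' b \coloniff \forall c \in A\, (c \wedge a \lesim c \wedge b)$ is a $\kappa$-$\bigvee$-lattice order-congruence containing $\prec$,'' and your auxiliary relation $R$ is exactly this $\lesim'$. The only cosmetic difference is that you verify (i)--(iv) from \cref{thm:frm-cong-ltrans} rather than the order-congruence axioms directly, which spares you from checking transitivity of $R$; both routes yield $\lesim \subseteq R$ by minimality.
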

\begin{proof}
By transfinite induction, or by checking that $a \lesim' b \coloniff \forall c \in A\, (c \wedge a \lesim c \wedge b)$ is a $\kappa$-$\bigvee$-lattice order-congruence containing $\prec$.
\end{proof}

For a $(\lambda, \kappa)$-frame $A$ where $\lambda > \omega$, the first characterization of $\kappa$-frame order-congruences above generalizes straightforwardly, i.e., $\lambda$-ary meets in $A$ should also be $\lesim$-meets.
Similarly, the corresponding $\kappa$-ideal closure operator $\simdown : A -> \kappa\@I(A)$ should preserve $\lambda$-ary meets.
However, these notions are not as useful, due to the lack of an analog of the $\wedge$-stability characterization.

By an \defn{upper (order-)congruence} on a $(\lambda, \kappa)$-frame $A$, we mean one generated by declaring $\top \sim a$ (equivalently, $\top \lesim a$) for all $a$ in some $U \subseteq A$; we will denote such a (order-)congruence by $\sim^U$ ($\lesim^U$).
Clearly, ${\sim^U} = {\sim^{U'}}$ where $U'$ is the $\lambda$-filter generated by $U$.
If $U$ is a $\lambda$-filter, it is \emph{not} necessarily the case that there is no bigger $U' \supseteq U$ with ${\sim^U} = {\sim^{U'}}$ (see \cref{rmk:frm-dpoly-bifrm-compl} below), unless $U$ is also a $\kappa$-filter (e.g., if $\lambda \ge \kappa$), in which case it is easily seen that
\begin{align*}
b \sim^U c  \iff  \exists a \in U\, (a \wedge b = a \wedge c), &&
b \lesim^U c  \iff  \exists a \in U\, (a \wedge b \le c).
\end{align*}
It follows that $U$ may be recovered from $\sim^U$ as the congruence class $[\top]$.
We call such $\sim^U$ a \defn{$\kappa$-filter congruence}, and call $A/{\sim^U}$ a \defn{$\kappa$-filterquotient} of $A$.
In particular, for a principal filter $\up a$, we have
$b \sim^{\up a} c \iff a \wedge b = a \wedge c$,
hence $A/{\sim^{\up a}}$ may be isomorphically realized as the principal ideal $\down a \subseteq A$ with quotient map $a \wedge (-) : A ->> \down a$.
We write ${\sim^a} := {\sim^{\up a}}$, and call $\down a$ the \defn{principal filterquotient} at $a$ when we are regarding it as the quotient by $\sim^a$.

For a $\kappa$-Boolean algebra $A$, owing to $a = b \iff a <-> b = \top$, every congruence is a $\kappa$-filter congruence, corresponding to the $\kappa$-filter $[\top] \subseteq A$.
Moreover, if $\sim$ is a $\kappa$-generated congruence, then $[\top]$ is a $\kappa$-generated $\kappa$-filter, hence a principal filter, namely $\up a$ for $a = \bigwedge_i (b_i <-> c_i)$ for some $\kappa$-ary set of generators $(b_i, c_i)$ for $\sim$.

\subsection{Products and covers}
\label{sec:frm-prod}

In this subsection, we recall a standard characterization of products of frames and Boolean algebras, via pairwise disjoint covers.
We then prove some preservation results for products as well as for general covers.

A \defn{$\kappa$-ary partition of $\top$} in a $\kappa$-frame $A$ is a $\kappa$-ary family of \defn{pairwise disjoint} elements $a_i \in A$, i.e., $a_i \wedge a_j = \bot$ for $i \ne j$, which \defn{cover} $\top$, i.e., $\bigvee_i a_i = \top$.

Given a $\kappa$-ary product $\prod_i A_i$ of $\kappa$-frames, the elements $\delta_i \in \prod_i A_i$ which are $\top$ in the $i$th coordinate and $\bot$ elsewhere form a $\kappa$-ary partition of $\top$.
Moreover, each projection $\pi_i : \prod_j A_j -> A_i$ descends to an isomorphism between the principal filterquotient $\down \delta_i$ and $A_i$.
Conversely, given any $\kappa$-frame $A$ with a $\kappa$-ary partition $(\delta_i)_i$ of $\top$, we have an isomorphism
\begin{align*}
A &\cong \prod_i \down \delta_i \\
a &|-> (\delta_i \wedge a)_i \\
\bigvee_i a_i &<-| (a_i)_i.
\end{align*}
This description of $\kappa$-ary products also applies to $\kappa$-Boolean algebras and $(\lambda, \kappa)$-frames (for $\lambda \ge \omega$).
Using this, we have

\begin{proposition}
\label{thm:frm-prod-pushout}
$\kappa$-ary products in $\!{\kappa Frm}, \!{\lambda\kappa Frm}, \!{\kappa Bool}$ are pushout-stable: given $<\kappa$-many objects $A_i$ in one of these categories, and a homomorphism $f : \prod_i A_i -> B$, letting
\begin{equation*}
\begin{tikzcd}
\prod_i A_i \dar["f"'] \rar["\pi_i"] &
A_i \dar["f_i"] \\
B \rar["g_i"'] &
B_i
\end{tikzcd}
\end{equation*}
be pushout squares, we have that the $g_i$ exhibit $B$ as an isomorphic copy of the product $\prod_i B_i$.
\end{proposition}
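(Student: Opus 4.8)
The plan is to reduce everything to the description of $\kappa$-ary products via partitions of $\top$ recalled above, together with the standard fact that the pushout of a quotient map is the quotient by the images of its defining relations.

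Write $\delta_i \in \prod_j A_j$ for the canonical $\kappa$-ary partition of $\top$, so that $\delta_i$ is $\top$ in coordinate $i$ and $\bot$ elsewhere. As recalled above, each projection $\pi_i : \prod_j A_j -> A_i$ is, up to post-composition with an isomorphism, the principal filterquotient map $\delta_i \wedge (-) : \prod_j A_j ->> \down\delta_i$; thus $\pi_i$ is the quotient of $\prod_j A_j$ by the (order-)congruence generated by the single relation $\top \lesim \delta_i$. First I would observe that the family $(f(\delta_i))_i$ is again a $\kappa$-ary partition of $\top$, now in $B$: since $f$ preserves binary meets we have $f(\delta_i) \wedge f(\delta_j) = f(\delta_i \wedge \delta_j) = \bot$ for $i \ne j$, and since $f$ preserves $\kappa$-ary joins we have $\bigvee_i f(\delta_i) = f\bigl(\bigvee_i \delta_i\bigr) = \top$ (in the $(\lambda,\kappa)$-frame case we use, as in the product description above, that $\lambda \ge \omega$, so that binary meets are available). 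Hence, by the description of $\kappa$-ary products above --- valid in all three categories $\!{\kappa Frm}$, $\!{\lambda\kappa Frm}$, $\!{\kappa Bool}$ --- the algebra $B$ is the product of its principal filterquotients $\down f(\delta_i)$, via the isomorphism $b |-> (f(\delta_i) \wedge b)_i$.

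Next I would identify the pushouts $B_i$. The pushout of a quotient map $C ->> C/{\sim}$, where $\sim$ is generated by a set $S$ of relations, along a homomorphism $h : C -> D$ is the quotient of $D$ by the congruence generated by the $h$-images of the relations in $S$; this is standard for algebraic categories, and the ordered analog (for order-congruences) is proved in the same way. Applying this with $C = \prod_j A_j$, the quotient map $\pi_i$, $S = \{\top \lesim \delta_i\}$, and $h = f$, we conclude that $B_i$ is the quotient of $B$ by the congruence generated by $\top \lesim f(\delta_i)$, that is, the principal filterquotient $\down f(\delta_i)$, with $g_i : B ->> B_i$ identified with $f(\delta_i) \wedge (-)$. (Here one checks routinely that $\down f(\delta_i)$ is again an object of the relevant category and $f(\delta_i) \wedge (-)$ a homomorphism of it --- meeting with a fixed element preserves all meets and joins that exist, and complements when these exist.) Combining this with the preceding paragraph, the tuple $(g_i)_i = \bigl(f(\delta_i) \wedge (-)\bigr)_i$ is precisely the isomorphism $B \cong \prod_i \down f(\delta_i) = \prod_i B_i$, which is exactly the assertion.

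I do not expect any step to be genuinely hard; the content is essentially bookkeeping, and the point requiring care is matching up the identifications --- of $\pi_i$ with a principal filterquotient map, of $B_i$ with a principal filterquotient of $B$, and of $g_i$ with $f(\delta_i) \wedge (-)$ --- so that the product decomposition of $B$ coincides on the nose with the tuple of pushout legs $(g_i)_i$, and not merely up to a further automorphism of $\prod_i B_i$. The only computations involved are the verifications just indicated that $\down f(\delta_i)$ lies in the relevant category and that $f(\delta_i) \wedge (-)$ is a morphism, both of which reduce to the fact that meeting with a fixed element preserves all joins and meets (and complements) that exist.
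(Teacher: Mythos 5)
Your proof is correct and follows essentially the same route as the paper's: identify $\pi_i$ with the principal filterquotient at $\delta_i$, observe that the $f(\delta_i)$ form a partition of $\top$ in $B$ so that $B \cong \prod_i \down f(\delta_i)$, and identify each pushout $B_i$ with $\down f(\delta_i)$ via the standard description of pushouts of quotient maps. The extra care you take in matching $g_i$ with $f(\delta_i) \wedge (-)$ on the nose is a reasonable point the paper leaves implicit, but there is no substantive difference.
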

\begin{proof}
Let $\delta_i \in \prod_i A_i$ be the partition of $\top$ defined above, so that each $A_i \cong \down \delta_i$.
Then the $f(\delta_i) \in B$ form a partition of $\top$, hence $B \cong \prod_i \down f(\delta_i)$.
Each $\down f(\delta_i)$ is the quotient of $B$ identifying $f(\delta_i)$ with $\top = f(\top)$, hence the quotient of $B$ by the image under $f$ of the congruence on $\prod_i A_i$ generated by $\delta_i \sim \top$ whose quotient is $A_i \cong \down \delta_i$; this exactly describes the pushout of $A_i$ across $f$.
\end{proof}

We next derive a presentation of a product from presentations of the factors.
More generally, given a $\kappa$-frame $A$ with a $\kappa$-ary cover $(a_i)_i$ of $\top$, we derive a presentation of $A$ from presentations of the principal filterquotients $\down a_i$.

For this purpose, it is convenient to introduce the category $\!{\lambda\kappa LFrm}$ of \defn{local $(\lambda, \kappa)$-frames}, meaning $(\lambda, \kappa)$-frames but possibly without a top element (thus, the operations are $\kappa$-ary joins and nonempty $\lambda$-ary meets).
For example, every $\kappa$-ideal in a $(\lambda, \kappa)$-frame $A$ is a local $(\lambda, \kappa)$-subframe; thus for a principal filterquotient $\down a$, the inclusion $\down a `-> A$ is a local $(\lambda, \kappa)$-subframe homomorphism which is a section of the quotient map $a \wedge (-) : A ->> \down a$.
Note that the facts about quotients from the preceding subsection all generalize easily from $(\lambda, \kappa)$-frames to local $(\lambda, \kappa)$-frames.

\begin{proposition}
\label{thm:frm-lfrm-idl-pres}
Let $A$ be a local $(\lambda, \kappa)$-frame, $A_i \subseteq A$ be $<\kappa$-many $\kappa$-ideals which together generate $A$ under $\kappa$-ary joins.
Then
\begin{align*}
A = \ang{A_i \qua \!{\lambda\kappa LFrm} \text{ for each $i$} \mid (A_i \ni a_i) \wedge (a_j \in A_j) = a_i \wedge a_j \in A_i}_\!{\lambda\kappa Frm}.
\end{align*}
(More precisely, $A$ is the quotient of the coproduct $\coprod_i A_i$ by the congruence $\sim$ generated by
\begin{align*}
\tag{$*$}
\iota_i(a_i) \wedge \iota_j(a_j) \sim \iota_i(a_i \wedge a_j)
\quad\text{for $a_i \in A_i$ and $a_j \in A_j$},
\end{align*}
where the $\iota_i : A_i -> \coprod_j A_j$ are the cocone maps.)
\end{proposition}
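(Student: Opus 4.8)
The plan is to show that the canonical comparison homomorphism $\phi\colon P \to A$ is an isomorphism, where $P$ denotes the presented object --- i.e.\ the quotient $(\coprod_i A_i)/{\sim}$ of the coproduct of the $A_i$, taken as local $(\lambda,\kappa)$-frames, by the congruence generated by $(*)$ --- and $\phi$ is induced by the inclusions $A_i \hookrightarrow A$. These inclusions are morphisms of local $(\lambda,\kappa)$-frames, and they satisfy $(*)$: for $a_i \in A_i$ and $a_j \in A_j$, the meet $a_i \wedge a_j$ computed in $A$ lies in the $\kappa$-ideal $A_i$, since $a_i \wedge a_j \le a_i$ and $A_i$ is downward-closed. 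Hence $\phi$ exists; and it is surjective, since its image contains each $A_i$ and is closed under $\kappa$-ary joins, while by hypothesis $A$ is generated by $\bigcup_i A_i$ under $\kappa$-ary joins.

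By the universal property of $P$, it suffices to show that $A$, equipped with the inclusions $A_i \hookrightarrow A$, enjoys the same universal property: for every local $(\lambda,\kappa)$-frame $B$ and every family of homomorphisms $h_i\colon A_i \to B$ with $h_i(a_i) \wedge h_j(a_j) = h_i(a_i \wedge a_j)$ for all $i, j$, there is a unique homomorphism $h\colon A \to B$ restricting to each $h_i$. Uniqueness is immediate, as $A$ is generated by the $A_i$ under $\kappa$-ary joins, which $h$ preserves. For existence I define $h(a) := \bigvee_i h_i(a_i)$ for any decomposition $a = \bigvee_i a_i$ with $a_i \in A_i$; such a decomposition exists because $\kappa$ is regular and each $A_i$ is closed under $\kappa$-ary joins, so the closure of $\bigcup_i A_i$ under $\kappa$-ary joins consists precisely of the joins of this shape.

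That $h$ is well defined and that it preserves $\bot$, $\kappa$-ary joins, and binary meets are short calculations using only $\kappa$-frame distributivity (in $A$ and in $B$) and the relations $(*)$. For instance, for well-definedness, if $a = \bigvee_i a_i = \bigvee_i a_i'$ then $a_i = a_i \wedge \bigvee_j a_j' = \bigvee_j (a_i \wedge a_j')$ by distributivity, so, since $h_i$ preserves $\kappa$-ary joins and $h_i(a_i \wedge a_j') = h_i(a_i) \wedge h_j(a_j')$ by $(*)$, we get $h_i(a_i) = \bigvee_j h_i(a_i \wedge a_j') = \bigvee_j\bigl(h_i(a_i) \wedge h_j(a_j')\bigr) = h_i(a_i) \wedge \bigvee_j h_j(a_j') \le \bigvee_j h_j(a_j')$, whence $\bigvee_i h_i(a_i) \le \bigvee_i h_i(a_i')$, and symmetrically equality. (One uses here that $h_i$ and $h_j$ agree on $A_i \cap A_j$, which follows from $(*)$ by taking $a_i = a_j$.) The remaining cases are similar: joins by combining decompositions, and binary meets by expanding $\bigl(\bigvee_i h_i(a_i)\bigr) \wedge \bigl(\bigvee_j h_j(b_j)\bigr)$ with distributivity, rewriting the cross-terms via $(*)$, and regrouping.

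The genuinely delicate step --- which I expect to be the main obstacle --- is that $h$ preserves nonempty $\lambda$-ary meets. One cannot simply interchange $\bigwedge$ and $\bigvee$, since $\lambda$-ary meets need not distribute over $\kappa$-ary joins in a $(\lambda,\kappa)$-frame. My plan is to make the decomposition canonical, taking $a_i$ to be the largest element of $A_i$ lying below $a$: under the hypotheses the $\kappa$-ideal $\{b \in A_i : b \le a\}$ turns out to be principal, these canonical $a_i$ still satisfy $\bigvee_i a_i = a$, and --- using only downward-closedness of the $A_i$ --- the construction commutes with $\lambda$-ary meets (the largest element of $A_i$ below $\bigwedge_l a^{(l)}$ equals $\bigwedge_l$ of the largest elements of $A_i$ below the $a^{(l)}$). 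Since each $h_i$ preserves $\lambda$-ary meets, this reduces the claim to identifying $\bigvee_i \bigwedge_l h_i(a_i^{(l)})$ with $\bigwedge_l \bigvee_i h_i(a_i^{(l)})$ for these particular families, which is where $(*)$ enters essentially: it lets one rewrite the cross-terms $h_i(a_i^{(l)}) \wedge h_j(a_j^{(l')})$ that appear upon expanding the meets. An equivalent and perhaps cleaner route, bypassing canonical tops, is to prove, using the quotient calculus of \cref{sec:frm-quot}, that modulo $(*)$ the $\lambda$-ary meets of $P$ are redundant --- that $P$ is already generated by $\bigcup_i \iota_i(A_i)$ under $\kappa$-ary joins and binary meets alone --- after which the well-definedness computation of the previous paragraph directly gives the injectivity of $\phi$.
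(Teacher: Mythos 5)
Your setup is fine as far as it goes: the comparison map $\phi\colon P \to A$ exists and is surjective, the formula $h(a) = \bigvee_i h_i(a_i)$ is forced, and your computations for well-definedness, $\kappa$-ary joins, and binary meets are correct. You have also correctly located the crux at $\lambda$-ary meets. But your primary plan for that step rests on a false claim: the $\kappa$-ideal $\{b \in A_i \mid b \le a\}$ need not be principal. Take $\kappa = \omega_1$, $A = \@P(\omega_1)$, $A_1 = \@P_{\omega_1}(\omega_1)$ (the countable subsets, a non-principal $\sigma$-ideal), and $A_2 = A$; all hypotheses hold, yet for $a = \omega_1$ the set $\{b \in A_1 \mid b \le a\} = A_1$ has no largest element. (Principality does hold when the $A_i$ are principal ideals, as in \cref{rmk:frm-lfrm-disj-pres}, but the proposition is used in general, e.g.\ in \cref{thm:frm-lfrm-idl-kpres}.) Moreover, even where canonical decompositions exist, your reduction terminates at the inequality $\bigwedge_l \bigvee_i h_i(a_i^{(l)}) \le \bigvee_i \bigwedge_l h_i(a_i^{(l)})$, which is precisely an instance of the $(\lambda,\kappa)$-distributive law that fails in general $(\lambda,\kappa)$-frames $B$; "$(*)$ lets one rewrite the cross-terms" is not an argument, and I see no way to establish this inequality working directly in $B$ --- it holds for these families only because the proposition is true, so this route is essentially circular.

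Your second route is the right one (and is the paper's proof), but the assertion it hinges on is exactly what has to be proved: that modulo $\sim$, every element of $\coprod_i A_i$ is a $\kappa$-ary join $\bigvee_i \iota_i(b_i)$ with $b_i \in A_i$, so that $\lambda$-ary meets never need to be evaluated; your well-definedness computation (applied with $B = (\coprod_i A_i)/{\sim}$) then finishes the proof. This normal form follows from two observations. First, reading the relations $(*)$ inside the principal filterquotient $\down \iota_i(a_i) \subseteq \coprod_j A_j$: its generators $\iota_i(a_i) \wedge \iota_j(a_j)$ are $\sim \iota_i(a_i \wedge a_j)$, and the set of elements $\sim \iota_i(b)$ for some $A_i \ni b \le a_i$ is closed under the local $(\lambda,\kappa)$-frame operations (for $\lambda$-ary meets one uses that $A_i$, being downward closed, contains $\bigwedge_l b_l$), so \emph{every} element of $\down \iota_i(a_i)$ is $\sim \iota_i(b)$ for some $b \le a_i$. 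Second --- and this is the trick that disposes of $\lambda$-ary meets --- the set of $b \in \coprod_i A_i$ lying below some $\bigvee_i \iota_i(a_i)$ with $a_i \in A_i$ is downward closed, hence closed under nonempty $\lambda$-ary meets for free; it is also closed under $\kappa$-ary joins and contains each $\iota_i(A_i)$, so it is all of $\coprod_i A_i$, and then $b = \bigvee_i (\iota_i(a_i) \wedge b) \sim \bigvee_i \iota_i(b_i)$ by the first observation. You should replace both of your proposed routes with this argument.
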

\begin{proof}
For each $a_i \in A_i$, by considering the relations ($*$) in the principal filterquotient $\down \iota_i(a_i) \subseteq \coprod_i A_i$, we get that every generator $\iota_i(a_i) \wedge \iota_j(a_j)$, hence every element, of $\down \iota_i(a_i)$ is $\sim \iota_i(b)$ for some $b \le a_i$.
Every $b \in \coprod_i A_i$ is $\le \bigvee_i \iota_i(a_i)$ for some $a_i \in A_i$, since the set of such elements clearly contains the image of each $\iota_i$ and is a local $(\lambda, \kappa)$-subframe; thus $b = \bigvee_i (\iota_i(a_i) \wedge b) \sim \bigvee_i \iota_i(b_i)$ for some $b_i \le a_i \in A_i$.
For two elements of this latter form, $b = \bigvee_i \iota_i(b_i)$ and $c = \bigvee_i \iota_i(c_i)$, such that $\bigvee_i b_i = \bigvee_i c_i \in A$, we have $\iota_i(b_i) = \bigvee_j \iota_i(b_i \wedge c_j) \lesim \bigvee_j \iota_j(c_j) = c$ by ($*$) for each $i$, whence $b \lesim c$; similarly $c \lesim b$, whence $b \sim c$.
This shows that $\sim$ is the kernel of the canonical map $\coprod_i A_i ->> A$.
\end{proof}

Note that both sides of ($*$) above are local $(\lambda, \kappa)$-frame homomorphisms as functions of either $a_i$ or $a_j$.
Thus, it is enough to consider ($*$) where $a_i, a_j$ are generators of $A_i, A_j$.
This yields

\begin{corollary}
\label{thm:frm-lfrm-idl-kpres}
In \cref{thm:frm-lfrm-idl-pres}, if the $A_i$ are $\mu$-presented for some $\mu \ge \kappa$, then so is $A$.
\qed
\end{corollary}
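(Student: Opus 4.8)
The plan is to observe that by \cref{thm:frm-lfrm-idl-pres}, $A$ is presented as a $(\lambda,\kappa)$-frame by the generating data consisting of the local $(\lambda,\kappa)$-frames $A_i$ together with the relations $(*)$. Since each $A_i$ is assumed $\mu$-presented (for some $\mu \ge \kappa$), it has a presentation $A_i = \ang{G_i \mid R_i}$ with $G_i$ and $R_i$ both $\mu$-ary. First I would assemble the generating set $G := \coprod_i G_i$, which is $\mu$-ary because there are $<\kappa \le \mu$ indices $i$ and each $G_i$ is $\mu$-ary and $\mu$ is regular; likewise the internal relations $R := \coprod_i R_i$ form a $\mu$-ary set. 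This already presents the coproduct $\coprod_i A_i$ as a $(\lambda,\kappa)$-frame (coproducts of presented structures are presented by the disjoint union of generators and relations), and this coproduct is therefore $\mu$-presented.

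The one remaining point is that the congruence $\sim$ generated by the relations $(*)$ in \cref{thm:frm-lfrm-idl-pres} is generated by a $\mu$-ary set of relations, so that the quotient $A = (\coprod_i A_i)/{\sim}$ is still $\mu$-presented. Here I would invoke the remark immediately preceding the Corollary: both sides of $(*)$, namely $\iota_i(a_i)\wedge\iota_j(a_j)$ and $\iota_i(a_i\wedge a_j)$, are local $(\lambda,\kappa)$-frame homomorphisms in each of the variables $a_i$ and $a_j$ separately. Hence, as noted there, it suffices to impose $(*)$ only for $a_i$ ranging over a generating set of $A_i$ and $a_j$ over a generating set of $A_j$ — i.e., for $a_i \in G_i$ and $a_j \in G_j$. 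The number of such instances is bounded by $(\text{number of pairs } i,j) \times \sup_{i,j}|G_i|\cdot|G_j|$, which is $<\kappa \cdot <\kappa \cdot \mu = \mu$ by regularity of $\mu$ and the fact that $\mu \ge \kappa$. Therefore $A$ is presented, as a $(\lambda,\kappa)$-frame, by the $\mu$-ary generating set $G$ subject to the $\mu$-ary set of relations $R$ together with these $\mu$-many instances of $(*)$, so $A$ is $\mu$-presented.

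There is essentially no obstacle here: the content is entirely a cardinality bookkeeping argument on top of \cref{thm:frm-lfrm-idl-pres}, and the only substantive input — that it is enough to impose $(*)$ on generators rather than all elements — has already been recorded in the paper. The mild subtlety worth a sentence of care is the passage from ``generators of $A_i$ as a local $(\lambda,\kappa)$-frame'' to the generators appearing in a chosen $\mu$-presentation; since $\mu \ge \kappa$ and a $\mu$-presented local $(\lambda,\kappa)$-frame is in particular $\mu$-generated, a $\mu$-ary generating set is available, and that is all that is used. Hence I would simply state the bookkeeping and conclude.
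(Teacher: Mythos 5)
Your proof is correct and follows exactly the route the paper intends: the corollary is stated with \qed precisely because the remark preceding it (that $(*)$ need only be imposed for $a_i, a_j$ ranging over generators) reduces everything to the cardinality bookkeeping you spell out. The only content beyond that remark is the regularity count, which you handle correctly.
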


\begin{remark}
\label{rmk:frm-lfrm-disj-pres}
In \cref{thm:frm-lfrm-idl-pres}, if the $A_i$ are principal ideals $\down a_i$ such that the $a_i$ are pairwise disjoint, then $\sim$ is generated simply by
$\iota_i(a_i) \wedge \iota_j(a_j) \sim \bot$ for $i \ne j$.
\end{remark}

We now connect local $(\lambda, \kappa)$-frame presentations with $(\lambda, \kappa)$-frame presentations.
Let
\begin{align*}
A = \ang{G \mid R}_\!{\lambda\kappa Frm}
\end{align*}
be a presented $(\lambda, \kappa)$-frame.
We may turn this into a presentation of $A$ as a local $(\lambda, \kappa)$-frame, as follows.
Let $G' := G \sqcup \{\top_A\}$ where $\top_A$ is a new symbol.
For a $(\lambda, \kappa)$-frame term $t$ over $G$, we may turn it into a local $(\lambda, \kappa)$-frame term $t'$ over $G'$ by replacing every occurrence of the nullary meet operation $\top$ in $t$ with the new symbol $\top_A$.
Now let $R'$ be $R$ with every term $t$ in it replaced with $t'$ just defined, together with the new relations $x \le \top_A$ for every $x \in G$.
These new relations ensure that $\top_A$ is the top element of $\ang{G' \mid R'}_\!{\lambda\kappa LFrm}$, whence the latter is a $(\lambda, \kappa)$-frame; and the operation $t |-> t'$ on terms yields a $(\lambda, \kappa)$-frame homomorphism $f : A -> \ang{G' \mid R'}_\!{\lambda\kappa LFrm}$.
Conversely, the relations in $R'$ are clearly satisfied in $A$, whence we get a local $(\lambda, \kappa)$-frame homomorphism $g : \ang{G' \mid R'}_\!{\lambda\kappa LFrm} -> A$.
It is easily seen that $g$ is surjective and $f \circ g = 1$, whence $f = g^{-1}$, i.e.,
\begin{align*}
A = \ang{G' \mid R'}_\!{\lambda\kappa LFrm}.
\end{align*}
Using this transformation $(G, R) |-> (G', R')$, we have

\begin{proposition}
\label{thm:frm-lfrm-pres->}
If a $(\lambda, \kappa)$-frame is $\mu$-presented, then it is $\mu$-presented as a local $(\lambda, \kappa)$-frame.
\qed
\end{proposition}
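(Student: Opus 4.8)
The plan is to invoke the transformation $(G, R) \mapsto (G', R')$ constructed in the paragraph immediately preceding the statement, and then merely check that it preserves the cardinality bound. Concretely, suppose $A$ is $\mu$-presented as a $(\lambda, \kappa)$-frame, say $A = \ang{G \mid R}_\!{\lambda\kappa Frm}$ with $|G| < \mu$ and $|R| < \mu$. Put $G' := G \sqcup \{\top_A\}$ and let $R'$ be $R$ with each occurring term $t$ replaced by its primed version $t'$ (in which every nullary-meet symbol $\top$ is rewritten as the generator $\top_A$), together with the relations $x \le \top_A$ for each $x \in G$. The discussion above the statement already shows that, with these choices, $A \cong \ang{G' \mid R'}_\!{\lambda\kappa LFrm}$: the new relations force $\top_A$ to be a top element, so the presented local $(\lambda,\kappa)$-frame is in fact a $(\lambda,\kappa)$-frame, and the homomorphisms $f$ and $g$ built respectively from $t \mapsto t'$ and from interpreting $\top_A$ as $\top$ are mutually inverse. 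So it remains only to bound the sizes of $G'$ and $R'$.

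First I would note that $|G'| = |G| + 1 < \mu$, since $\mu$ is an infinite regular cardinal or $\infty$. For the relations, $R'$ is the union of a set in bijection with $R$ (via $t \mapsto t'$) together with the $|G|$-many new relations $x \le \top_A$, so $|R'| \le |R| + |G| < \mu$ by regularity of $\mu$. Hence $(G', R')$ is a $\mu$-ary presentation of $A$ as a local $(\lambda, \kappa)$-frame, which is exactly what is claimed.

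I do not expect any real obstacle here: the substantive content is the construction $(G, R) \mapsto (G', R')$ and the verification $A \cong \ang{G' \mid R'}_\!{\lambda\kappa LFrm}$, both of which have already been carried out in the text, and the remaining cardinality check is immediate from the regularity of $\mu$. The one point I would state carefully is why $f$ and $g$ are mutually inverse in the primed setting: $g$ sends the generator $\top_A$ to $\top \in A$ and each $x \in G$ to its image in $A$; $f$ sends each $x \in G$ to that same image and is manifestly a one-sided inverse of $g$; and since both $f$ and $g$ are local $(\lambda,\kappa)$-frame homomorphisms, agreement on the generating set $G'$ forces $g$ to be surjective with $f \circ g = 1$, whence $f = g^{-1}$.
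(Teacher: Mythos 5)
Your proposal is correct and follows exactly the paper's route: the paper proves this proposition by the same transformation $(G,R)\mapsto(G',R')$ constructed in the preceding paragraph, with the proposition itself reduced to the (immediate) observation that the transformation preserves $\mu$-arity. The only cosmetic remark is that the bound $|R|+|G|<\mu$ needs only that $\mu$ is an infinite cardinal, not regularity, but this changes nothing.
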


Conversely, given a local $(\lambda, \kappa)$-frame $A$, the free $(\lambda, \kappa)$-frame generated by $A$ is its \defn{scone}
\begin{align*}
A_\top := A \sqcup \{\top\}
\end{align*}
where $\top = \top_{A_\top}$ is a new top element, strictly greater than all elements of $A$.
If $A$ was already a $(\lambda, \kappa)$-frame, then $A \subseteq A_\top$ is the principal ideal $\down \top_A$, i.e., the principal filterquotient by the congruence $\sim^{\top_A}$ identifying $\top_A$ with $\top_{A_\top}$.
Along with \cref{thm:frm-lfrm-pres->}, this yields

\begin{corollary}
\label{thm:frm-lfrm-pres}
A $(\lambda, \kappa)$-frame is $\mu$-presented iff it is $\mu$-presented as a local $(\lambda, \kappa)$-frame.
\qed
\end{corollary}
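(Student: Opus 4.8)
The plan is to prove the two directions of the biconditional separately; only the ``if'' direction requires new work. The ``only if'' direction --- a $(\lambda,\kappa)$-frame that is $\mu$-presented qua $(\lambda,\kappa)$-frame is $\mu$-presented qua local $(\lambda,\kappa)$-frame --- is exactly \cref{thm:frm-lfrm-pres->}, proved above via the transformation $(G,R) \mapsto (G',R')$, which adjoins $\top_A$ as a new generator together with the relations $x \le \top_A$ for $x \in G$ (rewriting the terms of $R$ accordingly) and so preserves $\mu$-arity.

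For the ``if'' direction, suppose the $(\lambda,\kappa)$-frame $A$ carries a local presentation $A = \ang{G \mid R}_\!{\lambda\kappa LFrm}$ with $G$ and $R$ both $\mu$-ary, and exploit the scone construction recalled immediately above the statement. The scone $A_\top = A \sqcup \{\top\}$ is the free $(\lambda,\kappa)$-frame on $A$ regarded as a local $(\lambda,\kappa)$-frame; since free functors preserve $\mu$-presentability --- concretely, the same $G$ and $R$, now read in $\!{\lambda\kappa Frm}$, present $A_\top$ --- the scone is $\mu$-presented \emph{as a $(\lambda,\kappa)$-frame}. Now invoke the companion fact recalled above: $A$ sits inside $A_\top$ as the principal ideal $\down\top_A$, and this inclusion realizes $A$ as the principal filterquotient $A_\top/{\sim^{\top_A}}$. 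By the treatment of filterquotients in \cref{sec:frm-quot}, the congruence $\sim^{\top_A}$ is generated by the \emph{single} relation $\top \sim \top_A$; thus $A$ is a quotient of the $\mu$-presented $(\lambda,\kappa)$-frame $A_\top$ by a one-element set of additional relations, hence is itself $\mu$-presented as a $(\lambda,\kappa)$-frame.

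I do not expect a genuine obstacle here: the substance --- that the scone really is the free $(\lambda,\kappa)$-frame on a local one, and that a $(\lambda,\kappa)$-frame is recovered from its scone as the principal-ideal quotient at its old top --- has already been discharged in the paragraph preceding the statement. The one point that warrants care is that the passage from the local presentation to an honest $(\lambda,\kappa)$-frame presentation costs only a single extra relation, rather than an uncontrolled enlargement of $R$; this is precisely what the principal-filterquotient description secures, via the fact (from \cref{sec:frm-quot}) that a principal filter congruence is generated by $\top \sim a$. A more hands-on alternative would set $A' := \ang{G \mid R,\ \top_A = \top}_\!{\lambda\kappa Frm}$ (with $\top_A$ written out as a local term over $G$) and build mutually inverse homomorphisms $A \rightleftarrows A'$ from the universal properties of the two presentations, checking that the map out of $A$ is top-preserving because it sends $\top_A$ to the $A'$-value of that term, which the added relation identifies with $\top$; but the scone route is shorter and reuses the setup already in place.
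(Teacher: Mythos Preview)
Your proposal is correct and matches the paper's approach exactly: one direction is \cref{thm:frm-lfrm-pres->}, and the other uses that the scone $A_\top$ is the free $(\lambda,\kappa)$-frame on $A$ qua local $(\lambda,\kappa)$-frame (hence $\mu$-presented), with $A$ recovered as the principal filterquotient $A_\top/{\sim^{\top_A}}$ by a single relation. Your added observation that the same $(G,R)$ presents $A_\top$ in $\!{\lambda\kappa Frm}$, and your alternative hands-on construction, are both fine but unnecessary elaborations.
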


By \cref{thm:frm-lfrm-idl-kpres}, we get

\begin{corollary}
\label{thm:frm-cover-pres}
If a $(\lambda, \kappa)$-frame $A$ has a $\kappa$-ary cover $(a_i)_i$ of $\top$ such that each $\down a_i$ is $\mu$-presented for some $\mu \ge \kappa$, then $A$ is $\mu$-presented.
\qed
\end{corollary}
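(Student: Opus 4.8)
The plan is to derive this directly from \cref{thm:frm-lfrm-idl-pres}, \cref{thm:frm-lfrm-idl-kpres}, and \cref{thm:frm-lfrm-pres} — indeed the statement is phrased as a corollary of the first two precisely because the argument is a short chaining of these. First I would regard the given $(\lambda, \kappa)$-frame $A$ as a local $(\lambda, \kappa)$-frame (every $(\lambda,\kappa)$-frame is one) and check that the principal ideals $\down a_i \subseteq A$ satisfy the hypotheses of \cref{thm:frm-lfrm-idl-pres} with $A_i := \down a_i$: each $\down a_i$ is a $\kappa$-ideal (a principal ideal is directed, hence $\kappa$-directed, hence a $\kappa$-ideal), and the $\down a_i$ generate $A$ under $\kappa$-ary joins, since for any $b \in A$ we have $b = b \wedge \top = b \wedge \bigvee_i a_i = \bigvee_i (b \wedge a_i)$ with each $b \wedge a_i \in \down a_i$, using the $\kappa$-ary distributive law and that $(a_i)_i$ covers $\top$.

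Next I would apply \cref{thm:frm-lfrm-idl-pres} to write $A$ as the presented $(\lambda, \kappa)$-frame $\ang{\down a_i \qua \!{\lambda\kappa LFrm} \text{ for each } i \mid ({*})}_\!{\lambda\kappa Frm}$, where $({*})$ denotes the meet-compatibility relations $\iota_i(a_i') \wedge \iota_j(a_j') \sim \iota_i(a_i' \wedge a_j')$ (or, after first disjointifying the cover, the simpler pairwise-disjointness relations of \cref{rmk:frm-lfrm-disj-pres} — though this shortcut is not needed). The essential content is that this exhibits $A$ as built, in $\!{\lambda\kappa Frm}$, from the $\down a_i$ as generating local $(\lambda, \kappa)$-frames subject to a $\kappa$-ary family of relations of the required shape.

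Then I would invoke \cref{thm:frm-lfrm-pres}: since each $\down a_i$ is $\mu$-presented as a $(\lambda, \kappa)$-frame by hypothesis, it is also $\mu$-presented as a local $(\lambda, \kappa)$-frame. Finally, \cref{thm:frm-lfrm-idl-kpres} — which is exactly \cref{thm:frm-lfrm-idl-pres} together with the observation that, for $\mu \ge \kappa$, $\mu$-presentedness of the $A_i$ passes to the presented $(\lambda,\kappa)$-frame $A$ — yields that $A$ is $\mu$-presented as a $(\lambda, \kappa)$-frame, which is the claim.

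I do not expect any genuine obstacle here: the only points requiring care are the bookkeeping between the ``local'' and ``non-local'' categories, which is entirely handled by \cref{thm:frm-lfrm-pres}, and the verification that the $\down a_i$ meet the hypotheses of \cref{thm:frm-lfrm-idl-pres}, which is immediate from $(a_i)_i$ being a $\kappa$-ary cover of $\top$. The degenerate cases (e.g.\ an empty cover, forcing $\bot = \top$ and hence $A$ trivial and finitely presented) cause no trouble.
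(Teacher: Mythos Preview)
Your proposal is correct and follows exactly the paper's approach: the paper derives the corollary directly from \cref{thm:frm-lfrm-idl-kpres} (implicitly using \cref{thm:frm-lfrm-pres} to pass between $\mu$-presentedness in $\!{\lambda\kappa Frm}$ and $\!{\lambda\kappa LFrm}$), and you have simply spelled out the hypothesis-checking that the paper leaves implicit.
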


\begin{corollary}
\label{thm:frm-prod-pres}
For $\mu \ge \kappa$, a $\kappa$-ary product of $\mu$-presented $(\lambda, \kappa)$-frames is $\mu$-presented.
\qed
\end{corollary}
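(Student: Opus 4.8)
The plan is to reduce immediately to \cref{thm:frm-cover-pres} via the description of $\kappa$-ary products of $(\lambda,\kappa)$-frames in terms of partitions of $\top$ recalled at the start of this subsection. Let $(A_i)_i$ be $<\kappa$-many $\mu$-presented $(\lambda,\kappa)$-frames and let $A := \prod_i A_i$. As recalled above, $A$ carries the canonical $\kappa$-ary partition $(\delta_i)_i$ of $\top$, where $\delta_i$ is $\top$ in the $i$th coordinate and $\bot$ elsewhere, and each projection $\pi_i$ induces an isomorphism $\down \delta_i \cong A_i$.

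Now a $\kappa$-ary partition of $\top$ is in particular a $\kappa$-ary cover of $\top$, and each principal filterquotient $\down \delta_i$ is isomorphic to the $\mu$-presented $(\lambda,\kappa)$-frame $A_i$, hence is itself $\mu$-presented (presentation rank being an isomorphism invariant). Thus \cref{thm:frm-cover-pres} applies directly and yields that $A$ is $\mu$-presented, as desired.

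There is accordingly no real obstacle here: all the work has been carried out in \cref{thm:frm-lfrm-idl-kpres,thm:frm-cover-pres}. If one instead wants a self-contained argument, one can invoke \cref{rmk:frm-lfrm-disj-pres}: since the $\delta_i$ are pairwise disjoint, $A$ is presented \emph{as a local $(\lambda,\kappa)$-frame} by the coproduct $\coprod_i (\down \delta_i)$ of the $\mu$-presented local $(\lambda,\kappa)$-frames $\down \delta_i \cong A_i$, subject only to the $<\kappa$-many relations $\iota_i(\delta_i) \wedge \iota_j(\delta_j) \sim \bot$ for $i \ne j$; as $\mu \ge \kappa$, this is a $\mu$-ary set of generators and relations, and by \cref{thm:frm-lfrm-pres} a $(\lambda,\kappa)$-frame is $\mu$-presented iff it is so as a local one.
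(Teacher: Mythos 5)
Your proof is correct and is precisely the argument the paper intends (the corollary is stated with a \qed immediately after \cref{thm:frm-cover-pres}): the $\delta_i$ form a $\kappa$-ary partition, hence cover, of $\top$ in $\prod_i A_i$ with $\down\delta_i \cong A_i$ $\mu$-presented, so \cref{thm:frm-cover-pres} applies. The alternative self-contained route you sketch via \cref{rmk:frm-lfrm-disj-pres} and \cref{thm:frm-lfrm-pres} is also sound and is essentially how \cref{thm:frm-cover-pres} itself is proved.
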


The above constructions of presentations also yield

\begin{proposition}
\label{thm:frm-lfrm-compl}
For a $(\lambda, \kappa)$-frame $A$ and $\lambda' \ge \lambda$, $\kappa' \ge \kappa$, we have $\ang{A \qua \!{\lambda\kappa Frm}}_\!{\lambda'\kappa' Frm} = \ang{A \qua \!{\lambda\kappa LFrm}}_\!{\lambda'\kappa' LFrm}$, i.e., the square
\begin{equation*}
\begin{tikzcd}
\!{\lambda\kappa Frm}
    \dar[shift left=2, right adjoint']
    \rar[shift left=2, "\ang{-}_\!{\lambda'\kappa' Frm}"] &
\!{\lambda'\kappa' Frm}
    \lar[shift left=2, right adjoint]
    \dar[shift left=2, right adjoint'] \\
\!{\lambda\kappa LFrm}
    \uar[shift left=2, "{(-)_\top}"]
    \rar[shift left=2, "\ang{-}_\!{\lambda'\kappa' LFrm}"] &
\!{\lambda'\kappa' LFrm}
    \lar[shift left=2, right adjoint]
    \uar[shift left=2, "{(-)_\top}"]
\end{tikzcd}
\end{equation*}
consisting of the vertical \emph{right} and horizontal \emph{left} adjoints commutes.
\end{proposition}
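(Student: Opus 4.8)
The plan is to read both sides of the asserted identity off a single presentation, exploiting that the passage from a $(\lambda,\kappa)$-frame presentation to a local-$(\lambda,\kappa)$-frame presentation is purely syntactic, hence insensitive to the arities involved. So I would fix any $(\lambda,\kappa)$-frame presentation $A = \ang{G \mid R}_\!{\lambda\kappa Frm}$ (for instance the canonical one with $G = A$ and $R$ all relations valid in $A$), and let $(G', R')$ be the transformed data described just above \cref{thm:frm-lfrm-pres->}: $G' = G \sqcup \{\top_A\}$, and $R'$ is obtained from $R$ by replacing every occurrence of the nullary meet $\top$ with the new symbol $\top_A$ and adjoining the relations $x \le \top_A$ for $x \in G$. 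The identity established there is $A \qua \!{\lambda\kappa LFrm} = \ang{G' \mid R'}_\!{\lambda\kappa LFrm}$.

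First I would record that presentations transport along a change of arities: for $\lambda' \ge \lambda$ and $\kappa' \ge \kappa$ one has $\ang{A}_\!{\lambda'\kappa' Frm} = \ang{G \mid R}_\!{\lambda'\kappa' Frm}$. I would argue this via the universal property rather than an appeal to coequalizers, to avoid circularity: for any $B \in \!{\lambda'\kappa' Frm}$, a $(\lambda',\kappa')$-frame map out of $\ang{A}_\!{\lambda'\kappa' Frm}$ is the same as a $(\lambda,\kappa)$-frame map $A \to B$, i.e.\ a map $G \to B$ respecting $R$ (the relations in $R$ only involve $\lambda$-ary meets and $\kappa$-ary joins, computed the same in $B$ whether one regards it as a $(\lambda,\kappa)$- or a $(\lambda',\kappa')$-frame), which is precisely what $\ang{G \mid R}_\!{\lambda'\kappa' Frm}$ classifies. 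The identical argument (now with the forgetful functor $\!{\lambda'\kappa' LFrm} \to \!{\lambda\kappa LFrm}$) gives $\ang{A \qua \!{\lambda\kappa LFrm}}_\!{\lambda'\kappa' LFrm} = \ang{\ang{G' \mid R'}_\!{\lambda\kappa LFrm}}_\!{\lambda'\kappa' LFrm} = \ang{G' \mid R'}_\!{\lambda'\kappa' LFrm}$.

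Then I would apply the transformation $(G,R) \mapsto (G',R')$ a second time, now at arities $(\lambda',\kappa')$, to the presentation $\ang{G \mid R}_\!{\lambda'\kappa' Frm}$ from the previous step. Since the construction of $(G',R')$ never mentions $\lambda$ or $\kappa$ — and the verification that $\top_A$ becomes the top element, that the term substitution $t \mapsto t'$ yields a map from $\ang{G\mid R}_\!{\lambda'\kappa' Frm}$ into the local presentation, and that this map is invertible, all go through verbatim — the same pair $(G',R')$ works, giving $\ang{G \mid R}_\!{\lambda'\kappa' Frm} \qua \!{\lambda'\kappa' LFrm} = \ang{G' \mid R'}_\!{\lambda'\kappa' LFrm}$. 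Chaining the three identities yields $\ang{A \qua \!{\lambda\kappa Frm}}_\!{\lambda'\kappa' Frm} \qua \!{\lambda'\kappa' LFrm} = \ang{A \qua \!{\lambda\kappa LFrm}}_\!{\lambda'\kappa' LFrm}$, which is exactly the commutativity of the square of vertical right and horizontal left adjoints, the displayed equation being understood with its left-hand side restricted along the forgetful functor $\!{\lambda'\kappa' Frm} \to \!{\lambda'\kappa' LFrm}$.

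I do not expect a serious obstacle; the content is entirely bookkeeping built on the presentation constructions already set up. The two points that need care are (i) phrasing the ``left adjoints compose, so presentations pass between arity levels'' step through the universal property as above, rather than citing the presentation machinery in a way that would be circular, and (ii) being explicit that the claimed equality is an equality of underlying local $(\lambda',\kappa')$-frames — equivalently, that the top element freely adjoined in forming $\ang{A \qua \!{\lambda\kappa LFrm}}_\!{\lambda'\kappa' LFrm}$ really is the top of the $(\lambda',\kappa')$-frame $\ang{A}_\!{\lambda'\kappa' Frm}$ and not something strictly larger — which is precisely what the second application of $(G,R) \mapsto (G',R')$ secures.
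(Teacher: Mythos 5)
Your proposal is correct and follows essentially the same route as the paper: fix a presentation $\ang{G \mid R}_\!{\lambda\kappa Frm}$ of $A$, observe that the transformation $(G,R)\mapsto(G',R')$ is arity-independent, and chain the resulting identities $\ang{A \qua \!{\lambda\kappa Frm}}_\!{\lambda'\kappa' Frm} = \ang{G\mid R}_\!{\lambda'\kappa' Frm} = \ang{G'\mid R'}_\!{\lambda'\kappa' LFrm} = \ang{A \qua \!{\lambda\kappa LFrm}}_\!{\lambda'\kappa' LFrm}$. The extra care you take in justifying the transport of presentations across arities via the universal property is a sound (if more verbose) filling-in of a step the paper leaves implicit.
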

\begin{proof}
Let $A
= \ang{G \mid R}_\!{\lambda\kappa Frm}
= \ang{G' \mid R'}_\!{\lambda\kappa LFrm}$
where $(G', R')$ is obtained from $(G, R)$ via the procedure described above.
Note that the above procedure works equally well for $\lambda', \kappa'$ as for $\lambda, \kappa$.
Thus
$\ang{A \qua \!{\lambda\kappa Frm}}_\!{\lambda'\kappa' Frm}
= \ang{G \mid R}_\!{\lambda'\kappa' Frm}
= \ang{G' \mid R'}_\!{\lambda'\kappa' LFrm}
= \ang{A \qua \!{\lambda\kappa LFrm}}_\!{\lambda'\kappa' LFrm}$.
\end{proof}

\begin{proposition}
\label{thm:frm-prod-compl}
For $\lambda' \ge \lambda$, $\kappa' \ge \kappa$, the free functor $\ang{-}_\!{\lambda'\kappa' Frm} : \!{\lambda\kappa Frm} -> \!{\lambda'\kappa' Frm}$ preserves $\kappa$-ary products.
\end{proposition}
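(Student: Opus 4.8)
The plan is to realise the $\kappa$-ary product $\prod_i A_i$, taken in $\!{\lambda\kappa Frm}$, as a colimit in the larger category $\!{\lambda\kappa LFrm}$ of local $(\lambda,\kappa)$-frames, and then to exploit the fact that the free functor $\ang{-}_{\!{\lambda'\kappa' LFrm}} : \!{\lambda\kappa LFrm} \to \!{\lambda'\kappa' LFrm}$, being a left adjoint, preserves that colimit; \cref{thm:frm-lfrm-compl} then lets me pass back and forth between local frames and frames at both ends of the computation.

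First I would apply \cref{thm:frm-lfrm-idl-pres} together with \cref{rmk:frm-lfrm-disj-pres} to $\prod_i A_i$ equipped with its canonical $\kappa$-ary partition $(\delta_i)_i$ of $\top$ (for which $\down \delta_i \cong A_i$, as recalled in \cref{sec:frm-prod}): this presents $\prod_i A_i$ as the quotient, \emph{in} $\!{\lambda\kappa LFrm}$, of the coproduct $\coprod_i \down \delta_i$ (coproduct in $\!{\lambda\kappa LFrm}$) by the congruence $\sim$ generated by $\iota_i(\delta_i) \wedge \iota_j(\delta_j) = \bot$ for $i \ne j$ --- in other words, $\prod_i A_i$ is a specific colimit (a coproduct followed by a coequalizer) of the local $(\lambda,\kappa)$-frames $\down \delta_i \cong A_i$ in $\!{\lambda\kappa LFrm}$. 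Applying the left adjoint $\ang{-}_{\!{\lambda'\kappa' LFrm}}$ preserves this colimit, and by \cref{thm:frm-lfrm-compl} this functor agrees, compatibly with units, with $\ang{-}_{\!{\lambda'\kappa' Frm}}$ on every $(\lambda,\kappa)$-frame --- in particular on $\prod_i A_i$ and on each $\down \delta_i \cong A_i$. Hence $\ang{\prod_i A_i}_{\!{\lambda'\kappa' Frm}}$ is the quotient, in $\!{\lambda'\kappa' LFrm}$, of $\coprod_i \ang{A_i}_{\!{\lambda'\kappa' Frm}}$ by the congruence generated by the images under the units of the generators of $\sim$. Since each unit $A_i \to \ang{A_i}_{\!{\lambda'\kappa' Frm}}$ is a $(\lambda,\kappa)$-frame homomorphism, it carries $\delta_i = \top_{A_i}$ to $\top$, so those images are exactly the disjointness relations $\iota_i(\top) \wedge \iota_j(\top) = \bot$ for $i \ne j$. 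Reading \cref{thm:frm-lfrm-idl-pres} and \cref{rmk:frm-lfrm-disj-pres} in the opposite direction --- now applied to the product $\prod_i \ang{A_i}_{\!{\lambda'\kappa' Frm}}$ in $\!{\lambda'\kappa' Frm}$ with its own canonical partition --- identifies this quotient with $\prod_i \ang{A_i}_{\!{\lambda'\kappa' Frm}}$. Tracing the cocone maps through the construction shows the resulting isomorphism is the canonical comparison map induced by the projections, which is the assertion.

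The point requiring care is the bookkeeping at the interface between $\!{\lambda\kappa LFrm}$ and $\!{\lambda\kappa Frm}$: one must confirm that $\prod_i A_i$ genuinely \emph{is} a colimit in $\!{\lambda\kappa LFrm}$, and not merely presented by that data, so that the left adjoint preserves it; that the local-frame and frame free functors, together with their units, are compatibly identified by \cref{thm:frm-lfrm-compl}, so that $\delta_i$ goes to $\top$ under the unit and the transported congruence is again the disjointness congruence; and that the colimit isomorphism obtained really is the canonical product-comparison map and not some other isomorphism. Each of these is routine given the preceding subsections, but these are the steps where one should be attentive. Alternatively, one can avoid the local-frame detour by a Yoneda argument: by the description of products in \cref{sec:frm-prod}, homomorphisms out of a $\kappa$-ary product $\prod_i B_i$ into any $C$ correspond to a $\kappa$-ary partition $(c_i)_i$ of $\top_C$ together with homomorphisms $B_i \to \down c_i$ into the principal filterquotients; taking $B_i = \ang{A_i}_{\!{\lambda'\kappa' Frm}}$ and applying the free--forgetful adjunction factorwise yields exactly the description of $\!{\lambda\kappa Frm}(\prod_i A_i, C)$, and one checks this identification is natural in $C$ and is implemented by the comparison map, so that Yoneda concludes.
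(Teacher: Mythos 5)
Your argument is correct and is essentially the paper's own proof: both present $\prod_i A_i$ in $\!{\lambda\kappa LFrm}$ via \cref{thm:frm-lfrm-idl-pres} and \cref{rmk:frm-lfrm-disj-pres}, observe that the same data interpreted at the $(\lambda',\kappa')$ level yields $\prod_i \ang{A_i \qua \!{\lambda\kappa LFrm}}_\!{\lambda'\kappa' LFrm}$, and apply \cref{thm:frm-lfrm-compl} twice to pass between the local-frame and frame free functors. The only difference is presentational --- the paper states the middle step as ``the same presentation yields the product'' where you phrase it as preservation of a colimit by the left adjoint, which amounts to the same thing.
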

\begin{proof}
Let $A_i \in \!{\lambda\kappa Frm}$ be $<\kappa$-many $(\lambda, \kappa)$-frames.
Let $\prod_i A_i = \ang{G \mid R}_\!{\lambda\kappa LFrm}$ be the presentation given by \cref{thm:frm-lfrm-idl-pres}, with $R$ from \cref{rmk:frm-lfrm-disj-pres}; it is clear that the same presentation yields $\ang{G \mid R}_\!{\lambda'\kappa' LFrm} = \prod_i \ang{A_i \qua \!{\lambda\kappa LFrm}}_\!{\lambda'\kappa' LFrm}$.
Thus using \cref{thm:frm-lfrm-compl} twice, we have
\begin{align*}
\ang{\prod_i A_i \qua \!{\lambda\kappa Frm}}_\!{\lambda'\kappa' Frm}
&= \ang{\prod_i A_i \qua \!{\lambda\kappa LFrm}}_\!{\lambda'\kappa' LFrm} \\
&= \ang{G \mid R}_\!{\lambda'\kappa' LFrm} \\
&= \prod_i \ang{A_i \qua \!{\lambda\kappa LFrm}}_\!{\lambda'\kappa' LFrm} \\
&= \prod_i \ang{A_i \qua \!{\lambda\kappa Frm}}_\!{\lambda'\kappa' Frm}.
\qedhere
\end{align*}
\end{proof}

For the categories $\!{\kappa Bool}$, one may develop analogs of the above by replacing $\!{\lambda\kappa LFrm}$ with the category $\!{\kappa LBool}$ of \defn{local $\kappa$-Boolean algebras},%
\footnote{also known as Boolean r(i)ngs, Boolean (pseudo)lattices}
meaning local $(\kappa, \kappa)$-frames such that each $\down a$ is Boolean.
However, we will be content with deducing the Boolean analogs of the main results above in an \emph{ad hoc} manner.
Note that a $\kappa$-Boolean algebra presentation $A = \ang{G \mid R}_\!{\kappa Bool}$ may be turned into a $(\kappa, \kappa)$-frame presentation by closing $G$ under complements and adding relations
\begin{align*}
a \wedge \neg a = \bot, &&
a \vee \neg a = \top
\end{align*}
for each $a \in G$.
Thus \cref{thm:frm-cover-pres} yields

\begin{corollary}
\label{thm:bool-cover-pres}
If a $\kappa$-Boolean algebra $A$ has a $\kappa$-ary cover $(a_i)_i$ of $\top$ such that each $\down a_i$ is $\mu$-presented for some $\mu \ge \kappa$, then $A$ is $\mu$-presented.
\qed
\end{corollary}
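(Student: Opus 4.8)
The plan is to reduce to the $(\kappa,\kappa)$-frame version, \cref{thm:frm-cover-pres}, and then transfer the conclusion back along the (full and faithful) forgetful functor $\!{\kappa Bool} -> \!{\kappa\kappa Frm}$ indicated by the $\rightarrowtail$ arrows of \eqref{diag:frm-cat}. First I would regard $A$ and all the principal filterquotients $\down a_i$ as $(\kappa,\kappa)$-frames. The family $(a_i)_i$ is still a $\kappa$-ary cover of $\top$ in $A \qua \!{\kappa\kappa Frm}$, and for each $i$ the filterquotient $\down a_i$ formed in $A \qua \!{\kappa\kappa Frm}$ has the same underlying poset and the same $\kappa$-ary joins and $\kappa$-ary meets as the filterquotient $\down a_i$ formed in $A \qua \!{\kappa Bool}$ — the Boolean complement $b \mapsto a_i \wedge \neg b$ on $\down a_i$ is merely extra structure that is discarded on passing to $\!{\kappa\kappa Frm}$ — so it is precisely the underlying $(\kappa,\kappa)$-frame of the $\kappa$-Boolean algebra $\down a_i$.

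Next I would check that each $\down a_i$, being $\mu$-presented as a $\kappa$-Boolean algebra, is $\mu$-presented as a $(\kappa,\kappa)$-frame. This is exactly the translation recalled just above the statement: from a presentation $\down a_i = \ang{G_i \mid R_i}_\!{\kappa Bool}$ with $G_i$ and $R_i$ both $\mu$-ary, close $G_i$ under complements — still $\mu$-ary, since $\mu$ is an infinite cardinal (or $\infty$) — and adjoin the $|G_i|$-many relation pairs $a \wedge \neg a = \bot$, $a \vee \neg a = \top$, obtaining a $\mu$-ary $(\kappa,\kappa)$-frame presentation of $\down a_i$. With every hypothesis of \cref{thm:frm-cover-pres} now verified (taking $\lambda := \kappa$ there), I conclude that $A$ is $\mu$-presented as a $(\kappa,\kappa)$-frame.

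Finally I would transfer the conclusion back to $\!{\kappa Bool}$. Since the forgetful functor $\!{\kappa Bool} -> \!{\kappa\kappa Frm}$ is full and faithful, it exhibits $\!{\kappa Bool}$ as a reflective full subcategory of $\!{\kappa\kappa Frm}$, so for the $\kappa$-Boolean algebra $A$ the counit $\ang{A \qua \!{\kappa\kappa Frm}}_\!{\kappa Bool} -> A$ is an isomorphism; and since free functors preserve $\mu$-presentability (as noted in \cref{sec:frm-idl}), from $A$ being $\mu$-presented in $\!{\kappa\kappa Frm}$ it follows that $\ang{A \qua \!{\kappa\kappa Frm}}_\!{\kappa Bool} \cong A$ is $\mu$-presented in $\!{\kappa Bool}$. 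I do not expect a genuine obstacle: \cref{thm:frm-cover-pres} does all the substantive work, and the only points needing a little care are the arity bookkeeping in the Boolean-to-$(\kappa,\kappa)$-frame translation and the (routine) verification that each $\down a_i$ really is the same $(\kappa,\kappa)$-frame however it is formed.
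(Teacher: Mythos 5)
Your proposal is correct and follows essentially the same route as the paper: the paper's (implicit) proof is precisely to convert the $\mu$-ary $\kappa$-Boolean presentations of the $\down a_i$ into $\mu$-ary $(\kappa,\kappa)$-frame presentations by closing the generators under complements and adding the complementation relations, apply \cref{thm:frm-cover-pres}, and then transfer back along the reflective inclusion $\!{\kappa Bool} \subseteq \!{\kappa\kappa Frm}$. Your write-up just makes explicit the bookkeeping (that the filterquotients and the cover are unchanged on passing to $\!{\kappa\kappa Frm}$, and that the free functor $\ang{-}_\!{\kappa Bool}$ preserves $\mu$-presentability) that the paper leaves to the reader.
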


\begin{corollary}
\label{thm:bool-prod-pres}
For $\mu \ge \kappa$, a $\kappa$-ary product of $\mu$-presented $\kappa$-Boolean algebras is $\mu$-presented.
\qed
\end{corollary}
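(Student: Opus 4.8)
The plan is to deduce this exactly as the $(\lambda,\kappa)$-frame version \cref{thm:frm-prod-pres} was deduced from \cref{thm:frm-cover-pres}, now using the Boolean cover lemma \cref{thm:bool-cover-pres} in its place. So let $(A_i)_i$ be $<\kappa$-many $\mu$-presented $\kappa$-Boolean algebras, and form their product $A := \prod_i A_i$ in $\!{\kappa Bool}$ (which exists, being a small limit in a category of algebraic structures).

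First I would invoke the description of $\kappa$-ary products recalled at the start of \cref{sec:frm-prod}, which, as noted there, applies verbatim to $\kappa$-Boolean algebras: the elements $\delta_i \in A$ that are $\top$ in the $i$th coordinate and $\bot$ elsewhere form a $\kappa$-ary partition of $\top$, in particular a $\kappa$-ary cover of $\top$; and each projection $\pi_i \colon A \to A_i$ restricts to an isomorphism of the principal filterquotient $\down \delta_i$ with $A_i$. Hence each $\down \delta_i$ is $\mu$-presented. Applying \cref{thm:bool-cover-pres} to the cover $(\delta_i)_i$ then yields that $A$ is $\mu$-presented, as desired.

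There is essentially no obstacle here, since the substantive work is already packaged into \cref{thm:bool-cover-pres} (which in turn rests on the local-$(\lambda,\kappa)$-frame presentation results \cref{thm:frm-lfrm-idl-pres}--\cref{thm:frm-lfrm-pres} together with the complement-adjoining translation from $\kappa$-Boolean presentations to $(\kappa,\kappa)$-frame presentations described just above \cref{thm:bool-cover-pres}). The only point to be careful about is to use the \emph{Boolean} cover lemma \cref{thm:bool-cover-pres} rather than \cref{thm:frm-cover-pres}, because we need $A$ and its filterquotients to be $\mu$-presented \emph{as $\kappa$-Boolean algebras}, not merely as $(\kappa,\kappa)$-frames; that is precisely the extra content \cref{thm:bool-cover-pres} supplies, so the present corollary follows immediately (whence the $\qed$ in its statement).
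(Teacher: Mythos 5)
Your proof is correct and is exactly the argument the paper intends (the \qed in the statement signals that the corollary is immediate from \cref{thm:bool-cover-pres} applied to the partition $(\delta_i)_i$ with $\down\delta_i\cong A_i$, just as \cref{thm:frm-prod-pres} follows from \cref{thm:frm-cover-pres}). Nothing further is needed.
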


\begin{proposition}
\label{thm:frm-prod-bool}
The free functor $\ang{-}_\!{\kappa Bool} : \!{\kappa\kappa Frm} -> \!{\kappa Bool}$ preserves $\kappa$-ary products.
\end{proposition}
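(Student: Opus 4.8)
The plan is to follow the pattern of \cref{thm:frm-prod-compl}, but since we have not introduced a notion of ``local $\kappa$-Boolean algebra'' to play the role of $\!{\lambda\kappa LFrm}$, I would instead combine the description of $\kappa$-ary products via partitions of $\top$ — which, as noted above, applies verbatim in $\!{\kappa Bool}$ — with the fact that $\ang{-}_\!{\kappa Bool}$, being a left adjoint, preserves all colimits, in particular quotients. So: fix $<\kappa$-many $(\kappa,\kappa)$-frames $A_i$, let $A := \prod_i A_i$ with its canonical $\kappa$-ary partition $(\delta_i)_i$ of $\top_A$ (so $A \cong \prod_i \down\delta_i$, with each $\pi_i : A ->> \down\delta_i \cong A_i$ the principal filterquotient at $\delta_i$), and let $\eta : A -> B := \ang{A}_\!{\kappa Bool}$ be the unit, regarded as a homomorphism in $\!{\kappa\kappa Frm}$.

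First I would observe that, since $\eta$ preserves $\kappa$-ary joins and finite meets, $(\eta(\delta_i))_i$ is again a $\kappa$-ary partition of $\top$, now in $B$; hence $B \cong \prod_i \down\eta(\delta_i)$ in $\!{\kappa Bool}$, with $i$th projection the quotient map $\eta(\delta_i) \wedge (-)$, and each principal filterquotient $\down\eta(\delta_i)$ again a $\kappa$-Boolean algebra. The heart of the argument is then to identify $\down\eta(\delta_i)$ with $\ang{A_i}_\!{\kappa Bool}$, compatibly with the units. For this I would use that $A_i \cong \down\delta_i = A/{\sim^{\up\delta_i}}$, where by the discussion of upper congruences in \cref{sec:frm-quot} the order-congruence $\sim^{\up\delta_i}$ is generated by the single relation $\top_A \sim \delta_i$ (as $\up\delta_i$ is the $\kappa$-filter generated by $\delta_i$); equivalently, $A_i$ is the coequalizer in $\!{\kappa\kappa Frm}$ of the two homomorphisms from the free $(\kappa,\kappa)$-frame on one generator into $A$ that select $\top_A$ and $\delta_i$. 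Applying the colimit-preserving functor $\ang{-}_\!{\kappa Bool}$ and using naturality of the unit, $\ang{A_i}_\!{\kappa Bool}$ becomes the coequalizer in $\!{\kappa Bool}$ of the two homomorphisms from the free $\kappa$-Boolean algebra on one generator into $B$ that select $\top_B$ and $\eta(\delta_i)$ — that is, the quotient of $B$ by the $\kappa$-Boolean congruence generated by $\top_B \sim \eta(\delta_i)$. Since $\up\eta(\delta_i)$ is a principal, hence $\kappa$-, filter, this congruence is exactly $\sim^{\up\eta(\delta_i)}$ (again by \cref{sec:frm-quot}), so the quotient is $\down\eta(\delta_i)$; and a short diagram chase (the composite of $\pi_i$ with the unit $A_i -> \ang{A_i}_\!{\kappa Bool}$ agrees with the composite of $\eta$ with the projection $B ->> \down\eta(\delta_i)$) shows the identification is compatible with the units, so that the resulting isomorphism $B \cong \prod_i \down\eta(\delta_i) \cong \prod_i \ang{A_i}_\!{\kappa Bool}$ is the canonical comparison map induced by the $\ang{\pi_i}_\!{\kappa Bool}$, which is therefore an isomorphism.

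The one point I expect to require genuine care, as opposed to routine diagram chasing along the lines of \cref{thm:frm-prod-pushout} and \cref{thm:frm-prod-compl}, is the passage between congruences in $\!{\kappa\kappa Frm}$ and in $\!{\kappa Bool}$: one must verify that applying the free functor to the coequalizer presentation of the principal filterquotient $A_i \cong \down\delta_i$ yields the principal filterquotient $\down\eta(\delta_i)$ of $B$ itself, and not a proper further quotient — equivalently, that the $\kappa$-Boolean congruence on $B$ generated by $\top_B \sim \eta(\delta_i)$ coincides with $\sim^{\up\eta(\delta_i)}$. This rests on the fact recorded at the end of \cref{sec:frm-quot} that a $\kappa$-generated congruence on a $\kappa$-Boolean algebra has a principal filter as its class $[\top]$.
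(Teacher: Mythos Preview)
Your argument is correct and takes a genuinely different route from the paper.  The paper works with explicit local $(\kappa,\kappa)$-frame presentations: for each $A_i$ it builds a presentation $(G_i,R_i)$ with generators $A_i \sqcup \neg A_i$ and complement relations, glues these via \cref{thm:frm-lfrm-idl-pres} and \cref{rmk:frm-lfrm-disj-pres} into a presentation $(G',R')$, and shows that $\ang{G' \mid R'}_\!{\kappa\kappa LFrm}$ is simultaneously $\prod_i \ang{A_i}_\!{\kappa Bool}$ (by recognizing $(G',R')$ as the disjoint-ideal presentation of this product) and $\ang{\prod_i A_i}_\!{\kappa Bool}$ (by recognizing the complement relations $(\dagger)$ as adjoining complements in each $\down\top_{A_i}$, which since the $\delta_i$ partition $\top$ amounts to adjoining complements globally).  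Your argument instead adapts the proof of \cref{thm:frm-prod-pushout} across the free/forgetful adjunction: you use that $\ang{-}_\!{\kappa Bool}$ preserves the coequalizer presentation $A_i \cong A/{\sim^{\delta_i}}$, landing on $B/{\sim^{\eta(\delta_i)}} \cong \down\eta(\delta_i)$, and then assemble these via the partition $(\eta(\delta_i))_i$.  Your approach is more conceptual and essentially category-agnostic --- it would work for any left adjoint out of $\!{\kappa\kappa Frm}$ (or $\!{\kappa Frm}$, $\!{\kappa Bool}$) whose target category also has the partition description of $\kappa$-ary products --- whereas the paper's presentation-based proof better fits the section's theme and makes the parallel with \cref{thm:frm-prod-compl} more visible.
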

\begin{proof}
Let $A_i \in \!{\lambda\kappa Frm}$ be $<\kappa$-many $(\kappa, \kappa)$-frames.
For each $i$, let $G_i := A_i \sqcup \neg A_i$ where $\neg A_i$ consists of the symbols $\neg a$ for each $a \in A_i$, and let $R_i$ consist of all relations between \emph{local} $(\kappa, \kappa)$-frame terms over $A_i$ which hold in $A_i$, together with the relations
\begin{align*}
\tag{$\dagger$}
a \wedge \neg a = \bot, &&
a \vee \neg a = \top_{A_i}
\end{align*}
for each $a \in A_i$; here $\bot$ denotes the nullary join operation (as a local $(\kappa, \kappa)$-frame term), while $\top_{A_i}$ denotes the top element of $A_i$, \emph{not} the nullary meet operation.
It is easily seen that $\ang{G_i \mid R_i}_\!{\kappa\kappa LFrm} = \ang{A_i}_\!{\kappa Bool}$.
Now as in the proof of \cref{thm:frm-prod-compl}, let $\prod_i A_i = \ang{G \mid R}_\!{\lambda\kappa LFrm}$ be the presentation given by \cref{thm:frm-lfrm-idl-pres}, with $R$ from \cref{rmk:frm-lfrm-disj-pres}; so $G = \bigsqcup_i A_i$, and $R$ consists of all relations which hold in each $A_i$ together with the relations
\begin{align*}
\tag{$\ddagger$}
\top_{A_i} \wedge \top_{A_j} = \bot \quad\text{for $i \ne j$}.
\end{align*}
Let $G' := \bigsqcup_i G_i$ and $R' := \bigsqcup_i R_i \cup (\ddagger)$.
Then $\ang{G' \mid R'}_\!{\kappa\kappa LFrm} = \prod_i \ang{A_i}_\!{\kappa Bool}$, since $(G', R')$ is obtained via \cref{thm:frm-lfrm-idl-pres} and \cref{rmk:frm-lfrm-disj-pres} from the presentations $(G_i, R_i)$ for $\ang{A_i}_\!{\kappa Bool}$.
On the other hand, by ($\dagger$), $\ang{G' \mid R'}_\!{\kappa\kappa LFrm}$ is obtained from $\ang{G \mid R}_\!{\kappa\kappa LFrm} = \prod_i A_i$ by freely adjoining complements in each $\down \top_{A_i} \subseteq \ang{G \mid R}_\!{\kappa\kappa LFrm}$, which is identified with $\down \delta_i \subseteq \prod_i A_i$ (where $\delta_i$ are as defined at the beginning of this subsection); since the $\delta_i$ form a partition of $\top$, adjoining complements in each $\down \top_{A_i}$ amounts to adjoining complements globally, so $\ang{G' \mid R'}_\!{\kappa\kappa LFrm} = \ang{\prod_i A_i}_\!{\kappa Bool}$.
\end{proof}

\begin{corollary}
For $\kappa \le \kappa'$, the free functor $\!{\kappa Bool} -> \!{\kappa' Bool}$ preserves $\kappa$-ary products.
\end{corollary}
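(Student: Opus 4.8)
The plan is to realize the free functor $\!{\kappa Bool} -> \!{\kappa' Bool}$ as a composite of three functors, each of which is already known to preserve $\kappa$-ary products. Although there is no forgetful functor $\!{\kappa\kappa Frm} -> \!{\kappa Bool}$, the forgetful functor $\!{\kappa Bool} -> \!{\kappa\kappa Frm}$ is full and faithful (one of the $\rightarrowtail$ arrows in \eqref{diag:frm-cat}), so $\!{\kappa Bool}$ is a reflective full subcategory of $\!{\kappa\kappa Frm}$, with reflector $\ang{-}_\!{\kappa Bool}$, and in particular $\ang{-}_\!{\kappa Bool}$ followed by the inclusion $\!{\kappa Bool} `-> \!{\kappa\kappa Frm}$ is naturally isomorphic to the identity. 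Now, for $\kappa \le \kappa'$, the forgetful functor $\!{\kappa' Bool} -> \!{\kappa Bool}$ followed by the inclusion $\!{\kappa Bool} `-> \!{\kappa\kappa Frm}$ equals the chain of forgetful functors $\!{\kappa' Bool} -> \!{\kappa'\kappa' Frm} -> \!{\kappa\kappa Frm}$ (the composite $\!{\kappa' Bool} -> \!{\kappa\kappa Frm}$ lands in the full subcategory $\!{\kappa Bool}$). Taking left adjoints of this identity of forgetful composites — using that a left adjoint of a composite is the composite of the left adjoints in reverse order, together with $\ang{-}_\!{\kappa Bool} \circ ({\!{\kappa Bool} `-> \!{\kappa\kappa Frm}}) \cong \mathrm{id}$ — the free functor $\!{\kappa Bool} -> \!{\kappa' Bool}$ is then naturally isomorphic to the composite
\begin{equation*}
\!{\kappa Bool} `-> \!{\kappa\kappa Frm} \xrightarrow{\ang{-}_\!{\kappa'\kappa' Frm}} \!{\kappa'\kappa' Frm} \xrightarrow{\ang{-}_\!{\kappa' Bool}} \!{\kappa' Bool}.
\end{equation*}

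Granting this factorization, it remains to observe that each of the three functors preserves $\kappa$-ary products. The inclusion $\!{\kappa Bool} `-> \!{\kappa\kappa Frm}$ is a right adjoint (to $\ang{-}_\!{\kappa Bool}$), hence preserves all limits, in particular $\kappa$-ary products; concretely, a $\kappa$-ary Cartesian product of $\kappa$-Boolean algebras with pointwise operations is again a $\kappa$-Boolean algebra and is their product in $\!{\kappa\kappa Frm}$ by fullness. The functor $\ang{-}_\!{\kappa'\kappa' Frm} : \!{\kappa\kappa Frm} -> \!{\kappa'\kappa' Frm}$ preserves $\kappa$-ary products by \cref{thm:frm-prod-compl} (with $\lambda = \kappa$, $\lambda' = \kappa'$). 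And $\ang{-}_\!{\kappa' Bool} : \!{\kappa'\kappa' Frm} -> \!{\kappa' Bool}$ preserves $\kappa'$-ary, hence a fortiori $\kappa$-ary, products by \cref{thm:frm-prod-bool} applied with $\kappa'$ in place of $\kappa$. Composing, the free functor $\!{\kappa Bool} -> \!{\kappa' Bool}$ preserves $\kappa$-ary products.

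I do not expect a serious obstacle; the only step needing care is the identification of the free functor with the displayed composite, i.e.\ checking that the forgetful functors compose correctly through $(\kappa,\kappa)$-frames and then dualizing to left adjoints. As an alternative that bypasses this bookkeeping, one could argue directly in the style of the proof of \cref{thm:frm-prod-bool}: given $<\kappa$-many $A_i \in \!{\kappa Bool}$, the partition $(\delta_i)_i$ of $\top$ in $\prod_i A_i$ witnessing $\prod_i A_i \cong \prod_i \down\delta_i$ is carried by the unit $\eta : \prod_i A_i -> \ang{\prod_i A_i}_\!{\kappa' Bool}$ to a $\kappa$-ary partition $(\eta\delta_i)_i$ of $\top$ (the equations $\delta_i \wedge \delta_j = \bot$ for $i \ne j$ and $\bigvee_i \delta_i = \top$ are preserved by any $\kappa$-Boolean homomorphism), so $\ang{\prod_i A_i}_\!{\kappa' Bool} \cong \prod_i \down\eta\delta_i$; and since each projection $\prod_i A_i ->> \down\delta_i \cong A_i$ is the principal filterquotient at $\delta_i$, hence the pushout of $\prod_i A_i \xleftarrow{x \mapsto \delta_i} \ang{x}_\!{\kappa Bool} \to 2$ (the second map $x \mapsto \top$), which the left adjoint $\ang{-}_\!{\kappa' Bool}$ preserves, one obtains $\down\eta\delta_i \cong \ang{A_i}_\!{\kappa' Bool}$ compatibly with the projections.
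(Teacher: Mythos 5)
Your proposal is correct and takes essentially the same route as the paper, which simply observes that the free functor is the composite $\!{\kappa Bool} \hookrightarrow \!{\kappa\kappa Frm} \to \!{\kappa'\kappa' Frm} \to \!{\kappa' Bool}$ and invokes \cref{thm:frm-prod-compl} and \cref{thm:frm-prod-bool}; your careful justification of that factorization via adjoints of the chain of forgetful functors is exactly the bookkeeping the paper leaves implicit.
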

\begin{proof}
The free functor is the composite $\!{\kappa Bool} -> \!{\kappa\kappa Frm} -> \!{\kappa'\kappa' Frm} -> \!{\kappa' Bool}$.
\end{proof}

\subsection{Posites}
\label{sec:frm-post}

In this subsection, we discuss a method for presenting $\kappa$-$\bigvee$-lattices and $\kappa$-frames via generators and $\bigvee$-relations which are ``saturated'' (in the sense described in \cref{sec:pres-adj}).
This method is well-known in the case $\kappa = \infty$ (see \cite[II~2.11]{Jstone}), where it is derived from the more general theory of sites in sheaf theory (see \cite[C2.1]{Jeleph}).
The case $\kappa < \infty$ does not appear to have been spelled out before, although it is likewise a special case of the more general theory of $\kappa$-ary sites of Shulman~\cite{Sksite}.
As in \cref{sec:frm-quot}, we also make a point of working in the $\bigvee$-only context for as long as possible before introducing meets.
Our presentation is partly chosen in order to highlight the analogy with some variants and generalizations of posites we will consider in \cref{sec:upkzfrm,sec:dpoly} below.

Let $A$ be a poset, and consider an arbitrary $\kappa$-$\bigvee$-lattice presented by $A$ (qua poset) together with additional relations imposed.
In other words, we are considering an arbitrary quotient of the free $\kappa$-$\bigvee$-lattice $\@L_\kappa(A)$ generated by $A$.
As described in \cref{sec:frm-quot}, such a quotient is determined by an order-congruence $\lesim$ on $\@L_\kappa(A)$, which is completely determined by the relation
\begin{align*}
a \lhd C  \coloniff  \down a \lesim C
\end{align*}
between $A$ and $\@L_\kappa(A)$, read ``\defn{$a$ is covered by $C$}'', via compatibility of $\lesim$ with $\kappa$-ary joins:
\begin{align*}
B \lesim C  \iff  \forall a \in B\, (a \lhd C).
\end{align*}
The other requirements on $\lesim$ translate to
\begin{itemize}
\item  (reflexivity) $a \in C \implies a <| C$;
\item  (left-transitivity) $a \le b <| C \implies a <| C$;
\item  (right-transitivity) if $a <| C$, and $c <| D$ for every $c \in C$, then $a <| D$.
\end{itemize}
Note that reflexivity and right-transitivity imply
\begin{itemize}
\item  (monotonicity) $a <| C \subseteq D \implies a <| D$.
\end{itemize}
We call a relation ${<|} \subseteq A \times \@L_\kappa(A)$ obeying these axioms a \defn{$\kappa$-$\bigvee$-coverage} on $A$, or just a \defn{$\bigvee$-coverage} when $\kappa = \infty$.
We call the poset $A$ equipped with a $\kappa$-$\bigvee$-coverage $<|$ a \defn{$\kappa$-$\bigvee$-posite}.
We think of a $\kappa$-$\bigvee$-posite $(A, <|)$ as a ``saturated'' presentation of a $\kappa$-$\bigvee$-lattice.
Indeed, the order-congruence $\lesim$ corresponding to $<|$ is clearly generated by declaring $\down a \lesim C = \bigvee_{c \in C} \down c$ for $a <| C$, so that the quotient is the presented $\kappa$-$\bigvee$-lattice
\begin{align*}
\ang{A \mid {<|}}_\!{\kappa{\bigvee}Lat} := \@L_\kappa(A)/{\sim} = \ang{A \qua \!{Pos} \mid a \le \bigvee C \text{ for } a <| C}_\!{\kappa{\bigvee}Lat};
\end{align*}
and $<|$ is saturated in that it contains \emph{all} relations of the form $a \le \bigvee C$ that hold in $\ang{A \mid {<|}}_\!{\kappa{\bigvee}Lat}$.

Given a $\kappa$-$\bigvee$-coverage $<|$, we write $a <| C$ for an arbitrary subset $C \subseteq A$ to mean that there is a $\kappa$-ary $B \subseteq C$ which generates a lower set $\down B \in \@L_\kappa(A)$ such that $a <| \down B$.
It is easily seen that this extended $<|$ relation continues to obey the three conditions above.
In particular, taking $C \in \@L_\lambda(A)$ for $\lambda \ge \kappa$ means that a $\kappa$-$\bigvee$-coverage may be regarded as a $\lambda$-$\bigvee$-coverage, hence as a $\bigvee$-coverage.

The $\kappa$-ideal closure operator $\simdown : \@L_\kappa(A) -> \kappa\@I(\@L_\kappa(A))$ corresponding to the order-congruence $\lesim$ corresponding to $<|$ takes $C \in \@L_\kappa(A)$ to the set of all $B \in \@L_\kappa(A)$ such that $b <| C$ for all $b \in B$.
Under the isomorphism $\kappa\@I(\@L_\kappa(A)) \cong \@L(A)$, $\simdown$ becomes
\begin{align*}
\downtri : \@L_\kappa(A) &--> \@L(A) \\
C &|--> \{a \in A \mid a <| C\}.
\end{align*}
The $\kappa$-directed-join-preserving extension $\downtri_* : \@L(A) \cong \kappa\@I(\@L_\kappa(A)) -> \@L(A)$ is defined the same way as $\downtri$ but regarding $<|$ as a $\bigvee$-coverage as above.
By \cref{sec:frm-quot}, the presented $\bigvee$-lattice $\ang{A \mid {<|}}_\!{{\bigvee}Lat}$ may be realized as the set
\begin{align*}
{<|}\@I(A) := \downtri_*(\@L(A)) \subseteq \@L(A)
\end{align*}
of \defn{$<|$-ideals} in $A$, i.e., lower sets $D \subseteq A$ which are closed under $<|$ in that $a <| C \subseteq D \implies a \in D$, under the \defn{principal $<|$-ideals} map
\begin{align*}
\downtri := \downtri \circ \down : A &--> {<|}\@I(A) \\
a &|--> \downtri \down a = \{b \in A \mid b <| \{a\}\}.
\end{align*}
The presented $\kappa$-$\bigvee$-lattice $\ang{A \mid {<|}}_\!{\kappa{\bigvee}Lat}$ may be realized as the image of $\downtri : \@L_\kappa(A) -> {<|}\@I(A)$, i.e., as the $\kappa$-generated, or equivalently $\kappa$-compact, $<|$-ideals ${<|}\@I_\kappa(A) := \downtri(\@L_\kappa(A)) = {<|}\@I(A)_\kappa \subseteq {<|}\@I(A)$.

The ``injection of generators'' $\downtri : A -> {<|}\@I(A)$ is an order-embedding iff for all $a, b \in A$, whenever $\downtri a \subseteq \downtri b$, i.e., $a <| \{b\}$, then already $a \le b$ in $A$; in other words, this means $\downtri b = \down b$, i.e., principal ideals are already $<|$-ideals.
In this case, we call $<|$ \defn{separated} (or \defn{subcanonical}).
There is a largest separated $\kappa$-$\bigvee$-coverage, called the \defn{canonical $\kappa$-$\bigvee$-coverage}, given by $a <| C$ (for $C \in \@L_\kappa(A)$ or $C \in \@P_\kappa(A)$) iff every upper bound of $C$ is $\ge a$; if $A$ is already a $\kappa$-$\bigvee$-lattice, this means
\begin{align*}
a <| C \iff a \le \bigvee C,
\end{align*}
i.e., $<|$ corresponds to the canonical presentation $A \cong \ang{A \mid \text{all relations which hold in $A$}}_\!{\kappa{\bigvee}Lat}$.
(For a general poset $A$, the canonical $\bigvee$-coverage presents the MacNeille completion of $A$.)

We regard $\kappa$-$\bigvee$-posites $(A, <|)$ as infinitary first-order structures with a partial order together with $(1+\lambda)$-ary relations $<|$ for all $\lambda < \kappa$.
Thus, by a \defn{homomorphism of $\kappa$-$\bigvee$-posites} $f : (A, {<|_A}) -> (B, {<|_B})$, we mean a monotone map $f : A -> B$ which preserves covers, i.e., $a <|_A C \implies f(a) <|_B f(C)$.
Let $\!{\kappa{\bigvee}Post}$ denote the \defn{category of $\kappa$-$\bigvee$-posites}.
Then $\!{\kappa{\bigvee}Post}$ is locally $\kappa$-presentable (see \cref{sec:cat-lim}).
For $\kappa \le \lambda$, we have a forgetful functor $\!{\lambda{\bigvee}Post} -> \!{\kappa{\bigvee}Post}$, whose left adjoint is given by regarding a $\kappa$-$\bigvee$-coverage as a $\lambda$-$\bigvee$-coverage as described above.

Regarding $\kappa$-$\bigvee$-lattices as $\kappa$-$\bigvee$-posites with the canonical $\kappa$-$\bigvee$-coverage, we have a full and faithful forgetful functor $\!{\kappa{\bigvee}Lat} -> \!{\kappa{\bigvee}Post}$, whose left adjoint is given by taking presented $\kappa$-$\bigvee$-lattices $(A, {<|}) |-> \ang{A \mid {<|}}_\!{\kappa{\bigvee}Lat} \cong {<|}\@I_\kappa(A)$; thus $\!{\kappa{\bigvee}Lat}$ is a reflective subcategory of $\!{\kappa{\bigvee}Post}$.
The adjunction unit $\downtri : (A, {<|}) -> \ang{A \mid {<|}}_\!{\kappa{\bigvee}Lat}$ ``reflects covers'', i.e., $a <| C \iff \downtri a <| \downtri(C)$, which captures the ``saturation'' of $<|$ as described above; $<|$ is separated iff $\downtri$ is also order-reflecting, hence an embedding of $\kappa$-$\bigvee$-posites.

Summarizing, we have the following commutative diagram of free/forgetful adjunctions:
\begin{equation}
\label{diag:frm-vpost}
\begin{tikzcd}
\!{Pos}
    \dar[equal]
    \rar[shift left=2, "\@L_\omega"] &
\!{{\vee}Lat}
    \lar[hook, shift left=2, right adjoint]
    \dar[rightarrowtail, shift left=2, right adjoint']
    \rar[shift left=2, "\@I_\sigma"] &
\!{\sigma{\bigvee}Lat}
    \lar[hook, shift left=2, right adjoint]
    \dar[rightarrowtail, shift left=2, right adjoint']
    \rar[shift left=2, "\sigma\@I_{\omega_2}"] &
\dotsb
    \lar[hook, shift left=2, right adjoint]
    \rar[shift left=2] &
\!{\kappa{\bigvee}Lat}
    \lar[hook, shift left=2, right adjoint]
    \dar[rightarrowtail, shift left=2, right adjoint']
    \rar[shift left=2, "\kappa\@I_\lambda"] &
\!{\lambda{\bigvee}Lat}
    \lar[hook, shift left=2, right adjoint]
    \dar[rightarrowtail, shift left=2, right adjoint']
    \rar[shift left=2, "\lambda\@I"] &
\!{{\bigvee}Lat}
    \lar[hook, shift left=2, right adjoint]
    \dar[rightarrowtail, shift left=2, right adjoint']
\\
\!{Pos}
    \rar[rightarrowtail, shift left=2] &
\!{{\vee}Post}
    \lar[shift left=2, right adjoint]
    \uar[shift left=2, "\ang{-}_\!{{\vee}Lat}"]
    \rar[rightarrowtail, shift left=2] &
\!{\sigma{\bigvee}Post}
    \lar[shift left=2, right adjoint]
    \uar[shift left=2, "\ang{-}_\!{\sigma{\bigvee}Lat}"]
    \rar[rightarrowtail, shift left=2] &
\dotsb
    \lar[shift left=2, right adjoint]
    \rar[rightarrowtail, shift left=2] &
\!{\kappa{\bigvee}Post}
    \lar[shift left=2, right adjoint]
    \uar[shift left=2, "\ang{-}_\!{\kappa{\bigvee}Lat}"]
    \rar[rightarrowtail, shift left=2] &
\!{\lambda{\bigvee}Post}
    \lar[shift left=2, right adjoint]
    \uar[shift left=2, "\ang{-}_\!{\lambda{\bigvee}Lat}"]
    \rar[rightarrowtail, shift left=2] &
\!{{\bigvee}Post}
    \lar[shift left=2, right adjoint]
    \uar[shift left=2, "\ang{-}_\!{{\bigvee}Lat}"]
\end{tikzcd}
\end{equation}
As before, the $\rightarrowtail$ arrows denote full embeddings.

We now consider posites which present frames.
Let $<|$ be a $\kappa$-$\bigvee$-coverage on a $\wedge$-lattice%
\footnote{For simplicity, we only consider posites based on a $\wedge$-lattice.
There is a more general theory of ``flat posites'' based on arbitrary posets (but still presenting frames); see \cite[C1.1.16(e)]{Jeleph}, \cite{Sksite}.}
$A$, corresponding to a $\kappa$-$\bigvee$-lattice order-congruence $\lesim$ on $\@L_\kappa(A)$ as described above.
From \cref{sec:frm-quot}, $\lesim$ is a $\kappa$-frame order-congruence iff it is $\wedge$-stable, which is easily seen to be equivalent to
\begin{itemize}
\item  ($\wedge$-stability) $a \le b <| C \implies a <| a \wedge C := \{a \wedge c \mid c \in C\}$.
\end{itemize}
Note that this (together with reflexivity and right-transitivity) implies left-transitivity.
We call a $\wedge$-stable $\kappa$-$\bigvee$-coverage $<|$ a \defn{$\kappa$-coverage}, and call a $\kappa$-$\bigvee$-posite $(A, <|)$ with $A \in \!{{\wedge}Lat}$ and $\wedge$-stable $<|$ a \defn{$\kappa$-posite}.
Thus a $\kappa$-posite $(A, <|)$ can be seen as a $\kappa$-frame presentation which is also a $\kappa$-$\bigvee$-lattice presentation (a point of view due to Abramsky--Vickers~\cite{AVquant}):
\begin{align*}
\ang{A \mid {<|}}_\!{\kappa Frm} := \@L_\kappa(A)/{\sim}
&= \ang{A \qua \!{{\wedge}Lat} \mid a \le \bigvee C \text{ for } a <| C}_\!{\kappa Frm} \\
&= \ang{A \qua \!{Pos} \mid a \le \bigvee C \text{ for } a <| C}_\!{\kappa{\bigvee}Lat}
= \ang{A \mid {<|}}_\!{\kappa{\bigvee}Lat}.
\end{align*}

By $\wedge$-stability and right-transitivity, a $\kappa$-coverage satisfies
\begin{align*}
a <| C \iff a <| a \wedge C,
\end{align*}
hence is completely determined by its restriction to pairs $(a, C)$ such that $C \subseteq \down a$ (this is how coverages are usually presented in the literature).
This restricted $<|$ continues to satisfy reflexivity and $\wedge$-stability as stated (assuming all occurrences of $<|$ obey the restriction), while right-transitivity is replaced with
\begin{itemize}
\item  (right-transitivity$'$) if $a <| C$, and $c <| c \wedge D$ for every $c \in C$, then $a <| D$.
\end{itemize}
As in the $\bigvee$-case, we also extend $<|$ to $(a, C)$ where $C \subseteq A$ is an arbitrary subset.

The largest $\kappa$-coverage on a $\wedge$-lattice $A$, called the \defn{canonical $\kappa$-coverage}, is given by $a <| C$ (for $\kappa$-ary $C$) iff for every $b \le a$, every upper bound of $b \wedge C$ is $\ge b$, i.e., $b = \bigvee (b \wedge C)$; if $A$ is already a $\kappa$-frame, this just means $a \le \bigvee C$ (by distributivity of $b \wedge \bigvee C$).
This gives a full inclusion $\!{\kappa Frm} -> \!{\kappa Post}$ into the \defn{category of $\kappa$-posites}, where by a homomorphism of $\kappa$-posites we mean a cover-preserving $\wedge$-lattice homomorphism.
The left adjoint is given by taking presented $\kappa$-frames.
We have the following commutative diagram of free/forgetful adjunctions, analogous to \eqref{diag:frm-vpost}:
\begin{equation*}
\label{diag:frm-post}
\begin{tikzcd}
\!{{\wedge}Lat}
    \dar[equal]
    \rar[shift left=2, "\@L_\omega"] &
\!{DLat}
    \lar[hook, shift left=2, right adjoint]
    \dar[rightarrowtail, shift left=2, right adjoint']
    \rar[shift left=2, "\@I_\sigma"] &
\!{\sigma Frm}
    \lar[hook, shift left=2, right adjoint]
    \dar[rightarrowtail, shift left=2, right adjoint']
    \rar[shift left=2, "\sigma\@I_{\omega_2}"] &
\dotsb
    \lar[hook, shift left=2, right adjoint]
    \rar[shift left=2] &
\!{\kappa Frm}
    \lar[hook, shift left=2, right adjoint]
    \dar[rightarrowtail, shift left=2, right adjoint']
    \rar[shift left=2, "\kappa\@I_\lambda"] &
\!{\lambda Frm}
    \lar[hook, shift left=2, right adjoint]
    \dar[rightarrowtail, shift left=2, right adjoint']
    \rar[shift left=2, "\lambda\@I"] &
\!{Frm}
    \lar[hook, shift left=2, right adjoint]
    \dar[rightarrowtail, shift left=2, right adjoint']
\\
\!{{\wedge}Lat}
    \rar[rightarrowtail, shift left=2] &
\!{\omega Post}
    \lar[shift left=2, right adjoint]
    \uar[shift left=2, "\ang{-}_\!{DLat}"]
    \rar[rightarrowtail, shift left=2] &
\!{\sigma Post}
    \lar[shift left=2, right adjoint]
    \uar[shift left=2, "\ang{-}_\!{\sigma Frm}"]
    \rar[rightarrowtail, shift left=2] &
\dotsb
    \lar[shift left=2, right adjoint]
    \rar[rightarrowtail, shift left=2] &
\!{\kappa Post}
    \lar[shift left=2, right adjoint]
    \uar[shift left=2, "\ang{-}_\!{\kappa Frm}"]
    \rar[rightarrowtail, shift left=2] &
\!{\lambda Post}
    \lar[shift left=2, right adjoint]
    \uar[shift left=2, "\ang{-}_\!{\lambda Frm}"]
    \rar[rightarrowtail, shift left=2] &
\!{Post}
    \lar[shift left=2, right adjoint]
    \uar[shift left=2, "\ang{-}_\!{Frm}"]
\end{tikzcd}
\end{equation*}
Moreover, this diagram ``sits above'' \eqref{diag:frm-vpost}, in that for each $\kappa$, we have a square
\begin{equation}
\label{diag:frm-post-vpost}
\begin{tikzcd}
\!{\kappa{\bigvee}Lat}
    \dar[rightarrowtail, shift left=2, right adjoint']
    \rar[dashed, shift left=2] &
\!{\kappa Frm}
    \lar[hook, shift left=2, right adjoint]
    \dar[rightarrowtail, shift left=2, right adjoint'] \\
\!{\kappa{\bigvee}Post}
    \uar[shift left=2, "\ang{-}_\!{\kappa{\bigvee}Lat}"] 
    \rar[dashed, shift left=2] &
\!{\kappa Post}
    \lar[hook, shift left=2, right adjoint]
    \uar[shift left=2, "\ang{-}_\!{\kappa Frm}"]
\end{tikzcd}
\end{equation}
in which not only do the forgetful functors commute, but also the two composites $\!{\kappa Post} -> \!{\kappa{\bigvee}Lat}$ agree.

We conclude this subsection with some simple consequences about presentations.
First, translating \cref{thm:frm-cong-ltrans}, \cref{thm:frm-nuc-ltrans}, and \cref{thm:frm-cong-meet} about $\lesim$ to $<|$ yields

\begin{proposition}
\label{thm:frm-cov-ltrans}
Let $A$ be a poset, $<|_0$ be a binary relation between $A$ and $\@L_\kappa(A)$ (or $\@P_\kappa(A)$).
Let $<|$ be the smallest relation which satisfies reflexivity, left-transitivity, and ``right-transitivity with respect to $<|_0$'', i.e., if $a <|_0 C$, and $c <| D$ for every $c \in C$, then $a <| D$.
Then $<|$ satisfies right-transitivity, hence is the $\kappa$-$\bigvee$-coverage generated by $<|_0$.
\qed
\end{proposition}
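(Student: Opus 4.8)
The plan is to transfer \cref{thm:frm-cong-ltrans} across the correspondence, set up at the beginning of this subsection, between $\kappa$-$\bigvee$-coverages on $A$ and $\kappa$-$\bigvee$-lattice order-congruences on $\@L_\kappa(A)$. First I would isolate the purely formal bijection underlying that correspondence: writing $a <| C \coloniff \down a \lesim C$ one way and $B \lesim C \coloniff \forall a \in B\,(a <| C)$ the other, one obtains mutually inverse, inclusion-preserving bijections between the relations ${<|} \subseteq A \times \@L_\kappa(A)$ satisfying reflexivity and left-transitivity and the relations $\lesim$ on $\@L_\kappa(A)$ satisfying conditions (i), (ii), (iii) of \cref{thm:frm-cong-ltrans}. (If $<|_0$ is presented as a relation between $A$ and $\@P_\kappa(A)$ rather than $\@L_\kappa(A)$, I would replace each right-hand side $C$ by $\down C$ at the outset; by the convention extending $<|$ to arbitrary subsets, this changes nothing.)

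Under this bijection I would take $\prec$ to be the relation on $\@L_\kappa(A)$ with $\down a \prec C$ iff $a <|_0 C$, and then match up the remaining hypotheses and conclusion of \cref{thm:frm-cong-ltrans}. Condition (iv) for $\lesim$ relative to $\prec$ unwinds exactly to ``right-transitivity with respect to $<|_0$'' for the corresponding $<|$; and, given (ii) and (iii), transitivity of $\lesim$ unwinds exactly to right-transitivity of $<|$ (the nontrivial direction using that every $C \in \@L_\kappa(A)$ is a $\kappa$-ary join of principal ideals). Hence the smallest $<|$ obeying reflexivity, left-transitivity, and right-transitivity with respect to $<|_0$ corresponds to the smallest $\lesim$ obeying (i)--(iv), which by \cref{thm:frm-cong-ltrans} is transitive; translating back shows that this $<|$ satisfies right-transitivity, so is a $\kappa$-$\bigvee$-coverage. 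That it is moreover the coverage \emph{generated by} $<|_0$ is then formal: any $\kappa$-$\bigvee$-coverage containing $<|_0$ obeys all three defining conditions of $<|$ (right-transitivity with respect to $<|_0$ being an instance of right-transitivity), hence contains $<|$; while $<|$ itself contains $<|_0$ by reflexivity together with right-transitivity with respect to $<|_0$.

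Alternatively, one can bypass the translation and argue directly, in the style of the alternative argument in the proof of \cref{thm:frm-cong-ltrans}: put $a \lhd' C \coloniff \forall D\,\bigl(\,(\forall c \in C\,(c <| D)) \implies a <| D\,\bigr)$, verify that $\lhd'$ satisfies reflexivity, left-transitivity, and right-transitivity with respect to $<|_0$, conclude ${<|} \subseteq {\lhd'}$ by minimality of $<|$, and then read off right-transitivity of $<|$ by unwinding the definition of $\lhd'$. On either route, the only step that calls for care is the surjectivity half of the bijection in the first paragraph — that a relation $\lesim$ satisfying (i)--(iii) is determined by its restriction to principal left-hand sides, hence arises from some $<|$ — together with the (harmless) passage between $\@P_\kappa(A)$- and $\@L_\kappa(A)$-arguments. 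Everything else is a mechanical unwinding of definitions, so I do not anticipate a genuine obstacle.
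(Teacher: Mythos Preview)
Your proposal is correct and follows exactly the paper's approach: the paper states this proposition with a bare \qed, having just said that it is obtained by ``translating \cref{thm:frm-cong-ltrans} \ldots\ about $\lesim$ to $<|$'', which is precisely the transfer you carry out in detail (and your alternative direct argument mirrors the alternative proof in \cref{thm:frm-cong-ltrans}).
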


\begin{corollary}
\label{thm:frm-cidl-ltrans}
Let $A$ be a poset, $<|_0$ be a binary relation between $A$ and $\@L_\kappa(A)$ (or $\@P_\kappa(A)$), $<|$ be the $\kappa$-$\bigvee$-coverage generated by $<|_0$.
Then a lower set $D \in \@L(A)$ is a $<|$-ideal iff it is a $<|_0$-ideal.
\qed
\end{corollary}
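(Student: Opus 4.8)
The ``only if'' direction is immediate, since the $\kappa$-$\bigvee$-coverage $<|$ generated by $<|_0$ contains $<|_0$: a lower set closed under $<|$ is \emph{a fortiori} closed under $<|_0$. For the converse, the plan is to fix a lower set $D \in \@L(A)$ which is a $<|_0$-ideal and show that it is a $<|$-ideal. To do this, I would introduce the auxiliary relation $\sqsubset$ between $A$ and $\@L_\kappa(A)$ (or $\@P_\kappa(A)$) given by
\[ a \sqsubset C \quad\coloniff\quad (C \subseteq D \implies a \in D). \]
With this definition, the hypothesis that $D$ is a $<|_0$-ideal is exactly the statement ${<|_0} \subseteq {\sqsubset}$, while the desired conclusion that $D$ is a $<|$-ideal is exactly the statement ${<|} \subseteq {\sqsubset}$ (modulo the routine passage between $\kappa$-ary $C$ and arbitrary $C$, discussed below). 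So everything reduces to proving ${<|} \subseteq {\sqsubset}$.

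For that, I would invoke \cref{thm:frm-cov-ltrans}, which characterizes $<|$ as the \emph{smallest} relation that is reflexive, left-transitive, and right-transitive with respect to $<|_0$; hence it suffices to check that $\sqsubset$ enjoys these three closure properties. Reflexivity ($a \in C \implies a \sqsubset C$) and left-transitivity ($a \le b \sqsubset C \implies a \sqsubset C$) use only that $D$ is downward closed. Right-transitivity with respect to $<|_0$ --- if $a <|_0 C$ and $c \sqsubset E$ for every $c \in C$, then $a \sqsubset E$ --- is where the $<|_0$-ideal hypothesis is used: assuming $E \subseteq D$, each $c \in C$ lies in $D$ because $c \sqsubset E$, so $C \subseteq D$, and then $a \in D$ since $D$ is a $<|_0$-ideal. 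Each of these is a one-line verification, so $\sqsubset$ satisfies all three conditions and therefore contains $<|$.

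Finally, I would record the bookkeeping remark that for a lower set $D$, closure under covers $a <| C$ by \emph{arbitrary} $C \subseteq D$ is automatic once closure under covers by $\kappa$-ary lower sets is known: a witnessing $\kappa$-ary $B \subseteq C$ has $\down B \subseteq D$ and $a <| \down B$, hence $a \in D$ (and likewise for $<|_0$-ideals). I do not expect any genuine obstacle here; the only thing demanding a little care is keeping straight the $\kappa$-ary-lower-set form and the arbitrary-subset form of the notions ``$<|_0$-ideal'' and ``$<|$-ideal'', and checking that the ad hoc relation $\sqsubset$ really does obey the three axioms of \cref{thm:frm-cov-ltrans}. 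This argument is precisely the ``$<|$-side'' reflection of how \cref{thm:frm-nuc-ltrans} follows from \cref{thm:frm-cong-ltrans} (the ``alternative'' relation trick used there), transported across the isomorphism $\kappa\@I(\@L_\kappa(A)) \cong \@L(A)$; alternatively, one could simply apply \cref{thm:frm-nuc-ltrans} verbatim to the $\kappa$-$\bigvee$-lattice $\@L_\kappa(A)$ equipped with the generating relation $\down a \prec C \coloniff a <|_0 C$.
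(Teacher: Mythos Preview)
Your proposal is correct and matches the paper's approach: the paper states the corollary with an immediate \qed, since (as announced just before \cref{thm:frm-cov-ltrans}) it is simply the translation of \cref{thm:frm-nuc-ltrans} from the $\lesim$-side to the $<|$-side. Your direct argument via \cref{thm:frm-cov-ltrans} and the auxiliary relation $\sqsubset$ is exactly what that translation unwinds to, and you even note the alternative of applying \cref{thm:frm-nuc-ltrans} verbatim to $\@L_\kappa(A)$, which is the paper's intended route.
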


\begin{proposition}
\label{thm:frm-cov-meet}
Let $A$ be a $\wedge$-lattice, $<|_0$ be a $\wedge$-stable binary relation between $A$ and $\@L_\kappa(A)$ (or $\@P_\kappa(A)$).
Then the $\kappa$-$\bigvee$-coverage $<|$ generated by $<|_0$ is still $\wedge$-stable, hence is also the $\kappa$-coverage generated by $<|_0$.
\qed
\end{proposition}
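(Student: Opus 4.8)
The plan is to transpose the proof of \cref{thm:frm-cong-meet} --- the order-congruence statement that this proposition ``translates'' --- into the language of coverages, using an auxiliary relation in the role of the $\lesim'$ appearing there. Write ${<|}$ for the $\kappa$-$\bigvee$-coverage generated by $<|_0$; by \cref{thm:frm-cov-ltrans} it is the smallest relation satisfying reflexivity, left-transitivity, and right-transitivity relative to $<|_0$, so in particular $<|_0 \subseteq {<|}$, and $<|$ has genuine right-transitivity, hence monotonicity, and is extended to arbitrary right-hand subsets of $A$ as in the text. Define a relation ${<|}'$ between $A$ and $\kappa$-ary subsets (or $\kappa$-generated lower sets) $C$ by declaring $a <|' C$ to hold iff $b <| (b \wedge C)$ for every $b \le a$, where $b \wedge C := \{b \wedge c \mid c \in C\}$. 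Then ${<|}'$ is $\wedge$-stable essentially by construction: if $a \le b$ and $b <|' C$, then for every $a' \le a$ we have $a' <| (a' \wedge C) = a' \wedge (a \wedge C)$, i.e.\ $a <|' (a \wedge C)$, and in particular $a <| (a \wedge C)$. So it suffices to show ${<|}'$ is a $\kappa$-$\bigvee$-coverage containing $<|_0$: then ${<|} \subseteq {<|}'$, so $b <| C$ implies $b <|' C$, which forces $\wedge$-stability of $<|$; being a $\wedge$-stable $\kappa$-$\bigvee$-coverage, $<|$ is then a $\kappa$-coverage, and as any $\kappa$-coverage above $<|_0$ is in particular a $\kappa$-$\bigvee$-coverage above $<|_0$ it is the least such, i.e.\ the $\kappa$-coverage generated by $<|_0$.

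Next I would verify the coverage axioms for ${<|}'$. Containment of $<|_0$: from $a <|_0 C$ and $b \le a$, $\wedge$-stability of $<|_0$ gives $b <|_0 (b \wedge C)$, hence $b <| (b \wedge C)$ since $<|_0 \subseteq {<|}$. Reflexivity: if $a$ lies in a lower set $C$ and $b \le a$, then $b \in C$, so $b = b \wedge b \in b \wedge C$, whence $b <| (b \wedge C)$ by reflexivity of $<|$. Left-transitivity is immediate, since the condition defining $a <|' C$ already quantifies over all $b \le a$. The one substantive step is right-transitivity: assume $a <|' C$ and $c <|' D$ for every $c \in C$, and fix $b \le a$, so $b <| (b \wedge C)$. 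For any $e$ in the lower set generated by $b \wedge C$, say $e \le b \wedge c$ with $c \in C$, we have $e \le c$, so $c <|' D$ gives $e <| (e \wedge D)$; and since $e \le b$ we get $e \wedge D \subseteq \down(b \wedge D)$, so monotonicity of $<|$ yields $e <| (b \wedge D)$. Applying right-transitivity of $<|$ to $b <| (b \wedge C)$ then gives $b <| (b \wedge D)$, i.e.\ $a <|' D$.

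The main obstacle --- really the only step that is not formal --- is this last computation, which is exactly the coverage incarnation of the fact that $a \lesim' b \coloniff \forall c\,(c \wedge a \lesim c \wedge b)$ defines a $\kappa$-$\bigvee$-lattice order-congruence in \cref{thm:frm-cong-meet}; its content is that composing covers commutes with meeting against a fixed element, and its subtlety is concentrated entirely in the passage $e \le b \wedge c \le c$ together with $e \wedge D \subseteq \down(b \wedge D)$ and the conventions for extending $<|$ to arbitrary right-hand subsets. Alternatively one could argue by transfinite induction along the generation of $<|$ from $<|_0$, checking that each derived instance $a <| C$ already carries its $\wedge$-stable consequences; but that requires tracking the stage structure, which the auxiliary relation avoids. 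The remaining bookkeeping --- and the analogous ``translations'' yielding \cref{thm:frm-cov-ltrans} and \cref{thm:frm-cidl-ltrans} from \cref{thm:frm-cong-ltrans} and \cref{thm:frm-nuc-ltrans} --- is routine.
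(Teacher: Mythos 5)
Your proposal is correct and is essentially the paper's intended argument: the paper leaves \cref{thm:frm-cov-meet} as a ``translation'' of \cref{thm:frm-cong-meet}, whose proof checks that the auxiliary relation $a \lesim' b \coloniff \forall c\, (c \wedge a \lesim c \wedge b)$ is an order-congruence containing $\prec$, and your $<|'$ is precisely the coverage incarnation of that relation, with the right-transitivity computation (via $e \le b \wedge c \le c$ and monotonicity) being the correct transposition of the one nontrivial verification.
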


Next, any presentation of a $\kappa$-$\bigvee$-lattice or $\kappa$-frame can be canonically turned into a $\kappa$-($\bigvee$-)posite.
For suppose $A = \ang{G \mid R}_\!{\kappa{\bigvee}Lat}$ is a $\kappa$-$\bigvee$-lattice presentation, where $G$ is a set or more generally a poset, and $R$ is a set of inequalities between $\kappa$-$\bigvee$-lattice terms.
This means that we have a quotient map $\ang{G} = \@L_\kappa(G) ->> A$ whose order-kernel $\lesim$ is generated by the pairs $(B, C) \in \@L_\kappa(G)^2$ which are the ``normal forms'' of the terms $(s, t)$ in each inequality $s \le t$ in $R$.
We may replace $s \le t$ with the inequalities $s_i \le t$ whenever $s$ is a $\kappa$-ary join term $s = \bigvee_i s_i$, until we are left with inequalities $b \le t$ where $b \in G$; thus $\lesim$ is also generated by pairs $(\down b, C)$.
This means that the $\kappa$-$\bigvee$-coverage $<|$ corresponding to $\lesim$, so that $A \cong \ang{G \mid {<|}}_\!{\kappa{\bigvee}Lat}$, is generated by the pairs $(b, C)$.

If the original presentation $(G, R)$ was $\lambda$-ary for some $\lambda \ge \kappa$, then so will be the new one.
Moreover, it is a general algebraic fact (see \cref{thm:cat-alg-pres}) that a $\lambda$-presented algebraic structure is $\lambda$-presented using any given $\lambda$-ary set of generators.
We thus have

\begin{proposition}
For $\lambda \ge \kappa$, a $\kappa$-$\bigvee$-lattice $A$ is $\lambda$-presented iff it is presented by some $\lambda$-presented $\kappa$-$\bigvee$-posite, i.e., $A \cong \ang{G \mid {<|}}_\!{\kappa{\bigvee}Lat}$ for some $\lambda$-ary poset $G$ and $\lambda$-generated $\kappa$-$\bigvee$-coverage $<|$.

Moreover, in that case, we can find such $<|$ for any $\lambda$-ary poset $G$ and monotone $G -> A$ whose image generates $A$.
\qed
\end{proposition}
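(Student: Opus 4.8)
The plan is to prove both implications, with almost all the work already done in the discussion immediately preceding the statement; the one substantive external input is the general algebraic fact \cref{thm:cat-alg-pres}, that a $\lambda$-presented (ordered) algebraic structure admits a $\lambda$-ary presentation over \emph{any} prescribed $\lambda$-ary generating (po)set.

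For the ``if'' direction, suppose $A \cong \ang{G \mid {<|}}_\!{\kappa{\bigvee}Lat}$ with $G$ a $\lambda$-ary poset and $<|$ a $\kappa$-$\bigvee$-coverage generated by a $\lambda$-ary set ${<|_0}$ of pairs $(a, C)$, with $C \in \@L_\kappa(G)$ (or $\@P_\kappa(G)$). By \cref{thm:frm-cov-ltrans}, the order-congruence on $\@L_\kappa(G)$ corresponding to $<|$ is already generated by the single relations $\down a \lesim C$ for $a <|_0 C$, so $A \cong \ang{G \qua \!{Pos} \mid a \le \bigvee C \text{ for } a <|_0 C}_\!{\kappa{\bigvee}Lat}$. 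A $\lambda$-ary poset is $\lambda$-presented qua poset (it is generated by its ${<\lambda}$ elements subject to its ${<\lambda}$ order relations), and here we adjoin only ${<\lambda}$ further relations, so $A$ is $\lambda$-presented.

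For the ``only if'' direction together with the ``moreover'', let $A$ be $\lambda$-presented and let $G -> A$ be monotone with $G$ a $\lambda$-ary poset whose image generates $A$. By \cref{thm:cat-alg-pres}, $A \cong \ang{G \mid R}_\!{\kappa{\bigvee}Lat}$ for a $\lambda$-ary set $R$ of inequalities between $\kappa$-$\bigvee$-lattice terms over $G$; passing to normal forms in $\@L_\kappa(G)$, this says the order-kernel $\lesim$ of the quotient $\@L_\kappa(G) ->> A$ is generated by a $\lambda$-ary set of pairs $(B, C) \in \@L_\kappa(G)^2$. Each relation $\bigvee B \le \bigvee C$ is equivalent to the family $\{\down b \lesim C \mid b \in B\}$, of size $|B| < \kappa \le \lambda$; since $\lambda$ is regular (or $\infty$) and $|R| < \lambda$, the resulting set of pairs $(\down b, C)$ is still $\lambda$-ary and still generates $\lesim$. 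Translating back along the correspondence between order-congruences on $\@L_\kappa(G)$ and $\kappa$-$\bigvee$-coverages, we obtain $A \cong \ang{G \mid {<|}}_\!{\kappa{\bigvee}Lat}$ where $<|$ is the $\kappa$-$\bigvee$-coverage generated by the $\lambda$-ary family of pairs $(b, C)$; hence $(G, {<|})$ is a $\lambda$-presented $\kappa$-$\bigvee$-posite of the required form, and the construction started from the arbitrarily prescribed $G -> A$.

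I expect no genuine obstacle: the argument is essentially an unwinding of the bijections set up in \cref{sec:frm-quot,sec:frm-post}, together with a single appeal to \cref{thm:cat-alg-pres}. The one point to watch is the cardinality count when a single relation $\bigvee B \le \bigvee C$ is split into the $|B|$-many relations $b \le \bigvee C$ — this is precisely where regularity of $\lambda$ and the hypothesis $\lambda \ge \kappa$ are used, to ensure the rewritten presentation remains $\lambda$-ary.
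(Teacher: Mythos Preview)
Your proposal is correct and follows essentially the same approach as the paper. The paper's proof is precisely the discussion immediately preceding the proposition (note the \texttt{\textbackslash qed} in the statement): convert an arbitrary $\lambda$-ary presentation $(G,R)$ into one where each relation has the form $b \le \bigvee C$ by splitting $\bigvee B \le \bigvee C$ into its components, observe that this yields generators for the corresponding $\kappa$-$\bigvee$-coverage, and invoke \cref{thm:cat-alg-pres} for the ``moreover'' clause; you reproduce exactly this, with the cardinality bookkeeping (regularity of $\lambda$, $\lambda \ge \kappa$) made more explicit than in the paper.
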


If $A = \ang{G \mid R}_\!{\kappa Frm}$ is a $\kappa$-frame presentation, by replacing $G$ with $\ang{G}_\!{{\wedge}Lat}$, we can suppose $A = \ang{G \qua \!{{\wedge}Lat} \mid R}_\!{\kappa Frm}$, so that the quotient $\@L_\kappa(G) ->> A$ is a $\kappa$-frame quotient, whence its order-kernel $\lesim$ is the $\kappa$-frame order-congruence generated by $R$.
By applying the distributive law, we may turn each inequality in $R$ into one between joins of finite meets, and then evaluate the finite meets in $G$, so that we are left with $\kappa$-$\bigvee$-lattice inequalities $s \le t$, corresponding to generators $(B, C)$ for $\lesim$.
As above, we may reduce to inequalities $b \le t$ with $b \in G$, corresponding to generators $(b, C)$ for the $\kappa$-coverage $<|$.
We may furthermore close $R$ under $\wedge$-stability, so that the generators $(b, C)$ for $<|$ become $\wedge$-stable as well, and so by \cref{thm:frm-cov-meet} they also generate $<|$ as a $\kappa$-$\bigvee$-coverage.

Clearly these modifications also preserve $\lambda$-presentability for $\lambda \ge \kappa$, yielding

\begin{proposition}
For $\lambda \ge \kappa$, a $\kappa$-frame $A$ is $\lambda$-presented iff it is presented by some $\lambda$-presented $\kappa$-posite, i.e., $A \cong \ang{G \mid {<|}}_\!{\kappa Frm}$ for some $\lambda$-ary $\wedge$-lattice $G$ and $\lambda$-generated $\kappa$-coverage $<|$.

Moreover, in that case, $<|$ is $\lambda$-generated also as a $\kappa$-$\bigvee$-coverage, and $G$ can be any $\lambda$-ary $\wedge$-lattice with a $\wedge$-homomorphism $G -> A$ whose image generates $A$.
\qed
\end{proposition}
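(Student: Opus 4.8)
The plan is to prove the two directions separately, with essentially all the content in the ``only if'' direction; the two ``moreover'' statements will fall out of the construction. For the ``if'' direction, note that a $\kappa$-posite $(G, {<|})$ with $G$ a $\wedge$-lattice \emph{is}, by the displayed description of $\ang{G \mid {<|}}_\!{\kappa Frm}$ given above, a $\kappa$-frame presentation with generating $\wedge$-lattice $G$ and relations the join-inequalities $a \le \bigvee C$ for $a <| C$. If $G$ is $\lambda$-ary and $<|$ is $\lambda$-generated as a $\kappa$-coverage, this is a $\lambda$-ary presentation, so $A \cong \ang{G \mid {<|}}_\!{\kappa Frm}$ is $\lambda$-presented.

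For the ``only if'' direction I would start from an arbitrary $\lambda$-ary $\kappa$-frame presentation $A = \ang{G_0 \mid R_0}_\!{\kappa Frm}$ and successively normalize it. First, replace $G_0$ by the free $\wedge$-lattice $G := \ang{G_0}_\!{{\wedge}Lat}$ — still $\lambda$-ary, since a finitary free structure on a $\lambda$-ary set is $\lambda$-ary — so that $A = \ang{G \qua \!{{\wedge}Lat} \mid R}_\!{\kappa Frm}$ for a suitable $\lambda$-ary $R$ and the canonical surjection $\@L_\kappa(G) \twoheadrightarrow A$ is a $\kappa$-frame quotient whose order-kernel $\lesim$ on $\@L_\kappa(G)$ is the $\kappa$-frame order-congruence generated by $R$. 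Next, use distributivity to rewrite each inequality in $R$ into ``(join of finite meets) $\le$ (join of finite meets)'' form, then evaluate the finite meets inside $G$ (legitimate now that $G$ is a $\wedge$-lattice), leaving $\kappa$-$\bigvee$-lattice inequalities $s \le t$ between join-terms over $G$. Splitting the left-hand join reduces further to inequalities $b \le \bigvee C$ with $b \in G$ and $C$ a $\kappa$-ary subset of $G$; the pairs $(b, C)$ generate a relation ${<|_0} \subseteq G \times \@P_\kappa(G)$ with $A \cong \ang{G \mid {<|_0}}_\!{\kappa Frm}$, still $\lambda$-generated. Finally, close $<|_0$ under $\wedge$-stability; this does not change the generated $\kappa$-frame order-congruence, so $A \cong \ang{G \mid {<|_0}}_\!{\kappa Frm}$ with $<|_0$ now $\wedge$-stable and still $\lambda$-generated. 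Let $<|$ be the $\kappa$-coverage it generates; by \cref{thm:frm-cov-meet} this equals the $\kappa$-$\bigvee$-coverage generated by $<|_0$, hence is $\lambda$-generated as both, and $\ang{G \mid {<|}}_\!{\kappa Frm} = \ang{G \mid {<|_0}}_\!{\kappa Frm} \cong A$. For the last ``moreover'' clause, invoke the general algebraic fact (\cref{thm:cat-alg-pres}) that a $\lambda$-presented structure is $\lambda$-presented on any $\lambda$-ary generating set: choose the initial presentation to have generating $\wedge$-lattice any prescribed $\lambda$-ary $G$ admitting a $\wedge$-homomorphism $G \to A$ with generating image, then run the above.

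The work is bookkeeping rather than conceptual: one must check that each rewriting of the relation set — passing to the free $\wedge$-lattice on the generators, distributive normal form, splitting joins on the left, and closing under $\wedge$-stability — keeps everything $\lambda$-ary and leaves the generated $\kappa$-frame order-congruence (hence the presented $\kappa$-frame) unchanged. The one non-formal ingredient is \cref{thm:frm-cov-meet}, which is precisely what guarantees that a $\wedge$-stable generating set for the $\kappa$-coverage also generates it as a $\kappa$-$\bigvee$-coverage, so that the resulting posite presents $A$ simultaneously in $\!{\kappa Frm}$ and in $\!{\kappa{\bigvee}Lat}$.
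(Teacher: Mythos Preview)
Your proposal is correct and follows essentially the same route as the paper: the paper's proof is the paragraph immediately preceding the proposition (hence the \qed with no proof block), and it performs exactly the normalization steps you describe --- pass to the free $\wedge$-lattice on the generators, rewrite relations via distributivity into $\kappa$-$\bigvee$-lattice inequalities, split the left-hand joins, close under $\wedge$-stability, and invoke \cref{thm:frm-cov-meet} for the $\bigvee$-coverage claim and \cref{thm:cat-alg-pres} for the freedom in choosing $G$.
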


Since $\ang{G \mid {<|}}_\!{\kappa Frm} = \ang{G \mid {<|}}_\!{\kappa{\bigvee}Lat}$, this yields the aforementioned (in \cref{sec:frm-idl})

\begin{corollary}
\label{thm:frm-pres-vlat}
A $\kappa$-frame is $\lambda$-presented iff it is $\lambda$-presented as a $\kappa$-$\bigvee$-lattice.
\qed
\end{corollary}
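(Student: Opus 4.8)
The plan is to deduce the corollary directly from the two immediately preceding propositions on posite presentations, together with the identity $\ang{G \mid {<|}}_\!{\kappa Frm} = \ang{G \mid {<|}}_\!{\kappa{\bigvee}Lat}$ valid for any $\kappa$-posite $(G, {<|})$ (recorded when $\kappa$-posites were introduced). Both implications then follow by exhibiting a single $\lambda$-sized posite and reading off what it presents in the two ways; the only substantive point is a $\wedge$-stability check in the converse direction.

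For the forward implication, suppose the $\kappa$-frame $A$ is $\lambda$-presented as a $\kappa$-frame. Using the proposition on $\kappa$-posite presentations, I would write $A \cong \ang{G \mid {<|}}_\!{\kappa Frm}$ for a $\lambda$-ary $\wedge$-lattice $G$ and a $\lambda$-generated $\kappa$-coverage $<|$, invoking the part of that proposition which further guarantees that $<|$ is $\lambda$-generated even \emph{as a $\kappa$-$\bigvee$-coverage}. Regarding $G$ as merely a $\lambda$-ary poset, $(G, {<|})$ is then a $\lambda$-presented $\kappa$-$\bigvee$-posite with $\ang{G \mid {<|}}_\!{\kappa{\bigvee}Lat} = \ang{G \mid {<|}}_\!{\kappa Frm} \cong A$, so the proposition on $\kappa$-$\bigvee$-posite presentations yields that $A$ is $\lambda$-presented as a $\kappa$-$\bigvee$-lattice.

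For the converse, suppose $A$ is $\lambda$-presented — hence $\lambda$-generated — as a $\kappa$-$\bigvee$-lattice, say by a $\lambda$-ary set $S \subseteq A$. I would let $G$ be the $\wedge$-sublattice of $A$ generated by $S$, a $\lambda$-ary $\wedge$-lattice whose inclusion $G \hookrightarrow A$ is a $\wedge$-homomorphism with image still generating $A$ as a $\kappa$-$\bigvee$-lattice; by the ``moreover'' clause of the $\kappa$-$\bigvee$-posite proposition, $A \cong \ang{G \mid {<|}}_\!{\kappa{\bigvee}Lat}$ for some $\lambda$-generated $\kappa$-$\bigvee$-coverage $<|$ on $G$, compatibly with $G \hookrightarrow A$. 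The key claim is that this $<|$ is automatically $\wedge$-stable, hence a $\kappa$-coverage: since $G$ is a $\wedge$-lattice, $\@L_\kappa(G)$ is a $\kappa$-frame, and the quotient map $\@L_\kappa(G) \twoheadrightarrow A$ preserves joins and, because every element of $\@L_\kappa(G)$ is a $\kappa$-join of principal ideals whose finite meets are computed in $G$ and sent into $A$ by the $\wedge$-homomorphism $G \hookrightarrow A$, also preserves finite meets; so it is a $\kappa$-frame homomorphism, its order-kernel is a $\kappa$-frame order-congruence (i.e.\ $\wedge$-stable), and by the coverage/order-congruence dictionary this makes $<|$ $\wedge$-stable. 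Thus $(G, {<|})$ is a $\lambda$-presented $\kappa$-posite with $\ang{G \mid {<|}}_\!{\kappa Frm} = \ang{G \mid {<|}}_\!{\kappa{\bigvee}Lat} \cong A$, and the $\kappa$-posite proposition gives that $A$ is $\lambda$-presented as a $\kappa$-frame.

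The one place I expect to need genuine argument is the $\wedge$-stability claim in the converse: it says that presenting the underlying $\kappa$-$\bigvee$-lattice of a $\kappa$-frame over a $\wedge$-lattice of generators never forces one to add extra (possibly non-$\lambda$-ary) meet relations to recover the frame structure. This is really the same ``$\wedge$-stability comes for free'' phenomenon already exploited in the discussion preceding the $\kappa$-posite proposition; here it falls out because $\@L_\kappa(G)$ is itself a $\kappa$-frame and the quotient onto $A$ respects its meet structure.
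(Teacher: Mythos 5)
Your proof is correct and follows exactly the route the paper intends: the paper's own "proof" is just the observation that $\ang{G \mid {<|}}_\!{\kappa Frm} = \ang{G \mid {<|}}_\!{\kappa{\bigvee}Lat}$ combined with the two preceding posite propositions, and your write-up is a faithful elaboration of that deduction. In particular, your $\wedge$-stability check in the converse direction (that the saturated coverage induced on a $\wedge$-sublattice of generators is automatically a $\kappa$-coverage, because the induced $\@L_\kappa(G) \twoheadrightarrow A$ preserves finite meets by distributivity in $A$) is precisely the one detail that needs to be supplied, and you supply it correctly.
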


\subsection{Colimits of frames}
\label{sec:frm-colim}

In this short subsection, we record some structural facts about colimits of $\kappa$-frames.
These facts are well-known for $\kappa = \infty$ (see \cite{JTgpd}); the same proofs can be easily adapted to work when $\kappa < \infty$, although we find it more natural to use a ``presentational'' approach based on posites.%
\footnote{Such a ``presentational'' approach is usually used to prove the analogous results for Grothendieck toposes; see e.g., \cite[C2.5.14]{Jeleph}.}

Consider the computation of the colimit $\injlim F = \injlim_\!{\kappa Post} F$ of an arbitrary diagram $F : \!I -> \!{\kappa Post}$, where $\!I$ is a small indexing category.
By the general abstract method for presenting colimits of structures axiomatized by universal Horn theories (see \cref{sec:cat-lim}), $\injlim F$ may be computed by taking the colimit of the underlying $\wedge$-lattices, then taking the image of the $\kappa$-coverage $<|_{F(I)}$ on each $F(I)$ under the cocone map $\iota_I : F(I) -> \injlim F$, and finally taking the $\kappa$-coverage $<|_{\injlim F}$ on $\injlim F$ generated by all of these images.
An analogous description holds for the colimit $\injlim_\!{\kappa{\bigvee}Post} F$ of $F$ (composed with the forgetful functor) computed in $\!{\kappa{\bigvee}Post}$.

Suppose the diagram $F$ is such that the colimit of the underlying $\wedge$-lattices is preserved by the forgetful functor $\!{{\wedge}Lat} -> \!{Pos}$.
For example, this holds if $\injlim F$ is a directed colimit (i.e., $\!I$ is a directed preorder).
Then the first part of the computation of $\injlim_\!{\kappa Post} F$ agrees with that of $\injlim_\!{\kappa{\bigvee}Post} F$.
Clearly, $\wedge$-stability of each $<|_{F(I)}$ implies $\wedge$-stability of the union of their images in $\injlim F$, which by \cref{thm:frm-cov-meet} therefore also generates $<|_{\injlim F}$ as a $\kappa$-$\bigvee$-coverage; thus the last part of the computation of $\injlim_\!{\kappa Post} F$ is also preserved in $\!{\kappa{\bigvee}Post}$.
We thus have

\begin{proposition}
\label{thm:frm-colim-dir-post-vpost}
The forgetful functor $\!{\kappa Post} -> \!{\kappa{\bigvee}Post}$ preserves directed colimits.
\qed
\end{proposition}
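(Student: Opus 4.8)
The plan is to read off the colimit in $\!{\kappa Post}$ and the colimit in $\!{\kappa{\bigvee}Post}$ from the general ``free, then impose relations'' recipe for colimits in these locally presentable categories (\cref{sec:cat-lim}) --- i.e.\ from exactly the description recalled in the paragraph above --- and then check that the two recipes produce the same underlying $\kappa$-$\bigvee$-posite. First note that the forgetful functor is well defined at all: the $\kappa$-$\bigvee$-coverage axioms (reflexivity and left/right transitivity) never mention $\wedge$, so forgetting the meets of a $\kappa$-posite leaves a $\kappa$-$\bigvee$-posite. Now fix a directed diagram $F : \!I -> \!{\kappa Post}$ with colimit cocone $\iota_I : F(I) -> \injlim F$ computed in $\!{\kappa Post}$. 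By the recipe, the underlying $\wedge$-lattice of $\injlim F$ is the colimit in $\!{{\wedge}Lat}$ of the underlying $\wedge$-lattices of the $F(I)$, and ${<|_{\injlim F}}$ is the $\kappa$-coverage on it generated by ${<|_0} := \bigcup_I \iota_I({<|_{F(I)}})$, the union of the forward images of the component coverages; the same recipe --- with ``$\wedge$-lattice'' replaced by ``poset'' and ``$\kappa$-coverage'' by ``$\kappa$-$\bigvee$-coverage'', but the \emph{same} cocone $(\iota_I)_I$ --- computes the colimit of the underlying $\kappa$-$\bigvee$-posites.

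I would dispatch the underlying poset in one line: $\!{{\wedge}Lat}$ is a finitary variety and $\!{Pos}$ is locally finitely presentable, so $\!{{\wedge}Lat} -> \!{Set}$ and $\!{Pos} -> \!{Set}$ both create filtered colimits; hence for directed $F$ the underlying poset of the $\!{{\wedge}Lat}$-colimit of the $F(I)$ is just the $\!{Pos}$-colimit of their underlying posets, which is exactly what the $\!{\kappa{\bigvee}Post}$-recipe uses. This is the one place directedness of $\!I$ is essential; it is the hypothesis flagged just above the statement.

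For the coverages, the point is that \emph{both} are generated --- as a $\kappa$-coverage, resp.\ as a $\kappa$-$\bigvee$-coverage --- by the single relation ${<|_0}$ on this common underlying poset. So it suffices to show ${<|_0}$ is $\wedge$-stable and then invoke \cref{thm:frm-cov-meet}, by which the $\kappa$-$\bigvee$-coverage generated by a $\wedge$-stable relation is again $\wedge$-stable, hence coincides with the $\kappa$-coverage it generates. To check $\wedge$-stability of ${<|_0}$: given $a \le b$ in $\injlim F$ with $b <|_0 D$, choose $I$ and $b', D'$ with $b = \iota_I(b')$, $D = \iota_I(D')$ (down-closure), $b' <|_{F(I)} D'$, and choose $J$ with $a$ in the image of $\iota_J$; passing first to some $K \ge I, J$ and then, since $\iota_K$ identifies only elements already equal at a later stage, to some $K' \ge K$, we may lift $a \le b$ to an honest inequality in $F(K')$ while still having $b_{K'} <|_{F(K')} D_{K'}$ (transition maps being $\kappa$-posite homomorphisms preserve coverages). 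Now $\wedge$-stability of ${<|_{F(K')}}$ gives $a_{K'} <|_{F(K')} a_{K'} \wedge D_{K'}$, and pushing forward along the $\wedge$-homomorphism $\iota_{K'}$ yields $a <|_0 a \wedge D$. Thus $\injlim F$ and the $\!{\kappa{\bigvee}Post}$-colimit have the same underlying poset, the same coverage, and the same cocone, so the forgetful functor preserves this, hence every, directed colimit.

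I expect the $\wedge$-stability of ${<|_0}$ to be the only part needing real care: it is a routine chase through the directed system, but one must remember to replace pointwise images $\iota_I[D']$ by their down-closures throughout (per the extended-${<|}$ conventions of \cref{sec:frm-post}) and to rectify the colimit-level inequality $a \le b$ to a single finite stage before invoking that stage's $\wedge$-stability. The remaining ingredients --- the colimit recipe, the behaviour of $\!{{\wedge}Lat} -> \!{Pos}$ on directed colimits, and \cref{thm:frm-cov-meet} --- are standard or already in hand.
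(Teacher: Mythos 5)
Your proof is correct and follows essentially the same route as the paper: compute both colimits by the general ``colimit of underlying structures plus coverage generated by the union of images'' recipe, use directedness to identify the underlying $\wedge$-lattice and poset colimits, and invoke \cref{thm:frm-cov-meet} via $\wedge$-stability of the union of the pushed-forward coverages. The only difference is that you spell out the directed-system chase establishing that $\wedge$-stability (lifting $a \le b$ to a finite stage), which the paper dismisses with ``clearly''; that is a faithful filling-in, not a different argument.
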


Now for a diagram $F : \!I -> \!{\kappa Frm}$, its colimit $\injlim_\!{\kappa Frm} F$ is given by the reflection $\ang{\injlim_\!{\kappa Post} F}_\!{\kappa Frm}$ of the colimit in $\!{\kappa Post}$, which is the same as $\ang{\injlim_\!{\kappa Post} F}_\!{\kappa{\bigvee}Lat}$ (see the square \eqref{diag:frm-post-vpost});
while the colimit $\injlim_\!{\kappa{\bigvee}Lat} F$ in $\!{\kappa{\bigvee}Lat}$ is given by the reflection $\ang{\injlim_\!{\kappa{\bigvee}Post} F}_\!{\kappa{\bigvee}Lat}$.
Thus

\begin{corollary}
\label{thm:frm-colim-dir-vlat}
The forgetful functor $\!{\kappa Frm} -> \!{\kappa{\bigvee}Lat}$ preserves directed colimits.
\qed
\end{corollary}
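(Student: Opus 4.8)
The plan is to deduce this from \cref{thm:frm-colim-dir-post-vpost} by a short diagram chase through the reflective-subcategory situations recorded in \eqref{diag:frm-post-vpost}; there is no essential new content.

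Let $F : \!I -> \!{\kappa Frm}$ be a directed diagram, and write $\bar F : \!I -> \!{\kappa{\bigvee}Lat}$ for its composite with the forgetful functor. Via the full inclusion $\!{\kappa Frm} -> \!{\kappa Post}$ which equips each $\kappa$-frame with its canonical $\kappa$-coverage, regard $F$ as a directed diagram of $\kappa$-posites; since $\!{\kappa Frm}$ is reflective in $\!{\kappa Post}$ with reflector $\ang{-}_\!{\kappa Frm}$, its colimit in $\!{\kappa Frm}$ is obtained by reflecting the colimit taken in $\!{\kappa Post}$, so that $\injlim_\!{\kappa Frm} F \cong \ang{\injlim_\!{\kappa Post} F}_\!{\kappa Frm}$. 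Likewise, via the full inclusion $\!{\kappa{\bigvee}Lat} -> \!{\kappa{\bigvee}Post}$ which equips each $\kappa$-$\bigvee$-lattice with its canonical $\kappa$-$\bigvee$-coverage, regard $\bar F$ as a diagram of $\kappa$-$\bigvee$-posites, with $\injlim_\!{\kappa{\bigvee}Lat} \bar F \cong \ang{\injlim_\!{\kappa{\bigvee}Post} \bar F}_\!{\kappa{\bigvee}Lat}$. The key compatibility is that the canonical $\kappa$-coverage of a $\kappa$-frame and its canonical $\kappa$-$\bigvee$-coverage coincide (for a $\kappa$-frame, a $\kappa$-ary $C$ covers $a$ in either sense exactly when $a \le \bigvee C$), so that the forgetful functor $\!{\kappa Post} -> \!{\kappa{\bigvee}Post}$ carries the $\kappa$-posite diagram $F$ onto the $\kappa$-$\bigvee$-posite diagram $\bar F$.

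Now I would apply \cref{thm:frm-colim-dir-post-vpost}: since the diagram is directed, the forgetful functor $\!{\kappa Post} -> \!{\kappa{\bigvee}Post}$ sends $\injlim_\!{\kappa Post} F$ to $\injlim_\!{\kappa{\bigvee}Post} \bar F$ via the canonical comparison. Feeding the $\kappa$-posite $\injlim_\!{\kappa Post} F$ into the commutativity recorded in \eqref{diag:frm-post-vpost} — namely that the two composites $\!{\kappa Post} -> \!{\kappa{\bigvee}Lat}$ agree — shows that the underlying $\kappa$-$\bigvee$-lattice of $\ang{\injlim_\!{\kappa Post} F}_\!{\kappa Frm}$ is $\ang{\injlim_\!{\kappa Post} F}_\!{\kappa{\bigvee}Lat}$, which by the previous paragraph equals $\ang{\injlim_\!{\kappa{\bigvee}Post} \bar F}_\!{\kappa{\bigvee}Lat}$. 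Composing the canonical isomorphisms assembled above, the underlying $\kappa$-$\bigvee$-lattice of $\injlim_\!{\kappa Frm} F$ is canonically identified with $\injlim_\!{\kappa{\bigvee}Lat} \bar F$, which is precisely the assertion that the forgetful functor $\!{\kappa Frm} -> \!{\kappa{\bigvee}Lat}$ preserves the directed colimit $\injlim_\!{\kappa Frm} F$.

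Since all the real content is in \cref{thm:frm-colim-dir-post-vpost} and \eqref{diag:frm-post-vpost}, there is no serious obstacle; the one point needing care is to check that every isomorphism in the chain is the canonical comparison map — so that the composite is genuinely the comparison map from $\injlim_\!{\kappa{\bigvee}Lat} \bar F$ to the underlying $\kappa$-$\bigvee$-lattice of $\injlim_\!{\kappa Frm} F$, rather than merely an abstract isomorphism of objects — together with the coincidence of the two canonical coverages used to identify the $\kappa$-posite and $\kappa$-$\bigvee$-posite diagrams sitting inside $\!{\kappa{\bigvee}Post}$.
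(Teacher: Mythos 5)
Your proposal is correct and follows essentially the same route as the paper: the colimit in $\!{\kappa Frm}$ (resp.\ $\!{\kappa{\bigvee}Lat}$) is the reflection of the colimit of canonical posites in $\!{\kappa Post}$ (resp.\ $\!{\kappa{\bigvee}Post}$), the two composites $\!{\kappa Post} \to \!{\kappa{\bigvee}Lat}$ in \eqref{diag:frm-post-vpost} agree, and \cref{thm:frm-colim-dir-post-vpost} identifies the two posite colimits in the directed case. Your extra attention to the coincidence of the canonical coverages and to the canonicity of the comparison maps is a faithful spelling-out of what the paper leaves implicit.
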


For a directed diagram $F : \!I -> \!{\kappa{\bigvee}Lat}$, the following gives us some control over the cocone maps $\iota_I : F(I) -> \injlim F$.
For $I \le J \in \!I$, we write $F(I, J)$ for $F$ applied to the unique $I -> J \in \!I$.

\begin{proposition}
\label{thm:frm-colim-dir-ker}
Let $F : \!I -> \!{\kappa{\bigvee}Lat}$ be a directed diagram, $I \in \!I$, $a \in F(I)$.
Suppose that for each $I \le J \le K \in \!I$, $F(I, J)(a) \in F(J)$ is the greatest element of its $\oker(F(J, K))$-principal ideal (i.e., for all $b \in F(J)$, $b \le F(I, J)(a) \iff F(J, K)(b) \le F(I, K)(a)$).
Then $a$ is the greatest element of its $\oker(\iota_I)$-principal ideal.

In particular, if each $F(I, J) : F(I) -> F(J)$ is injective, then so is each $\iota_I : F(I) -> \injlim F$.
\end{proposition}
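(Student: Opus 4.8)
\textit{Proof proposal.} The plan is to compute $\injlim F$ (taken in $\!{\kappa{\bigvee}Lat}$) via the posite presentation of \cref{sec:frm-post} and reduce the assertion to a statement about $<|$-ideals. Since $F$ is directed, the colimit $P := \injlim(\text{underlying posets of }F)$ in $\!{Pos}$ is computed on underlying sets, with cocone maps $\eta_m : F(m) -> P$ satisfying $\eta_m(x) \le_P \eta_n(y)$ iff $F(m,k)(x) \le F(n,k)(y)$ for some $k \ge m,n$. As discussed at the start of this subsection, $\injlim_{\!{\kappa{\bigvee}Post}}F$ then has underlying poset $P$ equipped with the $\kappa$-$\bigvee$-coverage $<|$ generated by the relation $<|_0$ consisting of the pairs $(\eta_m(x_0),\eta_m(C_0))$ for which $x_0 \le \bigvee C_0$ in some $F(m)$ (with $C_0$ a $\kappa$-ary subset); and $\injlim F = \ang{\injlim_{\!{\kappa{\bigvee}Post}}F}_{\!{\kappa{\bigvee}Lat}} = {<|}\@I_\kappa(P)$, with cocone maps $\iota_m = \downtri\circ\eta_m$, where $\downtri(x) = \{y \in P \mid y <| \{x\}\}$. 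In these terms, the conclusion that $a$ is the greatest element of its $\oker(\iota_I)$-principal ideal says exactly: for every $b \in F(I)$, if $\eta_I(b) \in \downtri(\eta_I(a))$ then $b \le a$ in $F(I)$.

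Write $a_J := F(I,J)(a)$ for $J \ge I$, so $\eta_J(a_J) = \eta_I(a)$. First I would extract from the hypothesis the fact $(\star)$: for $J \ge I$ and $d \in F(J)$, if $\eta_J(d) \le_P \eta_I(a)$ then $d \le a_J$; indeed $\eta_J(d) \le_P \eta_J(a_J)$ means $F(J,K)(d) \le F(J,K)(a_J) = a_K$ for some $K \ge J$, and then the hypothesis (instance $I \le J \le K$, applied to $d$) gives $d \le a_J$. The crux is then to show that the principal lower set $E := \{y \in P \mid y \le_P \eta_I(a)\}$ is a $<|_0$-ideal, hence by \cref{thm:frm-cidl-ltrans} a $<|$-ideal. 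So suppose $x <|_0 C$ with $C \subseteq E$; replacing the witnessing stage $m$ by some $m' \ge m, I$ and pushing $x_0, C_0$ forward along $F(m,m')$ (which preserves $\kappa$-ary joins and does not change the $\eta$-images), we may assume $m \ge I$, so $x = \eta_m(x_0)$, $C = \eta_m(C_0)$ with $x_0 \le \bigvee C_0$ in $F(m)$. For each $c_0 \in C_0$ we have $\eta_m(c_0) \in C \subseteq E$, i.e., $\eta_m(c_0) \le_P \eta_I(a)$, so $(\star)$ gives $c_0 \le a_m$; as $\{d \in F(m) \mid d \le a_m\}$ is closed under $\kappa$-ary joins, $\bigvee C_0 \le a_m$, whence $x_0 \le a_m$ and $x = \eta_m(x_0) \le_P \eta_m(a_m) = \eta_I(a)$, i.e., $x \in E$.

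Granting this, $E$ is a $<|$-ideal containing $\eta_I(a)$, so it contains the smallest such ideal $\downtri(\eta_I(a))$; thus if $\iota_I(b) \le \iota_I(a)$, then $\eta_I(b) \in \downtri(\eta_I(b)) \subseteq \downtri(\eta_I(a)) \subseteq E$, and $(\star)$ with $J = I$ yields $b \le a$, as required. For the final sentence: if every $F(I,J)$ is injective then each is an order-embedding (an injective homomorphism of $\bigvee$-lattices is order-reflecting, since $F(b) \le F(b')$ iff $F(b\vee b') = F(b')$ iff $b \vee b' = b'$), so $\oker(F(J,K)) = {\le}$ and the hypothesis holds for every $a \in F(I)$; hence every element is the greatest in its $\oker(\iota_I)$-principal ideal, which forces $\iota_I$ injective.

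I expect the crux step — that $E$ is a $<|_0$-ideal — to be the real content, and the place where the hypothesis does its work. The point to watch is that the hypothesis is exactly strong enough for a \emph{merely} directed (not necessarily $\kappa$-directed) diagram: it lets one conclude $c_0 \le a_m$ already at the single stage $m$ carrying the cover, so one never has to dominate the whole $\kappa$-ary index set $\{K_{c_0}\}_{c_0 \in C_0}$ by a common later stage — a step that would otherwise fail. The remaining work is just bookkeeping with the posite presentation of $\injlim F$, in particular the equivalence $\iota_I(b) \le \iota_I(a) \iff \eta_I(b) \in \downtri(\eta_I(a))$.
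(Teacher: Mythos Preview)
Your proof is correct, but it takes a genuinely different route from the paper's. The paper gives a two-line ``semantic'' argument: after replacing $\!I$ by $\up I$, define for each $J$ the map $f_J : F(J) \to 2$ as the indicator of $F(J) \setminus \down a_J$; the hypothesis says exactly that $(f_J)_J$ is a cocone (i.e., $F(J,K)(b) \le a_K \iff b \le a_J$), hence induces $f : \injlim F \to 2$ with $f \circ \iota_I = f_I$, and $\iota_I(b) \le \iota_I(a) \implies f_I(b) \le f_I(a) = 0 \implies b \le a$. Your approach instead unwinds the posite construction of the colimit and exhibits the principal lower set $E = \down_P \eta_I(a)$ as a $<|$-ideal directly. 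The two are essentially dual---your $E$ is precisely $f^{-1}(0)$ pulled back to $P$, and your verification that $E$ is $<|_0$-closed is the same computation as checking that $(f_J)$ is a cocone of $\kappa$-$\bigvee$-lattice homomorphisms---but the paper's packaging via the universal property avoids all the bookkeeping with $\eta$, $\downtri$, and the explicit posite presentation. What your approach buys is a more transparent view of \emph{which} $<|$-ideal $\iota_I(a)$ actually is; what the paper's buys is brevity and the observation that $2$ is enough to separate.
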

\begin{proof}
By replacing $\!I$ with $\up I$ (which is final in $\!I$), we may assume $I$ is the least element of $\!I$.
For each $J \in \!I$, let $f_J : F(J) -> 2$ be the indicator function of $F(J) \setminus \down F(I, J)(a)$.
Then the assumptions ensure $(f_J)_J$ is a cocone over $F$, hence induce a homomorphism $f : \injlim F -> 2$ such that $f \circ \iota_I = f_I$.
Then $\iota_I(b) \le \iota_I(a) \implies f_I(b) \le f_I(a) = 0 \implies b \in \down a$.
\end{proof}

We also briefly recall the construction of coproducts of frames via posites.
First, for $\wedge$-lattices $A, B$, the product $A \times B$ is a biproduct (``direct sum'') in $\!{\wedge Lat}$, hence also the coproduct, with injections $\iota_1 : A -> A \times B$ given by $\iota_1(a) := (a, \top)$ and $\iota_2$ given similarly.

Now for $\kappa$-frames $A, B$, their $\wedge$-lattice coproduct $A \times B$ generates their $\kappa$-frame coproduct, whence the latter is presented by a $\kappa$-coverage $<|$ on $A \times B$.
Namely, $<|$ imposes the relations that $\kappa$-ary joins in $A$ must be preserved, as must be $\kappa$-ary joins in $B$, hence is generated by the pairs
\begin{align}
\label{eq:frm-coprod-pres}
\begin{aligned}
(\bigvee C, b) &<| \{(c, b) \mid c \in C\} &&\text{for $\kappa$-ary $C \subseteq A$}, \\
(a, \bigvee C) &<| \{(a, c) \mid c \in C\} &&\text{for $\kappa$-ary $C \subseteq B$},
\end{aligned}
\end{align}
which are clearly $\wedge$-stable.
Thus by \cref{thm:frm-cov-meet} and \cref{thm:frm-cidl-ltrans}, the $\kappa$-frame coproduct of $A, B$ may be realized as the set of $D \in \@L(A \times B)$ which are $\kappa$-generated ideals with respect to these generators for $<|$, i.e., $D$ must be ``closed under $\kappa$-ary joins separately in each coordinate'', and $D$ must also be $\kappa$-generated as such.

As is standard, we denote the $\kappa$-frame coproduct of $A, B$ by $A \otimes B$ (in addition to the category-agnostic notation $A \amalg B$).
For $a \in A$ and $b \in B$, we write
\begin{align*}
a \times b := \iota_1(a) \wedge \iota_2(b) \in A \otimes B
\end{align*}
(this is more commonly denoted $a \otimes b$); an arbitrary element of $A \otimes B$ is thus a $\kappa$-ary join of such ``rectangles''.
Under the above realization of $A \otimes B$ as all $D \subseteq \@L(A \times B)$ closed under \eqref{eq:frm-coprod-pres}, it is easily seen that
\begin{align*}
a \times b = \down (a, b) \cup (A \times \{\bot\}) \cup (\{\bot\} \cup B) \subseteq A \times B.
\end{align*}

\begin{corollary}
\label{thm:frm-coprod}
Let $A, B$ be $\kappa$-frames, $a, a' \in A$, and $b, b' \in B$.
If $a \times b \le a' \times b' \in A \otimes B$, then either $a \le a'$ and $b \le b'$, or $a = \bot$, or $b = \bot$.

In particular, if $a \times b = \bot$, then either $a = \bot$ or $b = \bot$.
\end{corollary}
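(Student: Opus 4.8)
The plan is to read the statement off the explicit description of the coproduct $A \otimes B$ given just above. There $A \otimes B$ is realized as a sub-poset of the lower sets of $A \times B$, ordered by inclusion, with the rectangle $a \times b$ identified with the lower set $\down(a, b) \cup (A \times \{\bot\}) \cup (\{\bot\} \times B)$. In particular, the relation $a \times b \le a' \times b'$ in $A \otimes B$ unwinds to the inclusion $a \times b \subseteq a' \times b'$ of lower sets in $A \times B$.

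First I would dispose of the degenerate cases: if $a = \bot$ or $b = \bot$, there is nothing to prove, so assume $a \ne \bot$ and $b \ne \bot$. Then $(a, b) \in \down(a, b) \subseteq a \times b$, so by the assumed inclusion $(a, b) \in a' \times b' = \down(a', b') \cup (A \times \{\bot\}) \cup (\{\bot\} \times B)$. Since neither coordinate of $(a, b)$ is $\bot$, this pair lies in none of the latter two pieces, hence lies in $\down(a', b')$; that is, $a \le a'$ and $b \le b'$, proving the first assertion.

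For the ``in particular'', I would observe that $\down(\bot, \bot) \subseteq \{\bot\} \times B$, so the rectangle $\bot \times \bot$ equals $(A \times \{\bot\}) \cup (\{\bot\} \times B)$, which is easily seen to be the smallest lower set closed under the coverage generators \eqref{eq:frm-coprod-pres} — the instances with $C = \emptyset$ force precisely these elements into any $<|$-ideal — and hence is the bottom element of $A \otimes B$. Thus $a \times b = \bot$ says $a \times b \le \bot \times \bot$, and the first part with $a' = b' = \bot$ gives $a = \bot$ or $b = \bot$. I do not anticipate any real obstacle: the only points needing care are the three-case bookkeeping and the fact that the order on the realized $A \otimes B$ is just inclusion of lower sets, both of which are handed to us by the posite-based construction of $A \otimes B$ recalled above.
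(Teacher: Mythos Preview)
Your proof is correct and follows essentially the same approach as the paper: use the explicit description of $a \times b$ as the lower set $\down(a,b) \cup (A \times \{\bot\}) \cup (\{\bot\} \times B)$, observe that the order is inclusion, and read off where $(a,b)$ must land. The paper's proof is the one-line version of exactly this argument, and your handling of the ``in particular'' (identifying $\bot$ with $\bot \times \bot$ and invoking the first part) is the natural way to spell out what the paper leaves implicit.
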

\begin{proof}
If $a \times b \le a' \times b'$, then $(a, b) \in \down (a', b') \cup (A \times \{\bot\}) \cup (\{\bot\} \cup B) \subseteq A \times B$.
\end{proof}

See \cref{thm:bifrm-coprod} below for a generalization of this to $(\lambda, \kappa)$-frames.

\subsection{Adjoining complements}
\label{sec:frm-neg}

In this subsection, we discuss the process of completing a $\kappa$-frame to a $\kappa$-Boolean algebra by repeatedly adjoining complements.
This is one of the central constructions of classical locale theory (known variously as the ``frame of nuclei'', ``dissolution locale'', or ``assembly tower''; see e.g., \cite[II~2.5--10]{Jstone}, \cite{Wasm}, \cite{Pdissolv}), and corresponds to the Borel hierarchy in descriptive set theory (see \cref{sec:loc-bor} below).
The case $\kappa < \infty$ was studied by Madden~\cite{Mkfrm}.
One feature of our approach, which is new as far as we know, is an explicit posite presentation of the frame of nuclei (\cref{thm:frm-neg-pres}).
At the end of the subsection, we also discuss the interaction between adjoining complements and $\kappa$-presentability.

For a $\kappa$-frame $A$, we let $\@N_\kappa(A)$ denote the $\kappa$-frame resulting from freely adjoining a complement for each element of $A$:
\begin{align*}
\@N_\kappa(A) := \ang{A \qua \!{\kappa Frm},\, \neg a \text{ for $a \in A$} \mid a \wedge \neg a = \bot,\, a \vee \neg a = \top}_\!{\kappa Frm}.
\end{align*}
Thus $\@N_\kappa(A)$ is the universal $\kappa$-frame equipped with a homomorphism $\eta : A -> \@N_\kappa(A)$ mapping each $a \in A$ to a complemented element.
There is an obvious extension of $\@N_\kappa$ to a functor $\!{\kappa Frm} -> \!{\kappa Frm}$, such that $\eta$ becomes a natural transformation $1_\!{\kappa Frm} -> \@N_\kappa$.
For a frame $A$, we write $\@N(A) := \@N_\infty(A)$.

By distributivity, every element of $\@N_\kappa(A)$ can be written as a $\kappa$-ary join
\begin{align}
\label{eq:frm-neg-normform}
\bigvee_i (a_i \wedge \neg b_i) \quad\text{for $a_i, b_i \in A$}
\end{align}
(where we are writing $a_i$ instead of $\eta(a_i)$; see \cref{cvt:frm-neg-incl} below).
Thus $\@N_\kappa(A)$ is generated as a $\kappa$-$\bigvee$-lattice by the image of the $\wedge$-lattice homomorphism
\begin{align*}
A \times \neg A &--> \@N_\kappa(A) \\
(a, \neg b) &|--> a \wedge \neg b
\end{align*}
where $\neg A  := A^\op$ with elements written $\neg a$ for $a \in A$.
The order-kernel of this map corresponds to a $\kappa$-coverage $<|$ on $A \times \neg A$ such that $\@N_\kappa(A) = \ang{A \times \neg A \mid <|}_\!{\kappa Frm}$.

\begin{proposition}
\label{thm:frm-neg-pres}
The $\kappa$-coverage $<|$ such that $\@N_\kappa(A) = \ang{A \times \neg A \mid <|}_\!{\kappa Frm}$ is generated by the following pairs, which are $\wedge$-stable and hence also generate $<|$ as a $\kappa$-$\bigvee$-coverage (by \cref{thm:frm-cov-meet}):
\begin{align*}
\tag{i}&&(\bigvee B, \neg c) &<| \{(b, \neg c) \mid b \in B\} &&\text{for $\kappa$-ary $B \subseteq A$ and $c \in A$}, \\
\tag{ii}&&(a, \neg b) &<| \emptyset &&\text{for $a \le b \in A$}, \\
\tag{iii}&&(c, \neg b) &<| \{(a, \neg b), (c, \neg d)\} &&\text{for $a \vee b \ge c \wedge d \in A$}.
\end{align*}
\end{proposition}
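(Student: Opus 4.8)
The plan is to compute $\@N_\kappa(A)$ directly from its presentation and recognize the resulting $\kappa$-coverage. By the general remarks on posites (\cref{sec:frm-post}), since $\@N_\kappa(A)$ is generated as a $\kappa$-$\bigvee$-lattice by the image of the $\wedge$-lattice map $A \times \neg A \to \@N_\kappa(A)$, it is presented by a $\kappa$-coverage $<|$ on $A \times \neg A$, and to identify $<|$ it suffices to exhibit a $\wedge$-stable generating family and check that the presented $\kappa$-frame is $\@N_\kappa(A)$. So the first step is to verify that the pairs (i)--(iii) are $\wedge$-stable; this is a routine check using that meet in $A \times \neg A$ is computed coordinatewise (meet in $A$ in the first coordinate, join in $A$, i.e.\ meet in $\neg A = A^\op$, in the second), and then \cref{thm:frm-cov-meet} tells us the $\kappa$-coverage they generate coincides with the $\kappa$-$\bigvee$-coverage they generate.

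The heart of the argument is then to show that $\ang{A \times \neg A \mid {<|}}_\!{\kappa Frm}$, with $<|$ generated by (i)--(iii), has the universal property defining $\@N_\kappa(A)$. First I would check that the presented $\kappa$-frame receives a homomorphism from $A$ and that each $a \in A$ (sent to $(a, \neg \bot) = (a, \top_{\neg A})$) acquires a complement, namely $(\top_A, \neg a)$: relation (iii) with the choices forcing $(a,\top) \wedge (\top,\neg a) = (a, \neg a) <| \emptyset$ via (ii) gives $a \wedge \neg a = \bot$, and a suitable instance of (iii) (taking $a \vee b \ge c \wedge d$ with $b = a$, the "complement side") yields $(\top, \neg\bot) <| \{(a,\top),(\top,\neg a)\}$, i.e.\ $\top \le a \vee \neg a$. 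Conversely, given any $\kappa$-frame $B$ with a homomorphism $g : A \to B$ sending each $a$ to a complemented element, I would define a $\wedge$-lattice map $A \times \neg A \to B$ by $(a, \neg b) \mapsto g(a) \wedge \neg g(b)$ and verify it sends each of (i)--(iii) to a valid cover in $B$: (i) is distributivity of $g(\bigvee B) = \bigvee g(B)$ against $\neg g(c)$; (ii) is $g(a) \wedge \neg g(b) = \bot$ when $a \le b$ hence $g(a) \le g(b)$; and (iii) is the Boolean-algebra identity $\neg g(b) = (\neg g(b) \wedge g(a)) \vee (\neg g(b) \wedge \neg g(d))$ valid whenever $g(c) \wedge g(d) \le g(a) \vee g(b)$, so that meeting with $g(c)$ gives $g(c) \wedge \neg g(b) \le (g(a) \wedge \neg g(b)) \vee (g(c) \wedge \neg g(d))$. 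This shows the presented $\kappa$-frame maps uniquely to $B$ over $A$, giving the universal property; uniqueness of the factoring follows because $A \times \neg A$ generates.

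The main obstacle I anticipate is not any single step but getting the bookkeeping of relation (iii) exactly right — it is doing double duty, encoding both "$a \vee b = \top$ forces $\neg b \le a$ in the relevant filterquotient" and the interaction of the two complemented generators, and one must be careful that the side conditions "$a \vee b \ge c \wedge d$" are precisely strong enough to derive all the needed covers in $B$ while weak enough that they hold in $\@N_\kappa(A)$. Concretely, the delicate verification is that (i)--(iii) are \emph{complete}, i.e.\ generate the full order-kernel and not merely a sub-$\kappa$-coverage; the universal-property argument above is designed to sidestep a direct completeness check by instead showing the presented object literally is $\@N_\kappa(A)$, so the real work is confined to the three cover-verifications in $B$ and the derivation that complements exist in the presented $\kappa$-frame.
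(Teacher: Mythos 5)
Your argument is correct in substance but runs in the opposite direction from the paper's. The paper stays entirely on the presentation side: it reads off from the defining presentation of $\@N_\kappa(A)$ that $<|$ is generated as a $\kappa$-coverage by the three pairs
$(\bigvee B, \neg\bot) <| \{(b,\neg\bot) \mid b \in B\}$,
$(a,\neg a) <| \emptyset$, and
$(\top,\neg\bot) <| \{(a,\neg\bot),(\top,\neg a)\}$,
checks that (i)--(iii) hold in $\@N_\kappa(A)$, and then shows that the closure of these three generators under $\wedge$-stability is exactly (i)--(iii); a sandwich argument finishes the proof, and the only real computations are the verification of (iii) in $\@N_\kappa(A)$ and the $\wedge$-stability of (iii). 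You instead re-prove the universal property of $\@N_\kappa(A)$ for the frame presented by (i)--(iii). That works, but to conclude from ``the presented frames are isomorphic over $A \times \neg A$'' that the two coverages \emph{coincide}, you must invoke the saturation of $\kappa$-coverages from \cref{sec:frm-post} (a coverage is equivalent to an order-congruence on $\@L_\kappa(A \times \neg A)$, hence contains every cover that holds in the presented frame); you use this implicitly in your opening sentence, and it should be made explicit, as should the uniqueness half of the universal property (which rests on uniqueness of complements in a distributive lattice, so that the value on $(a,\neg b) = (a,\neg\bot) \wedge (\top,\neg b)$ is forced). What your route buys is that completeness of (i)--(iii) -- the point you rightly identify as delicate -- is reduced to three transparent verifications in an arbitrary target frame $B$; what it costs is the extra universal-property bookkeeping that the paper's direct generator-chasing avoids. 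Note that both routes still require the same $\wedge$-stability computation for (iii), which is short but not quite ``routine.''

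Two local slips, neither fatal. First, the ``Boolean-algebra identity'' $\neg g(b) = (\neg g(b) \wedge g(a)) \vee (\neg g(b) \wedge \neg g(d))$ is false as stated: it makes no reference to $c$ and would require $g(d) \le g(a) \vee g(b)$. The correct derivation meets with $g(c)$ first and splits along $g(d) \vee \neg g(d) = \top$:
\begin{align*}
g(c) \wedge \neg g(b)
= (g(c) \wedge g(d) \wedge \neg g(b)) \vee (g(c) \wedge \neg g(d) \wedge \neg g(b))
\le (g(a) \wedge \neg g(b)) \vee (g(c) \wedge \neg g(d)),
\end{align*}
using $g(c) \wedge g(d) \le g(a) \vee g(b)$ on the first disjunct; the target inequality you state is correct. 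Second, the instance of (iii) yielding $\top \le a \vee \neg a$ is $b = \bot$, $c = \top$, $d = a$ (with the ``(iii)-$a$'' equal to your $a$), not ``$b = a$.''
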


\begin{proof}
First, we check that these pairs are indeed part of $<|$, i.e., that the corresponding relations hold in $\@N_\kappa(A)$.
Pair (i) says $(\bigvee B) \wedge \neg c \le \bigvee_{b \in B} (b \wedge \neg c)$,
while (ii) says $a \wedge \neg b \le \bot$ if $a \le b$; these are clearly true.
Pair (iii) says $c \wedge \neg b \le (a \wedge \neg b) \vee (c \wedge \neg d)$ if $a \vee b \ge c \wedge d$; indeed, we have
\begin{align*}
b \vee (a \wedge \neg b) \vee (c \wedge \neg d)
&= b \vee a \vee (c \wedge \neg d) \\
&\ge (c \wedge d) \vee (c \wedge \neg d)
= c,
\end{align*}
whence $(a \wedge \neg b) \vee (c \wedge \neg d) \ge c \wedge \neg b$.

From the presentation defining $\@N_\kappa(A)$, it is clear that $<|$ is generated by the pairs
\begin{align*}
\tag{i$'$} (\bigvee B, \neg \bot) &<| \{(b, \neg \bot) \mid b \in B\}, \\
\tag{ii$'$} (a, \neg a) &<| \emptyset, \\
\tag{iii$'$} (\top, \neg \bot) &<| \{(a, \neg \bot), (\top, \neg a)\},
\end{align*}
corresponding to the relations which say that
(i$'$) $\kappa$-ary joins in $A$ are preserved,
(ii$'$) $a \wedge \neg a \le \bot$, and
(iii$'$) $\top \le a \vee \neg a$.

It is easily seen that the closure of (i$'$) and (ii$'$) under $\wedge$-stability yield (i) and (ii) respectively.
For (iii$'$), taking the meet with an arbitrary $(c, \neg b) \in A \times \neg A$ yields $(c, \neg b) <| \{(a \wedge c, \neg b), (c, \neg (a \vee b))\}$, which is of the form (iii).
Thus (i--iii) are included in $<|$ and include the generators (i$'$--iii$'$) of $<|$, and so generate $<|$.
It remains to check that (iii) is $\wedge$-stable: taking meet with $(e, \neg f) \in A \times \neg A$ yields
\begin{align*}
(c \wedge e, \neg (b \vee f)) <| \{(a \wedge e, \neg (b \vee f)), (c \wedge e, \neg (d \vee f))\}
\end{align*}
which is of the form (iii) since
\begin{align*}
(a \wedge e) \vee b \vee f
&= ((a \vee b) \wedge (e \vee b)) \vee f \\
&\ge (c \wedge d \wedge (e \vee b)) \vee f \\
&\ge (c \wedge e \wedge d) \vee f \\
&\ge c \wedge e \wedge (d \vee f).
\qedhere
\end{align*}
\end{proof}

It follows by \cref{thm:frm-cidl-ltrans} that $\@N_\kappa(A)$ may be realized as the subset of $D \in \@L(A \times \neg A)$ which are ideals with respect to the generators (i--iii) for $<|$ from \cref{thm:frm-neg-pres}, and which are also $\kappa$-generated as such.
In other words, (i--iii) describe precisely the saturation conditions characterizing uniqueness of the expressions \eqref{eq:frm-neg-normform}: two such expressions denote the same element of $\@N_\kappa(A)$ iff their sets of $(a_i, \neg b_i)$ agree when closed downward and under (i--iii).
Using this, we have

\begin{proposition}
\label{thm:frm-neg-inj}
$\eta : A -> \@N_\kappa(A)$ is injective.
\end{proposition}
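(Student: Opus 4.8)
The plan is to use the explicit description of $\@N_\kappa(A)$ from \cref{thm:frm-neg-pres}: as the poset of $\kappa$-compact ${<|}$-ideals of $A \times \neg A$, in which $\eta(a)$ is realized as the principal ${<|}$-ideal $\downtri(a, \neg\bot)$ of the pair $(a, \neg\bot) = \iota_1(a)$ (here $\neg\bot = \top$ in $\@N_\kappa(A)$, so the image of $(a,\neg\bot)$ under the defining surjection is $a \wedge \top = \eta(a)$). Since $\eta$ is a $\wedge$-lattice homomorphism, hence monotone, it suffices to show that $\eta$ is order-reflecting, i.e.\ $\eta(a) \le \eta(a')$ implies $a \le a'$ in $A$. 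Unwinding, $\eta(a) \le \eta(a')$ says $(a,\neg\bot) \in \downtri(a',\neg\bot)$; and by \cref{thm:frm-cidl-ltrans} a ${<|}$-ideal is exactly a lower set of $A \times \neg A$ closed under the generating relations (i)--(iii) of \cref{thm:frm-neg-pres}, so $\downtri(a',\neg\bot)$ is the smallest such lower set containing $(a',\neg\bot)$. Thus it is enough to exhibit, for each $a \in A$, one lower set $D_a$ closed under (i)--(iii) and containing $(a,\neg\bot)$, with the property that $(c,\neg\bot) \in D_a$ forces $c \le a$.

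I would take
\[ D_a := \{ (c, \neg d) \in A \times \neg A \mid c \le a \vee d \}, \]
the lower set that morally ought to be the principal ideal of $\eta(a)$ (since ``$c \wedge \neg d \le a$'' should be equivalent to ``$c \le a \vee d$''). That $D_a$ is a lower set, contains $(a,\neg\bot)$, and satisfies the required $(-,\neg\bot)$-slice condition is immediate, as is closure under (i) (if $b \le a \vee c$ for all $b$ in a $\kappa$-ary $B$, then $\bigvee B \le a \vee c$) and under (ii) (if $c \le d$ then $c \le d \le a \vee d$). The one step with any content is closure under (iii): given $(\alpha, \neg\beta), (\gamma, \neg\delta) \in D_a$ with $\alpha \vee \beta \ge \gamma \wedge \delta$, finite distributivity in the $\kappa$-frame $A$ gives
\[ \gamma = \gamma \wedge (a \vee \delta) = (\gamma \wedge a) \vee (\gamma \wedge \delta) \le a \vee (\alpha \vee \beta) \le a \vee \beta, \]
so $(\gamma, \neg\beta) \in D_a$. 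Hence $D_a$ is a ${<|}$-ideal by \cref{thm:frm-cidl-ltrans}.

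Since $D_a$ is a ${<|}$-ideal containing $(a,\neg\bot)$, we get $\downtri(a,\neg\bot) \subseteq D_a$. Therefore if $\eta(a) \le \eta(a')$, i.e.\ $\downtri(a,\neg\bot) \subseteq \downtri(a',\neg\bot)$, then $(a,\neg\bot) \in \downtri(a',\neg\bot) \subseteq D_{a'}$, which says exactly $a \le a' \vee \bot = a'$; and injectivity of $\eta$ follows from order-reflection together with monotonicity. The only real obstacle I anticipate is recognizing the correct candidate $D_a$ and routing the (iii)-closure through the distributive manipulation displayed above; everything else is formal bookkeeping resting on \cref{thm:frm-neg-pres} and the characterization of ${<|}$-ideals in \cref{thm:frm-cidl-ltrans}.
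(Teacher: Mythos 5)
Your proof is correct and follows essentially the same route as the paper's: both identify $\eta(a)$ with the principal $<|$-ideal $\downtri(a,\neg\bot)$ and compare it with the explicit lower set $\{(c,\neg d)\mid c\le a\vee d\}$, the only (cosmetic) difference being that you verify just the one containment $\downtri(a,\neg\bot)\subseteq D_a$ actually needed, where the paper asserts equality. Your check of closure under (i)--(iii), including the distributivity computation for (iii), is the content the paper leaves as ``easily seen,'' and it is carried out correctly.
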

\begin{proof}
$\eta(a) = a \wedge \neg \bot$, regarded as a $<|$-ideal in $A \times \neg A$ for $<|$ given by \cref{thm:frm-neg-pres}, is the principal $<|$-ideal $\downtri (a, \neg \bot)$.
We have $\downtri (a, \neg \bot) = \{(b, \neg c) \in A \times \neg A \mid b \le a \vee c\}$, since this lower set contains $(a, \neg \bot)$ and is easily seen to be the smallest such closed under (i--iii) in \cref{thm:frm-neg-pres}.
Thus
$\eta(a) \le \eta(b)
\iff \downtri (a, \neg \bot) \subseteq \downtri (b, \neg \bot)
\iff (a, \neg \bot) \in \downtri (b, \neg \bot)
\iff a \le b \vee \bot$.
\end{proof}

Note that by definition of $\@N_\kappa(A)$, $\eta$ is surjective, hence an isomorphism, iff every $a \in A$ already has a complement, i.e., $A$ is a $\kappa$-Boolean algebra.
We henceforth adopt the following

\begin{convention}
\label{cvt:frm-neg-incl}
For any $\kappa$-frame $A$, we regard $\eta : A `-> \@N_\kappa(A)$ as an inclusion.
\end{convention}

\Cref{thm:frm-neg-pres} also shows that the same $\kappa$-coverage $<|$, regarded as an $\infty$-coverage, presents the free frame generated by $A$ qua $\kappa$-frame together with complements for elements of $A$, i.e., the frame $\kappa\@I(\@N_\kappa(A))$.
This frame can thus also be realized as the set of all $<|$-ideals $D \subseteq A \times \neg A$.
As an application, we give a simple direct proof of the following result of Madden~\cite[5.1]{Mkfrm} (due to Isbell~\cite[1.3]{Iloc} when $\kappa = \infty$), which was originally proved via a more abstract method (see also \cref{rmk:loc-sub-im}):

\begin{proposition}[Madden]
\label{thm:frm-neg-cong}
For any $\kappa$-frame $A$, we have order-isomorphisms
\begin{align*}
\kappa\@I(\@N_\kappa(A)) &\cong \{\text{$\kappa$-frame order-congruences } {\lesim} \subseteq A^2\}, \\
\@N_\kappa(A) &\cong \{\text{$\kappa$-generated $\kappa$-frame order-congruences } {\lesim} \subseteq A^2\} \\
c &|-> \{(a, b) \mid a \wedge \neg b \le c\} \\
\bigvee_{a \lesim b} (a \wedge \neg b) &<-| {\lesim}
\end{align*}
where the join on the last line can be taken over any generating set for $\lesim$.
\end{proposition}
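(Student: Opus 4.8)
The plan is to deduce everything from the explicit posite presentation $\@N_\kappa(A) = \ang{A \times \neg A \mid {<|}}_\!{\kappa Frm}$ of \cref{thm:frm-neg-pres}, together with the description of $\kappa$-frame order-congruences from \cref{sec:frm-quot}. As noted in the two paragraphs preceding the statement, $\kappa\@I(\@N_\kappa(A))$ is realized as the poset (ordered by $\subseteq$) of all $<|$-ideals $D \subseteq A \times \neg A$, with $\@N_\kappa(A)$ the sub-poset of $\kappa$-generated ones, and by \cref{thm:frm-cidl-ltrans} a lower set $D \subseteq A \times \neg A$ is a $<|$-ideal exactly when it is closed under the three families of covers (i)--(iii) of \cref{thm:frm-neg-pres}. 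So it suffices to exhibit an order-isomorphism between the poset of $<|$-ideals of $A \times \neg A$ and the poset of $\kappa$-frame order-congruences $\lesim$ on $A$ (ordered by inclusion in $A^2$) which restricts to $\kappa$-generated objects on both sides, and then to check that on $\@N_\kappa(A) \subseteq \kappa\@I(\@N_\kappa(A))$ it is given by the stated formulas.

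First I would define the correspondence: send a $<|$-ideal $D$ to ${\lesim_D} := \{(a,b) \mid (a,\neg b)\in D\}$, and a $\kappa$-frame order-congruence $\lesim$ to $D_\lesim := \{(a,\neg b) \mid a \lesim b\}$. Both assignments are obviously monotone, and once well-definedness is established they are mutually inverse, since $(a,\neg b) \in D_{\lesim_D} \iff (a,\neg b)\in D$ and $a \lesim_{D_\lesim} b \iff a\lesim b$. The substance is the two well-definedness verifications, which amount to matching the closure conditions (i)--(iii) against the preorder/join/meet axioms for a $\kappa$-frame order-congruence (\cref{sec:frm-quot}). For ``${\lesim_D}$ is a $\kappa$-frame order-congruence'': reflexivity and $\le_A\subseteq{\lesim_D}$ are (ii); compatibility with $\kappa$-ary joins is (i) verbatim; transitivity follows from a single use of (iii) (given $(a,\neg b),(b,\neg c)\in D$, apply (iii) with output $(a,\neg c)$ and inputs $(b,\neg c),(a,\neg b)$, whose side condition $b\vee c\ge a\wedge b$ is trivial); and $\wedge$-stability follows by combining the lower-set property with one more use of (iii) (with inputs $(\bot,\neg(p\wedge y))\in D$ from (ii) and $(p\wedge x,\neg y)\in D$ from lower-set-ness, side condition $p\wedge y\ge p\wedge x\wedge y$). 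Conversely, for ``$D_\lesim$ is a $<|$-ideal'': the lower-set property and closure under (i), (ii) are immediate from $\le_A\subseteq{\lesim}$, join-compatibility, and transitivity, and closure under (iii) uses $\wedge$-stability and join-compatibility of $\lesim$ (from $a\vee b\ge c\wedge d$ and $c\lesim d$ deduce $c\lesim c\wedge d\le a\vee b$, and from $a\lesim b$ deduce $a\vee b\lesim b$, hence $c\lesim b$).

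To finish, I would identify the formulas. Under the realization of $\@N_\kappa(A)$, an element $c$ corresponds to the $\kappa$-generated $<|$-ideal $\{(a,\neg b)\mid a\wedge\neg b\le c\}$, so ${\lesim_c} = \{(a,b)\mid a\wedge\neg b\le c\}$, as claimed. For the $\kappa$-generated correspondence: if $\lesim$ is generated as a $\kappa$-frame order-congruence by a $\kappa$-ary relation $\prec\subseteq A^2$, then the $<|$-ideal generated by $\{(a,\neg b)\mid a\prec b\}$ is a $<|$-ideal contained in $D_\lesim$ whose associated order-congruence contains $\prec$, hence equals $\lesim$; so this $<|$-ideal is $D_\lesim$ itself, which is therefore $\kappa$-generated and represents $\bigvee_{a\prec b}(a\wedge\neg b)$ in $\@N_\kappa(A)$. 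Comparing this with the (in general non-$\kappa$-ary) generating set $D_\lesim$ shows the element also equals $\bigvee_{a\lesim b}(a\wedge\neg b)$, i.e., the displayed join may be taken over all of $\lesim$ or over any generating set; the reverse implication ($D_\lesim$ $\kappa$-generated $\Rightarrow$ $\lesim$ $\kappa$-generated) is the mirror image, using that a $\kappa$-ary generating set for $D_\lesim$ lies inside $D_\lesim$ and so induces a $\kappa$-ary subset of $\lesim$ generating it. I expect the only genuine obstacle to be the bookkeeping in the two well-definedness checks --- verifying that closure under (i)--(iii) is \emph{exactly} equivalent to the $\kappa$-frame-order-congruence axioms, with the right choices of auxiliary parameters in the instances of (iii); everything else is formal.
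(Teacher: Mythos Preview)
Your proposal is correct and follows essentially the same approach as the paper: both identify $<|$-ideals $D \subseteq A \times \neg A$ with binary relations $\lesim$ on $A$ via the obvious bijection, then verify that the lower-set and closure conditions (i)--(iii) from \cref{thm:frm-neg-pres} correspond exactly to the $\kappa$-frame order-congruence axioms, with the specific instances of (iii) you chose for transitivity and $\wedge$-stability differing only cosmetically from the paper's. The treatment of the $\kappa$-generated case and the explicit formulas likewise match the paper's argument.
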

\begin{proof}
The first isomorphism is obtained by regarding a $<|$-ideal $D \subseteq A \times \neg A$ as a subset of $A \times A^\op \cong A \times \neg A$, hence as a binary relation $\lesim$ on $A$.
For $D$ to be lower means that $a \le b \lesim c \le d \implies a \le d$.
For $D$ to be closed under (i--iii) in \cref{thm:frm-neg-pres} mean that
(i)~joins in $A$ are joins with respect to $\lesim$,
(ii)~$\lesim$ contains $\le$, and
(iii)~$a \lesim b$, $c \lesim d$, and $c \wedge d \le a \vee b$ imply $c \lesim b$;
(iii) implies $\lesim$ is transitive by taking $a = d$ and
$\wedge$-stable by taking $a = b = c' \wedge b'$, $c = c' \wedge a'$, and $d = b'$,
while (iii) is also implied by these two conditions on $\lesim$ (along with the others), since from $c \lesim d$, $\wedge$-stability, and (i), we have $c \lesim c \wedge d \le a \vee b \lesim b$ whence $c \lesim b$ by transitivity.
Thus $D$ is a $<|$-ideal iff $\lesim$ is an order-congruence.

It is clear from this correspondence that a join $\bigvee_i (a_i \wedge \neg b_i)$ in $\@N_\kappa(A)$ or in $\kappa\@I(\@N_\kappa(A))$, with $a_i, b_i \in A$, corresponds to the $<|$-ideal generated by the pairs $(a_i, \neg b_i)$, which corresponds to the order-congruence $\lesim$ generated by $a_i \lesim b_i$.
In particular, a $\kappa$-ary such join corresponds to a $\kappa$-generated order-congruence, yielding the second isomorphism.
\end{proof}

Now define inductively for each ordinal $\alpha$
\begin{align*}
\@N_\kappa^0(A) &:= A, \\
\@N_\kappa^{\alpha+1}(A) &:= \@N_\kappa(\@N_\kappa^\alpha(A)), \\
\@N_\kappa^\alpha(A) &:= \injlim_{\beta < \alpha} \@N_\kappa^\beta(A) \quad\text{for $\alpha$ limit},
\end{align*}
where the colimit is taken in $\!{\kappa Frm}$ with respect to the inclusions $\@N_\kappa^\beta(A) `-> \@N_\kappa^{\beta+1}(A) `-> \dotsb$.
By induction, using \cref{thm:frm-neg-inj} for the successor stages and \cref{thm:frm-colim-dir-ker} for the limit stages, for $\alpha \le \beta$, we have an injective homomorphism $\eta_{\alpha\beta} : \@N_\kappa^\alpha(A) -> \@N_\kappa^\beta(A)$.
Since the operations of $\kappa$-frames are $\kappa$-ary, the forgetful functor $\!{\kappa Frm} -> \!{Set}$ preserves $\kappa$-directed limits; thus every element of $\@N_\kappa^\kappa(A)$ belongs to some earlier stage, and so already has a complement in $\@N_\kappa^\kappa(A)$.
Thus
\begin{align*}
\@N_\kappa^\kappa(A) = \@N_\kappa^\infty(A) = \ang{A \qua \!{\kappa Frm}}_\!{\kappa Bool}.
\end{align*}

\begin{corollary}
\label{thm:frm-neginf-inj}
The unit $\eta = \eta_{0\kappa} = \eta_{0\infty} : A -> \@N_\kappa^\infty(A) = \ang{A \qua \!{\kappa Frm}}_\!{\kappa Bool}$ of the free/forgetful adjunction $\!{\kappa Frm} \rightleftarrows \!{\kappa Bool}$ is injective.
Equivalently, the left adjoint $\@N_\kappa^\infty = \ang{-}_\!{\kappa Bool}$ is faithful.
\qed
\end{corollary}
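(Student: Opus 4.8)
The plan is to read the statement off directly from the transfinite construction of $\@N_\kappa^\infty(A)$ just carried out, since the two substantive inputs are already in hand. Recall that $\@N_\kappa^\infty(A) = \@N_\kappa^\kappa(A) = \injlim_{\alpha < \kappa} \@N_\kappa^\alpha(A)$ — the identification of the two sides being the one established in the paragraph above, as $\!{\kappa Frm} -> \!{Set}$ preserves this $\kappa$-directed colimit — realized as a directed colimit along the maps $\eta_{\alpha\beta} : \@N_\kappa^\alpha(A) -> \@N_\kappa^\beta(A)$, and that the unit $\eta$ of the adjunction $\!{\kappa Frm} \rightleftarrows \!{\kappa Bool}$ is by construction the composite $\eta_{0\kappa} = \eta_{0\infty}$. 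So it suffices to show that each $\eta_{\alpha\beta}$ is injective, which I would do by transfinite induction on $\beta$. The successor case is immediate from \cref{thm:frm-neg-inj} applied to the $\kappa$-frame $\@N_\kappa^\alpha(A)$: it gives that $\eta_{\alpha,\alpha+1} : \@N_\kappa^\alpha(A) -> \@N_\kappa(\@N_\kappa^\alpha(A)) = \@N_\kappa^{\alpha+1}(A)$ is injective. At a limit ordinal $\lambda$, $\@N_\kappa^\lambda(A) = \injlim_{\beta < \lambda} \@N_\kappa^\beta(A)$ is a directed colimit in $\!{\kappa Frm}$, hence (by \cref{thm:frm-colim-dir-vlat}) also in $\!{\kappa{\bigvee}Lat}$, whose transition maps $\eta_{\beta\gamma}$ for $\beta \le \gamma < \lambda$ are injective by the inductive hypothesis; so by the ``in particular'' clause of \cref{thm:frm-colim-dir-ker} every cocone map $\eta_{\beta\lambda}$ is injective. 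Composing along the chain then yields injectivity of $\eta_{\alpha\beta}$ for all $\alpha \le \beta$, in particular of $\eta = \eta_{0\kappa}$.

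For the ``equivalently'' clause, I would invoke the standard fact recalled in \cref{sec:pres-adj} — the left adjoint of an adjunction is faithful iff every component of the unit is a monomorphism — together with the observation that in $\!{\kappa Frm}$, being monadic over $\!{Set}$, monomorphisms are precisely the injective homomorphisms. Hence injectivity of $\eta : A -> \@N_\kappa^\infty(A)$ for every $\kappa$-frame $A$ is the same as faithfulness of $\@N_\kappa^\infty = \ang{-}_\!{\kappa Bool}$.

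There is no genuine obstacle here: \cref{thm:frm-neg-inj} (proved via the explicit posite presentation of \cref{thm:frm-neg-pres}) handles one step of adjoining complements, and \cref{thm:frm-colim-dir-ker} handles the passage through limit stages, so the corollary reduces to bookkeeping over the tower. The one point needing a little care is that at a limit ordinal $\lambda$ it is the full cocone maps $\eta_{\beta\lambda}$, not merely the one-step maps $\eta_{\beta,\beta+1}$, that must be seen to be injective; this is exactly what the second sentence of \cref{thm:frm-colim-dir-ker} supplies once all earlier transition maps are known to be injective.
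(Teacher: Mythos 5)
Your proof is correct and is essentially the paper's own argument: the paper establishes injectivity of the $\eta_{\alpha\beta}$ by exactly this transfinite induction, citing \cref{thm:frm-neg-inj} at successor stages and \cref{thm:frm-colim-dir-ker} (via \cref{thm:frm-colim-dir-vlat}) at limit stages, and then identifies $\@N_\kappa^\kappa(A)$ with the free $\kappa$-Boolean algebra. Your explicit note that the colimit must be transferred to $\!{\kappa{\bigvee}Lat}$ before applying \cref{thm:frm-colim-dir-ker} is a point the paper leaves implicit, but it is the intended reading.
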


This allows us to transfer results from \cref{sec:frm-colim} about colimits in $\!{\kappa Frm}$ to $\!{\kappa Bool}$:

\begin{corollary}
\label{thm:bool-colim-dir-ker}
Let $F : \!I -> \!{\kappa Bool}$ be a directed diagram, $I \in \!I$, $a \in F(I)$.
Suppose that for each $I \le J \le K \in \!I$, $F(I, J)(a) \in F(J)$ is the greatest element of its $\oker(F(J, K))$-principal ideal.
Then $a$ is the greatest element of its $\oker(\iota_I)$-principal ideal.

In particular, if each $F(I, J) : F(I) -> F(J)$ is injective, then so is each $\iota_I : F(I) -> \injlim F$.
\end{corollary}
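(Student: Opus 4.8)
The plan is to reduce this to the $\kappa$-$\bigvee$-lattice statement \cref{thm:frm-colim-dir-ker} by passing through the reflective inclusion $\!{\kappa Bool} `-> \!{\kappa Frm}$. First I would let $C := \injlim_{\!{\kappa Frm}} F$ be the colimit computed after forgetting complements, with cocone maps $j_I : F(I) -> C$. Since $\!I$ is directed, \cref{thm:frm-colim-dir-vlat} shows that $C$, with the same cocone maps, is also the colimit in $\!{\kappa{\bigvee}Lat}$ of the underlying diagram of $\kappa$-$\bigvee$-lattices (the order on $C$ is of course the same either way). Hence \cref{thm:frm-colim-dir-ker} applies to the underlying $\!{\kappa{\bigvee}Lat}$-diagram of $F$ with this $I$ and $a$ — its hypothesis is exactly the one assumed here — and yields that $a$ is the greatest element of its $\oker(j_I)$-principal ideal, i.e.\ $j_I(b) \le j_I(a) \iff b \le a$ for all $b \in F(I)$.

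Next I would identify $\injlim_{\!{\kappa Bool}} F$ with the reflection $\@N_\kappa^\infty(C)$. Since $\@N_\kappa^\infty = \ang{-}_\!{\kappa Bool}$ is left adjoint to the full inclusion $\!{\kappa Bool} `-> \!{\kappa Frm}$, this is the standard description of colimits in a reflective full subcategory: any cocone over $F$ valued in $\!{\kappa Bool}$, regarded in $\!{\kappa Frm}$, factors uniquely through $C$, and that factorization in turn factors uniquely through the unit $\eta : C -> \@N_\kappa^\infty(C)$ by the universal property of the reflection; so $\@N_\kappa^\infty(C)$ with cocone maps $\eta \circ j_I$ is the colimit in $\!{\kappa Bool}$, i.e.\ $\iota_I = \eta \circ j_I$. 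By \cref{thm:frm-neginf-inj} the unit $\eta$ is injective, and an injective homomorphism of $\bigvee$-lattices (in particular of $\kappa$-frames) is automatically an order-embedding, since $\eta(x) \le \eta(y) \iff \eta(x \vee y) = \eta(y) \iff x \vee y = y \iff x \le y$.

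Putting the two steps together, for $b \in F(I)$ we get $\iota_I(b) \le \iota_I(a) \iff \eta(j_I(b)) \le \eta(j_I(a)) \iff j_I(b) \le j_I(a) \iff b \le a$, which is exactly the claim that $a$ is the greatest element of its $\oker(\iota_I)$-principal ideal. For the final assertion: if each $F(I, J)$ is injective, hence (by the same argument as for $\eta$) an order-embedding, then the hypothesis of the Corollary holds for every $a \in F(I)$ at once, so $\iota_I$ is order-reflecting on all of $F(I)$; and an order-reflecting homomorphism is injective.

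The only step that needs genuine care is the identification $\injlim_{\!{\kappa Bool}} F \cong \@N_\kappa^\infty(\injlim_{\!{\kappa Frm}} F)$, and specifically the fact that the Boolean cocone map is $\eta \circ j_I$ — this is where the whole argument lives, though it is a routine consequence of $\!{\kappa Bool}$ being a reflective full subcategory of $\!{\kappa Frm}$ with reflector $\@N_\kappa^\infty$. Once that, together with the inputs \cref{thm:frm-colim-dir-ker}, \cref{thm:frm-colim-dir-vlat}, and \cref{thm:frm-neginf-inj}, is in place, the remainder is a formal chain of equivalences.
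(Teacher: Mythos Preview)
Your proposal is correct and follows exactly the same route as the paper: compute the $\!{\kappa Bool}$-colimit as the reflection $\@N_\kappa^\infty$ of the $\!{\kappa Frm}$-colimit, then invoke \cref{thm:frm-colim-dir-ker}, \cref{thm:frm-colim-dir-vlat}, and \cref{thm:frm-neginf-inj}. The paper's proof is the one-line version of what you have written out in full.
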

\begin{proof}
The colimit $\injlim_\!{\kappa Bool} F$ in $\!{\kappa Bool}$ is the reflection $\@N_\kappa^\infty(\injlim_\!{\kappa Frm} F)$ of the colimit in $\!{\kappa Frm}$; the result thus follows from \cref{thm:frm-colim-dir-ker}, \cref{thm:frm-colim-dir-vlat}, and \cref{thm:frm-neginf-inj}.
\end{proof}

\begin{corollary}
\label{thm:bool-coprod}
Let $A, B$ be $\kappa$-Boolean algebras, $a, a' \in A$, and $b, b' \in b$.
If $a \times b \le a' \times b'$ in the coproduct $A \amalg B$ of $A, B$ in $\!{\kappa Bool}$, then either $a \le a'$ and $b \le b'$, or $a = \bot$, or $b = \bot$.

In particular, if $a \times b = \bot$, then either $a = \bot$ or $b = \bot$.
\end{corollary}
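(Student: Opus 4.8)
The plan is to reduce this to the $\kappa$-frame statement \cref{thm:frm-coprod} by identifying the coproduct $A \amalg B$ in $\!{\kappa Bool}$ with the $\kappa$-Boolean reflection of the $\kappa$-frame coproduct $A \otimes B$, and then transporting the inequality $a \times b \le a' \times b'$ back along the canonical comparison map $A \otimes B -> A \amalg B$.

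First I would recall that for a $\kappa$-Boolean algebra $C$ every element already has a complement, so by definition of $\@N_\kappa$ the unit $\eta : C -> \@N_\kappa(C)$ is an isomorphism, hence so is $\eta : C -> \@N_\kappa^\infty(C)$; equivalently, $\!{\kappa Bool}$ is a reflective subcategory of $\!{\kappa Frm}$ with reflection $\@N_\kappa^\infty = \ang{-}_\!{\kappa Bool}$. Since this left adjoint preserves colimits, the coproduct of $A, B$ in $\!{\kappa Bool}$ is $\@N_\kappa^\infty$ applied to their coproduct $A \otimes B$ in $\!{\kappa Frm}$, and the $\!{\kappa Bool}$-coproduct injections are obtained from the $\kappa$-frame injections $\iota_1 : A -> A \otimes B$ and $\iota_2 : B -> A \otimes B$ by postcomposing with the canonical map $\eta : A \otimes B -> \@N_\kappa^\infty(A \otimes B) = A \amalg B$ (this compatibility is just naturality of $\eta$ together with the triangle identities for the adjunction). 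Consequently the element $a \times b := \iota_1(a) \wedge \iota_2(b)$ computed in $A \amalg B$ is the image under $\eta$ of the corresponding element $a \times b \in A \otimes B$, and likewise for $a' \times b'$.

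Next I would use \cref{thm:frm-neginf-inj}, which says $\eta : A \otimes B -> \@N_\kappa^\infty(A \otimes B)$ is injective. Since $\eta$ is a $\wedge$-lattice homomorphism and injective, it is automatically order-reflecting — $x \le y \iff x \wedge y = x \iff \eta(x) \wedge \eta(y) = \eta(x) \iff \eta(x) \le \eta(y)$ — hence an order-embedding. Therefore $a \times b \le a' \times b'$ holds in $A \amalg B$ if and only if it holds already in the $\kappa$-frame coproduct $A \otimes B$, and \cref{thm:frm-coprod} applied there yields the dichotomy. The final assertion about $a \times b = \bot$ is the special case $a' = b' = \bot$.

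I do not expect a genuine obstacle here; the only steps needing a moment's care are the identification of the $\!{\kappa Bool}$-coproduct with $\@N_\kappa^\infty(A \otimes B)$ (which rests on $A, B$ already being Boolean) together with the compatibility of the coproduct injections, after which the order-embedding property of $\eta$ does all the work and \cref{thm:frm-coprod} supplies the content.
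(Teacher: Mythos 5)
Your proposal is correct and matches the paper's argument: the paper likewise identifies $A \amalg B$ with the reflection $\@N_\kappa^\infty(A \otimes B)$ of the $\kappa$-frame coproduct, invokes the injectivity of the unit from \cref{thm:frm-neginf-inj} (hence order-reflection), and concludes via \cref{thm:frm-coprod}. Your extra remark that an injective $\wedge$-homomorphism is automatically order-reflecting is a correct and worthwhile explicit justification of a step the paper leaves implicit.
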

\begin{proof}
This similarly follows from \cref{thm:frm-coprod} and \cref{thm:frm-neginf-inj}.
\end{proof}

We can also use $\@N_\kappa$ to construct the free functor $\ang{-}_\!{\lambda Bool} : \!{\kappa Bool} -> \!{\lambda Bool}$ for $\kappa \le \lambda$.
Indeed, since $\!{\kappa Bool} \subseteq \!{\kappa Frm}$ is a reflective subcategory,
\begin{align*}
\ang{A \qua \!{\kappa Bool}}_\!{\lambda Bool}
&\cong \ang{\ang{A \qua \!{\kappa Frm}}_\!{\kappa Bool}}_\!{\lambda Bool} \\
&\cong \ang{\ang{A \qua \!{\kappa Frm}}_\!{\lambda Frm}}_\!{\lambda Bool} \\
&\cong \@N_\lambda^\infty(\kappa\@I_\lambda(A))
\end{align*}
(see the diagram \eqref{diag:frm-cat}).
Since the units for $\kappa\@I_\lambda$ and $\@N_\lambda^\infty$ are both injective,

\begin{corollary}
\label{thm:frm-neginf-bool-inj}
For $\kappa \le \lambda$, the unit of the free/forgetful adjunction $\!{\kappa Bool} \rightleftarrows \!{\lambda Bool}$ is injective.
Equivalently, the left adjoint $\ang{-}_\!{\lambda Bool}$ is faithful.
\qed
\end{corollary}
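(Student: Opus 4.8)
The plan is to leverage the identification obtained in the computation immediately preceding the statement. For $A \in \!{\kappa Bool}$ there are natural isomorphisms $\ang{A \qua \!{\kappa Bool}}_\!{\lambda Bool} \cong \ang{A \qua \!{\kappa Frm}}_\!{\lambda Bool} \cong \@N_\lambda^\infty(\kappa\@I_\lambda(A))$: the first holds because any $\kappa$-frame homomorphism out of a $\kappa$-Boolean algebra into a $\lambda$-Boolean algebra automatically preserves complements (complements being unique), so the free $\lambda$-Boolean algebra on $A$ is the same whether $A$ is regarded as a $\kappa$-Boolean algebra or merely as its underlying $\kappa$-frame; the second combines $\ang{-\qua\!{\lambda Frm}}_\!{\lambda Bool} = \@N_\lambda^\infty$ (\cref{thm:frm-neginf-inj}, with $\lambda$ in place of $\kappa$) with $\ang{-\qua\!{\kappa Frm}}_\!{\lambda Frm} = \kappa\@I_\lambda$ (\cref{sec:frm-idl}). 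Hence, as a functor, $\ang{-}_\!{\lambda Bool} : \!{\kappa Bool} -> \!{\lambda Bool}$ is naturally isomorphic to the composite of the forgetful functor $\!{\kappa Bool} -> \!{\kappa Frm}$, then $\kappa\@I_\lambda : \!{\kappa Frm} -> \!{\lambda Frm}$, then $\@N_\lambda^\infty : \!{\lambda Frm} -> \!{\lambda Bool}$.

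Given this, I would conclude in either of two equivalent ways. Quickest: each functor in the composite is faithful --- the forgetful functor $\!{\kappa Bool} -> \!{\kappa Frm}$ is a full subcategory inclusion; $\kappa\@I_\lambda$ is faithful since its unit $\down$ is an order-embedding (\cref{sec:frm-idl}); and $\@N_\lambda^\infty$ is faithful by \cref{thm:frm-neginf-inj} --- so $\ang{-}_\!{\lambda Bool}$ is faithful, which is the ``equivalently'' assertion; injectivity of the unit then follows from the standard fact recalled in \cref{sec:pres-adj} that a left adjoint is faithful iff its unit is a monomorphism at every object, noting that monomorphisms in $\!{\lambda Bool}$ are exactly the injective homomorphisms (test against the free $\lambda$-Boolean algebra on one generator). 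More concretely: tracing the unit through the composite of adjunctions shows that at each $A$ it is the composite $A \xrightarrow{\down} \kappa\@I_\lambda(A) \xrightarrow{\eta} \@N_\lambda^\infty(\kappa\@I_\lambda(A))$ of the units of $\kappa\@I_\lambda$ and of $\@N_\lambda^\infty$, which is injective because $\down$ is an embedding and $\eta$ is injective by \cref{thm:frm-neginf-inj}.

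The only step needing real care --- and the only plausible obstacle --- is justifying the naturality (and, in the concrete approach, the unit-compatibility) of the isomorphism $\ang{- \qua \!{\kappa Bool}}_\!{\lambda Bool} \cong \@N_\lambda^\infty \circ \kappa\@I_\lambda \circ (\text{forgetful})$. This is routine two-categorical manipulation; the substantive ingredients are the commutativity of \eqref{diag:frm-cat}, which lets one reassociate composites of left adjoints, together with the complement-preservation observation above. Everything else is formal.
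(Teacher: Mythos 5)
Your proposal is correct and follows essentially the same route as the paper: the paper also identifies $\ang{A \qua \!{\kappa Bool}}_{\!{\lambda Bool}} \cong \ang{A \qua \!{\kappa Frm}}_{\!{\lambda Bool}} \cong \@N_\lambda^\infty(\kappa\@I_\lambda(A))$ (citing reflectivity of $\!{\kappa Bool} \subseteq \!{\kappa Frm}$, which is exactly your complement-preservation observation) and then concludes from injectivity of the units of $\kappa\@I_\lambda$ and $\@N_\lambda^\infty$. Your extra remarks on naturality and the faithfulness-of-composites phrasing are fine but add nothing beyond what the paper's argument already contains.
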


\begin{remark}
The preceding result is also easily seen ``semantically'', via MacNeille completion.
On the other hand, \cref{thm:frm-dpoly-bifrm-inj} below yields yet another, purely ``syntactic'', proof.
\end{remark}

\begin{remark}
For $\kappa \le \lambda$, unlike the free/forgetful adjunctions $\!{\kappa Frm} \rightleftarrows \!{\lambda Frm}$ and $\!{\kappa{\bigvee}Lat} \rightleftarrows \!{\lambda{\bigvee}Lat}$ (see \cref{thm:frm-idl-fulliso}), the free functor $\!{\kappa Bool} -> \!{\lambda Bool}$ is \emph{not} full on isomorphisms.
However, we will see below (\cref{thm:bool-free-cons}) that it is at least conservative.
\end{remark}

By \cref{thm:frm-neginf-inj,thm:frm-neginf-bool-inj}, every $\kappa$-frame or $\kappa$-Boolean algebra is embedded in the free (large) complete Boolean algebra it generates.
We now extend \cref{cvt:frm-neg-incl} to

\begin{convention}
\label{cvt:frm-cbool-incl}
For any $\kappa$-frame, $\kappa$-coframe, or $\kappa$-Boolean algebra $A$, \textbf{we regard $A$ as a subalgebra of the free complete Boolean algebra it generates}.
That is, we regard the unit $A `-> \ang{A}_\!{CBOOL}$ as an inclusion.
Hence we also regard all of the intermediate units $A `-> \ang{A}_\!{\lambda Frm} = \kappa\@I_\lambda(A)$ for $\lambda \ge \kappa$, $A `-> \@N_\kappa^\alpha(A)$, etc., as inclusions.

In particular, we adopt the (uncommon in locale theory) convention that \textbf{from now on, unless otherwise specified, all infinitary Boolean operations take place in $\ang{A}_\!{CBOOL}$}.
Thus for example, for elements $a_i$ in a frame $A$, $\bigwedge_i a_i$ will by default no longer refer to their meet in the complete lattice $A$, but will instead refer to their meet in $\ang{A}_\!{CBOOL}$.
Similarly, $->, \neg$ will never be used to denote the Heyting implication or pseudocomplement in a frame $A$, but will instead denote Boolean implication or complement in $\ang{A}_\!{CBOOL}$.
Note that for $a, b, a_i \in A$, we have $a -> b, \neg a \in \@N(A) \subseteq \@N^\infty(A) = \ang{A}_\!{CBOOL}$; thus $\bigwedge_i a_i = \neg \bigvee_i \neg a_i \in \@N^2(A) \subseteq \@N^\infty(A)$.

We also extend this convention to $(\lambda, \kappa)$-frames, once we know they also embed into their free complete Boolean algebras; see \cref{thm:frm-dpoly-bifrm-inj}.
\end{convention}

We conclude this subsection by showing that $\kappa$-Boolean algebras can be presented by ``enough'' $\kappa$-frames.
First, from general categorical principles, we have

\begin{proposition}
\label{thm:frm-bool-pres}
For every $\kappa$-presented $\kappa$-Boolean algebra $B$, there is a $\kappa$-presented $\kappa$-frame $A$ such that $\@N_\kappa^\infty(A) = \ang{A}_\!{\kappa Bool} \cong B$.
\end{proposition}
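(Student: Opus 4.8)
The plan is to prove this ``from above'', by exhibiting $B$ as a retract in $\!{\kappa Bool}$ of $\@N_\kappa^\infty(C)$ for a single $\kappa$-presented $\kappa$-frame $C$, and then descending. First I would record two facts. (i) The free functor $\ang{-}_\!{\kappa Bool} = \@N_\kappa^\infty : \!{\kappa Frm} \to \!{\kappa Bool}$ preserves $\kappa$-presented objects: it sends $\ang{G \mid R}_\!{\kappa Frm}$ to $\ang{G \mid R}_\!{\kappa Bool}$, so a $\kappa$-presentation of a $\kappa$-frame is also one of the $\kappa$-Boolean algebra it generates. (ii) The forgetful functor $\!{\kappa Bool} \to \!{\kappa Frm}$ --- which is the inclusion of a full reflective subcategory, with reflector $\@N_\kappa^\infty$ --- creates $\kappa$-directed colimits: these are computed on underlying sets, the operations being $\kappa$-ary, and a $\kappa$-directed colimit of $\kappa$-Boolean algebras taken in $\!{\kappa Frm}$ is again Boolean, each complement arising from a complement at some stage (this is exactly the observation already used to see $\@N_\kappa^\kappa = \@N_\kappa^\infty$).

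Next I would use (ii) to write the underlying $\kappa$-frame of $B$ as a $\kappa$-directed colimit $\injlim_i C_i$ of $\kappa$-presented $\kappa$-frames $C_i$, which is possible in any locally $\kappa$-presentable category. Applying the colimit-preserving reflector $\@N_\kappa^\infty$, and using that it is left inverse (up to isomorphism) to the inclusion on $\!{\kappa Bool}$, gives $B \cong \injlim_i \@N_\kappa^\infty(C_i)$ as a $\kappa$-directed colimit in $\!{\kappa Bool}$, each $\@N_\kappa^\infty(C_i)$ being $\kappa$-presented by (i). Since $B$ is itself $\kappa$-presented, $1_B$ factors through one of the colimit injections $\@N_\kappa^\infty(C_{i_0}) \to B$, exhibiting $B$ as a retract of $\@N_\kappa^\infty(C_{i_0})$ for some $C_{i_0} \in \!{\kappa Frm}_\kappa$.

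The remaining step --- upgrading ``$B$ is a retract of $\@N_\kappa^\infty(C_{i_0})$'' to ``$B \cong \@N_\kappa^\infty(A)$ for some $\kappa$-presented $\kappa$-frame $A$'' --- is the main obstacle, and it is where the abstract content lies. The idea is to split the resulting idempotent $e = s \circ r$ on $\@N_\kappa^\infty(C_{i_0})$ already in $\!{\kappa Frm}$. Transposing $e$ across the adjunction $\@N_\kappa^\infty \dashv {}$(forgetful) gives a $\kappa$-frame map $C_{i_0} \to \@N_\kappa^\infty(C_{i_0}) = \injlim_{\alpha < \kappa}\@N_\kappa^\alpha(C_{i_0})$, which by $\kappa$-presentability of $C_{i_0}$ factors through some $\@N_\kappa^\alpha(C_{i_0})$, and $\@N_\kappa^\alpha(C_{i_0})$ is in turn a $\kappa$-directed colimit of $\kappa$-presented $\kappa$-frames. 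Invoking the standard fact that an idempotent on a $\kappa$-presentable object which factors through a $\kappa$-directed colimit is, past some stage, induced by a genuine idempotent at a $\kappa$-presented stage, together with idempotent-completeness of $\!{\kappa Frm}_\kappa$ (so its splitting $A$ is again $\kappa$-presented) and the fact that a split idempotent is an absolute colimit (so $\@N_\kappa^\infty$ carries the splitting of this idempotent to the splitting of $e$, namely to $B$), yields the required $A$ with $\@N_\kappa^\infty(A) \cong B$. The delicate point is making this ``descent of the idempotent'' precise --- equivalently, arranging that the splitting $A$ is $\kappa$-presented --- and I expect it to require the careful book-keeping with $\kappa$-directed colimits that the paper carries out abstractly; this is presumably why the proof in \cref{thm:frm-bool-pres-cof} is phrased at the level of universal algebra rather than by manipulating $\kappa$-frames directly.
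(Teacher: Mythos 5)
Your first two paragraphs coincide with what the paper actually does: the proof reduces the statement to the general \cref{thm:cat-lim-refl} about reflective adjunctions between locally $\kappa$-presentable categories, whose proof writes the underlying $\kappa$-frame of $B$ as a $\kappa$-directed colimit $\injlim_i A_i$ of $\kappa$-presented $\kappa$-frames, applies the reflector to get $B \cong \injlim_i \@N_\kappa^\infty(A_i)$, and uses $\kappa$-presentability of $B$. The divergence is in the last step, and there your sketch has a genuine gap beyond the one you flag. The paper's \cref{thm:cat-lim-dircolim-retr} does not stop at the retract: by an $\omega$-fold zig-zag (repeatedly using $\kappa$-presentability of $B$ and of each $A_{i_j}$ to make the section and the colimit injections agree at some later stage of the diagram), it extracts a countable chain $A_{i_0} \to A_{i_1} \to \dotsb$ \emph{inside the original diagram} with $B \cong \injlim_j \@N_\kappa^\infty(A_{i_j}) \cong \@N_\kappa^\infty(\injlim_j A_{i_j})$; one then takes $A := \injlim_j A_{i_j}$, which is $\kappa$-presented because a countable colimit of $\kappa$-presented objects is $\kappa$-presented when $\kappa$ is uncountable (the case $\kappa = \omega$ is dispatched separately: $B$ is finite, so $A = B$ works). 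No idempotent is ever split.

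Two concrete problems with the idempotent route as you state it. First, the ``standard fact'' is misstated: the idempotent $e = s \circ r$ lives on $\@N_\kappa^\infty(C_{i_0})$, which is $\kappa$-presented in $\!{\kappa Bool}$ but \emph{not} in $\!{\kappa Frm}$, so $e$ cannot be descended in one step to an endomorphism of a $\kappa$-presented stage $D$ of $\@N_\kappa^\infty(C_{i_0}) = \injlim_j D_j$; one only gets maps $D_j \to D_{k(j)}$ lifting $e \circ \iota_j$, and assembling these into an honest idempotent on a $\kappa$-presented object requires exactly the $\omega$-iteration above --- which is also the only place uncountability of $\kappa$ enters, not the closure of $\!{\kappa Frm}_\kappa$ under retracts. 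Second, and more seriously: even given a genuine idempotent $e'$ on a $\kappa$-presented stage $D \supseteq C_{i_0}$ with $\iota_D \circ e' = e \circ \iota_D$, splitting $e'$ in $\!{\kappa Frm}_\kappa$ and applying $\@N_\kappa^\infty$ yields the splitting of $\@N_\kappa^\infty(e')$ on $\@N_\kappa^\infty(D)$, which only admits a comparison map to the splitting of $e$ (namely $B$) through the canonical $\@N_\kappa^\infty(D) \to \@N_\kappa^\infty(C_{i_0})$. That comparison is surjective (since $D \supseteq C_{i_0}$ generates) but need not be injective unless $D$ still \emph{freely} generates $\@N_\kappa^\infty(C_{i_0})$ --- and arranging that is precisely the content of \cref{thm:frm-bool-pres-cof} and \cref{thm:frm-neginf-pres}, which in the paper sit downstream of the present proposition and are themselves proved by the countable-chain argument. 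So the idempotent-splitting detour buys nothing over running the zig-zag of \cref{thm:cat-lim-dircolim-retr} directly.
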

\begin{proof}
If $\kappa = \omega$, then $B$ is finite, so we may take $A = B$.
For uncountable $\kappa$, this follows from reflectivity and local $\kappa$-presentability of $\!{\kappa Bool} \subseteq \!{\kappa Frm}$; see \cref{thm:cat-lim-refl}.
\end{proof}

\begin{remark}
\label{rmk:frm-bool-gen-free}
By \cref{thm:frm-neginf-inj}, the conclusion above implies that (up to isomorphism) $A \subseteq B$ is a $\kappa$-subframe generating $B$ as a $\kappa$-Boolean algebra.
However, this condition is not sufficient for the conclusion to hold: for example, the topology $\@O(2^\#N)$ of Cantor space $2^\#N$ is a subframe of $\@P(2^\#N)$ generating it as a complete Boolean algebra, but not freely (e.g., by the Gaifman--Hales \cref{thm:gaifman-hales}, since $\@O(2^\#N)$ is the ideal completion of the algebra of clopen sets in $2^\#N$ which is the free Boolean algebra on $\#N$, whence $\ang{\@O(2^\#N) \qua \!{Frm}}_\!{CBOOL} \cong \ang{\#N \qua \!{Set}}_\!{CBOOL}$).
\end{remark}

Unravelling the proof of \cref{thm:frm-bool-pres} yields the following refined statement:

\begin{proposition}
\label{thm:frm-bool-pres-cof}
Let $B$ be a $\kappa$-presented $\kappa$-Boolean algebra which is freely generated by a $\kappa$-subframe $A \subseteq B$.
For any $\kappa$-ary subset $C \subseteq A$, there is a $\kappa$-presented $\kappa$-subframe $A' \subseteq A$ which contains $C$ and still freely generates $B$.
\end{proposition}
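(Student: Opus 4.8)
The plan is a Löwenheim--Skolem-style construction which ``unravels'' the abstract reflection argument behind \cref{thm:frm-bool-pres}. I first reduce the goal: it suffices to build a $\kappa$-presented $\kappa$-frame $A_\infty$ equipped with a homomorphism $f_\infty : A_\infty \to A$ such that $C \subseteq \im(f_\infty)$ and the induced map $\@N_\kappa^\infty(f_\infty) : \@N_\kappa^\infty(A_\infty) \to \@N_\kappa^\infty(A) = B$ is an isomorphism. Granting this, set $A' := \im(f_\infty) \subseteq A$; it is a $\kappa$-generated $\kappa$-subframe containing $C$, and factoring $\@N_\kappa^\infty(f_\infty)$ through $\@N_\kappa^\infty(A')$ exhibits the (regular) epimorphism $e : A_\infty \twoheadrightarrow A'$ as becoming monic under $\@N_\kappa^\infty$. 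Since $\@N_\kappa^\infty$ is faithful (\cref{thm:frm-neginf-inj}) it reflects monomorphisms, so $e$ is both monic and a regular epimorphism, hence invertible; thus $A' \cong A_\infty$ is $\kappa$-presented, and $\@N_\kappa^\infty(A') \cong B$ via the inclusion $A' \hookrightarrow A \hookrightarrow B$, i.e.\ $A'$ freely generates $B$. (If $\kappa = \omega$ then $B$ is finite, so $A$ is finite and $A' = A$ works; assume $\kappa$ uncountable henceforth.)

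Two observations feed the construction. \emph{Supports:} every $b \in B = \@N_\kappa^\infty(A)$ lies in the image of $\@N_\kappa^\infty(A'') \to B$ for some \emph{$\kappa$-generated} $\kappa$-subframe $A'' \subseteq A$; indeed the union $B_0 \subseteq B$ of all these images is a $\kappa$-Boolean subalgebra --- it is closed under $\kappa$-ary joins because, choosing $\kappa$-ary generating sets $S_i$ of supporting subframes for the $b_i$, the union $\bigcup_i S_i$ is again $\kappa$-ary ($\kappa$ being regular), and it is closed under $\neg$ because the supporting subframes are themselves Boolean --- and $B_0 \supseteq A$ generates $B$ as a $\kappa$-Boolean algebra, so $B_0 = B$. \emph{Filtered presentation:} writing $A = \injlim_{i \in \!I} A_i$ as the canonical $\kappa$-filtered colimit over $\!I := \!{\kappa Frm}_\kappa \downarrow A$, and using that $\@N_\kappa^\infty$, being a left adjoint whose right adjoint preserves $\kappa$-filtered colimits (that is, $\!{\kappa Bool}$ is closed under $\kappa$-filtered colimits in $\!{\kappa Frm}$; cf.\ \cref{thm:cat-lim-refl}), preserves $\kappa$-presented objects, we get $B = \injlim_i \@N_\kappa^\infty(A_i)$ as a $\kappa$-filtered colimit of $\kappa$-presented $\kappa$-Boolean algebras. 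In particular, any $\kappa$-ary family of identities among elements of some $\@N_\kappa^\infty(A_i)$ that holds in $B$ already holds in $\@N_\kappa^\infty(A_j)$ for some $i \to j$ in $\!I$.

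Now construct an $\omega$-chain $A_0 \to A_1 \to \dotsb$ in $\!I$. Take $A_0$ to be the free $\kappa$-frame on the $\kappa$-ary set consisting of $C$ together with $\kappa$-ary supporting sets for the members of a fixed $\kappa$-ary $\kappa$-Boolean generating set of $B$, with $f_0 : A_0 \to A$ the evident map; then $C \subseteq \im(f_0)$, and since each $f_n$ factors $f_0$ up the chain, $\@N_\kappa^\infty(f_n)$ is surjective for every $n$. Given $f_n : A_n \to A$, its kernel $K_n := \ker \@N_\kappa^\infty(f_n)$ is a $\kappa$-generated congruence on the $\kappa$-presented $\kappa$-Boolean algebra $\@N_\kappa^\infty(A_n)$ --- being the kernel of a surjection between $\kappa$-presented objects (see \cref{sec:cat}) --- so choose $\kappa$-ary generators $(x_j, y_j)_j$ for $K_n$ and, using the filtered presentation together with $\kappa$-filteredness of $\!I$ to handle all $\kappa$-many pairs simultaneously, choose $A_n \to A_{n+1}$ in $\!I$ with every $x_j$ and $y_j$ identified in $\@N_\kappa^\infty(A_{n+1})$. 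Put $A_\infty := \injlim_n A_n$ with $f_\infty : A_\infty \to A$; as $\kappa$ has uncountable cofinality, $A_\infty$ is a countable colimit of $\kappa$-presented $\kappa$-frames, hence $\kappa$-presented. Then $\@N_\kappa^\infty(f_\infty)$ is surjective (already at stage $0$) and injective --- the image of each $K_n$ in $\@N_\kappa^\infty(A_\infty)$ is trivial, since the chosen generators of $K_n$ become identified by stage $n+1$ --- hence an isomorphism, and $C \subseteq \im(f_0) \subseteq \im(f_\infty)$, so the reduction applies.

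The content is in the bookkeeping rather than in one hard step. The essential subtlety is that $\@N_\kappa$ need not preserve monomorphisms, so one cannot simply take for $A'$ any $\kappa$-generated $\kappa$-subframe of $A$ whose $\@N_\kappa^\infty$ surjects onto $B$: such a surjection may fail to be injective, and the whole point of the $\omega$-iteration is to annihilate the resulting spurious identifications; its termination rests on the standard locally-$\kappa$-presentable facts (kernels of surjections between $\kappa$-presented objects are $\kappa$-generated, and $\!{\kappa Frm}_\kappa \downarrow A$ is $\kappa$-filtered). The other point needing care is the supports claim, which is false if one tracks underlying sets instead of generating sets --- a $\kappa$-generated $\kappa$-frame can have at least $\kappa$ elements --- so one must work with $\kappa$-ary \emph{generating} sets throughout the argument.
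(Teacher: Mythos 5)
Your proof is correct, and it is in essence the paper's own argument with the abstract step made concrete. The paper likewise writes $A = \injlim_i A_i$ as a $\kappa$-directed colimit of $\kappa$-presented $\kappa$-frames whose images in $A$ contain $C$, applies $\@N_\kappa^\infty$ to exhibit $B$ as a $\kappa$-directed colimit of $\kappa$-presented $\kappa$-Boolean algebras, and extracts a countable subchain $A_{i_0} \to A_{i_1} \to \dotsb$ with $\@N_\kappa^\infty(\injlim_j A_{i_j}) \cong B$; the difference lies in how the subchain is found. The paper invokes the purely categorical \cref{thm:cat-lim-dircolim-retr}: since $B$ is $\kappa$-presented, $1_B$ factors through some cocone map, giving a section $s : B \to \@N_\kappa^\infty(A_{i_0})$, and one then iteratively equalizes $s$ against the chain maps at later stages. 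You instead iteratively kill the kernel of $\@N_\kappa^\infty(A_n) \twoheadrightarrow B$, using \cref{thm:cat-alg-cong-pres} (kernels of surjections between $\kappa$-presented algebras are $\kappa$-generated) to see that only $\kappa$-many identifications must be absorbed at each step. Both are Löwenheim--Skolem arguments of the same shape; the paper's version buys generality (it works in any locally $\kappa$-presentable reflective situation, with no mention of congruences), while yours makes visible exactly which relations are being added and why the iteration terminates. Your front-end reduction (set $A' := \im(f_\infty)$ and show the epimorphism $A_\infty \twoheadrightarrow A'$ becomes monic under the faithful $\@N_\kappa^\infty$, hence is invertible) is a mild rearrangement of the paper's closing observation that the unit $A' \to \@N_\kappa^\infty(A') \cong B$ is the injective colimit comparison $A' \to A \subseteq B$.

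One step deserves more justification than you give it: the inference from ``the image of each $K_n$ in $\@N_\kappa^\infty(A_\infty)$ is trivial'' to ``$\@N_\kappa^\infty(f_\infty)$ is injective''. Since $\kappa$ is uncountable, the $\omega$-chain colimit $\@N_\kappa^\infty(A_\infty) = \injlim_n \@N_\kappa^\infty(A_n)$ is \emph{not} computed as in $\!{Set}$ --- an element of it need not come from any finite stage --- so one cannot argue pair-by-pair at finite stages, and triviality of a congruence on the images of the cocone maps does not by itself give injectivity on the whole colimit. The repair is short: either note that quotients commute with colimits, so $B \cong \injlim_n \paren{\@N_\kappa^\infty(A_n)/K_n}$ is the quotient of $\injlim_n \@N_\kappa^\infty(A_n)$ by the congruence generated by the images of the $K_n$, which you have shown to be trivial; or observe that $\ker(\iota_n) = K_n = \ker(\@N_\kappa^\infty(f_n))$ together with surjectivity of each $\@N_\kappa^\infty(f_n)$ forces all the images $\iota_n(\@N_\kappa^\infty(A_n))$ to coincide with a single $\kappa$-Boolean subalgebra isomorphic to $B$, which, generating the colimit, must be all of it. Either way the conclusion stands.
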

\begin{proof}
If $\kappa = \omega$, this is again trivial (take $A' := A$), so suppose $\kappa$ is uncountable.
As in the proof of \cref{thm:cat-lim-refl}, write $A$ as a $\kappa$-directed colimit $\injlim_{i \in I} A_i$ of $\kappa$-presented frames; we may assume that each $A_i -> A$ has image containing $C$ (by taking the $G_i$ in that proof to contain $C$).
Then $\injlim_i \@N_\kappa^\infty(A_i) \cong \@N_\kappa^\infty(\injlim_i A_i) = B$, so as in that proof, we have $B \cong \injlim_j \@N_\kappa^\infty(A_{i_j}) \cong \@N_\kappa^\infty(A')$ where $A' := \injlim_j A_{i_j}$ for some $i_0 \le i_1 \le \dotsb$.
From that proof, the unit $A' -> \@N_\kappa^\infty(A') \cong B$ is the colimit comparison $A' -> A \subseteq B$, and is injective (\cref{thm:frm-neginf-inj}), whence up to isomorphism, $A'$ is a $\kappa$-presented $\kappa$-subframe of $A \subseteq B$, which contains $C$ since the image of each $A_{i_j} -> A$ does.
\end{proof}

\begin{corollary}
\label{thm:frm-neginf-pres}
Let $A$ be a $\kappa$-presented $\kappa$-frame.
For any $\alpha$ and $\kappa$-ary subset $C \subseteq \@N_\kappa^\alpha(A)$, there is a $\kappa$-presented $\kappa$-subframe $A' \subseteq \@N_\kappa^\alpha(A)$ such that $A \cup C \subseteq A'$ and $\@N_\kappa^\infty(A) \cong \@N_\kappa^\infty(A')$ (canonically).
\end{corollary}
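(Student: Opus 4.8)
The plan is to reduce to \cref{thm:frm-bool-pres-cof}, but applied not to $A$ itself — rather, to the $\kappa$-Boolean algebra $B := \@N_\kappa^\infty(A) = \ang{A \qua \!{\kappa Frm}}_\!{\kappa Bool}$ together with the freely generating $\kappa$-subframe $\@N_\kappa^\alpha(A) \subseteq B$ in place of $A \subseteq B$. Two preliminary facts are needed. First, $B$ is $\kappa$-presented, since $A$ is and free functors preserve $\kappa$-presentability (\cref{sec:frm-idl}). Second, $\@N_\kappa^\alpha(A)$ — which by \cref{cvt:frm-cbool-incl} and the injectivity of the transition maps $\eta_{\alpha\beta}$ established above we regard as a $\kappa$-subframe sitting between $A$ and $\@N_\kappa^\infty(A) = B$ — freely generates $B$. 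For this, the inclusion $\@N_\kappa^\alpha(A) `-> B$ extends, by the universal property, to a $\kappa$-Boolean homomorphism $\@N_\kappa^\infty(\@N_\kappa^\alpha(A)) -> B$, while the inclusion $A `-> \@N_\kappa^\alpha(A) `-> \@N_\kappa^\infty(\@N_\kappa^\alpha(A))$ extends to $B = \@N_\kappa^\infty(A) -> \@N_\kappa^\infty(\@N_\kappa^\alpha(A))$; these two homomorphisms restrict to identities on the generating $\kappa$-subframes $\@N_\kappa^\alpha(A)$ and $A$ respectively, hence are mutually inverse, so $\@N_\kappa^\infty(\@N_\kappa^\alpha(A)) \cong B$ canonically.

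Next I would fix a $\kappa$-ary generating set $G \subseteq A$ (one exists since $A$, being $\kappa$-presented, is $\kappa$-generated), and apply \cref{thm:frm-bool-pres-cof} to $B$, its freely generating $\kappa$-subframe $\@N_\kappa^\alpha(A)$, and the $\kappa$-ary subset $G \cup C \subseteq \@N_\kappa^\alpha(A)$. This yields a $\kappa$-presented $\kappa$-subframe $A' \subseteq \@N_\kappa^\alpha(A)$ with $G \cup C \subseteq A'$ and $A'$ still freely generating $B$. Since $A'$ is a $\kappa$-subframe of $\@N_\kappa^\alpha(A)$ containing $G$, and $G$ generates the $\kappa$-subframe $A \subseteq \@N_\kappa^\alpha(A)$, we get $A \subseteq A'$, and of course $C \subseteq A'$ too. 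Finally, ``$A'$ freely generates $B$'' unwinds to $\@N_\kappa^\infty(A') \cong B \cong \@N_\kappa^\infty(A)$; chasing the inclusions $A \subseteq A' \subseteq \@N_\kappa^\alpha(A) \subseteq \@N_\kappa^\infty(A)$ shows this isomorphism is the canonical one — it is compatible with $A `-> A'$, equivalently both $\@N_\kappa^\infty(A')$ and $\@N_\kappa^\infty(A)$ are identified under \cref{cvt:frm-cbool-incl} with the $\kappa$-Boolean subalgebra of $\ang{A}_\!{CBOOL}$ generated by $A'$, resp.\ by $A$, and these coincide because $A \subseteq A'$ while $A$ already generates all of $\@N_\kappa^\infty(A)$.

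I do not anticipate a genuine obstacle: the substance is carried entirely by \cref{thm:frm-bool-pres-cof}, and the only delicate points are the bookkeeping — realizing that one must feed $\@N_\kappa^\alpha(A)$, not $A$, as the freely generating $\kappa$-subframe; padding $C$ with a $\kappa$-ary generating set of $A$ so as to force $A \subseteq A'$; and verifying that $\@N_\kappa^\alpha(A)$ genuinely freely generates $\@N_\kappa^\infty(A)$. Note that, pleasantly, this route never requires knowing whether $\@N_\kappa^\alpha(A)$ is itself $\kappa$-presented, which it typically is not.
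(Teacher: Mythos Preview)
Your proposal is correct and follows essentially the same approach as the paper: apply \cref{thm:frm-bool-pres-cof} with the freely generating $\kappa$-subframe taken to be $\@N_\kappa^\alpha(A)$ and the $\kappa$-ary subset taken to be $C$ together with a $\kappa$-ary generating set for $A$. The paper's proof is a single sentence stating exactly this; you have additionally spelled out the verifications (that $\@N_\kappa^\infty(A)$ is $\kappa$-presented, that $\@N_\kappa^\alpha(A)$ freely generates it, and that the resulting isomorphism is canonical) which the paper leaves implicit.
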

\begin{proof}
Take $A$ in \cref{thm:frm-bool-pres-cof} to be $\@N_\kappa^\alpha(A)$, and $C$ to be the present $C$ together with a $\kappa$-ary generating set for $A$.
\end{proof}

\begin{remark}
If $C \subseteq \@N_\kappa^\alpha(A)$ in \cref{thm:frm-neginf-pres} consists of complemented elements, then we may ensure they become complemented in $A'$, by adding the complements to $C$.
\end{remark}

\subsection{Zero-dimensionality and ultraparacompactness}
\label{sec:upkzfrm}

In this subsection, which is somewhat technical and will not be used in an essential way until \cref{sec:sigma11-invlim}, we study $\kappa$-frames which have ``enough'' complemented elements, as well as a more subtle infinitary refinement of such a condition.
The main results are towards the end of the subsection, concerning the connection of such frames with $\kappa$-Boolean algebras (\cref{thm:upkzfrm-cabool-dircolim,thm:upkzfrm-bool-pres}) as well as with the $\@N_\kappa$ functor from the preceding subsection (\cref{thm:frm-neg-upkz}).
Many of these results are known for $\kappa = \infty$, due to work of Paseka~\cite{Pupk} and Plewe~\cite{Psubloc}.
However, in generalizing to $\kappa < \infty$, we once again follow a ``presentational'' approach (which was essentially introduced by van~Name~\cite{vNultra} when $\kappa = \infty$).
Here, our approach yields proofs which are entirely different in spirit from the known ones for $\kappa = \infty$, and which (we believe) make the underlying algebraic ideas quite transparent.

For a $\kappa$-frame $A$, let $A_\neg \subseteq A$ denote the Boolean subalgebra of complemented elements.
We call $A$ \defn{zero-dimensional} if it is generated by $A_\neg$, and let $\!{\kappa ZFrm} \subseteq \!{\kappa Frm}$ denote the full subcategory of zero-dimensional $\kappa$-frames.

It is easily seen that $\!{\kappa ZFrm} \subseteq \!{\kappa Frm}$ is a coreflective subcategory, with the coreflection of $A \in \!{\kappa Frm}$ being the $\kappa$-subframe generated by $A_\neg$.
We may view this coreflective adjunction $\!{\kappa ZFrm} \rightleftarrows \!{\kappa Frm}$ as a free/forgetful adjunction, provided we change the ``underlying set'' of a zero-dimensional $\kappa$-frame $A$ to mean $A_\neg$, as we now explain (see \cref{sec:cat-lim} for a general explanation of this method of regarding arbitrary adjunctions as free/forgetful).
The following notion was essentially introduced by van~Name~\cite{vNultra} when $\kappa = \infty$, under different terminology.

Let $A$ be a zero-dimensional $\kappa$-frame.
Since $A_\neg \subseteq A$ is a $\wedge$-sublattice generating $A$, the canonical $\kappa$-coverage $<|$ on $A$ restricted to $B := A_\neg$ yields a $\kappa$-posite $(B, {<|})$ presenting $A$.
Since $B$ is Boolean, $<|$ is completely determined by its restriction to left-hand side $\top$, via
\begin{align}
\label{eq:frm-bpost}
b <| C \iff \top <| \{\neg b\} \cup C.
\end{align}
Since $B = A_\neg \subseteq A$ is closed under finite joins, we have
\begin{itemize}
\item  ($\omega$-supercanonicity) $<|$ extends the canonical $\omega$-coverage on $B$, i.e., for finite $C \subseteq B$, we have $\bigvee C = \top \implies \top <| C$.
\end{itemize}
The other axioms on a $\kappa$-coverage (see \cref{sec:frm-post}) imply
\begin{itemize}
\item  (monotonicity) if $\top <| C \subseteq D$, then $\top <| D$;
\item  (transitivity) if $\top <| C \cup D$, and $\top <| \{\neg c\} \cup D$ for every $c \in C$, then $\top <| D$.
\end{itemize}

\begin{lemma}
These three axioms on the unary predicate $\top <|$ on $\@P_\kappa(B)$ imply that $<|$ given by \eqref{eq:frm-bpost} is a $\kappa$-coverage on $B$.
\end{lemma}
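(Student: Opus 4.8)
The plan is to verify directly the three defining axioms of a $\kappa$-coverage on the $\wedge$-lattice $B$ — reflexivity, right-transitivity, and $\wedge$-stability; left-transitivity then follows automatically, as noted in \cref{sec:frm-post}. Throughout I would abbreviate $\top <| C$ to $P(C)$, so that the three hypotheses become: (SC) $P(C)$ whenever $C$ is finite with $\bigvee C = \top$; (Mon) if $P(C)$ and $C \subseteq D$ then $P(D)$; (Trans) if $P(C \cup D)$ and $P(\{\neg c\} \cup D)$ for every $c \in C$, then $P(D)$. The derived relation is defined by: $b <| C$ iff $P(\{\neg b\} \cup C)$, and all sets appearing below remain $\kappa$-ary since only finitely many elements are ever adjoined.

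Reflexivity and right-transitivity are immediate. If $b \in C$ then $\{b, \neg b\} \subseteq \{\neg b\} \cup C$ and $P(\{b, \neg b\})$ holds by (SC), so $P(\{\neg b\} \cup C)$ by (Mon). For right-transitivity, given $a <| C$ and $c <| D$ for all $c \in C$, I would apply (Trans) with its ``$C$'' and ``$D$'' instantiated as $C$ and $\{\neg a\} \cup D$: the premise $P(C \cup \{\neg a\} \cup D)$ follows from $P(\{\neg a\} \cup C)$ by (Mon), each premise $P(\{\neg c\} \cup \{\neg a\} \cup D)$ follows from $P(\{\neg c\} \cup D)$ by (Mon), and the conclusion $P(\{\neg a\} \cup D)$ is exactly $a <| D$.

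The substantive step is $\wedge$-stability: given $a \le b$ and $b <| C$ (i.e.\ $P(\{\neg b\} \cup C)$), I want $a <| a \wedge C$, where $a \wedge C := \{a \wedge c \mid c \in C\}$; that is, $P(D_0)$ for $D_0 := \{\neg a\} \cup (a \wedge C)$. Here I would invoke (Trans) with its ``$C$'' taken to be $C_0 := \{\neg b\} \cup C$ and its ``$D$'' taken to be $D_0$. Then $P(C_0 \cup D_0)$ holds by (Mon), since $P(C_0)$ is the hypothesis $b <| C$; and it remains to check $P(\{\neg x\} \cup D_0)$ for each $x \in C_0$. When $x = \neg b$ this asks for $P(\{b, \neg a\} \cup (a \wedge C))$, and $\{b, \neg a\}$ covers $\top$ because $a \le b$, so (SC) and (Mon) suffice. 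When $x = c \in C$ this asks for $P(\{\neg c, \neg a\} \cup (a \wedge C))$, and the key point is the Boolean identity $\neg c \vee \neg a \vee (a \wedge c) = \neg c \vee (\neg a \vee c) = \top$, which exhibits $\{\neg c, \neg a, a \wedge c\}$ as a finite cover of $\top$; again (SC) and (Mon) finish. Then (Trans) yields $P(D_0)$, i.e.\ $a <| a \wedge C$.

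The only place calling for any thought is this last step — choosing $C_0, D_0$ and recognizing that $\{\neg c, \neg a, a \wedge c\}$ is a finite cover of $\top$, which is precisely the algebraic content of ``meeting a cover with $a$''; everything else is mechanical bookkeeping with (SC), (Mon), (Trans). As a sanity check one can also confirm that \eqref{eq:frm-bpost} extends the given data, i.e.\ that $\top <| C$ recomputed from \eqref{eq:frm-bpost} equals $P(C)$: since $P(\{\top\} \cup C)$ always holds by (SC) and (Mon), applying (Trans) with its ``$C$'' $= \{\bot\}$ gives $P(\{\bot\} \cup C) \iff P(C)$.
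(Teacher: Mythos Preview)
Your proof is correct and follows essentially the same approach as the paper's: reflexivity via $\{b,\neg b\}$ being a finite cover, right-transitivity directly from the transitivity axiom, and $\wedge$-stability via the two key finite covers $\{\neg a, b\}$ (from $a \le b$) and $\{\neg a, \neg c, a \wedge c\}$. The only difference is cosmetic: the paper handles $\wedge$-stability in two stages (first deriving $\top <| \{\neg a\} \cup C$, then passing from $C$ to $a \wedge C$), whereas you collapse both eliminations into a single application of transitivity with $C_0 = \{\neg b\} \cup C$, which is slightly tidier but uses the identical Boolean identities.
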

\begin{proof}
Reflexivity follows from $\omega$-supercanonicity (whence $\top <| \{a, \neg a\}$) and monotonicity.

Right-transitivity of $<|$ follows from transitivity of $<|$.

$\wedge$-stability: suppose $a \le b <| C$, i.e., $\top <| \{\neg b\} \cup C$; we must show $a <| a \wedge C$, i.e., $\top <| \{\neg a\} \cup (a \wedge C)$.
Since $a \le b$, we have $\neg a \vee b = \top$, whence by $\omega$-supercanonicity, $\top <| \{\neg a, b\}$.
By monotonicity, we have $\top <| \{\neg a, b\} \cup C$ and $\top <| \{\neg a, \neg b\} \cup C$, whence by transitivity, $\top <| \{\neg a\} \cup C$, whence by monotonicity, $\top <| \{\neg a\} \cup C \cup (a \wedge C)$.
For each $c \in C$, we have $\top <| \{\neg a, \neg c, a \wedge c\}$ by $\omega$-supercanonicity whence $\top <| \{\neg a, \neg c\} \cup (a \wedge C)$ by monotonicity.
Thus $\top <| \{\neg a\} \cup (a \wedge C)$ by transitivity.
\end{proof}

We call a Boolean algebra $B$ equipped with a unary predicate $\top <|$ on $\@P_\kappa(B)$ obeying these three axioms a \defn{Boolean $\kappa$-posite}, and denote the category of all such by $\!{\kappa BPost}$.
Via \eqref{eq:frm-bpost}, this is the same thing as a $\kappa$-posite $(B, {<|})$ such that $<|$ remembers finite joins in $B$, hence yields a $\kappa$-frame $A = \ang{B \mid {<|}}_\!{\kappa Frm}$ together with a Boolean homomorphism $\eta : B -> A$ whose image generates $A$.
Let $\!{\kappa SBPost} \subseteq \!{\kappa BPost}$ denote the full subcategory of separated Boolean $\kappa$-posites $(B, {<|})$; this means
\begin{align*}
\top <| C \implies \bigvee C = \top \in B.
\end{align*}
We have a ``forgetful'' functor $\!{\kappa Frm} -> \!{\kappa SBPost} \subseteq \!{\kappa BPost}$ taking $A$ to $A_\neg$ equipped with the restriction of the canonical $<|$ on $A$, with left adjoint $(B, {<|}) |-> \ang{B \mid {<|}}_\!{\kappa Frm}$, where the counit $\ang{A_\neg \mid {<|}}_\!{\kappa Frm} -> A$ at $A \in \!{\kappa Frm}$ is an isomorphism iff $A$ was zero-dimensional.
Thus, we have represented $\!{\kappa ZFrm}$ as equivalent to a full reflective subcategory of $\!{\kappa SBPost}$, consisting of all those separated Boolean $\kappa$-posites $(B, {<|})$ for which the unit $\eta : B -> (\ang{B \mid {<|}}_\!{\kappa Frm})_\neg$ is an isomorphism.
Since $<|$ is assumed to be separated, $\eta$ is always an embedding; thus $\eta$ is an isomorphism iff it is surjective, which, by expressing a complemented element of $\ang{B \mid {<|}}_\!{\kappa Frm}$ as a join of elements of the image of $\eta$, is easily seen to mean
\begin{itemize}
\item  (complement-completeness)
for $C, D \in \@P_\kappa(B)$, if $\top <| C \cup D$, and $c \wedge d = \bot \in B$ for every $c \in C$ and $d \in D$, then there is $b \in B$ such that $\top <| \{\neg b\} \cup C$ and $\top <| \{b, \neg c\}$ for every $c \in C$
(i.e., $b <| C$ and $c <| \{b\}$ for every $c \in C$; by separatedness, this implies $b = \bigvee C$ in $B$).
\end{itemize}
We let $\!{\kappa CCBPost} \subseteq \!{\kappa SBPost}$ denote the full subcategory of complement-complete separated Boolean $\kappa$-posites.
Summarizing, we have represented the coreflective adjunction $\!{\kappa ZFrm} \rightleftarrows \!{\kappa Frm}$ as equivalent to the free/forgetful adjunction $\!{\kappa CCBPost} \rightleftarrows \!{\kappa Frm}$:
\begin{equation}
\label{diag:zfrm}
\begin{tikzcd}
\!{\kappa BPost} \rar[shift left=2] &
\!{\kappa CCBPost} \lar[shift left=2, rightarrowtail, right adjoint] \rar[phantom, "\simeq"] \ar[rr, shift left=2, bend left=15, "\ang{-}_\!{\kappa Frm}"] &[-2em]
\!{\kappa ZFrm} \rar[shift left=2, rightarrowtail] &
\!{\kappa Frm} \lar[shift left=2, right adjoint] \ar[ll, shift left=2, bend left=15, "{(-)_\neg}"]
\end{tikzcd}
\end{equation}

Note that the above categories $\!{\kappa BPost}, \!{\kappa SBPost}, \!{\kappa CCBPost}$ are all locally $\kappa$-presentable.
For $\!{\kappa CCBPost}$, this requires the use of unique existential quantifiers to axiomatize the existence of joins; see \cref{ex:cat-lim-essalg}.

We have the following weakenings of zero-dimensionality.
A $\kappa$-frame $A$ is \defn{regular} \cite[\S3]{Mkfrm} if every $a \in A$ is a $\kappa$-ary join of $b \in A$ for which there exist $c \in \neg A$ ($= \{\neg d \mid d \in A\}$) with $b \le c \le a$ in $\ang{A}_\!{CBOOL}$ (or in $\@N_\kappa(A)$); \defn{fit} if every $a \in A$ is a $\kappa$-ary join, in $\ang{A}_\!{CBOOL}$ (or in $\@N_\kappa(A)$), of $\neg A \ni b \le a$; and \defn{subfit} if every $a \in A$ is the least, in $A$, upper bound (in $\ang{A}_\!{CBOOL}$) of a $\kappa$-ary set of $\neg A \ni b \le a$.
(The latter notions are due to Isbell~\cite[\S2]{Iloc} when $\kappa = \infty$; we have given an equivalent definition in terms of $\ang{A}_\!{CBOOL}$.)
Clearly,
\begin{align*}
\text{zero-dimensional}
\implies
\text{regular}
\implies
\text{fit}
\implies
\text{subfit}.
\end{align*}
(Sometimes, zero-dimensional is called \emph{ultraregular}; see e.g., \cite{Eultra}.)
We also have the following strengthening.
A $\kappa$-frame $A$ is \defn{ultranormal} if every disjoint $a, b \in \neg A$ are separated by some $c \in A_\neg$, i.e., $a \le c \le \neg b$; equivalently, every $\neg a, \neg b \in A$ which cover $\top$ can be refined to a partition $\neg c, c$.
If $A$ is also subfit, then this implies that each $\neg b \in A$ is a $\kappa$-ary join of $c \in A_\neg$, i.e.,
\begin{align*}
\text{subfit} + \text{ultranormal} \implies \text{zero-dimensional}.
\end{align*}
See \cite[Ch.~V]{PPloc} for an extended discussion of these and other localic separation axioms.

We next consider the $\kappa$-ary generalization of ultranormality.
We say that $A$ is \defn{ultraparacompact} if every $\kappa$-ary cover $(b_i)_i$ of $\top \in A$ refines to a partition $(c_i)_i$, i.e., whenever $\top \le \bigvee_i b_i$, then there are $c_i \le b_i$ with $\top \le \bigvee_i c_i$ and $c_i \wedge c_j = \bot$ for $i \ne j$ (whence in fact each $c_i \in A_\neg$).
Clearly,
\begin{align*}
\text{ultraparacompact} \implies \text{ultranormal};
\end{align*}
hence
\begin{align*}
\text{subfit} + \text{ultraparacompact} \implies \text{zero-dimensional}.
\end{align*}
More generally, we say that $a \in A$ is \defn{ultraparacompact} if the same condition defining ultraparacompactness of $A$ holds replacing $\top$ with $a$ (without requiring $c_i \in A_\neg$), i.e., every $\kappa$-ary cover $(b_i)_i$ of $a$ refines to a pairwise disjoint $\kappa$-ary cover $(c_i)_i$ (whence by replacing $c_i$ with $a \wedge c_i$, we may ensure $a = \bigvee_i c_i$).
We let $A_\upk \subseteq A$ denote the ultraparacompact elements.

\begin{lemma}
For any $\kappa$-frame $A$, $A_\upk \subseteq A$ is closed under pairwise disjoint $\kappa$-ary joins.
\end{lemma}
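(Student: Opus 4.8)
The plan is to verify the defining condition of ultraparacompactness directly for the join $a := \bigvee_{i \in I} a_i$ of a pairwise disjoint $\kappa$-ary family $(a_i)_{i \in I}$ with each $a_i \in A_\upk$. So I would start with an arbitrary $\kappa$-ary cover $(b_j)_{j \in J}$ of $a$, meaning $a \le \bigvee_j b_j$, and construct from it a pairwise disjoint $\kappa$-ary cover $(d_j)_{j \in J}$ of $a$ with $d_j \le b_j$ for each $j$, which is exactly what is required (the refinement being indexed by the same set $J$, as in the definition given just above for ultranormality).

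First I would restrict the cover to each piece: by $\kappa$-ary distributivity, $a_i = a_i \wedge \bigvee_j b_j = \bigvee_j (a_i \wedge b_j)$, so $(a_i \wedge b_j)_{j \in J}$ is a $\kappa$-ary cover of $a_i$. Since $a_i$ is ultraparacompact, it refines to a pairwise disjoint $\kappa$-ary family $(c_{ij})_{j \in J}$ with $c_{ij} \le a_i \wedge b_j$, and, as recorded in the definition of ultraparacompact element, after replacing $c_{ij}$ by $a_i \wedge c_{ij}$ we may assume $\bigvee_j c_{ij} = a_i$. Now put $d_j := \bigvee_i c_{ij}$, a $\kappa$-ary join. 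Then $d_j \le b_j$, since every $c_{ij} \le b_j$; and $\bigvee_j d_j = a$, because on one hand $d_j \le \bigvee_i a_i = a$, while on the other hand $a_i = \bigvee_j c_{ij} \le \bigvee_j d_j$ for each $i$, whence $a = \bigvee_i a_i \le \bigvee_j d_j$.

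It remains to check pairwise disjointness of the $d_j$. For $j \ne j'$, distributing $\kappa$-ary joins twice (each time over a family indexed by $I$, so only $\kappa$-ary joins are involved) gives $d_j \wedge d_{j'} = \bigvee_i \bigvee_{i'} (c_{ij} \wedge c_{i'j'})$. When $i = i'$, the term $c_{ij} \wedge c_{ij'}$ is $\bot$ by pairwise disjointness of the refinement of $a_i$; when $i \ne i'$, it is $\le a_i \wedge a_{i'} = \bot$ since $c_{ij} \le a_i$, $c_{i'j'} \le a_{i'}$, and the $a_i$ are pairwise disjoint by hypothesis. Hence $d_j \wedge d_{j'} = \bot$, so $(d_j)_{j \in J}$ witnesses that $a$ is ultraparacompact, i.e.\ $a \in A_\upk$. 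I do not expect a serious obstacle here: the whole argument is a routine two-step distributivity computation, and the one spot where a slip is likely is the case split in the disjointness verification, where one must use both the disjointness inside each $a_i$ and the disjointness between distinct $a_i$.
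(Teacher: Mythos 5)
Your proof is correct and takes essentially the same approach as the paper: refine the cover on each $a_i$ using its ultraparacompactness, then combine the pieces using pairwise disjointness of the $a_i$. The only (harmless) difference is that the paper leaves the resulting partition doubly indexed by $(i,j)$, whereas you regroup it as $d_j := \bigvee_i c_{ij}$ to match the index set of the original cover.
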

\begin{proof}
Let $a_i \in A_\upk$ be $<\kappa$-many pairwise disjoint ultraparacompact elements.
If $\bigvee_i a_i \le \bigvee_j b_j$ for $<\kappa$-many $b_j$, then for each $i$, we have $a_i \le \bigvee_j c_{ij}$ for some pairwise disjoint $c_{ij} \le b_j$, whence $\bigvee_i a_i = \bigvee_{i,j} (a_i \wedge c_{ij})$ is a pairwise disjoint cover of $\bigvee_i a_i$.
\end{proof}

As with zero-dimensionality, it is convenient to introduce an auxiliary category to facilitate the study of ultraparacompactness.
By a \defn{disjunctive $\kappa$-frame} (see \cite{Cborin}), we will mean a poset $A$ which is a $\wedge$-lattice, has a least element $\bot$ (whence we may speak of disjointness), and has pairwise disjoint $\kappa$-ary joins, which we also denote using $\bigsqcup$ instead of $\bigvee$, over which finite meets distribute.
Denote the category of all such by $\!{\kappa DjFrm}$.
Then $\!{\kappa DjFrm}$ is locally $\kappa$-presentable, where the partial join operations are again axiomatized using unique existentials (see \cref{ex:cat-ord-djfrm}).

We have a forgetful functor $\!{\kappa Frm} -> \!{\kappa DjFrm}$, preserving the underlying $\wedge$-lattice but forgetting about non-pairwise-disjoint joins, whose left adjoint may be constructed as follows.
Given a disjunctive $\kappa$-frame $A$, the \defn{canonical disjunctive $\kappa$-coverage} on $A$ is given by
\begin{align*}
a <| C
&\coloniff  \exists \text{ $\kappa$-ary pairwise disjoint } \{d_i\}_i \subseteq \down C\, (a \le \bigsqcup_i d_i)  \quad\text{for $C \in \@P_\kappa(A)$} \\
&\iff  \exists \text{ $\kappa$-ary pairwise disjoint } \{d_i\}_i \subseteq \down C\, (a = \bigsqcup_i d_i).
\end{align*}
It is readily seen that this defines a separated $\kappa$-coverage on $A$, such that ${<|}\@I_\kappa(A) = \ang{A \mid {<|}}_\!{\kappa Frm} \cong \ang{A \qua \!{\kappa DjFrm}}_\!{\kappa Frm}$.
Call the $<|$-ideals \defn{disjunctive $\kappa$-ideals}, denoted $\dj\kappa\@I(A) := {<|}\@I(A)$, with $\kappa$-generated ideals $\dj\kappa\@I_\kappa(A) \subseteq \dj\kappa\@I(A)$.
Note that the disjunctive $\kappa$-ideal generated by $C \subseteq A$ is
\begin{align*}
\downtri C = \{a \mid a <| C\} = \{a \mid \exists \text{ $\kappa$-ary pairwise disjoint } \{d_i\}_i \subseteq \down C\, (a = \bigsqcup_i d_i)\}.
\end{align*}
Thus, for $B \in \!{\kappa Frm}$, $b \in B$ is ultraparacompact iff whenever $C \in \@P_\kappa(A)$ with $b \le \bigvee C$, then $b \in \downtri C$.
We now have the following, which is analogous to \cref{thm:vlat-cptbasis}:

\begin{proposition}
\label{thm:frm-upkbasis}
\leavevmode
\begin{enumerate}
\item[(a)]  For $A \in \!{\kappa DjFrm}$, we have $\dj\kappa\@I_\kappa(A)_\upk = \down(A) \cong A$.
In particular, $\dj\kappa\@I_\kappa(A)_\upk \subseteq \dj\kappa\@I_\kappa(A)$ forms a generating disjunctive $\kappa$-subframe.
\item[(b)]  If $B \in \!{\kappa Frm}$ and $A \subseteq B_\upk$ is a generating disjunctive $\kappa$-subframe of $B$, then we have an isomorphism $\bigvee : \dj\kappa\@I_\kappa(A) \cong B$, hence in fact $A = B_\upk$.
\end{enumerate}
\end{proposition}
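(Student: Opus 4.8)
The plan is to work throughout with the explicit realization $\dj\kappa\@I_\kappa(A) = {<|}\@I_\kappa(A)$, where $<|$ is the canonical disjunctive $\kappa$-coverage on $A$, and to prove (a) first and then bootstrap (b) from it.

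For (a): since $<|$ is separated, $\downtri = \down \colon A -> \dj\kappa\@I_\kappa(A)$ is an order-embedding, and as it preserves finite meets, $\bot$, and pairwise disjoint $\kappa$-ary joins (because $\downtri\{a_i\}_i = \down(\bigsqcup_i a_i)$ for pairwise disjoint $(a_i)$), its image $\down(A) \cong A$ is a disjunctive $\kappa$-subframe which generates $\dj\kappa\@I_\kappa(A)$ as a $\kappa$-frame, since $D = \downtri C = \bigvee_{c \in C} \down c$ for any $\kappa$-ary generating set $C$. It then remains to identify $\dj\kappa\@I_\kappa(A)_\upk$. To see that each $\down a$ is ultraparacompact: given a $\kappa$-ary family $(D_i)$ with $\down a \le \bigvee_i D_i = \downtri(\bigcup_i D_i)$, I would unfold $a <| \bigcup_i D_i$ to a pairwise disjoint $\kappa$-ary family $(d_j)_j \subseteq \bigcup_i D_i$ with $a = \bigsqcup_j d_j$, choose $i(j)$ with $d_j \in D_{i(j)}$, and set $E_i := \downtri\{d_j : i(j) = i\} \le D_i$; then the $E_i$ are pairwise disjoint (elements of $E_i$ and $E_{i'}$ for $i \ne i'$ lie under distinct members of the pairwise disjoint family $(d_j)$) and cover $\down a$. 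Conversely, if $D = \downtri C$ ($C$ $\kappa$-ary) is ultraparacompact, the cover $D = \bigvee_{c \in C} \down c$ refines to a pairwise disjoint $\kappa$-ary cover $(F_i)$ with $F_i \le \down c_i$; since $c_i \in D = \bigvee_j F_j$, unfolding gives $c_i = \bigsqcup_l d_l$ with $(d_l)$ pairwise disjoint, $d_l \in F_{j(l)}$, and I put $e_i := \bigsqcup_{l\,:\,j(l)=i} d_l \in A$ (which exists and is $\le c_i$). One checks $F_i = \down e_i$ — any $x \in F_i$ satisfies $x = x \wedge c_i = \bigsqcup_l(x \wedge d_l)$ with $x \wedge d_l = \bot$ whenever $j(l) \ne i$, since $x \in F_i$ and $d_l \in F_{j(l)}$ sit in disjoint ideals, so $x \le e_i$ — whence $D = \bigvee_i F_i = \bigvee_i \down e_i = \downtri\{e_i\}_i = \down(\bigsqcup_i e_i)$ is principal.

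For (b): let $\phi := \bigvee \colon \dj\kappa\@I_\kappa(A) -> B$ be the transpose of the inclusion $A `-> B$ (a $\!{\kappa DjFrm}$-homomorphism into a $\kappa$-frame) under the free/forgetful adjunction between $\!{\kappa DjFrm}$ and $\!{\kappa Frm}$, whose unit is $\down$; concretely $\phi(\downtri C) = \bigvee_B C$. It is a $\kappa$-frame homomorphism, surjective because $A$ generates $B$, and by (a) it restricts to the isomorphism $\down(A) \cong A$. For injectivity it suffices to prove $\downtri C = \{a \in A : a \le_B \bigvee_B C\}$ for every $\kappa$-ary $C \subseteq A$; the inclusion $\subseteq$ is immediate from distributivity in $B$, and the content is $\supseteq$. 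Given $a \in A$ with $a \le_B \bigvee_B C$, I would reduce to the case $a = \top_B$ by relativizing $B$ to $\down_B a$ and $A$ to $\down_A a$ (again a generating disjunctive $\kappa$-subframe, now contained in $(\down_B a)_\upk$), and then use ultraparacompactness of $\top_B$ in $B$ to refine the cover $C$ of $\top_B$ to a pairwise disjoint partition whose pieces are themselves ultraparacompact in $B$. Once $\supseteq$ is established, $\phi$ is an isomorphism, hence carries ultraparacompact elements to ultraparacompact elements, so $A = \phi(\down A) = \phi(\dj\kappa\@I_\kappa(A)_\upk) = B_\upk$.

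The main obstacle I anticipate is the inclusion $\supseteq$ in (b): converting the pairwise disjoint refinement handed to us by ultraparacompactness of $a$ \emph{in $B$} into one \emph{inside $A$}. After the reduction above, ultraparacompactness of $\top_B$ gives $\top_B$ as a pairwise disjoint $\kappa$-ary join in $B$ of elements below $C$; writing each such piece as a $\kappa$-ary join of elements of $A$ lying below it (possible since $A$ generates $B$), pieces coming from distinct members of the refinement are pairwise disjoint, so the remaining difficulty is disjointifying \emph{within} each block. This is exactly where the full hypothesis $A \subseteq B_\upk$ (not merely $\top_B \in B_\upk$) is needed, since each block lies below an ultraparacompact element of $B$; I expect to finish by a transfinite disjointification carried out inside $A$ that repeatedly invokes ultraparacompactness of these blocks together with the closure of $A$ under pairwise disjoint $\kappa$-ary joins established in the preceding lemma.
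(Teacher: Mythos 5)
Part (a) of your proposal is correct and follows essentially the paper's route: your forward direction (principal ideals are ultraparacompact) is what the paper dismisses as ``clear'', and your converse is the relevant special case of the paper's \cref{lm:frm-upkbasis} worked out directly inside the ideal completion, where it succeeds because an element of a join of disjunctive ideals $\bigvee_j F_j$ is, by construction of the canonical disjunctive coverage, a pairwise disjoint join of elements \emph{of} $\bigcup_j F_j \subseteq A$ — so the refinement automatically lives in $A$.

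Part (b) has a genuine gap, located exactly where you flag it. Reducing injectivity of $\bigvee$ to the identity $\downtri C = \{a \in A \mid a \le_B \bigvee_B C\}$ is the right move (it is the paper's: the restriction to $A$ of the canonical coverage of $B$ must coincide with the canonical disjunctive coverage of $A$), and $\subseteq$ is trivial. But for $\supseteq$ you only get as far as: ultraparacompactness of $a$ yields $a = \bigsqcup_i d_i$ with $d_i \le a \wedge c_i$ and $d_i \in B$, and you then defer landing the $d_i$ inside $A$ to ``a transfinite disjointification carried out inside $A$''. That deferred step is the entire content of the proof, and the proposed mechanism is not available: transfinite disjointification ($c_\alpha \mapsto c_\alpha \wedge \neg \bigvee_{\beta < \alpha} c_\beta$) needs complements, which neither $A$ nor $B$ has — ultraparacompactness is precisely the surrogate for them. (Your intermediate claim that the refinement can be taken ``with pieces that are themselves ultraparacompact in $B$'' is also unjustified; ultraparacompactness of $a$ asserts nothing about the pieces of the refinement.) What actually closes the gap is not a disjointification but a \emph{second}, differently targeted application of ultraparacompactness: this is the paper's \cref{lm:frm-upkbasis}, which shows that for $\bigvee_i c_i = \bigsqcup_i d_i$ with each $c_i \in B_\upk$ and $d_i \le c_i$, and $A$ any generating subset closed under pairwise disjoint $\kappa$-ary joins, the $d_i$ automatically lie in $A$. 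There one writes $d_i = \bigvee D_i$ with $D_i \in \@P_\kappa(A)$, applies ultraparacompactness of $c_i$ to the combined cover $\bigcup_j D_j$, and uses pairwise disjointness of the $d_j$ to show each $e \in D_i$ is absorbed by the block of the refinement indexed by $D_i$, forcing $d_i$ to equal that block. Some argument of this kind is what your proof is missing; the surrounding structure of your (b), including the concluding $A = B_\upk$ via (a), is fine.
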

\begin{proof}
We will need

\begin{lemma}
\label{lm:frm-upkbasis}
Let $B \in \!{\kappa Frm}$ and $A \subseteq B$ be closed under pairwise disjoint $\kappa$-ary joins and generate $A$ under $\kappa$-ary joins.
Let $c_i \in B_\upk$ be $<\kappa$-many ultraparacompact elements, and let $\bigvee_i c_i = \bigsqcup_i d_i$ with $c_i \ge d_i \in B$ be a pairwise disjoint refinement.
Then each $d_i \in A$.
\end{lemma}
\begin{proof}
Since $A$ generates $B$ under $\kappa$-ary joins, each $d_i = \bigvee D_i$ for some $D_i \in \@P_\kappa(A)$.
Since each $c_i$ is ultraparacompact and $c_i \le \bigvee_i d_i = \bigvee \bigcup_i D_i$, we have $c_i = \bigsqcup_j d_{ij}$ for some $d_{ij}$ each of which is a pairwise disjoint join of elements from $D_j$.
For each $d \in D_i$, since $\bigvee D_i = d_i \le c_i$, we have $d = d \wedge c_i = \bigsqcup_j (d \wedge d_{ij}) = d \wedge d_{ii} \le d_{ii}$, since for $j \ne i$ we have $d \wedge d_{ij} \le \bigvee D_i \wedge \bigvee D_j = d_i \wedge d_j = \bot$.
Thus each $d_i = \bigvee D_i = d_{ii} \in A$.
\end{proof}

(a)  Clearly principal ideals are ultraparacompact in $\dj\kappa\@I_\kappa(A)$.
Conversely, for $C \in \dj\kappa\@I_\kappa(A)_\upk$ generated by $\kappa$-ary $\{c_i\}_i \subseteq C$, we have $C = \bigvee_i \down c_i$ in $\dj\kappa\@I_\kappa(A)$, so using ultraparacompactness of $C$ and \cref{lm:frm-upkbasis} (with $A := \down(A) \subseteq \dj\kappa\@I_\kappa(A) =: B$), we have pairwise disjoint $d_i \le c_i$ with $C = \bigvee_i \down d_i = \down \bigsqcup_i d_i$.

(b)  Since $A \subseteq B$ forms a generating $\wedge$-sublattice, we have $B \cong \ang{A \mid {<|}}_\!{\kappa Frm}$ for the restriction $<|$ of the canonical $\kappa$-coverage on $B$; thus it is enough to show that $<|$ agrees with the canonical disjunctive $\kappa$-coverage $<|'$ on $A$.
Clearly the former contains the latter.
Conversely, for $b <| \{c_i\}_i \in \@P_\kappa(A)$, i.e., $b \le \bigvee_i c_i$ in $B$, we have $b = \bigvee_i (b \wedge c_i)$ with $b \wedge c_i \in A \subseteq B_\upk$ by assumption, so using ultraparacompactness of $b$ and \cref{lm:frm-upkbasis}, we have pairwise disjoint $b \wedge c_i \ge d_i \in A$ with $b = \bigsqcup_i d_i$, which witnesses $b <|' \{c_i\}_i$.
The last assertion follows from (a).
\end{proof}

\begin{remark}
\label{rmk:upkzfrm-kzmon}
As with \cref{thm:frm-idl-fulliso}, it follows that the free functor $\dj\kappa\@I_\kappa : \!{\kappa DjFrm} -> \!{\kappa Frm}$ is full on isomorphisms.
The $\kappa$-frames in its essential image, i.e., satisfying \cref{thm:frm-upkbasis}(b), might be called ``ultraparacoherent'', by analogy with coherent frames.

The preceding discussion, along with \cref{thm:vlat-cptbasis}, both fit into the categorical framework of KZ-monads; see \cite[B1.1.13--15]{Jeleph}, \cite{Kkzmon}, \cite{Mmoncomon}.
Namely, we have a KZ-monad $\dj\kappa\@I_\kappa$ on $\!{\kappa DjFrm}$, with category of algebras $\!{\kappa Frm}$; \cref{thm:frm-upkbasis}(a) shows that this monad has regular monic unit, whence $\!{\kappa DjFrm}$ embeds fully faithfully into the category of coalgebras for the induced comonad.
These coalgebras are the retracts of ``ultraparacoherent'' frames, or equivalently frames with a $\wedge$-stable ``relatively ultraparacompact'' or ``way-below'' relation $\ll$, hence might be called ``stably ultraparacompact'' frames, in analogy with stably compact frames (see \cite[VII~4.6]{Jstone}), localic $\vee$-lattices (see \cite[\S6.4]{Spowloc}), etc.
We do not study these in detail here, but make one preliminary observation: spaces of points of ``stably ultraparacompact'' frames have connected meets, since these commute with pairwise disjoint joins in $2 = \{0 < 1\}$; thus such frames can be thought of as dual to certain locales with connected meets.
\end{remark}

Finally, we consider $\kappa$-frames which are both ultraparacompact and zero-dimensional (or equivalently subfit, as noted above).
Let $\!{\kappa UPKZFrm} \subseteq \!{\kappa ZFrm}$ denote the full subcategory of these.

\begin{proposition}
\label{thm:upkzfrm}
For a zero-dimensional $\kappa$-frame $A$, the following are equivalent:
\begin{enumerate}
\item[(i)]  $A$ is ultraparacompact;
\item[(ii)]  $A_\upk \subseteq A$ forms a generating disjunctive $\kappa$-subframe (i.e., $A$ is ``ultraparacoherent'');
\item[(iii)]  every $a \in A_\neg$ is ultraparacompact;
\item[(iv)]  every $\kappa$-ary cover of $a \in A_\neg$ by complemented elements refines to a pairwise disjoint cover;
\item[(v)]  every $\kappa$-ary cover of $a \in A_\neg$ refines to a pairwise disjoint cover by complemented elements.
\end{enumerate}
In this case, $A_\upk \subseteq A$ is the closure of $A_\neg$ under pairwise disjoint $\kappa$-ary joins.
\end{proposition}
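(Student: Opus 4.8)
The plan is to establish the implications $\mathrm{(iii)}\Rightarrow\mathrm{(iv)}\Rightarrow\mathrm{(v)}\Rightarrow\mathrm{(iii)}$ together with $\mathrm{(iii)}\Rightarrow\mathrm{(ii)}\Rightarrow\mathrm{(i)}\Rightarrow\mathrm{(iii)}$, several of which are immediate: $\mathrm{(iii)}\Rightarrow\mathrm{(iv)}$ is the special case of $\mathrm{(iii)}$ for covers by complemented elements; $\mathrm{(v)}\Rightarrow\mathrm{(iii)}$ holds because a pairwise disjoint cover by complemented elements is in particular a pairwise disjoint cover; and $\mathrm{(iii)}\Rightarrow\mathrm{(i)}$ holds since $\top\in A_\neg$. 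So the work is in $\mathrm{(i)}\Rightarrow\mathrm{(iii)}$, $\mathrm{(iv)}\Rightarrow\mathrm{(v)}$, $\mathrm{(iii)}\Rightarrow\mathrm{(ii)}$, $\mathrm{(ii)}\Rightarrow\mathrm{(i)}$, plus the final identification of $A_\upk$.

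The recurring tool is an elementary \emph{complementation device}: if $(d_i)_i$ is a pairwise disjoint $\kappa$-ary family in $A$ with each $d_i\le c_i$ for some $c_i\in A_\neg$, and $\bigsqcup_i d_i\ge c_j$ for every $j$, then each $d_i$ is complemented, with complement $\neg c_i\vee\bigsqcup_{j\ne i}d_j$ (its meet with $d_i$ vanishes since $d_i\le c_i$ and the $d_j$ are disjoint; its join with $d_i$ equals $\neg c_i\vee\bigsqcup_j d_j\ge\neg c_i\vee c_i=\top$). For $\mathrm{(i)}\Rightarrow\mathrm{(iii)}$: given $a\in A_\neg$ and a $\kappa$-ary cover $(b_i)_i$ of $a$, adjoin $\neg a$ to obtain a $\kappa$-ary cover of $\top$; ultraparacompactness of $\top$ refines it to a partition $(c_i)_i\sqcup\{c\}$ with $c_i\le b_i$ and $c\le\neg a$, and then $(a\wedge c_i)_i$ is a pairwise disjoint $\kappa$-ary cover of $a$ refining $(b_i)_i$, so $a\in A_\upk$. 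For $\mathrm{(iv)}\Rightarrow\mathrm{(v)}$: given an arbitrary $\kappa$-ary cover $(b_i)_i$ of $a\in A_\neg$, use zero-dimensionality to write $b_i=\bigvee_j c_{ij}$ with $c_{ij}\in A_\neg$; then $(a\wedge c_{ij})_{i,j}$ is a $\kappa$-ary (by regularity of $\kappa$) cover of $a$ by complemented elements (complemented since $A_\neg$ is closed under $\wedge$); apply $\mathrm{(iv)}$ to get a pairwise disjoint refinement $(d_\alpha)_\alpha$ with $a=\bigsqcup_\alpha d_\alpha$ and $d_\alpha\le a\wedge c_{i_\alpha j_\alpha}$, and the device (with $c'_\alpha:=a\wedge c_{i_\alpha j_\alpha}$, noting $a=\bigsqcup_\alpha d_\alpha\ge c'_\beta$ for all $\beta$) shows each $d_\alpha\in A_\neg$; this is the required refinement of $(b_i)_i$.

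For $\mathrm{(iii)}\Rightarrow\mathrm{(ii)}$ and the final claim, I would show $A_\upk$ coincides with the closure $\langle A_\neg\rangle$ of $A_\neg$ under pairwise disjoint $\kappa$-ary joins. One inclusion is the Lemma that $A_\upk$ is closed under such joins, combined with $A_\neg\subseteq A_\upk$ from $\mathrm{(iii)}$. For the converse, if $a\in A_\upk$ then zero-dimensionality gives $a=\bigvee_i c_i$ with $c_i\in A_\neg$; replacing $c_i$ by $a\wedge c_i\in A_\neg$ and refining the resulting cover of $a$ by ultraparacompactness of $a$ yields pairwise disjoint $(d_i)_i$ with $a=\bigsqcup_i d_i$ and $d_i\le a\wedge c_i$, and the device shows $d_i\in A_\neg$, so $a\in\langle A_\neg\rangle$. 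One then checks $\langle A_\neg\rangle$ is a disjunctive $\kappa$-subframe: it consists precisely of the single pairwise disjoint $\kappa$-ary joins of elements of $A_\neg$ (a $\kappa$-ary disjoint join of such is again such, by regularity of $\kappa$), it contains $\bot$ and $\top$, and it is closed under binary meets because $(\bigsqcup_i a_i)\wedge(\bigsqcup_j b_j)=\bigsqcup_{i,j}(a_i\wedge b_j)$ by $\kappa$-frame distributivity, with the right-hand family pairwise disjoint, $\kappa$-ary, and with entries in $A_\neg$. It generates $A$ under $\kappa$-ary joins since it contains $A_\neg$ and $A$ is zero-dimensional. Finally $\mathrm{(ii)}\Rightarrow\mathrm{(i)}$ is immediate: a disjunctive $\kappa$-subframe of $A$ contains the nullary meet $\top$, so $\top$ is ultraparacompact.

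The only real obstacle is bookkeeping rather than conceptual: one must keep the complementation device honest (in particular, replacing the $c_{ij}$, $c_i$ by their meet with $a$ before applying it, so that the hypothesis $\bigsqcup_i d_i\ge c_j$ genuinely holds), track that all index sets remain $\kappa$-ary under the pairings used, and bear in mind that ``generating disjunctive $\kappa$-subframe'' in $\mathrm{(ii)}$ refers to generation under arbitrary---not necessarily pairwise disjoint---$\kappa$-ary joins, which is exactly what zero-dimensionality supplies.
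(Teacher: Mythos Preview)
Your proof is correct and uses the same core ingredients as the paper (the complementation device, zero-dimensionality to reduce to complemented covers, and adjoining $\neg a$ to reduce to covers of $\top$), though organized into a slightly different cycle of implications: you prove $\mathrm{(i)}\Rightarrow\mathrm{(iii)}$ and $\mathrm{(iv)}\Rightarrow\mathrm{(v)}$, whereas the paper proves $\mathrm{(i)}\Rightarrow\mathrm{(v)}$ and $\mathrm{(iv)}\Rightarrow\mathrm{(iii)}$. The one substantive difference is in $\mathrm{(iii)}\Rightarrow\mathrm{(ii)}$ and the final identification of $A_\upk$: the paper invokes the earlier \cref{thm:frm-upkbasis}(b) (applied to the closure of $A_\neg$ under pairwise disjoint $\kappa$-ary joins), whereas you give a self-contained direct argument establishing both inclusions $\langle A_\neg\rangle\subseteq A_\upk$ and $A_\upk\subseteq\langle A_\neg\rangle$ by hand. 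Your route is more elementary and avoids the disjunctive-ideal machinery; the paper's is shorter once that machinery is in place. One minor remark: in your argument for $A_\upk\subseteq\langle A_\neg\rangle$, the replacement of $c_i$ by $a\wedge c_i$ is harmless but redundant, since $c_i\le\bigvee_j c_j=a$ already forces $a\wedge c_i=c_i$.
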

\begin{proof}
Clearly (ii)$\implies$(i) and (v)$\implies$(iii)$\implies$(iv).

(iv)$\implies$(iii): Given a $\kappa$-ary cover $a \le \bigvee_i c_i$ of $a \in A_\neg$ by $c_i \in A$, write each $c_i$ as a $\kappa$-ary join of $c_{ij} \in A_\neg$, and refine these to $a = \bigsqcup_{i,j} d_{ij}$ with $d_{ij} \le c_{ij}$; then $d_i := \bigsqcup_j d_{ij} \le c_i$ with $a = \bigsqcup_i d_i$.

(iii)$\implies$(ii) follows from \cref{thm:frm-upkbasis}(b) (with $A :=$ closure of $A_\neg$ under pairwise disjoint $\kappa$-ary joins).

(i)$\implies$(v): Given a $\kappa$-ary cover $a \le \bigvee_i c_i$ of $a \in A_\neg$, letting $a = \bigsqcup_i d_i$ with $d_i \le a \wedge c_i$, since $a$ is complemented and the $d_i$ are pairwise disjoint, each $d_i$ has complement $\neg a \vee \bigvee_{j \ne i} d_j$.

The last statement follows from (iii) and \cref{thm:frm-upkbasis}(b) (with $A :=$ closure of $A_\neg$ under pairwise disjoint $\kappa$-ary joins).
\end{proof}

Given $A \in \!{\kappa UPKZFrm}$, as described above \eqref{diag:zfrm}, we may equivalently represent $A$ as the canonical Boolean $\kappa$-posite $(A_\neg, {<|})$.
For $<\kappa$-many $a, c_i \in A_\neg$ with $a \le \bigvee_i c_i$, we may find a pairwise disjoint refinement $a = \bigsqcup_i d_i$ with $d_i \in A_\neg$ (by \cref{thm:upkzfrm}(v)); in other words, $<|$ is generated by its restriction to pairwise disjoint right-hand side.
Call such a $\kappa$-posite \defn{disjunctive}, and let $\!{\kappa DjBPost} \subseteq \!{\kappa BPost}$ be the full subcategory of disjunctive Boolean $\kappa$-posites, with the further full subcategories $\!{\kappa CCDjBPost} \subseteq \!{\kappa SDjBPost} \subseteq \!{\kappa DjBPost}$ of complement-complete and separated posites, respectively.
Conversely, given any disjunctive Boolean $\kappa$-posite $(B, {<|}) \in \!{\kappa DjBPost}$, the presented zero-dimensional $\kappa$-frame $\ang{B \mid {<|}}_\!{\kappa Frm}$ is ultraparacompact: indeed, since $<|$ is generated by its restriction to pairwise disjoint right-hand side, $\ang{B \mid {<|}}_\!{\kappa Frm}$ may be obtained by first taking the presented \emph{disjunctive} $\kappa$-frame $\ang{B \mid {<|}}_\!{\kappa DjFrm}$, and then completing to a $\kappa$-frame as in \cref{thm:frm-upkbasis}.
Thus, the equivalence of categories $\!{\kappa CCBPost} \simeq \!{\kappa ZFrm}$ from above \eqref{diag:zfrm} restricts to an equivalence $\!{\kappa CCDjBPost} \simeq \!{\kappa UPKZFrm}$ between complement-complete disjunctive Boolean $\kappa$-posites and ultraparacompact zero-dimensional $\kappa$-frames.

The following diagram, extending \eqref{diag:zfrm}, depicts the relationship between all the categories we have considered in this subsection (as before, $\rightarrowtail$ denotes full subcategory):
\begin{equation}
\label{diag:upkzfrm}
\begin{tikzcd}
\!{\kappa BPost}
    \dar[shift left=2, right adjoint']
    \rar[shift left=2] &
\!{\kappa CCBPost}
    \lar[shift left=2, rightarrowtail, right adjoint]
    \dar[shift left=2, right adjoint']
    \rar[phantom, "\scriptstyle\simeq"] \rar[shift left=2, "\ang{-}_\!{\kappa Frm}"] &
\!{\kappa ZFrm}
    \lar[shift left=2, "{(-)_\neg}"]
    \dar[shift left=2, right adjoint']
    \rar[shift left=2, rightarrowtail] &
\!{\kappa Frm}
    \lar[shift left=2, right adjoint]
    \dar[shift left=2, right adjoint']
    \rar[shift left=2] &
\!{\kappa Bool}
    \lar[shift left=2, rightarrowtail, right adjoint]
    \ar[dll, dashed, rightarrowtail, start anchor=south]
\\
\!{\kappa DjBPost}
    \uar[shift left=2, rightarrowtail]
    \rar[shift left=2] &
\!{\kappa CCDjBPost}
    \lar[shift left=2, rightarrowtail, right adjoint]
    \uar[shift left=2, rightarrowtail]
    \rar[start anchor=north east, end anchor=south west, shift left=2, phantom, "\scriptstyle\simeq"{sloped}]
    \rar[start anchor=north east, end anchor=south west, shift left=4, "\ang{-}_\!{\kappa Frm}"{inner sep=1pt}]
    \ar[rr, shift left=2, rightarrowtail, "\ang{-}_\!{\kappa DjFrm}"{pos=.7}] &
|[yshift=3em]|
\!{\kappa UPKZFrm}
    \lar[start anchor=south west, end anchor=north east, shift left=0, "{(-)_\neg}"{pos=.3,inner sep=1pt}]
    \uar[shift left=2, rightarrowtail]
&
\!{\kappa DjFrm}
    \uar[shift left=2, "{\ang{-}_\!{\kappa Frm}}"{pos=.3}]
    \ar[ll, shift left=2, right adjoint, "{(-)_\neg}"]
\end{tikzcd}
\end{equation}
Most of these functors are the obvious ones, or else were described above.
The right adjoint $\!{\kappa BPost} -> \!{\kappa DjBPost}$ takes a Boolean $\kappa$-posite $(B, {<|})$ and replaces $<|$ with the $\kappa$-coverage generated by its restriction to pairwise disjoint right-hand side
(this can be regarded as ``forgetful'', if we view a disjunctive $\kappa$-coverage $<|$ as already restricted to pairwise disjoint right-hand side).
The right adjoint $\!{\kappa DjFrm} -> \!{\kappa CCDjBPost} \subseteq \!{\kappa DjBPost}$ takes a disjunctive $\kappa$-frame $A$ to $A_\neg \subseteq A$, same as $\!{\kappa Frm} -> \!{\kappa CCBPost} \subseteq \!{\kappa BPost}$; the only thing to note is that pairwise disjoint joins are already enough to define complements and prove their basic properties (see e.g., \cite{Cborin}).

\begin{remark}
\label{rmk:upkzfrm-free}
As $\kappa$ varies, the above diagrams \eqref{diag:upkzfrm} fit together in the obvious manner.
It follows that ultraparacompactness and zero-dimensionality are preserved by the \emph{free} functors $\!{\kappa Frm} -> \!{\lambda Frm}$ for $\kappa \le \lambda$ (but not necessarily by the forgetful functors, in analogy with \cref{rmk:frm-idl-coh-free}).
\end{remark}

The above diagram \eqref{diag:upkzfrm} also indicates that the composite forgetful functor $\!{\kappa Bool} -> \!{\kappa UPKZFrm}$ (which takes a $\kappa$-Boolean algebra to itself) is full, which is just to say that $\!{\kappa Bool} \subseteq \!{\kappa UPKZFrm}$, i.e., every $\kappa$-Boolean algebra $A$ is ultraparacompact and (clearly) zero-dimensional.
Ultraparacompactness is because covers may be disjointified via the usual transfinite iteration: given $a = \bigvee_{i < \alpha} c_i$ for some ordinal $\alpha < \kappa$, we have $a = \bigsqcup_{i < \alpha} (c_i \wedge \neg \bigvee_{j < i} c_j)$.

Since $\!{\kappa UPKZFrm} \subseteq \!{\kappa Frm}$ is coreflective, it follows that colimits of $\kappa$-Boolean algebras in $\!{\kappa Frm}$ remain ultraparacompact zero-dimensional.
We now show that $\!{\kappa UPKZFrm}$ is precisely the closure of $\!{\kappa Bool} \subseteq \!{\kappa Frm}$ under colimits.
This result is essentially due to Paseka~\cite{Pupk} when $\kappa = \infty$.

Recall that a \defn{complete atomic Boolean algebra} is an isomorphic copy of a full powerset $\@P(X)$; let $\!{CABool}$ denote the category of these.
By a \defn{complete $\kappa$-atomic Boolean algebra}, we will mean a copy of $\@P(X)$ for $\kappa$-ary $X$; let $\!{\kappa CABool}_\kappa$ denote the category of these.
For $\kappa$-ary $X$, it is easily seen that $\@P(X)$ is presented, as a $\kappa$-frame (and hence as a $\kappa$-Boolean algebra), by
\begin{align}
\label{eq:frm-caba-pres}
\@P(X) = \ang{\{x\} \text{ for } x \in X \mid \{x\} \wedge \{y\} = \bot \text{ for $x \ne y$, } \top \le \bigvee_{x \in X} \{x\}};
\end{align}
thus $\!{\kappa CABool}_\kappa \subseteq \!{\kappa Frm}_\kappa, \!{\kappa Bool}_\kappa$.
Moreover, the same presentation yields a $\kappa$-presented disjunctive Boolean $\kappa$-posite $(A, {<|})$ (namely $A :=$ all finite and cofinite subsets of $X$, and $<|$ generated by $\top <| \{\{x\} \mid x \in X\}$) such that $\@P(X) = \ang{A \mid {<|}}_\!{\kappa Frm}$; thus $\!{\kappa CABool}_\kappa \subseteq \!{\kappa UPKZFrm}_\kappa :\simeq \!{\kappa CCDjBPost}_\kappa$.

Now given any ultraparacompact zero-dimensional $\kappa$-frame $A$, for $\kappa$-ary $X$, a homomorphism $f : \@P(X) -> A$ is, by the above presentation for $\@P(X)$, the same thing as a pairwise disjoint cover $\top \le \bigsqcup_{x \in X} f(x)$ in $A$.
Using this, we may convert a presentation of $A$ into an expression of $A$ as a colimit of complete $\kappa$-atomic Boolean algebras.
Namely, suppose $A = \ang{B \mid {<|}}_\!{\kappa Frm}$ for a disjunctive Boolean $\kappa$-posite $(B, {<|})$ (e.g., the canonical posite $(A_\neg, {<|})$), and suppose that $<|$, or rather its one-sided restriction $\top <|$, is in turn generated by some unary relation $\top <|_0$ on pairwise disjoint $\kappa$-ary subsets of $B$.
Then for each $C$ such that $\top <|_0 C$, so that $\top = \bigsqcup C$ in $A$, we get a homomorphism $f_C : \@P(C) -> A$ as above.
Suppose, furthermore, that
\begin{itemize}
\item  $\top <|_0 C  \implies  C \subseteq B \setminus \{\bot\}$;
\item  ($\omega$-supercanonicity) for every finite pairwise disjoint $C \subseteq B \setminus \{\bot\}$, we have $\bigvee C = \top \implies \top <|_0 C$;
\item  (closure under common refinement) for every $C, D$ such that $\top <|_0 C$ and $\top <|_0 D$, we have $\top <|_0 (C \wedge D) \setminus \{\bot\}$, where $C \wedge D := \{c \wedge d \mid c \in C \AND d \in D\}$.
\end{itemize}
Note that we can always modify $<|_0$ to satisfy these, by removing $\bot$ from all $C$ such that $\top <|_0 C$ and then closing under the last two conditions, which does not change the generated $<|$.

\begin{theorem}
\label{thm:upkzfrm-cabool-pres}
Under the above assumptions, the homomorphisms $f_C : \@P(C) -> A$ for $\top <|_0 C$ form cocone maps exhibiting $A$ as the directed colimit, in $\!{\kappa Frm}$, of the $\@P(C)$ together with the preimage maps $h_{CD}^{-1} : \@P(C) -> \@P(D)$ for $\top <|_0 C$ and $\top <|_0 D$ such that $D \subseteq \down C$, where $h_{CD} : D -> C$ takes $d \in D$ to the unique element above it in $C$.
\end{theorem}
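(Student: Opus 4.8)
The plan is to verify directly that $A$, equipped with the homomorphisms $f_C$, is the colimit, by checking the universal property over the evident index poset; the degenerate case ($A$ trivial) being immediate, assume $B$ nondegenerate. First I would organize the index poset $I$: its objects are the $\kappa$-ary $C \subseteq B$ with $\top <|_0 C$, ordered by $C \le D$ iff $D \subseteq \down C$. Then $I$ is nonempty, since $\{\top\}$ and the $\{b, \neg b\}$ (for $b \ne \bot, \top$) lie in $I$ by $\omega$-supercanonicity, and directed, since $(C \wedge D) \setminus \{\bot\} \in I$ refines both $C$ and $D$ by closure of $<|_0$ under common refinement, hence is an upper bound of $\{C,D\}$. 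For $C \le D$, each $d \in D$ lies below a unique member of $C$ (below some member since $D \subseteq \down C$, below at most one since $C$ is pairwise disjoint and $d \ne \bot$), which gives the well-defined map $h_{CD} : D \to C$; one checks $h_{CC} = \mathrm{id}$ and $h_{CE} = h_{CD} \circ h_{DE}$ for $C \le D \le E$, so that $C \mapsto \@P(C)$, $(C \le D) \mapsto h_{CD}^{-1}$ is a functor $I \to \!{\kappa Frm}$, each $h_{CD}^{-1}$ being a preimage map and hence a complete Boolean (in particular a $\kappa$-frame) homomorphism. That the $f_C$ form a cocone reduces, since $f_D \circ h_{CD}^{-1}$ and $f_C$ are both $\kappa$-frame homomorphisms $\@P(C) \to A$ and $\@P(C)$ is $\kappa$-frame generated by its singletons (see \eqref{eq:frm-caba-pres}), to the identity $f_D(h_{CD}^{-1}(\{c\})) = f_C(\{c\})$ for $c \in C$; writing $\eta : B -> A$ for the structure map and using $\top <|_0 D$ to get $\top_A = \bigsqcup_{d \in D} \eta(d)$, this follows from $\eta(c) = \bigsqcup_{d \in D} \eta(c \wedge d) = \bigsqcup_{d \in D,\ d \le c} \eta(d) = f_D(\{d \in D : d \le c\})$, the terms with $d \not\le c$ vanishing by pairwise disjointness of $C$.

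For the universal property, uniqueness is clear: $A$ is $\kappa$-frame generated by $\eta(B)$, and each $\eta(b)$ lies in the image of some $f_C$ (namely $f_{\{b,\neg b\}}(\{b\})$, $f_{\{\top\}}(\{\top\})$, or $f_C(\emptyset) = \bot_A$ according as $b \ne \bot, \top$, $b = \top$, or $b = \bot$), so any homomorphism out of $A$ is determined by its composites with the $f_C$. For existence, given a cocone $g_C : \@P(C) -> Z$ in $\!{\kappa Frm}$, I would set $\psi : B -> Z$ to be $\bot_Z$ on $\bot_B$ and $b \mapsto g_C(\{b\})$ for any $C \in I$ containing $b$. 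By passing to a common refinement $D := (C \wedge C') \setminus \{\bot\}$ of two covers $C, C'$ and using the identity $h_{CD}^{-1}(\{b\}) = \{d \in D : d \le b\}$ (valid for $b \in C$), one checks that $\psi(b)$ does not depend on the choice of $C$, that $\psi$ preserves binary meets and $\top$ --- here the collapse of $\{d \in D : d \le b \wedge b'\}$ to the singleton $\{b \wedge b'\}$ (or to $\emptyset$ if $b \wedge b' = \bot$) via pairwise disjointness of $C$ and $C'$ is the crucial point --- and that $\psi$ carries each generating cover $\top <|_0 C$ to a cover of $Z$ (indeed $\bigvee_{c \in C} \psi(c) = g_C(C) = \top_Z$). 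Since $<|$ is the $\kappa$-coverage generated by $\top <|_0$ (\cref{thm:frm-cov-ltrans}) and $A = \ang{B \mid {<|}}_\!{\kappa Frm}$, the $\wedge$-homomorphism $\psi$ factors through a unique $\kappa$-frame homomorphism $\phi : A -> Z$ with $\phi \circ \eta = \psi$; then $\phi \circ f_C$ and $g_C$ agree on the generating singletons of $\@P(C)$, hence on all of $\@P(C)$, as required.

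I expect the main obstacle to be the bookkeeping in the existence step: every assertion about $\psi$ must be reduced, via a common refinement $D = (C \wedge C') \setminus \{\bot\}$, to a computation inside a single $\@P(D)$, and the collapse of sets $\{d \in D : d \le b\}$ to singletons --- which is precisely what makes $\psi$ well-defined, meet-preserving, and top-preserving --- rests squarely on the pairwise disjointness of the covers. Everything else (directedness and functoriality of the index, the cocone identity, uniqueness, and the passage through the posite presentation of $A$) is routine given the material of \cref{sec:frm-post}. One could instead attempt a ``presentational'' argument, constructing the colimit by the general posite recipe and identifying it with $\ang{B \mid {<|}}_\!{\kappa Frm}$, but since there is no natural $\wedge$-lattice map $\@P(C) -> B$ --- only $\@P(C) -> A$ via $f_C$ --- the direct universal-property route seems cleaner.
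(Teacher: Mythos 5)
Your proof is correct in its overall architecture but takes a genuinely different route from the paper's. The paper never touches the universal property directly: it invokes the general recipe for presenting a colimit of presented structures to write down a presentation of $\injlim_{\top <|_0 C} \@P(C)$ by generators $f_C(c)$ subject to the relations from \eqref{eq:frm-caba-pres} plus the identifications along the $h_{CD}^{-1}$, writes $A = \ang{B \qua \!{Bool} \mid \top \le \bigvee C \text{ for } \top <|_0 C}_\!{\kappa Frm}$, and shows the two presentations imply each other (the finite Boolean relations of $B$ being recovered from well-chosen finite partitions such as $\top = b \sqcup \neg b' \sqcup (b' \wedge \neg b)$). You instead verify the colimit property by hand: directedness via common refinements, the cocone identity via $\eta(c) = \bigsqcup_{d \in D} \eta(c \wedge d)$, and a mediating map built from an explicit $\wedge$-homomorphism $\psi$. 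Your route is more self-contained (no appeal to the machinery of colimits of presented structures), at the cost of the bookkeeping you anticipate; the paper's route concentrates all the work into showing that one set of relations implies the other.

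There is, however, one step that rests on a false premise. You assert that $<|$ is the $\kappa$-coverage generated by $\top <|_0$ and conclude that any $\wedge$-homomorphism $\psi$ preserving $\top$, binary meets, and the covers $\top <|_0 C$ factors through $A$. But $<|$ is the \emph{Boolean} $\kappa$-coverage generated by $\top <|_0$: being $\omega$-supercanonical, it also contains the canonical $\omega$-coverage of $B$, in particular $\bot <| \emptyset$ and $b \vee b' <| \{b, b'\}$, and these are not consequences of $\top <|_0$ under the plain $\kappa$-coverage axioms. Already for $B = 2$, where $\top <|_0$ reduces to $\top <|_0 \{\top\}$, the plain coverage generated by $\top <|_0$ presents the three-element frame $\@L(2)$ rather than $A = 2$, since nothing forces $\eta(\bot_B) = \bot$. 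So for $\psi$ to factor through $A$ you must also check that it preserves $\bot$ and binary joins. The first holds by your stipulation $\psi(\bot_B) := \bot_Z$. The second is true but requires the same common-refinement computation you use for meets: with $D := (\{b, \neg b\} \wedge \{b', \neg b'\}) \setminus \{\bot\}$ one has $\{d \in D \mid d \le b \vee b'\} = \{d \in D \mid d \le b\} \cup \{d \in D \mid d \le b'\}$, whence $\psi(b \vee b') = \psi(b) \vee \psi(b')$. (Alternatively, the covers $b \vee b' <| \{b, b'\}$ do lie in the plain coverage generated by $\top <|_0$ together with $\bot <| \emptyset$, using $\omega$-supercanonicity of $<|_0$ and $\wedge$-stability applied to the partition $\{b, \neg b \wedge b', \neg b \wedge \neg b'\} \setminus \{\bot\}$.) With this extra check the argument is complete; as written, the factorization step is not justified.
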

\begin{proof}
By the general construction of colimits of presented structures (see \cref{sec:cat-lim}), the colimit $\injlim_{\top <|_0 C} \@P(C)$ is presented by all the $\@P(C)$ qua $\kappa$-frame, together with new relations which identify $c \in \@P(C)$ with $h_{CD}^{-1}(c) = \down c \cap D \in \@P(D)$.
Hence, incorporating the above presentations \eqref{eq:frm-caba-pres} of $\@P(C)$, we get
\begin{align*}
\injlim_{\top <|_0 C} \@P(C)
= \ang*{f_C(c) \text{ for } \top <|_0 C \ni c \relmiddle| \begin{aligned}
f_C(c) \wedge f_C(c') &= \bot &&\text{for $c \ne c' \in C$} \\
\bigvee_{c \in C} f_C(c) &= \top &&\text{for $\top <|_0 C$} \\
f_C(c) &= \bigvee_{c \ge d \in D} f_D(d) &&\text{for $D \subseteq \down C$}
\end{aligned}}_\!{\kappa Frm}.
\end{align*}
On the other hand, since $A = \ang{B \mid {<|}}_\!{\kappa Frm}$ and $<|_0$ generates $<|$,
\begin{align*}
A = \ang{B \qua \!{Bool} \mid \top \le \bigvee C \text{ for $\top <|_0 C$}}_\!{\kappa Frm}.
\end{align*}
Our goal is to show that these two presentations are equivalent.

First, note that every generator $b \in B$ in the second presentation, other than $\bot, \top$ which are redundant since they are $\kappa$-frame operations, corresponds to some generator $f_C(b)$ in the first presentation, namely for $C := \{b, \neg b\}$ by $\omega$-supercanonicity of $<|_0$.
Also, for each $b$, the $f_C(b)$ for different $C$ are identified by the first presentation: indeed, for $\top <|_0 C, D \ni b$, we have $f_C(b) = \bigvee_{b \ge c \wedge d \in (C \wedge D) \setminus \{\bot\}} f_{(C \wedge D) \setminus \{\bot\}}(c \wedge d) = f_D(b)$ by the third relation.
So the generators (other than $\bot, \top$) in the two presentations correspond to each other.

Since the relations in the first presentation clearly hold in $A$, it remains only to show that they imply the relations in the second presentation.
For any $\top <|_0 C$, the first presentation says that $C$ forms a partition of $\top$; in particular, the relations $\top \le \bigvee C$ for $\top <|_0 C$ are implied by the first presentation, as are finite partitions of $\top$ in $B$ by $\omega$-supercanonicity of $<|_0$.
For each $b \in B$, by considering the partition $\top = b \sqcup \neg b$, we get that complements are preserved by the first presentation.
For $b \le b' \in B$, by considering the partition $\top = b \sqcup \neg b' \sqcup (b' \wedge \neg b)$, we get that the partial order on $B$ is preserved.
Finally, for a binary join $b \vee b' \in B$, by considering the partition $\top = b \sqcup (b' \wedge \neg b) \sqcup \neg (b \vee b')$ where $b' \wedge \neg b \le b'$, we get that the join $b \vee b'$ is preserved.
\end{proof}

Taking $B := A_\neg$ and $<|_0$ to be the canonical $<|$ with all occurrences of $\bot$ removed yields

\begin{corollary}
\label{thm:upkzfrm-cabool-dircolim}
Every ultraparacompact zero-dimensional $\kappa$-frame is a directed colimit, in $\!{\kappa Frm}$, of complete $\kappa$-atomic Boolean algebras.
\qed
\end{corollary}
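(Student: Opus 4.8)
The plan is to specialize \cref{thm:upkzfrm-cabool-pres} to the canonical Boolean $\kappa$-posite presentation of $A$. Given an ultraparacompact zero-dimensional $\kappa$-frame $A$, I would set $B := A_\neg$, so that $A \cong \ang{B \mid {<|}}_\!{\kappa Frm}$ where $<|$ is the restriction to $B = A_\neg$ of the canonical $\kappa$-coverage on $A$, as in the discussion above \eqref{diag:zfrm}. By \cref{thm:upkzfrm}(v), since $A$ is ultraparacompact and zero-dimensional, every cover $\top <| C$ with $C \subseteq B$ refines to a pairwise disjoint cover by complemented elements; hence the one-sided restriction $\top <|$ is generated by its further restriction to pairwise disjoint $\kappa$-ary subsets of $B$. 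I would then let $\top <|_0$ be this restriction with $\bot$ deleted from every $C$, subsequently closed under $\omega$-supercanonicity and under common refinement; as observed just before \cref{thm:upkzfrm-cabool-pres}, neither modification alters the $\kappa$-coverage generated, so we still have $A \cong \ang{B \mid {<|}}_\!{\kappa Frm}$ with $<|$ generated by $<|_0$.

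Next I would check the three hypotheses of \cref{thm:upkzfrm-cabool-pres}. The condition $\top <|_0 C \implies C \subseteq B \setminus \{\bot\}$ and closure under common refinement both hold by construction. The remaining point, $\omega$-supercanonicity, requires only that any finite pairwise disjoint $C \subseteq B \setminus \{\bot\}$ with $\bigvee C = \top$ satisfies $\top <|_0 C$: indeed $\top <| C$ because the canonical $\kappa$-coverage extends the canonical $\omega$-coverage on $B = A_\neg$, and $C$ is already pairwise disjoint and $\bot$-free, so $\top <|_0 C$ after the closure step.

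\Cref{thm:upkzfrm-cabool-pres} then exhibits $A$ as the directed colimit in $\!{\kappa Frm}$ of the diagram of the $\@P(C)$ for $\top <|_0 C$, with transition maps the preimage maps $h_{CD}^{-1} : \@P(C) -> \@P(D)$ along the refinement maps $h_{CD} : D -> C$. Each index $C$ is a $\kappa$-ary subset of $B$, hence $\@P(C)$ is a complete $\kappa$-atomic Boolean algebra, i.e., an object of $\!{\kappa CABool}_\kappa$; and the index poset is directed precisely because closure of $<|_0$ under common refinement furnishes, for any two $C, D$, the refinement $(C \wedge D) \setminus \{\bot\}$ lying above both. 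This yields the claim.

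Since \cref{thm:upkzfrm-cabool-pres} does all the real work, there is no serious obstacle here: the only care needed is in arranging that passing to $<|_0$ — deleting bottoms and closing under the two conditions — leaves the generated coverage, and hence the presented $\kappa$-frame, unchanged, while simultaneously making the index category directed. Both points are already handled in the surrounding discussion, so the corollary is essentially immediate once the specialization is set up correctly.
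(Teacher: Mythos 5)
Your proposal is correct and follows essentially the same route as the paper, which proves the corollary by taking $B := A_\neg$ and $<|_0$ to be the canonical coverage restricted to pairwise disjoint right-hand sides with $\bot$ removed, then invoking \cref{thm:upkzfrm-cabool-pres}. Your additional verifications (that $\omega$-supercanonicity holds because the canonical $\kappa$-coverage contains all finite join relations in $A_\neg$, and that directedness comes from closure under common refinement) are exactly the details the paper leaves implicit.
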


\begin{remark}
For a zero-dimensional $\kappa$-frame $A$, of course the generating set $A_\neg \subseteq A$ is the directed union of all finite (hence complete atomic) Boolean subalgebras $B \subseteq A_\neg$.
However, this does not generally yield an expression of $A$ as the directed colimit $\injlim_{B \subseteq A_\neg} B$ in $\!{\kappa Frm}$.
Indeed, since each $B = \ang{B \qua \!{Bool}}_\!{\kappa Frm}$, this colimit $\injlim_{B \subseteq A_\neg} B$ in $\!{\kappa Frm}$ is the same as the free functor $\ang{-}_\!{\kappa Frm} : \!{Bool} -> \!{\kappa Frm}$ applied to the colimit in $\!{Bool}$, i.e., to $A_\neg$; and of course $A$ is usually a proper quotient of $\ang{A_\neg \qua \!{Bool}}_\!{\kappa Frm} = \@I(A_\neg)$.
(The topological analog of $A |-> \@I(A_\neg)$ is the Banaschewski, i.e., universal zero-dimensional, compactification of a zero-dimensional space.)
\end{remark}

\begin{remark}
It is well-known, and easily shown, that $\!{CABool} \subseteq \!{Frm}$ is closed under finite colimits (namely, $\@P(X) \otimes \@P(Y) \cong \@P(X \times Y)$, and the coequalizer of $f^{-1}, g^{-1} : \@P(X) \rightrightarrows \@P(Y)$ for $f, g : Y \rightrightarrows X$ is $\@P(\eq(f, g))$).
The same holds for $\!{\kappa CABool}_\kappa \subseteq \!{\kappa Frm}$.

It follows that for any $A \in \!{\kappa Frm}$, the \defn{canonical diagram} of all complete $\kappa$-atomic Boolean $B$ equipped with a homomorphism $B -> A$ is filtered (see e.g., \cite[0.4, 1.4]{ARlpac}).
The proof of \cref{thm:upkzfrm-cabool-pres} easily adapts to show that for $A \in \!{\kappa UPKZFrm}$, the colimit of all such $B$ over the canonical diagram is $A$, i.e., $\!{\kappa CABool}_\kappa \subseteq \!{\kappa UPKZFrm}$ is a dense subcategory (see e.g., \cite[1.23]{ARlpac}).
\end{remark}

Now suppose $A$ is a $\kappa$-presented ultraparacompact zero-dimensional $\kappa$-frame.
Since $\!{\kappa CCDjBPost} \simeq \!{\kappa UPKZFrm} \subseteq \!{\kappa Frm}$ is coreflective, this means that $A = \ang{B \mid {<|}}_\!{\kappa Frm}$ for a $\kappa$-presented complement-complete disjunctive Boolean $\kappa$-posite $(B, {<|})$ (see \cref{sec:cat-lim}),
which means the same without requiring complement-completeness, since $\!{\kappa CCDjBPost} \subseteq \!{\kappa DjBPost}$ is reflective (see \cref{thm:cat-lim-refl}; for $\kappa = \omega$, use instead that finitely generated Boolean algebras are finite).
So $B$ is a $\kappa$-presented Boolean algebra, hence $\kappa$-ary; and $<|$ is a $\kappa$-generated Boolean $\kappa$-coverage.
Modifying the generators $<|_0$ to fulfill the assumptions of \cref{thm:upkzfrm-cabool-pres} clearly preserves the cardinality bound, yielding

\begin{corollary}
\label{thm:upkzfrm-kcabool-dircolim}
Every $\kappa$-presented ultraparacompact zero-dimensional $\kappa$-frame is a $\kappa$-ary directed colimit, in $\!{\kappa Frm}$, of complete $\kappa$-atomic Boolean algebras.
\qed
\end{corollary}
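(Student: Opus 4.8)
The plan is to rerun the construction carried out just above \cref{thm:upkzfrm-cabool-pres}, but starting from a $\kappa$-presented posite presentation of $A$ and tracking cardinalities throughout. First I would combine the coreflectivity of $\!{\kappa CCDjBPost} \simeq \!{\kappa UPKZFrm}$ in $\!{\kappa Frm}$ with the general facts about $\kappa$-presentability in (co)reflective subcategories of locally $\kappa$-presentable categories (\cref{sec:cat-lim}, \cref{thm:cat-lim-refl}) to write $A \cong \ang{B \mid {<|}}_\!{\kappa Frm}$ for a $\kappa$-presented complement-complete disjunctive Boolean $\kappa$-posite $(B, {<|})$; since moreover $\!{\kappa CCDjBPost} \subseteq \!{\kappa DjBPost}$ is reflective, I may drop complement-completeness and take $(B, {<|})$ to be merely a $\kappa$-presented disjunctive Boolean $\kappa$-posite (for $\kappa = \omega$ one instead just uses that finitely generated Boolean algebras are finite). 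Then $B$ is a $\kappa$-presented Boolean algebra, hence $\kappa$-ary, and $<|$ is a $\kappa$-generated Boolean $\kappa$-coverage; since the posite is disjunctive, $\top <|$ is generated by some $\kappa$-ary set $<|_0$ of relations $\top <|_0 C$ with each $C$ a pairwise disjoint $\kappa$-ary subset of $B$.

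Next I would normalize $<|_0$ as described before \cref{thm:upkzfrm-cabool-pres}: delete $\bot$ from every $C$ with $\top <|_0 C$, then close under $\omega$-supercanonicity (adjoining $\top <|_0 C$ for each finite pairwise disjoint $C \subseteq B \setminus \{\bot\}$ with $\bigvee C = \top$) and under binary common refinement, $(C, D) \mapsto (C \wedge D) \setminus \{\bot\}$. The key point is that none of this escapes the $\kappa$-ary range: $\omega$-supercanonicity adds at most $|\@P_\omega(B)| < \kappa$ new relations, each $C \wedge D$ has size $\le |C| \cdot |D| < \kappa$ by regularity of $\kappa$, and closing a $\kappa$-ary family under the finitary common-refinement operation again stays $\kappa$-ary by regularity. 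So the resulting $<|_0$ is still $\kappa$-generated, satisfies the three hypotheses of \cref{thm:upkzfrm-cabool-pres}, and has a $\kappa$-ary index set $\{C \mid \top <|_0 C\}$.

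Finally I would apply \cref{thm:upkzfrm-cabool-pres}, which exhibits $A$ as the colimit in $\!{\kappa Frm}$ of the $\@P(C)$ over $\{C \mid \top <|_0 C\}$ with the preimage maps along common refinements as transitions: this diagram is directed (any two $C, D$ have the common refinement $(C \wedge D) \setminus \{\bot\}$ above them, and it is nonempty since $\top <|_0 \{\top\}$ when $B$ is nontrivial — and if $B$ is trivial then $A \cong \@P(\emptyset)$ already), it has $<\kappa$ objects by the previous step, and every $\@P(C)$ is a complete $\kappa$-atomic Boolean algebra since $C$ is $\kappa$-ary; this is the desired $\kappa$-ary directed colimit. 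I expect the only genuine work to be the cardinality bookkeeping in the middle paragraph together with making precise, via \cref{sec:cat-lim}, the passage from ``$A$ is a $\kappa$-presented object of $\!{\kappa Frm}$ lying in $\!{\kappa UPKZFrm}$'' to ``$A$ admits a $\kappa$-presented disjunctive Boolean $\kappa$-posite presentation'' — both routine, but this is where all the cardinality control actually resides.
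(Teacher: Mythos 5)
Your proposal is correct and follows essentially the same route as the paper: the paper likewise passes from the coreflective equivalence $\!{\kappa CCDjBPost} \simeq \!{\kappa UPKZFrm}$ and \cref{thm:cat-lim-refl} to a $\kappa$-presented disjunctive Boolean $\kappa$-posite presentation, then notes that normalizing the generators $<|_0$ to satisfy the hypotheses of \cref{thm:upkzfrm-cabool-pres} "clearly preserves the cardinality bound." Your middle paragraph simply spells out that cardinality bookkeeping, which the paper leaves implicit.
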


We may combine this with \cref{thm:frm-bool-pres} to get the following strengthening of the latter:

\begin{proposition}
\label{thm:upkzfrm-bool-pres}
Every $\kappa$-presented $\kappa$-Boolean algebra is freely generated by a $\kappa$-presented ultraparacompact zero-dimensional $\kappa$-frame
(hence is a $\kappa$-ary directed colimit, in $\!{\kappa Bool}$, of complete $\kappa$-atomic Boolean algebras).
\end{proposition}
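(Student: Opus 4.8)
The plan is to treat the parenthetical as a consequence of the displayed (``freely generated'') assertion, and to obtain the latter by running the proof of \cref{thm:frm-bool-pres} with the ambient category $\!{\kappa Frm}$ replaced throughout by its subcategory $\!{\kappa UPKZFrm}$ of ultraparacompact zero-dimensional $\kappa$-frames.

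For the parenthetical: suppose $B \cong \@N_\kappa^\infty(A)$ for some $\kappa$-presented ultraparacompact zero-dimensional $\kappa$-frame $A$. By \cref{thm:upkzfrm-kcabool-dircolim}, $A$ is a $\kappa$-ary directed colimit, in $\!{\kappa Frm}$, of complete $\kappa$-atomic Boolean algebras $\@P(X_i)$. Since $\@N_\kappa^\infty = \ang{-}_\!{\kappa Bool}$ is a left adjoint it preserves this colimit, and it fixes each $\@P(X_i)$ (which is already a $\kappa$-Boolean algebra), so $B \cong \injlim_i \@P(X_i)$ now computed in $\!{\kappa Bool}$, a $\kappa$-ary directed colimit of complete $\kappa$-atomic Boolean algebras. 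So it remains to produce such an $A$.

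Recall that \cref{thm:frm-bool-pres} is proved by applying the general lemma \cref{thm:cat-lim-refl} to the reflective inclusion $\!{\kappa Bool} \subseteq \!{\kappa Frm}$ of a locally $\kappa$-presentable category whose reflector $\@N_\kappa^\infty$ preserves $\kappa$-presented objects; the lemma then gives that each $\kappa$-presented object of $\!{\kappa Bool}$ is $\@N_\kappa^\infty(A)$ for some $\kappa$-presented $A \in \!{\kappa Frm}$ --- in fact for $A$ a countable directed colimit of $\kappa$-presented objects of the ambient category, hence itself $\kappa$-presented and lying in any subcategory closed under such colimits. I would apply this with $\!{\kappa Frm}$ replaced by $\!{\kappa UPKZFrm}$. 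The things to verify are: (i) $\!{\kappa UPKZFrm}$ is locally $\kappa$-presentable --- via the equivalence $\!{\kappa UPKZFrm} \simeq \!{\kappa CCDjBPost}$ from \eqref{diag:upkzfrm} together with local $\kappa$-presentability of $\!{\kappa CCDjBPost}$ (proved like that of $\!{\kappa CCBPost}$, axiomatizing the partial joins with unique existential quantifiers); (ii) $\!{\kappa Bool}$ is a reflective subcategory of $\!{\kappa UPKZFrm}$ with reflector the restriction of $\@N_\kappa^\infty$ --- here one uses that $\@N_\kappa^\infty(C)$ is always Boolean, so $\@N_\kappa^\infty$ carries $\!{\kappa UPKZFrm}$ into $\!{\kappa Bool} \subseteq \!{\kappa UPKZFrm}$, together with reflectivity of $\!{\kappa Bool}$ in $\!{\kappa Frm}$ and the fact that both subcategories inherit their hom-sets from $\!{\kappa Frm}$; (iii) this reflector preserves $\kappa$-presented objects, being a left adjoint; and (iv) $\!{\kappa UPKZFrm}$ is closed in $\!{\kappa Frm}$ under colimits (it is coreflective there) and its $\kappa$-presented objects are exactly the ultraparacompact zero-dimensional $\kappa$-frames that are $\kappa$-presented as $\kappa$-frames, which under the equivalence of \eqref{diag:upkzfrm} correspond to the $\kappa$-presented complement-complete disjunctive Boolean $\kappa$-posites, as in the discussion preceding \cref{thm:upkzfrm-cabool-dircolim}. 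Granting these, \cref{thm:cat-lim-refl} yields the required $A$.

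The substance of the argument is the bundle of categorical verifications (i)--(iv), and I expect the main friction there to be matching the notion of $\kappa$-presented object internal to $\!{\kappa UPKZFrm} \simeq \!{\kappa CCDjBPost}$ with the external notion of a $\kappa$-presented $\kappa$-frame that happens to be ultraparacompact zero-dimensional, and confirming that $\!{\kappa UPKZFrm}$ has enough closure (coreflectivity in $\!{\kappa Frm}$, local $\kappa$-presentability) for the proof of \cref{thm:cat-lim-refl} to run unchanged; once these are in place the Boolean-algebra reflectivity in (ii) is immediate. (The case $\kappa = \omega$ is trivial, as then $B$ is finite and one may take $A := B$.)
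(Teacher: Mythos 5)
Your proposal is correct and follows essentially the same route as the paper, whose proof is the one-line instruction to rerun the argument of \cref{thm:frm-bool-pres} for the reflective inclusion of locally $\kappa$-presentable categories $\!{\kappa Bool} \subseteq \!{\kappa UPKZFrm} \simeq \!{\kappa CCDjBPost}$; your verifications (i)--(iv) are precisely the details the paper leaves implicit, with the matching of internal and external $\kappa$-presentability in (iv) handled by coreflectivity of $\!{\kappa UPKZFrm} \subseteq \!{\kappa Frm}$ as in \cref{sec:cat-lim}. Your derivation of the parenthetical from \cref{thm:upkzfrm-kcabool-dircolim} and colimit-preservation of the free functor is also the intended one.
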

\begin{proof}
As in the proof of \cref{thm:frm-bool-pres}, applied instead to the reflective inclusion of locally $\kappa$-presentable categories $\!{\kappa Bool} \subseteq \!{\kappa UPKZFrm} \simeq \!{\kappa CCDjBPost}$.
\end{proof}

As with \cref{thm:frm-bool-pres-cof}, we have the following refinement:

\begin{proposition}
\label{thm:upkzfrm-bool-pres-cof}
Let $B$ be a $\kappa$-presented $\kappa$-Boolean algebra which is freely generated by an ultraparacompact zero-dimensional $\kappa$-subframe $A \subseteq B$.
For any $\kappa$-ary subset $C \subseteq A$, there is a $\kappa$-presented ultraparacompact zero-dimensional $\kappa$-subframe $A' \subseteq A$ which contains $C$ and still freely generates $B$.
\end{proposition}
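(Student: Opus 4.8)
The plan is to run the proof of \cref{thm:frm-bool-pres-cof} essentially verbatim, with the reflective inclusion of locally $\kappa$-presentable categories $\!{\kappa Bool} \subseteq \!{\kappa Frm}$ replaced throughout by $\!{\kappa Bool} \subseteq \!{\kappa UPKZFrm} \simeq \!{\kappa CCDjBPost}$, exactly as was done to deduce \cref{thm:upkzfrm-bool-pres} from \cref{thm:frm-bool-pres}. If $\kappa = \omega$ the statement is trivial: then $B$ is finite, hence so is $A \subseteq B$, and one takes $A' := A$. So suppose $\kappa$ is uncountable.

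First I would present $A$, as an object of the locally $\kappa$-presentable category $\!{\kappa UPKZFrm}$, as a $\kappa$-directed colimit $A = \injlim_{i \in I} A_i$ of $\kappa$-presented ultraparacompact zero-dimensional $\kappa$-frames $A_i$; as in the proof of \cref{thm:cat-lim-refl}, I may arrange (by including $C$ in each of the chosen generating sets) that the image of every structure map $A_i \to A$ contains $C$. Since $\!{\kappa UPKZFrm} \subseteq \!{\kappa Frm}$ is coreflective, this is simultaneously the colimit in $\!{\kappa Frm}$, so the colimit and its comparison maps behave as in $\!{\kappa Frm}$.

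Next, applying the left adjoint $\ang{-}_\!{\kappa Bool} : \!{\kappa UPKZFrm} \to \!{\kappa Bool}$ (which preserves colimits) gives $B = \ang{A}_\!{\kappa Bool} \cong \injlim_i \ang{A_i}_\!{\kappa Bool}$, again a $\kappa$-directed colimit. As $B$ is $\kappa$-presented, hence a $\kappa$-presentable object, the identity $B \to B$ factors through some $\ang{A_{i_0}}_\!{\kappa Bool}$, and a standard cofinality argument (exactly as in the proof of \cref{thm:cat-lim-refl}, reproduced in \cref{thm:frm-bool-pres-cof}) yields a chain $i_0 \le i_1 \le \dotsb$ in $I$ with $B \cong \injlim_j \ang{A_{i_j}}_\!{\kappa Bool} \cong \ang{A'}_\!{\kappa Bool}$, where $A' := \injlim_j A_{i_j}$. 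Being a $\kappa$-ary directed colimit of $\kappa$-presented objects of $\!{\kappa UPKZFrm}$, the object $A'$ is itself $\kappa$-presented and is ultraparacompact and zero-dimensional; and the image of $A' \to A$ contains $C$ since the image of each $A_{i_j} \to A$ does.

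The one point needing care — identical to the corresponding point in \cref{thm:frm-bool-pres-cof} — is to see that up to isomorphism $A'$ is a $\kappa$-subframe of $A$ (hence of $B$) which freely generates $B$. For this one observes that the unit $A' \to \ang{A'}_\!{\kappa Bool} \cong B$ coincides with the colimit comparison $A' \to A$ followed by the inclusion $A \hookrightarrow B$, and that this map is injective: the free functor $\ang{-}_\!{\kappa Bool}$ on $\!{\kappa UPKZFrm}$ is the restriction of the one on $\!{\kappa Frm}$, whose unit is injective by \cref{thm:frm-neginf-inj}. Hence $A'$ embeds as a $\kappa$-presented ultraparacompact zero-dimensional $\kappa$-subframe of $A \subseteq B$, it contains $C$, and $\ang{A'}_\!{\kappa Bool} \cong B$ canonically. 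I expect this last bookkeeping step (matching the unit with the colimit comparison and invoking injectivity) to be the only delicate part, and it goes through word-for-word as in \cref{thm:frm-bool-pres-cof}.
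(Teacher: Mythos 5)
Your proposal is correct and is essentially the paper's own proof, which likewise reruns the argument of \cref{thm:frm-bool-pres-cof} with the reflective adjunction $\!{\kappa Frm} \rightleftarrows \!{\kappa Bool}$ replaced by $\!{\kappa CCDjBPost} \rightleftarrows \!{\kappa Bool}$ (i.e., $\!{\kappa UPKZFrm} \rightleftarrows \!{\kappa Bool}$). The one imprecision is your phrase ``including $C$ in each of the chosen generating sets'': the generators exhibiting $A$ as a $\kappa$-directed colimit of $\kappa$-presented objects of $\!{\kappa UPKZFrm} \simeq \!{\kappa CCDjBPost}$ live in the ``underlying set'' $A_\neg$ of complemented elements, not in all of $A$, so one must first replace $C$ by a $\kappa$-ary $C' \subseteq A_\neg$ generating $C$ under $\kappa$-ary joins (possible by zero-dimensionality) and include $C'$ instead — exactly the substitution the paper makes explicit.
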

\begin{proof}
As in the proof of \cref{thm:frm-bool-pres-cof}, with the role of the reflective adjunction $\!{\kappa Frm} \rightleftarrows \!{\kappa Bool}$ in that proof replaced by $\!{\kappa CCDjBPost} \rightleftarrows \!{\kappa Bool}$, with $A \subseteq B$ replaced by the subposite $A_\neg \subseteq B$, and with $C \subseteq A$ replaced by some $C' \subseteq A_\neg$ which generates $C$ under $\kappa$-ary joins.
\end{proof}

The following result connects ultraparacompactness with the frames $\@N_\kappa^\alpha(A)$ from the preceding subsection.
The case $\kappa = \infty$ was proved by Plewe~\cite[Th.~17]{Psubloc}, using very different methods.

\begin{theorem}
\label{thm:frm-neg-upkz}
For any $\kappa$-frame $A$, $\@N_\kappa(A)$ is ultraparacompact (and zero-dimensional).
\end{theorem}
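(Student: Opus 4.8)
\emph{Zero-dimensionality} is immediate: by construction $\@N_\kappa(A)$ is generated as a $\kappa$-frame by $\eta(A) \cup \{\neg a \mid a \in A\}$, and every element of this set is complemented, so $(\@N_\kappa(A))_\neg$ generates $\@N_\kappa(A)$. It remains to prove ultraparacompactness, i.e.\ that every $\kappa$-ary cover of $\top$ in $\@N_\kappa(A)$ refines to a pairwise disjoint cover. Expanding each element of a given cover into its normal form \eqref{eq:frm-neg-normform}, and then performing the routine ``split the cover into pieces $a \wedge \neg b$, disjointify, regroup'' manoeuvre (as in the proofs around \cref{thm:frm-upkbasis}; the regrouping joins are $<\kappa$-ary by regularity of $\kappa$, so exist in $\@N_\kappa(A)$), it suffices to treat a cover of the special form $\top = \bigvee_{k<\alpha}(a_k \wedge \neg b_k)$ with $a_k, b_k \in A$ and $\alpha < \kappa$, for which we must produce pairwise disjoint $d_k \le a_k \wedge \neg b_k$ with $\bigvee_k d_k = \top$.

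The crucial elementary observation is that for $a, b \in A$ the element $a \wedge \neg b \in \@N_\kappa(A)$ is \emph{already complemented}, with complement $\neg a \vee b$ (check $(a \wedge \neg b) \wedge (\neg a \vee b) = \bot$ and $(a \wedge \neg b) \vee \neg a \vee b = \top$); hence $(\@N_\kappa(A))_\neg$ contains all finite joins of such elements, and being a Boolean subalgebra it is closed under finite meets, finite joins and complements. Moreover, a join $\bigvee_j (p \wedge r_j \wedge \neg q)$ in which only the factor $r_j \in A$ varies equals $p \wedge (\bigvee_j r_j) \wedge \neg q$ (since $\eta$ preserves joins, by \cref{thm:frm-neg-inj}), and is therefore \emph{still} complemented even when the index set is infinite — this is the one place where an infinite join stays complemented, and it is what drives the argument. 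Next, by \cref{thm:frm-neg-pres} we have $\@N_\kappa(A) = \ang{A \times \neg A \mid {<|}}_\!{\kappa Frm}$ for the $\kappa$-coverage generated by the pairs (i)--(iii) there, so by \cref{thm:frm-cov-ltrans} the relation $(\top, \neg\bot) <| \{(a_k, \neg b_k)\}_k$ is witnessed by a well-founded derivation tree. After $\wedge$-stabilizing each rule application at the node where it occurs, we may take this tree to have the normalized form: each node carries a \emph{value} $p \wedge \neg q \in (\@N_\kappa(A))_\neg$; a node's value is the join of its children's values, each child's value being $\le$ it; a node expanded by rule (i) has $<\kappa$ children whose values differ only in an $A$-factor $r_j$ (as in the previous point); and each leaf's value is $\le a_{k(l)} \wedge \neg b_{k(l)}$ for some index $k(l)$. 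Finally, since the $\kappa$-coverage generated by $<\kappa$-ary rules is reached by stage $\kappa$ and $\kappa$ is regular, the pair $(\top, \neg\bot)$ enters the closure at some stage $<\kappa$, so the tree is $\kappa$-branching of rank $<\kappa$ and hence has $<\kappa$ nodes, in particular $<\kappa$ leaves.

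The disjointification is then carried out by descending the tree. Hand the root the piece $\top$. Recursively, a node with value $w$ that has been handed a complemented piece $w^* \le w$ distributes it to its children as follows: the elements $w^* \wedge w_i$ (for $w_i$ the children's values) are complemented and cover $w^*$, and we replace them by the pairwise disjoint $w_i^\dagger := (w^* \wedge w_i) \wedge \neg\bigvee_{j<i}(w^* \wedge w_j)$, which are complemented and satisfy $\bigsqcup_i w_i^\dagger = w^*$; the partial unions $\bigvee_{j<i}(w^* \wedge w_j) = w^* \wedge \bigvee_{j<i} w_j$ occurring here are complemented because at a rule-(i) node $\bigvee_{j<i} w_j$ has the controlled form $p \wedge (\bigvee_{j<i} r_j) \wedge \neg q$, while at a rule-(iii) node only two children arise. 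Passing $w_i^\dagger$ to child $i$ and iterating (the tree being well-founded), each leaf $l$ receives a complemented $d_l \le a_{k(l)} \wedge \neg b_{k(l)}$, with the $d_l$ pairwise disjoint and $\bigvee_l d_l = \top$. Setting $d_k := \bigvee\{d_l \mid k(l) = k\}$ — a legitimate $<\kappa$-ary join, as there are $<\kappa$ leaves — gives pairwise disjoint $d_k \le a_k \wedge \neg b_k$ with $\bigvee_k d_k = \top$, which is the required pairwise disjoint refinement.

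The main obstacle, and the reason one cannot simply iterate the ``$c \wedge \neg(\text{union of previous})$'' disjointification that works in a $\kappa$-Boolean algebra, is that a $<\kappa$-ary join of complemented elements of $\@N_\kappa(A)$ is in general \emph{not} complemented — otherwise $\@N_\kappa(A)$ would itself be $\kappa$-Boolean, which fails already for $A$ a free $\kappa$-frame on infinitely many generators. Organizing the disjointification along the derivation tree is exactly the device that keeps every infinite join one must complement in the special shape $p \wedge (\bigvee_j r_j) \wedge \neg q$ with the varying part inside $A$. The points that will require care are therefore: verifying that $\wedge$-stabilizing the rules of \cref{thm:frm-neg-pres} genuinely produces nodes of this shape with ``value $=$ join of children's values'', and confirming the cardinality bookkeeping that makes the derivation tree, and hence the final regrouping, $<\kappa$-sized.
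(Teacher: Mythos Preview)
Your proof is correct and reaches the same destination as the paper by a genuinely different route, though the underlying insight is shared.

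The paper does not analyze derivation trees directly. Instead, it lets $B \subseteq \@N_\kappa(A)_\neg$ be the Boolean subalgebra generated by $A$, observes that the restriction to $B$ of the canonical coverage presents $\@N_\kappa(A)$, and then proves (via a short universal-property argument comparing two presentations) that this coverage is generated by just two kinds of covers: the canonical $\kappa$-covers coming from $A$, and the canonical \emph{finite} covers in $B$. Each of these is then disjointified in one line by the standard formula $c_i \wedge \neg\bigvee_{j<i} c_j$: for covers from $A$ the partial joins $\bigvee_{j<i} c_j$ stay in $A$, so their complements lie in $B$; for finite covers in $B$ the partial joins are finite, so stay in $B$. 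Ultraparacompactness then follows from the general equivalence between disjunctive Boolean $\kappa$-posites and ultraparacompact zero-dimensional $\kappa$-frames developed in \cref{sec:upkzfrm}.

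Your approach unpacks essentially the same decomposition into an explicit recursion on the derivation tree for the posite of \cref{thm:frm-neg-pres}: your rule-(i) nodes correspond to the paper's $\kappa$-covers from $A$ (and your ``partial joins stay complemented because the varying factor is in $A$'' is exactly the paper's key point), while your rule-(iii) nodes correspond to the paper's finite covers in $B$. What the paper's route buys is brevity and the complete avoidance of the cardinality bookkeeping on the tree; what your route buys is self-containment, since you use only \cref{thm:frm-neg-pres} and \cref{thm:frm-cov-ltrans}, not the posite machinery of \cref{sec:upkzfrm}. Your rank/size argument is correct: since each rule has $<\kappa$-ary right-hand side and $\kappa$ is regular, the coverage iteration stabilizes at stage $\kappa$, so any individual covering relation enters at some stage $<\kappa$, giving a tree of rank $<\kappa$ with $<\kappa$-ary branching, hence $<\kappa$ nodes. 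One minor imprecision: at rule-(iii) nodes the children's values only \emph{cover} the parent's value rather than joining to it exactly (and no ``$\wedge$-stabilization'' is needed, since the raw derivation already has node values of the form $p \wedge \neg q$); but your disjointification only uses the inequality $w \le \bigvee_i w_i$, so the argument is unaffected.
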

\begin{proof}
Let $B \subseteq \@N_\kappa(A)_\neg$ be the Boolean subalgebra generated by $A \subseteq \@N_\kappa(A)_\neg$, and let $<|$ be the restriction to $B$ of the canonical $\kappa$-coverage on $\@N_\kappa(A)$.
Since $B$ is a generating $\wedge$-sublattice of $\@N_\kappa(A)$, we have $\@N_\kappa(A) = \ang{B \mid {<|}}_\!{\kappa Frm}$.
Thus, it suffices to show that $<|$ is disjunctive.

We claim that $<|$ is generated by the canonical $\kappa$-coverage on $A$, together with the canonical $\omega$-coverage on $B$.
Let $<|'$ be the $\kappa$-coverage on $B$ generated by these.
Clearly ${<|'} \subseteq {<|}$, whence we have a quotient map $f : \ang{B \mid {<|'}}_\!{\kappa Frm} ->> \ang{B \mid {<|}}_\!{\kappa Frm} = \@N_\kappa(A)$.
From the definition of $<|'$, $\kappa$-ary joins in $A$ are preserved in $\ang{B \mid {<|'}}_\!{\kappa Frm}$, as are finite meets and joins in $B$ whence so are complements of $a \in A$; thus the inclusion $A `-> B `-> \ang{B \mid {<|'}}_\!{\kappa Frm}$ extends to $g : \@N(A) -> \ang{B \mid {<|'}}_\!{\kappa Frm}$.
We have $g \circ f = 1$, since both restrict to the inclusion $A `-> B `-> \ang{B \mid {<|'}}_\!{\kappa Frm}$, which is a $\kappa$-frame epimorphism since $\ang{B \mid {<|'}}_\!{\kappa Frm}$ is generated by the sublattice $B$ which is in turn generated by elements of $A$ and their complements.
It follows that $f$ is an isomorphism, whence ${<|'} = {<|}$ (since both present the same $\kappa$-frame).

So it suffices to show that every $\kappa$-ary covering relation $a <| C$ which holds in $A$, as well as every finite covering relation which holds in $B$, is implied by the pairwise disjoint $\kappa$-ary covering relations which hold in $B$.
Enumerate $C = \{c_i\}_{i < \alpha}$ for some ordinal $\alpha < \kappa$.
In both cases, we have the pairwise disjoint cover $a <| \{c_i \wedge \neg \bigvee_{j < i} c_j\}_{i < \alpha}$ in $B$, which implies $a <| C$ by transitivity.
\end{proof}

\begin{corollary}
\label{thm:frm-neginf-upkz}
For any $\kappa$-frame $A$ and $\alpha \ge 1$, $\@N_\kappa^\alpha(A)$ is ultraparacompact zero-dimensional.
\end{corollary}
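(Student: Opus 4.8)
\textit{Proof proposal.} The plan is a short transfinite induction on $\alpha \ge 1$, with all the real work already contained in \cref{thm:frm-neg-upkz}. For the base case $\alpha = 1$ there is literally nothing to do: $\@N_\kappa^1(A) = \@N_\kappa(A)$ is ultraparacompact zero-dimensional by \cref{thm:frm-neg-upkz}. More strikingly, the successor case needs no induction hypothesis at all: if $\alpha = \beta+1$ then $\@N_\kappa^\alpha(A) = \@N_\kappa(\@N_\kappa^\beta(A))$ is $\@N_\kappa$ applied to the $\kappa$-frame $\@N_\kappa^\beta(A)$, so it is ultraparacompact zero-dimensional by \cref{thm:frm-neg-upkz} again. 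Thus the only clause of the induction with genuine content is the limit case.

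For $\alpha$ a limit ordinal, I would argue as follows. By the base and successor cases together with the induction hypothesis applied to the smaller limit ordinals, every $\@N_\kappa^\beta(A)$ with $1 \le \beta < \alpha$ lies in $\!{\kappa UPKZFrm}$. By definition $\@N_\kappa^\alpha(A) = \injlim_{\beta < \alpha} \@N_\kappa^\beta(A)$, the directed colimit computed in $\!{\kappa Frm}$ along the inclusions; since $\alpha$ is a limit ordinal the subchain indexed by $1 \le \beta < \alpha$ is cofinal, so $\@N_\kappa^\alpha(A)$ is a directed colimit \emph{in $\!{\kappa Frm}$} of objects of $\!{\kappa UPKZFrm}$. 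Now I invoke the fact recorded earlier in this subsection that $\!{\kappa UPKZFrm} \subseteq \!{\kappa Frm}$ is coreflective: the inclusion, being a left adjoint, preserves colimits, and a coreflective subcategory of the cocomplete category $\!{\kappa Frm}$ is closed under colimits computed in $\!{\kappa Frm}$. Hence $\@N_\kappa^\alpha(A) \in \!{\kappa UPKZFrm}$, which closes the induction.

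There is essentially no obstacle here; the corollary is a routine consequence of \cref{thm:frm-neg-upkz}. The one point that deserves care is the limit stage, and specifically the temptation to verify ultraparacompactness of the colimit "by hand": this is \emph{not} as easy as it looks, because for a limit ordinal $\alpha$ of cofinality $<\kappa$ (e.g.\ $\alpha = \omega$ with $\kappa = \omega_1$) the colimit $\@N_\kappa^\alpha(A)$ is strictly larger than $\bigcup_{\beta<\alpha}\@N_\kappa^\beta(A)$ — one must also close under $\kappa$-ary joins — so a $\kappa$-ary cover of $\top$ in the colimit need not live in any single stage. This is exactly why the clean route is to appeal to coreflectivity of $\!{\kappa UPKZFrm} \subseteq \!{\kappa Frm}$ rather than to a direct limit argument. (If one wished to avoid even that appeal, one could instead note that zero-dimensionality of the colimit follows from \cref{thm:frm-colim-dir-vlat} since the complemented elements of the colimit are generated by the images of the complemented elements of the stages, and handle ultraparacompactness via the posite description of \cref{diag:upkzfrm}; but the coreflectivity argument is shorter and I would present that.)
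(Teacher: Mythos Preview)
Your argument is correct and matches the paper's proof exactly: the paper's one-line proof reads ``Follows by induction from the preceding result and coreflectivity of $\!{\kappa UPKZFrm} \subseteq \!{\kappa Frm}$,'' which is precisely your induction using \cref{thm:frm-neg-upkz} for the base and successor stages and closure of the coreflective subcategory under colimits for the limit stage.
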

\begin{proof}
Follows by induction from the preceding result and coreflectivity of $\!{\kappa UPKZFrm} \subseteq \!{\kappa Frm}$.
\end{proof}

\begin{corollary}
\label{thm:upkzfrm-neginf-pres}
Let $A$ be a $\kappa$-presented $\kappa$-frame.
For any $\alpha \ge 1$ and $\kappa$-ary subset $C \subseteq \@N_\kappa^\alpha(A)$, there is a $\kappa$-presented ultraparacompact zero-dimensional $\kappa$-subframe $A' \subseteq \@N_\kappa^\alpha(A)$ such that $A \cup C \subseteq A'$ and $\@N_\kappa^\infty(A) \cong \@N_\kappa^\infty(A')$ (canonically).
\end{corollary}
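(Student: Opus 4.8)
The plan is to obtain this exactly as \cref{thm:frm-neginf-pres} was obtained from \cref{thm:frm-bool-pres-cof}, only invoking the ultraparacompact refinement \cref{thm:upkzfrm-bool-pres-cof} in its place. Set $B := \@N_\kappa^\infty(A)$; since $A$ is a $\kappa$-presented $\kappa$-frame and $\@N_\kappa^\infty = \ang{-}_\!{\kappa Bool}$ is a free (hence $\kappa$-presentability-preserving) functor, $B$ is a $\kappa$-presented $\kappa$-Boolean algebra.

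First I would check that $\@N_\kappa^\alpha(A)$, regarded as a subframe of $B$ via \cref{cvt:frm-cbool-incl}, is an ultraparacompact zero-dimensional $\kappa$-subframe which freely generates $B$. For freeness, the iteration identity $\@N_\kappa^\beta(\@N_\kappa^\alpha(A)) = \@N_\kappa^{\alpha+\beta}(A)$ gives, at $\beta = \infty$, a canonical isomorphism $\@N_\kappa^\infty(\@N_\kappa^\alpha(A)) \cong \@N_\kappa^\infty(A) = B$; ultraparacompactness and zero-dimensionality hold because $\alpha \ge 1$, by \cref{thm:frm-neginf-upkz}. Thus the hypothesis of \cref{thm:upkzfrm-bool-pres-cof} is met with this $B$ and with its distinguished subframe $\@N_\kappa^\alpha(A)$.

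Then I would apply \cref{thm:upkzfrm-bool-pres-cof} to this $B$, taking the prescribed $\kappa$-ary subset to be $C \cup G$, where $G \subseteq A$ is a $\kappa$-ary set generating $A$ as a $\kappa$-frame (which exists since $A$ is $\kappa$-presented, hence $\kappa$-generated); note $C \cup G \subseteq \@N_\kappa^\alpha(A)$. This yields a $\kappa$-presented ultraparacompact zero-dimensional $\kappa$-subframe $A' \subseteq \@N_\kappa^\alpha(A)$ that contains $C \cup G$ and still freely generates $B$, i.e., $\@N_\kappa^\infty(A') \cong B = \@N_\kappa^\infty(A)$ canonically. Since $A'$ is a $\kappa$-subframe containing the generating set $G$ of $A$, it contains all of $A$, so $A \cup C \subseteq A'$, as desired. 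The only point needing any care is this last bookkeeping step — arranging $A \subseteq A'$ rather than merely a generating set of $A$ — which is exactly the device used in the proof of \cref{thm:frm-neginf-pres}; there is otherwise no genuine obstacle, as all the substantive work is already packaged into \cref{thm:frm-neginf-upkz} and \cref{thm:upkzfrm-bool-pres-cof}.
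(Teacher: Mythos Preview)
Your proposal is correct and follows essentially the same approach as the paper: the paper's proof is simply ``As in \cref{thm:frm-neginf-pres}'', meaning one applies \cref{thm:upkzfrm-bool-pres-cof} with the subframe $\@N_\kappa^\alpha(A)$ (ultraparacompact zero-dimensional by \cref{thm:frm-neginf-upkz}, since $\alpha \ge 1$) and the $\kappa$-ary set $C$ together with a $\kappa$-ary generating set for $A$. Your write-up spells out the verification of the hypotheses more explicitly, but the argument is the same.
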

\begin{proof}
As in \cref{thm:frm-neginf-pres}.
\end{proof}

\subsection{Free complete Boolean algebras}
\label{sec:frm-cbool}

In this subsection, we give a ``brute-force'' syntactic construction of free complete Boolean algebras, using cut admissibility for a Gentzen sequent calculus for infinitary propositional logic.
This is analogous to Whitman's construction of free lattices (see \cite[III]{Hcbool}, \cite[I~4.6--7]{Jstone}, \cite{CSsigmapi}).
We are not aware of a prior application of the technique to infinitary Boolean algebras, although it is possibly folklore.
For background on sequent calculi in the finitary context, see \cite{TSprf}.

Let $X$ be a set (of generators), and let $\neg X$ be the set of symbols $\neg x$ for $x \in X$.
Let $\Term(X)$ be the class of \defn{prenex complete Boolean terms} over $X$, constructed inductively as follows:
\begin{itemize}
\item  $X \sqcup \neg X \subseteq \Term(X)$;
\item  for any subset $A \subseteq \Term(X)$, the symbols $\bigwedge A, \bigvee A$ are in $\Term(X)$.
\end{itemize}
Define the involution $\neg : \Term(X) -> \Term(X)$ in the obvious way, interchanging $x, \neg x$ and $\bigwedge, \bigvee$.

Let $|-$ be the unary predicate on \emph{finite} subsets $A \subseteq \Term(X)$ defined inductively as follows:
\begin{itemize}
\item  (reflexivity/identity) if $x, \neg x \in A$ for some $x \in X$, then $|- A$;
\item  ($\bigwedge$R) if $\bigwedge B \in A$, and $|- A \cup \{b\}$ for every $b \in B$, then $|- A$;
\item  ($\bigvee$R) if $\bigvee B \in A$, and $|- A \cup \{b\}$ for some $b \in B$, then $|- A$.
\end{itemize}
For two finite sets $A, B \subseteq \Term(X)$, we use the abbreviation%
\footnote{For simplicity, we use a one-sided sequent calculus, with formulas in prenex form and $\neg$ treated as an operation on terms.
It is also possible to directly formulate a two-sided calculus, treating $\neg$ as a primitive symbol, and prove all of the following results for two-sided sequents.
See \cite[Ch.~3]{TSprf}.}
\begin{align*}
A |- B \coloniff |- \neg A \cup B
\end{align*}
(where $\neg A := \{\neg a \mid a \in A\}$).
Thus $|- A \iff \emptyset |- A$.
The rules above imply their obvious generalizations when $|-$ has nonempty left-hand side, and also imply
\begin{itemize}
\item  (reflexivity/identity) if $x \in A \cap B$ or $x, \neg x \in A$ for some $x \in X$, then $A |- B$;
\item  ($\bigvee$L) if $\bigvee B \in A$, and $A \cup \{b\} |- C$ for every $b \in B$, then $A |- C$;
\item  ($\bigwedge$L) if $\bigwedge B \in A$, and $A \cup \{b\} |- C$ for some $b \in B$, then $A |- C$.
\end{itemize}
From now on, we will freely switch between one-sided and two-sided $|-$ without further comment; in particular, all of the following results stated for one-sided $|-$ have two-sided generalizations.

Every term $a \in \Term(X)$ can be represented as a well-founded tree, hence has an ordinal rank.
Similarly, whenever $|- A$ holds, its derivation using the rules above has an ordinal rank.

\begin{lemma}[generalized reflexivity/identity]
If $a, \neg a \in A$ for some $a \in \Term(X)$, then $|- A$.
\end{lemma}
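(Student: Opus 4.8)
The plan is to prove this by induction on the ordinal rank of the term $a \in \Term(X)$ (regarded as a well-founded tree), with the finite set $A$ allowed to vary: i.e.\ for each $a$ we prove the statement ``$\vdash A$ for every finite $A \subseteq \Term(X)$ with $a, \neg a \in A$'' using as induction hypothesis the same statement for all terms of strictly smaller rank. Note that the induction must be on \emph{term} rank, not on the rank of derivations, since we do not yet know $\vdash A$ holds at all.

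For the base case $a \in X \sqcup \neg X$: if $a = x \in X$ then $x = a \in A$ and $\neg x = \neg a \in A$, so $\vdash A$ by the reflexivity/identity rule as originally stated; if $a = \neg x$ for $x \in X$ then, since $\neg$ is an involution on $\Term(X)$, we have $\neg a = \neg\neg x = x$, so again $x, \neg x \in A$ and $\vdash A$. For the inductive step, since $\neg a$ has the same rank as $a$ and also lies in $A$, we may assume without loss of generality that $a = \bigwedge B$ for some set-sized $B \subseteq \Term(X)$, so that $\neg a = \bigvee \neg B$ where $\neg B := \{\neg b \mid b \in B\}$; the case $a = \bigvee B$ reduces to this one by applying it to $\neg a$. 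So suppose $\bigwedge B, \bigvee \neg B \in A$. Applying the rule $\bigwedge$R to the term $\bigwedge B \in A$, it suffices to show $\vdash A \cup \{b\}$ for each $b \in B$. Fixing such a $b$, we have $\bigvee \neg B \in A \subseteq A \cup \{b\}$, so by the rule $\bigvee$R applied to $\bigvee \neg B$ with chosen disjunct $\neg b \in \neg B$, it suffices to show $\vdash A \cup \{b, \neg b\}$. But $b$ has strictly smaller rank than $a = \bigwedge B$, and $A \cup \{b, \neg b\}$ is a finite set containing both $b$ and $\neg b$, so $\vdash A \cup \{b, \neg b\}$ holds by the induction hypothesis. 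This completes the induction.

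I do not anticipate any genuine obstacle here: the argument is a standard ``cut-free identity'' unfolding. The only points requiring mild care are (i) running the recursion on term rank rather than derivation rank, (ii) keeping all the side sets finite throughout, which holds because at each step we adjoin only finitely many terms ($b$, then $\neg b$), and (iii) the bookkeeping that $\neg$ is an involution exchanging $\bigwedge$ with $\bigvee$, so that the two nontrivial cases are genuinely dual and only one needs to be written out.
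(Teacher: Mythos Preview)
Your proof is correct and follows essentially the same approach as the paper: induction on the term $a$, reducing by $\neg$-duality to the case $a = \bigwedge B$, then applying ($\bigvee$R) to $\neg a = \bigvee \neg B$ and ($\bigwedge$R) to $a$ together with the induction hypothesis on each $b \in B$. Your additional remarks on finiteness and the explicit base case $a = \neg x$ are sound but not strictly needed, since the paper's ``by $\neg$-duality'' already covers swapping $a$ with $\neg a$.
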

\begin{proof}
By induction on $a$.
The case $a \in X$ is immediate.
Otherwise, by $\neg$-duality, it suffices to consider $a = \bigwedge B$.
For every $b \in B$, we have $|- A \cup \{b, \neg b\}$ by the induction hypothesis, whence $|- A \cup \{b\}$ by ($\bigvee$R) since $\bigvee \neg B = \neg a \in A$.
Thus $|- A$ by ($\bigwedge$R) since $\bigvee B = a \in A$.
\end{proof}

\begin{lemma}[monotonicity/weakening]
\label{thm:frm-cbool-weak}
If $|- A \subseteq B$, then $|- B$.
Moreover, the derivations have the same rank.
\end{lemma}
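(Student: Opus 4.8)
The plan is to argue by transfinite induction on the rank of the given derivation of $\vdash A$, according to the last rule applied. Throughout, $B$ is a finite subset of $\Term(X)$ with $A \subseteq B$ (so that $\vdash B$ is a meaningful assertion at all), and the goal at each stage is to produce a derivation of $\vdash B$ having the same rank as the one for $\vdash A$.

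If $\vdash A$ was obtained by reflexivity, then $x, \neg x \in A$ for some $x \in X$; since $A \subseteq B$, also $x, \neg x \in B$, so $\vdash B$ follows by the same instance of reflexivity, of the same rank. If $\vdash A$ was obtained by ($\bigwedge$R), so that $\bigwedge C \in A$ and $\vdash A \cup \{c\}$ holds --- by a derivation of strictly smaller rank --- for every $c \in C$, then $\bigwedge C \in B$ and $A \cup \{c\} \subseteq B \cup \{c\}$ for each $c$. Applying the induction hypothesis to each of these smaller derivations gives $\vdash B \cup \{c\}$ for every $c \in C$, each new derivation having the same rank as the corresponding old one; a single application of ($\bigwedge$R) (using $\bigwedge C \in B$) then yields $\vdash B$, whose rank --- one more than the supremum of the ranks of the premises --- equals that of the original derivation. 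The case of ($\bigvee$R) is handled identically, retaining the same witness $c \in C$ for which $\vdash A \cup \{c\}$ was derived.

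These three rules exhaust the possibilities, so the induction is complete. There is no genuine obstacle here; the only point requiring (routine) attention is the rank-preservation clause, which holds because each rule assigns its conclusion a rank that depends only on the ranks of its premises, so substituting for the premises derivations of equal rank and reapplying the same rule leaves the rank unchanged. This stability of the rank under weakening is exactly what will be needed later to run the cut-elimination induction.
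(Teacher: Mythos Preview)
Your proof is correct and follows exactly the approach the paper indicates: the paper's own proof reads simply ``By straightforward induction on the (rank of the) derivation of $\vdash A$,'' and you have spelled out precisely that induction, including the rank-preservation clause.
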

\begin{proof}
By straightforward induction on the (rank of the) derivation of $|- A$.
\end{proof}

\begin{lemma}[transitivity/cut]
\label{thm:frm-cbool-cut}
If $|- A \cup \{a\}$ and $|- A \cup \{\neg a\}$ for some $a \in \Term(X)$, then $|- A$.
\end{lemma}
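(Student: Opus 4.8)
The plan is to prove cut admissibility by the standard nested transfinite induction, the only non-routine input being that weakening (\cref{thm:frm-cbool-weak}) is rank-preserving, so adjoining side formulas to a premise never increases its complexity. First I would dispose of the trivial cases: if $a \in A$ then $A \cup \{a\} = A$ and the first hypothesis already gives $\vdash A$, and symmetrically if $\neg a \in A$ then $A \cup \{\neg a\} = A$ and the second hypothesis finishes it. So we may assume $a, \neg a \notin A$. Let $\mathcal D_1$ denote the given derivation of $\vdash A \cup \{a\}$ and $\mathcal D_2$ that of $\vdash A \cup \{\neg a\}$. I would run a double induction: an outer induction on the ordinal rank of the cut formula $a$, and, for fixed $a$, an inner induction on the lexicographically ordered pair $(\text{rank } \mathcal D_1, \text{rank } \mathcal D_2)$; the case distinction is on the last rules of $\mathcal D_1$ and $\mathcal D_2$.

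\emph{Easy cases.} If $\mathcal D_1$ is an identity axiom, so $x, \neg x \in A \cup \{a\}$ for some $x \in X$, then since $a, \neg a \notin A$ either both $x, \neg x$ already lie in $A$ (done) or $a \in \{x, \neg x\}$, forcing $\neg a \in A$, whence $\vdash A \cup \{\neg a\} = \vdash A$; the case $\mathcal D_2$ an axiom is symmetric. If $\mathcal D_1$ ends with $\bigwedge$R or $\bigvee$R whose principal formula lies in $A$ (rather than being $a$), permute the cut above that rule: e.g. for $\bigwedge$R on $\bigwedge B \in A$ with premises $\vdash A \cup \{a, b\}$ ($b \in B$), weaken $\mathcal D_2$ to $\vdash A \cup \{b, \neg a\}$ — same rank — and cut on $a$ by the inner induction hypothesis (the pair has strictly dropped in its first coordinate) to get $\vdash A \cup \{b\}$ for each $b \in B$, then re-apply $\bigwedge$R; this works verbatim even when $B$ is infinite since ranks are ordinals. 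The case where $\mathcal D_2$ ends with a rule whose principal formula lies in $A$ is symmetric, cutting $\mathcal D_1$ (weakened, hence rank unchanged) against the premises of $\mathcal D_2$ and invoking the inner induction hypothesis via the second coordinate.

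\emph{Principal case.} It remains to treat the situation where $\mathcal D_1$'s last rule is principal on $a$ and $\mathcal D_2$'s is principal on $\neg a$; by $\neg$-duality suppose $a = \bigwedge B$, so $\mathcal D_1$ ends with $\bigwedge$R with premises $\vdash A \cup \{a, b\}$ for all $b \in B$, while $\neg a = \bigvee \neg B$, so $\mathcal D_2$ ends with $\bigvee$R with premise $\vdash A \cup \{\neg a, \neg b_0\}$ for some $b_0 \in B$. I would then perform three cuts. First, cut the $\mathcal D_1$-premise $\vdash (A \cup \{b_0\}) \cup \{a\}$ against $\mathcal D_2$ weakened to $\vdash (A \cup \{b_0\}) \cup \{\neg a\}$: same cut formula $a$, first coordinate strictly smaller, so the inner hypothesis applies and yields $\vdash A \cup \{b_0\}$. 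Second, cut $\mathcal D_1$ weakened to $\vdash (A \cup \{\neg b_0\}) \cup \{a\}$ (rank unchanged, since weakening is rank-preserving) against the $\mathcal D_2$-premise $\vdash (A \cup \{\neg b_0\}) \cup \{\neg a\}$: same cut formula, first coordinate unchanged and second coordinate strictly smaller, so again the inner hypothesis applies and yields $\vdash A \cup \{\neg b_0\}$. Third, cut $\vdash A \cup \{b_0\}$ against $\vdash A \cup \{\neg b_0\}$ on $b_0$, whose rank is strictly below that of $a = \bigwedge B$, so the outer hypothesis applies and yields $\vdash A$. Note that when $a$ is atomic no rule can be principal on $a$, so this case never arises and the outer induction bottoms out cleanly.

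The main obstacle is purely the bookkeeping of the induction measure — verifying that each of the recursive cut-invocations is legitimate. The delicate point is the principal case: the first two cuts must keep the cut formula fixed while strictly descending in $(\text{rank }\mathcal D_1,\text{rank }\mathcal D_2)$, and this is exactly where rank-preservation of weakening (\cref{thm:frm-cbool-weak}) is essential, while the third cut descends in the cut formula itself. Everything else is routine rule permutation, uniform across the (possibly infinite) premise sets.
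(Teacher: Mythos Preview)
Your proof is correct and follows essentially the same approach as the paper: outer induction on the rank of the cut formula $a$, inner induction on the (lexicographic) pair of derivation ranks, with rank-preserving weakening used to permute cuts upward in the non-principal cases and the three-cut reduction in the principal case. The paper's organization differs only cosmetically (it invokes $\neg$-duality to halve the case list and folds your ``trivial cases'' $a\in A$, $\neg a\in A$ into the reflexivity analysis rather than disposing of them upfront), but the induction measure and the principal-case reduction are identical.
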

\begin{proof}
By lexicographical induction on $a$ followed by the ranks of the derivations of $|- A \cup \{a\}$ and $|- A \cup \{\neg a\}$.
Up to $\neg$-duality, it suffices to consider the following cases:
\begin{itemize}
\item
Suppose $|- A \cup \{a\}$ by reflexivity because $\neg a \in A$.
Then $|- A = A \cup \{\neg a\}$ by hypothesis.
\item
Suppose $|- A \cup \{a\}$ by reflexivity because some $x, \neg x \in A$.
Then $|- A$ for the same reason.
\item
Suppose $|- A \cup \{a\}$ by ($\bigwedge$R) because some $\bigwedge B \in A$ with $|- A \cup \{a, b\}$ for each $b \in B$.
Then by weakening, $|- A \cup \{\neg a, b\}$ for each $b \in B$, derived with the same rank as $|- A \cup \{\neg a\}$.
Thus by the induction hypothesis applied to $|- A \cup \{a, b\}$ and $|- A \cup \{\neg a, b\}$ where the rank of the former is less than that of $|- A \cup \{a\}$, we have $|- A \cup \{b\}$ for each $b \in B$.
So by ($\bigwedge$R), $|- A$.
\item
Similarly if $|- A \cup \{a\}$ by ($\bigvee$R) because some $\bigvee B \in A$ with $|- A \cup \{a, b\}$ for some $b \in B$.
\item
Finally, suppose $a = \bigwedge B$,
\begin{enumerate}
\item[(i)] $|- A \cup \{a\}$ by ($\bigwedge$R) because
\item[(ii)] $|- A \cup \{a, b\}$ for each $b \in B$, and
\item[(iii)] $|- A \cup \{\neg a\}$ by ($\bigvee$R) because
\item[(iv)] $|- A \cup \{\neg a, \neg b\}$ for some $b \in B$.
\end{enumerate}
By the induction hypothesis applied to (iv) and a weakening of (i) (where the former has lesser rank than $|- A \cup \{a, b\}$ and the latter has the same rank as (i)), we have $|- A \cup \{\neg b\}$ for some $b \in B$.
Again by the induction hypothesis applied to (ii) and a weakening of (iii) (where again the former has lesser rank), we have $|- A \cup \{b\}$ for that $b$.
Thus by the induction hypothesis (using that $b$ is a subterm of $a$), $|- A$.
\qedhere
\end{itemize}
\end{proof}

Note that by weakening, all of the rules above have equivalent (``multiplicative'') forms where the hypotheses are allowed to have different sets which are combined in the conclusion, e.g.,
\begin{itemize}
\item  ($\bigwedge$R) if $|- A_b \cup \{b\}$ for every $b \in B$, and $\bigcup_b A_b$ is finite, then $|- \bigcup_{b \in B} A_b \cup \{\bigwedge B\}$;
\item  (cut) if $|- A \cup \{a\}$ and $|- B \cup \{\neg a\}$ for some $a \in \Term(X)$, then $|- A \cup B$;
\end{itemize}
etc.  We will freely use these forms from now on (without mentioning weakening).

Now let $\le$ be defined on $\Term(X)$ by
\begin{align*}
a \le b  \coloniff  \{a\} |- \{b\}.
\end{align*}
By (generalized) reflexivity and (multiplicative) cut, this is a preorder; let ${\equiv} := {\le} \cap {\ge}$ be its symmetric part.
By ($\bigvee$R), we have $\{a\} |- \{\bigvee A\}$, i.e., $a \le \bigvee A$, for $a \in A$, while ($\bigvee$L) gives that $\bigvee A$ is the join of $A$ with respect to $\le$; similarly, $\bigwedge$ is meet.
For every $a \in \Term(X)$ and $B \subseteq \Term(X)$, we have the derivation
\begin{align*}
\inference[$\bigwedge$L $\times 2$]{
    \inference[$\bigvee$L]{
        \inference[$\bigvee$R]{
            \inference[$\bigwedge$R]{
                \overline{\{a, b\} |- \{a\}} &
                \overline{\{a, b\} |- \{b\}}
            }{\{a, b\} |- \bigwedge \{a, b\}}
        }{\{a, b\} |- \{\bigvee \{\bigwedge \{a, b\} \mid b \in B\}\}}
        \quad\text{for each $b \in B$}
    }{\{a, \bigvee B\} |- \{\bigvee \{\bigwedge \{a, b\} \mid b \in B\}\}}
}{\{\bigwedge \{a, \bigvee B\}\} |- \{\bigvee \{\bigwedge \{a, b\} \mid b \in B\}\}}
\end{align*}
which shows $a \wedge \bigvee B = \bigvee_{b \in B} (a \wedge b)$ in $\Term(X)/{\equiv}$, whence $\Term(X)/{\equiv}$ is a (large) frame.
And from $|- \{a, \neg a\}$, applying ($\bigvee$R) twice and ($\bigwedge$L) once yields $\{\bigwedge \emptyset\} |- \{\bigvee \{a, \neg a\}\}$, i.e., $\top \le a \vee \neg a$; thus since $\neg$ is an order-reversing involution, $\neg$ descends to complement in $\Term(X)/{\equiv}$.
Thus

\begin{lemma}
$\Term(X)/{\equiv}$ is a (large) complete Boolean algebra.
\qed
\end{lemma}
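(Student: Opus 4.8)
The plan is to exhibit $\Term(X)/{\equiv}$ as a complete lattice in which the formal operations $\bigvee$ and $\bigwedge$ compute suprema and infima, and then to check the frame distributive law and complementation. The first task is to see that $\le$ descends to a partial order, for which $\le$ must be a preorder on $\Term(X)$. Reflexivity is immediate from generalized reflexivity/identity applied to $|- \{a, \neg a\}$, i.e.\ $\{a\} |- \{a\}$; transitivity is exactly the two-sided form of cut, \cref{thm:frm-cbool-cut} (together with weakening, \cref{thm:frm-cbool-weak}, to reconcile the sets on the two sides). Hence ${\equiv} := {\le} \cap {\ge}$ is an equivalence relation and $\le$ induces a partial order on the quotient.

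For completeness, fix a set-sized $A \subseteq \Term(X)$. Rule ($\bigvee$R) gives $\{a\} |- \{\bigvee A\}$ for each $a \in A$, so $[\bigvee A]$ is an upper bound for $\{[a] \mid a \in A\}$; and if $\{a\} |- \{c\}$ for every $a \in A$, then ($\bigvee$L) yields $\{\bigvee A\} |- \{c\}$, so $[\bigvee A]$ is the least upper bound. The $\neg$-dual argument, using ($\bigwedge$R) and ($\bigwedge$L), shows $[\bigwedge A]$ is the greatest lower bound. Thus $\Term(X)/{\equiv}$ is a (large) complete lattice whose small joins and meets are computed by $\bigvee$ and $\bigwedge$ respectively.

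Next comes the infinite distributive law $\bigwedge\{a, \bigvee B\} \equiv \bigvee_{b \in B}\bigwedge\{a,b\}$; the inequality $\ge$ holds in any complete lattice, so only $\{\bigwedge\{a, \bigvee B\}\} |- \{\bigvee_{b \in B}\bigwedge\{a,b\}\}$ requires proof, and this is witnessed by the explicit derivation displayed above, assembled from instances of reflexivity via ($\bigwedge$R), ($\bigvee$R), ($\bigvee$L), ($\bigwedge$L) together with weakening. This makes $\Term(X)/{\equiv}$ a (large) frame.

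Finally I would handle complements. Directly from the definition of the two-sided turnstile, $a \le b$ and $\neg b \le \neg a$ both unwind to $|- \{\neg a, b\}$, so $\neg$ is order-reversing; and $\neg\neg a = a$ by induction on the rank of $a$, so $\neg$ is an involution, hence descends to an anti-automorphism of $\Term(X)/{\equiv}$ interchanging $\bigvee$ and $\bigwedge$. Generalized reflexivity gives $|- \{a, \neg a\}$, whence (after a weakening) two applications of ($\bigvee$R) give $|- \{\bigvee\{a, \neg a\}\}$, i.e.\ $\top \le a \vee \neg a$; applying the anti-automorphism $\neg$ gives $a \wedge \neg a \le \bot$. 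So $[\neg a]$ is a complement of $[a]$, and a frame in which every element is complemented is a complete Boolean algebra, completing the argument. The only ingredient carrying genuine content is cut-admissibility, \cref{thm:frm-cbool-cut}, which is already in hand; the remaining work — in particular bookkeeping the distributive-law derivation — should present no real obstacle.
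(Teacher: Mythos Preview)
Your proposal is correct and follows essentially the same approach as the paper: the paper's argument (given in the paragraph immediately preceding the lemma) establishes that $\le$ is a preorder via generalized reflexivity and cut, that $\bigvee$ and $\bigwedge$ compute joins and meets via the respective left/right rules, exhibits the explicit derivation of the distributive law, and deduces complementation from $|- \{a,\neg a\}$ together with $\neg$ being an order-reversing involution. The only trivial differences are that the paper treats $\neg\neg a = a$ as immediate from the definition of $\neg$ as an involution (rather than needing induction), and phrases $\top \le a \vee \neg a$ as $\{\bigwedge\emptyset\} |- \{\bigvee\{a,\neg a\}\}$ using one ($\bigwedge$L) step in addition to the two ($\bigvee$R) steps.
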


\begin{lemma}[soundness]
\label{thm:frm-cbool-sound}
Whenever $|- A$, then $\bigvee A = \top$ holds in $\ang{X}_\!{CBOOL}$.
\end{lemma}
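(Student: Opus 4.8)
The plan is to exhibit a sound interpretation of the sequent calculus in $\ang{X}_\!{CBOOL}$ and then induct on the rank of a derivation. Since $\ang{X}_\!{CBOOL}$ is a (large) complete Boolean algebra receiving a map from the set $X$, every prenex complete Boolean term $a \in \Term(X)$ has a well-defined value $\llbracket a\rrbracket \in \ang{X}_\!{CBOOL}$, obtained by transfinite recursion on the rank of $a$: for $x \in X$, let $\llbracket x\rrbracket$ be the image of $x$ and $\llbracket\neg x\rrbracket := \neg\llbracket x\rrbracket$; and set $\llbracket\bigwedge S\rrbracket := \bigwedge_{s \in S}\llbracket s\rrbracket$, $\llbracket\bigvee S\rrbracket := \bigvee_{s \in S}\llbracket s\rrbracket$ for $S \subseteq \Term(X)$. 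These meets and joins make sense because $\ang{X}_\!{CBOOL}$ is complete and each term is built by applying $\bigwedge,\bigvee$ to set-sized subsets, so the arities involved are small. An immediate induction on term rank (using that $\neg$ preserves rank) shows $\llbracket\neg a\rrbracket = \neg\llbracket a\rrbracket$ for every $a$, i.e.\ the involution $\neg$ on $\Term(X)$ is carried to Boolean complement.

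Next I would prove, by transfinite induction on the rank of the derivation of $|- A$, that $c := \bigvee_{a \in A}\llbracket a\rrbracket$ equals $\top$ in $\ang{X}_\!{CBOOL}$, splitting on the last rule applied. If $|- A$ by reflexivity because $x, \neg x \in A$, then $c \ge \llbracket x\rrbracket \vee \neg\llbracket x\rrbracket = \top$. If $|- A$ by ($\bigvee$R) because $\bigvee B \in A$ and $|- A \cup \{b\}$ for some $b \in B$, then the induction hypothesis gives $c \vee \llbracket b\rrbracket = \top$, while $\llbracket b\rrbracket \le \bigvee_{b'\in B}\llbracket b'\rrbracket = \llbracket\bigvee B\rrbracket \le c$ since $\bigvee B \in A$; hence $c = c \vee \llbracket b\rrbracket = \top$. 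If $|- A$ by ($\bigwedge$R) because $\bigwedge B \in A$ and $|- A \cup \{b\}$ for every $b \in B$, then the induction hypothesis gives $c \vee \llbracket b\rrbracket = \top$, equivalently $\neg c \le \llbracket b\rrbracket$, for every $b \in B$; so $\neg c \le \bigwedge_{b\in B}\llbracket b\rrbracket = \llbracket\bigwedge B\rrbracket \le c$, whence $\neg c \le c$ and $c = \top$. This settles the lemma for one-sided sequents, and the two-sided form follows immediately: $A |- B$ unfolds to $|- \neg A \cup B$, which by the above together with $\llbracket\neg a\rrbracket = \neg\llbracket a\rrbracket$ yields $\bigwedge_{a\in A}\llbracket a\rrbracket \le \bigvee_{b\in B}\llbracket b\rrbracket$.

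I do not expect any genuine obstacle: this is the routine ``soundness'' half of a cut-admissibility development. The only care needed is to keep the two transfinite ranks distinct — term rank, governing the recursive definition of $\llbracket-\rrbracket$ and the identity $\llbracket\neg a\rrbracket = \neg\llbracket a\rrbracket$, versus derivation rank, governing the induction above — and to observe that smallness of term arities is exactly what makes $\llbracket-\rrbracket$ land in $\ang{X}_\!{CBOOL}$ in the first place. The substantive content of the subsection lies instead on the completeness side, namely the cut admissibility of \cref{thm:frm-cbool-cut}, which is what upgrades this soundness statement to the isomorphism $\Term(X)/{\equiv} \cong \ang{X}_\!{CBOOL}$.
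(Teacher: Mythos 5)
Your proof is correct and is exactly the ``straightforward induction on the derivation of $|- A$'' that the paper's one-line proof alludes to, with the routine details (the interpretation $\llbracket-\rrbracket$, the identity $\llbracket\neg a\rrbracket = \neg\llbracket a\rrbracket$, and the three rule cases) filled in correctly. Nothing further is needed.
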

\begin{proof}
By straightforward induction on the derivation of $|- A$.
\end{proof}

\begin{theorem}
\label{thm:frm-cbool-ltalg}
$\ang{X}_\!{CBOOL} = \Term(X)/{\equiv}$.
\end{theorem}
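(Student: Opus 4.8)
The plan is to exhibit mutually inverse homomorphisms between $\Term(X)/{\equiv}$ and $\ang{X}_\!{CBOOL}$ fixing the generators. Since the preceding lemma shows $\Term(X)/{\equiv}$ is a (large) complete Boolean algebra, the function $x \mapsto [x]$ on $X$ extends, by the universal property of the free algebra, to a complete Boolean homomorphism $\varphi \colon \ang{X}_\!{CBOOL} \to \Term(X)/{\equiv}$. The content of the theorem is that $\varphi$ is an isomorphism, and I will get this by constructing an explicit inverse.

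For the inverse, first define a map $\psi_0 \colon \Term(X) \to \ang{X}_\!{CBOOL}$ by recursion on the well-founded tree structure of prenex terms: $\psi_0(x) := x$ and $\psi_0(\neg x) := \neg x$ for $x \in X$, and $\psi_0(\bigwedge A) := \bigwedge_{a \in A} \psi_0(a)$, $\psi_0(\bigvee A) := \bigvee_{a \in A} \psi_0(a)$ for $A \subseteq \Term(X)$ (using that $\ang{X}_\!{CBOOL}$ has all small meets and joins). A trivial induction using $\neg$-duality gives $\psi_0(\neg a) = \neg\psi_0(a)$. The key point is that $\psi_0$ factors through $\equiv$: if $a \equiv b$, i.e.\ $\{a\} |- \{b\}$ and $\{b\} |- \{a\}$, equivalently $|- \{\neg a, b\}$ and $|- \{\neg b, a\}$, then soundness (\cref{thm:frm-cbool-sound}) gives $\neg\psi_0(a) \vee \psi_0(b) = \top$ and $\neg\psi_0(b) \vee \psi_0(a) = \top$ in $\ang{X}_\!{CBOOL}$, whence $\psi_0(a) = \psi_0(b)$. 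So $\psi_0$ descends to $\psi \colon \Term(X)/{\equiv} \to \ang{X}_\!{CBOOL}$. That $\psi$ preserves complements is immediate from $\neg$-duality; that it preserves arbitrary small meets and joins uses the fact established earlier that, in $\Term(X)/{\equiv}$, the class of $\bigwedge A$ is the meet (and of $\bigvee A$ the join) of the classes of the elements of $A$: given a small set of classes, pick representatives, take $\bigwedge$ of them, and apply the definition of $\psi_0$.

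It then remains to check $\psi \circ \varphi = 1$ and $\varphi \circ \psi = 1$. Both composites fix every $x \in X$ (resp.\ every $[x]$), so they agree with the identity on the generators; since $X$ generates $\ang{X}_\!{CBOOL}$ and the image of $X$ generates $\Term(X)/{\equiv}$ as complete Boolean algebras --- the latter because, by construction, every element of $\Term(X)$ is built from $X \sqcup \neg X$ using $\bigwedge$ and $\bigvee$ --- both composites are the identity, and $\varphi$ is the desired isomorphism. I do not anticipate a genuine obstacle here: the hard work is already contained in cut admissibility (\cref{thm:frm-cbool-cut}) and soundness (\cref{thm:frm-cbool-sound}), which together make $\psi$ well defined, and the rest is bookkeeping. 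The one place demanding a little care is verifying that $\psi$ preserves \emph{all} small meets and joins, not merely finite ones, which is exactly where the observation that syntactic $\bigwedge/\bigvee$ compute the lattice operations in the quotient is used.
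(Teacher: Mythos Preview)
Your proof is correct and follows essentially the same approach as the paper. The paper's argument is more compressed: it notes that $\Term(X)/{\equiv}$ is a complete Boolean algebra generated by $X$, hence receives a surjection $\varphi$ from $\ang{X}_\!{CBOOL}$, and then observes that injectivity of $\varphi$ reduces (via the term interpretation $\psi_0$ and the identity $\varphi\circ\psi_0 = [-]$) to soundness; you simply unpack this last step by explicitly constructing and verifying the inverse homomorphism $\psi$.
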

\begin{proof}
Clearly $\Term(X)/{\equiv}$ is generated as a complete Boolean algebra by (the $\equiv$-classes of) $x \in X$, so it suffices to check that if two terms $a, b \in \Term(X)$ obey $a \le b$ in $\Term(X)/{\equiv}$, i.e., $\{a\} |- \{b\}$, then they also obey $a \le b$ in $\ang{X}_\!{CBOOL}$; this follows from the preceding lemma.
\end{proof}

The restriction of $|-$ to \emph{finite} sets of terms was needed for the inductive proof of \cref{thm:frm-cbool-cut} to go through.
It is often more convenient to use an infinitary version of $|-$, which we now give.

We inductively define the relation $|-$ on \emph{arbitrary} subsets $A \subseteq \Term(X)$ via the following rules:
\begin{itemize}
\item  (reflexivity/identity) if $x, \neg x \in A$ for some $x \in X$, then $|- A$;
\item  ($\bigwedge$R) if $\bigwedge B \in A$, and $|- A \cup \{b\}$ for every $b \in B$, then $|- A$;
\item  ($\bigvee$R) if $\bigvee B \in A$, and $|- A \cup B$, then $|- A$.
\end{itemize}
As before, we also use the obvious two-sided generalization without further comment.
It will turn out (by \cref{thm:frm-cbool-seq}) that this relation restricted to finite sets is the same as defined before.
Whenever we need to disambiguate, we will refer to these rules as the \defn{infinitary sequent calculus} (and the previous ones as the \defn{finitary calculus}; note that this does not refer to the arity of $\bigwedge, \bigvee$).

\begin{lemma}
Weakening (\cref{thm:frm-cbool-weak}) and soundness (\cref{thm:frm-cbool-sound}) also hold for the infinitary sequent calculus.
\end{lemma}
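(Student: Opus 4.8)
The plan is to handle both assertions by transfinite induction on the well-founded derivation tree of $\vdash A$ in the infinitary calculus, mirroring the arguments for \cref{thm:frm-cbool-weak,thm:frm-cbool-sound} in the finitary case. Since the infinitary calculus has the same shape --- one base rule and two right-introduction rules, with ($\bigvee$R) now adjoining all of $B$ simultaneously rather than a single $b$ --- the case analysis runs in parallel, and the extra generality in ($\bigvee$R) causes no difficulty.

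For weakening, I would induct on the derivation of $\vdash A$ under the hypothesis $A \subseteq C$. In the reflexivity case, the same generators $x, \neg x$ already lie in $C$, so $\vdash C$ at the same (zero) rank. In the ($\bigwedge$R) case, witnessed by $\bigwedge B \in A$ and $\vdash A \cup \{b\}$ for each $b \in B$, each premise upgrades to $\vdash C \cup \{b\}$ by the induction hypothesis with no increase in rank, and $\bigwedge B \in C$, so ($\bigwedge$R) reapplies at the same rank. The ($\bigvee$R) case is identical with $B$ in place of $\{b\}$. Reading off the ranks at each step gives the rank-preservation clause as well.

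For soundness, I would again induct on the derivation, verifying $\bigvee A = \top$ in $\ang{X}_\!{CBOOL}$ at each step. Reflexivity gives $\top = x \vee \neg x \le \bigvee A$. In the ($\bigvee$R) case, the hypothesis gives $\bigvee A \vee \bigvee B = \top$, and $\bigvee B \le \bigvee A$ since $\bigvee B \in A$, so $\bigvee A = \top$. In the ($\bigwedge$R) case, the hypothesis gives $\bigvee A \vee b = \top$ for each $b \in B$, whence $\top = \bigwedge_{b \in B}(\bigvee A \vee b) = \bigvee A \vee \bigwedge B = \bigvee A$, the last equality holding because $\bigwedge B \in A$.

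The only step here that is not purely formal is the penultimate equality in the ($\bigwedge$R) case of soundness, which invokes the infinitary distributive law $c \vee \bigwedge_i d_i = \bigwedge_i (c \vee d_i)$ --- valid in any complete Boolean algebra, being the de Morgan dual of the frame distributive law. This is precisely the kind of identity already used in establishing that $\Term(X)/{\equiv}$ is a complete Boolean algebra in \cref{sec:frm-cbool}, so I anticipate no genuine obstacle; the proofs are routine once one is careful that the derivation trees of the infinitary calculus are genuinely well-founded and hence carry an ordinal rank to induct on.
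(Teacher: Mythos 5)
Your proof is correct and takes the same route as the paper, which simply says ``by induction on derivations in the infinitary calculus'' and leaves the case analysis (including the rank-preservation clause for weakening and the infinitary distributive law in the ($\bigwedge$R) case of soundness) implicit; you have filled in exactly those details.
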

\begin{proof}
By induction on derivations in the infinitary calculus.
\end{proof}

\begin{lemma}
\label{thm:frm-cbool-seq-fin-inf}
If $|- A$ in the finitary calculus, then $|- A$ in the infinitary calculus.
\end{lemma}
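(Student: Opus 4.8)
The plan is to proceed by a straightforward transfinite induction on the derivation of $\vdash A$ in the finitary calculus. The two rules (reflexivity/identity) and ($\bigwedge$R) occur verbatim in the infinitary calculus, so for those cases it suffices to apply the induction hypothesis to each finitary premise — in the ($\bigwedge$R) case the premises $\vdash A \cup \{b\}$ for $b \in B$ are again finite sets, so the induction hypothesis applies on the nose — and then reapply the identical rule in the infinitary calculus.

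The only case requiring any thought is ($\bigvee$R), where the two calculi genuinely differ: finitarily one adjoins a single disjunct $b$, whereas the infinitary rule adjoins the whole set $B$. Suppose $\vdash A$ was obtained finitarily because $\bigvee B \in A$ and $\vdash A \cup \{b\}$ holds finitarily for some fixed $b \in B$. By the induction hypothesis, $\vdash A \cup \{b\}$ holds in the infinitary calculus. Since $A \cup \{b\} \subseteq A \cup B$, weakening for the infinitary calculus (established in the lemma immediately preceding) gives $\vdash A \cup B$ in the infinitary calculus, and then the infinitary form of ($\bigvee$R) applied with $\bigvee B \in A$ yields $\vdash A$, as desired.

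I expect the only potential obstacle to be precisely this mismatch between the finitary ($\bigvee$R) and the infinitary ($\bigvee$R); but since it is bridged at once by weakening, no real difficulty arises, and the argument is essentially a one-line case analysis on top of the induction. (The converse direction, that an infinitary derivation of a finite sequent can be collapsed to a finitary one, is the substantive half and is presumably handled separately in \cref{thm:frm-cbool-seq} via cut admissibility; the present lemma only needs the easy inclusion.)
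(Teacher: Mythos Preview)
Your proof is correct and matches the paper's approach exactly: the paper's proof is the one-liner ``By induction. In the only non-obvious case of ($\bigvee$R), use weakening in the infinitary calculus to go from $\vdash A \cup \{b\}$ to $\vdash A \cup B$,'' which is precisely what you spelled out.
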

\begin{proof}
By induction.
In the only non-obvious case of ($\bigvee$R), use weakening in the infinitary calculus to go from $|- A \cup \{b\}$ to $|- A \cup B$.
\end{proof}

\begin{lemma}
In the infinitary calculus, if $|- A \cup \{\bigvee B\}$, then $|- A \cup B$.
\end{lemma}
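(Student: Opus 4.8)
The plan is to prove this as an \emph{invertibility} (inversion) lemma for the $(\bigvee\mathrm{R})$ rule, by induction on the derivation of $\vdash A \cup \{\bigvee B\}$ in the infinitary calculus, with $A$ and $B$ universally quantified throughout. The argument rests on one purely syntactic observation about the freely generated term structure $\Term(X)$: the term $\bigvee B$ is distinct from every $x, \neg x \in X \sqcup \neg X$ and from every $\bigwedge$-term, and $\bigvee B = \bigvee E$ forces $B = E$. So in analyzing the last rule of the derivation, its principal formula is either an element of $A$ genuinely different from $\bigvee B$, or is $\bigvee B$ itself; this dichotomy drives the case split.

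First I would handle the non-$(\bigvee\mathrm{R})$ cases. If $\vdash A \cup \{\bigvee B\}$ holds by reflexivity, the witnessing $x, \neg x$ both lie in $A$, so $\vdash A$ by reflexivity and $\vdash A \cup B$ by weakening (which, as just established, holds in the infinitary calculus). If the last rule is $(\bigwedge\mathrm{R})$ with principal formula $\bigwedge E \in A$ and premises $\vdash A \cup \{\bigvee B\} \cup \{e\}$, the induction hypothesis gives $\vdash A \cup B \cup \{e\}$ for each $e \in E$, and $(\bigwedge\mathrm{R})$ (with $\bigwedge E \in A \cup B$) concludes $\vdash A \cup B$.

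The remaining case is a $(\bigvee\mathrm{R})$ inference with principal formula $\bigvee E \in A \cup \{\bigvee B\}$ and premise $\vdash A \cup \{\bigvee B\} \cup E$. If $\bigvee E \in A$, the induction hypothesis gives $\vdash A \cup E \cup B$, and $(\bigvee\mathrm{R})$ (with $\bigvee E \in A \cup B$) yields $\vdash A \cup B$. Otherwise $\bigvee E = \bigvee B$, hence $E = B$, and the premise is $\vdash A \cup B \cup \{\bigvee B\}$; applying the induction hypothesis to this premise --- now with $A \cup B$ playing the role of $A$ --- produces $\vdash (A \cup B) \cup B = A \cup B$.

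I do not anticipate a real obstacle: this is a standard cut-free invertibility argument. The only point that needs care is the last subcase, where the term $\bigvee B$ being eliminated is itself the principal formula of the last rule; there one must invoke the induction hypothesis on the \emph{premise} of that very inference (legitimate since it is a proper subderivation) rather than attempt to reapply the rule, and it is precisely for this move that the induction hypothesis must be stated with $A$ allowed to vary.
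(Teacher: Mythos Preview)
Your proof is correct and follows essentially the same approach as the paper's: induction on the derivation, with the same three cases and the same key observation that in the $(\bigvee\mathrm{R})$ case the principal formula is either some $\bigvee E \in A$ or is $\bigvee B$ itself. The only cosmetic difference is that the paper applies the induction hypothesis uniformly to obtain $\vdash A \cup B \cup C$ before splitting on whether $C = B$ or $\bigvee C \in A$, whereas you split first and then invoke the hypothesis; the resulting arguments are the same.
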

\begin{proof}
By induction.
\begin{itemize}
\item  If $|- A \cup \{\bigvee B\}$ by reflexivity, then there is some $x, \neg x \in A$, so $|- A \cup B$ by reflexivity.
\item  If $|- A \cup \{\bigvee B\}$ by ($\bigwedge$R), then there is some $\bigwedge C \in A$ such that $|- A \cup \{\bigvee B\} \cup \{c\}$ for every $c \in C$; by the induction hypothesis, $|- A \cup B \cup \{c\}$ for every $c \in C$, whence by ($\bigwedge$R), $|- A \cup B$.
\item  Suppose $|- A \cup \{\bigvee B\}$ by ($\bigvee$R) because there is $\bigvee C \in A \cup \{\bigvee B\}$ such that $|- A \cup \{\bigvee B\} \cup C$.
By the induction hypothesis, we have $|- A \cup B \cup C$.
If $B = C$, this gives $|- A \cup B$; otherwise, we must have $\bigvee C \in A$, whence by ($\bigvee$R), $|- A \cup B$.
\qedhere
\end{itemize}
\end{proof}

\begin{theorem}
\label{thm:frm-cbool-seq}
For any subset $A \subseteq \Term(X)$, the following are equivalent:
\begin{enumerate}
\item[(i)]  $\bigvee A = \top$ in $\ang{X}_\!{CBOOL}$;
\item[(ii)]  $|- \{\bigvee A\}$ in the infinitary sequent calculus;
\item[(iii)]  $|- A$ in the infinitary sequent calculus;
\item[(iv)]  $|- \{\bigvee A\}$ in the finitary sequent calculus;
\item[(v)]  $|- A$ in the finitary sequent calculus, when $A$ is finite.
\end{enumerate}
\end{theorem}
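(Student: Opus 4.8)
The plan is to prove the cycle $(\mathrm{i}) \Rightarrow (\mathrm{iv}) \Rightarrow (\mathrm{ii}) \Rightarrow (\mathrm{iii}) \Rightarrow (\mathrm{ii})$ together with $(\mathrm{ii}) \Rightarrow (\mathrm{i})$, which already gives the equivalence of (i)--(iv) for an arbitrary set $A \subseteq \Term(X)$, and then to splice in (v) for finite $A$ by proving $(\mathrm{iv}) \Rightarrow (\mathrm{v})$ and $(\mathrm{v}) \Rightarrow (\mathrm{iii})$. Essentially all of this is routine manipulation inside the two sequent calculi of this subsection; the only step with any semantic content is $(\mathrm{i}) \Rightarrow (\mathrm{iv})$, and \cref{thm:frm-cbool-ltalg} reduces even that to a derivation.

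For $(\mathrm{i}) \Rightarrow (\mathrm{iv})$: by \cref{thm:frm-cbool-ltalg} we have $\ang{X}_\!{CBOOL} = \Term(X)/{\equiv}$, so the hypothesis $\bigvee A = \top$ in $\ang{X}_\!{CBOOL}$ says merely that $\top \le \bigvee A$ in $\Term(X)/{\equiv}$, i.e.\ $\{\top\} |- \{\bigvee A\}$ in the finitary calculus, which unwinds to $|- \{\bot,\, \bigvee A\}$ (writing $\bot := \neg\top = \bigvee\emptyset$). Since $|- \{\top\}$ is derivable (apply ($\bigwedge$R) to $\top = \bigwedge\emptyset$, with no premises), one application of the multiplicative form of cut (\cref{thm:frm-cbool-cut}) on the formula $\top$ yields $|- \{\bigvee A\}$, which is (iv).

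The remaining steps among (ii)--(iv) are bookkeeping. $(\mathrm{iv}) \Rightarrow (\mathrm{ii})$ is \cref{thm:frm-cbool-seq-fin-inf} applied to the (finite) sequent $\{\bigvee A\}$. $(\mathrm{ii}) \Rightarrow (\mathrm{iii})$ is the instance, with empty side context, of the preceding lemma stating that in the infinitary calculus $|- A' \cup \{\bigvee B\}$ implies $|- A' \cup B$. $(\mathrm{iii}) \Rightarrow (\mathrm{ii})$ follows by weakening (\cref{thm:frm-cbool-weak}) from $|- A$ to $|- A \cup \{\bigvee A\}$ and then one infinitary ($\bigvee$R) step. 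And $(\mathrm{ii}) \Rightarrow (\mathrm{i})$ is soundness of the infinitary calculus (the infinitary form of \cref{thm:frm-cbool-sound}), together with the triviality $\bigvee\{\bigvee A\} = \bigvee A$ in the honest complete Boolean algebra $\ang{X}_\!{CBOOL}$. This closes the loop, so (i)--(iv) are equivalent for all $A$.

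Finally, for finite $A$, $(\mathrm{v}) \Rightarrow (\mathrm{iii})$ is again \cref{thm:frm-cbool-seq-fin-inf}, and for $(\mathrm{iv}) \Rightarrow (\mathrm{v})$ I would first note that $\{\bigvee A\} |- A$, i.e.\ $|- \{\bigwedge \neg A\} \cup A$, is derivable by a single ($\bigwedge$R) on $\bigwedge\neg A$, each premise $|- \{\bigwedge\neg A\} \cup A \cup \{\neg a\}$ being an instance of generalized reflexivity (as $a \in A$); cutting this against $|- \{\bigvee A\}$ (\cref{thm:frm-cbool-cut}, on the formula $\bigvee A$) then gives $|- A$. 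I do not expect any real obstacle: once \cref{thm:frm-cbool-ltalg} is available the content of (i) is absorbed, and the rest is the standard weakening/cut/soundness/$\bigvee$-inversion shuffle. The only point calling for care is to keep the two calculi apart — cut (\cref{thm:frm-cbool-cut}) was proved only for \emph{finite} sequents, which is exactly why every reduction invoking it (the $(\mathrm{i})\Rightarrow(\mathrm{iv})$ and $(\mathrm{iv})\Rightarrow(\mathrm{v})$ steps) is routed through finite sets, and why (v) is listed as equivalent only when $A$ is finite.
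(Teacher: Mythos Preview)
Your proof is correct and essentially identical to the paper's: same use of \cref{thm:frm-cbool-ltalg} plus cut against $|-\{\bigwedge\emptyset\}$ for $(\mathrm{i})\Rightarrow(\mathrm{iv})$, same use of \cref{thm:frm-cbool-seq-fin-inf} for $(\mathrm{iv})\Rightarrow(\mathrm{ii})$, same use of the $\bigvee$-inversion lemma and ($\bigvee$R) for $(\mathrm{ii})\Leftrightarrow(\mathrm{iii})$, and the same cut against $\{\bigvee A\}|-A$ for $(\mathrm{iv})\Rightarrow(\mathrm{v})$ (your one-sided ($\bigwedge$R) derivation of the latter is exactly the paper's two-sided ($\bigvee$L)). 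The only cosmetic difference is that the paper routes $(\mathrm{v})\Rightarrow(\mathrm{iv})$ directly via ($\bigvee$R) rather than going through $(\mathrm{iii})$ via \cref{thm:frm-cbool-seq-fin-inf}.
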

\begin{proof}
(i) is implied by all the other conditions by soundness.

(i)$\iff \{\bigwedge \emptyset\} |- \{\bigvee A\}$ (in the finitary calculus) by \cref{thm:frm-cbool-ltalg}; the latter implies (iv) by cut against $|- \{\bigwedge \emptyset\}$ which is immediate from ($\bigwedge$R).

(iv)$\implies$(ii) by \cref{thm:frm-cbool-seq-fin-inf}.

(ii)$\iff$(iii) by the preceding lemma and ($\bigvee$R).

Similarly, when $A$ is finite, (v)$\implies$(iv) by ($\bigvee$R), and (iv)$\implies$(v) by cut against $\{\bigvee A\} |- A$ which follows from ($\bigvee$L).
\end{proof}

\begin{corollary}[infinitary transitivity/cut]
\label{thm:frm-cbool-inf-cut}
In the infinitary calculus, if $|- A \cup B$, and $|- A \cup \{\neg b\}$ for every $b \in B$, then $|- A$.
\end{corollary}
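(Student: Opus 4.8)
The plan is to bypass a direct syntactic induction and instead route through the semantics of $\ang{X}_\!{CBOOL}$, using \cref{thm:frm-cbool-seq} in both directions: the soundness half turns the hypotheses into lattice inequalities in $\ang{X}_\!{CBOOL}$, and the implication (i)$\implies$(iii) turns the resulting lattice identity back into a derivation. Concretely, since $\bigvee (A \cup \{\neg b\}) = \bigvee A \vee \neg b$ and $\bigvee(A \cup B) = \bigvee A \vee \bigvee B$, the statement is really just the elementary Boolean fact that if $\bigvee B \le \bigvee A$ and $\bigvee A \vee \bigvee B = \top$, then $\bigvee A = \top$.

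First I would apply soundness (\cref{thm:frm-cbool-sound}, which as noted holds also for the infinitary calculus) to each hypothesis $\vdash A \cup \{\neg b\}$: this gives $\bigvee A \vee \neg b = \top$ in $\ang{X}_\!{CBOOL}$, equivalently $b \le \bigvee A$, for every $b \in B$. Taking the join over $b \in B$ yields $\bigvee B \le \bigvee A$ in $\ang{X}_\!{CBOOL}$.

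Next, applying soundness to $\vdash A \cup B$ gives $\bigvee A \vee \bigvee B = \top$ in $\ang{X}_\!{CBOOL}$; combined with $\bigvee B \le \bigvee A$ this forces $\bigvee A = \top$ in $\ang{X}_\!{CBOOL}$. Now the implication (i)$\implies$(iii) of \cref{thm:frm-cbool-seq} delivers $\vdash A$ in the infinitary calculus, which is exactly the conclusion.

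The only thing to watch is that all of these manipulations take place in the infinitary calculus: a purely syntactic argument by transfinite induction in the style of \cref{thm:frm-cbool-cut} is awkward here because the cut rules proved by induction are stated for the \emph{finitary} calculus with finite side-sets, whereas $B$ may be infinite. One could reduce to the singleton case $B = \{\bigvee B\}$ — from $\vdash A \cup \{\neg b\}$ for all $b \in B$ one gets $\vdash A \cup \{\neg \bigvee B\}$ by ($\bigwedge$R), and $\vdash A \cup \{\bigvee B\}$ from $\vdash A \cup B$ by weakening and ($\bigvee$R) — but even that one-term cut in the infinitary calculus is most cleanly obtained via \cref{thm:frm-cbool-seq} in the same way. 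So I expect the ``hard part'' to be essentially the realization that no genuinely new induction is needed: the content is already packaged in the equivalence of \cref{thm:frm-cbool-seq}.
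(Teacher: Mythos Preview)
Your proposal is correct and takes essentially the same approach as the paper: both arguments pass to $\ang{X}_\!{CBOOL}$ via soundness, deduce $\bigvee A = \top$ from the elementary Boolean manipulation, and then invoke (i)$\implies$(iii) of \cref{thm:frm-cbool-seq} to return to the infinitary calculus. The paper's proof is simply a terser version of what you wrote.
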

\begin{proof}
In $\ang{X}_\!{CBOOL}$, we have $\bigvee A \vee \bigvee B = \top$, and $\bigvee A \vee \neg b = \top$, i.e., $b \le \bigvee A$, for every $b \in B$; this clearly implies $\bigvee A = \top$.
\end{proof}

\subsection{Distributive polyposets}
\label{sec:dpoly}

In this subsection, we introduce a method for presenting $(\lambda, \kappa)$-frames for arbitrary $\lambda, \kappa$.
This can be seen as an abstraction of the properties of the infinitary sequent calculus from the preceding subsection (see \cref{ex:frm-dpoly-seq}), or alternatively, as a ``two-sided'' generalization of posite presentations from \cref{sec:frm-post} (see \cref{ex:frm-dpoly-vpost}).

A \defn{$(\lambda, \kappa)$-distributive prepolyposet}%
\footnote{``Poly'' derives from the theory of polycategories, and refers to the fact that both sides of $<|$ may consist of multiple elements; see e.g., \cite{Schu}.
``Distributive'' refers to the fact that our axioms force existing finite meets/joins to distribute over existing $\kappa$-joins/$\lambda$-meets; this follows from \cref{thm:frm-dpoly-meetjoin} and \cref{thm:frm-dpoly-inj}.}
consists of an underlying set $A$ equipped with a binary relation ${<|} \subseteq \@P_\lambda(A) \times \@P_\kappa(A)$ between $\lambda$-ary subsets and $\kappa$-ary subsets of $A$, obeying:
\begin{itemize}
\item  (monotonicity) $B \supseteq C <| D \subseteq E \implies B <| E$;
\item  (reflexivity) $\{a\} <| \{a\}$ for all $a \in A$;
\item  (transitivity) if $B \cup C <| D \cup E$, $B <| \{c\} \cup E$ for every $c \in C$, and $B \cup \{d\} <| E$ for every $d \in D$, then $B <| E$.
\end{itemize}
Using monotonicity, transitivity is easily seen to be equivalent to the combination of its special cases with $D = \emptyset$ or $C = \emptyset$, which we call \defn{left-transitivity} and \defn{right-transitivity} respectively, or to the following ``multiplicative'' form; we will use all of these freely:
\begin{itemize}
\item  if $B \cup C <| D \cup E$, $B_c <| \{c\} \cup E_c$ for every $c \in C$, and $B_d \cup \{d\} <| E_d$ for every $d \in D$, then $B \cup \bigcup_{d \in D} B_d <| \bigcup_{c \in C} E_c \cup E$, assuming $\bigcup_{d \in D} B_d$ is $\lambda$-ary and $\bigcup_{c \in C} E_c$ is $\kappa$-ary.
\end{itemize}
We refer to $<|$ as a \defn{$(\lambda, \kappa)$-distributive prepolyorder} on $A$.
We define the preorder $\le$ on $A$ by
\begin{align*}
a \le b  \coloniff  \{a\} <| \{b\},
\end{align*}
and let ${\equiv} := {\le} \cap {\ge}$.
By transitivity, $<|$ is invariant under $\equiv$ on both sides.
If $\le$ is a partial order, i.e, $\equiv$ is equality, then we call $<|$ \defn{separated} or a \defn{$(\lambda, \kappa)$-distributive polyorder}, and $A$ a \defn{$(\lambda, \kappa)$-distributive polyposet}.
We denote the categories of $(\lambda, \kappa)$-distributive (pre)polyposets and $<|$-preserving maps by $\!{\lambda\kappa DP(r)oly}$; for $\lambda, \kappa < \infty$, both are locally $\max(\lambda, \kappa)$-presentable, and $\!{\lambda\kappa DPoly} \subseteq \!{\lambda\kappa DProly}$ is reflective, with reflection given by quotienting by $\equiv$.

\begin{example}
\label{ex:frm-dpoly-seq}
Let $X$ be any set, and let $A \subseteq \Term(X)$ be a set of prenex complete Boolean terms over $X$, as in the preceding subsection.
Declare $B <| C$ iff $B |- C$ in the infinitary sequent calculus from the preceding subsection.
Then $(A, {<|})$ is a $(\lambda, \kappa)$-distributive prepolyposet, where the transitivity axiom is by (the two-sided version of) \cref{thm:frm-cbool-inf-cut}.
\end{example}

\begin{example}
\label{ex:frm-dpoly-bifrm}
Let $A$ be a $(\lambda, \kappa)$-frame, and define the \defn{canonical polyorder}
\begin{align*}
B <| C \coloniff \bigwedge B \le \bigvee C.
\end{align*}
Then $(A, <|)$ is a $(\lambda, \kappa)$-distributive polyposet.
To check the left-transitivity axiom: if
\begin{enumerate}
\item[(i)] $\bigwedge B \wedge \bigwedge C \le \bigvee D$ and
\item[(ii)] $\bigwedge B \le c \vee \bigvee D$ for every $c \in C$, i.e., $\bigwedge B \le \bigwedge C \vee \bigvee D$ (by distributivity),
\end{enumerate}
then
$
\bigwedge B
= \bigwedge B \wedge (\bigwedge C \vee \bigvee D)
\le (\bigwedge B \wedge \bigwedge C) \vee \bigvee D
= \bigwedge D
$;
right-transitivity is dual.
This defines a forgetful functor $\!{\lambda\kappa Frm} -> \!{\lambda\kappa DPoly}$.
\end{example}

\begin{proposition}
\label{thm:frm-dpoly-meetjoin}
Let $A$ be a $(\lambda, \kappa)$-distributive prepolyposet.
\begin{enumerate}
\item[(a)]
For $B \in \@P_\kappa(A)$, a $\le$-join $\bigvee B$, if it exists, is characterized by the relations $b \le \bigvee B$ for each $b \in B$ together with $\{\bigvee B\} <| B$.
\item[(b)]
Dually, a meet of $B \in \@P_\lambda(A)$ is characterized by $\bigwedge B \le b$ for $b \in B$ together with $B <| \{\bigwedge B\}$.
\item[(c)]
Thus, any $(\lambda, \kappa)$-distributive prepolyposet homomorphism preserves existing $\lambda$-ary meets and $\kappa$-ary joins in the domain.
\item[(d)]
Thus, the forgetful functor $\!{\lambda\kappa Frm} -> \!{\lambda\kappa DPoly}$ is full (and faithful).
\end{enumerate}
\end{proposition}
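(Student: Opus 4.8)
The plan is to prove the dual pair (a)--(b) first, and then read off (c) and (d) as formal consequences. Throughout I keep the conventions of the excerpt: $a \le b$ abbreviates $\{a\} <| \{b\}$, so that by monotonicity and transitivity $\le$ is a preorder invariant under $<|$ on both sides (a partial order when $A$ is a polyposet). By a \emph{join} of $B \in \@P_\kappa(A)$ I shall mean an element $s$ with (i)~$b \le s$ for every $b \in B$ and (ii)~$\{s\} <| B$; part (a) will establish that any such $s$ is in fact a $\le$-least upper bound of $B$ and is determined up to $\equiv$ by (i)--(ii), which is the content of ``characterized by (i) and (ii)''. (A bare $\le$-least upper bound of $B$ need not satisfy (ii); it is precisely the joins in this stronger, ``presentation-aware'' sense that get preserved by homomorphisms, which is what (c) records, so I state (a) in this form.)

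For (a): suppose $s$ satisfies (i) and (ii). By (i), $s$ is a $\le$-upper bound of $B$. Let $t$ be any $\le$-upper bound, so $\{b\} <| \{t\}$ for each $b \in B$; I must show $s \le t$, i.e.\ $\{s\} <| \{t\}$. I apply right-transitivity (the $C = \emptyset$ instance of the transitivity axiom) to the sequent $\{s\} <| B \cup \{t\}$ --- obtained from (ii) by monotonicity --- cutting away each $d \in B$ on the right against $\{s, d\} <| \{t\}$ --- obtained from $\{d\} <| \{t\}$ by monotonicity; the conclusion is exactly $\{s\} <| \{t\}$. Hence $s$ is a $\le$-least upper bound, and if $s'$ is another solution of (i)--(ii) then $s' \le s$ and, symmetrically, $s \le s'$, so $s \equiv s'$. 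Part (b) is the order-dual: for a meet of $B \in \@P_\lambda(A)$ one takes an $s$ with $s \le b$ for $b \in B$ and $B <| \{s\}$, and given a lower bound $t$ one derives $\{t\} <| \{s\}$ from $B \cup \{t\} <| \{s\}$ by left-transitivity (the $D = \emptyset$ instance), cutting away each $d \in B$ on the left against $\{t\} <| \{d, s\}$.

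For (c): let $f : A -> A'$ be a homomorphism of $(\lambda,\kappa)$-distributive prepolyposets and let $s$ be a join of some $B \in \@P_\kappa(A)$. Since $f$ preserves $<|$ --- hence its restriction $\le$ to singletons, so $f$ is monotone --- the element $f(s)$ satisfies $f(b) \le f(s)$ for every $b \in B$ together with $\{f(s)\} <| f(B)$; by (a), $f(s)$ is a join of $f(B)$, i.e.\ $f$ preserves the join. The meet case is dual via (b). For (d): the forgetful functor $U : \!{\lambda\kappa Frm} -> \!{\lambda\kappa DPoly}$ of \cref{ex:frm-dpoly-bifrm} is the identity on underlying sets and on maps, hence faithful. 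For fullness, let $f$ be a $(\lambda,\kappa)$-distributive polyposet morphism from $UA$ to $UB$ for $(\lambda,\kappa)$-frames $A, B$. Then $f$ preserves $\le$, and for any $\kappa$-ary $C \subseteq A$ the element $\bigvee C \in A$ satisfies (i) trivially and (ii) since $\bigwedge\{\bigvee C\} = \bigvee C \le \bigvee C$ witnesses $\{\bigvee C\} <| C$ in the canonical polyorder; so $\bigvee C$ is a join in the sense above, and by (c), $f(\bigvee C) = \bigvee f(C)$. Dually $f$ preserves $\lambda$-ary meets. Since $\lambda, \kappa \ge \omega$, binary meets and joins and the constants $\top = \bigwedge\emptyset$, $\bot = \bigvee\emptyset$ are special cases, so $f$ is a $(\lambda,\kappa)$-frame homomorphism.

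I do not expect a genuine obstacle here: the whole argument is a matter of instantiating the one-sided (``multiplicative'') transitivity axioms with the right data and padding the intermediate sequents by monotonicity. The only point that needs a little care is the one flagged in the first paragraph --- that ``join'' in a prepolyposet must be read in the presentation-aware sense (i)+(ii), not merely as a $\le$-least upper bound, for (c) and (d) to hold --- so I will make that explicit in the statement of (a) rather than leaving it to context.
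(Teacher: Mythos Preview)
Your proof is correct and takes essentially the same approach as the paper: the paper's proof of (a) is the one-line ``If $b \le c$ for every $b \in B$, then by transitivity with $\{\bigvee B\} <| B$, $\bigvee B \le c$,'' with everything else declared to follow immediately. Your version simply spells out which instance of transitivity is being invoked and how monotonicity pads the sequents, and you are more careful than the paper about the reading of ``characterized by'' (your observation that a bare $\le$-least upper bound need not satisfy (ii) is correct and worth flagging; the paper's phrasing is loose on this point, but its proof, like yours, only establishes that (i)+(ii) suffice for $s$ to be a $\le$-join).
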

\begin{proof}
(a)  If $b \le c$ for every $b \in B$, then by transitivity with $\{\bigvee B\} <| B$, $\bigvee B \le c$.

The rest immediately follows.
\end{proof}

Given a $(\lambda, \kappa)$-distributive prepolyposet $(A, {<|})$, we let
\begin{align*}
\ang{A \mid {<|}}_\!{\lambda\kappa Frm} := \ang{(A, {<|}) \qua \!{\lambda\kappa DProly}}_\!{\lambda\kappa Frm} = \ang{A \qua \!{Set} \mid \bigwedge B \le \bigvee C \text{ for } B <| C}_\!{\lambda\kappa Frm}
\end{align*}
be the \defn{presented $(\lambda, \kappa)$-frame}.
This is the left adjoint to the forgetful functor $\!{\lambda\kappa Frm} -> \!{\lambda\kappa DProly}$.
We use similar notation for any of our main categories ``above'' $\!{\lambda\kappa Frm}$ in the diagram \eqref{diag:frm-cat}: for $\lambda' \ge \lambda$, $\kappa' \ge \kappa$, and $\mu \ge \lambda, \kappa$, we let $\ang{A \mid {<|}}_\!{\lambda'\kappa' Frm}$ and $\ang{A \mid {<|}}_\!{\mu Bool}$ denote the corresponding presented structure in the respective category.
We now have the main result of this subsection, which says that our three axioms on $<|$ imply that it is already ``saturated'' under all relations that hold in the presented algebras, analogously to posites (\cref{sec:frm-post}):

\begin{theorem}
\label{thm:frm-dpoly-inj}
For any $(\lambda, \kappa)$-distributive prepolyposet $(A, {<|})$, the unit $\eta : A -> \ang{A \mid {<|}}_\!{CBOOL}$ reflects $<|$, i.e., for any $B \in \@P_\lambda(A)$ and $C \in \@P_\kappa(A)$, we have $B <| C \iff \bigwedge \eta(B) \le \bigvee \eta(C)$.
Thus if $<|$ is separated, then $\eta$ is an embedding of $(\lambda, \kappa)$-distributive polyposets.

Thus, the same holds for all of the intermediate units $A -> \ang{A \mid {<|}}_\!{\lambda'\kappa' Frm}$ and $A -> \ang{A \mid {<|}}_\!{\mu Bool}$.
\end{theorem}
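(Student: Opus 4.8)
The statement to prove is \cref{thm:frm-dpoly-inj}: for a $(\lambda,\kappa)$-distributive prepolyposet $(A,{<|})$, the unit $\eta : A \to \ang{A \mid {<|}}_\!{CBOOL}$ reflects $<|$. One direction — that $B <| C$ implies $\bigwedge\eta(B) \le \bigvee\eta(C)$ in the presented complete Boolean algebra — is immediate by construction, since the presenting relations of $\ang{A \mid {<|}}_\!{CBOOL}$ are exactly the inequalities $\bigwedge B \le \bigvee C$ for $B <| C$, so this ``soundness'' half needs only an easy verification (or an appeal to the soundness half of \cref{thm:frm-cbool-seq} applied to the relevant terms). The content is the converse: if $\bigwedge\eta(B) \le \bigvee\eta(C)$ holds in the free complete Boolean algebra on the relations, then already $B <| C$.

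\emph{First}, I would realize $\ang{A \mid {<|}}_\!{CBOOL}$ concretely via the term calculus of \cref{sec:frm-cbool}: it is $\Term(A)/{\equiv}$ where $\equiv$ is induced by the finitary (equivalently, by \cref{thm:frm-cbool-seq}, the infinitary) sequent provability relation $|-$, but now working modulo the presenting relations. That is, I would consider the sequent calculus over generator set $A$ \emph{augmented} with the axioms $\bigwedge B |- \bigvee C$ for each $B <| C$, and invoke \cref{thm:frm-cbool-seq} (suitably relativized to a theory) to conclude that $\bigwedge\eta(B) \le \bigvee\eta(C)$ in $\ang{A \mid {<|}}_\!{CBOOL}$ iff $B |- C$ is derivable in this augmented infinitary calculus. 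The crux is then: \emph{derivability of $B |- C$ in the augmented calculus, for $B \in \@P_\lambda(A)$ and $C \in \@P_\kappa(A)$, implies $B <| C$ already holds}. This is a proof-theoretic statement — a cut-elimination-style argument showing the prepolyorder $<|$ is closed under the rules of the calculus — and it is the place where the three axioms (monotonicity, reflexivity, transitivity, especially its multiplicative form) get used. Monotonicity handles weakening; reflexivity handles the identity axioms and the axioms $B <| B$; and transitivity (multiplicative form) is exactly what is needed to absorb an application of cut, as well as the left/right rules $(\bigwedge\mathrm{L})$, $(\bigvee\mathrm{R})$ when the principal formula's immediate subterms are themselves elements of $A$.

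\emph{Second}, there is a subtlety: a derivation of $B |- C$ may pass through sequents containing compound terms $\bigwedge D$, $\bigvee D$ that are not elements of $A$, so I cannot literally ``run $<|$ along the derivation.'' The standard fix is to prove a \emph{subformula-property} or interpolation-style lemma: define, for each term $t \in \Term(A)$, a lower/upper ``localization'' in terms of $A$ via $<|$ (roughly, $t$ is ``$<|$-below'' a $\kappa$-ary $C \subseteq A$ when a certain relation holds), show this is well-defined by induction on $t$ using \cref{thm:frm-dpoly-meetjoin}, and then show that each rule of the augmented calculus preserves the appropriate localization relation. Restricting to the case where all terms in the end-sequent are already elements of $A$ then yields $B <| C$. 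The cardinality bookkeeping — that all the subsets produced stay $\lambda$-ary on the left and $\kappa$-ary on the right — is where regularity of $\lambda,\kappa$ and the multiplicative form of transitivity are essential; I expect this is mostly routine but must be tracked carefully, since a careless union could blow up arity.

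\emph{The main obstacle} I anticipate is precisely this induction over derivations with the arity constraints: making the inductive invariant strong enough to push through the $(\bigwedge\mathrm{R})$ and $(\bigvee\mathrm{L})$ cases (which branch over possibly $\lambda$- or $\kappa$-indexed families) while keeping left-sides $\lambda$-ary and right-sides $\kappa$-ary throughout. Once the key lemma is in place, the theorem's conclusions for the intermediate units $A \to \ang{A \mid {<|}}_\!{\lambda'\kappa' Frm}$ and $A \to \ang{A \mid {<|}}_\!{\mu Bool}$ follow immediately: these units factor through $\eta : A \to \ang{A \mid {<|}}_\!{CBOOL}$ (by \cref{cvt:frm-cbool-incl} and the universal properties), and $<|$ being reflected by the composite forces it to be reflected by each factor; separatedness of $<|$ then upgrades ``reflects $<|$'' to ``is an embedding of $(\lambda,\kappa)$-distributive polyposets,'' since reflecting the relation $\{a\} <| \{b\}$ is exactly order-reflection.
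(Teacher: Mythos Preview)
Your overall strategy---reduce to the infinitary sequent calculus of \cref{sec:frm-cbool} and argue by induction on derivations---is the paper's approach, but you miss the key simplification that makes the induction almost immediate and renders both your ``relativized \cref{thm:frm-cbool-seq}'' and your ``localization'' steps unnecessary.

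Rather than augmenting the calculus with new axiom schemes for each $B <| C$ (which, as you note, would force you either to re-establish cut admissibility in the presence of non-logical axioms or to track arbitrary compound cut-formulas via some localization), the paper encodes all the relations as a single hypothesis in the \emph{free} algebra: $\bigwedge\eta(D) \le \bigvee\eta(E)$ holds in the quotient $\ang{A \mid {<|}}_\!{CBOOL}$ iff $\bigwedge_{B <| C}(\bigwedge B \to \bigvee C) \le \bigwedge D \to \bigvee E$ in $\ang{A}_\!{CBOOL}$, which by \cref{thm:frm-cbool-seq} as stated (no relativization needed) is derivability of the sequent
\[
\{\bigvee(\neg B \cup C) \mid B <| C\} \;|-\; \neg D \cup E
\]
in the cut-free infinitary calculus. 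The left side contains only $\bigvee$-terms, the right side only atoms and negated atoms; so the only applicable rules are reflexivity (giving some $a \in D \cap E$, whence $D <| E$ by reflexivity and monotonicity) and $(\bigvee\mathrm{L})$ on some $\bigvee(\neg B_0 \cup C_0)$ with $B_0 <| C_0$, whose premises have \emph{exactly the same shape} with $D$ enlarged by some $c \in C_0$ or $E$ enlarged by some $b \in B_0$. The induction hypothesis then gives $D <| \{b\} \cup E$ for each $b \in B_0$ and $D \cup \{c\} <| E$ for each $c \in C_0$, which together with $B_0 <| C_0$ is exactly an instance of the transitivity axiom yielding $D <| E$. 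The rule $(\bigwedge\mathrm{L})$ cannot fire at all. No compound terms other than the encoded axioms ever appear, so no localization is needed, and the arity bookkeeping is a single application of regularity.

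Your plan could be made to work, but the two places you flag as obstacles---relativizing \cref{thm:frm-cbool-seq} and defining the localization---are genuine extra work that the encoding trick sidesteps completely. (Your remark about the intermediate units has the direction of factoring backwards: it is $\eta$ that factors through the intermediate unit, not the other way round; but the argument you give is still correct once this is fixed.)
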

\begin{proof}
By the general construction of presented algebras, $\ang{A \mid {<|}}_\!{CBOOL}$ is the free complete Boolean algebra $\ang{A}_\!{CBOOL} = \ang{A \qua \!{Set}}_\!{CBOOL}$ quotiented by the relations $\bigwedge B \le \bigvee C$, i.e., ${\bigwedge B -> \bigvee C} = \top$, for $B <| C$.
By the description of Boolean algebra quotients at the end of \cref{sec:frm-quot}, an element $t \in \ang{A}_\!{CBOOL}$ becomes $\top$ in $\ang{A \mid {<|}}_\!{CBOOL}$ iff it belongs to the principal $\infty$-filter $\up \bigwedge_{B <| C} (\bigwedge B -> \bigvee C)$.
Thus for $D \in \@P_\kappa(A)$ and $E \in \@P_\lambda(A)$, we have
\begin{align*}
\bigwedge \eta(D) \le \bigvee \eta(E)
&\iff \bigwedge_{B <| C} (\bigwedge B -> \bigvee C) \le \bigwedge D -> \bigvee E \in \ang{A}_\!{CBOOL}, \\
\intertext{which, using the infinitary sequent calculus from the preceding subsection, is by \cref{thm:frm-cbool-seq}}
&\iff \{\bigvee (\neg B \cup C) \mid B <| C\} |- \neg D \cup E.
\end{align*}
We induct on derivations to show that for every such $D, E$, we have $D <| E$.
\begin{itemize}
\item
If the last rule used is reflexivity, then there must be some $a \in A$ with $a \in D \cap E$, whence $D <| E$ by reflexivity of $<|$.
\item
If the last rule used is ($\bigwedge$R), i.e., ($\bigvee$L), then there is some $B_0 <| C_0$ such that
\begin{itemize}
\item  $\{\bigvee (\neg B \cup C) \mid B <| C\} |- \{b\} \cup \neg (D) \cup E$ for every $b \in B_0$ and
\item  $\{\bigvee (\neg B \cup C) \mid B <| C\} |- \{\neg c\} \cup \neg (D) \cup E$ for every $c \in C_0$.
\end{itemize}
By the induction hypothesis, this means $D <| \{b\} \cup E$ for every $b \in B_0$ and $D \cup \{c\} <| E$ for every $c \in C_0$.
By transitivity with $B_0 <| C_0$, we get $D <| E$.
\item
The last rule cannot be ($\bigvee$R), i.e., ($\bigwedge$L), due to the form of the sequent.
\qedhere
\end{itemize}
\end{proof}

Applying \cref{thm:frm-dpoly-inj} to the canonical polyorder on a $(\lambda, \kappa)$-frame yields

\begin{corollary}
\label{thm:frm-dpoly-bifrm-inj}
For $\lambda' \ge \lambda$, $\kappa' \ge \kappa$, and $\mu \ge \lambda, \kappa$, the units of the free/forgetful adjunctions $\!{\lambda\kappa Frm} \rightleftarrows \!{\lambda'\kappa' Frm}$ and $\!{\lambda\kappa Frm} \rightleftarrows \!{\mu Bool}$ are injective.
Equivalently, the left adjoints are faithful.
\qed
\end{corollary}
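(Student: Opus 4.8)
The plan is to apply \cref{thm:frm-dpoly-inj} to the canonical polyorder on $A$, using \cref{thm:frm-dpoly-meetjoin} to bridge between polyposet homomorphisms and $(\lambda,\kappa)$-frame homomorphisms. Fix $\lambda' \ge \lambda$, $\kappa' \ge \kappa$, and $\mu \ge \lambda, \kappa$, let $A$ be a $(\lambda,\kappa)$-frame, and equip $A$ with its canonical $(\lambda,\kappa)$-distributive polyorder ${<|}$ from \cref{ex:frm-dpoly-bifrm}, so that $B <| C \iff \bigwedge B \le \bigvee C$ for $\lambda$-ary $B$ and $\kappa$-ary $C$; since $A$ is a poset, ${<|}$ is separated. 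Write $\!D$ for either $\!{\lambda'\kappa' Frm}$ or $\!{\mu Bool}$, and let $\eta \colon A \to \ang{A \mid {<|}}_\!D$ be the corresponding unit (a homomorphism of $(\lambda,\kappa)$-distributive polyposets). By \cref{thm:frm-dpoly-inj}, $\eta$ reflects ${<|}$; as ${<|}$ is separated this makes $\eta$ an order-embedding, and in particular injective.

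Next I observe that $\eta$ is in fact a $\!{\lambda\kappa Frm}$-morphism, not merely a polyposet homomorphism. Indeed, $A$ being a $(\lambda,\kappa)$-frame has all $\lambda$-ary meets and $\kappa$-ary joins, and by \cref{thm:frm-dpoly-meetjoin}(c) every polyposet homomorphism — hence $\eta$ — preserves these; since they are exactly the operations of a $(\lambda,\kappa)$-frame, and $\ang{A \mid {<|}}_\!D$ forgets to a $(\lambda,\kappa)$-frame (using $\lambda' \ge \lambda$, $\kappa' \ge \kappa$, and $\mu \ge \lambda,\kappa$, cf.\ the diagram \eqref{diag:frm-cat}), $\eta$ is a $\!{\lambda\kappa Frm}$-homomorphism into the $\!D$-object $\ang{A \mid {<|}}_\!D$.

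Finally, let $u \colon A \to \ang{A \qua \!{\lambda\kappa Frm}}_\!D$ be the unit of the free/forgetful adjunction $\!{\lambda\kappa Frm} \rightleftarrows \!D$. Since $\eta$ is a $\!{\lambda\kappa Frm}$-morphism into a $\!D$-object, the universal property of $\ang{A \qua \!{\lambda\kappa Frm}}_\!D$ yields a (unique) $\!D$-morphism $v \colon \ang{A \qua \!{\lambda\kappa Frm}}_\!D \to \ang{A \mid {<|}}_\!D$ with $v \circ u = \eta$. As $\eta$ is injective, so is $u$. This shows the units of $\!{\lambda\kappa Frm} \rightleftarrows \!{\lambda'\kappa' Frm}$ and $\!{\lambda\kappa Frm} \rightleftarrows \!{\mu Bool}$ are injective; equivalently — an injective homomorphism being in particular a monomorphism — the left adjoint free functors are faithful. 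The only non-formal input is \cref{thm:frm-dpoly-inj}, which supplies injectivity of $\eta$; the one place requiring slight care is the upgrade of $\eta$ from a polyposet to a $(\lambda,\kappa)$-frame homomorphism, handled by \cref{thm:frm-dpoly-meetjoin}, but this is routine.
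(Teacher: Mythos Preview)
Your proof is correct and follows the same approach as the paper, which simply says ``Applying \cref{thm:frm-dpoly-inj} to the canonical polyorder on a $(\lambda,\kappa)$-frame yields'' the result. The only difference is that your factorization $\eta = v \circ u$ is an unnecessary detour: since the forgetful functor $\!{\lambda\kappa Frm} \to \!{\lambda\kappa DPoly}$ is full and faithful (\cref{thm:frm-dpoly-meetjoin}(d)), for $A$ with its canonical polyorder we have $\ang{A \mid {<|}}_\!D \cong \ang{A \qua \!{\lambda\kappa Frm}}_\!D$ and $\eta$ \emph{is} $u$, so $v$ is an isomorphism.
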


Note that this gives alternative proofs of \cref{thm:frm-neginf-inj,thm:frm-neginf-bool-inj}, which are more ``syntactic'', being ultimately based on the infinitary sequent calculus.
A ``semantic'' proof of \cref{thm:frm-dpoly-bifrm-inj} in the case $\kappa = \lambda = \infty$ was essentially given by Picado~\cite[2.2]{Pbifrm}.

We now know that all of our main categories defined in \cref{sec:frm-cat} admit faithful free functors to all of the ones ``above'' them, i.e., each of these types of structures embeds into the free complete Boolean algebra it generates.
As promised in \cref{cvt:frm-cbool-incl}, for a $(\lambda, \kappa)$-frame $A$, we henceforth also regard the unit $\eta : A `-> \ang{A}_\!{CBOOL}$ (hence also all intermediate units) as an inclusion.

Next, we note that distributive polyposets are indeed a ``two-sided'' generalization of posites:

\begin{example}
\label{ex:frm-dpoly-vpost}
Let $(A, <|)$ be a $\kappa$-$\bigvee$-posite (\cref{sec:frm-post}).
We may extend $<|$ to a $(\lambda, \kappa)$-distributive prepolyorder, for any $\lambda$, by defining $B <| C$ to mean that there is $b \in B$ with $b <| C$.
\begin{proof}[Proof of transitivity]
Left-transitivity: from $B \cup C <| D$, we either have some $b \in B$ with $b <| D$ in which case $B <| D$, or some $c \in C$ with $c <| D$, in which case from $B <| \{c\} \cup D$, we have some $b \in B$ with $b <| \{c\} \cup D$, whence by right-transitivity of the original $<|$, we have $b <| D$ and so $B <| D$.

Right-transitivity: from $B <| C \cup D$, we have some $b \in B$ with $b <| C \cup D$, while for every $c \in C$, from $B \cup \{c\} <| D$, which means either $c <| D$ or there is $b_c \in B$ with $b_c <| D$; if the latter holds for some $c$ then we immediately get $B <| D$, otherwise $B <| D$ by right-transitivity of the original $<|$.
\end{proof}

The extended $<|$ is clearly the $(\lambda, \kappa)$-distributive prepolyorder generated by declaring $\{a\} <| C$ whenever $a <| C$.
Conversely, any $(\lambda, \kappa)$-distributive prepolyorder $<|$ on $A$ defines a preorder $\le$ on $A$, as well as a $\kappa$-$\bigvee$-coverage by restricting the left-hand side of $<|$ to be singletons, which is separated (as defined in \cref{sec:frm-post}, despite $\le$ being possibly not antisymmetric).
Letting $\!{\kappa{\bigvee}Prost}$ be the category of \defn{$\kappa$-$\bigvee$-preposites}, meaning $\kappa$-$\bigvee$-posites where the underlying poset may instead be a preordered set, we thus have a free/forgetful adjunction $\!{\kappa{\bigvee}Prost} \rightleftarrows \!{\lambda\kappa DProly}$, such that the unit is $<|$-reflecting (but the preorder on the image is the restriction of $<|$ to singletons).
This restricts to a reflective adjunction $\!{\kappa{\bigvee}SPost} \rightleftarrows \!{\lambda\kappa DPoly}$ between the full subcategories of separated $\kappa$-$\bigvee$-posites and $(\lambda, \kappa)$-distributive polyposets, respectively.

(It would perhaps have been more natural for us to only consider separated preposites and separated posites in the first place, since given a non-separated preposite $(A, {<|})$, we may always replace the preorder $\le_A$ with $<|$ on singletons, without affecting the presented $\kappa$-$\bigvee$-lattice/$\kappa$-frame.
This would mirror our definition of prepolyorders via a single relation $<|$, from which $\le$ is then derived.
However, in the literature, separation is usually regarded as an extra condition.)

In fact, we can view a $\kappa$-$\bigvee$-(pre)posite as a ``$(\{1\}, \kappa)$-distributive (pre)polyposet'', just as a $\kappa$-$\bigvee$-lattice is a ``$(\{1\}, \kappa)$-frame'' (see \cref{sec:frm-cat}).
Regarding general $(\lambda, \kappa)$-distributive (pre)polyposets as presentations for $(\lambda, \kappa)$-frames then mirrors the role of (separated) $\kappa$-$\bigvee$-(pre)posites as presentations for $\kappa$-$\bigvee$-lattices, so that we have a commutative square of adjunctions, analogous to \eqref{diag:frm-vpost}:
\begin{equation*}
\begin{tikzcd}
\!{\kappa{\bigvee}Lat}
    \dar[rightarrowtail, shift left=2, right adjoint']
    \rar[shift left=2] &
\!{\lambda\kappa Frm}
    \lar[hook, shift left=2, right adjoint]
    \dar[rightarrowtail, shift left=2, right adjoint'] \\
\!{\kappa {\bigvee}(S)P(r)ost}
    \uar[shift left=2, "\ang{-}_\!{\kappa{\bigvee}Lat}"]
    \rar[shift left=2] &
\!{\lambda\kappa DP(r)oly}
    \lar[shift left=2, right adjoint]
    \uar[shift left=2, "\ang{-}_\!{\lambda\kappa Frm}"]
\end{tikzcd}
\end{equation*}
The bottom left adjoint is full and faithful if we take $\!{\kappa {\bigvee}SP(r)ost}$, and an isomorphism if $\lambda = \{1\}$.
\end{example}

\begin{example}
\label{ex:frm-dpoly-post}
Now let $(A, {<|})$ be a $\kappa$-posite.
If we regard $(A, {<|})$ as a $(\lambda, \kappa)$-distributive prepolyposet as in \cref{ex:frm-dpoly-vpost}, we lose the information that finite meets in $A$ are to be preserved.
Instead, we regard $(A, {<|})$ as a $(\omega, \kappa)$-distributive prepolyposet via
\begin{align*}
B <| C  \coloniff  \bigwedge B <| C \quad\text{for $B \in \@P_\omega(A)$}.
\end{align*}
\begin{proof}[Proof of transitivity]
Left-transitivity: from $B <| \{c\} \cup D$ for every $c \in C$, we get
$\bigwedge B <| \{c\} \cup D$ for every $c \in C$.
Letting $c = \{c_1, \dotsc, c_n\}$, using $\wedge$-stability and right-transitivity, we have
$\bigwedge B <| \{\bigwedge B \wedge c_1\} \cup D$, and
$\bigwedge B \wedge c_1 <| \{\bigwedge B \wedge c_1 \wedge c_2\} \cup E$, whence
$\bigwedge B <| \{\bigwedge B \wedge c_1 \wedge c_2\} \cup D$;
continuing inductively, we get
$\bigwedge B <| \{\bigwedge B \wedge c_1 \wedge \dotsb \wedge c_n\} \cup D$.
Now from $B \cup C <| D$, we get
$\bigwedge B \wedge \bigwedge C <| D$,
whence by right-transitivity,
$\bigwedge B <| D$.
Right-transitivity is similar to \cref{ex:frm-dpoly-vpost}.
\end{proof}

This defines a functor $\!{\kappa Prost} -> \!{\omega\kappa DProly}$, which, when restricted to $\!{\kappa SPost}$, is a full and faithful \emph{right} adjoint.
Its left adjoint takes an $(\omega, \kappa)$-distributive prepolyposet $(A, {<|})$, takes the separated quotient, and then freely adjoins finite meets while preserving separatedness (hence does nothing for finite meets that already exist).
Moreover, this adjunction is compatible (i.e., forms the obvious commutative triangle) with the previously defined forgetful functors $\!{\kappa Frm} -> \!{\kappa SPost}$ (from \cref{sec:frm-post}) and $\!{\kappa Frm} -> \!{\omega\kappa DPoly}$ (\cref{ex:frm-dpoly-bifrm}).
\end{example}

\begin{remark}
\label{rmk:frm-dpoly-bifrm-compl}
For $\lambda > \omega$, combining the preceding two examples, one might try to define the free $(\lambda, \kappa)$-distributive prepolyposet generated by a $\kappa$-posite $(A, {<|})$ by taking $B <| C$ iff there is a finite $B' \subseteq B$ with $\bigwedge B' <| C$.
However, the resulting $<|$ \emph{need not} be transitive.
When $A$ is a $\kappa$-frame, this corresponds to the fact that given infinite $B, C \subseteq A$, $\bigwedge B \le \bigvee C$ in $\ang{A}_\!{CBOOL}$ does not necessarily mean that there is a finite $B' \subseteq B$ with $\bigwedge B' \le \bigvee C$.
(For example, take $\kappa = \lambda = \omega_1$ and $A = \@O(\#R)$; by \cref{rmk:loc-ctbpres-bor}, countable meets are intersections, thus $\bigwedge_{n \in \#N} (n, \infty) = \emptyset$.)

For similar reasons, for an arbitrary $(\lambda, \kappa)$-distributive polyposet $(A, {<|})$, attempting to extend $<|$ to arbitrary subsets by $B <| C$ iff there are $\lambda$-ary $B' \subseteq B$ and $\kappa$-ary $C' \subseteq C$ with $B' <| C'$ need not yield a transitive relation.
\end{remark}

\subsection{Interpolation}
\label{sec:frm-interp}

In this subsection, we apply distributive polyposets to study colimits in the categories $\!{\kappa\kappa Frm}, \!{\kappa Bool}$.
Our main goal is to prove \cref{thm:bifrm-minterp}, a structural result about pushouts and their ordered analogs, which can be seen as an algebraic version of the Craig--Lyndon interpolation theorem for infinitary propositional logic.

First, we consider coproducts:

\begin{proposition}
\label{thm:frm-dpoly-coprod}
Let $A_i$ for $i \in I$ be $(\lambda, \kappa)$-distributive prepolyposets.
Then the coproduct of the $A_i$ in $\!{\lambda\kappa DProly}$ is given by the disjoint union $\bigsqcup_i A_i$ with $<|$ given by the closure under monotonicity of the union of the $<|_{A_i}$ on each $A_i$, i.e.,
\begin{align*}
B <| C  \coloniff  \exists i\, (A_i \cap B <|_{A_i} A_i \cap C) \quad\text{for $B \in \@P_\lambda(\bigsqcup_i A_i)$ and $C \in \@P_\kappa(\bigsqcup_i A_i)$}.
\end{align*}
The coproduct in $\!{\kappa BProly}$ is given similarly, with $\neg$ defined as in each $A_i$.
\end{proposition}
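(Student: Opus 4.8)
The plan is to verify directly that the proposed structure $(\bigsqcup_i A_i, {<|})$ satisfies the universal property of the coproduct in $\!{\lambda\kappa DProly}$, by first checking it is a legitimate object of the category (i.e., $<|$ is a $(\lambda,\kappa)$-distributive prepolyorder) and then exhibiting the cocone maps and checking the universal mapping property. The parenthetical about $\!{\kappa BProly}$ is then handled by the same argument with the extra unary operation $\neg$ carried along.

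First I would check the three axioms for $<|$. Monotonicity is built into the definition. Reflexivity: given $a \in \bigsqcup_i A_i$, it lies in a unique $A_i$, and $\{a\} <|_{A_i} \{a\}$ by reflexivity in $A_i$, so $\{a\} <| \{a\}$. The only real work is transitivity, and here the point is that the relation ``$\exists i$ witnessing $A_i \cap B <|_{A_i} A_i \cap C$'' behaves well because a transitivity instance in $\bigsqcup_i A_i$ only ever involves finitely relevant indices, and the ``cut'' elements $c \in C$, $d \in D$ each live in a single summand. I would argue as follows: suppose $B \cup C <| D \cup E$ witnessed by some index $i$; and $B <| \{c\}\cup E$ for each $c \in C$, and $B \cup \{d\} <| E$ for each $d \in D$. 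The subtle step is that the witnessing index for the hypothesis $B \cup \{d\} <| E$ (with $d \in A_j$, say) need not be $i$ unless $d \in A_i$; but if $d \notin A_i$ then $A_i \cap (B \cup \{d\}) = A_i \cap B$, and combined with $A_i \cap B \subseteq A_i \cap (B\cup C)$ and $A_i \cap E$, one can already discard that $d$ from the cut by monotonicity applied within $A_i$. More precisely: after restricting everything to $A_i$, the elements of $C$ and $D$ lying outside $A_i$ contribute nothing, so the transitivity instance to be run is entirely inside $A_i$ on the sets $A_i\cap B$, $A_i\cap C$, $A_i\cap D$, $A_i\cap E$, and for the elements $c \in A_i \cap C$ (resp. $d \in A_i\cap D$) the hypotheses $B <| \{c\}\cup E$ (resp. $B\cup\{d\} <| E$) must themselves be witnessed by index $i$, since $c$ (resp. $d$) appears in the restricted left/right side, forcing $A_i \cap(\{c\}\cup E) \ni c$ to be nonempty on the $A_i$ side. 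Then transitivity of $<|_{A_i}$ gives $A_i \cap B <|_{A_i} A_i \cap E$, i.e.\ $B <| E$. I expect this bookkeeping — confirming that the witnessing index propagates correctly through the cut elements — to be the main (though still routine) obstacle; everything else is essentially formal.

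For the universal property: the cocone maps $\iota_i : A_i \to \bigsqcup_i A_i$ are the obvious inclusions, which preserve $<|$ by definition (take the witnessing index to be $i$). Given any $(\lambda,\kappa)$-distributive prepolyposet $D$ with homomorphisms $f_i : A_i \to D$, the only possible cocone-induced map is $f : \bigsqcup_i A_i \to D$ defined by $f|_{A_i} = f_i$; I must check $f$ preserves $<|$. If $B <| C$ in $\bigsqcup_i A_i$, witnessed by index $i$, then $A_i\cap B <|_{A_i} A_i\cap C$, so $f_i(A_i\cap B) <|_D f_i(A_i \cap C)$; since $f$ agrees with $f_i$ on $A_i$ and $f(B) \supseteq f_i(A_i\cap B)$, $f(C) \subseteq$ (hmm) — one uses monotonicity of $<|_D$ together with the fact that $f(B) \supseteq f_i(A_i \cap B)$ and $f(C) \supseteq f_i(A_i\cap C)$, so $f_i(A_i\cap B) <|_D f_i(A_i\cap C)$ yields $f(B) <|_D f(C)$ by monotonicity. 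Uniqueness of $f$ is immediate since $f$ is determined on each $A_i$. Finally, for $\!{\kappa BProly}$ (which is $\!{\kappa\kappa DProly}$ with an added involution $\neg$ compatible with $<|$ in the sense of the sequent-calculus-style axioms underlying \cref{ex:frm-dpoly-seq}), one equips $\bigsqcup_i A_i$ with the $\neg$ acting summand-wise; this is well-defined since $\neg$ on $A_i$ maps $A_i$ to $A_i$, it is an involution, and it is compatible with the $<|$ just defined because each witnessing index is preserved under applying $\neg$ to both sides. The cocone maps and induced map automatically commute with $\neg$, so the same universal-property verification goes through verbatim.

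Alternatively — and this is perhaps the cleanest route to actually writing it up — one can avoid re-checking transitivity from scratch by invoking \cref{thm:frm-dpoly-coprod}'s known analog: since $\!{\lambda\kappa DProly}$ is the category of models of a (multi-sorted, if one likes) Horn-type theory whose only operation-free axioms are monotonicity, reflexivity, transitivity, coproducts are computed by taking the disjoint union of underlying sets and the smallest relation making each $\iota_i$ a homomorphism, which is exactly the monotone closure of $\bigcup_i {<|_{A_i}}$; the content of the proposition is then just the explicit description of that closure, namely that no new instances arise beyond restricting to a single summand — which is the transitivity computation above. I would present the direct verification, flagging that it is the standard ``free cocompletion of models of a universal Horn theory'' computation made concrete.
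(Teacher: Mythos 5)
Your overall strategy is the same as the paper's: by the general construction of colimits of models of such theories, the coproduct in $\!{\lambda\kappa DProly}$ is the disjoint union equipped with the closure of the union of the $<|_{A_i}$ under the three axioms, so the entire content is that the monotonicity closure is already reflexive and transitive (the paper gets the universal property for free from this and does not re-verify it). The one genuine problem is in your transitivity check. You assert that for $c \in A_i \cap C$ the hypothesis $B <| \{c\} \cup E$ ``must be witnessed by index $i$, since $c$ appears in the restricted right side.'' That is false: the definition only requires \emph{some} index $j$ with $A_j \cap B <|_{A_j} A_j \cap (\{c\} \cup E)$, and nothing forces $j = i$ --- for instance $A_j \cap B <|_{A_j} A_j \cap E$ might already hold for some $j \neq i$, which witnesses $B <| \{c\} \cup E$ by monotonicity no matter where $c$ lives. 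Since your final appeal to transitivity of $<|_{A_i}$ needs $A_i \cap B <|_{A_i} \{c\} \cup (A_i \cap E)$ for each $c \in A_i \cap C$, this claim is load-bearing, and the step does not go through as justified.

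The repair is a short case split, which is exactly what the paper does. Suppose $B \cup C <| D \cup E$ is witnessed by $i$, and each hypothesis $B <| \{c\} \cup E$ (resp.\ $B \cup \{d\} <| E$) is witnessed by some $j_c$ (resp.\ $j_d$). If some $c \in A_i \cap C$ has $j_c \neq i$, then $c \notin A_{j_c}$, so $A_{j_c} \cap (\{c\} \cup E) = A_{j_c} \cap E$ and that hypothesis already reads $A_{j_c} \cap B <|_{A_{j_c}} A_{j_c} \cap E$, i.e., $B <| E$ outright; similarly for $d \in A_i \cap D$ with $j_d \neq i$. Otherwise all the relevant witnesses equal $i$, the hypotheses restrict to $A_i$ (elements of $C, D$ outside $A_i$ being irrelevant, as you note), and transitivity of $<|_{A_i}$ gives $A_i \cap B <|_{A_i} A_i \cap E$. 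With that fix, the rest of your write-up --- reflexivity, the cocone maps, uniqueness of the induced map, and carrying $\neg$ along summand-wise --- is fine.
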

\begin{proof}
By the general construction of colimits (see \cref{sec:cat-lim}), $<|$ is the closure of the union of the $<|_{A_i}$ under monotonicity, reflexivity, and transitivity; since the union is clearly reflexive, it suffices to check that the closure under monotonicity is already transitive.
So suppose
\begin{itemize}
\item[(i)]  $B \cup C <| D$, i.e., there is some $i$ such that $A_i \cap (B \cup C) <|_{A_i} A_i \cap D$;
\item[(ii)]  for each $c \in C$, $B <| \{c\} \cup D$, i.e., there is some $i_c$ such that $A_{i_c} \cap B <|_{A_{i_c}} A_{i_c} \cap (\{c\} \cup D)$.
\end{itemize}
If there is some $c \in A_i \cap C$ such that $i_c \ne i$, then $c \not\in A_{i_c}$, so (ii) gives $A_{i_c} \cap B <|_{A_{i_c}} A_{i_c} \cap D$, whence $B <| D$.
Otherwise, left-transitivity of $<|_{A_i}$ gives $A_i \cap B <|_{A_i} A_i \cap D$, whence $B <| D$.
This proves left-transitivity; right-transitivity is dual.
\end{proof}

We may apply this to deduce that only ``trivial'' relations hold in coproducts of $(\lambda, \kappa)$-frames:

\begin{corollary}
\label{thm:bifrm-coprod}
Let $A_i$ for $i \in I$ be $(\lambda, \kappa)$-frames, let $b_i \in A_i$ for each $i \in I$ with $b_i = \top$ for all but $<\lambda$-many $i$, and let $c_i \in A_i$ for each $i \in I$ with $c_i = \bot$ for all but $<\kappa$-many $i$.
If $\bigwedge_i b_i \le \bigvee_i c_i$ in the coproduct $(\lambda, \kappa)$-frame $\coprod_i A_i$, then some $b_i \le c_i$.
\end{corollary}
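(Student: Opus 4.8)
The plan is to combine the coproduct description from \cref{thm:frm-dpoly-coprod} with the saturation theorem \cref{thm:frm-dpoly-inj}, exactly as \cref{thm:frm-coprod} was deduced in the $\kappa$-frame case but now working with the two-sided polyposet machinery. First I would equip each $(\lambda,\kappa)$-frame $A_i$ with its canonical polyorder ${<|}_{A_i}$ given by $B <| C \iff \bigwedge B \le \bigvee C$ (\cref{ex:frm-dpoly-bifrm}), so that $(A_i, {<|}_{A_i})$ is a $(\lambda,\kappa)$-distributive polyposet and $A_i = \ang{A_i \mid {<|}_{A_i}}_\!{\lambda\kappa Frm}$ by \cref{thm:frm-dpoly-meetjoin}(d) (the forgetful functor $\!{\lambda\kappa Frm} -> \!{\lambda\kappa DPoly}$ is full and faithful, so the presented $(\lambda,\kappa)$-frame recovers $A_i$). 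Then $\coprod_i A_i = \ang{\bigsqcup_i A_i \mid {<|}}_\!{\lambda\kappa Frm}$ where ${<|}$ is the coproduct polyorder of \cref{thm:frm-dpoly-coprod}, since the free functor $\ang{-}_\!{\lambda\kappa Frm}$ is a left adjoint and hence preserves coproducts.

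The key step is then to invoke \cref{thm:frm-dpoly-inj}: the unit $\bigsqcup_i A_i -> \ang{\bigsqcup_i A_i \mid {<|}}_\!{\lambda\kappa Frm} = \coprod_i A_i$ reflects ${<|}$. So if $\bigwedge_i b_i \le \bigvee_i c_i$ holds in $\coprod_i A_i$ — note each $b_i$ and $c_i$ lives in the image of the unit, and only $<\lambda$-many $b_i$ differ from $\top$ while only $<\kappa$-many $c_i$ differ from $\bot$, so the relevant left-hand set $B = \{b_i : b_i \ne \top\}$ is $\lambda$-ary and the right-hand set $C = \{c_i : c_i \ne \bot\}$ is $\kappa$-ary, and $\bigwedge_i b_i = \bigwedge B$, $\bigvee_i c_i = \bigvee C$ — then $B <| C$ in the coproduct polyorder. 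By the explicit formula in \cref{thm:frm-dpoly-coprod}, this means there is a single index $i$ with $A_i \cap B <|_{A_i} A_i \cap C$, i.e.\ $\bigwedge(A_i \cap B) \le \bigvee(A_i \cap C)$ in $A_i$. Since outside index $i$ the $b_j$ contribute $\top$ (so dropping them only shrinks the meet, staying $\le$) — more precisely $A_i \cap B \subseteq \{b_i\}$ (it is either $\{b_i\}$ or $\emptyset$, and if empty replace by $\{\top\}$) and $A_i \cap C \subseteq \{c_i\}$ similarly — we conclude $b_i \le \bigwedge(A_i \cap B) \le \bigvee(A_i \cap C) \le c_i$, allowing the degenerate cases where one side is empty to be read as $\top$ resp.\ $\bot$, which still gives $b_i = \top \le c_i$ or $b_i \le \bot = c_i$ as appropriate.

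I do not expect a serious obstacle here; the content is entirely carried by \cref{thm:frm-dpoly-inj} and \cref{thm:frm-dpoly-coprod}, both already established. The only point requiring a little care is bookkeeping the possibly-empty intersections $A_i \cap B$ and $A_i \cap C$ and checking that the ``cofinitely trivial'' hypotheses on the $b_i, c_i$ are exactly what is needed to make $B$ a legitimate $\lambda$-ary set and $C$ a legitimate $\kappa$-ary set so that \cref{thm:frm-dpoly-inj} applies; this is routine. (Compare the proof of \cref{thm:bool-coprod}, which handles the Boolean version by reducing through \cref{thm:frm-neginf-inj}; here the two-sided polyposet gives the statement directly for all $(\lambda,\kappa)$-frames at once.)
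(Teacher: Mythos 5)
Your proposal is correct and follows essentially the same route as the paper's proof: compute $\coprod_i A_i$ as the reflection of the coproduct prepolyposet from \cref{thm:frm-dpoly-coprod}, apply the saturation theorem \cref{thm:frm-dpoly-inj} to pull the relation $\bigwedge_i b_i \le \bigvee_i c_i$ back to $B <| C$ in $\bigsqcup_i A_i$, and read off a single index $i$ with $b_i \le c_i$. The extra bookkeeping you flag (empty intersections $A_i \cap B$, $A_i \cap C$ read as $\top$, $\bot$) is exactly the right check and is handled implicitly in the paper.
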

\begin{proof}
The coproduct $\coprod_i A_i$ may be computed as the reflection $\ang{\bigsqcup_i A_i \mid {<|}}_\!{\lambda\kappa Frm}$ into $\!{\lambda\kappa Frm}$ of the coproduct in $\!{\lambda\kappa DProly}$ given by \cref{thm:frm-dpoly-coprod}.
By \cref{thm:frm-dpoly-inj}, we have $\bigwedge_i b_i \le \bigvee_i c_i$ in $\coprod_i A_i$ iff $B := \{b_i \mid b_i \ne \top\} <| \{c_i \mid c_i \ne \bot\} =: C$ in $\bigsqcup_i A_i$.
By \cref{thm:frm-dpoly-coprod}, the latter means there is some $i$ such that $A_i \cap B <| A_i \cap C$ in $A_i$, which means $b_i \le c_i$.
\end{proof}

When $\kappa = \lambda$, we have the following generalization for coproducts with certain relations imposed, which says that the only relations which hold are those implied in one step by transitivity:

\begin{theorem}[interpolation]
\label{thm:bifrm-minterp}
Let $A, B_i, C_j$ be $<\kappa$-many $(\kappa, \kappa)$-frames, with homomorphisms $f_i : A -> B_i$ and $g_j : A -> C_j$, and let $D$ be their \defn{bilax pushout}, i.e., the universal $(\kappa, \kappa)$-frame equipped with morphisms as in the diagram
\begin{equation*}
\begin{tikzcd}
B_i \rar["\iota_i"] \drar[phantom, "\le"{pos=.2,sloped}, "\le"{pos=.8,sloped}] &
D \\
A \uar["f_i"] \rar["g_j"'] \urar["\iota"{pos=.2,inner sep=1pt}] &
C_j \uar["\iota_j"']
\end{tikzcd}
\end{equation*}
obeying $\iota_i \circ f_i \le \iota$ for each $i$ and $\iota \le \iota_j \circ g_j$ for each $j$.
Let $a^L, a^R \in A$, $b^L_i, b^R_i \in B_i$ for each $i$, and $c^L_j, c^R_j \in C_j$ for each $j$.
If
\begin{align*}
\tag{$*$}
a^L \wedge \bigwedge_i b^L_i \wedge \bigwedge_j c^L_j \le a^R \vee \bigvee_i b^R_i \vee \bigvee_j c^R_j \quad \in D
\end{align*}
(where we omitted mentioning $\iota, \iota_i, \iota_j$),
then there are $a^L_i, a^R_j \in A$ such that
\begin{align*}
\tag{$\dagger$}
\begin{aligned}
\bigwedge_i a^L_i \wedge a^L &\le a^R \vee \bigvee_j a^R_j &&\in A, \\
b^L_i &\le f_i(a^L_i) \vee b^R_i &&\in B_i \;\forall i, \\
c^L_j \wedge g_j(a^R_j) &\le c^R_j &&\in C_j \;\forall j.
\end{aligned}
\end{align*}
\end{theorem}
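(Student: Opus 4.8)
The plan is to realize $D$ as the $(\kappa,\kappa)$-frame \emph{presented} by an explicit $(\kappa,\kappa)$-distributive prepolyposet, and then read off both $(*)$ and $(\dagger)$ from \cref{thm:frm-dpoly-inj}. Let $D_0$ be the $(\kappa,\kappa)$-distributive prepolyposet whose underlying set is the disjoint union $A \sqcup \bigsqcup_i B_i \sqcup \bigsqcup_j C_j$ (we identify each summand with its image in $D$), and whose relation $<|$ is the smallest $(\kappa,\kappa)$-distributive prepolyorder on this set containing the canonical polyorders (in the sense of \cref{ex:frm-dpoly-bifrm}) of $A$, of each $B_i$, and of each $C_j$ on their respective summands, together with the generators $\{f_i(a)\} <| \{a\}$ and $\{a\} <| \{g_j(a)\}$ for all $a \in A$. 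Then $\ang{D_0}_\!{\kappa\kappa Frm}$ is presented by all the elements of $A, B_i, C_j$ subject to exactly the relations holding in each of them plus $f_i(a) \le a \le g_j(a)$, which is precisely the bilax pushout $D$ (here one uses \cref{thm:frm-dpoly-meetjoin}(d), so that the canonical-polyorder presentation of a $(\kappa,\kappa)$-frame recovers it). Applying \cref{thm:frm-dpoly-inj} to $D_0$ along the intermediate unit $D_0 -> \ang{D_0}_\!{\kappa\kappa Frm} = D$, we get: for all $\kappa$-ary $P, P' \subseteq D_0$, the inequality $\bigwedge_{x\in P} x \le \bigvee_{y\in P'} y$ holds in $D$ iff $P <| P'$ in $D_0$. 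Taking $P = \{a^L\}\cup\{b^L_i\}_i\cup\{c^L_j\}_j$ and $P' = \{a^R\}\cup\{b^R_i\}_i\cup\{c^R_j\}_j$ turns $(*)$ into the assertion $P <| P'$ in $D_0$.

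It remains to describe $<|$ in $D_0$ explicitly. I claim that, writing a $\kappa$-ary $P \subseteq D_0$ as $P = P_A \cup \bigcup_i P_{B_i} \cup \bigcup_j P_{C_j}$ according to its summand components and likewise $P'$, we have $P <| P'$ in $D_0$ iff there exist $a^L_i \in A$ (one for each $i$) and $a^R_j \in A$ (one for each $j$) with
\begin{align*}
P_A \cup \{a^L_i\}_i &<| P'_A \cup \{a^R_j\}_j \quad\text{in } A, \\
P_{B_i} &<| \{f_i(a^L_i)\} \cup P'_{B_i} \quad\text{in } B_i \text{ (each } i), \\
\{g_j(a^R_j)\} \cup P_{C_j} &<| P'_{C_j} \quad\text{in } C_j \text{ (each } j),
\end{align*}
where the augmented families on the first line are $\kappa$-ary by regularity of $\kappa$ and the assumption of $<\kappa$-many indices. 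Granting this, specializing to the $P, P'$ above turns the three displayed conditions into exactly $(\dagger)$ (with $c^L_j$, $c^R_j$ as the singleton components $P_{C_j}$, $P'_{C_j}$, etc.), so the $a^L_i, a^R_j$ are the interpolants the theorem asks for. The ``if'' half of the claim is routine: from the displayed relations and the generators $f_i(a^L_i)\le a^L_i$, $a^R_j \le g_j(a^R_j)$, a short chain of cuts — first eliminating the $f_i(a^L_i)$ and $g_j(a^R_j)$, then applying the multiplicative form of transitivity along the $a^L_i$ and $a^R_j$ — yields $P <| P'$ in $D_0$. For the ``only if'' half it suffices to check that the interpolation relation defined by the displayed condition is itself a $(\kappa,\kappa)$-distributive prepolyorder containing all the generators of $D_0$: then it contains $<|$ (the smallest such), and the reverse containment is the ``if'' half, so the two coincide. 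Monotonicity is immediate; reflexivity follows by padding with $\top$ and $\bot$ in $A$, choosing the interpolant freely in the one summand where the relation actually lives so as not to inflate the $A$-side; and the generators lie in the relation by similarly easy choices of interpolants.

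The main obstacle is verifying that the interpolation relation is \emph{transitive} — the algebraic counterpart of the Maehara method for Craig--Lyndon interpolation (cf.\ the role of cut admissibility, \cref{ex:frm-dpoly-seq}, in \cref{thm:frm-dpoly-inj}). Given interpolants witnessing $P\cup Q <| R\cup S$, witnessing $P <| \{q\}\cup S$ for each $q\in Q$, and witnessing $P\cup\{r\} <| S$ for each $r\in R$, one must assemble interpolants witnessing $P <| S$. The construction goes by case analysis on which summand ($A$, some $B_i$, or some $C_j$) each cut-element $q,r$ inhabits: a cut-element in $A$ is removed by a cut inside $A$ between the $A$-components of the witnessing relations, one in $B_i$ by a cut inside $B_i$, one in $C_j$ by a cut inside $C_j$, while the $A$-valued interpolants contributed by the various hypotheses are amalgamated using finite meets and joins in $A$ together with transitivity in $A$; the maps $f_i, g_j$ enter exactly when a cut must be transported across a summand boundary. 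Keeping every assembled family $\kappa$-ary throughout (there are at most $\kappa$-many cut-elements and $<\kappa$-many summands) is where regularity of $\kappa$ and the cardinality bound on the index sets are used; and the fact that an $A$-side interpolant must occur simultaneously inside a $\kappa$-ary meet and a $\kappa$-ary join is what forces the hypothesis $\lambda=\kappa$ rather than general two-sided arities. (The converse implication $(\dagger)\Rightarrow(*)$, not asserted by the theorem but also true, is the ``if'' half above combined with \cref{thm:frm-dpoly-inj}.)
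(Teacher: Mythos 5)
Your overall strategy is the same as the paper's: present the bilax pushout $D$ by a $(\kappa,\kappa)$-distributive prepolyposet on the disjoint union $A \sqcup \bigsqcup_i B_i \sqcup \bigsqcup_j C_j$, define the ``interpolation relation'' by the existence of witnesses $a^L_i, a^R_j$ satisfying your three displayed conditions, show that this relation is a prepolyorder containing the canonical polyorders and the generators $\{f_i(a)\} <| \{a\}$, $\{a\} <| \{g_j(a)\}$, and then read off $(\dagger)$ from $(*)$ via the saturation theorem (\cref{thm:frm-dpoly-inj}). Your bookkeeping around this frame is correct (including the observation that only the ``only if'' containment is needed for the theorem, and the correct identification of where $\lambda=\kappa$ is forced), and the reflexivity, monotonicity, and generator checks are easy, as you say.

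The gap is that the transitivity verification --- which is the entire technical content of the theorem --- is asserted rather than proved. Saying that cut-elements are ``removed by a cut inside'' their own summand and that the $A$-valued interpolants are ``amalgamated using meets and joins in $A$'' names the shape of the argument but not the argument itself. Concretely: given witnesses $\-a^L_i, \-a^R_j$ for $L <| R \cup S$ and witnesses $\-a^L_{s,i}, \-a^R_{s,j}$ for each $L \cup \{s\} <| R$ with $s \in S$, you must exhibit explicit amalgamated interpolants --- in the paper these are $a^L_i := (\-a^L_i \vee \bigvee_{s \in B_i \cap S} \-a^L_{s,i}) \wedge \bigwedge_{s \in S \setminus B_i} \-a^L_{s,i}$ and dually for $a^R_j$ --- and then verify that a \emph{single} such choice simultaneously satisfies all three clauses. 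The $B_i$- and $C_j$-clauses are manageable, but the $A$-clause requires a long chain of manipulations interleaving the hypotheses for cut-elements from all three kinds of summands, repeatedly distributing $\kappa$-ary meets over $\kappa$-ary joins; it is not obvious that any amalgamation works, and finding one that does is where the theorem is actually proved. (Incidentally, your phrase ``finite meets and joins in $A$'' is wrong even as a description: the amalgamation necessarily uses $\kappa$-ary meets and joins over the cut-set $S$, which is exactly why the statement lives in $(\kappa,\kappa)$-frames.) Until this computation is carried out, the proposal is a correct plan rather than a proof.
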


Note that ($\dagger$) does indeed imply ($*$) by transitivity; here our omission of the $\iota$'s in ($*$) somewhat obscures the significance of the directions of the $\le$ in the bilax pushout, which ensure that e.g., the $a^L_i$ in the second line of ($\dagger$) is $\le$ the $a^L_i$ in the first line.
The bilax pushout is an instance of a \defn{weighted colimit} in a locally ordered category; see \cref{sec:cat-ord}.

\begin{proof}
The bilax pushout $D$ is presented by $A, B_i, C_j$ qua $\!{\kappa\kappa Frm}$ subject to the additional relations $f_i(a) \le a$ and $a \le g_j(a)$ for each $a \in A$.
For $L, R \in \@P_\kappa(A \sqcup \bigsqcup_i B_i \sqcup \bigsqcup_j C_j)$, define
\begin{align*}
L <| R \coloniff \exists a^L_i, a^R_j \in A \text{ s.t.\ } \left\{
\begin{aligned}
\bigwedge_i a^L_i \wedge \bigwedge (A \cap L) &\le \bigvee (A \cap R) \vee \bigvee_j a^R_j &&\in A, \\
\bigwedge (B_i \cap L) &\le f_i(a^L_i) \vee \bigvee (B_i \cap R) &&\in B_i \;\forall i, \\
\bigwedge (C_i \cap L) \wedge g_j(a^R_j) &\le \bigvee (C_j \cap R) &&\in C_j \;\forall j.
\end{aligned}
\right.
\end{align*}
We claim that $<|$ is a $(\kappa, \kappa)$-distributive prepolyorder on $A \sqcup \bigsqcup_i B_i \sqcup \bigsqcup_j C_j$ containing the canonical $<|$ on $A, B_i, C_j$ and also obeying $\{f_i(a)\} <| \{a\}$ and $\{a\} <| \{g_j(a)\}$ for each $a \in A$.
This will imply that $<|$ contains the relations needed to present $D$, thus whenever ($*$) holds in $D$, then it also holds in $\ang{A \sqcup \bigsqcup_i B_i \sqcup \bigsqcup_j C_j \mid {<|}}_\!{\kappa\kappa Frm}$, whence by \cref{thm:frm-dpoly-inj}, we have $\{a^L, b^L_i, c^L_j\}_{i,j} <| \{a^R, b^R_i, c^R_j\}_{i,j}$, which exactly gives ($\dagger$).

To check that $L <|_A R \implies L <| R$: take $a^L_i := \top$ and $a^R_j := \bot$.

To check that $L <|_{B_i} R \implies L <| R$: take $a^L_i := \bot$, $a^L_{i'} := \top$ for all $i' \ne i$, and $a^R_j := \bot$.

Dually, $L <|_{C_j} R \implies L <| R$.
Reflexivity follows, since $<|_A, <|_{B_i}, <|_{C_j}$ are reflexive.
Monotonicity is obvious.

Finally, we check left-transitivity; right-transitivity is dual.
Suppose $L <| R \cup S$, i.e., there are $\-a^L_i, \-a^R_j \in A$ such that
\begin{align*}
\tag{a} \bigwedge_i \-a^L_i \wedge \bigwedge (A \cap L) &\le \bigvee (A \cap (R \cup S)) \vee \bigvee_j \-a^R_j, \\
\tag{b} \bigwedge (B_i \cap L) &\le f_i(\-a^L_i) \vee \bigvee (B_i \cap (R \cup S)), \\
\tag{c} \bigwedge (C_j \cap L) \wedge g_j(\-a^R_j) &\le \bigvee (C_j \cap (R \cup S)); \\
\intertext{and for each $s \in S$, we have $L \cup \{s\} <| R$, i.e., there are $\-a^L_{s,i}, \-a^R_{s,j}$ such that}
\tag{a$_s$} \bigwedge_i \-a^L_{s,i} \wedge \bigwedge (A \cap (L \cup \{s\})) &\le \bigvee (A \cap R) \vee \bigvee_j \-a^R_{s,j}, \\
\tag{b$_s$} \bigwedge (B_i \cap (L \cup \{s\})) &\le f_i(\-a^L_{s,i}) \vee \bigvee (B_i \cap R), \\
\tag{c$_s$} \bigwedge (C_j \cap (L \cup \{s\})) \wedge g_j(\-a^R_{s,j}) &\le \bigvee (C_j \cap R).
\end{align*}
We must show $L <| R$.
Put
\begin{align*}
a^L_i &:= (\-a^L_i \vee \bigvee_{s \in B_i \cap S} \-a^L_{s,i}) \wedge \bigwedge_{s \in S \setminus B_i} \-a^L_{s,i}, \\
a^R_j &:= (\-a^R_j \wedge \bigwedge_{s \in C_j \cap S} \-a^R_{s,j}) \vee \bigvee_{s \in S \setminus C_j} \-a^R_{s,j}.
\end{align*}
Then
\begin{align*}
f_i(a^L_i) \vee \bigvee (B_i \cap R)
&= ((f_i(\-a^L_i) \vee \bigvee_{s \in B_i \cap S} f_i(\-a^L_{s,i})) \wedge \bigwedge_{s \in S \setminus B_i} f_i(\-a^L_{s,i})) \vee \bigvee (B_i \cap R) \\
\text{(by (b$_s$))} \qquad
&\ge ((f_i(\-a^L_i) \vee \bigvee_{s \in B_i \cap S} \bigwedge (B_i \cap (L \cup \{s\}))) \wedge \bigwedge (B_i \cap L)) \vee \bigvee (B_i \cap R) \\
&= ((f_i(\-a^L_i) \vee (\bigwedge (B_i \cap L) \wedge \bigvee (B_i \cap S))) \wedge \bigwedge (B_i \cap L)) \vee \bigvee (B_i \cap R) \\
&\ge (f_i(\-a^L_i) \vee \bigvee (B_i \cap R) \vee \bigvee (B_i \cap S)) \wedge \bigwedge (B_i \cap L) \\
\text{(by (b))} \qquad
&= \bigwedge (B_i \cap L), \\[1ex]
\bigwedge (C_j \cap L) \wedge g_j(a^R_j)
&= (\bigwedge (C_j \cap L) \wedge g_j(\-a^R_j) \wedge \bigwedge_{s \in C_j \cap S} g_j(\-a^R_{s,j})) \vee \bigvee_{s \in S \setminus C_j} (\bigwedge (C_j \cap L) \wedge g_j(\-a^R_{s,j})) \\
\text{(by (c), (c$_s$))} \qquad
&\le (\bigvee (C_j \cap (R \cup S)) \wedge \bigwedge (C_j \cap L) \wedge \bigwedge_{s \in C_j \cap S} g_j(\-a^R_{s,j})) \vee \bigvee (C_j \cap R) \\
&\le \bigvee (C_j \cap R) \vee \bigvee_{s \in C_j \cap S} (s \wedge \bigwedge (C_j \cap L) \wedge g_j(\-a^R_{s,j})) \\
\text{(by (c$_s$))} \qquad
&= \bigvee (C_j \cap R),
\end{align*}
and letting
$t := \bigwedge_i \bigwedge_{s \in S \setminus B_i} \-a^L_{s,i} \wedge \bigwedge (A \cap L)$,
\begin{align*}
\bigwedge_i a^L_i \wedge \bigwedge (A \cap L)
&= t \wedge \bigwedge_i (\-a^L_i \vee \bigvee_{s \in B_i \cap S} \-a^L_{s,i}) \\
\text{(using $t$)} \qquad
&= t \wedge \bigwedge_i (\-a^L_i \vee \bigvee_{s \in B_i \cap S} (\-a^L_{s,i} \wedge \bigwedge_{i' \ne i} \-a^L_{s,i'} \wedge \bigwedge (A \cap L))) \\
&= t \wedge \bigwedge_i (\-a^L_i \vee \bigvee_{s \in B_i \cap S} (\bigwedge_{i'} \-a^L_{s,i'} \wedge \bigwedge (A \cap L))) \\
\text{(by (a$_s$))} \qquad
&\le t \wedge \bigwedge_i (\-a^L_i \vee \bigvee_{s \in B_i \cap S} (\bigvee (A \cap R) \vee \bigvee_j \-a^R_{s,j})) \\
\text{(using $t$)} \qquad
&\le t \wedge ((\bigwedge_i \-a^L_i \wedge \bigwedge (A \cap L)) \vee \bigvee (A \cap R) \vee \bigvee_i \bigvee_{s \in B_i \cap S} \bigvee_j \-a^R_{s,j}) \\
\text{(by (a))} \qquad
&\le t \wedge (\bigvee (A \cap R) \vee \bigvee (A \cap S) \vee \bigvee_j \-a^R_j \vee \bigvee_i \bigvee_{s \in B_i \cap S} \bigvee_j \-a^R_{s,j}) \\
\text{(using $t$)} \qquad
&= t \wedge (\bigvee (A \cap R) \vee \bigvee_{s \in A \cap S} (s \wedge \-a^L_{s,i} \wedge \bigwedge (A \cap L)) \vee \bigvee_j \-a^R_j \vee \bigvee_i \bigvee_{s \in B_i \cap S} \bigvee_j \-a^R_{s,j}) \\
\text{(by (a$_s$))} \qquad
&= t \wedge (\bigvee (A \cap R) \vee \bigvee_{s \in A \cap S} \bigvee_j \-a^R_{s,j} \vee \bigvee_j \-a^R_j \vee \bigvee_i \bigvee_{s \in B_i \cap S} \bigvee_j \-a^R_{s,j}) \\
&= t \wedge (\bigvee (A \cap R) \vee \bigvee_j \-a^R_j \vee \bigvee_{s \in S \setminus \bigsqcup_j C_j} \bigvee_j \-a^R_{s,j}) \\
\text{(using $t$)} \qquad
&\le \bigvee (A \cap R) \vee \bigvee_j (\-a^R_j \wedge \bigwedge_{s \in C_j \cap S} \bigwedge_i \-a^L_{s,i} \wedge \bigwedge (A \cap L)) \vee \bigvee_{s \in S \setminus \bigsqcup_j C_j} \bigvee_j \-a^R_{s,j} \\
\text{(by (a$_s$))} \qquad
&\le \bigvee (A \cap R) \vee \bigvee_j (\-a^R_j \wedge \bigwedge_{s \in C_j \cap S} (\bigvee (A \cap R) \vee \bigvee_{j'} \-a^R_{s,j'})) \vee \bigvee_{s \in S \setminus \bigsqcup_j C_j} \bigvee_j \-a^R_{s,j} \\
&= \bigvee (A \cap R) \vee \bigvee_j (\-a^R_j \wedge \bigwedge_{s \in C_j \cap S} (\-a^R_{s,j} \vee \bigvee_{j' \ne j} \-a^R_{s,j'})) \vee \bigvee_{s \in S \setminus \bigsqcup_j C_j} \bigvee_j \-a^R_{s,j} \\
&\le \bigvee (A \cap R) \vee \bigvee_j (\-a^R_j \wedge \bigwedge_{s \in C_j \cap S} \-a^R_{s,j}) \vee \bigvee_j \bigvee_{s \in S \setminus C_j} \-a^R_{s,j} \\
&= \bigvee (A \cap R) \vee \bigvee_j a^R_j.
\end{align*}
Thus the $a^L_i, a^R_j$ witness $L <| R$.
\end{proof}

We have the following special cases of \cref{thm:bifrm-minterp}:

\begin{remark}
If $A = 2 = \{\bot < \top\}$ is the initial $(\kappa, \kappa)$-frame, the relations $\iota_i \circ f_i \le \iota$ and $\iota \le \iota_j \circ g_j$ become trivial, so the bilax pushout in \cref{thm:bifrm-minterp} is just the coproduct $\coprod_i B_i \amalg \coprod_j C_j$.
If we take $a^L = \top$ and $a^R = \bot$, then ($\dagger$) says that either $a^L_i = \bot$ for some $i$ in which case $b^L_i \le b^R_i$, or $a^R_j = \top$ for some $j$ in which case $c^L_j \le c^R_j$.
We thus recover \cref{thm:bifrm-coprod} (for $\lambda = \kappa$).
\end{remark}

Now consider \cref{thm:bifrm-minterp} when $A, B_i, C_j$ are $\kappa$-Boolean algebras.
Then $D$, being generated by complemented elements, is also a $\kappa$-Boolean algebra.
Moreover, the relation $f(a) \le f(b) \iff f(\neg a) \ge f(\neg b)$ means that imposing inequalities between morphisms is the same as imposing equalities, so that a bilax pushout is simply a pushout.
Moving the inequalities in \cref{thm:bifrm-minterp} to one side, we get the following simplified statement:

\begin{corollary}
\label{thm:bool-minterp}
Let $A, B_i$ be $<\kappa$-many $\kappa$-Boolean algebras, with homomorphisms $f_i : A -> B_i$, and let $D$ be their pushout.
Let $a \in A$ and $b_i \in B_i$ for each $i$ with $\bigwedge_i b_i \le a \in D$.
Then there are $a_i \in A$ such that $\bigwedge_i a_i \le a \in A$ and $b_i \le f_i(a_i) \in B_i$.
\qed
\end{corollary}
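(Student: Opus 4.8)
The plan is to obtain \cref{thm:bool-minterp} as a direct specialization of \cref{thm:bifrm-minterp}, taking the family of $C_j$'s there to be empty. The first step is to recognize the pushout $D$ as the relevant bilax pushout. With no $C_j$'s, the bilax pushout appearing in \cref{thm:bifrm-minterp} is the universal $(\kappa,\kappa)$-frame equipped with homomorphisms $\iota$ from $A$ and $\iota_i$ from $B_i$ subject only to $\iota_i \circ f_i \le \iota$. Since $A$ and the $B_i$ are $\kappa$-Boolean algebras and complements are preserved by $(\kappa,\kappa)$-frame homomorphisms (from $a \wedge \neg a = \bot$ and $a \vee \neg a = \top$ one gets the same after applying a homomorphism), this bilax pushout is generated, as a $(\kappa,\kappa)$-frame, by complemented elements. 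Using both distributive laws available in a $(\kappa,\kappa)$-frame, one checks that $\bigwedge_i \neg x_i$ is a complement of $\bigvee_i x_i$ whenever each $x_i$ is complemented, and dually, so that the complemented elements form a sub-$(\kappa,\kappa)$-frame; hence the bilax pushout is itself a $\kappa$-Boolean algebra. Applying $\iota_i \circ f_i \le \iota$ to complements then gives the reverse inequality, so $\iota_i \circ f_i = \iota$, and the bilax pushout, with the cocone maps $\iota$ and $\iota_i$, is exactly the pushout $D$ of the $B_i$ over $A$ in $\!{\kappa Bool}$ (a cocone into any $\kappa$-Boolean algebra satisfies the lax condition as an equality, so the two universal properties agree).

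With this identification, I would apply \cref{thm:bifrm-minterp} (empty $C_j$ family) under the substitution $a^L := \top$, $a^R := a$, $b^L_i := b_i$, $b^R_i := \bot$. Its hypothesis then reads $\bigwedge_i b_i \le a$ in $D$, which is precisely our assumption. Since there are no $C_j$, there are no $a^R_j$, and the conclusion of \cref{thm:bifrm-minterp} produces elements $a^L_i \in A$ with $\bigwedge_i a^L_i \wedge \top \le a$ in $A$ and $b_i \le f_i(a^L_i) \vee \bot$ in $B_i$, i.e.\ $\bigwedge_i a^L_i \le a$ in $A$ and $b_i \le f_i(a^L_i)$ in $B_i$. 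Setting $a_i := a^L_i$ finishes the proof.

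There is no real obstacle once \cref{thm:bifrm-minterp} is in hand; the argument is a translation. The one point deserving care is the elementary fact that a $(\kappa,\kappa)$-frame generated by complemented elements is already a $\kappa$-Boolean algebra, which is exactly what lets us recognize the empty-$C$ bilax pushout as the $\!{\kappa Bool}$-pushout and hence discard the lax inequalities from the statement. I would record this as a short observation (or inline it), after which the corollary is immediate.
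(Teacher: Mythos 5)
Your proof is correct and follows essentially the same route as the paper: specialize \cref{thm:bifrm-minterp} to the Boolean case (where the bilax pushout coincides with the $\!{\kappa Bool}$ pushout, since $D$ is generated by complemented elements and the lax inequalities become equalities upon applying them to complements) and then choose the substitution $a^L = \top$, $a^R = a$, $b^L_i = b_i$, $b^R_i = \bot$ with empty $C_j$ family. The only difference is that you spell out the closure of complemented elements under $\kappa$-ary meets and joins, which the paper leaves implicit.
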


The case of a single $B_i$ and $C_j$ in \cref{thm:bifrm-minterp} is particularly useful:

\begin{corollary}
\label{thm:bifrm-interp}
Let
\begin{equation*}
\begin{tikzcd}
B \rar["\iota_1"] \drar[phantom,"\le"{sloped}] & D \\
A \uar["f"] \rar["g"'] & C \uar["\iota_2"']
\end{tikzcd}
\end{equation*}
be a \defn{cocomma} square in $\!{\kappa\kappa Frm}$, i.e., a universal square such that $\iota_1 \circ f \le \iota_2 \circ g$.
Let $b, b' \in B$ and $c, c' \in C$ with $b \wedge c \le b' \vee c' \in D$.
Then there is $a \in A$ with $b \le f(a) \vee b'$ and $c \wedge g(a) \le c'$.

In particular, for $b \in B$ and $c \in C$ with $b \le c \in D$, there is $a \in A$ with $b \le f(a)$ and $g(a) \le c$.

The same holds for pushout squares in $\!{\kappa Bool}$.
\end{corollary}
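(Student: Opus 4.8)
The plan is to deduce \cref{thm:bifrm-interp} from \cref{thm:bifrm-minterp} by specializing to a single $B_i = B$ (with $f_i = f$) and a single $C_j = C$ (with $g_j = g$), and using the degenerate choice $a^L := \top$, $a^R := \bot$ in $A$, which kills the two $A$-contributions to the hypothesis $(*)$. With these choices $(*)$ reads $b \wedge c \le b' \vee c'$, an inequality in the bilax pushout $\bar D$ of $B \xleftarrow{f} A \xrightarrow{g} C$. The cocomma square $D$ of the corollary is in general a \emph{different} object from $\bar D$ (the bilax pushout also carries a ``middle'' copy of $A$ squeezed between the images of $B$ and $C$), but the structure maps $\iota_1, \iota_2 : B, C \to \bar D$ satisfy $\iota_1 f \le \iota \le \iota_2 g$, hence $\iota_1 f \le \iota_2 g$, so the cocomma universal property of $D$ produces a homomorphism $D \to \bar D$ commuting with the maps out of $B$ and $C$. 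Consequently $b \wedge c \le b' \vee c'$ in $D$ implies the same in $\bar D$, and \cref{thm:bifrm-minterp} applies. (Note the direction: we only ever transport the \emph{hypothesis} from the ``smaller'' object $D$ to $\bar D$, which is exactly the direction in which the comparison map runs.)

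The conclusion $(\dagger)$ then yields $a^L_1, a^R_1 \in A$ with $a^L_1 \le a^R_1$ in $A$, $b \le f(a^L_1) \vee b'$ in $B$, and $c \wedge g(a^R_1) \le c'$ in $C$. I would collapse the two witnesses into one: monotonicity of $f$ together with $a^L_1 \le a^R_1$ gives $b \le f(a^L_1) \vee b' \le f(a^R_1) \vee b'$, so $a := a^R_1$ satisfies both required inequalities. For the ``in particular'' clause, given $b \le c$ in $D$, I would apply the first assertion with the data $(b,\ \top_C,\ \bot_B,\ c)$ in place of $(b, c, b', c')$: since $\iota_1, \iota_2$ are $(\kappa,\kappa)$-frame homomorphisms they preserve $\bot$ and $\top$, so the needed hypothesis $b \wedge \top_C \le \bot_B \vee c$ in $D$ is exactly $b \le c$, and the output $b \le f(a) \vee \bot_B = f(a)$ together with $\top_C \wedge g(a) = g(a) \le c$ is precisely what is wanted.

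For the statement about pushout squares in $\!{\kappa Bool}$, the same argument applies but is cleaner: when $A, B, C$ are $\kappa$-Boolean algebras the bilax pushout formed in $\!{\kappa\kappa Frm}$ is generated by complemented elements, hence is itself a $\kappa$-Boolean algebra, and replacing $a$ by $\neg a$ shows that imposing $\iota_1 f \le \iota \le \iota_2 g$ forces $\iota = \iota_1 f = \iota_2 g$; thus the bilax pushout, the cocomma square, and the pushout in $\!{\kappa Bool}$ all coincide, and one invokes the Boolean form of \cref{thm:bifrm-minterp} (as discussed before \cref{thm:bool-minterp}) in place of the $(\kappa,\kappa)$-frame form.

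I expect no genuine obstacle here: \cref{thm:bifrm-minterp} carries all the weight. The only points meriting care are the bookkeeping of the degenerate substitutions $a^L = \top$, $a^R = \bot$; the observation that there is a comparison homomorphism from the cocomma square to the bilax pushout, allowing the hypothesis to be transported in the correct direction; and the fact that $a^R_1$ may legitimately serve on both sides of the conclusion, thanks to the $a^L_1 \le a^R_1$ relation coming out of $(\dagger)$ and monotonicity of $f$.
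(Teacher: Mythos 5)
Your proof is correct and follows essentially the same route as the paper: transport the hypothesis along the comparison map from the cocomma square to the bilax pushout (which exists because $\iota_1 f \le \iota \le \iota_2 g$ implies $\iota_1 f \le \iota_2 g$), then apply \cref{thm:bifrm-minterp} with a single $B_i$ and $C_j$ and the degenerate choices $a^L = \top$, $a^R = \bot$. Your extra detail — collapsing $a^L_1, a^R_1$ into the single witness $a := a^R_1$ via $a^L_1 \le a^R_1$ and monotonicity of $f$ — is exactly the step the paper leaves implicit, and your handling of the Boolean case matches the discussion preceding \cref{thm:bool-minterp}.
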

\begin{proof}
Since $\iota_1 \circ f \le \iota_2 \circ g$ is implied by $\iota_1 \circ f \le \iota \le \iota_2 \circ g$ for $\iota : A -> D$, the cocomma $D$ maps into the bilax pushout, thus $b \wedge c \le b' \vee c' \in D$ implies that the same relation holds in the bilax pushout, whence \cref{thm:bifrm-minterp} applies.
As above, when $A, B, C \in \!{\kappa Bool}$, we get the Boolean version.
\end{proof}

The $\kappa$-Boolean version of \cref{thm:bifrm-interp} was essentially proved  by LaGrange~\cite{Lamalg}, via a different method, and in the form of the easily equivalent \cref{thm:bool-sap} (see \cite[3.2]{Cborin}).

\begin{corollary}[LaGrange]
\label{thm:bool-pushout-inj}
Pushout in $\!{\kappa Bool}$ preserves monomorphisms, i.e., in a pushout
\begin{equation*}
\begin{tikzcd}
B \rar["g'"] & D \\
A \uar["f"] \rar["g"'] & C \uar["f'"']
\end{tikzcd}
\end{equation*}
if $f$ is injective, then so is $f'$.
More generally, $f'$ is injective as long as $\ker(f) \subseteq \ker(g)$.
\end{corollary}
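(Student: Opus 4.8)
The plan is to deduce \cref{thm:bool-pushout-inj} from the interpolation result \cref{thm:bifrm-interp} (or its immediate specialization \cref{thm:bool-minterp} with a single $B$), by a standard ``injectivity via interpolation'' argument. Since a homomorphism of $\kappa$-Boolean algebras is injective iff its kernel is trivial, and the kernel of a $\kappa$-Boolean homomorphism $h$ is determined by the $\kappa$-filter $h^{-1}(\top)$ (equivalently, $h(x) = \top \iff x \in \ker$-filter), it suffices to show: if $x \in C$ satisfies $f'(x) = \top \in D$, then $x = \top \in C$, under the hypothesis $\ker(f) \subseteq \ker(g)$. Here I am using that $f'(x) = f'(y)$ iff $f'(x \leftrightarrow y) = \top$, so controlling the fiber over $\top$ suffices.

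First I would set up the situation: the pushout $D$ of $f : A \to B$ and $g : A \to C$ in $\!{\kappa Bool}$ is generated by the images of $g' : B \to D$ and $f' : C \to D$, subject to the relations $g'(f(a)) = f'(g(a))$ for $a \in A$. Suppose $f'(x) = \top$ for some $x \in C$. Rewriting $\top \le f'(x)$ as $\bot \ge f'(\neg x)$, i.e. $f'(\neg x) \wedge g'(\top) \le \bot$, I want to apply the interpolation statement to the cocomma/pushout square. Concretely, $\neg x \le \bot \in D$ (viewing $\neg x \in C$ and $\bot$ trivially in $B$), so by \cref{thm:bifrm-interp} (Boolean version) applied with $b = \bot \in B$, $c = \neg x \in C$, $b' = \bot$, $c' = \bot$, there is $a \in A$ with $\bot \le f(a)$ (vacuous) and $g(a) \wedge \neg x \le \bot$, i.e. $g(a) \le x \in C$ — but that alone is not enough; I also need a dual inequality controlling $a$ from the $B$ side. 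The right move is to run interpolation so that the witness $a$ is forced to have $f(a) = \top \in B$: take $b = \top \in B$ and $c = \neg x \in C$ with $b \wedge c = \neg x \le \bot = b' \vee c'$ in $D$ (using $f'(\neg x) \wedge g'(\top) = \bot$). Then \cref{thm:bifrm-interp} yields $a \in A$ with $\top = b \le f(a) \vee \bot = f(a)$, so $f(a) = \top$, hence $a \in \ker(f)$-filter; and $c \wedge g(a) = \neg x \wedge g(a) \le \bot = c'$, so $g(a) \le x$.

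Now the hypothesis $\ker(f) \subseteq \ker(g)$ enters: $f(a) = \top$ means $a$ and $\top$ have the same $f$-image, i.e. $(a \leftrightarrow \top) = a \in \ker(f)$-filter, so $a = \top$ modulo $\ker(f)$; translating to kernels of homomorphisms, $\ker(f) \subseteq \ker(g)$ gives $g(a) = g(\top) = \top \in C$. Combined with $g(a) \le x$ this yields $x = \top \in C$, as desired. Thus $\ker(f') $ is trivial, i.e. $f'$ is injective. The special case where $f$ itself is injective is the instance $\ker(f) = \{\text{trivial}\} \subseteq \ker(g)$.

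The main obstacle is bookkeeping rather than mathematical depth: making sure the interpolation statement is invoked with the correct choice of $b, b', c, c'$ so that the interpolant $a \in A$ carries both the information ``$f(a) = \top$'' (to feed into $\ker(f) \subseteq \ker(g)$) and ``$g(a) \le x$'' (to conclude $x = \top$), and correctly passing between the ``element maps to $\top$'' and ``two elements have equal image'' formulations of injectivity in $\!{\kappa Bool}$ — for which I would use the end-of-\cref{sec:frm-quot} description of $\kappa$-Boolean congruences as $\kappa$-filter congruences. I should also double-check that the cocomma square of \cref{thm:bifrm-interp} in the Boolean case really is just the pushout (noted in the statement of \cref{thm:bifrm-interp} and in the discussion before \cref{thm:bool-minterp}, since imposing $\le$ between Boolean homomorphisms is the same as imposing $=$), so that ``$b \wedge c \le b' \vee c'$ holds in the pushout $D$'' is exactly the hypothesis we can verify from $f'(x) = \top$.
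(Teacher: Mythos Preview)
Your proposal is correct and takes essentially the same approach as the paper: apply the Boolean case of \cref{thm:bifrm-interp} to a single element of $C$, extract an interpolant $a \in A$, and use $\ker(f) \subseteq \ker(g)$ to conclude. The only cosmetic differences are that the paper works with $\bot$ (showing $f'(c) = \bot \implies c = \bot$) rather than $\top$, and invokes the simplified ``in particular'' form of \cref{thm:bifrm-interp} directly rather than the general $(b,b',c,c')$ form, which makes the argument one line shorter.
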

\begin{proof}
Let $c \in C$ with $f'(c) = \bot$.
Then $f'(c) \le g'(\bot)$, so by \cref{thm:bifrm-interp}, there is $a \in A$ with $c \le g(a)$ and $f(a) \le \bot$.
Since $\ker(f) \subseteq \ker(g)$, we get $c \le g(a) = \bot$.
\end{proof}

A category has the \defn{strong amalgamation property} if in the above pushout square, whenever $f, g$ are both injective, then so are $f', g'$ and the square is also a pullback.

\begin{corollary}[LaGrange]
\label{thm:bool-sap}
$\!{\kappa Bool}$ has the strong amalgamation property.
\end{corollary}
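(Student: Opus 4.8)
The plan is to deduce the strong amalgamation property of $\!{\kappa Bool}$ directly from \cref{thm:bool-pushout-inj} together with a pullback argument fed by \cref{thm:bifrm-interp}. Consider a pushout square with $f : A -> B$ and $g : A -> C$ both injective, and with $g' : B -> D$, $f' : C -> D$ the induced maps. First I would observe that $\ker(f) = \{(\bot,\bot),\dots\}$ is trivial since $f$ is injective, so in particular $\ker(f) \subseteq \ker(g)$ and $\ker(g) \subseteq \ker(f)$; hence \cref{thm:bool-pushout-inj} immediately gives that both $f'$ and $g'$ are injective. This handles the ``amalgamation'' half of the statement.

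The remaining task is to show the square is also a pullback, i.e.\ that $B \times_D C \cong A$ via $(f,g)$; equivalently, identifying everything with subalgebras of $D$ along the injective maps (using \cref{cvt:frm-cbool-incl} implicitly, or just working with images), that $g'(B) \cap f'(C) = \iota(A)$ inside $D$, where $\iota := g' \circ f = f' \circ g$. The inclusion $\iota(A) \subseteq g'(B) \cap f'(C)$ is clear. For the converse, take $d \in g'(B) \cap f'(C)$, say $d = g'(b) = f'(c)$ for some $b \in B$, $c \in C$. Then in $D$ we have $g'(b) \le f'(c)$ and $f'(c) \le g'(b)$. Now I want to apply \cref{thm:bifrm-interp}: a pushout square in $\!{\kappa Bool}$ is in particular a cocomma-type square (the inequality $\iota_1 \circ f \le \iota_2 \circ g$ holds trivially with equality), so from $g'(b) \le f'(c)$ there is $a_1 \in A$ with $b \le f(a_1)$ and $g(a_1) \le c$, and from $f'(c) \le g'(b)$ (applying the symmetric version of \cref{thm:bifrm-interp} with the roles of $B$ and $C$ swapped) there is $a_2 \in A$ with $c \le g(a_2)$ and $f(a_2) \le b$.

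From these four inequalities I would extract that $d = \iota(a)$ for a suitable $a \in A$. Indeed $f(a_2) \le b \le f(a_1)$, so since $f$ is injective and order-reflecting, $a_2 \le a_1$ in $A$; similarly $g(a_1) \le c \le g(a_2)$ gives $a_1 \le a_2$, hence $a_1 = a_2 =: a$. Then $b \le f(a) \le b$ forces $b = f(a)$ (again $f$ order-reflecting), so $d = g'(b) = g'(f(a)) = \iota(a) \in \iota(A)$, as desired. This shows $g'(B) \cap f'(C) = \iota(A)$, and combined with injectivity of $f', g'$, that the square is a pullback; together with the amalgamation half, $\!{\kappa Bool}$ has the strong amalgamation property. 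I expect the main subtlety to be purely bookkeeping: making sure the ``symmetric version'' of \cref{thm:bifrm-interp} is legitimately invoked (it is, by the symmetry of the pushout square in $B$ and $C$) and being careful that $f$, $g$ being injective homomorphisms of $\kappa$-Boolean algebras are automatically order-embeddings (which holds, since $f(a) \le f(b) \iff f(a \wedge \neg b) = \bot \iff a \wedge \neg b = \bot \iff a \le b$). There is no real analytic obstacle here; the content is all in \cref{thm:bifrm-interp}.
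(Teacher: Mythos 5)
Your proof is correct and follows essentially the same route as the paper: the paper's proof of \cref{thm:bool-sap} also takes $(b,c)$ with $g'(b)=f'(c)$, applies \cref{thm:bifrm-interp} in both directions to produce $a, a' \in A$ with $f(a') \le b \le f(a)$ and $g(a) \le c \le g(a')$, and uses injectivity (hence order-reflection) of $f,g$ to conclude $a = a'$ and $b = f(a)$, $c = g(a)$. Your extra bookkeeping — citing \cref{thm:bool-pushout-inj} for injectivity of $f',g'$ and verifying that injective Boolean homomorphisms are order-embeddings — is all implicit in the paper's version and entirely sound.
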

\begin{proof}
Let $(b, c) \in B \times_D C$, i.e., $g'(b) = f'(c)$.
Then by \cref{thm:bifrm-interp}, there are $a, a' \in A$ such that $f(a') \le b \le f(a)$ and $g(a) \le c \le g(a')$.
Since $f, g$ are injective, $a' \le a$ and $a \le a'$, whence $f(a) = b$ and $g(a) = c$.
\end{proof}

A morphism is a \defn{regular monomorphism} if it is the equalizer of its cokernel (i.e., pushout with itself); see \cref{sec:cat-alg}.

\begin{corollary}
\label{thm:bool-mono-reg}
All monomorphisms in $\!{\kappa Bool}$ are regular.
\end{corollary}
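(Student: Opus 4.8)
The statement is that every monomorphism in $\!{\kappa Bool}$ is regular. The plan is to show that an injective homomorphism $f : A \to B$ coincides with the equalizer of the two coprojections $g_1, g_2 : B \rightrightarrows B \amalg_A B$ of its pushout with itself. First I would recall that we always have $g_1 \circ f = g_2 \circ f$, so $f$ factors through the equalizer $\eq(g_1, g_2) \hookrightarrow B$; the content is to prove that this factorization is surjective, i.e.\ every $b \in B$ with $g_1(b) = g_2(b)$ lies in the image of $f$. So fix such a $b$, and the goal is to produce $a \in A$ with $f(a) = b$.

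The key step is to apply the interpolation result \cref{thm:bifrm-interp} (in its $\kappa$-Boolean form) to the pushout square $B \amalg_A B$, viewing it as the pushout of $f : A \to B$ along itself with coprojections $g_1, g_2$. From $g_1(b) = g_2(b)$ we get in particular $g_1(b) \le g_2(b) \in B \amalg_A B$. Here I need to be slightly careful about which copy of $B$ plays the role of "$B$" and which plays "$C$" in \cref{thm:bifrm-interp}: take the source of $g_1$ to be the left factor and the source of $g_2$ to be the right factor, so $b$ in the left factor and $b$ in the right factor satisfy $b \le b$ in $D := B \amalg_A B$. Then \cref{thm:bifrm-interp} yields $a \in A$ with $b \le f(a)$ (in the left $B$) and $f(a) \le b$ (in the right $B$) — but both of these are inequalities in $B$ itself, between $b$ and $f(a)$, so $b \le f(a)$ and $f(a) \le b$, whence $b = f(a)$. (One should double-check the variance: the square $B \amalg_A B$ is a genuine pushout of $f$ along $f$, and applying $g_1(b) = g_2(b)$ gives $g_1(b) \wedge g_2(b') \le \cdots$ type hypotheses; but the cleanest route is just to feed $b \le b \in D$ into the "in particular" clause of \cref{thm:bifrm-interp}, using that $\!{\kappa Bool}$ pushouts are cocomma squares automatically since imposing $\le$ between Boolean homomorphisms is the same as imposing $=$.)

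Having shown the comparison map $A \to \eq(g_1, g_2)$ is both a mono (since $f$ is) and surjective, it is an isomorphism, so $f$ is (up to iso) the equalizer of its cokernel pair, i.e.\ a regular monomorphism. The main obstacle I anticipate is purely bookkeeping: getting the two factors of the pushout correctly matched to the "$B$" and "$C$" slots of \cref{thm:bifrm-interp} so that the two conclusions $b \le f(a)$ and $f(a) \le b$ land in the single algebra $B$ rather than in $D$ — i.e.\ making sure the interpolant $a \in A$ is the \emph{same} element for both inequalities and that both inequalities are pulled back along injective maps into $B$. But since $f$ itself need not be injective in the hypotheses of \cref{thm:bifrm-interp} (the inequalities $b \le f(a)$ and $f(a) \le b$ are literally inequalities in $B$), no injectivity is even needed for this direction — the injectivity of $f$ is only used at the very end to conclude the comparison map $A \to \eq(g_1,g_2)$ is mono. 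So this is really a short deduction from \cref{thm:bifrm-interp}.
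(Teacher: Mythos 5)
Your proof is correct and takes essentially the same approach as the paper: the paper also shows that every $b$ in the equalizer of the cokernel pair lies in $\im(f)$, citing the strong amalgamation property (\cref{thm:bool-sap}), which is itself deduced from \cref{thm:bifrm-interp} in exactly the way you apply that theorem directly. Your direct application to the diagonal case ($b \le b$ in $B \amalg_A B$, yielding $b \le f(a) \le b$ in the single algebra $B$) is a harmless inlining of the same argument.
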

\begin{proof}
If $f : A -> B$ is a monomorphism, then letting $\iota_1, \iota_2 : B \rightrightarrows C$ be its cokernel, for any $b \in \eq(\iota_1, \iota_2)$, i.e., $\iota_1(b) = \iota_2(b)$, by the strong amalgamation property we have $b \in \im(f)$.
\end{proof}

\begin{corollary}[LaGrange]
\label{thm:bool-epi-surj}
All epimorphisms in $\!{\kappa Bool}$ are regular, i.e., surjective.
\end{corollary}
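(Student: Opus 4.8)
The plan is to deduce surjectivity of an epimorphism $f : A -> B$ in $\!{\kappa Bool}$ from the interpolation property already established in \cref{thm:bifrm-interp}, along exactly the same lines as the proof of \cref{thm:bool-mono-reg}. First I would recall the standard categorical fact that a morphism $f$ in a category with pushouts is an epimorphism iff its cokernel pair $\iota_1, \iota_2 : B \rightrightarrows D$ (the pushout of $f$ along itself) satisfies $\iota_1 = \iota_2$; the nontrivial direction is immediate from the universal property of the pushout. So, given an epimorphism $f : A -> B$, form this pushout square, so that $\iota_1 = \iota_2 : B -> D$.

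Next, fix $b \in B$. Since $\iota_1 = \iota_2$, we have $\iota_1(b) \le \iota_2(b) \in D$, so by the ``in particular'' clause of \cref{thm:bifrm-interp}, applied to the pushout square of $f$ with itself and with $c := b$, there is $a \in A$ with $b \le f(a)$ and $f(a) \le b$, whence $b = f(a) \in \im(f)$. As $b$ was arbitrary, $f$ is surjective. Finally I would note that surjective homomorphisms of $\kappa$-Boolean algebras are regular epimorphisms -- they are the coequalizers of their own kernel pairs, by the general theory of quotients of algebraic structures recalled in \cref{sec:frm-quot} (cf.\ \cref{sec:cat-alg}) -- so $f$ is in particular regular, which is the assertion; conversely regular epimorphisms are always epimorphisms, so the two notions coincide in $\!{\kappa Bool}$.

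There is essentially no obstacle: every ingredient is already in place, and this corollary is a one-line consequence of LaGrange's interpolation theorem. The only points requiring a moment's care are (i) checking that \cref{thm:bifrm-interp} does apply with the degenerate data $g = f$ and $C = B$, which it does since nothing in that statement forbids $B = C$; and (ii) justifying the phrase ``regular, i.e., surjective'' by invoking the standard fact (from \cref{sec:cat-alg}) that regular epimorphisms in a category monadic over $\!{Set}$ are precisely the surjections. Thus the real content lies entirely in \cref{thm:bifrm-interp}, and the argument here is purely formal.
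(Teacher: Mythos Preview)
Your proof is correct and is essentially identical to the paper's: form the cokernel pair of $f$, use that $\iota_1 = \iota_2$ since $f$ is epi, and apply the Boolean case of \cref{thm:bifrm-interp} to any $b \in B$ to obtain $a \in A$ with $b \le f(a) \le b$. The paper states it slightly more tersely (noting that the pushout of an epi with itself is $1_B$), but the content is the same.
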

\begin{proof}
$f : A -> B$ is an epimorphism iff its pushout with itself is the identity $1_B$; then for any $b \in B$, by \cref{thm:bifrm-interp}, there is $a \in A$ with $b \le f(a) \le b$.
\end{proof}

\begin{corollary}
\label{thm:bool-free-cons}
For $\kappa \le \lambda$, the free functor $\ang{-}_\!{\lambda Bool} : \!{\kappa Bool} -> \!{\lambda Bool}$ is conservative.
\end{corollary}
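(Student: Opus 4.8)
The plan is to unfold the definition and reduce everything to facts already established in this subsection. Recall that $\ang{-}_\!{\lambda Bool} : \!{\kappa Bool} -> \!{\lambda Bool}$ being conservative means: whenever $f : A -> B$ in $\!{\kappa Bool}$ is such that $\ang{f}_\!{\lambda Bool} : \ang{A}_\!{\lambda Bool} -> \ang{B}_\!{\lambda Bool}$ is an isomorphism, then $f$ is already an isomorphism. Since in $\!{\kappa Bool}$ a homomorphism which is both injective and surjective is an isomorphism, it suffices to show that such an $f$ is both a monomorphism and an epimorphism, hence injective (automatically) and surjective (by \cref{thm:bool-epi-surj}).

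For injectivity, I would use that the adjunction unit $\eta_A : A `-> \ang{A}_\!{\lambda Bool}$ is injective, by \cref{thm:frm-neginf-bool-inj} (equivalently \cref{thm:frm-dpoly-bifrm-inj}), and likewise $\eta_B$. Given $f(a) = f(a')$, the naturality square $\ang{f}_\!{\lambda Bool} \circ \eta_A = \eta_B \circ f$ together with injectivity of $\ang{f}_\!{\lambda Bool}$ (an isomorphism, hence injective) and of $\eta_A$ forces $a = a'$; so $f$ is a monomorphism. For the epimorphism part, given $g, h : B \rightrightarrows C$ in $\!{\kappa Bool}$ with $g \circ f = h \circ f$, I apply the functor $\ang{-}_\!{\lambda Bool}$ to get $\ang{g}_\!{\lambda Bool} \circ \ang{f}_\!{\lambda Bool} = \ang{h}_\!{\lambda Bool} \circ \ang{f}_\!{\lambda Bool}$, cancel $\ang{f}_\!{\lambda Bool}$ (an isomorphism, hence epi), and then invoke faithfulness of $\ang{-}_\!{\lambda Bool}$ (again \cref{thm:frm-neginf-bool-inj}) to conclude $g = h$; so $f$ is an epimorphism in $\!{\kappa Bool}$.

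Combining the two, $f$ is an injective and surjective homomorphism of $\kappa$-Boolean algebras (surjectivity by \cref{thm:bool-epi-surj}), hence an isomorphism; alternatively one can phrase the last step via \cref{thm:bool-mono-reg} as ``$f$ is a regular monomorphism and an epimorphism, and any such morphism is an isomorphism''. I do not expect a genuine obstacle here: all the real content — interpolation and its consequence that epimorphisms in $\!{\kappa Bool}$ are surjective, plus faithfulness of the free functor — has already been done, and the remaining argument is purely formal category theory (``conservative $=$ mono $+$ epi'' in a balanced-enough algebraic category). The only thing to be slightly careful about is keeping straight which maps live in $\!{\kappa Bool}$ versus $\!{\lambda Bool}$ when chasing the naturality square and applying the functor.
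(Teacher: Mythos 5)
Your proof is correct and follows essentially the same route as the paper: the paper notes that faithfulness of $\ang{-}_\!{\lambda Bool}$ (from \cref{thm:frm-neginf-bool-inj}) makes it reflect monomorphisms and epimorphisms, and then concludes via \cref{thm:bool-epi-surj} that a mono-plus-epi in $\!{\kappa Bool}$ is a bijection, hence an isomorphism. Your version merely unpacks the reflection of monos and epis into explicit naturality-square chases, which is the same argument.
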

\begin{proof}
Since $\ang{-}_\!{\lambda Bool}$ is faithful (\cref{thm:frm-neginf-bool-inj}), it reflects monomorphisms and epimorphisms;
since a morphism in $\!{\kappa Bool}$ which is both a monomorphism and an epimorphism is a bijection by \cref{thm:bool-epi-surj}, hence an isomorphism, it follows that $\ang{-}_\!{\lambda Bool}$ reflects isomorphisms.
\end{proof}

We have analogous consequences of \cref{thm:bifrm-interp} for $\!{\kappa\kappa Frm}$.
Given a morphism $f : A -> B$ in a locally ordered category, consider its cocomma with itself:
\begin{equation*}
\begin{tikzcd}
B \rar["\iota_1"] \drar[phantom, "\le"{sloped}] & C \\
A \uar["f"] \rar["f"'] & B \uar["\iota_2"']
\end{tikzcd}
\end{equation*}
We say that $f$ is an \defn{order-regular monomorphism} if $f$ exhibits $A$ as the \defn{inserter} $\ins(\iota_1, \iota_2)$, which is the universal subobject of $B$ on which $\iota_1 \le \iota_2$ holds; in categories of ordered algebraic structures, we have $\ins(\iota_1, \iota_2) = \{b \in B \mid \iota_1(b) \le \iota_2(b)\}$.
We say that $f$ is an \defn{order-epimorphism} if $g \circ f \le h \circ f \implies g \le h$ for all $g, h : B \rightrightarrows D$, or equivalently $\iota_1 \le \iota_2$, i.e., $\ins(\iota_1, \iota_2) = B$.
See \cref{sec:cat-ord}.

\begin{corollary}
In any cocomma square in $\!{\kappa\kappa Frm}$ as above, $f$ surjects onto $\ins(\iota_1, \iota_2)$.
\end{corollary}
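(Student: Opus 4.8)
The plan is to apply \cref{thm:bifrm-interp} with the cocomma square being the cocomma of $f : A \to B$ with itself, so $C := \ins(\iota_1,\iota_2)$ is the thing we want $f$ to surject onto. Concretely, let $b \in \ins(\iota_1,\iota_2) \subseteq B$, i.e., $\iota_1(b) \le \iota_2(b) \in C$, where $C$ is the cocomma of $f$ with itself and $\iota_1,\iota_2 : B \rightrightarrows C$ are the two structure maps satisfying $\iota_1 \circ f \le \iota_2 \circ f$. The ``in particular'' clause of \cref{thm:bifrm-interp}, applied to this square with $B$ playing the role of both $B$ and $C$ in that statement, and with $\iota_1(b) \le \iota_2(b)$ playing the role of $b \le c \in D$, yields $a \in A$ with $b \le f(a)$ and $f(a) \le b$ in $B$, i.e., $f(a) = b$. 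Hence $b \in \im(f)$, so $f$ surjects onto $\ins(\iota_1,\iota_2)$.

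The one point that needs care is checking that \cref{thm:bifrm-interp} applies to the cocomma-with-itself square. That result is stated for a cocomma square $\iota_1 \circ f \le \iota_2 \circ g$ with possibly different legs $f : A \to B$ and $g : A \to C$; here we specialize $g := f$ and $C := B$, so that the cocomma $D$ of that statement is exactly the cocomma of $f$ with itself, and the two maps $B \to D$ are the $\iota_1,\iota_2$ above. The inserter $\ins(\iota_1,\iota_2) = \{b \in B \mid \iota_1(b) \le \iota_2(b)\}$ is a $(\kappa,\kappa)$-subframe of $B$ containing $\im(f)$ (since $\iota_1 \circ f \le \iota_2 \circ f$ by definition of cocomma), and the corollary claims the reverse containment $\ins(\iota_1,\iota_2) \subseteq \im(f)$, which is what the interpolation argument gives.

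I expect this to be essentially immediate once the setup is unwound — the only ``obstacle'' is purely bookkeeping: making sure the roles of $A, B, C, D$ and of $\iota_1, \iota_2$ in \cref{thm:bifrm-interp} are matched correctly to the cocomma-with-itself diagram, and recalling (from \cref{sec:cat-ord}, as cited) the explicit description of the inserter in a category of ordered algebraic structures. There is no genuine analytic content beyond \cref{thm:bifrm-interp} itself; this corollary is the $(\kappa,\kappa)$-frame analog of \cref{thm:bool-epi-surj}, with ``equalizer'' replaced by ``inserter'' and ``pushout'' by ``cocomma,'' and its proof is the order-theoretic transcription of that one.

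\begin{proof}
Let $\iota_1, \iota_2 : B \rightrightarrows C$ be the cocomma of $f$ with itself, so that $\iota_1 \circ f \le \iota_2 \circ f$ and $C$ is universal as such. By definition, $\ins(\iota_1, \iota_2) = \{b \in B \mid \iota_1(b) \le \iota_2(b)\}$, and $f$ factors through this subframe since $\iota_1 \circ f \le \iota_2 \circ f$. Conversely, let $b \in B$ with $\iota_1(b) \le \iota_2(b) \in C$. Applying the ``in particular'' clause of \cref{thm:bifrm-interp} to the present cocomma square (with both legs equal to $f$, and with $\iota_1(b) \le \iota_2(b)$ in place of the relation $b \le c$ there), we obtain $a \in A$ with $b \le f(a)$ and $f(a) \le b$ in $B$, i.e., $f(a) = b$. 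Hence every element of $\ins(\iota_1, \iota_2)$ lies in $\im(f)$, so $f$ surjects onto $\ins(\iota_1, \iota_2)$.
\end{proof}
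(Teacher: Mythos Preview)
Your proof is correct and takes essentially the same approach as the paper: apply the ``in particular'' clause of \cref{thm:bifrm-interp} to the cocomma of $f$ with itself, so that for any $b \in \ins(\iota_1,\iota_2)$ one obtains $a \in A$ with $b \le f(a) \le b$. The paper's proof is the one-line version of exactly this argument.
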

\begin{proof}
For any $b \in \ins(\iota_1, \iota_2)$, by \cref{thm:bifrm-interp} there is $a \in A$ such that $b \le f(a) \le b$.
\end{proof}

\begin{corollary}
\label{thm:bifrm-mono-oreg}
All monomorphisms in $\!{\kappa\kappa Frm}$ are order-regular.
\qed
\end{corollary}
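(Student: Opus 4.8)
The plan is to mirror, on the ordered side, the way \cref{thm:bool-mono-reg} was deduced from the strong amalgamation property \cref{thm:bool-sap}: all of the real content is already contained in \cref{thm:bifrm-interp}, and the corollary immediately preceding shows that in the cocomma square of a morphism $f : A \to B$ with itself, the leg $f$ surjects onto the inserter $\ins(\iota_1, \iota_2)$; so the only remaining task is to upgrade ``surjective'' to ``isomorphic'' using that a monomorphism is injective.

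Concretely, let $f : A \to B$ be a monomorphism in $\!{\kappa\kappa Frm}$ and form its cocomma with itself, giving $\iota_1, \iota_2 : B \rightrightarrows C$ universal with $\iota_1 \circ f \le \iota_2 \circ f$. First I would recall the two routine facts about $\!{\kappa\kappa Frm}$ that make everything go through: (i) the inserter is computed naively, $\ins(\iota_1, \iota_2) = \{b \in B \mid \iota_1(b) \le \iota_2(b)\}$, a sub-$(\kappa,\kappa)$-frame of $B$ (closed under all the operations since $\iota_1, \iota_2$ are homomorphisms and the operations are monotone; see \cref{sec:cat-ord}), so that the cocomma inequality $\iota_1 \circ f \le \iota_2 \circ f$ says exactly that $f$ factors through the inclusion $\ins(\iota_1,\iota_2) \hookrightarrow B$; and (ii) a monomorphism in $\!{\kappa\kappa Frm}$ is an injective homomorphism, since the forgetful functor to $\!{Set}$ preserves limits, hence monos, and is faithful, hence reflects them. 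Then, by the preceding corollary, the corestriction $f : A \to \ins(\iota_1,\iota_2)$ is a surjective and, by (ii), injective homomorphism of $(\kappa,\kappa)$-frames, i.e.\ a bijective homomorphism, hence an isomorphism --- the partial order and all operations being recoverable from the algebra structure (e.g.\ $a \le b \iff a \wedge b = a$), so that the set-theoretic inverse is automatically a homomorphism. Therefore $f$ equals the inserter inclusion $\ins(\iota_1,\iota_2) \hookrightarrow B$ precomposed with an isomorphism, i.e.\ $f$ exhibits $A$ as $\ins(\iota_1,\iota_2)$, which is precisely the definition of an order-regular monomorphism.

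I do not expect a genuine obstacle here: the argument is formally identical to the one establishing \cref{thm:bool-mono-reg}, with \cref{thm:bifrm-interp} in place of the strong amalgamation property. The one point worth a sentence of care is that the cocomma of $f$ with itself exists in $\!{\kappa\kappa Frm}$ and admits the presentation exploited by \cref{thm:bifrm-interp}; but this is a weighted colimit in a locally ordered, locally presentable category, so it exists, and its explicit description as a presented $(\kappa,\kappa)$-frame is exactly what that theorem was set up to handle (see again \cref{sec:cat-ord}).
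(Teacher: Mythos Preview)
Your proposal is correct and takes essentially the same approach as the paper: the statement is marked \qed immediately after the corollary that $f$ surjects onto $\ins(\iota_1,\iota_2)$, and your argument is exactly the intended unpacking of that \qed, adding only the routine verifications (inserters computed pointwise, monos are injective, bijective homomorphisms are isomorphisms) that the paper leaves implicit.
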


\begin{corollary}
\label{thm:bifrm-oepi-surj}
All order-epimorphisms in $\!{\kappa\kappa Frm}$ are surjective.
\qed
\end{corollary}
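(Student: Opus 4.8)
The plan is to deduce this from the (unlabelled) corollary displayed immediately above \cref{thm:bifrm-mono-oreg}, in exactly the way \cref{thm:bool-epi-surj} was deduced from \cref{thm:bifrm-interp}. Let $f : A -> B$ be an order-epimorphism in $\!{\kappa\kappa Frm}$. First I would pass to the self-cocomma characterization recalled just before that corollary: since $\!{\kappa\kappa Frm}$ is a category of ordered algebraic structures it admits all weighted colimits (see \cref{sec:cat-ord}), so we may form the cocomma square of $f$ with itself, with cocone maps $\iota_1, \iota_2 : B \rightrightarrows C$ universal among those satisfying $\iota_1 \circ f \le \iota_2 \circ f$. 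By the standard fact about locally ordered categories, the defining condition of an order-epimorphism ($g \circ f \le h \circ f \implies g \le h$ for all $g, h : B \rightrightarrows D$) is equivalent to $\iota_1 \le \iota_2$, i.e.\ to $\ins(\iota_1, \iota_2) = B$.

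Next I would invoke the preceding corollary, which asserts that in this cocomma square $f$ surjects onto $\ins(\iota_1, \iota_2)$. Concretely, given $b \in \ins(\iota_1, \iota_2)$ we have $\iota_1(b) \le \iota_2(b) \in C$; applying \cref{thm:bifrm-interp} to this relation, with $b$ placed in both the $B$-slot and the $C$-slot of that statement, produces $a \in A$ with $b \le f(a)$ and $f(a) \le b$, hence $b = f(a) \in \im(f)$. Combining the two steps, $f$ surjects onto $\ins(\iota_1, \iota_2) = B$, so $f$ is surjective, as claimed.

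There is essentially no real obstacle here: all the content has already been packaged into \cref{thm:bifrm-interp} (hence into \cref{thm:bifrm-minterp}, which in turn rests on the saturation theorem \cref{thm:frm-dpoly-inj} for $(\kappa,\kappa)$-distributive polyposets). The only point requiring a line of care is the bookkeeping that translates the universal-property definition of ``order-epimorphism'' into the self-cocomma form $\ins(\iota_1,\iota_2) = B$; this is routine and runs in exact parallel to the handling of ordinary epimorphisms in \cref{thm:bool-epi-surj} and of order-regular monomorphisms in \cref{thm:bifrm-mono-oreg}.
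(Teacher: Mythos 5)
Your proposal is correct and follows exactly the route the paper intends: the equivalence ``order-epimorphism $\iff \ins(\iota_1,\iota_2) = B$'' is already recorded in the definitional paragraph before \cref{thm:bifrm-mono-oreg}, and combining it with the unlabelled corollary that $f$ surjects onto $\ins(\iota_1,\iota_2)$ (itself a one-line application of \cref{thm:bifrm-interp} giving $b \le f(a) \le b$) yields the statement immediately. This is precisely why the paper marks it \qed with no further argument.
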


\begin{corollary}
\label{thm:bifrm-free-cons}
For $\kappa \le \lambda$, the free functor $\!{\kappa\kappa Frm} -> \!{\lambda\lambda Frm}$ is conservative.
\end{corollary}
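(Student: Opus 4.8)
The plan is to mimic the proof of \cref{thm:bool-free-cons} essentially verbatim, with the Boolean tools replaced by their $(\kappa,\kappa)$-frame analogs which have just been established. Recall that for $\kappa \le \lambda$ the free functor $F := \ang{-}_\!{\lambda\lambda Frm} : \!{\kappa\kappa Frm} -> \!{\lambda\lambda Frm}$ is faithful by \cref{thm:frm-dpoly-bifrm-inj}. A faithful functor reflects both monomorphisms and order-epimorphisms: if $Ff$ is monic then given $g,h$ with $fg=fh$ we have $F(fg)=F(fh)$, i.e.\ $(Ff)(Fg)=(Ff)(Fh)$, so $Fg=Fh$, so $g=h$; and dually (using that $F$, being a left adjoint between locally ordered categories, is $\!{Pos}$-enriched, hence preserves the order on hom-posets, and is order-faithful, so it reflects order-inequalities and therefore reflects order-epimorphisms). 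So $F$ reflects the property of being simultaneously a monomorphism and an order-epimorphism.

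Next I would argue that a morphism in $\!{\kappa\kappa Frm}$ which is both a monomorphism and an order-epimorphism is an isomorphism. Indeed, by \cref{thm:bifrm-oepi-surj} an order-epimorphism in $\!{\kappa\kappa Frm}$ is surjective; a surjective monomorphism in any category of (ordered) algebraic structures is a bijection on underlying sets, and a bijective homomorphism of $(\kappa,\kappa)$-frames whose inverse is also a homomorphism—which holds here because the order is determined by the algebraic operations, or more directly because a surjective $\wedge$- and $\bigvee$-preserving monomorphism reflects order (if $f(a)\le f(b)$ then $f(a\wedge b)=f(a)$ so $a\wedge b = a$, i.e.\ $a\le b$)—is an isomorphism. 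Hence "monomorphism $+$ order-epimorphism $\implies$ isomorphism" in $\!{\kappa\kappa Frm}$.

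Finally, I would assemble these: suppose $f : A -> B$ in $\!{\kappa\kappa Frm}$ is such that $Ff$ is an isomorphism in $\!{\lambda\lambda Frm}$. An isomorphism is in particular both a monomorphism and an order-epimorphism, so since $F$ reflects each of these properties, $f$ is both a monomorphism and an order-epimorphism in $\!{\kappa\kappa Frm}$, hence an isomorphism by the previous paragraph. This is exactly conservativity of $F$.

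I do not anticipate a serious obstacle: every ingredient—faithfulness (\cref{thm:frm-dpoly-bifrm-inj}), surjectivity of order-epimorphisms (\cref{thm:bifrm-oepi-surj}), and the elementary reflection properties of faithful $\!{Pos}$-enriched functors—is already in hand. The one point requiring a moment's care is the order-enriched bookkeeping: one must confirm that "order-epimorphism" is reflected by an order-faithful left adjoint, i.e.\ that $F$ genuinely acts faithfully on the $\!{Pos}$-structure of hom-sets (not merely on underlying sets); this is standard for the free/forgetful adjunctions in this paper (cf.\ the discussion of locally ordered categories in \cref{sec:frm-cat} and \cref{sec:cat-ord}), since the unit $\eta$ is an order-embedding by \cref{thm:frm-dpoly-bifrm-inj}. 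With that observed, the proof is a one-line invocation of the pattern already used for $\!{\kappa Bool}$.

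\begin{proof}
Let $F := \ang{-}_\!{\lambda\lambda Frm} : \!{\kappa\kappa Frm} -> \!{\lambda\lambda Frm}$ be the free functor. By \cref{thm:frm-dpoly-bifrm-inj} (applied to the canonical polyorder), the unit $\eta : A `-> F(A)$ is an embedding of $(\kappa,\kappa)$-distributive polyposets for every $A$; in particular $F$ is faithful, and moreover order-faithful in the sense that it reflects the pointwise order on hom-posets. Consequently $F$ reflects monomorphisms and order-epimorphisms: if $F(f)$ is a monomorphism and $fg = fh$ then $F(fg) = F(fh)$ forces $F(g) = F(h)$, hence $g = h$; and dually, if $F(f)$ is an order-epimorphism and $gf \le hf$ then $F(g)F(f) \le F(h)F(f)$ gives $F(g) \le F(h)$, hence $g \le h$.

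Now a morphism $f : A -> B$ in $\!{\kappa\kappa Frm}$ which is both a monomorphism and an order-epimorphism is an isomorphism: by \cref{thm:bifrm-oepi-surj} it is surjective, hence bijective (being also injective), and a bijective $(\kappa,\kappa)$-frame homomorphism is an isomorphism, since $f(a) \le f(b)$ implies $f(a \wedge b) = f(a)$, hence $a \wedge b = a$, i.e.\ $a \le b$, so $f^{-1}$ is monotone and therefore a homomorphism.

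Finally, suppose $F(f)$ is an isomorphism. An isomorphism is both a monomorphism and an order-epimorphism, so $F(f)$ has both properties, whence by the preceding paragraph $f$ is a monomorphism and an order-epimorphism in $\!{\kappa\kappa Frm}$, hence an isomorphism. Thus $F$ is conservative.
\end{proof}
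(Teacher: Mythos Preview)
Your proof is correct and follows essentially the same approach as the paper: the paper's proof simply says ``This follows similarly to \cref{thm:bool-free-cons} from \cref{thm:bifrm-oepi-surj} and the fact that the free functor is order-faithful, i.e., an order-embedding on each hom-poset (because the units are order-embeddings, by \cref{thm:frm-dpoly-bifrm-inj}).'' You have merely unpacked this one-line sketch, using the same ingredients (order-faithfulness from \cref{thm:frm-dpoly-bifrm-inj}, reflection of monos and order-epis, and surjectivity of order-epis from \cref{thm:bifrm-oepi-surj}) in the same way.
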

\begin{proof}
This follows similarly to \cref{thm:bool-free-cons} from \cref{thm:bifrm-oepi-surj} and the fact that the free functor is order-faithful, i.e., an order-embedding on each hom-poset (because the units are order-embeddings, by \cref{thm:frm-dpoly-bifrm-inj}).
\end{proof}

\begin{remark}
Epimorphisms in $\!{\kappa\kappa Frm}$ are \emph{not} necessarily surjective: the unit $A -> \ang{A}_\!{\kappa Bool}$ is an epimorphism, essentially because $\neg$ is uniquely defined in terms of $\wedge, \vee$.
Indeed, the set of $b \in \ang{A}_\!{\kappa Bool}$ at which the value of every $f : \ang{A}_\!{\kappa Bool} -> C \in \!{\kappa Bool}$ is determined by $f|A$ is easily seen to be a $\kappa$-Boolean subalgebra containing $A$, hence is all of $\ang{A}_\!{\kappa Bool}$.

It follows that monomorphisms in $\!{\kappa\kappa Frm}$ are \emph{not} necessarily regular: $A -> \ang{A}_\!{\kappa Bool}$ is a monomorphism and an epimorphism; if it were a regular monomorphism, then (being an epimorphism) it would have to be an isomorphism.
\end{remark}

\begin{remark}
\label{rmk:bifrm-oepi-surj}
Order-epimorphisms in $\!{\lambda\kappa Frm}$ for $\lambda \ne \kappa$ are \emph{not} necessarily surjective.
Take $\lambda = \omega$ and $\kappa \ge \omega_1$, and consider the topology $\@O(\-{\#N})$ of $\-{\#N} = \#N \cup \{\infty\}$, the one-point compactification of the discrete space $\#N$.
The inclusion $\@O(\-{\#N}) `-> \@P(\-{\#N})$ is a non-surjective $\kappa$-frame order-epimorphism.
Indeed, every $a \in \@P(\-{\#N}) \setminus \@O(\-{\#N})$ is the join $a = (a \cap \#N) \cup \{\infty\}$ of some $a \cap \#N \in \@O(\-{\#N})$ with an element which is a meet $\{\infty\} = \bigcap_{n \in \#N} [n, \infty] = \neg \bigcup_n \neg [n, \infty]$ of complemented elements of $\@O(\-{\#N})$.
Thus for $\kappa$-frame homomorphisms $f, g : \@P(\-{\#N}) \rightrightarrows B$, which automatically preserve complements, if $f \le g$ on $\@O(\-{\#N})$, then also $f(\{\infty\}) \le g(\{\infty\})$, whence $f \le g$ on all of $\@P(\-{\#N})$.

It follows that \cref{thm:bifrm-minterp} must also fail for $\!{\lambda\kappa Frm}$ with $\lambda \ne \kappa$, or else the proof of \cref{thm:bifrm-oepi-surj} would go through.
\end{remark}

\begin{remark}
The proof of \cref{thm:bifrm-minterp} is inspired by the well-known technique of proving the Craig--Lyndon interpolation theorem by induction on derivations in the cut-free sequent calculus.
See e.g., \cite[\S4.4]{TSprf}.
However, we note that \cref{thm:bifrm-minterp} does not immediately follow from applying this technique to the sequent calculus from \cref{sec:frm-cbool}: since we do not assume the algebras are free (or even $\kappa$-presented), we would need to include extra ``non-logical axioms'', which the cut admissibility proof using \emph{finitary} sequents from \cref{thm:frm-cbool-cut} does not easily handle.
\end{remark}

\section{Locales and Borel locales}
\label{sec:loc}

In this section, we study the categories of ``formal spaces'' which are the duals of the algebraic categories $\!{\kappa Frm}, \!{\kappa\kappa Frm}, \!{\kappa Bool}$ studied in the previous section.
In \cref{sec:loc-cat}, we define these categories and discuss their relations to each other as well as to the corresponding categories of spaces; in \cref{sec:loc-sp}, we complete this discussion by defining the spatialization adjunctions between the corresponding categories of locales and spaces.
In \cref{sec:loc-bor}, we define the $\kappa$-Borel hierarchy on a $\kappa$-locale via the dual of the $\@N_\kappa$ functor from \cref{sec:frm-neg}, and show how several algebraic results we proved in \cref{sec:frm-neg,sec:upkzfrm} correspond to classical descriptive set-theoretic results on ``change of topology''.
In the crucial \cref{sec:loc-im}, we give a detailed analysis of notions of ``sublocale''/``subobject'', ``embedding'', ``injection'', ``surjection'', and ``image'' in our main categories.

The rest of the subsections cover somewhat more specialized topics.
In \cref{sec:loc-ctbpres}, we review some known results which say that the spatialization adjunctions become equivalences when $\kappa = \omega_1$ and furthermore the algebras/spaces involved are restricted to being countably (co)presented; we then use these to give several important examples.
In \cref{sec:loc-intlog}, we review some abstract categorical properties of our various categories of (Borel) locales, as well as the method from categorical logic of interpreting ``pointwise'' expressions in arbitrary categories obeying such properties.
In \cref{sec:poloc}, we show that our category of ``positive $\kappa$-Borel locales'' (dual to $\!{\kappa\kappa Frm}$) can be meaningfully regarded as $\kappa$-Borel locales equipped with an intrinsic ``specialization order''.
Finally, in \cref{sec:loc-baire} we recall Isbell's well-known localic Baire category theorem and discuss some of its consequences as well as a generalization.

\subsection{The main categories}
\label{sec:loc-cat}

A topological space $X = (X, \@O(X))$ can be defined as an underlying set $X$ together with a subframe $\@O(X) \subseteq \@P(X)$; a continuous map $f : X -> Y$ between topological spaces is a map such that $f^{-1} : \@P(Y) -> \@P(X)$ restricts to a frame homomorphism $\@O(Y) -> \@O(X)$.
A \defn{locale} $X$ is, formally, the same thing as an arbitrary frame $\@O(X)$, but thought of as the frame of ``open sets'' in a generalized ``space'' without an underlying set.
A \defn{continuous map} $f : X -> Y$ between locales is a frame homomorphism $f^* : \@O(Y) -> \@O(X)$.
The \defn{category of locales} will be denoted $\!{Loc}$; thus by definition, $\!{Loc} = \!{Frm}^\op$.

\begin{convention}
\label{cvt:loc-frm}
There are various commonly used conventions regarding the locale/frame distinction; see e.g., \cite{PPloc}.
We follow Isbell \cite{Iloc} in \defn{strictly distinguishing between locales and frames}: the former are ``geometric'' objects, while the latter are their ``algebraic'' duals.

This convention means that we may unambiguously speak of notions such as \defn{product locales} $X \times Y$ (coproduct frames $\@O(X \times Y) := \@O(X) \otimes \@O(Y)$), which only implicitly refer to the direction of morphisms (the product projection $\pi_1 : X \times Y -> X$ corresponding to the coproduct injection $\iota_1 : \@O(X) -> \@O(X) \otimes \@O(Y)$).
\end{convention}

\begin{convention}
\label{cvt:loc-sets}
In the locale theory literature, it is common to refer to the elements of the frame $\@O(X)$ corresponding to a locale $X$ as \emph{open parts}, \emph{opens}, or some other name emphasizing that they need not be subsets in the usual sense; see e.g., \cite{Idst}.
However, we will be dealing extensively with localic analogs of other types of sets, e.g., closed, Borel, and analytic sets, and we feel it would be rather awkward and confusing to similarly rename all of these.
Therefore, we will freely refer to elements $U \in \@O(X)$ as \defn{open sets of $X$}.
Note that there should be no ambiguity, since by \cref{cvt:loc-frm}, there is no other notion of ``subset'' of $X$.
When working in $\@O(X)$, we will freely use set-theoretic notation interchangeably with order-theoretic ones, e.g.,
\begin{align*}
U \subseteq V &\coloniff U \le V, &
U \cap V &:= U \wedge V, &
X &:= \top, &
\emptyset &:= \bot.
\end{align*}
\end{convention}

For each of the other categories defined in \cref{sec:frm-cat}, consisting of algebraic structures with a subset of the operations of a complete Boolean algebra, we can consider an analogous notion of ``space'' consisting of an underlying set $X$ equipped with a subalgebra (of the specified type) of the powerset $\@P(X)$.
We can then define a pointless generalization of such ``spaces'' by dropping the underlying set and the requirement that the algebra be a subalgebra of a powerset.

Perhaps the most familiar example of such a ``space'', other than topological, is a \defn{Borel space} (or \emph{measurable space}) $X = (X, \@B(X))$, consisting of a set $X$ together with a $\sigma$-algebra $\@B(X)$, i.e., a $\sigma$-Boolean subalgebra $\@B(X) \subseteq \@P(X)$, whose elements are called \defn{Borel sets} (or \emph{measurable sets}).
We analogously define a \defn{($\sigma$-)Borel locale} $X$ to mean an arbitrary $\sigma$-Boolean algebra $\@B(X) = \@B_\sigma(X)$, whose elements are called \defn{($\sigma$-)Borel sets} of $X$.
A \defn{($\sigma$-)Borel map} $f : X -> Y$ between $\sigma$-Borel locales is a $\sigma$-Boolean homomorphism $f^* : \@B_\sigma(Y) -> \@B_\sigma(X)$.
The \defn{category of ($\sigma$-)Borel locales} will be denoted $\!{\sigma BorLoc} := \!{\sigma Bool}^\op$.

Analogously, by a \defn{$\kappa$-Borel space} we mean a set $X$ equipped with a $\kappa$-Boolean subalgebra $\@B_\kappa(X) \subseteq \@P(X)$.
We then define a \defn{$\kappa$-Borel locale} $X$ to be an arbitrary $\kappa$-Boolean algebra $\@B_\kappa(X)$, whose elements are called \defn{$\kappa$-Borel sets} of $X$.
A \defn{$\kappa$-Borel map} $f : X -> Y$ is a $\kappa$-Boolean homomorphism $\@B _\kappa(Y) -> \@B_\kappa(X)$.
The \defn{category of $\kappa$-Borel locales} is $\!{\kappa BorLoc} := \!{\kappa Bool}^\op$.
We include the case $\kappa = \infty$ by taking an \defn{$\infty$-Borel locale} $X$ to be a \emph{small-presented large} complete Boolean algebra $\@B_\infty(X)$; thus $\!{\infty BorLoc} := \!{CBOOL}_\infty^\op$.

\begin{example}
An $\omega$-Borel space is a set $X$ equipped with a Boolean algebra of subsets $\@B_\omega(X)$.
This can be seen as a description of a zero-dimensional compactification of $X$ (namely the Stone space of $\@B_\omega(X)$), hence might be called an ``ultraproximity'' on $X$.
On the other hand, an $\omega$-Borel locale is, equivalently by Stone duality, just a Stone space.
\end{example}

\begin{example}
\label{ex:loc-infbor}
An $\infty$-Borel space is a set $X$ equipped with the complete (atomic) Boolean algebra $\@B_\infty(X)$ of invariant sets for some equivalence relation on $X$; if $\@B_\infty(X)$ separates points of $X$, then it must be the full powerset $\@P(X)$.
On the other hand, an $\infty$-Borel locale can, in particular, be given by an arbitrary small complete Boolean algebra, possibly atomless.
For example, the regular open or Lebesgue measure algebra of $[0, 1]$ correspond to $\infty$-Borel locales, which, intuitively, are the ``intersections'' of all dense open, respectively conull, sets in $[0, 1]$ (see \cref{ex:loc-baire-perfect}).
\end{example}

Analogously in the topological setting, by a \defn{$\kappa$-topological space} we will mean a set $X$ equipped with a $\kappa$-subframe $\@O_\kappa(X) \subseteq \@P(X)$.
We then define a \defn{$\kappa$-locale} $X$ to be an arbitrary $\kappa$-frame $\@O_\kappa(X)$, whose elements are called \defn{$\kappa$-open sets} of $X$; a $\kappa$-frame homomorphism $\@O_\kappa(Y) -> \@O_\kappa(X)$ is a \defn{$\kappa$-continuous map} $X -> Y$, and the \defn{category of $\kappa$-locales} is denoted $\!{\kappa Loc} := \!{\kappa Frm}^\op$.
(Note that when we omit $\kappa$, we mean $\kappa = \omega_1$ in the Borel context but $\kappa = \infty$ in the topological context, in accord with standard terminology.)

The main example of a $\kappa$-topological space is a topological space which is \defn{$\kappa$-based}, i.e., has a $\kappa$-ary (sub)basis; in such a space, arbitrary unions of open sets are determined by $\kappa$-ary ones, i.e., a $\kappa$-based topological space is the same thing as a $\kappa$-based $\kappa$-topological space (see \cref{thm:frm-idl-kgen}).
For example, a second-countable (= $\sigma$-based) topological space can be faithfully regarded as a $\sigma$-topological space.
The analogous class of $\kappa$-locales correspond to the $\kappa$-generated ($\kappa$-)frames.

A (second-countable $\sigma$-)topological space contains more structure than a Borel space, which may be forgotten by replacing the ($\sigma$-)topology with the Borel $\sigma$-algebra it generates.
Motivated by this, we adopt the following

\begin{convention}
\label{cvt:loc-forget}
We regard all of the \emph{free} functors from \cref{sec:frm-cat} as nameless \emph{forgetful} functors between the dual categories of locales (and variations thereof).
Thus, for example, a $\sigma$-locale $X$ has an \defn{underlying $\sigma$-Borel locale}, given by $\@B_\sigma(X) := \ang{\@O_\sigma(X) \qua \!{\sigma Frm}}_\!{\sigma Bool}$.

Note that this convention is consistent with our \cref{cvt:frm-cbool-incl}, according to which $\@O_\sigma(X) \subseteq \@B_\sigma(X)$ is regarded as a $\sigma$-subframe, namely the $\sigma$-subframe of $\sigma$-open sets.
We also have $\@O_\sigma(X) \subseteq \@O(X) := \ang{\@O_\sigma(X) \qua \!{\sigma Frm}}_\!{Frm} \cong \sigma\@I(\@O_\sigma(X))$; note that $\@O_\sigma(X) = \@O(X)_\sigma$ (by \cref{thm:vlat-cptbasis}), i.e., ``$\sigma$-open'' = ``$\sigma$-compact open''.
All of these lattices are contained in the (large) complete Boolean algebra $\@B_\infty(X) := \ang{\@O_\sigma(X) \qua \!{\sigma Frm}}_\!{CBOOL}$.
It is often helpful to regard $\@B_\infty(X)$ as playing role of the ``powerset $\@P(X)$'' in the localic context; see \cref{ex:loc-infbor}.
For example, a locale $X$ consists of an underlying $\infty$-Borel locale together with a ``topology'' $\@O(X) \subseteq \@B_\infty(X)$.
(However, $\@O(X)$ is not allowed to be an arbitrary subframe of $\@B_\infty(X)$: it needs to be small and to freely generate $\@B_\infty(X)$ qua frame.
See \cref{rmk:frm-bool-gen-free}.)

Via this analogy, we can define the analogs of various common topological set notions in a locale $X$:
a \defn{closed set} will mean an element of $\neg \@O(X) = \{\neg U \mid U \in \@O(X)\} \subseteq \@B_\infty(X)$;
a \defn{clopen set} is an element of $\@O(X)_\neg$;
the \defn{interior} $B^\circ$ and \defn{closure} $\-B$ of an arbitrary $\infty$-Borel $B \in \@B_\infty(X)$ are the greatest open set $\le B$ and least closed set $\ge B$, respectively;
$B$ is \defn{dense} if $\-B = X$ (= $\top \in \@B_\infty(X)$).
We also call $X$ \defn{$\kappa$-based} if $\@O(X)$ is $\kappa$-generated (as a frame or $\bigvee$-lattice).
\end{convention}

\begin{remark}
\label{rmk:loc-reglind-baire}
If $X$ is a compact Hausdorff space, or more generally a regular Lindelöf space, then $\@O(X)$ is a $\sigma$-coherent frame, whose $\sigma$-compact elements $\@O_\sigma(X) \subseteq \@O(X)$ are the cozero open sets in $X$.
Thus, $\sigma$-locales can also be regarded as a generalization of regular Lindelöf spaces; see \cite{MVlind} for this point of view.

The \defn{Baire $\sigma$-algebra} in a topological space is that generated by the (co)zero sets (see e.g., \cite[4A3K]{Fmeas}; this definition agrees with other commonly used definitions in compact Hausdorff spaces).
Thus, it would perhaps have been more consistent for us to use ``Baire set'' instead of ``Borel set'' in our generalized terminology.
We have chosen the latter, due to its far more ubiquitous use (as well as the various other unrelated uses of ``Baire'') in classical descriptive set theory.
\end{remark}

We also introduce analogous terminology corresponding to the categories $\!{\kappa\kappa Frm}$.
A \defn{positive $\kappa$-Borel space} is a set $X$ equipped with a $(\kappa, \kappa)$-subframe $\@B^+_\kappa(X) \subseteq \@P(X)$;
a \defn{positive $\kappa$-Borel locale} $X$ is an arbitrary $(\kappa, \kappa)$-frame $\@B^+_\kappa(X)$, whose elements are called \defn{positive $\kappa$-Borel sets} of $X$; and the category of these and \defn{positive $\kappa$-Borel maps} (i.e., $(\kappa, \kappa)$-frame homomorphisms in the opposite direction) is denoted $\!{\kappa Bor^+Loc} := \!{\kappa\kappa Frm}^\op$.
As with $\kappa$-Borel locales, we take a \defn{positive $\infty$-Borel locale} to mean a small-presented large $(\infty, \infty)$-frame.
These notions sit in between locales and Borel locales: every $\kappa$-locale $X$ has an underlying positive $\kappa$-Borel locale (given by $\@B^+_\kappa(X) := \ang{\@O_\kappa(X) \qua \!{\kappa Frm}}_\!{\kappa\kappa Frm}$), while every positive $\kappa$-Borel locale $X$ has an underlying $\kappa$-Borel locale (given by $\@B_\kappa(X) := \ang{\@B^+_\kappa(X) \qua \!{\kappa\kappa Frm}}_\!{\kappa Bool}$).

The main purpose of introducing positive Borel locales is to provide the localic analog of the specialization preorder on a topological space:

\begin{example}
\label{ex:loc-infbor+}
A positive $\infty$-Borel space is a set $X$ equipped with a collection of subsets $\@B^+_\infty(X)$ closed under arbitrary intersections and unions; such a collection is also known as an \defn{Alexandrov topology} on $X$, and consists of all the upper sets with respect to some preorder $\le$ on $X$.
The \defn{specialization preorder} on an arbitrary topological space $X$ is defined by
\begin{align*}
x \le y \coloniff x \in \-{\{y\}} \iff \forall x \in U \in \@O(X)\, (y \in U),
\end{align*}
and corresponds to the Alexandrov topology generated by the topology on $X$.
Thus, the specialization preorder can be regarded as the ``underlying positive $\infty$-Borel space'' of a topological space, where positive $\infty$-Borel sets are precisely the upper sets;
the underlying positive $\infty$-Borel locale of a locale is analogous.
\end{example}

\begin{example}
\label{ex:loc-sbor+}
It is a classical theorem from descriptive set theory (see \cite[28.11]{Kcdst}) that a Borel subset of Cantor space $2^\#N$ is upward-closed in the product of the usual ordering $2 = \{0 < 1\}$ iff it belongs to the closure under countable intersections and unions of the (subbasic) open sets for the Scott topology on $2^\#N$, i.e., the product $\#S^\#N$ of the \defn{Sierpinski space} $\#S := 2$ with the topology where $\{1\}$ is open but not closed.
In other words, the underlying positive ($\sigma$-)Borel space of $\#S^\#N$ remembers precisely the Borel $\sigma$-algebra together with the specialization preorder.
We will prove the $\kappa$-localic analog of the theorem just cited in \cref{thm:loc-pos-upper} below.
\end{example}

We locally order the categories $\!{\kappa Bor^+Loc}$, as well as $\!{\kappa Loc}$, via the \emph{same} pointwise partial order on the hom-sets of the dual categories $\!{\kappa\kappa Frm}$ and $\!{\kappa Frm}$.
That is, for $f, g : X -> Y$, say in $\!{\kappa Loc}$,
\begin{align*}
f \le g  \coloniff  \forall U \in \@O_\kappa(Y)\, (f^*(U) \le g^*(U)).
\end{align*}
(Note that we do not flip the partial order on the hom-sets; in 2-categorical terminology, we flip the $1$-cells but not the $2$-cells, i.e., $\!{\kappa Loc} = \!{\kappa Frm}^\op \ne \!{\kappa Frm}^\mathrm{coop}$.)
To motivate this, note that for functions $f, g : X -> Y$ to a preorder $Y$, we have $f \le g$ pointwise iff for every upper $U \subseteq Y$, we have $f^{-1}(U) \subseteq g^{-1}(U)$.

The following diagram, dual to part of the diagram \eqref{diag:frm-cat}, depicts the relationships between the categories introduced above, for $\kappa \le \lambda$:
\begin{equation}
\label{diag:loc-cat}
\begin{tikzcd}[column sep=2em]
\mathllap{\scriptstyle \!{Stone} \;\simeq\;}
\!{\omega BorLoc}
    \dar[shift left=2]
    \rar[shift left=2, right adjoint'] &
\!{\sigma BorLoc}
    \lar[shift left=2]
    \dar[shift left=2]
    \rar[shift left=2, right adjoint'] &
\dotsb
    \lar[shift left=2]
    \rar[shift left=2, right adjoint'] &
\!{\kappa BorLoc}
    \lar[shift left=2]
    \dar[shift left=2]
    \rar[shift left=2, right adjoint'] &
\!{\lambda BorLoc}
    \lar[shift left=2]
    \dar[shift left=2]
    \rar[shift left=2] &
\!{\infty BorLoc}
    \dar[shift left=2]
\\
\!{\omega Bor^+Loc}
    \uar[shift left=2, right adjoint']
    \rar[shift left=2, right adjoint'] &
\!{\sigma Bor^+Loc}
    \lar[shift left=2]
    \uar[shift left=2, right adjoint']
    \dar[shift left=2]
    \rar[shift left=2, right adjoint'] &
\dotsb
    \lar[shift left=2]
    \rar[shift left=2, right adjoint'] &
\!{\kappa Bor^+Loc}
    \lar[shift left=2]
    \uar[shift left=2, right adjoint']
    \dar[shift left=2]
    \rar[shift left=2, right adjoint'] &
\!{\lambda Bor^+Loc}
    \lar[shift left=2]
    \uar[shift left=2, right adjoint']
    \dar[shift left=2]
    \rar[shift left=2] &
\!{\infty Bor^+Loc}
    \uar[shift left=2, right adjoint']
\\
\mathllap{\scriptstyle \!{Spec} \;\simeq\;}
\!{\omega Loc}
    \uar[equal]
    \rar[shift left=2, right adjoint'] &
\!{\sigma Loc}
    \lar[shift left=2]
    \uar[shift left=2, right adjoint']
    \rar[shift left=2, right adjoint'] &
\dotsb
    \lar[shift left=2]
    \rar[shift left=2, right adjoint'] &
\!{\kappa Loc}
    \lar[shift left=2]
    \uar[shift left=2, right adjoint']
    \rar[shift left=2, right adjoint'] &
\!{\lambda Loc}
    \lar[shift left=2]
    \uar[shift left=2, right adjoint']
    \rar[shift left=2, right adjoint'] &
\!{Loc}
    \lar[shift left=2]
    \uar[shift left=2]
\end{tikzcd}
\end{equation}
The functors pointing upward or to the right are the forgetful functors from \cref{cvt:loc-forget}, corresponding to the \emph{free} functors in \eqref{diag:frm-cat}; the left adjoints, when they exist, correspond to the \emph{forgetful} functors in \eqref{diag:frm-cat}.
(Most free functors from $\!{\infty BorLoc}, \!{\infty Bor^+Loc}$ do not exist, since they are dual to categories of small-presented large structures; but the free functor from the former category to the latter, i.e., the forgetful functor $\!{CBOOL}_\infty -> \!{\infty\infty FRM}_\infty$, does exist, because a small-presented complete Boolean algebra is also small-presented as an $(\infty, \infty)$-frame by adding complements of generators, as described before \cref{thm:bool-cover-pres}.)
Here $\!{Stone}$ denotes the category of Stone spaces, while $\!{Spec}$ denotes the category of \defn{spectral spaces} (or \defn{coherent spaces}), which is dual to $\!{\omega Frm} = \!{DLat}$ by Stone duality for distributive lattices; see \cite[II~3.4]{Jstone}.

By the various results in \cref{sec:frm} on preservation properties of free functors, we have

\begin{proposition}
\label{thm:loc-forget}
\leavevmode
\begin{enumerate}
\item[(a)]  All of the forgetful functors in the above diagram \eqref{diag:loc-cat} are faithful, as well as order-faithful for the functors with domain and codomain in the bottom two rows.
\item[(b)]  The horizontal forgetful functors are also conservative, as well as full-on-isomorphisms in the case of $\!{\kappa Loc} -> \!{\lambda Loc}$.
\item[(c)]  The forgetful functors whose domain is in column ``$\kappa$'' preserve $\kappa$-ary coproducts (as well as all small limits).
\end{enumerate}
\end{proposition}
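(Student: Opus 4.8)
The plan is to prove all three parts by passing to the dual algebraic categories of \eqref{diag:frm-cat} and invoking the standard dictionary relating an adjunction $F \dashv G$ to its unit $\eta$: $F$ is faithful iff every component $\eta_A$ is a monomorphism, $F$ is order-faithful on hom-posets iff every $\eta_A$ is an order-embedding, and passing to opposite categories turns the assertions ``$F^\op$ is faithful / conservative / full on isomorphisms / preserves all small limits'' into ``$F$ is faithful / conservative / full on isomorphisms / preserves all small colimits''. Since every forgetful functor in \eqref{diag:loc-cat} is $F^\op$ for $F$ one of the free functors $\kappa\@I_\lambda$, $\ang{-}_\!{\lambda'\kappa' Frm}$, $\ang{-}_\!{\mu Bool}$, $\ang{-}_\!{\lambda Bool}$, or a composite of these, all three parts reduce to facts about these free functors already proved in \cref{sec:frm}.

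For (a): faithfulness of each forgetful functor is injectivity of the corresponding unit among $\down : A \to \kappa\@I_\lambda(A)$, $A \to \ang{A}_\!{CBOOL}$, and their intermediate factors, which is exactly \cref{cvt:frm-cbool-incl}, resting on \cref{thm:frm-neginf-inj}, \cref{thm:frm-neginf-bool-inj}, and \cref{thm:frm-dpoly-bifrm-inj}. For order-faithfulness, note that the functors among the bottom two rows of \eqref{diag:loc-cat} are opposites of $\kappa\@I_\lambda$ or of free functors into $(\lambda,\kappa)$-frames; I would then observe that $\down$ is an order-embedding, while for any $(\lambda,\kappa)$-frame $A$ the unit $A \to \ang{A}_\!{CBOOL}$ is an embedding of $(\lambda,\kappa)$-distributive polyposets by \cref{thm:frm-dpoly-inj} applied to the canonical polyorder (which is separated since $A$ is a poset), hence an order-embedding; applying the same to $\ang{A}_\!{\lambda'\kappa' Frm}$ shows the intermediate factors are order-embeddings as well.

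For (b): conservativity of the horizontal forgetful functors is conservativity of $\kappa\@I_\lambda : \!{\kappa Frm} \to \!{\lambda Frm}$ (\cref{thm:frm-idl-fulliso}), of $\ang{-}_\!{\lambda Bool}$ (\cref{thm:bool-free-cons}), and of $\ang{-}_\!{\lambda\lambda Frm}$ (\cref{thm:bifrm-free-cons}); the instances with target in the $\infty$-column are covered by the identical proofs, since faithfulness together with surjectivity of all (order-)epimorphisms forces isomorphisms to be reflected. That $\!{\kappa Loc} \to \!{\lambda Loc}$ is full on isomorphisms is the self-dual statement that $\kappa\@I_\lambda$ is full on isomorphisms, again \cref{thm:frm-idl-fulliso}. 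For (c): a forgetful functor with domain in column $\kappa$ is the opposite of a left adjoint (a free functor out of $\!{\kappa Frm}$, $\!{\kappa\kappa Frm}$, or $\!{\kappa Bool}$), hence preserves all small limits automatically; and it preserves $\kappa$-ary coproducts iff the corresponding free functor preserves $\kappa$-ary products, which is \cref{thm:frm-prod-compl} for the free functors among $(\lambda,\kappa)$-frame categories, \cref{thm:frm-prod-bool} and its corollary for the passage to $\kappa$-Boolean algebras, and composites for the rest, the $\infty$-column cases factoring through some $\!{\lambda\lambda Frm}$ with $\lambda < \infty$ exactly as in the proof of that corollary.

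The main obstacle here is mild and essentially bookkeeping: one must match every arrow of \eqref{diag:loc-cat}---including composites and the arrows into and within the $\infty$-column---with one of the cited preservation results, and verify that the $\infty$-column instances of conservativity and of $\kappa$-ary product preservation really do follow from the quoted proofs (which are phrased for $\lambda < \infty$) without any new idea. I expect this to go through routinely.
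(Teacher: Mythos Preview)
Your proposal is correct and follows essentially the same approach as the paper: dualize to the algebraic categories and invoke the previously established results about injectivity of units (\cref{thm:frm-neginf-inj,thm:frm-neginf-bool-inj,thm:frm-dpoly-bifrm-inj}), conservativity and fullness on isomorphisms (\cref{thm:frm-idl-fulliso,thm:bool-free-cons,thm:bifrm-free-cons}), preservation of $\kappa$-ary products (\cref{sec:frm-prod}), and that left adjoints preserve colimits. Your write-up is more detailed than the paper's one-line citation list, but the content is the same.
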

\begin{proof}
By \cref{thm:frm-idl-fulliso,thm:frm-neginf-inj,thm:frm-neginf-bool-inj,thm:frm-dpoly-bifrm-inj,thm:bool-free-cons,thm:bifrm-free-cons}, the results in \cref{sec:frm-prod}, and the fact that free functors always preserve colimits.
\end{proof}

Intuitively, the fact that the forgetful functors are faithful means that all of the above categories can be seen as consisting of $\infty$-Borel locales equipped with additional structure, while the morphisms are $\infty$-Borel maps which preserve that additional structure.
Fullness of $\!{\kappa Loc} -> \!{\lambda Loc}$ on isomorphisms means that being a $\kappa$-locale can be regarded as a mere \emph{property} of a locale, rather than additional structure; that is, we may treat $\kappa$-locales as particular kinds of locales.%
\footnote{In the literature (e.g., \cite{Jstone}), $\kappa$-locales are usually called \defn{$\kappa$-coherent locales}.}
Conservativity of the other horizontal functors means that e.g., if a $\sigma$-Borel map $f : X -> Y$ between $\sigma$-Borel locales is an $\infty$-Borel isomorphism, then its inverse (as an $\infty$-Borel map) is automatically a $\sigma$-Borel map.
This can be seen as an analog of the result from classical descriptive set theory that a Borel bijection between standard Borel spaces automatically has Borel inverse (see \cite[14.12]{Kcdst}).

We use the following terminology for (co)limits in the above categories.
As already mentioned (\cref{cvt:loc-frm}), \defn{product locales} will mean categorical products, dual to coproducts in the dual algebraic categories.
We will also refer to $\kappa$-ary coproducts in $\!{\kappa Loc}, \!{\kappa BorLoc}, \!{\kappa Bor^+Loc}$ as \defn{disjoint unions}, denoted $\bigsqcup$, and corresponding to products of the dual algebras; the nullary coproduct is also called the \defn{empty locale} $\emptyset$ (with $\@O(\emptyset) = \@B_\infty(\emptyset) = 1$).
In addition to \cref{thm:loc-forget}(c), the dual of \cref{thm:frm-prod-pushout} tells us that such coproducts are well-behaved:

\begin{proposition}
\label{thm:loc-coprod-pullback}
$\kappa$-ary disjoint unions in $\!{\kappa Loc}, \!{\kappa BorLoc}, \!{\kappa Bor^+Loc}$ are pullback-stable.
\qed
\end{proposition}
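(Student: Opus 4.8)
The plan is to obtain this as the dual of \cref{thm:frm-prod-pushout}. Recall from \cref{sec:frm-prod} that the $\kappa$-ary disjoint union $\bigsqcup_i X_i$ of $<\kappa$-many objects in $\!{\kappa Loc}$ (resp.\ $\!{\kappa BorLoc}$, resp.\ $\!{\kappa Bor^+Loc}$) is just the categorical coproduct, and hence, under the defining duality with $\!{\kappa Frm}$ (resp.\ $\!{\kappa Bool}$, resp.\ $\!{\kappa\kappa Frm} = \!{\lambda\kappa Frm}$ for $\lambda = \kappa$), corresponds to the $\kappa$-ary \emph{product} $\prod_i \@O_\kappa(X_i)$ (resp.\ $\prod_i \@B_\kappa(X_i)$, resp.\ $\prod_i \@B^+_\kappa(X_i)$), with the coprojections $X_i -> \bigsqcup_j X_j$ dual to the product projections $\pi_i$. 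These categories have all small limits (equivalently, their duals are cocomplete categories of algebras, as noted in \cref{sec:frm-cat}), so the pullbacks in question exist.

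First I would unwind what pullback-stability of this coproduct asserts: given an arbitrary morphism $f : Y -> \bigsqcup_j X_j$, one forms the pullbacks $Y_i := Y \times_{\bigsqcup_j X_j} X_i$ of the coprojections along $f$, and must show that the induced maps $Y_i -> Y$ exhibit $Y$ as the disjoint union $\bigsqcup_i Y_i$. Passing to the opposite (algebraic) category, where pullbacks become pushouts and $\bigsqcup$ becomes $\prod$, each $Y_i$ corresponds to the pushout of the projection $\pi_i : \prod_j \@O_\kappa(X_j) -> \@O_\kappa(X_i)$ along $f^* : \prod_j \@O_\kappa(X_j) -> \@O_\kappa(Y)$, and the conclusion to be proved translates into: the resulting cocone maps exhibit $\@O_\kappa(Y)$ as $\prod_i \@O_\kappa(Y_i)$. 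But this is verbatim the statement of \cref{thm:frm-prod-pushout}, applied in $\!{\kappa Frm}$, in $\!{\lambda\kappa Frm}$ with $\lambda = \kappa$, and in $\!{\kappa Bool}$ respectively (and likewise with $\@B^+_\kappa$ and $\@B_\kappa$ in place of $\@O_\kappa$).

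Since \cref{thm:frm-prod-pushout} is already in hand, there is no substantive obstacle; the entire content of the proof is the translation across the duality. The one point meriting a little care — and essentially the only thing I would actually write down — is the bookkeeping: matching ``pullback of the $i$th coprojection along $f$'' in $\!{\kappa Loc}$ (etc.) with ``pushout of the $i$th product projection $\pi_i$ along $f^*$'' in $\!{\kappa Frm}$ (etc.), and ``$Y \cong \bigsqcup_i Y_i$'' with ``$\@O_\kappa(Y) \cong \prod_i \@O_\kappa(Y_i)$''. After that one simply invokes \cref{thm:frm-prod-pushout} and is done.
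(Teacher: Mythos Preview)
Your proposal is correct and is exactly the paper's approach: the paper states this proposition immediately after noting that ``the dual of \cref{thm:frm-prod-pushout} tells us that such coproducts are well-behaved'' and gives no further proof. Your unwinding of the duality is precisely what the \qed is suppressing.
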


A \defn{$\kappa$-sublocale} of a $\kappa$-locale $X$ will mean a \emph{regular} subobject of $X \in \!{\kappa Loc}$, hence an equivalence class of $\kappa$-locales $Y$ equipped with a $\kappa$-continuous map $f : Y -> X$ which is a regular monomorphism, i.e., $f^* : \@O_\kappa(X) -> \@O_\kappa(Y) \in \!{\kappa Frm}$ is a regular epimorphism, i.e., surjective (see \cref{sec:cat-alg}).
In other words, every $\kappa$-open set in $Y$ is the ``restriction'' $f^*(U)$ of a $\kappa$-open set $U$ in $X$.
We call such an $f : Y -> X$ an \defn{embedding of $\kappa$-locales}.
By the usual abuse of notation for subobjects (again see \cref{sec:cat-alg}), we also refer to $Y$ itself as a $\kappa$-sublocale of $X$, denoted $Y \subseteq X$, and refer to the embedding $f$ as the ``inclusion'' $Y `-> X$ (the notation $Y \subseteq X$ is consistent with our notation for $\kappa$-Borel sets $B \subseteq X$, by \cref{cvt:loc-sub-pi02} below).
Let
\begin{align*}
\RSub_\!{\kappa Loc}(X) := \{\text{regular subobjects of } X \in \!{\kappa Loc}\} = \{\text{$\kappa$-sublocales of } X\}.
\end{align*}
The terms \defn{$\kappa$-Borel sublocale} and \defn{positive $\kappa$-Borel sublocale} likewise mean regular subobjects in $\!{\kappa BorLoc}, \!{\kappa Bor^+Loc}$ respectively, hence quotients in $\!{\kappa Bool}, \!{\kappa\kappa Frm}$ respectively.
However, due to \cref{thm:bool-epi-surj,thm:bifrm-oepi-surj}, these are the same as ordinary subobjects in $\!{\kappa BorLoc}$, respectively order-embedded subobjects in $\!{\kappa Bor^+Loc}$ (see \cref{sec:cat-ord}, and \cref{thm:loc-mono-epi} below).

Finally, a (positive) $\kappa$-(Borel )locale will be called \defn{$\lambda$-copresented} if its dual algebra is $\lambda$-presented, and the category of all these will be denoted $\!{\kappa (Bor^{(+)})Loc}_\lambda \subseteq \!{\kappa (Bor^{(+)})Loc}$.
This implies that it is $\kappa$-based.
By the dual of \cref{thm:frm-idl-kgen},
\begin{align*}
\!{Loc}_\kappa = \!{\kappa Loc}_\kappa = \!{\lambda Loc}_\kappa \quad\text{for $\lambda \ge \kappa$}.
\end{align*}
We also call a $\kappa$-copresented (positive) $\kappa$-(Borel )locale a \defn{standard (positive) $\kappa$-(Borel ) locale}.
This is motivated by the equivalence between $\sigma$-copresented ($\sigma$-)Borel locales and \defn{standard Borel spaces} (Borel subspaces of $2^\#N$) from classical descriptive set theory (see \cref{sec:loc-ctbpres} below).
Note that by our definition, all (positive) $\infty$-(Borel )locales are standard.

\begin{proposition}
$\kappa$-ary limits and disjoint unions, as well as the forgetful functors in \eqref{diag:loc-cat} from column ``$\kappa$'', preserve $\kappa$-copresentability.
\end{proposition}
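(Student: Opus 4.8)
The plan is to reduce the statement entirely, by dualization, to algebraic facts about $\kappa$-presented structures already established in \cref{sec:frm}. Recall that under $\!{\kappa Loc} = \!{\kappa Frm}^\op$, $\!{\kappa Bor^+Loc} = \!{\kappa\kappa Frm}^\op$, and $\!{\kappa BorLoc} = \!{\kappa Bool}^\op$, an object is $\kappa$-copresented precisely when its dual algebra is $\kappa$-presented; a $\kappa$-ary limit of (positive) $\kappa$-(Borel) locales corresponds to a $\kappa$-ary colimit of dual algebras; a $\kappa$-ary disjoint union corresponds to a $\kappa$-ary \emph{product} of dual algebras; and each forgetful functor of \eqref{diag:loc-cat} corresponds to one of the free functors of \eqref{diag:frm-cat}. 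So there are exactly three things to verify on the algebraic side: that $\kappa$-ary colimits, $\kappa$-ary products, and free functors each preserve $\kappa$-presentability in $\!{\kappa Frm}$, $\!{\kappa\kappa Frm}$, and $\!{\kappa Bool}$.

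For $\kappa$-ary limits of locales, I would invoke the standard fact about locally $\kappa$-presentable categories (recalled in \cref{sec:cat-lim}) that a $\kappa$-ary colimit of $\kappa$-presented objects is again $\kappa$-presented; since $\!{\kappa Frm}$, $\!{\kappa\kappa Frm}$, and $\!{\kappa Bool}$ are all locally $\kappa$-presentable, this immediately handles products of locales (dual to coproducts of algebras), equalizers (dual to coequalizers), pullbacks (dual to pushouts), and indeed any $\kappa$-ary limit. The only point to state explicitly here is that "$\kappa$-ary" subsumes the finite limits, since $\omega \le \kappa$.

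For $\kappa$-ary disjoint unions the dual statement is about $\kappa$-ary \emph{products} of algebras, which are limits and hence not covered by the previous paragraph; this is exactly why the proposition restricts to disjoint unions rather than claiming all colimits of locales preserve $\kappa$-copresentability. Here I would cite \cref{thm:frm-prod-pres} — a $\kappa$-ary product of $\mu$-presented $(\lambda,\kappa)$-frames is $\mu$-presented — applied with $\mu := \kappa$, and with $\lambda := \omega$ for $\!{\kappa Loc}$ and $\lambda := \kappa$ for $\!{\kappa Bor^+Loc}$, together with \cref{thm:bool-prod-pres} for $\!{\kappa BorLoc}$. Those results were proved via the cover/posite presentation machinery of \cref{sec:frm-prod}, so nothing further is needed.

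For the forgetful functors of \eqref{diag:loc-cat} whose domain lies in column ``$\kappa$'' — namely $\!{\kappa Loc} -> \!{\kappa Bor^+Loc}$, $\!{\kappa Bor^+Loc} -> \!{\kappa BorLoc}$, and the horizontal $\!{\kappa Loc} -> \!{\lambda Loc}$, $\!{\kappa Bor^+Loc} -> \!{\lambda Bor^+Loc}$, $\!{\kappa BorLoc} -> \!{\lambda BorLoc}$ for $\lambda \ge \kappa$, together with their composites — each corresponds to a free functor between algebraic categories ($\!{\kappa Frm} -> \!{\kappa\kappa Frm}$, $\!{\kappa\kappa Frm} -> \!{\kappa Bool}$, $\!{\kappa Frm} -> \!{\lambda Frm}$, etc.), and free functors preserve $\kappa$-presentability (a $\kappa$-ary presentation of $A$ in the smaller category is still a $\kappa$-ary presentation of $\ang{A}$ in the larger one), as noted in \cref{sec:frm-idl}; since preservation of $\kappa$-copresentability is closed under composition, this settles every remaining case. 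I do not anticipate any genuine obstacle: the proposition is bookkeeping that packages \cref{thm:frm-prod-pres}, \cref{thm:bool-prod-pres}, the general $\kappa$-ary-colimit fact, and the triviality that free functors preserve presentability, and the only spot requiring a moment's care is fixing the reading of "$\kappa$-ary limits" so that the finite limits are not overlooked.
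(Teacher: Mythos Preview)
Your proposal is correct and follows essentially the same approach as the paper: the paper's proof simply says that $\kappa$-ary disjoint unions are handled by \cref{thm:frm-prod-pres,thm:bool-prod-pres}, while ``the rest are general facts about $\kappa$-presentable objects in the dual categories,'' which is exactly what you have unpacked in detail (namely, that $\kappa$-ary colimits of $\kappa$-presented objects are $\kappa$-presented, and that free functors preserve $\kappa$-presentability).
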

\begin{proof}
$\kappa$-ary disjoint unions are by \cref{thm:frm-prod-pres,thm:bool-prod-pres}; the rest are general facts about $\kappa$-presentable objects in the dual categories.
\end{proof}

\subsection{Spatialization}
\label{sec:loc-sp}

Given a topological space $X$, we may forget its underlying set and keep only the frame $\@O(X)$ of open sets, yielding the \defn{underlying locale} of $X$; we treat this as a nameless forgetful functor
\begin{align*}
\!{Top} --> \!{Loc},
\end{align*}
which is formally the same as the functor $\@O : \!{Top} -> \!{Frm}^\op$.

Conversely, given a locale $X$, we may define a canonical topological space from it as follows.
A \defn{point} in $X$ is a continuous map $x : 1 -> X$, or equivalently a frame homomorphism $x^* : \@O(X) -> \@O(1) = 2$.
We may identify points $x : 1 -> X$ with the sets $(x^*)^{-1}(\top) \subseteq \@O(X)$, which are \defn{completely prime filters}, i.e., complements of principal ideals $(x^*)^{-1}(\bot)$, thought of as the neighborhood filter of $x$; the complement (in $\@B_\infty(X)$) of the greatest element of $(x^*)^{-1}(\top)$ can be thought of as the closure $\-x := \-{\{x\}}$, and is an \defn{irreducible} closed set, meaning $\up \-x$ is a prime filter in $\neg \@O(X)$.
The \defn{spatialization} of $X$ is the space of points
\begin{align*}
\Sp(X) := \!{Loc}(1, X)
\end{align*}
equipped with the topology consisting of the open sets
\begin{align*}
\Sp(U) := \{x \in \Sp(X) \mid x^*(U) = \top \in 2\}
\end{align*}
for $U \in \@O(X)$.
In other words, $\Sp : \@O(X) -> \@P(X)$ is easily seen to be a frame homomorphism; $\@O(\Sp(X))$ is by definition its image.

Given a topological space $X$ and a locale $Y$, a continuous map $f : X -> Y$ induces a map $X -> \Sp(Y)$ taking each point $x \in X$ to the composite map of locales $f(x) : 1 --->{x} X --->{f} Y$, given by $f(x)^*(V) = x^*(f^*(V)) = (\top \text{ iff $x \in V$})$ for $V \in \@O(Y)$.
In other words, $f$ is equivalently a frame homomorphism $f^* : \@O(Y) -> \@O(X) \subseteq \@P(X) \cong 2^X$, hence transposes to a map $X -> 2^{\@O(Y)}$ whose image lands in the set of frame homomorphisms, i.e., in $\Sp(Y)$; this is the map induced by $f$.
Conversely, a map $g : X -> \Sp(Y)$ corresponds in this way to a frame homomorphism $(V |-> \{x \in X \mid g(x)^*(V) = \top\} = g^{-1}(\Sp(V))) : \@O(Y) -> \@P(X)$, which lands in $\@O(X)$ iff $g$ is continuous.
This yields a natural bijection
\begin{align*}
\!{Loc}(X, Y) \cong \!{Top}(X, \Sp(Y)),
\end{align*}
i.e., $\Sp : \!{Loc} -> \!{Top}$ is right adjoint to the forgetful functor.
(This is a Stone-type adjunction induced by the ``commuting'' structures of a topology and a frame on $2$; see \cite[VI~\S4]{Jstone}.)

The adjunction unit is given by, for each topological space $X$, the continuous map
\begin{alignat*}{3}
\eta_X : X &--> \Sp(X) &{}\cong{}& \{\text{completely prime filters in } \@O(X)\} &{}\cong{}& \{\text{irreducible closed sets}\} \subseteq \neg \@O(X) \\
x &|--> && \{U \in \@O(X) \mid x \in U\} &{}|->{}& \-x,
\end{alignat*}
and is an isomorphism iff $X$ is \defn{sober}, meaning $T_0$ (whence $\eta_X$ is injective) and every irreducible closed set has a dense point.
Examples include any Hausdorff space (irreducible closed sets being singletons) as well as most naturally occurring non-Hausdorff spaces in ``ordinary'' mathematics, such as spectra of rings or continuous dcpos (see \cite{Jstone}).
The adjunction counit is given by, for each locale $Y$, the map
\begin{align*}
\epsilon_Y : \Sp(Y) &--> Y
\end{align*}
defined by
\begin{align*}
\epsilon_Y^* := \Sp : \@O(Y) --> \@O(\Sp(Y)).
\end{align*}
By definition, $\epsilon_Y^*$ is always surjective, i.e., $\epsilon_Y$ is a sublocale embedding; while $\epsilon_Y^*$ is injective iff $Y$ is \defn{spatial}, meaning it admits enough points to separate open sets.
In particular, the underlying locale of a topological space $X$ is always spatial; hence the adjunction $\!{Top} \rightleftarrows \!{Loc}$ is \defn{idempotent} (see e.g., \cite[VI~4.5]{Jstone}), meaning it factors into the reflection into sober spaces followed by the coreflective inclusion of spatial locales:
\begin{equation*}
\begin{tikzcd}
\!{Top} \rar[shift left=2, two heads, "\Sp"] &
\!{SobTop} \lar[shift left=2, rightarrowtail, right adjoint] \rar[phantom, "\simeq"] &[-2em]
\!{SpLoc} \rar[shift left=2, rightarrowtail] &
\!{Loc} \lar[shift left=2, two heads, right adjoint, "\Sp"]
\end{tikzcd}
\end{equation*}

For each of the other categories of locale-like structures defined in the preceding subsection, dual to a category of lattice-theoretic algebras, we may set up an idempotent adjunction with the corresponding category of spaces equipped with a subalgebra of $\@P(X)$ in exactly the same way; we refer to all of these as right adjoints as \defn{spatialization}.
Thus we have idempotent adjunctions
\begin{equation*}
\begin{tikzcd}
\!{\kappa Top} \rar[shift left=2] &
\!{\kappa Loc}, \lar[shift left=2, right adjoint, "\Sp"] &
\!{\kappa Bor} \rar[shift left=2] &
\!{\kappa BorLoc}, \lar[shift left=2, right adjoint, "\Sp"] &
\!{\kappa Bor^+} \rar[shift left=2] &
\!{\kappa Bor^+Loc} \lar[shift left=2, right adjoint, "\Sp"]
\end{tikzcd}
\end{equation*}
between the respective categories of $\kappa$-topological spaces and $\kappa$-locales, of $\kappa$-Borel spaces and $\kappa$-Borel locales, and of positive $\kappa$-Borel spaces and positive $\kappa$-Borel locales.
These adjunctions fit into the obvious three-dimensional commutative diagram together with \eqref{diag:loc-cat}, one ``slice'' of which is
\begin{equation}
\label{diag:loc-sp}
\begin{tikzcd}
\!{\kappa BorLoc}
    \dar[shift left=2]
    \rar[shift left=2, right adjoint', "\Sp"] &
\!{\kappa Bor}
    \lar[shift left=2]
    \dar[shift left=2]
\\
\!{\kappa Bor^+Loc}
    \uar[shift left=2, right adjoint']
    \dar[shift left=2]
    \rar[shift left=2, right adjoint', "\Sp"] &
\!{\kappa Bor^+}
    \lar[shift left=2]
    \uar[shift left=2, right adjoint']
    \dar[shift left=2]
\\
\!{\kappa Loc}
    \uar[shift left=2, right adjoint']
    \rar[shift left=2, right adjoint', "\Sp"] &
\!{\kappa Top}
    \lar[shift left=2]
    \uar[shift left=2, right adjoint']
\end{tikzcd}
\end{equation}
Commutativity of the right adjoints means that e.g., for a $\kappa$-locale $X$, its $\kappa$-topological spatialization $\Sp(X)$, which is the set of points $\Sp(X) \cong \!{\kappa Frm}(\@O_\kappa(X), 2)$ equipped with the image $\@O_\kappa(\Sp(X))$ of the $\kappa$-frame homomorphism $\Sp : \@O_\kappa(X) -> \@P(\Sp(X))$, has underlying $\kappa$-Borel space given by $\Sp(X)$ equipped with the $\kappa$-Boolean subalgebra of $\@P(\Sp(X))$ generated by $\@O_\kappa(\Sp(X))$, which is the same as $\Sp(X) \cong \!{\kappa Frm}(\@O_\kappa(X), 2) \cong \!{\kappa Bool}(\ang{\@O_\kappa(X) \qua \!{\kappa Frm}}_\!{\kappa Bool}, 2) = \!{\kappa Bool}(\@B_\kappa(X), 2)$ equipped with the image of $\Sp : \@B_\kappa(X) -> \@P(\Sp(X))$, which is the $\kappa$-Borel spatialization of $X$.
Note that this justifies our using the same notation $\Sp$ to denote all of these spatializations.

\begin{remark}
\label{rmk:loc-sp-forget}
However, the horizontal \emph{left} adjoints (the forgetful functors from spaces to locales) in the above diagram do not always commute with the vertical \emph{right} adjoints (the forgetful functors from topological to positive Borel to Borel).
Dually, this means that e.g., given a subframe $\@O(X) \subseteq \@P(X)$, the free complete Boolean algebra $\ang{\@O(X)}_\!{CBOOL}$ it generates need not be the complete Boolean subalgebra of $\@P(X)$ generated by $\@O(X)$.
A counterexample is given by taking $\@O(X) := \ang{\#N}_\!{Frm}$, which is a subframe $\@L(\ang{\#N}_\!{\wedge Lat}) \subseteq \@P(\ang{\#N}_\!{\wedge Lat})$ of a powerset (see \cref{sec:frm-idl}); but $\ang{\@O(X)}_\!{CBOOL} = \ang{\#N}_\!{CBOOL}$ is not a complete Boolean subalgebra of a powerset.

A related issue is that many familiar (say, topological) properties and constructions need not be preserved by the horizontal forgetful functors above (even when starting with, say, sober spaces).
For example, the localic product $\#Q \times \#Q$ is not spatial, hence is not the underlying locale of the topological product (see \cite[II~2.14]{Jstone}).

These issues mean that when we are regarding a space as a locale, we must be careful about \emph{when} we apply the nameless horizontal forgetful functor above.
In general, our convention will be to \textbf{pass to locales as soon as possible}.
For example, ``the $\infty$-Borel locale $\#R$'' will mean the underlying $\infty$-Borel locale of the underlying locale, rather than the (discrete) underlying $\infty$-Borel locale of the underlying $\infty$-Borel space; while $\#Q \times \#Q$ will refer to the localic product.

We now list some well-behaved classes of topological spaces, for which many spatial notions agree with their localic analogs.
Usually, we will only be regarding these kinds of spaces as locales, so that the above issues never arise.
\begin{itemize}

\item  Compact Hausdorff spaces form a full subcategory of $\!{SobTop} \simeq \!{SpLoc} \subseteq \!{Loc}$ which is closed under arbitrary limits in $\!{Loc}$ (see \cite[III~1.6--7]{Jstone}).

\item  Locally compact Hausdorff spaces are closed under finite limits in $\!{Loc}$ (see \cite[II~2.13]{Jstone}).

\item  In particular, identifying sets with discrete spaces, we may regard $\!{Set} \subseteq \!{SobTop}$ as a full subcategory of $\!{Loc}$ closed under finite limits (and arbitrary colimits), the \defn{discrete locales}, dual to complete atomic Boolean algebras $\!{CABool} \subseteq \!{Frm}$ (see \cref{sec:upkzfrm}).

Similarly, we call $\kappa$-ary sets \defn{$\kappa$-discrete locales}, regarded as a full subcategory $\!{Set}_\kappa \subseteq \!{Loc}_\kappa = \!{\kappa Loc}_\kappa$ of standard $\kappa$-locales, dual to $\!{\kappa CABool}_\kappa \subseteq \!{\kappa Frm}_\kappa$ from \cref{sec:upkzfrm}.
Note that $\!{Set}_\kappa$ is also a full subcategory of standard $\kappa$-Borel locales $\!{\kappa BorLoc}_\kappa$.

\item  Completely metrizable spaces are closed under countable limits in $\!{Loc}$ (see \cite[4.1]{Ifun}).

\item  The \defn{Sierpinski space} $\#S = \{0 < 1\}$, with $\{1\}$ open but not closed, is sober; and all topological and localic powers of it agree (with $\@O(\#S^X) = \ang{X}_\!{Frm}$; see \cite[VII~4.9]{Jstone}, \cref{thm:loc-sierpinski}).

\item  More generally, countably copresented subspaces of powers of $\#S$ are well-behaved, and also in the (positive) Borel contexts; see \cref{sec:loc-ctbpres}.

\end{itemize}
\end{remark}

\subsection{The Borel hierarchy}
\label{sec:loc-bor}

Classically, the Borel hierarchy in a metrizable topological space $X$ is defined by letting the $\*\Sigma^0_1$ sets be the open sets,
the $\*\Pi^0_\alpha$ sets be the complements of $\*\Sigma^0_\alpha$ sets for each ordinal $\alpha$,
the $\*\Delta^0_\alpha$ sets be those which are both $\*\Sigma^0_\alpha$ and $\*\Pi^0_\alpha$,
and the $\*\Sigma^0_\alpha$ sets, for $\alpha \ge 2$, be countable unions of sets which are $\*\Pi^0_\beta$ for some $\beta < \alpha$; see \cite[11.B]{Kcdst}.
Thus, for example, a $\*\Sigma^0_2$ set is an $F_\sigma$ set (countable union of closed), while a $\*\Pi^0_2$ set is a $G_\delta$ set (countable intersection of open).
In the non-metrizable setting, a better-behaved definition, due to Selivanov~\cite{Sdom}, takes instead $\*\Sigma^0_\alpha$ sets to be countable unions
\begin{align*}
\bigcup_{i \in \#N} (A_i \setminus B_i) \quad\text{for $A_i, B_i \in \Sigma^0_{\beta_i}$, $\beta_i < \alpha$}.
\end{align*}
It is easily seen by induction that this only makes a difference for $\*\Sigma^0_2$, where the effect is to ensure that $\*\Sigma^0_2$ is the $\sigma$-subframe of $\@P(X)$ generated by the open and closed sets.

Motivated by this definition and the formula \eqref{eq:frm-neg-normform} for elements of $\@N_\kappa$, we define the \defn{$\kappa$-Borel hierarchy} of a $\kappa$-locale $X$ as follows:
\begin{align*}
\kappa\Sigma^0_{1+\alpha}(X) &:= \@N_\kappa^\alpha(\@O_\kappa(X)) \subseteq \@B_\kappa(X), \\
\kappa\Pi^0_{1+\alpha}(X) &:= \neg \@N_\kappa^\alpha(\@O_\kappa(X)) \subseteq \@B_\kappa(X), \\
\kappa\Delta^0_{1+\alpha}(X) &:= \kappa\Sigma^0_{1+\alpha}(X) \cap \kappa\Pi^0_{1+\alpha}(X) = \@N_\kappa^\alpha(\@O_\kappa(X))_\neg \subseteq \@B_\kappa(X).
\end{align*}
We call the elements $B \in \kappa\Sigma^0_\alpha(X)$ the \defn{$\kappa\Sigma^0_\alpha$ sets of $X$}; similarly for the other classes.

\begin{convention}
Classically, boldface $\*\Sigma, \*\Pi, \*\Delta$ is used in order to distinguish from the \emph{effective} Borel hierarchy, for which lightface is reserved (see \cite{Mdst}).
Since we will never consider effective notions in this paper, we have chosen to use lightface, in order to avoid cluttering the notation any more than it already is.
To avoid any possible confusion, we will always write the prefix $\kappa$ when denoting the localic Borel hierarchy (even when $\kappa = \omega_1$ or $\kappa = \infty$).
However, we will continue to use boldface for the classical ($\sigma$-)Borel hierarchy in topological spaces.
\end{convention}

We have the usual picture of the $\kappa$-Borel hierarchy (see \cite[11.B]{Kcdst}):
\begin{equation*}
\begin{tikzcd}[row sep=0.5em,column sep=1em,every arrow/.style={phantom,"\subseteq" sloped}]
& \mathllap{\@O_\kappa ={}} \kappa\Sigma^0_1 \drar & \subseteq & \kappa\Sigma^0_2 \drar & \subseteq & \dotsb & \subseteq & \kappa\Sigma^0_\alpha \drar & \subseteq & \dotsb \\
\mathllap{(\@O_\kappa)_\neg ={}} \kappa\Delta^0_1 \urar \drar && \kappa\Delta^0_2 \urar \drar && \dotsb && \kappa\Delta^0_\alpha \urar \drar && \dotsb && \subseteq \@B_\kappa(X) \\
& \mathllap{\neg \@O_\kappa ={}} \kappa\Pi^0_1 \urar & \subseteq & \kappa\Pi^0_2 \urar & \subseteq & \dotsb & \subseteq & \kappa\Pi^0_\alpha \urar & \subseteq & \dotsb
\end{tikzcd}
\hspace{-3em}
\end{equation*}

The $\sigma$-localic Borel hierarchy is connected to the classical one as follows: for a second-countable ($\sigma$-)topological space $X$, the Borel $\sigma$-algebra is the image in $\@P(X)$ of the Borel $\sigma$-algebra of the underlying $\sigma$-locale of $X$ (see \cref{rmk:loc-sp-forget}); this quotient map restricts to give
\begin{align}
\label{eq:loc-sp-bor}
\*\Sigma^0_\alpha(X) = \im(\sigma\Sigma^0_\alpha(X) -> \@P(X)).
\end{align}
Similarly for $\*\Pi^0_\alpha$.

\begin{remark}
It is possible for the spatial $\*\Sigma^0_\alpha(X)$ to be a nontrivial quotient of the localic $\sigma\Sigma^0_\alpha(X)$ (even for a second-countable $\sigma$-topological $X$).
Take $X = \#Q$; by the localic Baire category theorem (see \cref{thm:loc-baire} below), the join of the closed singletons $\{x\}$ is $< \top$ in $\sigma\Sigma^0_2(X)$, but of course not in $\*\Sigma^0_2(X)$.
In \cref{sec:loc-ctbpres}, we will see a large class of spaces for which $\*\Sigma^0_\alpha(X) \cong \sigma\Sigma^0_\alpha(X)$.
\end{remark}

For a $\kappa$-locale $X$ and $\lambda \ge \kappa$, we may regard $X$ as a $\lambda$-locale, hence obtain a $\lambda$-Borel hierarchy of $X$.
As $\lambda$ varies, these are related as follows:
\begin{equation*}
\begin{tikzcd}[row sep=1em, column sep=0.8em, every arrow/.style={phantom,"\subseteq" sloped}]
\infty\Sigma^0_1(X) \rar &
\infty\Sigma^0_2(X) \rar &
\dotsb \rar &
\infty\Sigma^0_\alpha(X) \rar &
\dotsb \rar &
\infty\Sigma^0_\kappa(X) \rar &
\dotsb \rar &
\infty\Sigma^0_\lambda(X) \rar &
\dotsb \rar &
\@B_\infty(X)
\\
\vdots \uar &
\vdots \uar &
&
\vdots \uar &
&
\vdots \uar &
&
\vdots \uar
\\
\lambda\Sigma^0_1(X) \uar \rar &
\lambda\Sigma^0_2(X) \uar \rar &
\dotsb \rar &
\lambda\Sigma^0_\alpha(X) \uar \rar &
\dotsb \rar &
\lambda\Sigma^0_\kappa(X) \uar \rar &
\dotsb \rar &
\lambda\Sigma^0_\lambda(X) \mathrlap{{}= \@B_\lambda(X)} \uar
\\
\kappa\Sigma^0_1(X) \uar \rar &
\kappa\Sigma^0_2(X) \uar \rar &
\dotsb \rar &
\kappa\Sigma^0_\alpha(X) \uar \rar &
\dotsb \rar &
\kappa\Sigma^0_\kappa(X) \mathrlap{{}= \@B_\kappa(X)} \uar
\end{tikzcd}
\end{equation*}
Note the differing lengths of the rows.
Indeed, if $X$ is the $\sigma$-locale with $\@O(X) = \ang{\#N}_\!{Frm}$ (namely $X = \#S^\#N$; see \cref{thm:loc-sierpinski}), then by the Gaifman--Hales \cref{thm:gaifman-hales}, $\@B_\infty(X) = \ang{\#N}_\!{CBOOL}$ is a proper class, while each $\infty\Sigma^0_{1+\alpha}(X) = \@N^\alpha(\ang{\#N}_\!{Frm})$ is a set; thus

\begin{corollary}[of Gaifman--Hales]
There is a $\sigma$-locale $X$ whose $\infty$-Borel hierarchy is strictly increasing.
\qed
\end{corollary}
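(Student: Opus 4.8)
The plan is to exhibit a single concrete $\sigma$-locale $X$ for which the inclusions $\infty\Sigma^0_{1+\alpha}(X) \subseteq \infty\Sigma^0_{1+\alpha+1}(X)$ are all strict, and to do so by leveraging the Gaifman--Hales \cref{thm:gaifman-hales} together with the identification $\infty\Sigma^0_{1+\alpha}(X) = \@N^\alpha(\@O(X))$ established just above. The natural candidate is $X = \#S^\#N$, equivalently the $\sigma$-locale with $\@O(X) = \ang{\#N}_\!{Frm}$ (the free frame on countably many generators); the point is that its $\infty$-Borel algebra $\@B_\infty(X) = \@N^\infty(\ang{\#N}_\!{Frm}) = \ang{\#N}_\!{CBOOL}$ is the free complete Boolean algebra on $\#N$, which by Gaifman--Hales is a proper class.

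First I would record that each level $\infty\Sigma^0_{1+\alpha}(X) = \@N^\alpha(\ang{\#N}_\!{Frm})$ is a genuine \emph{set}: by \cref{thm:frm-small}, the free frame $\ang{\#N}_\!{Frm}$ is small, and each application of $\@N$ (which freely adjoins complements, i.e.\ is a presented-frame construction, hence a small-generated frame) preserves smallness, as do the colimits at limit stages by \cref{thm:frm-small} again. Next I would observe that the union $\bigcup_\alpha \@N^\alpha(\ang{\#N}_\!{Frm}) = \@N^\infty(\ang{\#N}_\!{Frm}) = \ang{\#N}_\!{CBOOL}$ is a proper class by \cref{thm:gaifman-hales}. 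Now suppose for contradiction that the $\infty$-Borel hierarchy of $X$ were not strictly increasing, i.e.\ that $\@N^{\alpha}(\ang{\#N}_\!{Frm}) = \@N^{\alpha+1}(\ang{\#N}_\!{Frm})$ for some ordinal $\alpha$. Then $\@N^\alpha(\ang{\#N}_\!{Frm})$ would already be a $\sigma$-frame (indeed a frame) closed under complements of its elements, hence a complete Boolean algebra, and by the universal property it would equal $\@N^\infty(\ang{\#N}_\!{Frm}) = \ang{\#N}_\!{CBOOL}$; but the left side is a set and the right side is a proper class, a contradiction. So in fact $\@N^\alpha(\ang{\#N}_\!{Frm}) \subsetneq \@N^{\alpha+1}(\ang{\#N}_\!{Frm})$ for every $\alpha$, which is exactly the claim. (One should note that the inclusions $\eta : \@N^\alpha(A) \hookrightarrow \@N^{\alpha+1}(A)$ are honest inclusions by \cref{cvt:frm-neg-incl} / \cref{thm:frm-neg-inj}, so ``strictly increasing'' is literally ``proper subset at each successor step''.)

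The main obstacle is mostly bookkeeping rather than a deep difficulty: one must be careful that ``the hierarchy stabilizes at some stage'' really does force the algebra to be complete-Boolean and hence to coincide with $\ang{\#N}_\!{CBOOL}$. The subtle point is that stabilization at a \emph{successor} step $\alpha \to \alpha+1$ is what matters — once $\@N(\@N^\alpha(A)) = \@N^\alpha(A)$, every element of $\@N^\alpha(A)$ has a complement in $\@N^\alpha(A)$ (since $\@N$ adjoins a complement for each element, and the canonical map is now surjective), so $\@N^\alpha(A)$ is $\sigma$-Boolean, and then $\@N^\infty(A) = \@N^\alpha(A)$; conversely, if stabilization occurred at a limit stage it would propagate to the next successor. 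A secondary point to handle cleanly is that $\@N$ applied to a small frame is small: this follows because $\@N_\infty(A) = \ang{A \qua \!{Frm}, \neg a \mid \dots}_\!{Frm}$ is a \emph{frame presented by a small set of generators and relations}, hence small by \cref{thm:frm-small}. With those two observations in place, the argument is a two-line reductio, and I would present it as such.
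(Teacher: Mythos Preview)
Your proposal is correct and follows essentially the same approach as the paper: take $X = \#S^\#N$ so that $\@O(X) = \ang{\#N}_\!{Frm}$, observe that each level $\@N^\alpha(\ang{\#N}_\!{Frm})$ is a set while their union $\ang{\#N}_\!{CBOOL}$ is a proper class by Gaifman--Hales, and conclude that the hierarchy cannot stabilize. The paper states this argument in one sentence immediately before the corollary and marks the corollary with \qed; your version just fills in the bookkeeping (smallness of each stage via \cref{thm:frm-small}, and that stabilization would force $\@N^\alpha = \@N^\infty$) more explicitly.
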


For a locale $X$, each $\infty\Sigma^0_\alpha(X) \subseteq \@B_\infty(X)$ forms a small $\bigvee$-sublattice, while each $\infty\Pi^0_\alpha(X) \subseteq \@B_\infty(X)$ forms a small $\bigwedge$-sublattice; thus these inclusions have right/left adjoints, i.e., for any $\infty$-Borel set $B \in \@B_\infty(X)$, we may define its \defn{$\infty\Sigma^0_\alpha$-interior} and \defn{$\infty\Pi^0_\alpha$-closure}
\begin{align*}
B^{\circ\alpha} &:= \max \{A \in \infty\Sigma^0_\alpha(X) \mid A \le B\}, \\
\-B^\alpha &:= \min \{C \in \infty\Pi^0_\alpha(X) \mid B \le C\}.
\end{align*}
Thus $\-B^1 = \-B$, while $\-B^\alpha = B \iff B \in \infty\Pi^0_\alpha(X)$.
We say that $B$ is \defn{$\infty\Pi^0_\alpha$-dense} if $\-B^\alpha = X$.
Note that even if $X$ is a $\kappa$-locale and $B \in \@B_\kappa(X)$, $\-B^\alpha$ need not be in $\kappa\Pi^0_\alpha(X)$.
Similarly, there is no useful spatial analog of these notions for $\alpha \ge 2$.

For a $\kappa$-Borel locale $X$, since each $\kappa\Sigma^0_{1+\alpha}(X)$ is a $\kappa$-frame, we may define a $\kappa$-locale by
\begin{align*}
\@O_\kappa(\@D_\kappa^\alpha(X)) := \kappa\Sigma^0_{1+\alpha}(X) = \@N_\kappa^\alpha(\@O_\kappa(X)).
\end{align*}
This $\kappa$-locale $\@D_\kappa^\alpha(X)$ is called the \defn{$\alpha$th dissolution} of $X$ (a term due to Johnstone~\cite{Jsep}), and is equipped with a $\kappa$-continuous map
\begin{align*}
\epsilon_\kappa : \@D_\kappa^\alpha(X) &--> X,
\end{align*}
corresponding to the unit $\epsilon_\kappa^* : \@O_\kappa(X) `-> \@N_\kappa^\alpha(\@O_\kappa(X))$ (from \cref{sec:frm-neg}),
such that $\epsilon_\kappa$ is also a $\kappa$-Borel isomorphism and such that $\epsilon_\kappa^*(\kappa\Sigma^0_{1+\alpha}(X)) = \@O_\kappa(\@D_\kappa^\alpha(X))$.
In other words, we can think of $\@D_\kappa^\alpha(X)$ as ``$X$ with a finer $\kappa$-topology, inducing the same $\kappa$-Borel structure, in which $\kappa\Sigma^0_{1+\alpha}$ sets in $X$ become open''.

Note that for $\kappa < \infty$, $\@D_\kappa^\kappa(X) = \@D_\kappa^\infty(X)$, which corresponds to $\@N_\kappa^\infty(\@O_\kappa(X)) = \ang{\@O_\kappa(X)}_\!{\kappa Bool}$ but regarded as a $\kappa$-frame via the forgetful functor $\!{\kappa Bool} -> \!{\kappa Frm}$, is the ``free $\kappa$-locale'' on the underlying $\kappa$-Borel locale of $X$ (see \eqref{diag:loc-cat}).
It can thus be thought of as a $\kappa$-localic analog of the discrete topology on a topological space.
(However, the dependence on $\kappa < \infty$ is needed, since the forgetful functor $\!{Loc} -> \!{\infty BorLoc}$ in \eqref{diag:loc-cat} does not have a left adjoint.)

\begin{remark}
\label{rmk:loc-bor-pfrm}
In \cite{BWZpfrm}, the authors construct a variant dissolution which only adds complements for cozero sets of completely regular locales; the Lindelöf case is an instance of the dissolution on $\sigma$-locales (see \cref{rmk:loc-reglind-baire}).
\end{remark}

A basic result in classical descriptive set theory says that Borel sets may be made (cl)open while preserving $\sigma$-copresentability of the topology (see \cite[13.A]{Kcdst} and \cref{sec:loc-ctbpres} below).
The localic analog of this is given by the duals of \cref{thm:frm-bool-pres,thm:frm-neginf-pres}:

\begin{proposition}
\label{thm:loc-bor-loc}
Every standard $\kappa$-Borel locale is the underlying $\kappa$-Borel locale of a standard $\kappa$-locale.
\qed
\end{proposition}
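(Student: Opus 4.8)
The plan is to obtain \cref{thm:loc-bor-loc} simply by dualizing \cref{thm:frm-bool-pres}. Recall that a standard $\kappa$-Borel locale $X$ is, by definition, one whose dual $\kappa$-Boolean algebra $\@B_\kappa(X)$ is $\kappa$-presented; and a standard $\kappa$-locale is one whose dual $\kappa$-frame is $\kappa$-presented. The key input is \cref{thm:frm-bool-pres}, which asserts that for every $\kappa$-presented $\kappa$-Boolean algebra $B$ there is a $\kappa$-presented $\kappa$-frame $A$ with $\@N_\kappa^\infty(A) = \ang{A}_\!{\kappa Bool} \cong B$.

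Concretely, given a standard $\kappa$-Borel locale $X$, I would set $B := \@B_\kappa(X)$, apply \cref{thm:frm-bool-pres} to obtain a $\kappa$-presented $\kappa$-frame $A$ together with an isomorphism $\ang{A}_\!{\kappa Bool} \cong B$, and then let $X'$ be the $\kappa$-locale with $\@O_\kappa(X') := A$. Since $A$ is $\kappa$-presented, $X'$ is a standard $\kappa$-locale. Its underlying $\kappa$-Borel locale has, by \cref{cvt:loc-forget}, dual algebra $\@B_\kappa(X') = \ang{\@O_\kappa(X') \qua \!{\kappa Frm}}_\!{\kappa Bool} = \ang{A}_\!{\kappa Bool} \cong B = \@B_\kappa(X)$. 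Dualizing this isomorphism of $\kappa$-Boolean algebras gives an isomorphism of $\kappa$-Borel locales between the underlying $\kappa$-Borel locale of $X'$ and $X$, which is exactly the claim.

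There is essentially no obstacle here: the content has already been extracted into \cref{thm:frm-bool-pres} (itself a consequence of reflectivity and local $\kappa$-presentability of $\!{\kappa Bool} \subseteq \!{\kappa Frm}$, via \cref{thm:cat-lim-refl}, plus the trivial $\kappa = \omega$ case), and the only thing to check is the bookkeeping that ``underlying $\kappa$-Borel locale'' corresponds under duality precisely to the free functor $\ang{-}_\!{\kappa Bool} : \!{\kappa Frm} \to \!{\kappa Bool}$ — which is built into \cref{cvt:loc-forget}. The one point worth a word is to note that $X'$ so constructed witnesses ``$X$ with a finer $\kappa$-topology'': the unit map $A = \@O_\kappa(X') \hookrightarrow \@N_\kappa^\infty(A) \cong \@B_\kappa(X)$ is a $\kappa$-subframe inclusion by \cref{thm:frm-neginf-inj}, so $\@O_\kappa(X')$ sits inside $\@B_\kappa(X)$ as a generating $\kappa$-subframe, and the induced $\kappa$-Borel map $X' \to X$ (dual to this inclusion) is the asserted $\kappa$-Borel isomorphism. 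This is the localic content of \cref{thm:intro-dissolv}(a), and the proof is a two-line dualization.
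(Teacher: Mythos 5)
Your proposal is correct and is exactly the paper's intended argument: the proposition is stated as the dual of \cref{thm:frm-bool-pres}, and the only content is the bookkeeping you carry out, namely that ``underlying $\kappa$-Borel locale'' is by \cref{cvt:loc-forget} the dual of the free functor $\ang{-}_\!{\kappa Bool} : \!{\kappa Frm} \to \!{\kappa Bool}$, so setting $\@O_\kappa(X') := A$ for the $\kappa$-presented $\kappa$-frame $A$ produced by \cref{thm:frm-bool-pres} does the job.
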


\begin{proposition}
\label{thm:loc-bor-dissolv}
Let $X$ be a standard $\kappa$-locale.
For any $\alpha$ and $\kappa$-ary $\@C \subseteq \kappa\Sigma^0_\alpha(X)$, there is a standard $\kappa$-locale $X'$ equipped with a $\kappa$-continuous $\kappa$-Borel isomorphism $f : X' -> X$ such that, identifying $\@B_\kappa(X) = \@B_\kappa(X')$ via $f^*$, we have $\@C \subseteq \@O_\kappa(X') \subseteq \kappa\Sigma^0_\alpha(X)$.
(Thus we can also ensure $\@C \cap \kappa\Delta^0_\alpha(A) \subseteq \@O_\kappa(X')_\neg$.)
\qed
\end{proposition}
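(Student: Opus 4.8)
The plan is to obtain \cref{thm:loc-bor-dissolv} as the direct dual of \cref{thm:frm-neginf-pres} (together with \cref{thm:frm-bool-pres}, which underlies it). Since $X$ is a standard $\kappa$-locale, the $\kappa$-frame $A := \@O_\kappa(X)$ is $\kappa$-presented, and by definition of the $\kappa$-Borel hierarchy we have $\kappa\Sigma^0_\alpha(X) = \@N_\kappa^\alpha(A)$, with $\@C$ a $\kappa$-ary subset of it. So I would apply \cref{thm:frm-neginf-pres} to this $A$ and to $C := \@C$, producing a $\kappa$-presented $\kappa$-subframe $A' \subseteq \@N_\kappa^\alpha(A)$ with $A \cup \@C \subseteq A'$ and a canonical isomorphism $\@N_\kappa^\infty(A') \cong \@N_\kappa^\infty(A)$ — ``canonical'' meaning it restricts to the identity compatibly with the two inclusions $A' \hookrightarrow \@N_\kappa^\infty(A')$ and $A \hookrightarrow \@N_\kappa^\infty(A)$ (which are compatible with each other since $A \subseteq A'$).

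Next I would define $X'$ to be the standard $\kappa$-locale with $\@O_\kappa(X') := A'$; it is $\kappa$-copresented because $A'$ is $\kappa$-presented. The $\kappa$-frame inclusion $\@O_\kappa(X) = A \hookrightarrow A' = \@O_\kappa(X')$ is, by definition of $\!{\kappa Loc}$, of the form $f^*$ for a $\kappa$-continuous map $f : X' \to X$. Passing to underlying $\kappa$-Borel locales as in \cref{cvt:loc-forget}, $\@B_\kappa(X) = \@N_\kappa^\infty(A)$ and $\@B_\kappa(X') = \@N_\kappa^\infty(A')$, and the map induced on Borel algebras by $f$ is the free $\kappa$-Boolean extension of the inclusion $A \hookrightarrow A'$; this agrees with the canonical isomorphism of \cref{thm:frm-neginf-pres} by its compatibility with those inclusions (and uniqueness of extensions to free $\kappa$-Boolean algebras). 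Hence $f$ is a $\kappa$-Borel isomorphism, and identifying $\@B_\kappa(X) = \@B_\kappa(X')$ along $f^*$ makes $\@O_\kappa(X') = A'$, which by construction satisfies $\@O_\kappa(X) \cup \@C \subseteq A' \subseteq \@N_\kappa^\alpha(A) = \kappa\Sigma^0_\alpha(X)$, as required.

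For the parenthetical clause, note that every element of $\@C \cap \kappa\Delta^0_\alpha(X) = \@C \cap \@N_\kappa^\alpha(A)_\neg$ is complemented in $\@N_\kappa^\alpha(A)$. By the Remark following \cref{thm:frm-neginf-pres}, I would simply enlarge $\@C$ beforehand by adjoining the complements of these (finitely or $\kappa$-many) elements, still a $\kappa$-ary subset of $\@N_\kappa^\alpha(A)$; then for each such element, both it and its complement lie in $A' = \@O_\kappa(X')$, so it lies in $\@O_\kappa(X')_\neg$. Essentially the whole argument is formal unwinding; the only point that requires a moment's care — and the ``main obstacle'', such as it is — is checking that the $\kappa$-Borel isomorphism realized by $f$ coincides with the canonical isomorphism furnished by \cref{thm:frm-neginf-pres}, which reduces to the stated compatibility of that isomorphism with the inclusions of $A$ and $A'$. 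Everything else (standardness of $X'$, the sandwich $\@O_\kappa(X) \subseteq \@O_\kappa(X') \subseteq \kappa\Sigma^0_\alpha(X)$, and the clopen clause) is immediate from \cref{thm:frm-neginf-pres} and its Remark.
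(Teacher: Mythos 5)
Your proposal is correct and is exactly the paper's intended argument: the paper presents \cref{thm:loc-bor-dissolv} as the immediate dual of \cref{thm:frm-bool-pres} and \cref{thm:frm-neginf-pres} (hence the bare \qed), and your write-up simply unwinds that dualization, including the use of the Remark following \cref{thm:frm-neginf-pres} for the clopen clause. No gaps.
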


We will henceforth refer to the conclusion of this result by saying that $X'$ is a \defn{partial dissolution} of $X$ making each $C \in \@C$ $\kappa$-open.

We call a $\kappa$-locale $X$ \defn{ultraparacompact}, respectively \defn{zero-dimensional}, if the $\kappa$-frame $\@O_\kappa(X)$ is, as defined in \cref{sec:upkzfrm}.
Recall also \defn{$\kappa$-discrete} locales $\!{Set}_\kappa \subseteq \!{Loc}_\kappa$ from \cref{rmk:loc-sp-forget}.
By \cref{thm:upkzfrm-cabool-dircolim,thm:upkzfrm-kcabool-dircolim},

\begin{proposition}
\label{thm:loc-upkz-invlim}
A (standard) $\kappa$-locale is ultraparacompact zero-dimensional iff it is a ($\kappa$-ary) codirected limit, in $\!{\kappa Loc}$, of $\kappa$-discrete locales.
\qed
\end{proposition}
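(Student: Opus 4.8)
The plan is to dualize the two cited algebraic results, \cref{thm:upkzfrm-cabool-dircolim} and \cref{thm:upkzfrm-kcabool-dircolim}, and unwind the definitions of ``ultraparacompact zero-dimensional $\kappa$-locale'' and ``$\kappa$-discrete locale'' in terms of the corresponding $\kappa$-frames. Recall from the excerpt that a $\kappa$-locale $X$ is ultraparacompact zero-dimensional precisely when the $\kappa$-frame $\@O_\kappa(X)$ is, that $\kappa$-discrete locales are by definition the objects of $\!{Set}_\kappa$, which is dual to the category $\!{\kappa CABool}_\kappa$ of complete $\kappa$-atomic Boolean algebras (i.e.\ powersets $\@P(X)$ of $\kappa$-ary sets), regarded as $\kappa$-frames; and that a codirected limit in $\!{\kappa Loc}$ corresponds, under $\!{\kappa Loc} = \!{\kappa Frm}^\op$, to a directed colimit in $\!{\kappa Frm}$. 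So the statement to be proved is exactly the assertion: a ($\kappa$-presented) $\kappa$-frame $A$ is ultraparacompact zero-dimensional iff it is a ($\kappa$-ary) directed colimit in $\!{\kappa Frm}$ of complete ($\kappa$-atomic, in the $\kappa$-presented case) Boolean algebras.

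First I would handle the ``only if'' direction. For a (standard) ultraparacompact zero-dimensional $\kappa$-locale $X$, apply \cref{thm:upkzfrm-cabool-dircolim} (resp.\ \cref{thm:upkzfrm-kcabool-dircolim} when $X$ is standard) to $\@O_\kappa(X)$ to write it as a (resp.\ $\kappa$-ary) directed colimit in $\!{\kappa Frm}$ of complete $\kappa$-atomic Boolean algebras $\@P(C)$; dualizing, this exhibits $X$ as a (resp.\ $\kappa$-ary) codirected limit in $\!{\kappa Loc}$ of the $\kappa$-discrete locales $C$. Conversely, for the ``if'' direction, suppose $X = \projlim_i X_i$ in $\!{\kappa Loc}$ with each $X_i$ $\kappa$-discrete; dualizing, $\@O_\kappa(X) = \injlim_i \@P(X_i)$ in $\!{\kappa Frm}$. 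Now I invoke coreflectivity of $\!{\kappa UPKZFrm} \subseteq \!{\kappa Frm}$ (established in \cref{sec:upkzfrm}, just above \cref{thm:upkzfrm-cabool-dircolim}): since each $\@P(X_i)$ lies in $\!{\kappa UPKZFrm}$ (indeed each is $\kappa$-atomic Boolean, hence certainly ultraparacompact zero-dimensional, as noted where $\!{\kappa CABool}_\kappa \subseteq \!{\kappa UPKZFrm}_\kappa$ is observed), and a coreflective subcategory is closed under all colimits computed in the ambient category, the colimit $\@O_\kappa(X)$ remains ultraparacompact zero-dimensional; thus $X$ is an ultraparacompact zero-dimensional $\kappa$-locale. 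For the standard case one additionally notes that a $\kappa$-ary directed colimit of $\kappa$-presented frames is $\kappa$-presented, so $X$ is standard.

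The only mild obstacle is bookkeeping: making sure the ``$\kappa$-ary'' qualifier tracks correctly through the duality (a $\kappa$-ary codirected \emph{limit} of locales $\leftrightarrow$ a $\kappa$-ary directed \emph{colimit} of frames), and that ``$\kappa$-discrete'' is identified with ``complete $\kappa$-atomic Boolean algebra, viewed as a $\kappa$-frame'' rather than merely ``Boolean''; both of these are already spelled out in \cref{rmk:loc-sp-forget} and in \cref{sec:upkzfrm} (the presentation \eqref{eq:frm-caba-pres} shows $\@P(X)$ for $\kappa$-ary $X$ is $\kappa$-presented and lies in $\!{\kappa CABool}_\kappa \subseteq \!{\kappa UPKZFrm}_\kappa$), so no new argument is needed. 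Accordingly the proof is essentially a one-line dualization, and I would simply write: ``This is the dual of \cref{thm:upkzfrm-cabool-dircolim} and \cref{thm:upkzfrm-kcabool-dircolim}, together with coreflectivity of $\!{\kappa UPKZFrm} \subseteq \!{\kappa Frm}$ for the converse direction.''
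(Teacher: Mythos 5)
Your proposal is correct and matches the paper's intended argument: the forward direction is exactly the dualization of \cref{thm:upkzfrm-cabool-dircolim} and \cref{thm:upkzfrm-kcabool-dircolim}, and the converse is the observation (made in the paper just before \cref{thm:upkzfrm-cabool-pres}) that coreflectivity of $\!{\kappa UPKZFrm} \subseteq \!{\kappa Frm}$ makes it closed under colimits of the complete $\kappa$-atomic Boolean algebras $\@P(X_i) \in \!{\kappa CABool}_\kappa \subseteq \!{\kappa UPKZFrm}$, with $\kappa$-presentability preserved by $\kappa$-ary colimits in the standard case. You have merely made explicit the bookkeeping the paper leaves implicit.
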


We now have the following strengthened ``change of topology'' results, dual to \cref{thm:upkzfrm-bool-pres,thm:upkzfrm-neginf-pres}:

\begin{proposition}
\label{thm:loc-bor-upkz}
Every standard $\kappa$-Borel locale is the underlying $\kappa$-Borel locale of an ultraparacompact zero-dimensional standard $\kappa$-locale (hence a $\kappa$-ary codirected limit, in $\!{\kappa BorLoc}$, of $\kappa$-ary sets).
\qed
\end{proposition}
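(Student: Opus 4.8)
\textbf{Proof plan for \cref{thm:loc-bor-upkz}.}
The statement dualizes \cref{thm:upkzfrm-bool-pres}: a standard $\kappa$-Borel locale $X$ is the same as a $\kappa$-presented $\kappa$-Boolean algebra $\@B_\kappa(X)$, and \cref{thm:upkzfrm-bool-pres} already says that every such algebra is \emph{freely} generated (qua $\kappa$-Boolean algebra) by a $\kappa$-presented ultraparacompact zero-dimensional $\kappa$-frame $A$. Taking $X'$ to be the standard $\kappa$-locale with $\@O_\kappa(X') := A$, the fact that $\@B_\kappa(X') = \@N_\kappa^\infty(A) = \ang{A \qua \!{\kappa Frm}}_\!{\kappa Bool} \cong \@B_\kappa(X)$ is exactly the assertion that $X$ is the underlying $\kappa$-Borel locale of $X'$; and $X'$ is ultraparacompact zero-dimensional by construction, standard since $A$ is $\kappa$-presented. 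So the first clause is a direct translation of \cref{thm:upkzfrm-bool-pres} through the contravariant equivalence $\!{\kappa BorLoc} = \!{\kappa Bool}^\op$, together with the forgetful functor $\!{\kappa Loc} -> \!{\kappa BorLoc}$ of \cref{cvt:loc-forget} (which on objects sends $\@O_\kappa(X')$ to $\ang{\@O_\kappa(X') \qua \!{\kappa Frm}}_\!{\kappa Bool}$).

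For the parenthetical ``hence a $\kappa$-ary codirected limit, in $\!{\kappa BorLoc}$, of $\kappa$-ary sets'': first apply \cref{thm:loc-upkz-invlim} (more precisely its proof via \cref{thm:upkzfrm-kcabool-dircolim}) to write the ultraparacompact zero-dimensional standard $\kappa$-locale $X'$ as a $\kappa$-ary codirected limit $\projlim_i X_i$ in $\!{\kappa Loc}$, with each $X_i \in \!{Set}_\kappa$ a $\kappa$-discrete locale. Dually this is the statement that $A = \@O_\kappa(X') \cong \injlim_i \@O_\kappa(X_i) = \injlim_i \@P(\Sp X_i)$ as a $\kappa$-ary directed colimit in $\!{\kappa Frm}$ of complete $\kappa$-atomic Boolean algebras. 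Then apply the free functor $\ang{-}_\!{\kappa Bool} : \!{\kappa Frm} -> \!{\kappa Bool}$, i.e.\ $\@N_\kappa^\infty$; being a left adjoint it preserves the directed colimit, so $\@B_\kappa(X') = \@N_\kappa^\infty(A) \cong \injlim_i \@N_\kappa^\infty(\@P(\Sp X_i)) = \injlim_i \@P(\Sp X_i)$ in $\!{\kappa Bool}$ (each $\@P(\Sp X_i)$ is already $\kappa$-Boolean, so $\@N_\kappa^\infty$ fixes it). Dualizing back, $X \cong X' = \projlim_i X_i$ in $\!{\kappa BorLoc}$, with the $X_i$ now viewed as $\kappa$-ary sets qua $\kappa$-Borel locales (the inclusion $\!{Set}_\kappa \subseteq \!{\kappa BorLoc}_\kappa$ noted in \cref{rmk:loc-sp-forget}). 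This is essentially the content already recorded as \cref{thm:upkzfrm-bool-pres}, whose last parenthetical makes exactly this colimit assertion in $\!{\kappa Bool}$; I would simply cite it.

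The only mild subtlety — and the place I would be most careful — is bookkeeping about which forgetful functors commute: I must invoke that $\@N_\kappa^\infty = \ang{-}_\!{\kappa Bool}$ is a left adjoint (hence preserves the $\kappa$-ary directed colimit expressing $A$), which is immediate, and that it preserves $\kappa$-presentedness, which is the remark that free functors preserve $\lambda$-presentability in \cref{sec:frm-idl}. There is no genuine obstacle here: the hard analytic work — that ultraparacompactness can be achieved by enlarging the topology without changing the $\kappa$-Borel structure — is entirely contained in \cref{thm:upkzfrm-bool-pres} (which in turn rests on \cref{thm:upkzfrm-kcabool-dircolim} and \cref{thm:frm-bool-pres}), so the proof is a short dualization. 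I would write it as: ``Immediate from \cref{thm:upkzfrm-bool-pres} by dualizing, using \cref{cvt:loc-forget} and the inclusion $\!{Set}_\kappa \subseteq \!{\kappa BorLoc}_\kappa$; the parenthetical follows from the parenthetical there, or from \cref{thm:loc-upkz-invlim} together with the fact that $\ang{-}_\!{\kappa Bool}$ preserves directed colimits.''
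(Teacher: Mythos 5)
Your proposal is correct and is exactly the paper's argument: the result is stated with no separate proof precisely because it is the direct dual of \cref{thm:upkzfrm-bool-pres} (via $\!{\kappa BorLoc} = \!{\kappa Bool}^\op$ and \cref{cvt:loc-forget}), with the parenthetical dualizing the parenthetical there. Your extra bookkeeping about $\ang{-}_\!{\kappa Bool}$ preserving directed colimits and $\kappa$-presentedness is accurate but not needed beyond citing that parenthetical.
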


\begin{proposition}
\label{thm:loc-bor-dissolv-upkz}
Let $X$ be a standard $\kappa$-locale.
For any $\alpha \ge 2$ and $\kappa$-ary $\@C \subseteq \kappa\Sigma^0_\alpha(X)$, there is an ultraparacompact zero-dimensional partial dissolution $X' -> X$ such that $\@C \subseteq \@O_\kappa(X') \subseteq \kappa\Sigma^0_\alpha(X)$
(and $\@C \cap \kappa\Delta^0_\alpha(A) \subseteq \@O_\kappa(X')_\neg$).
\qed
\end{proposition}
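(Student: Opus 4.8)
The plan is to deduce \cref{thm:loc-bor-dissolv-upkz} by combining the ordinary partial dissolution result \cref{thm:loc-bor-dissolv} with the ultraparacompactification result \cref{thm:loc-bor-upkz}, the key point being that one may perform both operations at once while keeping the topology inside $\kappa\Sigma^0_\alpha(X)$. Dualizing everything, we are given a $\kappa$-presented $\kappa$-frame $\@O_\kappa(X) =: A$, an ordinal $\alpha \ge 2$, and a $\kappa$-ary subset $\@C \subseteq \@N_\kappa^\alpha(A) = \kappa\Sigma^0_\alpha(X)$, and we must produce a $\kappa$-presented ultraparacompact zero-dimensional $\kappa$-subframe $A' \subseteq \@N_\kappa^\alpha(A)$ with $\@C \cup A \subseteq A'$ and $\@N_\kappa^\infty(A') \cong \@N_\kappa^\infty(A)$ canonically (and with the elements of $\@C \cap \kappa\Delta^0_\alpha$ becoming complemented in $A'$). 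This is almost exactly \cref{thm:upkzfrm-neginf-pres}, with one extra wrinkle: that corollary requires $\alpha \ge 1$ but does not by itself guarantee the enlarged subframe lands inside $\@N_\kappa^\alpha(A)$ — except that its proof does exactly that, since it invokes \cref{thm:upkzfrm-bool-pres-cof} applied with the ambient algebra taken to be $\@N_\kappa^\alpha(A)$, which \cref{thm:frm-neginf-upkz} tells us is itself ultraparacompact zero-dimensional for $\alpha \ge 1$.

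So concretely, first I would observe that since $\alpha \ge 2 \ge 1$, \cref{thm:frm-neginf-upkz} gives that $\@N_\kappa^\alpha(A)$ is an ultraparacompact zero-dimensional $\kappa$-frame, which by \cref{thm:upkzfrm-neginf-pres} (whose hypothesis $\alpha \ge 1$ is met) freely generates $\@N_\kappa^\infty(A)$ as a $\kappa$-Boolean algebra; indeed $\@N_\kappa^\infty(\@N_\kappa^\alpha(A)) = \@N_\kappa^\infty(A)$ since iterating $\@N_\kappa$ past $\kappa$ stabilizes. Then I would apply \cref{thm:upkzfrm-bool-pres-cof} to $B := \@N_\kappa^\infty(A)$ with its freely-generating ultraparacompact zero-dimensional $\kappa$-subframe $\@N_\kappa^\alpha(A) \subseteq B$, and with the prescribed $\kappa$-ary subset taken to be $\@C$ together with a $\kappa$-ary generating set for $A$ (so that $A \subseteq A'$, using that $A$ is $\kappa$-presented hence $\kappa$-generated). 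This yields a $\kappa$-presented ultraparacompact zero-dimensional $\kappa$-subframe $A' \subseteq \@N_\kappa^\alpha(A)$ containing $\@C \cup A$ and still freely generating $B = \@N_\kappa^\infty(A)$, and the canonicity of the isomorphism $\@N_\kappa^\infty(A') \cong \@N_\kappa^\infty(A)$ comes from the fact that the unit $A' \to \@N_\kappa^\infty(A') \cong B$ is just the inclusion $A' \hookrightarrow B$, exactly as in the proof of \cref{thm:frm-neginf-pres}. For the parenthetical strengthening: as in the remark following \cref{thm:frm-neginf-pres}, if $C \in \@C$ is complemented in $\@N_\kappa^\alpha(A)$ — equivalently $C \in \kappa\Delta^0_\alpha(X)$ — then adding $\neg C$ to the prescribed subset $\@C$ forces $C$ to be complemented in $A'$, i.e.\ $C \in \@O_\kappa(X')_\neg$.

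Translating back to locales: setting $\@O_\kappa(X') := A'$ gives a standard ($\kappa$-presented, since $A'$ is $\kappa$-presented) $\kappa$-locale $X'$, ultraparacompact and zero-dimensional since $A'$ is; the inclusion $\@O_\kappa(X) = A \hookrightarrow A' = \@O_\kappa(X')$ dualizes to a $\kappa$-continuous map $f : X' \to X$, which is a $\kappa$-Borel isomorphism since $\@N_\kappa^\infty(f^*) : \@N_\kappa^\infty(A) \to \@N_\kappa^\infty(A')$ is an isomorphism; and $\@C \subseteq A' \subseteq \@N_\kappa^\alpha(A) = \kappa\Sigma^0_\alpha(X)$ is precisely the statement that $X'$ is a partial dissolution of $X$ making each $C \in \@C$ $\kappa$-open, with the $\kappa\Delta^0_\alpha$ members becoming clopen. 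I do not expect any real obstacle here — the statement is essentially a repackaging of \cref{thm:upkzfrm-neginf-pres} in localic language, exactly parallel to how \cref{thm:loc-bor-dissolv} repackages \cref{thm:frm-neginf-pres}. The only thing to be careful about is bookkeeping the requirement $\alpha \ge 2$ versus $\alpha \ge 1$: the theorem is stated with $\alpha \ge 2$ to match the topological statement (for $\alpha = 1$ a "zero-dimensional partial dissolution making open sets open" is vacuous/trivial anyway), but the underlying algebra works for all $\alpha \ge 1$, so nothing is lost.

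\begin{proof}
Dually, let $A := \@O_\kappa(X)$, a $\kappa$-presented $\kappa$-frame, and let $\@C \subseteq \@N_\kappa^\alpha(A) = \kappa\Sigma^0_\alpha(X)$ be $\kappa$-ary.
By \cref{thm:upkzfrm-neginf-pres} applied with the given $\alpha$ and the $\kappa$-ary subset consisting of $\@C$ together with a $\kappa$-ary generating set for $A$ (together with $\neg C$ for each $C \in \@C$ that is complemented in $\@N_\kappa^\alpha(A)$, using the remark following that corollary), there is a $\kappa$-presented ultraparacompact zero-dimensional $\kappa$-subframe $A' \subseteq \@N_\kappa^\alpha(A)$ such that $A \cup \@C \subseteq A'$, such that each complemented $C \in \@C$ lies in $A'_\neg$, and such that $\@N_\kappa^\infty(A) \cong \@N_\kappa^\infty(A')$ canonically, the canonical isomorphism being induced by the inclusion $A' \hookrightarrow \@N_\kappa^\infty(A)$.
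Now set $\@O_\kappa(X') := A'$; this defines an ultraparacompact zero-dimensional standard $\kappa$-locale $X'$, and the inclusion $A = \@O_\kappa(X) \hookrightarrow A' = \@O_\kappa(X')$ is the comparison map $\@O_\kappa(X) \hookrightarrow \kappa\Sigma^0_\alpha(X) \subseteq \@N_\kappa^\alpha(A)$, dual to a $\kappa$-continuous map $f : X' \to X$ which is a $\kappa$-Borel isomorphism since $\@N_\kappa^\infty$ applied to $A \hookrightarrow A'$ is an isomorphism.
Identifying $\@B_\kappa(X') = \@N_\kappa^\infty(A') \cong \@N_\kappa^\infty(A) = \@B_\kappa(X)$ via $f^*$, we have $\@C \subseteq A' = \@O_\kappa(X') \subseteq \@N_\kappa^\alpha(A) = \kappa\Sigma^0_\alpha(X)$, so $X'$ is an ultraparacompact zero-dimensional partial dissolution of $X$ making each $C \in \@C$ $\kappa$-open, with $\@C \cap \kappa\Delta^0_\alpha(X) \subseteq \@O_\kappa(X')_\neg$.
\end{proof}
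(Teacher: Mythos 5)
Your proof is correct and is exactly the argument the paper intends: the proposition is stated with a \qed precisely because it is the localic dual of \cref{thm:upkzfrm-neginf-pres} (plus the remark after \cref{thm:frm-neginf-pres} for the complemented elements), which is what you carry out. The only nitpick is the indexing convention $\kappa\Sigma^0_{1+\alpha}(X) = \@N_\kappa^\alpha(\@O_\kappa(X))$, so ``$\alpha \ge 2$'' in the $\Sigma$-indexing corresponds to exponent $\ge 1$ on $\@N_\kappa$, which is exactly why the two hypotheses match up; this does not affect your argument.
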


We also record here the dual of \cref{thm:frm-neginf-upkz}:

\begin{proposition}
For any $\kappa$-locale $X$ and $\alpha \ge 2$, if $C_i \in \kappa\Sigma^0_\alpha(X)$ are $<\kappa$-many $\kappa\Sigma^0_\alpha$ sets which cover $X$, then there are pairwise disjoint $D_i \le C_i$ (whence $D_i \in \kappa\Delta^0_\alpha(X)$) that still cover $X$.
\qed
\end{proposition}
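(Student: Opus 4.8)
The plan is to recognize this statement as a direct translation of \cref{thm:frm-neginf-upkz} into the Borel-hierarchy language, the only genuine bookkeeping being to settle in which lattice the phrases ``the $C_i$ cover $X$'' and ``pairwise disjoint'' are to be read.

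First I would write $\kappa\Sigma^0_\alpha(X) = \@N_\kappa^\beta(\@O_\kappa(X))$ for an ordinal $\beta \ge 1$ (with $\alpha = 1 + \beta$; since $\alpha \ge 2$ such a $\beta$ exists, namely $\beta = \alpha$ for $\alpha$ infinite and $\beta = \alpha - 1$ for $\alpha$ finite). By \cref{thm:frm-neginf-upkz}, the $\kappa$-frame $A' := \kappa\Sigma^0_\alpha(X)$ is then ultraparacompact (and zero-dimensional). Next I would note that by \cref{cvt:frm-cbool-incl} the inclusion $A' \hookrightarrow \@B_\infty(X) = \ang{\@O_\kappa(X)}_\!{CBOOL}$ is an injective $\kappa$-frame homomorphism, so it preserves $\top$, finite meets, and $\kappa$-ary joins; hence a $\kappa$-ary join computed in $A'$ maps to the corresponding join in $\@B_\infty(X)$, and conversely, since $\top \in \@B_\infty(X)$ is the image of $\top \in A'$, the hypothesis that $\bigvee_i C_i = \top$ in $\@B_\infty(X)$ (equivalently in $\@B_\kappa(X)$) forces $\bigvee_i C_i = \top$ inside the $\kappa$-frame $A'$. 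Likewise the disjointness $D_i \wedge D_j = \bot$ and the inequalities $D_i \le C_i$ may be checked in $A'$ and then transported along the inclusion.

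It then remains to apply the very definition of ultraparacompactness of the $\kappa$-frame $A'$ (from \cref{sec:upkzfrm}) to the $\kappa$-ary cover $(C_i)_i$ of $\top \in A'$: this yields pairwise disjoint $D_i \le C_i$ in $A'$ with $\bigvee_i D_i = \top$, and each $D_i$ is automatically complemented in $A'$, i.e.\ $D_i \in A'_\neg = \kappa\Sigma^0_\alpha(X)_\neg = \kappa\Delta^0_\alpha(X)$. Transporting these relations back to $\@B_\infty(X)$ is exactly the asserted conclusion. I do not foresee a real obstacle here: the mathematical content is entirely in \cref{thm:frm-neginf-upkz} plus the definition of ultraparacompact $\kappa$-frame, and the only care required is the compatibility of joins, meets, and $\top$ along the canonical inclusions used in the middle paragraph.
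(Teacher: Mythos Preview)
Your proposal is correct and is exactly the unpacking the paper has in mind: the statement is recorded as the direct dual of \cref{thm:frm-neginf-upkz}, and your bookkeeping (the indexing $\kappa\Sigma^0_\alpha(X) = \@N_\kappa^\beta(\@O_\kappa(X))$ with $\beta \ge 1$, and the transport of $\top$, $\kappa$-ary joins, and finite meets along the canonical inclusion into $\@B_\infty(X)$) is precisely what is needed to read ``cover'' and ``pairwise disjoint'' consistently. There is nothing to add.
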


\begin{remark}
This is a weak form of the $\kappa$-ary analog of the \defn{generalized reduction property} which holds classically for $\*\Sigma^0_\alpha$, $\alpha \ge 2$ in metrizable spaces (see \cite[22.16]{Kcdst}): any countable family of $\*\Sigma^0_\alpha$ sets has a pairwise disjoint refinement with the same union.
In other words, every $\*\Sigma^0_\alpha$ set is ultraparacompact in the $\sigma$-frame $\*\Sigma^0_\alpha$.
We do not know if analogously, every $\kappa\Sigma^0_\alpha$ set in every (standard) $\kappa$-locale is ultraparacompact.
\end{remark}

For a $\kappa$-continuous map $f : X -> Y$ between $\kappa$-locales, the $\kappa$-Boolean homomorphism $f^* : \@B_\kappa(Y) -> \@B_\kappa(X)$ (extending the $\kappa$-frame homomorphism $f^* : \@O_\kappa(Y) -> \@O_\kappa(X)$) restricts to $\kappa$-frame homomorphisms $f^* : \kappa\Sigma^0_\alpha(Y) -> \kappa\Sigma^0_\alpha(X)$ for each $\alpha$, i.e., ``preimages of $\kappa\Sigma^0_\alpha$ sets are $\kappa\Sigma^0_\alpha$''.

If $f$ is merely a $\kappa$-Borel map, then for each $V \in \@O_\kappa(Y)$, we have $f^*(V) \in \@B_\kappa(X)$, thus $f^*(V) \in \kappa\Sigma^0_\alpha(X)$ for some $\alpha < \kappa$.
By taking a partial dissolution of $X'$ making $f^*(V)$ $\kappa$-open for all (or only generating) $V \in \@O_\kappa(Y)$ using \cref{thm:loc-bor-dissolv} (or \cref{thm:loc-bor-dissolv-upkz}), we get the following counterpart ``change of topology'' results for maps:

\begin{proposition}
\label{thm:loc-mor-dissolv}
Let $f : X -> Y$ be a $\kappa$-Borel map between $\kappa$-locales.
Then there is a partial dissolution $X' -> X$ such that $f$ lifts to a $\kappa$-continuous map $f' : X' -> Y$.
Moreover:
\begin{enumerate}
\item[(a)]  if $X, Y$ are standard, we can ensure $X'$ is standard;
\item[(b)]  if $f^*(\@O_\kappa(Y)) \subseteq \kappa\Sigma^0_\alpha(X)$, we can ensure $\@O_\kappa(X') \subseteq \kappa\Sigma^0_\alpha(X)$;
\item[(c)]  for $\alpha \ge 2$, we can ensure $X'$ is ultraparacompact zero-dimensional.
\qed
\end{enumerate}
\end{proposition}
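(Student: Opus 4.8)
The plan is to reduce \cref{thm:loc-mor-dissolv} to the already-established change-of-topology results \cref{thm:loc-bor-dissolv} and \cref{thm:loc-bor-dissolv-upkz}. The key observation is that a $\kappa$-Borel map $f : X \to Y$ is, by definition, a $\kappa$-Boolean homomorphism $f^* : \@B_\kappa(Y) \to \@B_\kappa(X)$, and such a homomorphism restricts to a $\kappa$-continuous map $f' : X' \to Y$ precisely when $f^*$ sends $\@O_\kappa(Y) \subseteq \@B_\kappa(Y)$ into $\@O_\kappa(X') \subseteq \@B_\kappa(X')=\@B_\kappa(X)$; since $\@O_\kappa(Y)$ is a $\kappa$-frame and $f^*$ preserves finite meets and $\kappa$-ary joins, it suffices to arrange this for a $\kappa$-ary generating set of $\@O_\kappa(Y)$.

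First I would fix a $\kappa$-ary generating set $\{V_i\}_i \subseteq \@O_\kappa(Y)$ (this exists because $Y$ is $\kappa$-based, which holds in all cases under consideration: standardness gives it in (a), and in the general statement a $\kappa$-locale has a $\kappa$-generated $\kappa$-frame by definition). For each $i$ we have $f^*(V_i) \in \@B_\kappa(X) = \bigcup_{\beta<\kappa}\kappa\Sigma^0_{1+\beta}(X)$, so there is some $\beta_i < \kappa$ with $f^*(V_i) \in \kappa\Sigma^0_{1+\beta_i}(X)$; since $\kappa$ is regular and there are $<\kappa$-many $i$, we may take $\gamma := \sup_i (1+\beta_i) < \kappa$, so that the $\kappa$-ary set $\@C := \{f^*(V_i)\}_i$ lies in $\kappa\Sigma^0_\gamma(X)$. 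Now apply \cref{thm:loc-bor-dissolv} to $X$, $\gamma$, and $\@C$ (when $\gamma \ge 2$; if $\gamma < 2$ replace $\gamma$ by $2$, which only enlarges the ambient class) to obtain a standard-preserving partial dissolution $X' \to X$ with $\@C \subseteq \@O_\kappa(X') \subseteq \kappa\Sigma^0_\gamma(X)$ and $\@B_\kappa(X') = \@B_\kappa(X)$ via $f^*$ of the dissolution map. Then $f$, viewed as the unchanged $\kappa$-Boolean homomorphism $\@B_\kappa(Y) \to \@B_\kappa(X) = \@B_\kappa(X')$, now sends each $V_i$ to an element of $\@O_\kappa(X')$, hence sends all of $\@O_\kappa(Y)$ into $\@O_\kappa(X')$ by the generating argument above; this is exactly a $\kappa$-continuous lift $f' : X' \to Y$.

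For the refinements: part (a) is immediate since \cref{thm:loc-bor-dissolv} preserves standardness when applied to a standard $X$ (and $Y$ standard was only needed to know $Y$ is $\kappa$-based, which also follows from $X$ alone not being relevant here — $Y$ being a $\kappa$-locale already gives a $\kappa$-ary generating set). For part (b), if we are given that $f^*(\@O_\kappa(Y)) \subseteq \kappa\Sigma^0_\alpha(X)$, then we may take $\@C$ to be a $\kappa$-ary generating set already lying in $\kappa\Sigma^0_\alpha(X)$ and run \cref{thm:loc-bor-dissolv} with $\alpha$ in place of $\gamma$, yielding $\@O_\kappa(X') \subseteq \kappa\Sigma^0_\alpha(X)$. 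For part (c), with $\alpha \ge 2$, use \cref{thm:loc-bor-dissolv-upkz} instead of \cref{thm:loc-bor-dissolv}: it produces an ultraparacompact zero-dimensional partial dissolution with the same containment $\@C \subseteq \@O_\kappa(X') \subseteq \kappa\Sigma^0_\alpha(X)$, and the lifting argument is identical.

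The only mild subtlety — the ``main obstacle,'' though it is not severe — is bookkeeping around the level $\gamma$ in the general (unrefined) statement: one must confirm that the supremum of $<\kappa$-many ordinals below $\kappa$ stays below $\kappa$ (regularity of $\kappa$), and that bumping $\gamma$ up to at least $2$ when necessary does no harm, which is clear since $\kappa\Sigma^0_\beta(X) \subseteq \kappa\Sigma^0_{\beta'}(X)$ for $\beta \le \beta'$. One should also note that in case $\kappa = \infty$ the phrase ``$\kappa$-Borel map between $\kappa$-locales'' together with smallness of the presentations still makes each $f^*(V_i)$ land in $\infty\Sigma^0_{1+\beta_i}(X)$ for some set-sized $\beta_i$, and since there are only set-many generators the supremum is again an ordinal, so the argument goes through verbatim using the $\infty$-versions of \cref{thm:loc-bor-dissolv} and \cref{thm:loc-bor-dissolv-upkz}.
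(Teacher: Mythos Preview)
Your approach is essentially the same as the paper's: the paragraph preceding the proposition explains that one takes a partial dissolution making $f^*(V)$ $\kappa$-open for all (or only generating) $V \in \@O_\kappa(Y)$, using \cref{thm:loc-bor-dissolv} or \cref{thm:loc-bor-dissolv-upkz}, exactly as you do.

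One small correction: a $\kappa$-locale does \emph{not} have a $\kappa$-generated $\kappa$-frame by definition; $\@O_\kappa(X)$ is an arbitrary $\kappa$-frame. This does not affect parts (a)--(c), since standardness of $Y$ gives $\kappa$-basedness in (a), and in (b), (c) the hypothesis already bounds the level uniformly. For the unrefined general statement you can simply take $X' = \@D_\kappa^\kappa(X)$, the full dissolution, which makes every $\kappa$-Borel set open regardless of whether a common $\alpha$ or a $\kappa$-ary generating set exists.
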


\begin{remark}
Classically, a map $f : X -> Y$ between topological spaces is \defn{Baire class $\alpha$} if it is in the $\alpha$th iterated closure of the continuous maps under pointwise sequential limits, while a \defn{Baire map} is a map which is Baire class $\alpha$ for some $\alpha < \omega_1$.
For sufficiently nice spaces $X, Y$, $f$ is Baire class $\alpha$ iff $f^*(\@O(Y)) \subseteq \*\Sigma^0_{\alpha+1}(X)$, hence Baire iff it is Borel; see \cite[24.3]{Kcdst}.

Our notion of $\kappa$-Borel map, as well as the above result, is thus related to Ball's~\cite{Bbaire} notion of Baire maps from a locale to $\#R$, in the same way as in \cref{rmk:loc-bor-pfrm}: Ball's notion in the regular Lindelöf case is an instance of our notion when $\kappa = \omega_1$.
\end{remark}

\begin{remark}
\label{rmk:loc-bor-localization}
We note the following abstract categorical consequence of the above (which is somewhat implicit in \cite{MMfrmepi}): the forgetful functor $\!{\kappa Loc} -> \!{\kappa BorLoc}$ exhibits $\!{\kappa BorLoc}$ as a \defn{localization} of $\!{\kappa Loc}$, i.e., $\!{\kappa BorLoc}$ is obtained from $\!{\kappa Loc}$ by freely adjoining inverses for some morphisms.
See e.g., \cite[Ch.~5]{Bcat}.
This follows (see e.g., \cite[p.~486]{MPlfp}) from the fact that $\!{\kappa Loc} -> \!{\kappa BorLoc}$ preserves finite limits, and that every $\kappa$-Borel map $f : X -> Y$ with $Y \in \!{\kappa Loc}$ factors as $X \cong X' -> Y$ for some $\kappa$-locale $X'$ which is $\kappa$-Borel isomorphic to $X$ and some $\kappa$-continuous map $X' -> Y$, by \cref{thm:loc-bor-loc,thm:loc-mor-dissolv}.
Moreover, we can take $X'$ standard if $X, Y$ both are; thus $\!{\kappa BorLoc}_\kappa$ is also a localization of $\!{\kappa Loc}_\kappa$.

Once we know we have a localization, we can of course describe $\!{\kappa BorLoc}_{(\kappa)}$ as the localization of $\!{\kappa Loc}_{(\kappa)}$ which adjoins an inverse for all those $\kappa$-continuous $f : X -> Y \in \!{\kappa Loc}_{(\kappa)}$ which do in fact become invertible in $\!{\kappa BorLoc}_{(\kappa)}$; these are exactly the partial dissolutions.
In other words,
\begin{quote}
``(Standard) $\kappa$-Borel locales are what's left of (standard) $\kappa$-locales after $\kappa$-continuous bijections are declared to be isomorphisms.''
\end{quote}
(Note that ``bijection'' can be interpreted as either $\kappa$-Borel or $\infty$-Borel isomorphism, by conservativity of $\!{\kappa BorLoc} -> \!{\infty BorLoc}$ (\cref{thm:loc-forget}).)
\end{remark}

\subsection{Borel images and sublocales}
\label{sec:loc-im}

We now give a detailed analysis of various notions of ``image'' for maps between locales.

For an $\infty$-Borel map $f : X -> Y$ between $\infty$-Borel locales, and any $\infty$-Borel set $B \in \@B_\infty(X)$, we define the \defn{$\infty$-Borel image} of $B$ under $f$ to be
\begin{align*}
f(B) := f^{\@B_\infty}(B) := \min \{C \in \@B_\infty(Y) \mid B \le f^*(C)\}, \text{ if it exists}.
\end{align*}
Thus, $f : \@B_\infty(X) \rightharpoonup \@B_\infty(Y)$ is the partial left adjoint to $f^* : \@B_\infty(Y) -> \@B_\infty(X)$.
More generally, for any class of $\infty$-Borel sets $\Gamma(Y) \subseteq \@B_\infty(Y)$, we define the \defn{$\Gamma$-image} of $B$ under $f$ to be
\begin{align*}
f^\Gamma(B) := \min \{C \in \Gamma(Y) \mid B \le f^*(C)\}, \text{ if it exists}.
\end{align*}
We also define the \defn{$\Gamma$-image}, and in particular the \defn{$\infty$-Borel image}, of $f$ to be
\begin{align*}
\im^\Gamma(f) &:= f^\Gamma(X), \\
\im(f) &:= f(X) = \im^{\@B_\infty}(f).
\end{align*}
(In \cref{thm:sigma11-proper} we will show that the $\infty$-Borel image need not exist.)

If $\Gamma(Y) \subseteq \@B_\infty(Y)$ is a small $\bigwedge$-sublattice, then the $\Gamma$-image always exists (by the adjoint functor theorem).
For example, if $Y$ is a locale, the \defn{$\infty\Pi^0_\alpha$-image} exists for every $\alpha$; we denote it by
\begin{align*}
\-f^\alpha(B) &:= f^{\infty\Pi^0_\alpha}(B) = \bigwedge \{C \in \infty\Pi^0_\alpha(Y) \mid B \le f^*(C)\}, \\
\-\im^\alpha(f) &:= \-f^\alpha(X) = \bigwedge \{C \in \infty\Pi^0_\alpha(y) \mid f^*(C) = \top\}.
\end{align*}
There is a close relationship between $\infty\Pi^0_\alpha$-image and $\infty\Pi^0_\alpha$-closure (\cref{sec:loc-bor}): the latter is the former under the identity map.
Conversely, if the $\infty$-Borel image exists, then its $\infty\Pi^0_\alpha$-closure gives the $\infty\Pi^0_\alpha$-image, hence our notation.
(This will generalize to all $\infty\Pi^0_\alpha$-images, once we define the notion of ``$\infty\Sigma^1_1$-image'' by adjoining nonexistent $\infty$-Borel images in \cref{sec:sigma11-cat} below.)

\begin{remark}
\label{rmk:loc-im-pi02}
In the locale theory literature (see e.g., \cite[C1.2]{Jeleph}), the ``image'' of a continuous locale map usually refers to the image sublocale, i.e., what we are calling the ``$\infty\Pi^0_2$-image'' (see \cref{cvt:loc-sub-pi02} below).
It is well-known that this notion of ``image'' behaves poorly in many ways, e.g., it is not pullback-stable (see \cite[C1.2.12]{Jeleph} and \cref{rmk:loc-epi-pullback}).
This poor behavior is explained by the decomposition of ``$\infty\Pi^0_2$-image'' as ``$\infty\Sigma^1_1$-image'' (which is well-behaved; see \cref{thm:sigma11-cat}) followed by $\infty\Pi^0_2$-closure (which one does not expect to behave like an ``image'').

To avoid confusion, we will henceforth avoid saying ``image'' without a prefix.
\end{remark}

Clearly, if $\Gamma \subseteq \Gamma'$, and the $\Gamma'$-image exists and is in $\Gamma$, then it is also the $\Gamma$-image.
Conversely, if $\Gamma$ generates $\Gamma'$ under arbitrary meets, then any $\Gamma$-image is also the $\Gamma'$-image; for example, if $Y$ is a $\kappa$-locale, then a $\kappa$-closed image exists iff a closed image exists and is a $\kappa$-closed set.
If the $\Gamma$-image and $\Gamma'$-image both exist, then the former is always bigger; e.g.,
\begin{align*}
\-f(B) := \-f^1(B) \ge \-f^2(B) \ge \dotsb \ge \-f^\alpha(B) \ge \dotsb \quad (\ge f(B), \text{ if it exists}).
\end{align*}
Also, if $\Gamma$ is a directed union of subclasses $\Gamma_i$, then the $\Gamma$-image is the eventual value of the $\Gamma_i$-images, if the latter eventually all exist and stabilize; e.g.,
\begin{align*}
f(B)
&= \min_{\alpha < \infty} \-f^\alpha(B)
= \text{eventual value of } \-f^\alpha(B) \text{, if it exists}.
\end{align*}
If $f : X -> Y$ with $Y$ a $\kappa$-Borel locale, we denote the \defn{$\kappa$-Borel image} by
\begin{align*}
f^\kappa(B) &:= f^{\@B_\kappa}(B) = \min \{C \in \@B_\kappa(Y) \mid B \le f^*(C)\};
\end{align*}
thus for $\kappa < \infty$,
\begin{align*}
f(B)
&= \text{eventual value of } f^\lambda(B) \text{ for $\kappa \le \lambda < \infty$, if it exists}.
\end{align*}
(The above formulas are implicit in the work of Wilson~\cite{Wasm} and Madden--Molitor~\cite{MMfrmepi}.)

The $\kappa$-Borel image has a simple dual algebraic meaning:

\begin{proposition}
\label{thm:loc-im-alg}
Let $f : X -> Y$ be an $\infty$-Borel map from an $\infty$-Borel locale to a $\kappa$-Borel locale, where $\kappa \le \infty$.
Then the $\kappa$-Borel image $f^\kappa(B)$ of some $B \in \@B_\infty(X)$ exists iff the composite of $f^* : \@B_\kappa(Y) -> \@B_\infty(X)$ followed by the principal filterquotient $B \wedge (-) : \@B_\infty(X) ->> \down B$ has kernel which is a $\kappa$-generated $\kappa$-Boolean congruence on $\@B_\kappa(Y)$, hence a principal filter congruence $\sim^C$ for some $C \in \@B_\kappa(Y)$, in which case $f^\kappa(B) = C$.

Thus, the $\kappa$-Borel image $\im^\kappa(f) := f^\kappa(X)$ of $f$ exists iff $f^* : \@B_\kappa(Y) -> \@B_\infty(X)$ has kernel which is a $\kappa$-generated congruence, hence a principal filter congruence $\sim^{\im^\kappa(f)}$.
\end{proposition}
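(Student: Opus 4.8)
The plan is to unwind the definition of the $\kappa$-Borel image in terms of the universal property of the principal filterquotient and the classification of $\kappa$-Boolean congruences from \cref{sec:frm-quot}. Recall that $f^\kappa(B) = \min\{C \in \@B_\kappa(Y) \mid B \le f^*(C)\}$ when this minimum exists. The first step is to translate the condition ``$B \le f^*(C)$ in $\@B_\infty(X)$'' into a statement about the composite $q \circ f^* : \@B_\kappa(Y) -> \down B$, where $q = (B \wedge (-)) : \@B_\infty(X) ->> \down B$ is the principal filterquotient. Indeed, $B \le f^*(C)$ holds iff $B = B \wedge f^*(C)$, i.e., iff $q(f^*(C)) = B = \top_{\down B}$. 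So the set of $C$ whose $\infty$-Borel preimage contains $B$ is exactly $(q \circ f^*)^{-1}(\top)$, which is a $\kappa$-filter in $\@B_\kappa(Y)$; since $\@B_\kappa(Y)$ is $\kappa$-Boolean, by the discussion at the end of \cref{sec:frm-quot} this filter is the congruence class $[\top]$ for the congruence $\ker(q \circ f^*)$ on $\@B_\kappa(Y)$, and it is a $\kappa$-generated $\kappa$-filter (hence principal, equal to $\up C$ for some $C$) iff $\ker(q \circ f^*)$ is a $\kappa$-generated congruence.

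Next I would observe that the minimum of the filter $(q \circ f^*)^{-1}(\top)$ exists in $\@B_\kappa(Y)$ iff this filter is principal, i.e., $\up C$ for some $C \in \@B_\kappa(Y)$, in which case that minimum $C$ is by definition $f^\kappa(B)$. Combining with the previous step, $f^\kappa(B)$ exists iff $\ker(q \circ f^*)$ is a $\kappa$-generated $\kappa$-Boolean congruence on $\@B_\kappa(Y)$, equivalently (again by the end of \cref{sec:frm-quot}) a principal filter congruence $\sim^C$ for the corresponding $C$, and then $f^\kappa(B) = C$. This handles the first paragraph of the statement verbatim, taking $\kappa \le \infty$ throughout (when $\kappa = \infty$, ``$\kappa$-generated'' means ``small-generated'', and the argument is identical using that $\@B_\infty(Y)$ is small-presented so small-generated filters are exactly principal ones — this is the place where smallness of $\@B_\infty(Y)$ is used).

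Finally, the statement about $\im^\kappa(f) = f^\kappa(X)$ is just the special case $B = \top_{\@B_\infty(X)}$ of the above: then $q = (\top \wedge (-))$ is the identity on $\@B_\infty(X)$, so $q \circ f^* = f^*$, and the claim reduces to: $\im^\kappa(f)$ exists iff $\ker(f^*) : \@B_\kappa(Y) -> \@B_\infty(X)$ is a $\kappa$-generated congruence, hence a principal filter congruence $\sim^{\im^\kappa(f)}$.

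I do not expect any serious obstacle here — the entire proof is a matter of correctly composing the relevant universal properties (partial left adjoint / minimum of a filter; principal filterquotient; classification of $\kappa$-Boolean congruences as principal filter congruences). The one point requiring a little care is the passage between ``$\kappa$-generated congruence'', ``$\kappa$-generated $\kappa$-filter as the class $[\top]$'', and ``principal filter $\up C$'': this is exactly the computation recalled at the end of \cref{sec:frm-quot} (for a $\kappa$-generated congruence on a $\kappa$-Boolean algebra, the filter $[\top]$ is $\kappa$-generated, hence — since a $\kappa$-generated $\kappa$-filter in a $\kappa$-$\bigvee$-lattice generated by $<\kappa$ elements is principal, or here simply because $\kappa$-Boolean algebras have $\kappa$-ary meets — principal), so I would just cite that discussion rather than redo it.
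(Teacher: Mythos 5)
Your proposal is correct and follows essentially the same route as the paper, which likewise just unravels the definition of $f^\kappa(B)$ as the minimum of the filter $\{C \mid B \le f^*(C)\} = (B \wedge f^*(-))^{-1}(\top)$ and invokes the classification of $\kappa$-Boolean congruences as $\kappa$-filter congruences from the end of \cref{sec:frm-quot}. The paper compresses this into a one-line "direct unravelling"; your version merely spells out the same steps (including the equivalence between $\kappa$-generated congruence, $\kappa$-generated $\kappa$-filter $[\top]$, and principal filter $\up C$) in more detail.
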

\begin{proof}
This is a direct unravelling of the definitions: $f^\kappa(B) = C$ iff $C \in \@B_\kappa(Y)$ is least such that $B \le f^*(C)$, i.e., $B \wedge f^*(C) = B = \top_{\down B}$, which means precisely that $\ker(B \wedge f^*(-) : \@B_\kappa(Y) -> \down B) = {\sim^C}$ by the general description of $\kappa$-Boolean algebra congruences (end of \cref{sec:frm-quot}).
\end{proof}

Recall from the end of \cref{sec:loc-cat} that by an \defn{embedding} in one of our categories of locales, we mean a regular monomorphism, i.e., a map $f$ such that $f^*$ is a surjective homomorphism.

\begin{corollary}
\label{thm:loc-emb-im}
Let $f : X -> Y$ be an embedding in (i) $\!{\kappa Loc}$, (ii) $\!{\kappa Bor^+Loc}$, or (iii) $\!{\kappa BorLoc}$.
Then the $\infty$-Borel image $\im(f)$ exists, and is given by the respective formulas
\begin{align*}
\tag{i}
\im(f) &= \bigwedge \{B -> C \mid B, C \in \@O_\kappa(Y) \AND f^*(B) \le f^*(C)\}, \\
\tag{ii}
\im(f) &= \bigwedge \{B -> C \mid B, C \in \@B^+_\kappa(Y) \AND f^*(B) \le f^*(C)\}, \\
\tag{iii}
\im(f) &= \bigwedge \{B \in \@B_\kappa(Y) \mid f^*(B) = \top\},
\end{align*}
where the meets may be restricted to any generating set for the (order-)kernel of $f^*$.

If $X, Y$ are both standard, then the meets may be restricted to be $\kappa$-ary; thus $\im(f) \in \@B_\kappa(Y)$, and so $\im(f)$ is also the $\kappa$-Borel image $\im^\kappa(f)$.
\end{corollary}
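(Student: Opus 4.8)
The plan is to reduce the whole statement to the description of quotients of ($\kappa$-)Boolean algebras at the end of \cref{sec:frm-quot}, applied to the surjection that $f$ induces on free complete Boolean algebras. Write $L(-)$ for whichever of the lattices $\@O_\kappa(-)$, $\@B^+_\kappa(-)$, $\@B_\kappa(-)$ governs case (i), (ii), (iii); then $f$ being an embedding means exactly that $f^* : L(Y) \to L(X)$ is a surjection in the corresponding algebraic category, and I set ${\lesim} := \oker(f^*) \subseteq L(Y)^2$ (its ordinary kernel in case (iii), but the uniform notation is harmless), so that $L(X) \cong L(Y)/{\lesim}$ and, by definition, $\{(B,C) \in L(Y)^2 \mid f^*(B) \le f^*(C)\} = {\lesim}$. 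Under \cref{cvt:frm-cbool-incl} we have $L(Y) \subseteq \@B_\kappa(Y) \subseteq \@B_\infty(Y)$ and likewise for $X$, and $\@B_\infty(X) = \ang{L(X)}_\!{CBOOL}$, with $f^* : \@B_\infty(Y) \to \@B_\infty(X)$ the functorial extension of $f^* : L(Y) \to L(X)$.

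The first step is to realize $f^* : \@B_\infty(Y) \to \@B_\infty(X)$ as a filterquotient. Since $\ang{-}_\!{CBOOL}$ is a left adjoint (a composite of free functors from \eqref{diag:frm-cat}) it preserves quotients, so applying it to $L(X) = L(Y)/{\lesim}$ exhibits $\@B_\infty(X) = \ang{L(X)}_\!{CBOOL}$ as $\@B_\infty(Y)$ modulo the complete-Boolean-algebra congruence generated by the relations $B \le C$ for $B \lesim C$, with $f^*$ the quotient map. By \cref{sec:frm-quot} this congruence is the filter congruence of the filter $F := (f^*)^{-1}(\top) \subseteq \@B_\infty(Y)$, and $F$ is the (small-meet-closed, i.e.\ ``$\infty$-filter'') filter generated by $\{B \to C \mid B \lesim C\}$; using any generating set $\{(B_i,C_i)\}_i$ of the order-congruence $\lesim$ it is generated by $\{B_i \to C_i\}_i$. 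Now $\im(f) = \min\{C \in \@B_\infty(Y) \mid f^*(C) = \top\} = \min F$; since $L(Y)$ is small, $\{B \to C \mid B \lesim C\}$ is a small subset of $\@B_\infty(Y)$, which (even if a proper class) has all small meets, so $m := \bigwedge\{B \to C \mid B \lesim C\}$ exists, lies in $F$, and is below every element of $F$. Hence $\im(f)$ exists and equals $m$: this is (i) and (ii) verbatim, and (iii) after noting that for $L = \@B_\kappa$ one has $\{B \to C \mid f^*(B) \le f^*(C)\} = \{B \mid f^*(B) = \top\}$ (if $f^*(B)\le f^*(C)$ then $f^*(B\to C) = \top$, and conversely $D = \top \to D$). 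The same argument with the generating set gives $\im(f) = \bigwedge_i (B_i \to C_i)$.

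For the standard case, suppose $X$ and $Y$ are both standard, so $L(Y)$ and $L(X)$ are $\kappa$-presented. Then $\lesim$ is $\kappa$-generated: fixing a $\kappa$-ary generating set $G$ of $L(Y)$, \cref{thm:cat-alg-pres} presents $L(X)$ on the $\kappa$-ary generating set $f^*(G)$ with a $\kappa$-ary set of relations, and pulling these back along $f^*$ gives a $\kappa$-ary set of pairs generating $\lesim$ as an order-congruence (the defining relations of $L(Y)$ need not be adjoined, as they already hold in $L(X)$). So the index set in the previous step may be taken $\kappa$-ary, whence $\im(f) = \bigwedge_i (B_i \to C_i)$ is a $\kappa$-ary meet; each $B_i \to C_i = \neg B_i \vee C_i$ lies in $\@B_\kappa(Y)$ since $B_i, C_i \in L(Y) \subseteq \@B_\kappa(Y)$, and $\@B_\kappa(Y)$ is a $\kappa$-Boolean algebra, hence closed under $\kappa$-ary meets; so $\im(f) \in \@B_\kappa(Y)$. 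Finally $\im(f) = \im^\kappa(f)$: minimizing over the smaller class $\@B_\kappa(Y) \subseteq \@B_\infty(Y)$ gives $\im(f) \le \im^\kappa(f)$, while $\im(f) \in \@B_\kappa(Y)$ with $f^*(\im(f)) = \top$ gives the reverse inequality — this is also precisely the criterion of \cref{thm:loc-im-alg}.

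The two load-bearing points are Step 1 — that $\ang{-}_\!{CBOOL}$ turns the quotient $L(Y) \twoheadrightarrow L(X)$ into a quotient and the resulting congruence into the filter congruence of exactly $\{B \to C \mid B \lesim C\}$ — and the claim in the standard case that $\lesim$ is $\kappa$-generated, which is the only place both standardness hypotheses (not just that of $Y$) are used. Everything else is routine bookkeeping with presentations and filters. The one persistent subtlety is that $\@B_\infty(Y)$ may be a proper class, so only small meets are ever available; this is exactly why the argument hinges on $\lesim$ (equivalently, the relevant set of relations) being small, and correspondingly why, without the standardness hypotheses, $\im(f)$ still exists but its defining meet need not land in $\@B_\kappa(Y)$.
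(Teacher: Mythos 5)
Your proof is correct and follows essentially the same route as the paper's: both realize the extension of $f^*$ to $\@B_\infty(Y) \twoheadrightarrow \@B_\infty(X)$ as the principal filterquotient at $\bigwedge\{B \to C \mid f^*(B) \le f^*(C)\}$ via the description of Boolean congruences in \cref{sec:frm-quot}, and both handle the standard case by noting that a quotient map between $\kappa$-presented algebras has $\kappa$-generated kernel (the paper cites \cref{thm:cat-alg-cong-pres} directly where you re-derive the same fact via \cref{thm:cat-alg-pres}). The only cosmetic difference is that you phrase the conclusion as minimality of the meet in the filter $(f^*)^{-1}(\top)$ rather than invoking \cref{thm:loc-im-alg}, which amounts to the same thing.
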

(Note that when $\kappa = \infty$, these formulas still make sense, because we always assume (positive) $\infty$-Borel locales are standard.)
\begin{proof}
Since $f^*$ is surjective, so is its complete Boolean extension $\@B_\infty(Y) -> \@B_\infty(X)$, with kernel generated by any generators of the kernel of the original $f^*$.
In case (i), for example, $f^* : \@O_\kappa(Y) ->> \@O_\kappa(X)$ is the quotient of $\@O_\kappa(Y)$ by the relations $B \le C$ for all (generating) pairs $(B, C)$ in the order-kernel of $f^*$, hence $\@B_\infty(Y) ->> \@B_\infty(X)$ is the quotient by these same relations, which (again by the general description of Boolean congruences from \cref{sec:frm-quot}) equivalently means $\@B_\infty(X)$ is, up to isomorphism, the principal filterquotient $\@B_\infty(Y) ->> \down \bigwedge_{f^*(B) \le f^*(C)} (B -> C)$, which by \cref{thm:loc-im-alg} means $\im(f) = \bigwedge_{f^*(B) \le f^*(C)} (B -> C)$, which yields the formula (i).
The cases (ii) and (iii) are similar.

If $X, Y$ are both standard, then $f^*$ is a quotient map between $\kappa$-presented algebras, hence has $\kappa$-generated kernel (see \cref{thm:cat-alg-cong-pres}).
\end{proof}

\begin{remark}
\label{rmk:loc-mono-epi-std}
The notions of monomorphism, regular monomorphism, and epimorphism in the categories of standard objects $\!{\kappa (Bor^{(+)})Loc}_\kappa$ are the restrictions of those in their respective parent categories $\!{\kappa (Bor^{(+)})Loc}$; this is a general algebraic fact about their dual categories (see \cref{thm:cat-alg-mono-epi-pres}).
Thus, there is no ambiguity when we say ``standard $\kappa$-sublocale'', for example.
\end{remark}

Let us define the following \emph{ad hoc} classes of sets: for a (i) $\kappa$-locale, (ii) positive $\kappa$-Borel locale, or (iii) $\kappa$-Borel locale $Y$, respectively,
\begin{align*}
\tag{i}
\infty(\kappa\Pi^0_2)_\delta(Y)
&:= \text{closure of $\kappa\Pi^0_2(Y) \subseteq \@B_\infty(Y)$ under arbitrary $\bigwedge$} \\
&= \{\bigwedge_{i \in I} (B_i -> C_i) \mid I \in \!{Set} \AND B_i, C_i \in \@O_\kappa(Y)\}, \\
\tag{ii}
\lambda I(\@B^+_\kappa)_\delta(Y)
&:= \{\bigwedge_{i \in I} (B_i -> C_i) \mid I \in \!{Set}_\lambda \AND B_i, C_i \in \@B^+_\kappa(Y)\}, \\
\tag{iii}
\infty(\@B_\kappa)_\delta(Y)
&:= \{\bigwedge_{i \in I} B_i \mid I \in \!{Set} \AND B_i \in \@B_\kappa(Y)\}.
\end{align*}
(The ``$I$'' in (ii) stands for ``implication''.)
The formulas in \cref{thm:loc-emb-im} clearly imply

\begin{corollary}
\label{thm:loc-emb-im-cls}
In the situations of \cref{thm:loc-emb-im}, $\im(f)$ is the
(i)~$\infty(\kappa\Pi^0_2)_\delta$-image (hence also $\infty\Pi^0_2$-image),
(ii)~$\infty I(\@B^+_\kappa)_\delta$-image, or
(iii)~$\infty(\@B_\kappa)_\delta$-image of $f$, respectively.

If $X, Y$ are standard, then $\im(f)$ is also the
(i)~$\kappa\Pi^0_2$-image,
(ii)~$\kappa I(\@B^+_\kappa)_\delta$-image, or
(iii)~$\kappa$-Borel image of $f$, respectively.
\qed
\end{corollary}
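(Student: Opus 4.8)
The plan is to read everything off from the three explicit formulas for $\im(f)$ in \cref{thm:loc-emb-im}, combined with the elementary fact recorded earlier in \cref{sec:loc-im}: whenever $\Gamma \subseteq \Gamma'$ are classes of $\infty$-Borel sets in $Y$ and the $\Gamma'$-image of a map exists and happens to lie in $\Gamma$, then it is also the $\Gamma$-image. I will apply this with $\Gamma' = \@B_\infty(Y)$, so that the $\Gamma'$-image is the $\infty$-Borel image $\im(f)$, whose existence is exactly the content of \cref{thm:loc-emb-im}; it then remains, in each of the three cases, to observe that the formula given there for $\im(f)$ displays it as a member of the relevant class $\Gamma$.

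First I would do case (i). By \cref{thm:loc-emb-im}(i), $\im(f) = \bigwedge\{B -> C \mid B, C \in \@O_\kappa(Y),\ f^*(B) \le f^*(C)\}$, which is literally of the form $\bigwedge_{i \in I}(B_i -> C_i)$ with $I$ a set and $B_i, C_i \in \@O_\kappa(Y)$; that is, $\im(f) \in \infty(\kappa\Pi^0_2)_\delta(Y)$ by the definition of that class, so $\im(f)$ is the $\infty(\kappa\Pi^0_2)_\delta$-image of $f$. For the parenthetical ``hence also $\infty\Pi^0_2$-image'', I would note that each $B -> C = \neg(B \wedge \neg C)$ lies in $\infty\Pi^0_2(Y)$, since $B \wedge \neg C \in \@N(\@O(Y)) = \infty\Sigma^0_2(Y)$, and that $\infty\Pi^0_2(Y)$ is a $\bigwedge$-sublattice of $\@B_\infty(Y)$ (as recalled in \cref{sec:loc-bor}); hence $\infty(\kappa\Pi^0_2)_\delta(Y) \subseteq \infty\Pi^0_2(Y)$ and the same fact applies again. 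Cases (ii) and (iii) then go through verbatim using parts (ii) and (iii) of \cref{thm:loc-emb-im}: the stated formula exhibits $\im(f)$ as a (possibly large) meet of implications between positive $\kappa$-Borel sets, respectively a meet of $\kappa$-Borel sets, so $\im(f)$ lies in $\infty I(\@B^+_\kappa)_\delta(Y)$, respectively $\infty(\@B_\kappa)_\delta(Y)$, and is therefore the corresponding $\Gamma$-image.

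For the standard case I would rerun the same three arguments, this time invoking the last clause of \cref{thm:loc-emb-im}, which allows the defining meets to be taken $\kappa$-ary when $X$ and $Y$ are both standard. In case (i), $\im(f)$ is then a $\kappa$-ary meet of elements of $\kappa\Pi^0_2(Y) = \neg\@N_\kappa(\@O_\kappa(Y))$, which is closed under $\kappa$-ary meets ($\@N_\kappa(\@O_\kappa(Y))$ being a $\kappa$-frame), so $\im(f) \in \kappa\Pi^0_2(Y)$ and hence is the $\kappa\Pi^0_2$-image; in case (ii), $\im(f)$ is a $\kappa$-ary meet of implications between positive $\kappa$-Borel sets, hence lies in $\kappa I(\@B^+_\kappa)_\delta(Y)$, the $\kappa$-ary analogue of the class from (ii); and in case (iii), $\im(f)$ is a $\kappa$-ary meet of $\kappa$-Borel sets, hence lies in $\@B_\kappa(Y)$, so it coincides with the $\kappa$-Borel image $\im^\kappa(f)$.

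There is essentially no obstacle here: all the substance is in \cref{thm:loc-emb-im}, and this statement is a bookkeeping corollary whose only genuine content is identifying \emph{which} notion of image each of those meets computes. The handful of things worth double-checking are the class inclusion $\infty(\kappa\Pi^0_2)_\delta(Y) \subseteq \infty\Pi^0_2(Y)$, the closure of $\kappa\Pi^0_2(Y)$ and $\@B_\kappa(Y)$ under $\kappa$-ary meets, and the trivial observation that $B -> C$ is a $\kappa\Pi^0_2$ (indeed $\infty\Pi^0_2$) set whenever $B, C$ are ($\kappa$-)open — all immediate from the material of \cref{sec:loc-bor}.
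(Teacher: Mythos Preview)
Your proposal is correct and takes the same approach as the paper. The paper's own proof consists entirely of the sentence ``The formulas in \cref{thm:loc-emb-im} clearly imply'' followed by the statement and a \qed; you have simply spelled out those implications in detail, and your checks (the inclusion $\infty(\kappa\Pi^0_2)_\delta \subseteq \infty\Pi^0_2$, closure of $\kappa\Pi^0_2$ and $\@B_\kappa$ under $\kappa$-ary meets) are all correct.
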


The images in these different categories are related as follows.
Since the forgetful functors $\!{\kappa Loc} -> \!{\kappa Bor^+Loc} -> \!{\kappa BorLoc}$ and $\!{\kappa (Bor^{(+)})Loc} -> \!{\lambda (Bor^{(+)})Loc}$ ($\kappa \le \lambda$) from diagram \eqref{diag:loc-cat} preserve limits, they preserve regular monomorphisms as well as pullbacks, and so restrict to $\bigwedge$-lattice homomorphisms on the regular subobject lattices of each object in the domain.
Since the functors are also faithful (\cref{thm:loc-forget}), these $\bigwedge$-lattice homomorphisms are injective (see \cref{thm:cat-funct-sub}).
Thus for $X \in \!{\kappa Loc}$, say, we have embeddings
\begin{equation}
\label{diag:loc-sub}
\begin{tikzcd}[every arrow/.append style={hook}]
\RSub_\!{\kappa BorLoc}(X) \rar &
\RSub_\!{\lambda BorLoc}(X) \rar &
\dotsb \rar &
\RSub_\!{\infty BorLoc}(X)
\\
\RSub_\!{\kappa Bor^+Loc}(X) \uar \rar &
\RSub_\!{\lambda Bor^+Loc}(X) \uar \rar &
\dotsb \rar &
\RSub_\!{\infty Bor^+Loc}(X) \uar
\\
\RSub_\!{\kappa Loc}(X) \uar \rar &
\RSub_\!{\lambda Loc}(X) \uar \rar &
\dotsb \rar &
\RSub_\!{Loc}(X) \uar
\end{tikzcd}
\end{equation}
The dual of $\RSub_\!{\kappa Loc}(X) `-> \RSub_\!{\infty BorLoc}(X)$, say, takes a $\kappa$-frame quotient $f^* : \@O_\kappa(X) ->> \@O_\kappa(Y)$ to the induced complete Boolean quotient $f^* : \@B_\infty(X) ->> \@B_\infty(Y)$.
Thus, all of these embeddings commute with taking the $\infty$-Borel image (by our definition of $\infty$-Borel image).

The following is also immediate from \cref{thm:loc-im-alg}:

\begin{corollary}
\label{thm:loc-emb-im-inj}
Each $\im : \RSub_\!{\infty BorLoc}(X) -> \@B_\infty(X)$ is a $\bigwedge$-lattice embedding.

Thus, each $\im : \RSub_\!{\kappa (Bor^{(+)})Loc}(X) -> \@B_\infty(X)$ is a $\bigwedge$-lattice embedding.

Thus, each $\im : \RSub_{\!{\kappa (Bor^{(+)})Loc}_\kappa}(X) -> \@B_\kappa(X)$ is a $\kappa$-$\bigwedge$-lattice embedding.
\end{corollary}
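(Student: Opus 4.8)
The plan is to deduce \cref{thm:loc-emb-im-inj} directly from \cref{thm:loc-im-alg} together with the basic structural facts about $\infty$-Borel images already assembled in this subsection. The first statement is the key one, and everything else follows formally, so I would concentrate the argument there.

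First I would establish that $\im : \RSub_\!{\infty BorLoc}(X) -> \@B_\infty(X)$ is well-defined and order-preserving. By \cref{thm:loc-emb-im} (case (iii), with $\kappa = \infty$, using that every $\infty$-Borel locale is standard), every embedding $f : Y `-> X$ in $\!{\infty BorLoc}$ has an $\infty$-Borel image, so $\im$ is defined on all of $\RSub_\!{\infty BorLoc}(X)$. That $\im$ is order-preserving is immediate: if $Y \subseteq Z \subseteq X$ are $\infty$-Borel sublocales, i.e. $\@B_\infty(X) ->> \@B_\infty(Z) ->> \@B_\infty(Y)$, then $\ker$ of the first quotient is contained in $\ker$ of the composite, so by \cref{thm:loc-im-alg} the corresponding principal filter congruences satisfy $\sim^{\im(Y)} \supseteq \sim^{\im(Z)}$, which by the description of $\kappa$-Boolean congruences at the end of \cref{sec:frm-quot} means $\im(Z) \le \im(Y)$ — wait, I need to be careful about the direction here, since $\RSub$ is ordered by inclusion of sublocales, which is the \emph{reverse} of inclusion of kernels. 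In fact smaller sublocale means larger kernel means larger generating filter means the filter-generator $\im$ is \emph{smaller}; and indeed this matches the intuition that the $\infty$-Borel image of a sublocale is itself, and bigger sublocales have bigger images. So $\im$ is order-preserving for the inclusion order on $\RSub_\!{\infty BorLoc}(X)$.

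Next, the heart of the argument: $\im$ reflects order, i.e. $\im(Y) \le \im(Z) \implies Y \subseteq Z$, and $\im$ preserves (arbitrary) meets. For meets, given a family of sublocales $Y_i \subseteq X$ with intersection $Y = \bigcap_i Y_i$ (the meet in $\RSub_\!{\infty BorLoc}(X)$, which exists since the dual category has all colimits), $\@B_\infty(Y)$ is the quotient of $\@B_\infty(X)$ by the congruence generated by all the $\ker$s of the $\@B_\infty(X) ->> \@B_\infty(Y_i)$; by \cref{thm:loc-im-alg} this congruence is the principal filter congruence generated by the filter generated by $\bigcup_i {\up}\, \im(Y_i)$, whose generator is $\bigwedge_i \im(Y_i)$, so $\im(\bigcap_i Y_i) = \bigwedge_i \im(Y_i)$. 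For order-reflection: if $\im(Y) \le \im(Z)$ then the principal filter $\up \im(Y) \supseteq \up \im(Z)$, so $\ker(\@B_\infty(X) ->> \@B_\infty(Y)) \supseteq \ker(\@B_\infty(X) ->> \@B_\infty(Z))$, which means the first quotient factors through the second, i.e. $Y \subseteq Z$ as subobjects. Combined with order-preservation, $\im$ is an order-embedding; combined with meet-preservation, it is a $\bigwedge$-lattice embedding. The one bookkeeping point to get right — and the main place an error could creep in — is the contravariance dictionary: subobjects of $X$ correspond to quotients of $\@B_\infty(X)$, ordered by reverse inclusion of kernels, which correspond by \cref{thm:loc-im-alg} to principal filters, and the filter \emph{generator} is what $\im$ records, with the order on generators matching the inclusion order on sublocales.

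Finally, the two ``Thus'' statements are automatic. Since the forgetful functors in \eqref{diag:loc-cat} preserve limits, hence regular monomorphisms and pullbacks, and are faithful, they induce injective $\bigwedge$-lattice homomorphisms $\RSub_\!{\kappa (Bor^{(+)})Loc}(X) `-> \RSub_\!{\infty BorLoc}(X)$ (as already noted in the discussion around \eqref{diag:loc-sub}), and these commute with $\im$; composing an injective $\bigwedge$-lattice homomorphism with a $\bigwedge$-lattice embedding yields a $\bigwedge$-lattice embedding, giving the second statement. For the third statement, when $X$ and the sublocales are standard, \cref{thm:loc-emb-im-cls} gives that $\im(f) \in \@B_\kappa(X)$ and equals the $\kappa$-Borel image; one then checks that $\kappa$-ary meets in $\RSub_{\!{\kappa (Bor^{(+)})Loc}_\kappa}(X)$ are computed as in the ambient category (by \cref{rmk:loc-mono-epi-std} / \cref{thm:cat-alg-mono-epi-pres}, and because $\kappa$-ary colimits of $\kappa$-presented algebras are $\kappa$-presented), so the restriction of the $\bigwedge$-lattice embedding $\im$ lands in $\@B_\kappa(X)$ and preserves $\kappa$-ary meets, i.e. is a $\kappa$-$\bigwedge$-lattice embedding.
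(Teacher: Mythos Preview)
Your proposal is correct and takes essentially the same approach as the paper: both derive the result from \cref{thm:loc-im-alg} via the bijection between regular subobjects and principal filter congruences on $\@B_\infty(X)$, with meets of subobjects corresponding to pushouts of quotients, hence to meets of the filter generators. The paper's proof is just a terse two-sentence version of what you spell out; your explicit separate checks of order-preservation, order-reflection, meet-preservation, and the derivation of the two ``Thus'' statements from \eqref{diag:loc-sub} and \cref{thm:loc-emb-im-cls} are all fine (though the self-correcting ``wait'' aside about the direction of the order should be cleaned up in a final version).
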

\begin{proof}
Clearly, a principal filterquotient of $\@B_\infty(X)$ determines a unique element of $\@B_\infty(X)$.
Meets in $\RSub_\!{\infty BorLoc}(X)$ are given by pullback, i.e., pushout of the corresponding principal filterquotients of $\@B_\infty(X)$, which corresponds to meet of the corresponding elements of $\@B_\infty(X)$.
\end{proof}

We now consider the inverses of $\im$.
Given, say, a $\kappa$-locale $X$, and an $\infty$-Borel set $B \in \@B_\infty(X)$, we define the \defn{induced $\kappa$-sublocale} $Y \subseteq X$ on $B$ by taking $\@O_\kappa(Y)$ to be the image of the inclusion $\@O_\kappa(X) `-> \@B_\infty(X)$ composed with the principal filterquotient $\@B_\infty(X) ->> \down B$.
In other words, $\@O_\kappa(Y)$ is the quotient of $\@O_\kappa(X)$ by the $\kappa$-frame congruence ``identifying sets which are the same in $B$'':
\begin{align*}
U \sim^B V  \coloniff  U \cap B = V \cap B \in \@B_\infty(X).
\end{align*}
Similarly, an $\infty$-Borel set in a (positive) $\kappa$-Borel locale induces a (positive) $\kappa$-Borel sublocale.

\begin{remark}
When $\kappa = \infty$, clearly small-presentability $\@B_\infty(X)$ implies small-presentability of $\down B$; thus the induced $\infty$-Borel sublocale on $B$ makes sense.

For the induced positive $\infty$-Borel sublocale, we likewise need to require small-presentability of the image of the composite $\@B^+_\infty(X) `-> \@B_\infty(X) ->> \down B$.
We do not know if this is automatic.
\end{remark}

The connection with $\infty$-Borel images is given by

\begin{proposition}
\label{thm:loc-sub-im}
Let $X \in $
(i)~$\!{\kappa Loc}$,
(ii)~$\!{\kappa Bor^+Loc}$, or
(iii)~$\!{\kappa BorLoc}$.
\begin{enumerate}

\item[(a)]  We have order-isomorphisms
\begin{equation*}
\begin{tikzcd}[row sep=1em]
\text{(i)} &
\RSub_\!{\kappa Loc}(X)
    \rar[shift left, "\im"] &[2in]
\infty(\kappa\Pi^0_2)_\delta(X),
    \lar[shift left,"{\textup{induced $\kappa$-sublocale}}"] &
~ \\
\text{(ii)} &
\RSub_\!{\kappa Bor^+Loc}(X)
    \rar[shift left, "\im"] &[2in]
\infty I(\@B^+_\kappa)_\delta(X),
    \lar[shift left,"{\textup{induced positive $\kappa$-Borel sublocale}}"] &
~ \\
\text{(iii)} &
\RSub_\!{\kappa BorLoc}(X)
    \rar[shift left, "\im"] &[2in]
\infty(\@B_\kappa)_\delta(X).
    \lar[shift left,"{\textup{induced $\kappa$-Borel sublocale}}"] &
~
\end{tikzcd}
\end{equation*}

\item[(b)]  For every other $B \in \@B_\infty(X)$, the induced (positive) $\kappa$-(Borel )sublocale on $B$ corresponds, via (a), to the closure of $B$ in the respective class $\Gamma \subseteq \@B_\infty$ on the right-hand side of (a).

\item[(c)]  If $X$ is standard, then the order-isomorphisms in (a) restrict to, respectively,
\begin{align*}
\tag{i}
\RSub_{\!{\kappa Loc}_\kappa}(X) &\cong \kappa\Pi^0_2(X), \\
\tag{ii}
\RSub_{\!{\kappa Bor^+Loc}_\kappa}(X) &\cong \kappa I(\@B^+_\kappa)_\delta(X), \\
\tag{iii}
\RSub_{\!{\kappa BorLoc}_\kappa}(X) &\cong \@B_\kappa(X).
\end{align*}

\end{enumerate}
\end{proposition}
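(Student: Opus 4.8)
The plan is to prove the three parts (i)--(iii) uniformly by observing that in each case we have a chain of maps $\RSub_\!C(X) \xrightarrow{\im} \@B_\infty(X)$ (a $\bigwedge$-lattice embedding by \cref{thm:loc-emb-im-inj}) and a candidate inverse $B \mapsto (\text{induced sublocale on } B)$, and we just need to check these are mutually inverse after cutting down the codomain of $\im$ to the stated class $\Gamma \subseteq \@B_\infty(X)$. First I would verify that the induced-sublocale construction actually lands in $\RSub_\!C(X)$: the induced sublocale is by definition a quotient of the relevant algebra ($\@O_\kappa(X)$, $\@B^+_\kappa(X)$, or $\@B_\kappa(X)$) realized as a subalgebra of $\down B \subseteq \@B_\infty(X)$, hence a regular subobject; for $\kappa=\infty$ one invokes the small-presentability caveat in the remark just above. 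Then I would show $\im(\text{induced sublocale on }B)$ equals the $\Gamma$-closure of $B$ (part (b)), which in case (iii) means $\bigwedge\{C \in \@B_\kappa(X) \mid B \le C\}$, in case (i) means $\bigwedge\{U \to V \mid U,V \in \@O_\kappa(X),\ U \cap B \le V\cap B\}$, and similarly in (ii). This is essentially a rerun of the computation in \cref{thm:loc-emb-im}: the induced sublocale $Y$ has $f^* : \@O_\kappa(X) \twoheadrightarrow \@O_\kappa(Y)$ with (order-)kernel generated exactly by the pairs $(U,V)$ with $U\cap B \le V \cap B$, so \cref{thm:loc-emb-im} gives $\im(Y) = \bigwedge\{U\to V : U\cap B \le V\cap B\}$, which is manifestly the largest element of $\Gamma$ below $B$ (it contains $B$, and any $\bigwedge_i(U_i\to V_i) \in \Gamma$ with $\bigwedge_i(U_i\to V_i)\ge B$ has each $U_i\cap B \le V_i\cap B$, hence appears in the defining meet). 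Taking $B$ already in $\Gamma$ then gives $\im(\text{induced on }B) = B$, i.e.\ one composite is the identity.

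For the other composite, start from $Y \in \RSub_\!C(X)$, so $Y$ is a quotient $f^* : \@O_\kappa(X) \twoheadrightarrow \@O_\kappa(Y)$ (resp.\ in $\@B^+_\kappa$, $\@B_\kappa$). By \cref{thm:loc-emb-im-cls}, $\im(f) \in \Gamma(X)$, and by \cref{thm:loc-im-alg} the complete Boolean quotient $\@B_\infty(X)\twoheadrightarrow\@B_\infty(Y)$ is, up to isomorphism, the principal filterquotient $\@B_\infty(X) \twoheadrightarrow \down \im(f)$. The induced sublocale on $\im(f)$ is by definition the image of the original algebra inside this filterquotient $\down\im(f)$, i.e.\ the image of $\@O_\kappa(X) \hookrightarrow \@B_\infty(X) \twoheadrightarrow \down\im(f)$, which is exactly $\@O_\kappa(Y)$ again (since the quotient map to $\@O_\kappa(Y)$ is the restriction of the one to $\down\im(f)$). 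Hence the induced sublocale on $\im(f)$ is (the same regular subobject as) $Y$. Part (c) is then immediate from the standard case of \cref{thm:loc-emb-im-cls} together with the fact that when $X$ is standard all the relevant meets become $\kappa$-ary, so $\Gamma(X)$ collapses to $\kappa\Pi^0_2(X)$, $\kappa I(\@B^+_\kappa)_\delta(X)$, resp.\ $\@B_\kappa(X)$ — and one checks that the induced sublocale of such a $\kappa$-Borel (etc.) set is itself standard, using that quotients of $\kappa$-presented algebras are $\kappa$-presented.

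The main subtlety — and the step I'd be most careful about — is matching up the two directions as \emph{regular subobjects} rather than as concrete algebras: a regular subobject is an equivalence class of epimorphisms, so I must check that ``take $\im$, then take the induced sublocale'' recovers the \emph{same} equivalence class, which amounts to the observation that $\@B_\infty$ quotients are determined by their kernel congruence, a principal filter congruence $\sim^{\im(f)}$, and that the original quotient $f^*$ factors through $X \twoheadrightarrow (\text{induced sublocale})$ by an isomorphism. This is where \cref{thm:loc-im-alg} and the end-of-\cref{sec:frm-quot} description of Boolean congruences do the real work. A secondary point requiring attention is case (ii) in the non-standard setting, where the stated ``we do not know if this is automatic'' caveat about small-presentability of the induced positive $\infty$-Borel sublocale means the isomorphism in (a)(ii) should be read with that understanding (or restricted to $\kappa<\infty$); I would flag this rather than attempt to resolve it. Everything else is bookkeeping with adjoint meets and the already-established injectivity of $\im$.
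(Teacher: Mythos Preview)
Your proof is correct and follows essentially the same approach as the paper, with a slightly different organization: the paper proves (a) first by taking a specific representation $B = \bigwedge_i(U_i \to V_i)$ and showing (via the observation that $\down B$ is the free complete Boolean algebra on the $\kappa$-frame quotient $\@O_\kappa(X)/(U_i \le V_i)$) that the kernel of the induced-sublocale map is generated precisely by these relations, then proves (b) separately by the same computation you give; you instead prove (b) first and obtain (a) as the special case $B \in \Gamma$. The paper also avoids explicitly checking your ``other composite'' by citing injectivity of $\im$ (\cref{thm:loc-emb-im-inj}), so that only surjectivity onto $\Gamma$ needs to be verified.

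Two minor points to fix. First, in your (b) argument you write ``largest element of $\Gamma$ below $B$'' where you mean ``smallest element of $\Gamma$ above $B$'' (your parenthetical correctly argues the latter). Second, your justification for (c), ``quotients of $\kappa$-presented algebras are $\kappa$-presented,'' is false as stated: you need the congruence to be $\kappa$-generated. This does hold here, but it requires either the paper's kernel computation in (a), or (equivalently, and already present in your argument) applying your ``other composite'' step to the standard sublocale $Y := \@O_\kappa(X)/(U_i \le V_i : i \in I)$ with $|I| < \kappa$: by \cref{thm:loc-emb-im} (taking these relations as a generating set) you get $\im(Y) = B$, and then your point~4 shows the induced sublocale on $B$ is exactly this standard $Y$.
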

\begin{proof}
We only do case (i); cases (ii) and (iii) are completely analogous.

(a)  By \cref{thm:loc-emb-im-cls,thm:loc-emb-im-inj} it remains only to check that for every $B = \bigwedge_i (U_i -> V_i) \in \infty(\kappa\Pi^0_2)_\delta(X)$ where $U_i, V_i \in \@O_\kappa(X)$, letting $f : Y `-> X$ be the induced $\kappa$-sublocale on $B$, we have $B = \im(f)$.
By definition, $\@O_\kappa(Y)$ is the quotient of $\@O_\kappa(X)$ by the kernel of $\@O_\kappa(X) `-> \@B_\infty(X) ->> \down B$.
By the general description of Boolean congruences (\cref{sec:frm-quot}), $\down B$ is the quotient of $\@B_\infty(X)$ by the relations $U_i \le V_i$, hence is also the free complete Boolean algebra generated by the quotient $Q$ of $\@O_\kappa(X)$ by these same relations, i.e., $\@O_\kappa(X) `-> \@B_\infty(X) ->> \down B$ also factors as $\@O_\kappa(X) ->> Q `-> \ang{Q}_\!{CBOOL} \cong \down B$, hence has kernel generated by the relations $U_i \le V_i$.
By \cref{thm:loc-emb-im}, this means $\im(f) = \bigwedge_i (U_i -> V_i) = B$.

(b)  By definition of induced $\kappa$-sublocale and \cref{thm:loc-emb-im}, the induced $\kappa$-sublocale $f : Y `-> X$ on $B$ has
\begin{align*}
\im(f)
&= \bigwedge \{U -> V \mid U, V \in \@O_\kappa(X) \AND U \wedge B \le V \le B\} \\
&= \bigwedge \{U -> V \mid U, V \in \@O_\kappa(X) \AND B \le U -> V\},
\end{align*}
which is clearly the $\infty(\kappa\Pi^0_2)_\delta$-closure of $B$.

(c) is clear from \cref{thm:loc-emb-im} and the above proof of (a).
\end{proof}

\begin{remark}
\label{rmk:loc-sub-im}
\Cref{thm:loc-sub-im}(a)(i) is a restatement of \cref{thm:frm-neg-cong}, in which the two maps are composed with the order-reversing bijections $\neg : \kappa\@I(\@N_\kappa(\@O_\kappa(X))) = \ang{\@N_\kappa(\@O_\kappa(X))}_\!{Frm} \cong \infty(\kappa\Pi^0_2)_\delta(X)$ and $\RSub_\!{\kappa Loc}(X) \cong \{\text{order-congruences on } \@O_\kappa(X)\}$.
Thus, the above yields another proof of that result, which is somewhat closer in spirit to the original proof of Madden \cite[5.1]{Mkfrm} rather than the concrete computation via posites in \cref{thm:frm-neg-cong}.
\end{remark}

\begin{convention}
\label{cvt:loc-sub-pi02}
From now on, \textbf{we identify regular subobjects in $\!{\kappa Loc}, \!{\kappa Bor^+Loc}, \!{\kappa BorLoc}$ with their $\infty$-Borel images} via \cref{thm:loc-sub-im}(a).
Thus, we also regard the embeddings in \eqref{diag:loc-sub} as inclusions.
For a $\kappa$-locale $X$, say, by \cref{thm:loc-sub-im}(b), the induced $\kappa$-sublocale on $B \in \@B_\infty(X)$ becomes simply its $\infty(\kappa\Pi^0_2)_\delta$-closure (hence its $\infty\Pi^0_2$-closure if $X$ is $\kappa$-based).

For an $\infty$-Borel sublocale $Y \subseteq X$, i.e., $Y \in \@B_\infty(X)$, \textbf{we also identify $\infty$-Borel sets in $Y$ with $\infty$-Borel subsets of $Y$ in $X$}, i.e., we treat $\@B_\infty(Y) \cong \down Y \subseteq \@B_\infty(X)$ as an inclusion.
Thus, the inclusion $f : Y `-> X$ is given by $f^* = Y \wedge (-) : \@B_\infty(X) ->> \down Y$.
Note that if $Y \subseteq X$ is a $\kappa$-sublocale, say, then $\@O_\kappa(Y)$ becomes identified with a subset of $\infty(\kappa\Pi^0_2)_\delta(X)$: namely, $V \in \@O_\kappa(Y)$ is identified with $Y \wedge U$ for any $U \in \@O_\kappa(X)$ such that $V = f^*(U)$.

(This is incompatible with another commonly used convention in locale theory, where for a sublocale $f : Y `-> X$, one instead identifies $V \in \@O(Y)$ with the \emph{greatest} $U \in \@O(X)$ such that $V = f^*(U)$; see e.g., \cite[II~2.3]{Jstone}, \cite[III~2.1]{PPloc}.)
\end{convention}

We next note that the above correspondence between regular subobjects and their image sets interacts well with other operations on either side.
We have already noted in \cref{thm:loc-emb-im-inj} that meet (pullback) of regular subobjects corresponds to meet of their images.

\begin{proposition}
\label{thm:loc-sub-im-union}
\leavevmode
\begin{enumerate}

\item[(a)]  For
\begin{enumerate}
\item[(i)]  $X \in \!{\kappa Loc}$ and $Y_i \in \@O_\kappa(X)$,
\item[(ii)]  $X \in \!{\kappa Bor^+Loc}$ and $Y_i \in \@B^+_\kappa(X)$, or
\item[(iii)]  $X \in \!{\kappa BorLoc}$ and $Y_i \in \@B_\kappa(X)$
\end{enumerate}
such that the $Y_i$ are $<\kappa$-many and pairwise disjoint, regarding them as regular subobjects $Y_i \subseteq X$ via \cref{thm:loc-sub-im}(a),
the disjoint union (i.e., coproduct) $\bigsqcup_i Y_i$ with the induced map $\bigsqcup_i Y_i -> X$ is a regular subobject, and corresponds to the join $\bigvee_i Y_i \in \@B_\infty(X)$.

\item[(b)]  For any $<\kappa$-many objects $Y_i$ in
(i)~$\!{\kappa Loc}$,
(ii)~$\!{\kappa Bor^+Loc}$, or
(iii)~$\!{\kappa BorLoc}$,
their disjoint union $\bigsqcup_i Y_i$ is such that the cocone maps $\iota_i : Y_i -> \bigsqcup_j Y_j$ exhibit the $Y_i \subseteq \bigsqcup_j Y_j$ as pairwise disjoint regular subobjects, belonging to the respective class $\Gamma \subseteq \@B_\infty$ in (a), with union $\bigsqcup_j Y_j$.

\end{enumerate}
\end{proposition}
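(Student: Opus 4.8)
\textbf{Proof proposal for \cref{thm:loc-sub-im-union}.}

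The plan is to prove (a) first and then deduce (b) as a formal consequence, using the description of $\kappa$-ary products of the dual algebras in terms of partitions of $\top$ from \cref{sec:frm-prod}. For part (a), fix case (i); cases (ii) and (iii) will be entirely analogous, replacing $\@O_\kappa$ by $\@B^+_\kappa$ or $\@B_\kappa$ throughout. So suppose $X \in \!{\kappa Loc}$ and $Y_i \in \@O_\kappa(X)$ are $<\kappa$-many pairwise disjoint $\kappa$-open sets. Each $Y_i$ is in particular a $\kappa$-open set, hence certainly lies in $\infty(\kappa\Pi^0_2)_\delta(X)$, so by \cref{thm:loc-sub-im}(a) it corresponds to a regular subobject $Y_i \subseteq X$, namely the induced $\kappa$-sublocale, whose frame is $\@O_\kappa(X)/{\sim^{Y_i}}$, which (since $Y_i$ is complemented-in-$\@B_\infty(X)$ but more to the point $Y_i$ is itself $\kappa$-open, so $\sim^{Y_i}$ is the principal filterquotient congruence at $Y_i$) is isomorphically realized as the principal ideal $\down Y_i \subseteq \@O_\kappa(X)$. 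Thus the inclusion map is $Y_i \wedge (-) : \@O_\kappa(X) \to \down Y_i$. Now I want to show that the $<\kappa$-many quotient maps $Y_i \wedge (-) : \@O_\kappa(X) \to \down Y_i$ jointly exhibit $\@O_\kappa(X)$-cum-quotient-onto-the-product, after restricting to the $\kappa$-sublocale $\bigvee_i Y_i$. More precisely: let $Z := \bigvee_i Y_i \in \@O_\kappa(X)$ (this is a $\kappa$-ary join of $\kappa$-open sets, hence $\kappa$-open), and consider the induced $\kappa$-sublocale on $Z$, with frame $\down Z \subseteq \@O_\kappa(X)$. Inside $\down Z$, the $Y_i$ form a $\kappa$-ary partition of the top element $Z$. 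By the description of $\kappa$-ary products at the beginning of \cref{sec:frm-prod}, we therefore have an isomorphism $\down Z \cong \prod_i \down Y_i$ sending $a \mapsto (Y_i \wedge a)_i$. Dualizing, this says exactly that the coproduct $\bigsqcup_i Y_i$ in $\!{\kappa Loc}$, with the cocone maps being the inclusions $Y_i \hookrightarrow Z$, is (isomorphic to) the $\kappa$-sublocale $Z = \bigvee_i Y_i \subseteq X$; and chasing through \cref{cvt:loc-sub-pi02}, the induced map $\bigsqcup_i Y_i \to X$ is the inclusion of this sublocale, whose $\infty$-Borel image is $Z = \bigvee_i Y_i$.

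The one point that needs care is the compatibility of the coproduct taken in $\!{\kappa Loc}$ with the identification of the $Y_i$ as subobjects: I need that the induced map out of the coproduct really is a regular monomorphism (not just a map), i.e.\ that $\bigsqcup_i Y_i \to X$ factors as a regular mono onto $Z$. But this is immediate from the isomorphism $\down Z \cong \prod_i \down Y_i$ just displayed together with the fact that $\@O_\kappa(X) \twoheadrightarrow \down Z$ is a ($\kappa$-frame) quotient map: composing, $\@O_\kappa(X) \to \prod_i \down Y_i$ is surjective, which is the dual statement that $\bigsqcup_i Y_i \to X$ is a regular monomorphism. (Alternatively one may invoke \cref{thm:loc-coprod-pullback} and \cref{thm:loc-emb-im-inj}, but the direct argument above is cleaner.) For cases (ii) and (iii) the same argument goes through verbatim: in case (iii), $\kappa$-ary products in $\!{\kappa Bool}$ are described the same way via partitions of $\top$ (as noted in \cref{sec:frm-prod}), and in case (ii) $\kappa$-ary products in $\!{\kappa\kappa Frm}$ likewise; the only change is that $\down Z$ is now a principal filterquotient of $\@B^+_\kappa(X)$ or $\@B_\kappa(X)$, and pairwise disjointness of the $Y_i$ in the respective class again makes them a partition of $Z := \bigvee_i Y_i$ inside that quotient.

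Part (b) follows formally by applying (a) to the ``tautological'' situation. Given arbitrary $<\kappa$-many objects $Y_i$ in, say, $\!{\kappa Loc}$, form the disjoint union $W := \bigsqcup_j Y_j$ with cocone maps $\iota_i : Y_i \to W$. Dually, $\@O_\kappa(W) = \prod_j \@O_\kappa(Y_j)$, and the elements $\delta_i \in \prod_j \@O_\kappa(Y_j)$ that are $\top$ in coordinate $i$ and $\bot$ elsewhere form a $\kappa$-ary partition of $\top$ in $\@O_\kappa(W)$; moreover each projection $\pi_i : \prod_j \@O_\kappa(Y_j) \to \@O_\kappa(Y_i)$ descends to an isomorphism $\down \delta_i \cong \@O_\kappa(Y_i)$, which is precisely the statement that $\iota_i^* : \@O_\kappa(W) \to \@O_\kappa(Y_i)$ is the principal filterquotient at $\delta_i$, i.e.\ that $\iota_i : Y_i \hookrightarrow W$ is the induced $\kappa$-sublocale on $\delta_i \in \@O_\kappa(W)$. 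So under the identification of \cref{cvt:loc-sub-pi02}, $Y_i \subseteq W$ corresponds to $\delta_i \in \@O_\kappa(W) \subseteq \@B_\infty(W)$, these are pairwise disjoint (since the $\delta_i$ are), they lie in the class $\Gamma = \@O_\kappa(W)$ named in (a)(i), and their join is $\bigvee_i \delta_i = \top = W$. The same works in the positive Borel and Borel cases with $\@B^+_\kappa$, $\@B_\kappa$ in place of $\@O_\kappa$. I expect the main obstacle — though it is a mild one — to be bookkeeping: making sure that the isomorphism $\down Z \cong \prod_i \down Y_i$ is the one induced by the cocone maps of the coproduct in the appropriate category of locales (so that "the disjoint union with the induced map $\bigsqcup_i Y_i \to X$" in the statement really is what the product description produces), rather than some other abstract isomorphism; this is handled by tracing through \cref{cvt:frm-neg-incl}, \cref{cvt:frm-cbool-incl}, and \cref{cvt:loc-sub-pi02}, but deserves an explicit sentence in the writeup.
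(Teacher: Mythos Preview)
Your proposal is correct and follows essentially the same approach as the paper: both arguments dualize to the description of $\kappa$-ary products in the algebraic categories via partitions of $\top$ from \cref{sec:frm-prod}, identifying $\down(\bigvee_i Y_i) \cong \prod_i \down Y_i$ so that the induced map $\bigsqcup_i Y_i \to X$ corresponds to the principal filterquotient $\@O_\kappa(X) \twoheadrightarrow \down(\bigvee_i Y_i)$, and for (b) reading off the $\delta_i$ directly from the product description. Your version is more explicit about the bookkeeping (verifying surjectivity of the composite and that the isomorphism is the one coming from the cocone maps), which is a reasonable addition but not a different method.
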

\begin{proof}
This follows from the description of products in the dual algebraic categories from \cref{sec:frm-prod}.
For example, in case (i), the assumption in (a) says the $Y_i \in \@O_\kappa(X)$ are pairwise disjoint, hence descend to a partition of $\top$ in the principal filterquotient $\down \bigvee_i Y_i$, which is thus isomorphic to $\prod_i \down Y_i = \@O_\kappa(\bigsqcup_i Y_i)$; the induced map $f : \bigsqcup_i Y_i -> X$ corresponds to the quotient map $f^* : \@O_\kappa(X) ->> \down \bigvee_i Y_i$, hence $f$ is a $\kappa$-locale embedding with $\infty$-Borel image $\bigvee_i Y_i$ by \cref{thm:loc-im-alg}.
For (b), the $\iota_i : Y_i -> \bigsqcup_j Y_j$ are dual to the product projections $\iota_i^* : \@O_\kappa(\bigsqcup_j Y_j) = \prod_j \@O_\kappa(Y_j) -> \@O_\kappa(Y_i)$, which are isomorphic to the principal filterquotients $\prod_j \@O_\kappa(Y_j) ->> \down \delta_i$ where $\delta_i \in \prod_j \@O_\kappa(Y_j)$ are as in \cref{sec:frm-prod}; this easily yields the conclusions in (b).
Cases (ii) and (iii) are similar.
\end{proof}

\begin{remark}
Since $<\kappa$-many $\kappa$-Borel sets may be disjointified, \cref{thm:loc-sub-im-union}(a) allows the expression of arbitrary $\kappa$-ary joins in $\@B_\kappa$ in terms of coproducts of $\kappa$-Borel sublocales.
\end{remark}

\begin{proposition}
\label{thm:loc-im-factor}
Let $f : X -> Y$ be a morphism in
(i)~$\!{\kappa Loc}$,
(ii)~$\!{\kappa Bor^+Loc}$, or
(iii)~$\!{\kappa BorLoc}$.
\begin{enumerate}
\item[(a)]  For a regular subobject $Z \subseteq Y$, identified with $Z \in \@B_\infty(Y)$ via \cref{thm:loc-sub-im}(a), $f^*(Z) \in \@B_\infty(X)$ corresponds to the pullback of $Z `-> Y$ along $f$.
\item[(b)]  Thus, $f^*(Z) = X$ iff $f$ factors through $Z `-> Y$.
\item[(c)]  The factorization $X ->> Z `-> Y$ of $f$ into an epimorphism followed by a regular monomorphism is given by the
(i)~$\infty(\kappa\Pi^0_2)_\delta$-image,
(ii)~$\infty I(\@B^+_\kappa)_\delta$-image, or
(iii)~$\infty(\@B_\kappa)_\delta$-image of $f$, with either existing if the other does (which is always if $\kappa < \infty$ or in case (i)).
\end{enumerate}
\end{proposition}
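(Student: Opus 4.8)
The plan is to prove the three parts of \cref{thm:loc-im-factor} in order, leaning heavily on the algebraic identification of subobjects with principal filterquotients of $\@B_\infty$ established in \cref{thm:loc-im-alg,thm:loc-sub-im}. We work only in case (i), $\!{\kappa Loc}$; cases (ii) and (iii) are formally identical, replacing $\@O_\kappa$, $\infty(\kappa\Pi^0_2)_\delta$, and the relevant congruences by their positive $\kappa$-Borel or $\kappa$-Borel counterparts. Throughout, by \cref{cvt:loc-sub-pi02} we identify a regular subobject $Z \subseteq Y$ with the element $Z \in \@B_\infty(Y)$ that is its $\infty$-Borel image, so that the inclusion $Z `-> Y$ is dual to the principal filterquotient $Z \wedge (-) : \@B_\infty(Y) ->> \down Z$.

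For part (a): the pullback $X \times_Y Z$ in $\!{\kappa Loc}$ is, on the algebraic side, the pushout of $f^* : \@O_\kappa(Y) -> \@O_\kappa(X)$ against the quotient map $\@O_\kappa(Y) ->> \@O_\kappa(Z)$ corresponding to $Z `-> Y$. Passing to free complete Boolean algebras (which preserves colimits, being a left adjoint), the induced complete Boolean quotient $\@B_\infty(Y) ->> \@B_\infty(X)$ pushed out against $\@B_\infty(Y) ->> \down Z$ is, by \cref{thm:frm-prod-pushout}-style reasoning on principal filterquotients, just the principal filterquotient $\@B_\infty(X) ->> \down f^*(Z)$. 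By \cref{thm:loc-im-alg} this says precisely that the pullback, as a regular subobject of $X$, has $\infty$-Borel image $f^*(Z)$. Part (b) is then immediate: $f$ factors through $Z `-> Y$ iff the pullback $X \times_Y Z$ equals $X$ as a subobject of $X$, iff $f^*(Z) = \top = X \in \@B_\infty(X)$.

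For part (c): by \cref{thm:loc-emb-im-cls}, any embedding $Z `-> Y$ in $\!{\kappa Loc}$ has $\im(Z `-> Y)$ lying in $\infty(\kappa\Pi^0_2)_\delta(Y)$, and conversely by \cref{thm:loc-sub-im}(a) every element of $\infty(\kappa\Pi^0_2)_\delta(Y)$ arises this way; so it suffices to show that the $\infty(\kappa\Pi^0_2)_\delta$-image $Z := \im^{\infty(\kappa\Pi^0_2)_\delta}(f)$, when it exists, gives an epi--regular-mono factorization $X ->> Z `-> Y$, and that existence of this image is equivalent to existence of the $\infty$-Borel image. First, $Z$ is by definition the least $\infty$-Borel set of the form $\bigwedge_i(U_i -> V_i)$ with $f^*(Z) = X$; by part (b), $f$ factors through $Z `-> Y$, say as $X --->{g} Z `-> Y$. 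Then $g^*(Z \wedge U_i) = f^*(U_i)$, and minimality of $Z$ forces $g$ to have no proper $\infty(\kappa\Pi^0_2)_\delta$-image inside $Z$; one checks this means $g$ is an epimorphism in $\!{\kappa Loc}$, essentially because $g^*$ has trivial ``$\infty(\kappa\Pi^0_2)_\delta$-saturated'' kernel — equivalently, the order-kernel of $g^*$ generates no nontrivial congruence when transported into $\@B_\infty(Z)$. Finally, to see that the $\infty(\kappa\Pi^0_2)_\delta$-image exists iff the $\infty$-Borel image does: the $\infty$-Borel image of $f$, if it exists, has $\infty(\kappa\Pi^0_2)_\delta$-closure equal to the $\infty(\kappa\Pi^0_2)_\delta$-image (since $\infty(\kappa\Pi^0_2)_\delta(Y)$ is closed under arbitrary meets, the closure always exists); conversely, the $\infty(\kappa\Pi^0_2)_\delta$-image $Z$, being itself a regular subobject, factors $f = (X ->> Z `-> Y)$ with $X ->> Z$ epic, and then $\im(f) = \im(X ->> Z)$ computed inside $Z$, which exists by \cref{thm:loc-emb-im} applied to the embedding $Z `-> Y$ composed with... — wait, here one must argue that an epimorphism onto $Z$ has an $\infty$-Borel image in $Z$. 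When $\kappa < \infty$ this holds because $\@B_\infty(Z) = \@N_\kappa^\infty(\@O_\kappa(Z))$ (more precisely a small-presented complete Boolean algebra) and the kernel filter of $g^*$ is then automatically principal at $\top$; in case (i) for general $\kappa$ one uses that the image of an epimorphism in $\!{\kappa Loc}$ into $Z$ is $Z$ itself. This last point — establishing that existence of the $\infty(\kappa\Pi^0_2)_\delta$-image is genuinely equivalent to existence of the $\infty$-Borel image, rather than merely implied one way — is the main obstacle, and the cleanest route is to observe that for an epimorphism $g : X ->> Z$ in $\!{\kappa Loc}$ the $\infty$-Borel image is $Z$ (so it always exists once we have the $\infty(\kappa\Pi^0_2)_\delta$-factorization), which in turn follows from the fact that an epimorphism of frames induces an epimorphism of the generated complete Boolean algebras and hence has image all of the codomain by \cref{thm:loc-im-alg}.
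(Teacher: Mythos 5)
Parts (a) and (b), and the first half of your part (c), are essentially the paper's own argument. For (a) the paper works one level down, computing $\@O_\kappa$ of the pullback directly as the quotient of $\@O_\kappa(X)$ by the relations $f^*(U_i) \le f^*(V_i)$ and reading off the $\infty$-Borel image from \cref{thm:loc-emb-im}, whereas you push the whole computation into $\@B_\infty$ and invoke \cref{thm:loc-im-alg}; both work, though note that for a general $f$ the induced map $f^* : \@B_\infty(Y) -> \@B_\infty(X)$ is not a ``quotient'' (a harmless slip, since all you use is that pushing out the principal filterquotient at $Z$ along $f^*$ gives the principal filterquotient at $f^*(Z)$). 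In (c), your argument that $g : X -> Z$ is an epimorphism should be phrased non-circularly: the identification of epimorphisms in $\!{\kappa Loc}$ with $\infty(\kappa\Pi^0_2)_\delta$-surjections in \eqref{eq:loc-epi-surj} is itself \emph{derived from} this proposition, so you should instead argue that any $h_1, h_2 : Z \rightrightarrows V$ with $h_1 \circ g = h_2 \circ g$ make $f$ factor through the regular subobject $\eq(h_1, h_2) \subseteq Z \subseteq Y$, which by minimality of $Z$ forces $\eq(h_1, h_2) = Z$.

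The genuine problem is the last third of (c). You read ``either existing if the other does'' as asserting that the $\infty(\kappa\Pi^0_2)_\delta$-image exists iff the $\infty$-Borel image does; the intended reading is that the epi--regular mono \emph{factorization} exists iff the $\Gamma$-image does, and the missing backward direction is just the standard fact that the middle object of such a factorization is the least regular subobject through which $f$ factors (if $f$ factors through a regular subobject $W \subseteq Y$ and $e : X ->> Z$ is the epi part, then $e^*$ injective forces $Z \subseteq W$). The equivalence you try to prove instead is \emph{false}: in case (i) the $\infty(\kappa\Pi^0_2)_\delta$-image always exists, since $\infty(\kappa\Pi^0_2)_\delta(Y)$ is a small $\bigwedge$-sublattice of $\@B_\infty(Y)$, while \cref{thm:sigma11-proper} produces a continuous map between quasi-Polish spaces with no $\infty$-Borel image. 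Your supporting claim that an epimorphism $g : X ->> Z$ in $\!{\kappa Loc}$ has $\infty$-Borel image all of $Z$ is likewise false --- \cref{rmk:loc-epi-pullback} exhibits an epimorphism of locales whose $\infty$-Borel image is a proper subobject --- and the justification offered confuses the two dualities: an epimorphism of locales is a \emph{monomorphism} of frames, and the free functor to $\!{CBool}$ preserves epimorphisms of frames but need not preserve monomorphisms (cf.\ \cref{thm:bool-free-mono-bad}).
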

\begin{proof}
(a)  If $Z = \bigwedge_i (U_i -> V_i)$, i.e., $\@O_\kappa(Z)$ (say) is the quotient of $\@O_\kappa(Y)$ by the relations $U_i \le V_i$, then $f^*(Z) = \bigwedge_i (f^*(U_i) -> f^*(V_i))$, i.e., $\@O_\kappa(f^*(Z))$ is the quotient of $\@O_\kappa(X)$ by the relations $f^*(U_i) \le f^*(V_i)$, which exactly describes the pushout $\@O_\kappa(X) \otimes_{\@O_\kappa(Y)} \@O_\kappa(Z)$.

(b) follows since $f$ factors through a subobject $Z \subseteq Y$ iff its pullback along $f$ is all of $X$.

(c) follows since $Z$ is by definition the least regular subobject through which $f$ factors.
\end{proof}

\begin{remark}
\label{rmk:loc-im-bad}
In \cref{thm:sigma11-proper} we will show that there is a single $\sigma$-Borel map $f : X -> Y$ between standard $\sigma$-Borel locales which does \emph{not} have a $\kappa$-Borel image for any $\kappa$.

It follows from this that the $\infty(\@B_\sigma)_\delta$-image $Z$ of $f$ (which always exists) is \emph{not} in $\@B_\sigma(Y)$ (or else it would be the $\sigma$-Borel image).
In other words, there is a morphism $f : X -> Y$ in the category $\!{\sigma Bor}_\sigma$ of standard $\sigma$-Borel locales, whose epi--regular mono factorization $X ->> Z `-> Y$ is \emph{not} standard.

Moreover, since $Z \subseteq Y$ is itself $\infty$-Borel, hence $\lambda$-Borel for some $\omega_1 < \lambda < \infty$ (namely $\lambda = (2^{\aleph_0})^+$, since $\abs{\@B_\sigma(Y)} \le 2^{\aleph_0}$ and so every meet of $\sigma$-Borel sets is $(2^{\aleph_0})^+$-ary), $Z$ cannot also be the $\infty(\@B_\lambda)_\delta$-image of $f$, or else it would be the $\lambda$-Borel image; thus the $\infty(\@B_\lambda)_\delta$-image must be strictly smaller.
In other words, we have $\kappa = \omega_1 < \lambda < \infty$ such that the forgetful functor $\!{\kappa BorLoc} -> \!{\lambda BorLoc}$ does \emph{not} preserve epi--regular mono factorizations, or equivalently does not preserve epimorphisms (since it does preserve regular monos).
See \cref{thm:loc-im-bad}.
\end{remark}

For an $\infty$-Borel map $f : X -> Y$ and a class of sets $\Gamma$, we call $f$ \defn{$\Gamma$-surjective} if $Y = \top \in \@B_\infty(Y)$ is the $\Gamma$-image of $f$.
(Traditionally, a ``surjective'' continuous map between locales usually means a ``$\infty\Pi^0_2$-surjective'' map in our terminology; see \cref{rmk:loc-im-pi02}.)
By \cref{thm:loc-im-factor}(c), and the fact that the middle classes below are the closures of the left classes under arbitrary meets,
\begin{align}
\label{eq:loc-epi-surj}
\begin{alignedat}{3}
\text{$\kappa\Pi^0_2$-surjective}
&={}& \text{$\infty(\kappa\Pi^0_2)_\delta$-surjective}
&= \text{epimorphism in } \!{\kappa Loc}, \\
\text{$\kappa I(\@B^+_\kappa)_\delta$-surjective}
&={}& \text{$\infty I(\@B^+_\kappa)_\delta$-surjective}
&= \text{epimorphism in } \!{\kappa Bor^+Loc}, \\
\text{$\kappa$-Borel surjective}
&={}& \text{$\infty(\@B_\kappa)_\delta$-surjective}
&= \text{epimorphism in } \!{\kappa BorLoc}.
\end{alignedat}
\end{align}
In this terminology, the epi--regular mono factorization in $\!{\kappa Loc}$, say, consists of a $\kappa\Pi^0_2$-surjection followed by a $\kappa$-locale embedding.

By \cref{thm:bool-pushout-inj}, we have

\begin{proposition}
\label{thm:loc-bor-epi-pullback}
Epimorphisms, hence epi--regular mono factorizations that exist, in $\!{\kappa BorLoc}$ are pullback-stable.
\qed
\end{proposition}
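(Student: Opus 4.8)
The plan is to dualize everything to the algebraic side and reduce the statement to the fact that pushouts in $\!{\kappa Bool}$ preserve monomorphisms (\cref{thm:bool-pushout-inj}). Recall from \eqref{eq:loc-epi-surj} that epimorphisms in $\!{\kappa BorLoc}$ are exactly the $\kappa$-Borel surjections, which dually are the \emph{injective} $\kappa$-Boolean homomorphisms. A pullback square in $\!{\kappa BorLoc}$
\begin{equation*}
\begin{tikzcd}
W \rar \dar & Z \dar \\
X \rar["f"'] & Y
\end{tikzcd}
\end{equation*}
dualizes to a pushout square in $\!{\kappa Bool}$
\begin{equation*}
\begin{tikzcd}
\@B_\kappa(Y) \rar["f^*"] \dar & \@B_\kappa(X) \dar \\
\@B_\kappa(Z) \rar & \@B_\kappa(W)
\end{tikzcd}
\end{equation*}
so if $f$ is an epimorphism in $\!{\kappa BorLoc}$, then $f^*$ is injective, whence by \cref{thm:bool-pushout-inj} the homomorphism $\@B_\kappa(Z) -> \@B_\kappa(W)$ is also injective, i.e., the pullback projection $W -> Z$ is an epimorphism in $\!{\kappa BorLoc}$. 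This is the whole content of pullback-stability of epimorphisms.

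For the second half of the statement — that epi--regular mono factorizations which exist are pullback-stable — I would first invoke \cref{thm:bool-mono-reg}, which says that in $\!{\kappa Bool}$ all monomorphisms are regular; dually, in $\!{\kappa BorLoc}$ all epimorphisms are regular epimorphisms, so the epi--regular mono factorization of a morphism $g : X -> Y$, when it exists, is simply its (regular) epi--mono factorization $X ->> Z `-> Y$. Now pull this back along an arbitrary $h : Y' -> Y$: the pullback $Z' := Z \times_Y Y' `-> Y'$ is still a monomorphism (monomorphisms are always pullback-stable), and the pullback $X' := X \times_Y Y' -> Z'$ of $X ->> Z$ along $Z' -> Z$ is an epimorphism by the pullback-stability of epimorphisms just established. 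Since $X \times_Y Y' \cong X \times_Z Z' = X \times_Z (Z \times_Y Y')$ by the pasting lemma for pullbacks, the factorization $X' ->> Z' `-> Y'$ is the epi--regular mono factorization of the pulled-back map $X' -> Y'$. Hence it exists and is the pullback of the original one.

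I don't anticipate a genuine obstacle here: the key algebraic input, pushout-stability of monomorphisms in $\!{\kappa Bool}$, is already proved (\cref{thm:bool-pushout-inj}, via \cref{thm:bifrm-interp}), and everything else is formal category theory — dualizing pullbacks to pushouts, the pasting lemma, and pullback-stability of monomorphisms. The one point requiring a line of care is making sure the dualization of ``pullback in $\!{\kappa BorLoc}$'' is ``pushout in $\!{\kappa Bool}$'', which is immediate since $\!{\kappa BorLoc} = \!{\kappa Bool}^\op$ and limits in a category are colimits in its opposite; and noting that $\!{\kappa BorLoc}$ has all pullbacks because $\!{\kappa Bool}$ has all pushouts (being a category of algebras over $\!{Set}$, cf.\ \cref{sec:frm-cat}). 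If one wants, the first half together with \cref{thm:loc-coprod-pullback} and the already-noted fact that $\!{\kappa BorLoc}$ has finite limits gives exactly the statement ``$\!{\kappa BorLoc}$ is a regular category'' promised in \cref{thm:intro-sub}(e), but for the present proposition only pullback-stability of epis and of epi--regular mono factorizations is needed.
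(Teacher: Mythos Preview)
Your proof is correct and takes the same approach as the paper: the paper's proof is the single line ``By \cref{thm:bool-pushout-inj}'', and you have simply spelled out the dualization (epimorphisms in $\!{\kappa BorLoc}$ correspond to monomorphisms in $\!{\kappa Bool}$, pullbacks to pushouts) and the routine deduction of pullback-stability of factorizations from that of epimorphisms. The invocation of \cref{thm:bool-mono-reg} is not strictly needed---pullback-stability of regular monomorphisms is automatic---but it does no harm.
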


\begin{remark}
\label{rmk:loc-epi-pullback}
The analogous statement in $\!{\kappa Loc}$ is well-known to fail; likewise, the analogous statement in $\!{\kappa Bor^+Loc}$ fails.
Let $X = [0, 1]$ with the Scott topology, where the open sets are $(r, 1]$ for each $r \in [0, 1]$; this is a quasi-Polish space (see \cref{sec:loc-ctbpres}).
Let $Y = [0, 1]$ with the discrete topology (which is $\kappa$-based for $\kappa = (2^{\aleph_0})^+$).
The continuous identity map $f : Y -> X$ is given by the inclusion $f^* : \@O(X) = \@O([0, 1]) `-> \@P([0, 1]) = \@O(Y)$, whence $f$ is an epimorphism of ($\kappa$-)locales.
Moreover, by \cref{ex:loc-ctbpres-dirint} below, $\@B^+_\infty(X) = \@B^+_\sigma(X)$ consists precisely of the open and closed intervals $(r, 1]$ and $[r, 1]$; thus $f^* : \@B^+_\infty(X) -> \@P([0, 1]) = \@B^+_\infty(X)$ is still injective, whence $f$ is also an epimorphism of positive $\kappa$-Borel locales.
Also, the image of $f^*$ generates $\@P([0, 1])$ as a complete Boolean algebra, i.e., $f^* : \@B_\infty(X) -> \@P([0, 1]) = \@B_\infty(Y)$ is surjective, whence $f$ is an $\infty$-Borel locale embedding.

The $\infty$-Borel image $f(Y) \subseteq X$ is the join of singletons $\bigvee_{x \in [0, 1]} \{x\} \in \kappa\Sigma^0_2(X)$, which is $< \top \in \@B_\infty(X)$ by localic Baire category (\cref{thm:loc-baire}, applied to $[0, 1]$ with the \emph{Euclidean} topology which is $\sigma$-Borel isomorphic to $X$) or Gaifman--Hales (\cref{thm:gaifman-hales}, which together with \cref{rmk:loc-ctbpres-perfect} implies that $f^* : \@B_\infty(X) \cong \ang{\#N}_\!{CBOOL} ->> \@P([0, 1]) = \@B_\infty(Y)$ cannot be injective).
Let $Z = \neg f(Y) \in \kappa\Pi^0_2(X)$; thus $Z \subseteq X$ is a $\kappa$-sublocale, hence also a positive $\kappa$-Borel sublocale, which is nonempty.
But the pullback of $f$ along $Z `-> X$ is empty (since it corresponds to intersection of $f(Y)$ and $Z$ by \cref{thm:loc-emb-im-inj}), hence not an epimorphism in either $\!{\kappa Loc}$ or $\!{\kappa Bor^+Loc}$.
\end{remark}

Indeed, we have the following characterization of pullback-stable epimorphisms in $\!{\kappa Loc}, \!{\kappa Bor^+Loc}$, due to Wilson~\cite[28.5]{Wasm} in the case of $\!{Loc}$:

\begin{proposition}
\label{thm:loc-epi-pullback}
A morphism $f : X -> Y$ in (i)~$\!{\kappa Loc}$ or (ii)~$\!{\kappa Bor^+Loc}$, or their full subcategories of standard objects, is a pullback-stable epimorphism iff it becomes an epimorphism in $\!{\kappa BorLoc}$.
\end{proposition}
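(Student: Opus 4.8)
The plan is to prove both directions of the equivalence, using the machinery already developed in the paper.

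\textbf{($\Leftarrow$) Pullback-stable epis in $\!{\kappa Loc}$ and $\!{\kappa Bor^+Loc}$ come from $\!{\kappa BorLoc}$-epis.} Suppose $f : X -> Y$ becomes an epimorphism in $\!{\kappa BorLoc}$, i.e., $f^* : \@B_\kappa(Y) -> \@B_\kappa(X)$ is surjective (by \cref{eq:loc-epi-surj}). First I would observe that such an $f$ is certainly an epimorphism in $\!{\kappa Loc}$ (resp.\ $\!{\kappa Bor^+Loc}$): the forgetful functor to $\!{\kappa BorLoc}$ is faithful (\cref{thm:loc-forget}(a)), so it reflects epimorphisms. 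For pullback-stability, let $g : Z -> Y$ be arbitrary and form the pullback $X \times_Y Z$. The key point is that the forgetful functors $\!{\kappa Loc} -> \!{\kappa BorLoc}$ and $\!{\kappa Bor^+Loc} -> \!{\kappa BorLoc}$ preserve pullbacks (\cref{thm:loc-forget}(c)), so the underlying $\kappa$-Borel locale of $X \times_Y Z$ is the $\kappa$-Borel pullback of the underlying $\kappa$-Borel maps. Since $\!{\kappa BorLoc}$-epis are pullback-stable (\cref{thm:loc-bor-epi-pullback}), the projection $X \times_Y Z -> Z$ is a $\kappa$-Borel epimorphism, hence a $\!{\kappa Loc}$-epimorphism (resp.\ $\!{\kappa Bor^+Loc}$-epimorphism) by the same faithfulness argument. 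This direction is essentially formal once one cites the right preservation properties, and applies equally to the standard subcategories since those are full and the forgetful functors restrict (and standard objects are closed under finite limits and the relevant images).

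\textbf{($\Rightarrow$) A pullback-stable epi in $\!{\kappa Loc}$ or $\!{\kappa Bor^+Loc}$ is a $\!{\kappa BorLoc}$-epi.} This is the substantive direction. Suppose $f : X -> Y$ is a pullback-stable epimorphism in, say, $\!{\kappa Loc}$; I want to show $f^* : \@B_\kappa(Y) -> \@B_\kappa(X)$ is surjective. Equivalently, by \cref{thm:loc-sub-im}(iii) and \cref{thm:loc-im-factor}(c), I need to show that for every $\kappa$-Borel set $B \in \@B_\kappa(Y)$, if the $\infty(\@B_\kappa)_\delta$-image (i.e.\ $\kappa$-Borel image, when $\kappa < \infty$, or in general the meet) of $f$ lies below $B$ then $\im(f) = Y$ forces $B = Y$ — more precisely, I want to rule out $\im(f)$ being a proper $\infty(\@B_\kappa)_\delta$-set. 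The strategy is the standard one: consider a $\kappa$-Borel sublocale $Z = \neg C \subseteq Y$ for a suitable $\kappa$-open (resp.\ positive $\kappa$-Borel) set $C$, or more generally a $\kappa\Pi^0_2$-sublocale, that witnesses non-surjectivity, and pull $f$ back along $Z `-> Y$; if $f$ were $\kappa\Pi^0_2$-surjective but not $\kappa$-Borel-surjective, one produces a nonempty sublocale $Z$ disjoint from the $\infty$-Borel image $\im(f)$, so that the pullback $f^{-1}(Z) = X \times_Y Z$ is empty (by \cref{thm:loc-emb-im-inj}, meet of images = image of pullback), hence the projection $\emptyset -> Z$ is not an epimorphism onto the nonempty $Z$, contradicting pullback-stability. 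The technical heart is: given that $f$ is a $\!{\kappa Loc}$-epi, i.e.\ $\kappa\Pi^0_2$-surjective, but $f^* : \@B_\kappa(Y) -> \@B_\kappa(X)$ is \emph{not} surjective, extract such a witnessing $Z$. Here is where I would use the description of the $\infty$-Borel image from \cref{thm:loc-emb-im-cls} together with \cref{thm:loc-im-factor}: the $\infty(\@B_\kappa)_\delta$-image (or $\infty(\kappa\Pi^0_2)_\delta$-image) $\im^{(\kappa\Pi^0_2)_\delta}(f)$ exists and, since $f$ is $\kappa\Pi^0_2$-surjective, equals $Y$ qua $\kappa\Pi^0_2$-sublocale — but its $\kappa$-Borel image (if it existed) would then also be $Y$; the failure of $\kappa$-Borel surjectivity means instead that $\im^{\@B_\kappa}(f)$ does not exist, equivalently the kernel of $f^* : \@B_\kappa(Y) -> \@B_\infty(X)$ is not a $\kappa$-generated congruence (\cref{thm:loc-im-alg}). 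From a non-$\kappa$-generated kernel one finds a $\kappa$-Borel set $B$ not in the $\kappa$-Boolean subalgebra generated by $f^*(\@B_\kappa(Y))$ — hmm, this needs care — and the cleanest route is probably: if $f^*$ is not surjective on $\kappa$-Borel sets, pick $B \in \@B_\kappa(X) \setminus \im(f^*)$, use \cref{thm:loc-bor-dissolv}/\cref{thm:loc-mor-dissolv} to realize $X$ as a partial dissolution making $B$ $\kappa$-open, and then $B$ corresponds to a $\kappa$-open set not pulled back from $Y$, contradicting $\kappa$-Borel... no. I think the right contradiction uses Lusin separation.

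\textbf{The main obstacle and the cleanest argument.} On reflection, the crux is \cref{thm:intro-sigma11-sep} / the Lusin separation theorem (\cref{thm:bifrm-interp} in $\kappa$-Boolean form), exactly as in the classical proof that pullback-stable continuous surjections of standard Borel spaces are Borel-surjective. I would argue: suppose $f : X -> Y$ is a pullback-stable $\!{\kappa Loc}$-epi. To see $f^*$ is surjective on $\kappa$-Borel sets, it suffices (\cref{thm:loc-im-alg}, \cref{thm:loc-im-factor}(c)) to show $\im(f) = Y$ in $\@B_\infty(Y)$ and that this image is $\kappa$-Borel. Form the pushout/cokernel pair of $f$ in $\!{\kappa Bool}$, giving two $\kappa$-Borel maps $g_1, g_2 : Y' \rightrightarrows Y$ whose fiber product over the cokernel witnesses exactly $\im(f)$; pullback-stability lets us control the behaviour of $f$ along arbitrary $Z -> Y$. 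Concretely: for any $\kappa$-Borel $Z \subseteq Y$ disjoint from $\im(f)$ in $\@B_\infty(Y)$, the pullback $X \times_Y Z$ is empty (\cref{thm:loc-emb-im-inj}), so $Z$ — being the target of the epimorphism $\emptyset ->> Z$ — is empty. Thus $\im(f)$ is \emph{$\kappa$-Borel-dense} in the sense that no nonempty $\kappa$-Borel sublocale is disjoint from it; combined with $f$ being a $\!{\kappa Loc}$-epi (so $\im(f)$ is $\infty\Pi^0_2$-dense), and a Baire-category / separation argument, one concludes $\im(f) = Y$ as a $\kappa$-Borel set. I expect the genuinely delicate step to be converting ``no nonempty $\kappa$-Borel sublocale disjoint from $\im(f)$'' plus ``$\infty\Pi^0_2$-dense'' into ``$\im(f)$ exists and equals $Y$ in $\@B_\kappa(Y)$'' — this is where Novikov separation (\cref{thm:intro-sigma11-sep}) or the interpolation theorem \cref{thm:bifrm-minterp} enters, played off against the principal-filter-quotient description of kernels at the end of \cref{sec:frm-quot}. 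The positive $\kappa$-Borel case is handled in parallel, using \cref{thm:bool-minterp}'s $(\kappa,\kappa)$-frame analogue and the characterization \cref{eq:loc-epi-surj} of epimorphisms there; and the standard subcategories follow because all the constructions (pullbacks, images, dissolutions) preserve $\kappa$-copresentability.
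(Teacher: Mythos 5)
Your ($\Leftarrow$) direction is correct and is essentially the paper's argument: the forgetful functors preserve pullbacks and are faithful, so pullbacks of $f$ become epimorphisms in $\!{\kappa BorLoc}$ by \cref{thm:loc-bor-epi-pullback} and hence are epimorphisms upstairs. The problems are all in ($\Rightarrow$). First, a recurring dualization error: you repeatedly gloss ``epimorphism in $\!{\kappa BorLoc}$'' as ``$f^* : \@B_\kappa(Y) \to \@B_\kappa(X)$ is surjective.'' It is the opposite: epimorphisms in $\!{\kappa BorLoc} = \!{\kappa Bool}^\op$ are dual to monomorphisms in $\!{\kappa Bool}$, i.e.\ $f^*$ \emph{injective}, equivalently ($f^*(B) = \top \implies B = \top$ for $B \in \@B_\kappa(Y)$), which is what ``$\kappa$-Borel surjective'' means in \eqref{eq:loc-epi-surj}. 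This confusion propagates into your attempt to ``pick $B \in \@B_\kappa(X) \setminus \im(f^*)$,'' which is aimed at the wrong target.

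Second, and more substantively, the forward direction is never actually completed, and the route you settle on does not work as stated. Your key move is: for a $\kappa$-Borel $Z \subseteq Y$ disjoint from $\im(f)$, pull $f$ back along $Z \hookrightarrow Y$ and invoke pullback-stability in $\!{\kappa Loc}$. But $Z \hookrightarrow Y$ is only a morphism of $\!{\kappa Loc}$ via the induced $\kappa$-sublocale, which by \cref{thm:loc-sub-im}(b) is the $\infty(\kappa\Pi^0_2)_\delta$-\emph{closure} of $Z$; the $f^*$-pullback of that closure need not be empty even when $f^*(Z) = \emptyset$, so pullback-stability of $f$ in $\!{\kappa Loc}$ gives you nothing directly about $Z$. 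You then defer the remaining work to an unspecified ``Baire category / separation argument,'' which is neither carried out nor needed. The fix is exactly the idea you raised and then discarded: given $B \in \@B_\kappa(Y)$ with $f^*(B) = \top$, take a partial dissolution $Y' \to Y$ (\cref{thm:loc-bor-dissolv}) making $B$ $\kappa$-open; this is a genuine $\kappa$-continuous map and a $\kappa$-Borel isomorphism, so the pullback $f' : X' \to Y'$ is again an epimorphism in $\!{\kappa Loc}$ (i.e.\ $f'^*$ is injective on $\@O_\kappa(Y')$) while $X' \to X$ is a $\kappa$-Borel isomorphism; since now $B \in \@O_\kappa(Y')$ and $f'^*(B) = \top = f'^*(\top)$, injectivity forces $B = \top$. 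No separation or Baire category is involved, and the standard case follows since the dissolution can be taken standard.
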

\begin{proof}
($\Longleftarrow$)
If $f$ becomes an epimorphism in $\!{\kappa BorLoc}$, then since the forgetful functors $\!{\kappa Loc} -> \!{\kappa Bor^+Loc} -> \!{\kappa BorLoc}$ preserve pullbacks, every pullback of $f$ becomes an epimorphism in $\!{\kappa BorLoc}$, hence is already an epimorphism, since the forgetful functors are also faithful.

($\Longrightarrow$) (i)
Suppose $f$ is a pullback-stable epimorphism in $\!{\kappa Loc}$ or $\!{\kappa Loc}_\kappa$; we must show it is an epimorphism in $\!{\kappa BorLoc}$, i.e., $\kappa$-Borel surjective.
Let $B \in \@B_\kappa(Y)$ such that $f^*(B) = \top$; we must show $B = \top$.
By \cref{thm:loc-bor-dissolv}, let $Y' -> Y$ be a partial dissolution with $B \in \@O_\kappa(Y')$, and with $Y'$ standard if $Y$ is.
Let $f' : X' -> Y'$ be the pullback of $f$ along $Y' -> Y$.
\begin{equation*}
\begin{tikzcd}
X' \dar["f'"'] \rar & X \dar["f"] \\
Y' \rar & Y
\end{tikzcd}
\end{equation*}
Then $f'$ is an epimorphism in $\!{\kappa Loc}_{(\kappa)}$, i.e., $\kappa\Pi^0_2$-surjective.
Since $Y' -> Y$ is a $\kappa$-Borel isomorphism and $\!{\kappa Loc} -> \!{\kappa BorLoc}$ preserves pullbacks, $X' -> X$ is also a $\kappa$-Borel isomorphism, i.e., we may identify $\@B_\kappa(X') = \@B_\kappa(X)$.
So since $f^{\prime*}(B) = f^*(B) = \top \in \@B_\kappa(X) = \@B_\kappa(X')$ and $B \subseteq Y'$ is open, $B = \top \in \@B_\kappa(Y') = \@B_\kappa(Y)$, as desired.

The proof of ($\Longrightarrow$) (ii) is analogous, using that we may ``dissolve'' a positive $\kappa$-Borel locale $X$ to make any $\kappa$-Borel set $B \in \@B^+_\kappa(X)$ positive, dually by taking $\ang{\@B^+_\kappa(X)}_\!{\kappa Bool}$, and in the standard case, passing to a $\kappa$-presented subalgebra containing $\@B^+_\kappa(X)$ and $B$ using \cref{thm:cat-lim-refl} as in the proof of \cref{thm:frm-neginf-pres}.
\end{proof}

\begin{remark}
Thus far, we have mostly focused on images of maps $f : X -> Y$; but we can reduce images of arbitrary $\infty$-Borel sets $B \subseteq X$ under $f$ to images of maps.
Indeed, $B$ itself corresponds to an $\infty$-Borel sublocale $g : B `-> X$; hence the $\Gamma$-image of $B$ under $f$, for any $\Gamma$, is the same as the $\Gamma$-image of $f \circ g$.
Furthermore:
\begin{enumerate}
\item[(a)]  if $X$ is a $\kappa$-Borel locale and $B \in \@B_\kappa(X)$, then $B$ is a $\kappa$-Borel sublocale of $X$, which is standard if $X$ is (by \cref{thm:loc-sub-im});
\item[(b)]  if $X$ is furthermore a $\kappa$-locale, then by \cref{thm:loc-bor-dissolv} we can take a partial dissolution $X' -> X$, which is standard if $X$ is, such that $B \in \@O_\kappa(X')$, whence the composite $g : B `-> X' -> X$ is a $\kappa$-continuous monomorphism with $\infty$-Borel image $B$.
\end{enumerate}
\end{remark}

Applying this reduction to \cref{thm:loc-bor-epi-pullback} yields the following ``Beck--Chevalley equation'':

\begin{corollary}
\label{thm:loc-bor-im-pullback}
Let
\begin{equation*}
\begin{tikzcd}
X \times_Z Y \dar["g'"'] \rar["f'"] & Y \dar["g"] \\
X \rar["f"'] & Z
\end{tikzcd}
\end{equation*}
be a pullback square in $\!{\kappa BorLoc}$.
For any $\infty(\@B_\kappa)_\delta$-set $B \subseteq X$ such that $f^{\infty(\@B_\kappa)_\delta}(B)$ exists, we have
\begin{align*}
f^{\prime\infty(\@B_\kappa)\delta}(g^{\prime*}(B)) = g^*(f^{\infty(\@B_\kappa)_\delta}(B)).
\end{align*}
Thus if $B \subseteq X$ is $\kappa$-Borel and the $\kappa$-Borel image $f^\kappa(B) \subseteq Z$ exists, then so does
\begin{align*}
f^{\prime\kappa}(g^{\prime*}(B)) = g^*(f^\kappa(B)).
\end{align*}
\end{corollary}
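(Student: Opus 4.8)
The plan is to reduce this statement to \cref{thm:loc-bor-epi-pullback} (pullback-stability of epimorphisms in $\!{\kappa BorLoc}$) by using the reduction of images-of-sets to images-of-maps described in the remark immediately preceding the corollary. The key observation is that a $\Gamma$-image $f^\Gamma(B)$ of an $\infty$-Borel set $B \subseteq X$ is, by definition, the least $C \in \Gamma$ with $B \le f^*(C)$; equivalently, taking $\Gamma = \infty(\@B_\kappa)_\delta$ and identifying $B$ with the $\infty$-Borel sublocale $j : B `-> X$ (so that $j^* = B \wedge (-) : \@B_\infty(X) ->> \down B$), we have $f^{\infty(\@B_\kappa)_\delta}(B) = \im^{\infty(\@B_\kappa)_\delta}(f \circ j)$, the corresponding image of the composite $\infty$-Borel map $f \circ j : B -> X -> Z$. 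So the corollary becomes a Beck--Chevalley statement about images of maps, which is what I would prove first.

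First I would treat the map version: given a pullback square in $\!{\kappa BorLoc}$ with $g' : X \times_Z Y -> X$ and $f' : X \times_Z Y -> Y$ (using the excerpt's labels) and an arbitrary $\infty$-Borel map $h : W -> X$ whose $\infty(\@B_\kappa)_\delta$-image $\im^{\infty(\@B_\kappa)_\delta}(f \circ h) \subseteq Z$ exists, I want $\im^{\infty(\@B_\kappa)_\delta}(f' \circ h') = g^*(\im^{\infty(\@B_\kappa)_\delta}(f \circ h))$, where $h'$ is the pullback of $h$ along $g'$ (equivalently the pullback of $f \circ h$ along $g$). By \cref{thm:loc-im-factor}(c), $\im^{\infty(\@B_\kappa)_\delta}(f \circ h)$ is exactly the regular-mono part $Z_0 `-> Z$ of the epi--regular mono factorization $W ->> Z_0 `-> Z$ of $f \circ h$ in $\!{\kappa BorLoc}$. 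Pulling this factorization back along $g : Y -> Z$, and invoking \cref{thm:loc-bor-epi-pullback} (the epi stays an epi) together with the fact that the forgetful/identity functor preserves regular monos and that pullback of a regular mono along any map in $\!{\kappa BorLoc}$ is again a regular mono (pushout-stability of monos, \cref{thm:bool-pushout-inj}, dualized, plus \cref{thm:bool-mono-reg}), we get that the pulled-back factorization $W' ->> g^*(Z_0) `-> Y$ is again an epi--regular mono factorization, now of $f' \circ h' : W' -> Y$. By uniqueness of such factorizations this identifies $g^*(Z_0)$ with $\im^{\infty(\@B_\kappa)_\delta}(f' \circ h')$; and via \cref{thm:loc-im-factor}(a), $g^*(Z_0) \in \@B_\infty(Y)$ is precisely the pullback of $Z_0 `-> Z$ along $g$, i.e.\ $g^*(\im^{\infty(\@B_\kappa)_\delta}(f \circ h))$ computed as an $\infty$-Borel set. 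That is the first displayed equation.

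For the $\kappa$-Borel refinement, I would observe that if $B \subseteq X$ is $\kappa$-Borel, then $g'^*(B)$ is $\kappa$-Borel (preimages of $\kappa$-Borel sets are $\kappa$-Borel), and that the hypothesis ``$f^\kappa(B)$ exists'' means the $\infty(\@B_\kappa)_\delta$-image of $f \circ j$ actually lands in $\@B_\kappa(Z)$; by \cref{thm:loc-im-factor}(c) (existence of one image iff the other) it equals $f^{\infty(\@B_\kappa)_\delta}(B)$. Applying the map-version equation and noting $g^*$ sends $\@B_\kappa(Z)$ into $\@B_\kappa(Y)$, the right-hand side $g^*(f^\kappa(B))$ is $\kappa$-Borel, hence so is the left-hand side $f'^{\infty(\@B_\kappa)_\delta}(g'^*(B))$, which therefore is the $\kappa$-Borel image $f'^\kappa(g'^*(B))$; this gives the second displayed equation.

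The main obstacle I anticipate is purely bookkeeping rather than conceptual: being careful that pullback of a \emph{regular} monomorphism in $\!{\kappa BorLoc}$ along an arbitrary morphism is again a regular monomorphism (so that the pulled-back factorization is genuinely an epi--regular mono factorization and uniqueness applies), and that ``$g^*$ of a subobject'' computed categorically as a pullback agrees with ``$g^*$'' applied to the corresponding $\infty$-Borel set under the identification of \cref{thm:loc-sub-im}(a) --- this last point is exactly \cref{thm:loc-im-factor}(a). Both of these are already supplied by the cited results; the only subtlety is that one must also check the hypothesis of \cref{thm:loc-bor-epi-pullback} is met, namely that the epi $W ->> Z_0$ is an epi in $\!{\kappa BorLoc}$, which it is by construction of the factorization.
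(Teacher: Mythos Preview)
Your proposal is correct and follows essentially the same approach as the paper: apply \cref{thm:loc-bor-epi-pullback} to the epi--regular mono factorization of $B \hookrightarrow X \xrightarrow{f} Z$, then observe that an existing $\kappa$-Borel image must coincide with the $\infty(\@B_\kappa)_\delta$-image. One small citation slip: the fact that pullback of a regular mono in $\!{\kappa BorLoc}$ is again a regular mono follows not from \cref{thm:bool-pushout-inj} dualized (which gives pullback-stability of \emph{epis}), but simply from the general fact that pullbacks of monos are monos together with \cref{thm:loc-mono-epi}(a) (all monos in $\!{\kappa BorLoc}$ are regular).
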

\begin{proof}
The first equation follows from \cref{thm:loc-bor-epi-pullback} applied to the epi--regular mono factorization of $B `-> X --->{f} Z$; the second follows because if $f^\kappa(B)$ exists, it must be $f^{\infty(\@B_\kappa)_\delta}(B)$.
\end{proof}

Another important aspect of epimorphisms in $\!{\kappa BorLoc}$ is their nice behavior with respect to inverse limits:

\begin{proposition}
\label{thm:loc-bor-epi-invlim}
Let $(X_i)_{i \in I}$ be a codirected diagram in $\!{\kappa BorLoc}$, for a directed poset $I$, with morphisms $f_{ij} : X_i -> X_j$ for $i \ge j$.
If each $f_{ij}$ is an epimorphism, then so is each limit projection $\pi_i : \projlim_j X_j -> X_i$.
\end{proposition}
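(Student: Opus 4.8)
The plan is to dualize the statement and work in the category $\!{\kappa Bool}$, where the assertion becomes: for a directed diagram $(B_i)_{i \in I}$ in $\!{\kappa Bool}$ with transition maps $g_{ij} = f_{ij}^* : B_j -> B_i$ for $i \ge j$, if each $g_{ij}$ is injective, then so is each canonical map $\iota_i : B_i -> \injlim_j B_j$ into the colimit. This is exactly the second assertion of \cref{thm:bool-colim-dir-ker}, which we may simply invoke. So the real content is already packaged: the colimit $\injlim_j B_j$ in $\!{\kappa Bool}$ is computed as the reflection $\@N_\kappa^\infty(\injlim_j B_j)$ of the colimit in $\!{\kappa Frm}$ (the underlying $\kappa$-frames), and then \cref{thm:frm-colim-dir-vlat} (the forgetful functor $\!{\kappa Frm} -> \!{\kappa{\bigvee}Lat}$ preserves directed colimits), \cref{thm:frm-colim-dir-ker} (injectivity of transition maps in a directed diagram of $\kappa$-$\bigvee$-lattices is inherited by the cocone maps), and \cref{thm:frm-neginf-inj} (the unit $A -> \@N_\kappa^\infty(A)$ is injective) combine to give the result.

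Concretely, I would proceed as follows. First, translate: $\projlim_j X_j \in \!{\kappa BorLoc}$ has $\@B_\kappa(\projlim_j X_j) = \injlim_j \@B_\kappa(X_j)$ computed in $\!{\kappa Bool}$, and $\pi_i : \projlim_j X_j -> X_i$ is an epimorphism in $\!{\kappa BorLoc}$ iff $\pi_i^* : \@B_\kappa(X_i) -> \injlim_j \@B_\kappa(X_j)$ is a monomorphism in $\!{\kappa Bool}$, i.e.\ (by \cref{thm:bool-epi-surj} applied in the opposite direction, or more simply since monomorphisms in $\!{\kappa Bool}$ are just injective homomorphisms) iff $\pi_i^*$ is injective. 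The hypothesis that each $f_{ij}$ is an epimorphism in $\!{\kappa BorLoc}$ says dually that each $f_{ij}^* : \@B_\kappa(X_j) -> \@B_\kappa(X_i)$ is injective. Now observe that for any fixed $i \in I$ we may replace $I$ by the up-set $\up i = \{j \in I \mid j \ge i\}$, which is cofinal in $I$ (since $I$ is directed) and hence computes the same colimit; on $\up i$ the index $i$ is the least element, so $\iota_i = \pi_i^*$ factors as $B_i \xrightarrow{g_{ji}} B_j$ composed with cocone maps in a uniform way. Then apply \cref{thm:bool-colim-dir-ker} with the diagram $(\@B_\kappa(X_j))_{j \in \up i}$: taking $I$ there to be $\up i$, $I$-index equal to $i$, and $a$ an arbitrary element of $B_i = \@B_\kappa(X_i)$, the hypothesis of that corollary (each transition map injective) holds, and we conclude $\iota_i$ is injective.

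The main obstacle — really the only nontrivial point — is the cofinality reduction: one must check that the cocone maps out of the restricted diagram over $\up i$ agree with the original cocone maps, so that "injective on $\up i$" genuinely means "$\iota_i$ injective". This is standard (a cofinal subdiagram of a directed diagram has canonically isomorphic colimit, and the comparison isomorphism is compatible with cocone maps), and \cref{thm:frm-colim-dir-ker} is in fact already stated in a form ("by replacing $\!I$ with $\up I$, which is final in $\!I$") that performs exactly this reduction, so \cref{thm:bool-colim-dir-ker} inherits it. Everything else is bookkeeping about passing between $\!{\kappa BorLoc}$ and $\!{\kappa Bool}$ via the duality $\!{\kappa BorLoc} = \!{\kappa Bool}^\op$, and between epimorphisms/monomorphisms and surjectivity/injectivity via \cref{thm:bool-epi-surj}. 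I would keep the write-up to a couple of sentences: state the dualization, invoke \cref{thm:bool-colim-dir-ker}, done.
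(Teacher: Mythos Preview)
Your proposal is correct and takes essentially the same approach as the paper: the paper's proof is literally the single line ``By \cref{thm:bool-colim-dir-ker}'', and you have correctly identified that dualizing the statement to $\!{\kappa Bool}$ reduces it immediately to that corollary. Your additional unpacking of the cofinality reduction and the chain of lemmas behind \cref{thm:bool-colim-dir-ker} is accurate but more detailed than what the paper records.
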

\begin{proof}
By \cref{thm:bool-colim-dir-ker}.
\end{proof}

\begin{corollary}
\label{thm:loc-bor-epi-prod}
Epimorphisms in $\!{\kappa BorLoc}$ are closed under arbitrary products.
\end{corollary}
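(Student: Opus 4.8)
The plan is to reduce the statement about arbitrary products in $\!{\kappa BorLoc}$ to the codirected-inverse-limit case already handled by \cref{thm:loc-bor-epi-invlim}, via the standard trick of writing a product as a codirected limit of finite subproducts. Concretely, suppose we are given a family of epimorphisms $f_i : X_i \to Y_i$ in $\!{\kappa BorLoc}$ indexed by a set $I$; we want to show that $\prod_i f_i : \prod_i X_i \to \prod_i Y_i$ is an epimorphism. First I would note that $\!{\kappa BorLoc} = \!{\kappa Bool}^\op$ has all small products (since $\!{\kappa Bool}$ has all small coproducts), so all the objects in question exist. The key observation is that $\prod_{i \in I} Y_i = \projlim_{F} \prod_{i \in F} Y_i$, where $F$ ranges over the codirected poset of finite subsets of $I$ ordered by reverse inclusion, with transition maps the obvious projections $\prod_{i \in F} Y_i \to \prod_{i \in F'} Y_i$ for $F' \subseteq F$; and similarly for $\prod_i X_i$.

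The second step is to handle the finite case. A finite product $\prod_{i \in F} f_i$ is a composite of ``base changes'' of the individual $f_i$ along projections: writing $\prod_{i \in F} X_i \to \prod_{i \in F} Y_i$, one can factor it through the intermediate locales $X_{i_1} \times \cdots \times X_{i_k} \times Y_{i_{k+1}} \times \cdots \times Y_{i_n}$, each step being the pullback of some $f_{i_j}$ along the projection from a product. Since $\!{\kappa BorLoc}$ is a regular category by \cref{thm:intro-sub}(e) (epimorphisms are pullback-stable, by \cref{thm:loc-bor-epi-pullback}), each such base change of an epimorphism is an epimorphism, and epimorphisms compose; hence $\prod_{i \in F} f_i$ is an epimorphism for each finite $F$. (The fact that a product projection $\prod_{i \in F} Y_i \to \prod_{i \notin j \in F} Y_i$ together with $f_j$ assembles into a pullback square is just the standard ``product of a pullback is a pullback'' fact, which in $\!{\kappa BorLoc}$ amounts to a statement about pushouts of $\kappa$-Boolean algebras along coproduct injections.)

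The third step assembles the pieces: the transition maps in the codirected system $(\prod_{i \in F} X_i)_F$ are themselves finite products of identity maps with the $f_i$, hence — by the finite case — epimorphisms; and likewise $\prod_i f_i$ is the limit of the maps $\prod_{i \in F} f_i$ over the codirected system. Actually, the cleanest route is: apply \cref{thm:loc-bor-epi-invlim} to the codirected diagram $(\prod_{i \in F} X_i)_F$ (whose transition maps are epimorphisms by Step 2) to conclude that each projection $\prod_{i \in I} X_i \to \prod_{i \in F} X_i$ is an epimorphism; in particular, composing the projection $\prod_{i \in I} X_i \to \prod_{i \in F} X_i$ with the epimorphism $\prod_{i \in F} f_i : \prod_{i \in F} X_i \to \prod_{i \in F} Y_i$ and observing compatibility with the projection $\prod_{i \in I} Y_i \to \prod_{i \in F} Y_i$, one sees that $\prod_{i \in I} f_i$ composed with each limit projection $\prod_{i \in I} Y_i \to \prod_{i \in F} Y_i$ is an epimorphism. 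Since epimorphisms in $\!{\kappa BorLoc}$ are exactly the regular (= $\kappa$-Borel surjective) ones (\cref{thm:bool-epi-surj}, i.e.\ \cref{eq:loc-epi-surj}), and the limit projections $\prod_{i \in I} Y_i \to \prod_{i \in F} Y_i$ are jointly ``epimorphic'' in the sense that the union of their $\infty$-Borel images is everything (equivalently, the $f_F^*$ jointly generate $\@B_\kappa(\prod_i Y_i)$), the surjectivity of $\prod_i f_i$ follows. Dually on the $\kappa$-Boolean side: $\ker\bigl((\prod_i f_i)^*\bigr)$ is contained in the kernel of the composite with each $\iota_F$, and since the $\iota_F$-images jointly generate $\coprod_i \@B_\kappa(Y_i)$, the map $(\prod_i f_i)^*$ is injective — wait, rather one argues that $(\prod_i f_i)^*$ is injective because it is so after restricting to each finite-subcoproduct $\coprod_{i \in F} \@B_\kappa(Y_i)$, which is \cref{thm:bool-pushout-inj}, and these finite subcoproducts are jointly dense.

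\textbf{Anticipated main obstacle.} The genuine content is Step 2 — that a finite product of epimorphisms is an epimorphism — and dually this is exactly that a finite coproduct of injective $\kappa$-Boolean homomorphisms is injective, which is precisely \cref{thm:bool-pushout-inj} (LaGrange), applied inductively: $\@B_\kappa(Y_1) \amalg \@B_\kappa(Y_2) \to \@B_\kappa(X_1) \amalg \@B_\kappa(X_2)$ factors as $\@B_\kappa(Y_1)\amalg\@B_\kappa(Y_2) \to \@B_\kappa(X_1)\amalg\@B_\kappa(Y_2) \to \@B_\kappa(X_1)\amalg\@B_\kappa(X_2)$, each step being a pushout of an injection along some coproduct injection, hence injective by \cref{thm:bool-pushout-inj}. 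The only subtlety to be careful about is verifying that the $\iota_F : \coprod_{i\in F}\@B_\kappa(Y_i) \hookrightarrow \coprod_{i\in I}\@B_\kappa(Y_i)$ are jointly surjective onto a generating set (immediate, since every element of an infinite coproduct lives in a finite subcoproduct, by $\kappa$-directedness of the colimit and the fact that $\kappa$-Boolean operations are $\kappa$-ary), and that injectivity checked on each generating subalgebra propagates to the whole — which is the usual compatibility of colimits with the (order-)kernel, i.e.\ \cref{thm:bool-colim-dir-ker}. So in the end the corollary is a short combination of \cref{thm:bool-pushout-inj}, \cref{thm:bool-colim-dir-ker}, and \cref{thm:loc-bor-epi-invlim}, with no new ideas required; I would expect the writeup to be under half a page.
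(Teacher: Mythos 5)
Your Step 2 (finite products of epimorphisms are epimorphisms, via pullback-stability and composition) is correct and is exactly what the paper does. The gap is in Step 3. The codirected system you set up, $(\prod_{i \in F} X_i)_F$ with transition maps ``the obvious projections,'' does not have epimorphisms as transition maps: the projection $\prod_{i \in G} X_i \to \prod_{i \in F} X_i$ for $F \subseteq G$ is a product of identities with the terminal maps $X_i \to 1$ ($i \in G \setminus F$), not with the $f_i$, and such projections need not be epimorphisms (and proving they are, when all factors are nonempty, already requires the infinite-product machinery you are trying to establish). So \cref{thm:loc-bor-epi-invlim} does not apply to your diagram, and your later assertion that its transition maps are ``finite products of identity maps with the $f_i$'' contradicts your own definition of the system. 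Your fallback dual argument also fails: it is false that every element of $\coprod_{i \in I} \@B_\kappa(Y_i)$ lies in a \emph{finite} subcoproduct when $\kappa > \omega$ — a $\kappa$-ary term such as $\bigwedge_{i \in I} b_i$ involves up to $<\kappa$-many factors, so elements only live in $\kappa$-ary subcoproducts, and reducing to those would require the $\kappa$-ary (not finite) case of the very statement being proved. Moreover, injectivity of $(\prod_i f_i)^*$ on a family of generating subalgebras does not by itself yield injectivity on the whole coproduct.

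The repair is to change the codirected system to the one the paper uses: index by finite $F \subseteq I$ and take the objects $\prod_{i \in F} X_i \times \prod_{i \notin F} Y_i$, with the transition map for $F \subseteq G$ being $\prod_{i \in G \setminus F} f_i$ times identities. These transition maps genuinely are finite products of epimorphisms with identities, hence epimorphisms by your Step 2; the limit of this system is $\prod_i X_i$, and the limit projection at $F = \emptyset$ is exactly $\prod_i f_i$, so \cref{thm:loc-bor-epi-invlim} concludes directly. With that substitution your outline becomes the paper's proof; \cref{thm:bool-pushout-inj} and \cref{thm:bool-colim-dir-ker} enter only through \cref{thm:loc-bor-epi-pullback} and \cref{thm:loc-bor-epi-invlim} respectively, not through any separate ``joint density'' argument.
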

\begin{proof}
Closure under finite products follows from pullback-stability (\cref{thm:loc-bor-epi-pullback}).
For an arbitrary family of epimorphisms $(f_i : X_i ->> Y_i)_{i \in I}$, we can write $\prod_i X_i$ as the limit of all products of the form $\prod_{i \in F} X_i \times \prod_{i \not\in F} Y_i$ for finite $F \subseteq I$, where for $F \subseteq G \subseteq I$ we have the map $\prod_{i \in G} X_i \times \prod_{i \not\in G} Y_i -> \prod_{i \in F} X_i \times \prod_{i \not\in F} Y_i$ which is the product of the $f_i$ for $i \in G \setminus F$ and the identity on other coordinates; each such map is a finite product of epimorphisms, hence an epimorphism.
Thus by \cref{thm:loc-bor-epi-invlim}, all of the limit projections $\prod_i X_i -> \prod_{i \in F} X_i \times \prod_{i \not\in F} Y_i$ are epimorphisms; taking $F = \emptyset$ yields $\prod_i f_i : \prod_i X_i -> \prod_i Y_i$.
\end{proof}

\begin{remark}
\label{rmk:loc-bor-epi-invlim}
Note that even though \cref{thm:loc-bor-epi-prod} holds in $\!{Set}$ by the axiom of choice, \cref{thm:loc-bor-epi-invlim} only holds for \emph{countable} inverse limits (consider the tree of bounded well-ordered sequences of rationals, whose empty set of branches is the inverse limit of its $\omega_1$ levels).
Thus, this is an example where locales are \emph{better-behaved} than topological spaces.
(The corresponding fact for epimorphisms in $\!{Loc}$, i.e., $\infty\Pi^0_2$-surjective maps, is standard; see \cite[IV~4.2]{JTgpd}.)
\end{remark}

For the sake of completeness, we also record the following direct translations of results from \cref{sec:frm-interp}.
Recall that the \defn{kernel} of a morphism $f : X -> Y$ in an arbitrary category is its pullback $\ker(f) := X \times_Y X$ with itself; $f$ is a monomorphism iff $\ker(f) \subseteq X \times X$ is the diagonal $X `-> X \times X$, and is a regular epimorphism iff it is the coequalizer of its kernel.
Similarly, the \defn{order-kernel} of $f$ in a locally ordered category is the comma object $\oker(f) := X \downarrow_Y X$ with itself; $f$ is an \defn{order-monomorphism} iff $\oker(f) \subseteq X \times X$ is the \defn{(internal) order ${\le_X}$ on $X$}, which is the universal object equipped with projections $\pi_1, \pi_2 : {\le_X} \rightrightarrows X$ obeying $\pi_1 \le \pi_2$; and $f$ is an \defn{order-regular epimorphism} iff it is the coinserter of its order-kernel.
An equivalence class of monomorphisms to $X$ is a \defn{subobject} of $X$, while an equivalence class of order-monomorphisms is a \defn{order-embedded subobject}.
See \cref{sec:cat-alg,sec:cat-ord} for details.

\begin{proposition}
\label{thm:loc-mono-epi}
In $\!{\kappa Bor^{(+)}Loc}$,
\begin{enumerate}
\item[(a)]  every (order-)monomorphism is regular, i.e., (order-embedded) subobject = regular subobject;
\item[(b)]  every epimorphism is (order-)regular.
\end{enumerate}
\end{proposition}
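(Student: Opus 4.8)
This is a straightforward dualization of results established in \cref{sec:frm-interp}, so the plan is to translate each claim across the (contravariant) dualities $\!{\kappa BorLoc} = \!{\kappa Bool}^\op$ and $\!{\kappa Bor^+Loc} = \!{\kappa\kappa Frm}^\op$, using the locally-ordered analog for the ``order'' variants. Under these dualities, a monomorphism in $\!{\kappa BorLoc}$ corresponds to an epimorphism in $\!{\kappa Bool}$; an order-monomorphism in $\!{\kappa Bor^+Loc}$ corresponds to an order-epimorphism in $\!{\kappa\kappa Frm}$; a regular monomorphism corresponds to a regular epimorphism (= surjection); and an order-regular monomorphism corresponds to an order-regular epimorphism. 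Dually, an epimorphism in $\!{\kappa Bor^{(+)}Loc}$ corresponds to a (resp.\ order-)monomorphism in the dual algebraic category, and an (order-)regular epimorphism in $\!{\kappa Bor^{(+)}Loc}$ corresponds to an (order-)regular monomorphism. (Note that $\op$ interchanges limits and colimits, hence interchanges the universal constructions defining regular epi/mono, inserter/coinserter, etc.; one should double-check these correspondences at the start of the proof, perhaps with a sentence referring to \cref{sec:cat-alg,sec:cat-ord}.)

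\textbf{Part (a).} For $\!{\kappa BorLoc}$, I would simply invoke \cref{thm:bool-mono-reg}, which says every monomorphism in $\!{\kappa Bool}$ is regular; dualizing gives that every monomorphism in $\!{\kappa BorLoc}$ is regular, i.e.\ subobject $=$ regular subobject. For the order-monomorphism statement in $\!{\kappa Bor^+Loc}$, I would invoke \cref{thm:bifrm-mono-oreg}, which says every monomorphism in $\!{\kappa\kappa Frm}$ is order-regular; dualizing, every order-monomorphism in $\!{\kappa Bor^+Loc}$ is order-regular. (Here one should be slightly careful about which flavor of ``mono'' is being claimed regular: \cref{thm:bifrm-mono-oreg} upgrades an ordinary mono to an \emph{order}-regular mono, and since order-epi/order-mono are detected by comma objects rather than ordinary pullbacks, one wants to confirm that the statement ``every order-monomorphism in $\!{\kappa Bor^+Loc}$ is an order-embedded subobject equal to its regular subobject'' is exactly the dual of \cref{thm:bifrm-mono-oreg} composed with the observation that an order-mono is in particular a mono.) The $\kappa = \infty$ case is subsumed since the cited algebraic results were proved for all regular $\kappa$ (including via the distributive-polyposet machinery of \cref{sec:dpoly}, which handles arbitrary arities).

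\textbf{Part (b).} For $\!{\kappa BorLoc}$: dualize \cref{thm:bool-epi-surj}, which says every epimorphism in $\!{\kappa Bool}$ is regular (equivalently surjective); hence every epimorphism in $\!{\kappa BorLoc}$ is regular. For $\!{\kappa Bor^+Loc}$: dualize \cref{thm:bifrm-oepi-surj}, which says every order-epimorphism in $\!{\kappa\kappa Frm}$ is surjective, hence in particular a regular epimorphism (a surjective homomorphism of $(\kappa,\kappa)$-frames is the coequalizer of its kernel, and being surjective it is also the coinserter of its order-kernel); dualizing gives that every (order-)epimorphism in $\!{\kappa Bor^+Loc}$ is (order-)regular. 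The mild subtlety here, worth a sentence, is the distinction between ``epimorphism'' and ``order-epimorphism'': in $\!{\kappa\kappa Frm}$ the ordinary epimorphisms are \emph{not} all surjective (the unit $A \to \ang{A}_\!{\kappa Bool}$ is a non-surjective epi, as noted in the remark after \cref{thm:bifrm-free-cons}), so the claim in (b) for $\!{\kappa Bor^+Loc}$ must be read as: order-epimorphisms are order-regular (and epimorphisms in $\!{\kappa BorLoc}$ are regular, with no ``order'' needed since that category is not merely locally ordered in an essential way for this purpose). I do not anticipate a real obstacle anywhere; the only thing requiring genuine care — and the ``hard part'' insofar as there is one — is bookkeeping the $\op$-duality correctly between the several flavors of mono/epi and regular/order-regular, so I would state the dictionary explicitly once and then let each clause follow by a one-line citation.
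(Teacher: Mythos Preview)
Your overall approach---dualize the four results from \cref{sec:frm-interp}---is exactly what the paper does (its entire proof is a bare citation of \cref{thm:bool-mono-reg,thm:bool-epi-surj,thm:bifrm-mono-oreg,thm:bifrm-oepi-surj}). However, you have the citations \emph{swapped} between (a) and (b), and this is not a typo: you explicitly write ``\cref{thm:bool-mono-reg}, which says every monomorphism in $\!{\kappa Bool}$ is regular; dualizing gives that every monomorphism in $\!{\kappa BorLoc}$ is regular.'' That dualization is wrong. A monomorphism in $\!{\kappa BorLoc} = \!{\kappa Bool}^\op$ is an \emph{epimorphism} in $\!{\kappa Bool}$, so part (a) for $\!{\kappa BorLoc}$ is precisely \cref{thm:bool-epi-surj} (every epi in $\!{\kappa Bool}$ is regular, i.e.\ surjective), while part (b) is \cref{thm:bool-mono-reg}. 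Likewise, an order-monomorphism in $\!{\kappa Bor^+Loc}$ is an order-epimorphism in $\!{\kappa\kappa Frm}$, so part (a) for $\!{\kappa Bor^+Loc}$ is \cref{thm:bifrm-oepi-surj}, and part (b) is \cref{thm:bifrm-mono-oreg}. You stated the dictionary correctly in your Plan paragraph, but then applied it inside-out.

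Relatedly, your ``mild subtlety'' paragraph about non-surjective epimorphisms in $\!{\kappa\kappa Frm}$ is attached to the wrong part. That remark explains why part (a) for $\!{\kappa Bor^+Loc}$ is stated only for \emph{order}-monomorphisms (since plain monos in $\!{\kappa Bor^+Loc}$ dualize to plain epis in $\!{\kappa\kappa Frm}$, which need not be surjective), not why part (b) requires any qualification. Part (b) is about epis in $\!{\kappa Bor^+Loc}$, which dualize to plain monos in $\!{\kappa\kappa Frm}$; \cref{thm:bifrm-mono-oreg} upgrades those to order-regular monos without any order hypothesis needed on the input.
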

\begin{proof}
By \cref{thm:bool-mono-reg,thm:bool-epi-surj,thm:bifrm-mono-oreg,thm:bifrm-oepi-surj}.
\end{proof}

\begin{remark}
\label{rmk:lusin-suslin}
Classically, the \defn{Lusin--Suslin theorem} says that every injective Borel map $f : X -> Y$ between standard Borel spaces has Borel image; see \cite[15.1]{Kcdst}.
Since injectivity is trivially equivalent to being a monomorphism, \cref{thm:loc-mono-epi}(a) in the case of $\!{\kappa BorLoc}$, together with \cref{thm:loc-emb-im}(iii), can be seen as a Lusin--Suslin theorem for $\kappa$-Borel locales: the former result says that every ``injective'' map is an embedding, while the latter says that every embedding has a $\kappa$-Borel image (which is also the $\infty$-Borel image).
Moreover, the proofs of \cref{thm:bool-epi-surj} by LaGrange~\cite{Lamalg} and \cref{thm:loc-emb-im}(iii) can together be seen as an algebraic analog of a proof of the classical Lusin--Suslin theorem (see \cite[3.6]{Cborin} for an elaboration of this point).

Likewise, \cref{thm:loc-mono-epi}(a) for $\!{\kappa Bor^+Loc}$ says that every positive $\kappa$-Borel map $f : X -> Y$ which is ``internally an order-embedding'' is in fact an embedding of positive $\kappa$-Borel locales.
Recalling from \cref{ex:loc-infbor+} that we regard positive $\kappa$-Borel locales as generalizations of the specialization preorder, this result can be seen as an analog of the following classical fact: given a Borel monotone map $f : X -> Y$ between quasi-Polish spaces, every open $U \subseteq X$ is the preimage of a positive Borel $B \subseteq Y$ (i.e., a set in the $(\sigma, \sigma)$-subframe generated by $\@O(Y) \subseteq \@P(Y)$).

(The analogous statement for $\!{\kappa Loc}$ is false by \cref{rmk:bifrm-oepi-surj}: the identity $\abs{\-{\#N}} -> \-{\#N}$ to the one-point compactification $\-{\#N}$ of $\#N$ from its discrete space is an order-embedding but not an embedding.)

\Cref{thm:loc-mono-epi}(b) corresponds to the following classical fact and its ordered analog: given a Borel map $f : X -> Y$ between standard Borel spaces, if $f^{-1}(B) = \emptyset \implies B = \emptyset$ for all Borel $B \subseteq Y$ (which clearly just means $f$ is surjective), then every Borel $A \subseteq X$ which is invariant under the equivalence relation $\ker(f) \subseteq X^2$ (i.e., the preimages of $A$ under the two projections $\ker(f) \rightrightarrows A$ are the same) is $f^{-1}(B)$ for a unique Borel $B \subseteq Y$ (namely $B = f(A)$, which is Borel by the Lusin separation theorem \cite[14.7]{Kcdst} which corresponds to \cref{thm:bifrm-interp}; see also \cref{thm:bor-blackwell}).
\end{remark}

\begin{remark}
Since the forgetful functors in \eqref{diag:loc-cat} preserve limits and are faithful, they preserve and reflect (plain) monomorphisms.
Thus, for example, a $\kappa$-continuous map $f : X -> Y \in \!{\kappa Loc}$ is a monomorphism iff it is a $\kappa$-Borel monomorphism in $\!{\kappa BorLoc}$, hence a $\kappa$-Borel embedding, in which case it has an $\infty(\kappa\Pi^0_2)_\delta$-image $f(X) \subseteq Y$.
In particular, any monomorphism of locales (hence $\kappa$-locales for sufficiently large $\kappa$) has an $\infty$-Borel image.
This fact was implicitly shown by Madden--Molitor~\cite{MMfrmepi}, also ultimately using LaGrange's interpolation theorem (\cref{thm:bool-epi-surj}).

Recall that a finite-limit-preserving functor is conservative iff it restricts to an injection on each subobject $\wedge$-lattice (see \cref{thm:cat-funct-sub}).
Since the horizontal forgetful functors in \eqref{diag:loc-cat} are conservative (\cref{thm:loc-forget}), they are thus injective on subobject $\bigwedge$-lattices.
For $\!{\kappa BorLoc} -> \!{\lambda BorLoc}$, this is equivalent to injectivity on regular subobjects (\eqref{diag:loc-sub}, in turn equivalent to faithfulness) by \cref{thm:loc-mono-epi}(a); but for $\!{\kappa Bor^+Loc} -> \!{\lambda Bor^+Loc}$, the latter result only gives injectivity on order-embedded subobjects.
\end{remark}

\begin{remark}
\label{rmk:loc-sub-sp}
Finally, we record the interaction of sublocales with spatialization.
The right adjoint functor $\Sp : \!{\kappa Loc} -> \!{\kappa Top}$ (say) preserves limits, hence preserves regular monomorphisms, which are (certain) $\kappa$-subspace embeddings in $\!{\kappa Top}$;
thus each $\kappa$-sublocale $Y \subseteq X$ yields a $\kappa$-subspace $\Sp(Y) \subseteq \Sp(X)$.
Points $x \in \Sp(Y)$ are homomorphisms $x^* : \@O_\kappa(Y) -> 2$, hence homomorphisms $\@O_\kappa(X) -> 2$ which respect the kernel of $\@O_\kappa(X) ->> \@O_\kappa(Y)$.
If $Y$ corresponds via \cref{thm:loc-sub-im} to an $\infty(\kappa\Pi^0_2)_\delta$-set
\begin{align*}
Y = \bigwedge_i (U_i -> V_i) \subseteq X,
\end{align*}
where $U_i, V_i \in \@O_\kappa(X)$, then the (order-)kernel of $\@O_\kappa(X) ->> \@O_\kappa(Y)$ is generated by the relations $U_i \le V_i$; thus $x \in \Sp(X)$ is in $\Sp(Y)$ iff $x^*(U_i) \le x^*(V_i)$, i.e., $x \in \Sp(U_i) \implies x \in \Sp(V_i)$ (recall \cref{sec:loc-sp}) for each $i$, i.e., $x \in \bigcap_i (\Sp(U_i) -> \Sp(V_i))$; that is, we have
\begin{align*}
\Sp(Y) = \bigcap_i (\Sp(U_i) -> \Sp(V_i)) \subseteq \Sp(X).
\end{align*}
In other words, letting $\infty(\kappa\*\Pi^0_2)_\delta(\Sp(X))$ consist of all sets in $\Sp(X)$ of the form $\bigcap_i (U_i -> V_i)$ for $\kappa$-open $U_i, V_i \subseteq \Sp(X)$, so that $\infty(\kappa\*\Pi^0_2)_\delta(\Sp(X))$ is the image of $\infty(\kappa\Pi^0_2)_\delta(X)$ under $\@B_\infty(X) ->> \@B_\infty(\Sp(X)) = \@P(\Sp(X))$ as in \eqref{eq:loc-sp-bor}, we have that the following square commutes:
\begin{equation}
\begin{tikzcd}
\infty(\kappa\Pi^0_2)_\delta(X) \dar[two heads] \rar[phantom, "\cong"] &[-1em] \RSub_\kappa(X) = \{\text{$\kappa$-sublocales of } X\} \dar["\Sp"] \\
\infty(\kappa\*\Pi^0_2)_\delta(\Sp(X)) \rar[phantom, "\subseteq"] & \@P(\Sp(X)) = \{\text{$\kappa$-subspaces of } X\}
\end{tikzcd}
\end{equation}
We clearly have analogs of this for $\!{\kappa Bor^{(+)}Loc}$, and for the full subcategories of standard objects.
\end{remark}

\subsection{Standard $\sigma$-locales}
\label{sec:loc-ctbpres}

The standard objects in our main categories $\!{\kappa (Bor^{(+)})Loc}$, when $\kappa = \omega_1$, form subcategories which are particularly convenient when working with concrete examples: they are all spatial, corresponding to well-behaved classes of spaces; and moreover, other localic notions (such as products, sublocales, etc.), under suitable countability restrictions, also correspond to their spatial analogs.
In this subsection, we review these correspondences, and use them to give several key examples.

Classical descriptive set theory takes place in the topological context of \defn{Polish spaces}, i.e., second-countable completely metrizable spaces, and the Borel context of \defn{standard Borel spaces}, i.e., ($\sigma$-)Borel subspaces of Cantor space $2^\#N$; see \cite{Kcdst}.
Relatively recently, de~Brecht~\cite{dBqpol} introduced \defn{quasi-Polish spaces}, which are $\*\Pi^0_2$ (in the sense of Selivanov; see \cref{sec:loc-bor}) subspaces of countable powers of Sierpinski space $\#S^\#N$, and showed that they are a robust generalization of Polish spaces to the non-$T_1$ context, with suitable analogs of most well-known descriptive set-theoretic properties of Polish spaces.
Heckmann~\cite{Hqpol} proved that the spatialization adjunction $\!{Top} \rightleftarrows \!{Loc}$ restricts to an equivalence between the full subcategories of quasi-Polish spaces and countably copresented locales (or equivalently in our terminology, standard $\sigma$-locales).
As we noted in the Introduction, this result, and the consequent analogies it yields between descriptive set theory and locale theory, is one of the primary motivations behind the present paper.

As Heckmann~\cite[\S4.1]{Hqpol} points out, the key ingredient in his result, namely that every countably copresented locale is spatial, has a much longer and more disguised history.
It was shown by Fourman--Grayson~\cite[3.12]{FGformal}, in the related context of \emph{formal topology}.
If one takes for granted certain purely formal order-theoretic constructions, e.g., the injectivity of $A -> \ang{A}_\!{CBOOL}$ for a frame $A$ (shown already by Isbell~\cite[1.3, 1.5]{Iloc}), then the result can arguably be traced back even further; e.g., it follows from
the Loomis--Sikorski representation theorem for $\sigma$-Boolean algebras,
which is itself equivalent to the spatiality of standard $\sigma$-Borel locales (see \cite[4.1]{Cborin}).
We may complete the triad by adding the spatiality of standard positive $\sigma$-Borel locales (see below).
All of these results easily imply each other, and essentially boil down to a (classical) Baire category argument.
In the following, we have arbitrarily chosen to take the Loomis--Sikorski theorem as fundamental, and to view the other results as consequences.

\begin{theorem}[Loomis--Sikorski]
\label{thm:loomis-sikorski}
Every countably presented $\sigma$-Boolean algebra $A$ admits enough homomorphisms to $2$ to separate its elements.
\end{theorem}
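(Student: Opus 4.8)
The plan is to prove this as a soft consequence of the machinery already built up, specifically the fact that a countably presented $\sigma$-Boolean algebra is, up to isomorphism, freely generated by a countably presented ultraparacompact zero-dimensional $\sigma$-frame, together with the concrete posite description of such frames. So first I would invoke \cref{thm:upkzfrm-bool-pres} to write $A \cong \@N_\sigma^\infty(F) = \ang{F}_\!{\sigma Bool}$ for some countably presented ultraparacompact zero-dimensional $\sigma$-frame $F$, and then \cref{thm:upkzfrm-kcabool-dircolim} to express $F$ as a countable directed colimit, in $\!{\sigma Frm}$, of complete $\sigma$-atomic (in fact here, countable powerset) Boolean algebras $\@P(C_n)$ along preimage maps $h_{mn}^{-1} : \@P(C_m) \to \@P(C_n)$ for $m \ge n$, where each $C_n$ is a countable partition refining the previous. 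Because a homomorphism $A \to 2$ is the same as a homomorphism $F \to 2$ in $\!{\sigma Frm}$ (adjointness, \cref{thm:frm-neginf-inj}), and the colimit is computed in $\!{\sigma Frm}$, such a homomorphism is exactly a compatible sequence of maps $\@P(C_n) \to 2$, i.e.\ a choice of $c_n \in C_n$ with $h_{mn}(c_m) = c_n$ for $m \ge n$ — in other words, a branch through the tree $\bigcup_n C_n$ ordered by refinement.

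With that reformulation in hand, the task reduces to: given two distinct elements $a \ne b$ of $A = \ang{F}_\!{\sigma Bool}$, produce a branch $(c_n)_n$ through the refinement tree whose induced homomorphism $A \to 2$ separates $a$ and $b$. Since $a,b \in \@N_\sigma^\infty(F)$ and the $\sigma$-Boolean operations are countable, $a$ and $b$ already lie in some countable stage $\@N_\sigma^k(F_0)$ built from a countable subframe, so it suffices (after absorbing everything into $F$ via the cofinality statement \cref{thm:frm-bool-pres-cof}) to separate elements of $F$ itself, or rather of a countably generated $\sigma$-Boolean algebra sitting over $F$. The key point is that $a \ne b$ means, say, $a \wedge \neg b \ne \bot$ in $A$; I would then want to find a branch landing inside $a \wedge \neg b$. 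Here is where the \emph{descriptive} content enters: the tree $\bigcup_n C_n$ is countably branching, and I need to avoid countably many "bad" conditions coming from the relations that cut $F$ down from the free frame. This is precisely a Baire-category / König-type argument: one shows that the set of branches is nonempty inside any $\sigma$-open set that is "really nonzero" in $A$, by a fusion argument descending through the levels $C_n$ while at stage $n$ ensuring we stay below $a \wedge \neg b$ and meet the $n$th relation.

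Concretely, I expect the cleanest route is: enumerate a countable set of relations $\{r_n\}$ generating the $\sigma$-frame congruence presenting $F$ (and the Boolean relations presenting $A$ over $F$), each of the form "$u_n \le \bigvee_j v_{n,j}$" with countable right-hand side; an ideal-theoretic point of $A$ is a branch of the tree that is "generic" for these relations. Starting from the condition $a \wedge \neg b$, which is nonzero in the genuine quotient $A$, one inductively refines: having chosen $c_n \in C_n$ with the partial condition still compatible with $a \wedge \neg b$ and with relations $r_0,\dots,r_{n-1}$ handled, one uses that the covers in the disjunctive posite are generated by the canonical $\sigma$-coverage (as in the proof of \cref{thm:frm-neg-upkz} and \cref{thm:frm-upkbasis}) to find a child $c_{n+1} \in C_{n+1}$ below $c_n$, still compatible with $a \wedge \neg b$, and resolving $r_n$. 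The resulting branch yields $x^* : A \to 2$ with $x^*(a \wedge \neg b) = \top$, hence $x^*(a) = \top \ne \bot$ or $= \top \ne x^*(b)$, giving separation.

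The main obstacle, and the only genuinely nontrivial step, is the compatibility bookkeeping in this fusion: one must verify that "the partial condition $c_n$ is compatible with $a \wedge \neg b$ in $A$" is preserved through the refinement — i.e.\ that passing from the free structure to the quotient $A$ does not collapse $a \wedge \neg b$ to $\bot$ along the branch — and that each relation $r_n$ can indeed be discharged at a single level without destroying compatibility. This is exactly the classical Baire-category argument (equivalently, the statement that the presented complete Boolean algebra $\ang{F}_\!{CBOOL}$ has enough points because $F$ is countably presented), and it is the content that the earlier purely algebraic results deliberately \emph{do not} supply; everything else is unwinding adjunctions and the posite descriptions already established. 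I would present the compatibility step via the disjointification identity $a = \bigsqcup_i (a \wedge \neg\bigvee_{j<i} c_j)$ available in $A$ (noted at the end of \cref{sec:upkzfrm}), which lets one always split a nonzero $\sigma$-open condition along the countable partition $C_{n+1}$ into countably many pieces, at least one of which remains nonzero, so that a diagonal/fusion choice through $\omega$ levels succeeds.
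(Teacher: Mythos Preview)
Your approach is quite different from the paper's, which simply defers to external references (Sikorski and \cite{Cborin}) and notes that it suffices to treat the free algebra, since any countably presented $\sigma$-Boolean algebra is a principal filterquotient of a free one. The paper explicitly treats Loomis--Sikorski as a black-box input (``essentially boil[s] down to a (classical) Baire category argument''), not as something derived from the machinery of \cref{sec:frm}.

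Your plan to derive it from that machinery instead is viable, but the final step is both mis-cited and over-described. First, \cref{thm:frm-bool-pres-cof} produces a \emph{smaller} $\kappa$-presented subframe containing prescribed elements of the given frame; to enlarge $F$ to a countably presented ultraparacompact zero-dimensional $F'$ containing $a \wedge \neg b$ and its complement you want \cref{thm:upkzfrm-neginf-pres} (or \cref{thm:frm-neginf-pres}). Second, and more important: once you have written $F' = \injlim_n \@P(C_n)$ in $\!{\sigma Frm}$ via \cref{thm:upkzfrm-kcabool-dircolim}, a branch through the tree already \emph{is} a $\sigma$-frame homomorphism $F' \to 2$, by the universal property of the colimit---there are no residual ``relations $r_n$ that cut $F$ down from the free frame'' left to handle. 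The relations presenting $F'$ are precisely what the transition maps of that diagram encode; that is the entire content of \cref{thm:upkzfrm-cabool-pres}. So your ``fusion/genericity'' step collapses: since each $C_n$ consists of nonzero elements partitioning $\top$ and $C_{n+1}$ refines $C_n$, every node has a successor, and any $c_0 \le a \wedge \neg b$ in some $C_{n_0}$ extends to a full branch by dependent choice. That one line is the Baire-category content; no enumeration of side conditions is needed.
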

\begin{proof}
See \cite[29.1]{Sbool} or \cite[4.1]{Cborin}.  (Note that since every countably presented algebra is a principal filterquotient of a free algebra, it suffices to assume $A$ is free.)
\end{proof}

\begin{corollary}
Every countably presented ($\sigma$-)frame or $(\sigma, \sigma)$-frame $A$ admits enough homomorphisms to $2$ to separate its elements.
\end{corollary}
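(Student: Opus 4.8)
The plan is to reduce the statement about countably presented ($\sigma$-)frames and $(\sigma,\sigma)$-frames to the Loomis--Sikorski theorem (\cref{thm:loomis-sikorski}) via the faithfulness of the free functors to $\!{\sigma Bool}$ established in \cref{sec:frm-neg,sec:dpoly}. The key observation is that for any $(\sigma,\sigma)$-frame (in particular any $\sigma$-frame, taking $\lambda=\omega$) $A$, the unit $\eta \colon A \hookrightarrow \ang{A \qua \!{\sigma\sigma Frm}}_\!{\sigma Bool}$ is injective --- this is exactly \cref{thm:frm-dpoly-bifrm-inj} (or, for the frame case, \cref{thm:frm-neginf-bool-inj}). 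Moreover, the free functor preserves $\sigma$-presentability (free functors always preserve $\lambda$-presentability, as noted in \cref{sec:frm-idl}), so $\ang{A}_\!{\sigma Bool}$ is again countably presented.

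First I would fix a countably presented $\sigma$-frame or $(\sigma,\sigma)$-frame $A$, and two distinct elements $a \ne b \in A$. Passing to the free $\sigma$-Boolean algebra $B := \ang{A \qua \!{\sigma\sigma Frm}}_\!{\sigma Bool}$ (or $\ang{A \qua \!{\sigma Frm}}_\!{\sigma Bool}$), the unit $\eta \colon A \hookrightarrow B$ is injective by the results cited above, so $\eta(a) \ne \eta(b)$ in $B$. Since $B$ is countably presented, \cref{thm:loomis-sikorski} supplies a $\sigma$-Boolean homomorphism $h \colon B \to 2$ with $h(\eta(a)) \ne h(\eta(b))$. Then $h \circ \eta \colon A \to 2$ is a homomorphism of $(\sigma,\sigma)$-frames (resp.\ $\sigma$-frames) --- here I use that the inclusion $\!{\sigma Bool} \hookrightarrow \!{\sigma\sigma Frm}$ (resp.\ $\!{\sigma Bool} \to \!{\sigma Frm}$) is a forgetful functor between algebraic categories, so the composite is a homomorphism in the appropriate category --- which separates $a$ from $b$.

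There is essentially no obstacle here: the whole content has been front-loaded into \cref{thm:frm-dpoly-bifrm-inj} and \cref{thm:loomis-sikorski}, and what remains is a two-line diagram chase. The one point that warrants a sentence of care is the preservation of countable presentability under the free functor $\!{\sigma\sigma Frm} \to \!{\sigma Bool}$ (and $\!{\sigma Frm} \to \!{\sigma Bool}$): this is needed so that Loomis--Sikorski is applicable to $B$, and it follows from the general fact that left adjoints between categories of $\lambda$-ary algebraic structures preserve $\lambda$-presented objects (they preserve the relevant colimits and send free finitely-many-generator algebras to $\lambda$-presented ones). If one prefers to avoid even that remark, one can instead observe that any countably presented algebra is a principal filterquotient of a free one and reduce to the free case, exactly as in the parenthetical remark in the proof of \cref{thm:loomis-sikorski}; but invoking preservation of $\sigma$-presentability directly is cleaner. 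Thus the proof is:

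\begin{proof}
Let $A$ be a countably presented $\sigma$-frame or $(\sigma,\sigma)$-frame, and let $a \ne b \in A$. The free functor to $\!{\sigma Bool}$ preserves countable presentability, so $B := \ang{A}_\!{\sigma Bool}$ is a countably presented $\sigma$-Boolean algebra, and by \cref{thm:frm-neginf-bool-inj} (in the frame case) or \cref{thm:frm-dpoly-bifrm-inj} (in the $(\sigma,\sigma)$-frame case) the unit $\eta \colon A \hookrightarrow B$ is injective, so $\eta(a) \ne \eta(b)$. By \cref{thm:loomis-sikorski} there is a $\sigma$-Boolean homomorphism $h \colon B \to 2$ with $h(\eta(a)) \ne h(\eta(b))$; then $h \circ \eta \colon A \to 2$ is a homomorphism of the relevant type separating $a$ from $b$.
\end{proof}
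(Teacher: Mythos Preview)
Your proof is correct and follows exactly the same approach as the paper's (reduce to Loomis--Sikorski via injectivity of the unit into the free $\sigma$-Boolean algebra). One small citation slip: in the frame case you want \cref{thm:frm-neginf-inj} (the unit of $\!{\sigma Frm} \rightleftarrows \!{\sigma Bool}$ is injective), not \cref{thm:frm-neginf-bool-inj} (which concerns $\!{\kappa Bool} \rightleftarrows \!{\lambda Bool}$); alternatively, \cref{thm:frm-dpoly-bifrm-inj} alone covers both cases since a $\sigma$-frame is an $(\omega,\sigma)$-frame.
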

\begin{proof}
Follows from injectivity of the units $A -> \ang{A}_\!{\sigma Bool}$ (\cref{thm:frm-neginf-inj,thm:frm-dpoly-bifrm-inj}).
\end{proof}

\begin{corollary}
\label{thm:loc-ctbpres-sp}
Every standard (positive) $\sigma$-(Borel )locale $X$ is spatial.
\qed
\end{corollary}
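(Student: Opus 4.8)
The plan is to deduce Corollary~\ref{thm:loc-ctbpres-sp} from the two preceding results, which themselves reduce to the Loomis--Sikorski Theorem~\ref{thm:loomis-sikorski}. Recall that a (positive) $\sigma$-(Borel )locale $X$ being spatial means precisely that the counit $\epsilon_X : \Sp(X) \to X$ of the appropriate spatialization adjunction is a (regular) monomorphism, equivalently that $\epsilon_X^*$ is injective, equivalently that the points of $X$ (i.e., homomorphisms from the dual algebra to $2$) separate the elements of that dual algebra. So the statement to be proved is: for $X$ a standard $\sigma$-locale, $\sigma$-Borel locale, or positive $\sigma$-Borel locale, the dual algebra $\@O_\sigma(X)$, $\@B_\sigma(X)$, or $\@B^+_\sigma(X)$ respectively admits enough homomorphisms to $2$ to separate its elements.

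First I would dispatch the $\sigma$-Borel case directly: a standard $\sigma$-Borel locale $X$ has $\@B_\sigma(X)$ a countably presented $\sigma$-Boolean algebra by definition of ``standard'', so Theorem~\ref{thm:loomis-sikorski} applies verbatim. Next, for the $\sigma$-locale and positive $\sigma$-Borel cases, I would invoke the corollary just above: a standard $\sigma$-locale $X$ has $\@O_\sigma(X)$ countably presented as a $\sigma$-frame, and a standard positive $\sigma$-Borel locale has $\@B^+_\sigma(X)$ countably presented as a $(\sigma,\sigma)$-frame. The key point is that by Convention~\ref{cvt:frm-cbool-incl} together with the injectivity results \cref{thm:frm-neginf-inj} (for $\sigma$-frames) and \cref{thm:frm-dpoly-bifrm-inj} (for $(\sigma,\sigma)$-frames), the unit $A \hookrightarrow \ang{A}_\!{\sigma Bool}$ is an embedding; and $\ang{A}_\!{\sigma Bool} = \@N_\sigma^\infty(A)$ (resp.\ $\ang{A \qua \!{\sigma\sigma Frm}}_\!{\sigma Bool}$) is again countably presented as a $\sigma$-Boolean algebra, since free functors preserve $\kappa$-presentability. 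Then any homomorphism $\ang{A}_\!{\sigma Bool} \to 2$ restricts to a homomorphism $A \to 2$, and if $a \ne b$ in $A$ then $a \ne b$ in $\ang{A}_\!{\sigma Bool}$ by injectivity, so some point of $\ang{A}_\!{\sigma Bool}$ provided by Theorem~\ref{thm:loomis-sikorski} separates them, giving a point of $X$ that separates $a$ and $b$. By the commutativity of the spatialization diagram \eqref{diag:loc-sp} (the right adjoints commute), these restricted homomorphisms $A \to 2$ are exactly the $\sigma$-frame (resp.\ positive $\sigma$-Borel) points of $X$.

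Concretely, I would write: By \cref{thm:loc-ctbpres-sp} the dual algebra of $X$ is countably presented (as a $\sigma$-Boolean algebra in the $\sigma$-Borel case, as a $\sigma$-frame in the $\sigma$-locale case, as a $(\sigma,\sigma)$-frame in the positive $\sigma$-Borel case). In each case, by Theorem~\ref{thm:loomis-sikorski} and the preceding corollary, this algebra embeds into a countably presented $\sigma$-Boolean algebra which is separated by its homomorphisms to $2$; restricting these along the embedding, using that the spatialization functors commute (diagram \eqref{diag:loc-sp}), gives enough points of $X$ to separate the dual algebra, i.e., $\epsilon_X^*$ is injective, i.e., $X$ is spatial. \qed

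I do not expect a serious obstacle here: the substance is entirely in Theorem~\ref{thm:loomis-sikorski} (cited, not proved) and in the injectivity of units (established earlier). The only mild care needed is bookkeeping across the three parallel cases and making sure ``spatial'' is unwound correctly in each: for $\sigma$-locales one wants the topological-spatialization counit to be a regular mono (equivalently $\epsilon^*$ injective), while for (positive) $\sigma$-Borel locales one wants the corresponding Borel (resp.\ positive Borel) counit to be a mono; but since these counits are all given by the same formula $\epsilon^* = \Sp$ on the respective dual algebras, ``enough points to separate elements of the dual algebra'' uniformly captures all three, so no case-specific argument is really required beyond naming the correct algebra.
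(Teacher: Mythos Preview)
Your proposal is correct and takes essentially the same approach as the paper: the paper's own proof is literally just \qed, since the preceding (unnumbered) corollary already states that every countably presented $\sigma$-frame or $(\sigma,\sigma)$-frame admits enough homomorphisms to $2$, which together with Loomis--Sikorski itself covers all three cases once ``spatial'' is unwound as ``enough points to separate elements of the dual algebra.'' One small slip: in your concrete write-up you wrote ``By \cref{thm:loc-ctbpres-sp} the dual algebra of $X$ is countably presented,'' which is a self-reference to the very corollary being proved; you meant ``by definition of standard.''
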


It follows that the spatialization adjunctions from \cref{sec:loc-sp}, when $\kappa = \omega_1$, restrict to equivalences between $\!{\sigma (Bor^{(+)})Loc}_\sigma$ and full subcategories of the corresponding categories of spaces.
In order to identify these full subcategories of spaces, we use the following well-known fact (see e.g., \cite[VII~4.9]{Jstone}):

\begin{proposition}[folklore]
\label{thm:loc-sierpinski}
For any set $X$, the power $\#S^X$ of Sierpinski space is sober, and corresponds to the locale with $\@O(\#S^X) = \ang{X}_\!{Frm}$ where the generators $x \in X \subseteq \ang{X}_\!{Frm}$ are identified with the subbasic open sets in the product $\#S^X$.
\end{proposition}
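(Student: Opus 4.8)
\textbf{Proof proposal for \cref{thm:loc-sierpinski}.}

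The plan is to verify the universal property of $\ang{X}_\!{Frm}$ directly against continuous maps into powers of $\#S$, and then deduce sobriety for free. First I would recall that a continuous map $f : Y -> \#S^X$ from a topological space $Y$ is the same thing as an $X$-indexed family of continuous maps $f_x : Y -> \#S$ (one for each coordinate projection composed with $f$), and that a continuous map $Y -> \#S$ is in turn the same thing as an open set of $Y$ — namely $f_x^{-1}(\{1\})$, since $\{1\}$ is the only nontrivial open set of $\#S$. Thus $\!{Top}(Y, \#S^X) \cong \@O(Y)^X = \!{Set}(X, \@O(Y))$, naturally in $Y$. Passing to the underlying locale of $Y$ (and using that the forgetful functor $\!{Top} -> \!{Loc}$ is the functor $\@O(-) : \!{Top} -> \!{Frm}^\op$), this reads $\!{Loc}(Y, \#S^X) \cong \!{Set}(X, \@O(Y)) \cong \!{Frm}(\ang{X}_\!{Frm}, \@O(Y))$, where the last bijection is the universal property of the free frame $\ang{X}_\!{Frm}$ on the set $X$ (see \cref{sec:frm-idl}). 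Chasing through these identifications, the subbasic open set of $\#S^X$ that is the preimage of $\{1\}$ under the $x$th projection corresponds to the generator $x \in X \subseteq \ang{X}_\!{Frm}$.

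Next I would argue that this bijection, natural in $Y \in \!{Top}$, exhibits $\#S^X$ (as a \emph{topological} space, not merely as a locale) as the spatialization of the locale $Z$ with $\@O(Z) = \ang{X}_\!{Frm}$. Indeed, applying the natural bijection with $Y = 1$ gives $\Sp(Z) = \!{Loc}(1, Z) \cong \!{Set}(X, \@O(1)) = \!{Set}(X, 2) = 2^X$ as sets, i.e.\ points of $Z$ are in bijection with points of $\#S^X$ (completely prime filters in $\ang{X}_\!{Frm}$ correspond to subsets of $X$ via which generators they contain); and the open set $\Sp(u) \subseteq \Sp(Z)$ for a generator $u = x$ is exactly the subbasic open set of $\#S^X$ indexed by $x$, so the topology on $\Sp(Z)$ — generated by the $\Sp(u)$ for $u \in \@O(Z)$ — is precisely the product topology on $\#S^X$. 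Hence $\Sp(Z) \cong \#S^X$ as topological spaces.

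Finally, sobriety of $\#S^X$ follows because the spatialization $\Sp(Z)$ of any locale $Z$ is always sober (it is the image of $Z$ under the right adjoint into $\!{Top}$, and such spaces are sober; cf.\ the idempotency of the adjunction $\!{Top} \rightleftarrows \!{Loc}$ discussed in \cref{sec:loc-sp}), and we have just identified $\#S^X$ with $\Sp(Z)$. Equivalently, one checks directly that the unit $\eta_{\#S^X} : \#S^X -> \Sp(\#S^X)$ is a bijection: it is injective because the subbasic opens separate points of $2^X$, and surjective by the point count above. In particular $\@O(\#S^X) \cong \@O(\Sp(Z))$ is a quotient of $\@O(Z) = \ang{X}_\!{Frm}$; but since $\eta$ is a homeomorphism this quotient map is an isomorphism, giving $\@O(\#S^X) = \ang{X}_\!{Frm}$ as claimed.

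I do not expect any serious obstacle here: the statement is ``folklore'' and the only care needed is in being explicit about the two Stone-type adjunctions in play — the one between topologies and frames on the one hand, and the meaning of ``free frame on a set'' on the other — and in checking that the generator $x$ lands on the correct subbasic open under the chain of natural bijections. The mild subtlety worth spelling out is \emph{why} no non-product open sets appear: this is exactly the assertion that $\ang{X}_\!{Frm}$ has no ``hidden'' elements, which is automatic from its being the free frame, i.e.\ every element is a join of finite meets of generators, matching the basic opens of the product topology. A pointer to \cite[VII~4.9]{Jstone} or \cref{rmk:loc-sp-forget} (where this example is cited) suffices for any reader wanting more detail.
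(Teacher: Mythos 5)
Your overall architecture is sound and close to the paper's: both arguments reduce the claim to (i) the locale $Z$ with $\@O(Z) = \ang{X}_\!{Frm}$ is the localic power $\#S^X$, (ii) its spatialization is the topological power, and (iii) $Z$ is spatial, so the two powers coincide. Your Yoneda computation $\!{Top}(Y, \#S^X) \cong \!{Set}(X, \@O(Y)) \cong \!{Frm}(\ang{X}_\!{Frm}, \@O(Y)) \cong \!{Top}(Y, \Sp(Z))$ is a perfectly good (and arguably cleaner) way to get $\#S^X \cong \Sp(Z)$ and hence sobriety, where the paper instead uses that $\Sp$, being a right adjoint, preserves products and that $\#S$ is sober.

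The gap is in step (iii), which is the only substantive point. You conclude $\@O(\#S^X) = \ang{X}_\!{Frm}$ ``since $\eta$ is a homeomorphism,'' but that is a non sequitur: the unit $\eta_{\#S^X}$ being an isomorphism is sobriety of the \emph{space} $\#S^X$ (automatic for any spatialization), whereas what you need is that the counit $\epsilon_Z^* = \Sp : \ang{X}_\!{Frm} -> \@O(\Sp(Z))$ is \emph{injective}, i.e., that the locale $Z$ is spatial; these two conditions are logically independent (for a non-spatial $Z$, $\Sp(Z)$ is still sober while $\epsilon_Z^*$ is a proper quotient). Your closing remark that injectivity is ``automatic from its being the free frame, i.e.\ every element is a join of finite meets of generators'' only addresses surjectivity onto the product topology; a priori two distinct such joins in $\ang{X}_\!{Frm}$ could have the same set of points in $2^X$, and freeness alone does not rule this out (compare $\ang{\#N}_\!{CBOOL}$, which surjects onto $\@P(2^\#N)$ but is a proper class). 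The missing argument is the paper's: the explicit construction $\ang{X}_\!{Frm} \cong \@L(\ang{X}_\!{{\wedge}Lat})$ realizes the free frame as a \emph{subframe} of the powerset $\@P(\ang{X}_\!{{\wedge}Lat}) \cong 2^{\ang{X}_\!{{\wedge}Lat}}$, so the coordinate projections to $2$ — equivalently, the points of $\#S^X$ given by characteristic functions of finite subsets of $X$ — separate its elements, which is exactly the required injectivity.
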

\begin{proof}
The free frame on one generator is clearly $\ang{1}_\!{Frm} \cong \@O(\#S) = \{\emptyset < \{1\} < \#S\}$ where $\{1\}$ is the generator; and $\#S$ is clearly sober.
It follows that the right adjoint spatialization functor $\Sp : \!{Loc} -> \!{Top}$ takes the localic power $\#S^X$, corresponding to the frame coproduct $\@O(\#S)^{{\bigotimes}X} \cong \ang{X}_\!{Frm}$, to the spatial power $\#S^X$, whence the latter is sober.
But since $\ang{X}_\!{Frm} = \@L(\ang{X}_\!{\wedge Lat}) \subseteq \@P(\ang{X}_\!{\wedge Lat}) \cong 2^{\ang{X}_\!{\wedge Lat}}$ (see \cref{sec:frm-idl}) is a subframe of a power of $2$, it clearly admits enough homomorphisms to $2$ (the projections), i.e., the localic power $\#S^X$ is spatial, hence agrees with the spatial power.
Under this correspondence, each generator $x \in X$ corresponds to the coproduct injection $\iota_x : \@O(\#S) -> \@O(\#S)^{{\bigotimes}X}$, hence to the product projection $\pi_x : \#S^X -> \#S$, hence to the $x$th subbasic open set in $\#S^X$.
\end{proof}

\begin{corollary}
\label{thm:loc-bor-sierpinski}
For any countable set $X$, the countable power $\#S^X$ in $\!{\sigma (Bor^{(+)})Loc}$ is spatial, and corresponds to the spatial power of $\#S$ in $\!{(\sigma)Top}, \!{(\sigma)Bor^+}, \!{(\sigma)Bor}$ respectively.
\end{corollary}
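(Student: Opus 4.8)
The plan is to deduce this from the general spatiality theorem \cref{thm:loc-ctbpres-sp} for standard objects, together with the fact that spatialization preserves limits. First I would pin down the object: by the computation in \cref{thm:loc-sierpinski} (or directly, as the $\sigma$-frame coproduct of $\abs X$-many copies of $\@O_\sigma(\#S) = \ang{1}_\!{\sigma Frm}$), the $\sigma$-locale power $\#S^X$ has $\@O_\sigma(\#S^X) \cong \ang{X}_\!{\sigma Frm}$, which is countably presented since $X$ is countable; hence $\#S^X \in \!{\sigma Loc}_\sigma$ is standard. Since the forgetful functors of \eqref{diag:loc-cat} from column ``$\sigma$'' preserve $\sigma$-copresentability (a proposition in \cref{sec:loc-cat}) and preserve small limits (\cref{thm:loc-forget}(c)), the objects ``$\#S^X$'' in $\!{\sigma Bor^+Loc}$ and $\!{\sigma BorLoc}$ — by \cref{cvt:loc-forget} the underlying (positive) $\sigma$-Borel locales of the $\sigma$-locale power, equivalently the $X$-fold powers of $\#S$ formed directly in those categories — are again standard. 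Applying \cref{thm:loc-ctbpres-sp} to each of the three then gives that $\#S^X$ is spatial in all three categories.

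It remains to identify the spatializations with the spatial power. In each of the three spatialization adjunctions of \eqref{diag:loc-sp}, $\Sp$ is a right adjoint, hence preserves limits, so $\Sp(\#S^X) \cong \prod_X \Sp(\#S)$. Now $\#S$ is sober (a finite $T_0$ space), and likewise its two points are separated by its positive $\sigma$-Borel, resp.\ $\sigma$-Borel, subalgebra, so the adjunction unit $\#S \to \Sp(\#S)$ is an isomorphism in all three space categories. Therefore $\Sp(\#S^X)$ is the $X$-fold power of $\#S$ taken in $\!{\sigma Top}$, resp.\ $\!{\sigma Bor^+}$, resp.\ $\!{\sigma Bor}$; for countable $X$ this product is computed on the underlying set $2^X$ with the $\sigma$-topology (resp.\ positive $\sigma$-Borel structure, resp.\ $\sigma$-Borel $\sigma$-algebra) generated by the subbasic sets $\{p \mid p(x) = 1\}$ for $x \in X$, i.e.\ it is precisely the spatial power of $\#S$. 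Combined with spatiality from the previous paragraph, this is exactly the assertion of the corollary.

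I do not expect a genuine obstacle: the substantive ingredient, spatiality of standard $\sigma$-objects, rests on Loomis--Sikorski (\cref{thm:loomis-sikorski}) and is already available. The only care needed is bookkeeping across the three parallel cases — in particular checking that the two readings of ``$\#S^X$'' in each category (the forgetful image of the $\sigma$-locale power versus a power formed directly, and likewise on the spatial side) coincide, which follows from limit-preservation of the forgetful and spatialization functors and the commutativity of the three-dimensional diagram containing \eqref{diag:loc-cat} and \eqref{diag:loc-sp}, and that products in $\!{\sigma Top}$, $\!{\sigma Bor^+}$, $\!{\sigma Bor}$ of countably many $\sigma$-based objects are formed with the expected subbasis.
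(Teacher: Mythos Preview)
Your proposal is correct and follows essentially the same approach as the paper: spatiality via \cref{thm:loc-ctbpres-sp} (after noting standardness), and identification with the spatial power by chasing the product $\#S^X$ through the right adjoint forgetful and spatialization functors in \eqref{diag:loc-sp}. You have simply made explicit the ``chase'' that the paper's one-line proof leaves to the reader, including the verification that $\#S \to \Sp(\#S)$ is an isomorphism in each of the three space categories.
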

\begin{proof}
Spatiality is by \cref{thm:loc-ctbpres-sp}; the rest follows from chasing the product $\#S^X \in \!{\sigma Loc}$ through the various right adjoint forgetful and spatialization functors \eqref{diag:loc-sp}.
\end{proof}

For a second-countable ($\sigma$-)topological space $X$ to have a spatial underlying $\sigma$-Borel locale (of the underlying $\sigma$-locale) means that the localic $\sigma$-Borel algebra of $X$ (which is \emph{a priori} $\ang{\@O_\sigma(X)}_\!{\sigma Bool}$) agrees with the spatial Borel $\sigma$-algebra; this implies (see \eqref{eq:loc-sp-bor}) that we have levelwise agreement of the ($\sigma$-)Borel hierarchies, i.e.,
\begin{align}
\label{eq:loc-ctbpres-bor}
\sigma\Sigma^0_\alpha(X) \cong \*\Sigma^0_\alpha(X) \quad \forall \alpha < \omega_1.
\end{align}
Similarly, if a positive Borel space $X$ has a spatial underlying $\sigma$-Borel locale, then countable Boolean combinations of positive Borel sets in $X$, as subsets, are in bijection with the respective combinations of positive Borel sets in the underlying $\sigma$-Borel locale; for example, if $\sigma I(\@B_\sigma)_\delta$ is the class of countable meets of implications between positive $\sigma$-Borel sets defined above \cref{thm:loc-emb-im-cls},
\begin{align}
\label{eq:loc-ctbpres-bor+}
\sigma I(\@B_\sigma)_\delta(X) \cong \{\bigcap_{i \in \#N} (B_i -> C_i) \mid B_i, C_i \in \@B^+_\sigma(X)\} \subseteq \@P(X).
\end{align}

Now for a standard $\sigma$-locale $X$, say, $\@O(X)$ is a countably presented quotient of some countably generated free $\sigma$-frame $\@O(\#S^Y)$ with $Y$ countable, i.e., $X \subseteq \#S^Y$ is a standard $\sigma$-sublocale, hence a $\sigma\Pi^0_2$ set in $\#S^Y$ by \cref{thm:loc-sub-im}(c), hence a $\*\Pi^0_2$ set in $\#S^Y$ by \eqref{eq:loc-ctbpres-bor} and \cref{rmk:loc-sub-sp}, i.e., a quasi-Polish space.
Similarly, a standard $\sigma$-Borel locale $X$ corresponds to a Borel set in $\#S^Y$ (which is Borel isomorphic to $2^Y$), i.e., a standard Borel space;
while a standard positive $\sigma$-Borel locale $X$ corresponds to a set of the form given by the right-hand side of \eqref{eq:loc-ctbpres-bor+} in some $X = \#S^Y$.
Call positive Borel spaces of the latter form \defn{standard positive Borel spaces}.
Let
\begin{align*}
\!{QPol} \subseteq \!{(\sigma)Top}, &&
\!{SBor^+} \subseteq \!{Bor^+}, &&
\!{SBor} \subseteq \!{Bor}
\end{align*}
denote the categories of quasi-Polish spaces, standard positive Borel spaces, and standard Borel spaces, respectively.
We have thus recovered the equivalences of categories mentioned above:

\begin{theorem}
\label{thm:loc-ctbpres}
The spatialization adjunctions from \cref{sec:loc-sp} restrict to equivalences
\begin{align*}
&\mathrlap{\textup{(Heckmann)}}& \hspace{1in}
\!{QPol} &\simeq \!{(\sigma)Loc}_\sigma, \hspace{1in} && \\
&& \!{SBor^+} &\simeq \!{\sigma Bor^+Loc}_\sigma, && \\
&\mathrlap{\textup{(Loomis--Sikorski)}}&
\!{SBor} &\simeq \!{\sigma BorLoc}_\sigma. &&
\qed
\end{align*}
\end{theorem}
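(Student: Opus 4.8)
The plan is to establish the three equivalences by combining the spatialization machinery from \cref{sec:loc-sp} with the spatiality results just proven (\cref{thm:loc-ctbpres-sp}) and the structural description of standard objects as subobjects of powers of $\#S$. The overall strategy is: (1) show each spatialization adjunction restricts to a functor between the relevant subcategories in both directions; (2) show the restricted adjunction is an adjoint equivalence by checking the unit and counit are isomorphisms on the subcategories in question.

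First I would recall that the spatialization adjunctions $\!{(\sigma)Top} \rightleftarrows \!{(\sigma)Loc}$, $\!{Bor^+} \rightleftarrows \!{\sigma Bor^+Loc}$, $\!{Bor} \rightleftarrows \!{\sigma BorLoc}$ are idempotent, so each factors as a reflection onto a subcategory of spaces (the ``sober'' ones in the appropriate sense) followed by a coreflective inclusion of the spatial objects. By \cref{thm:loc-ctbpres-sp}, every standard positive $\sigma$-(Borel )locale is spatial, so the counit $\epsilon_X \colon \Sp(X) \to X$ is an isomorphism for all $X$ in $\!{\sigma (Bor^{(+)})Loc}_\sigma$; this is half of the equivalence and is the only place the nontrivial Baire-category content (via \cref{thm:loomis-sikorski}) enters. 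It remains to identify the essential image of $\Sp$ restricted to these subcategories, and to show $\Sp$ is fully faithful there (equivalently, that the unit $\eta_Y \colon Y \to \Sp(\Sp(Y))$ is an isomorphism for $Y$ in the image, i.e.\ that $Y$ is already ``sober'' in the relevant sense).

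The identification of the image is the bookkeeping core of the argument. For a standard $\sigma$-locale $X$, write $\@O_\sigma(X)$ as a countably presented quotient of $\@O_\sigma(\#S^Y) = \ang{Y}_\!{\sigma Frm}$ for countable $Y$ (using that $\sigma$-presented $\sigma$-frames need only $\sigma$-many generators, and \cref{thm:loc-sierpinski,thm:loc-bor-sierpinski} to identify $\#S^Y$); by \cref{thm:loc-sub-im}(c)(i), $X$ is a $\sigma\Pi^0_2$ set in $\#S^Y$, and via \eqref{eq:loc-ctbpres-bor} and \cref{rmk:loc-sub-sp} this transports to a $\*\Pi^0_2$ subspace of $\#S^Y$, i.e.\ a quasi-Polish space. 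Conversely every quasi-Polish space, being by definition a $\*\Pi^0_2$ subspace of some $\#S^\#N$, is sober (as a $\*\Pi^0_2$-subspace of a sober space, or directly) and is the spatialization of its underlying $\sigma$-locale, which is a standard $\sigma$-sublocale of $\#S^\#N$ by the same correspondence run backwards; hence $\!{QPol}$ is exactly the image, and since $\Sp$ is already full and faithful on spatial locales (by idempotency) and quasi-Polish spaces are sober, the restricted adjunction is an equivalence. The $\sigma$-Borel case is identical with $\#S^Y$ replaced by $2^Y$ (Borel-isomorphic to $\#S^Y$) and \cref{thm:loc-sub-im}(c)(iii), giving standard Borel spaces; the positive $\sigma$-Borel case uses \cref{thm:loc-sub-im}(c)(ii) and \eqref{eq:loc-ctbpres-bor+} to see that the image consists exactly of the sets of the form $\bigcap_{i \in \#N}(B_i \to C_i)$ inside some $\#S^Y$, which is by definition $\!{SBor^+}$.

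The main obstacle will be the positive $\sigma$-Borel case: unlike the frame and Boolean-algebra cases, the relevant class of ``standard positive Borel spaces'' has no independent classical name, so one must verify carefully that the definition-by-image is well-behaved — in particular that such a space really is the positive $\sigma$-Borel spatialization of the corresponding standard positive $\sigma$-Borel locale (this needs that the $(\sigma,\sigma)$-subframe of $\@P(\#S^Y)$ generated by the subbasic opens, cut down to the subspace, matches the quotient $(\sigma,\sigma)$-frame presenting the locale, which is precisely \cref{thm:loc-ctbpres-sp} together with \cref{rmk:loc-sub-sp} applied in $\!{\sigma Bor^+Loc}$), and that ``sober'' in this context (enough homomorphisms to $2$ \emph{and} point-separation/order-reflection) holds automatically for objects in the image. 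Everything else is diagram-chasing through \eqref{diag:loc-sp} plus the already-established spatiality; the only genuinely new input beyond that is \cref{thm:loc-sierpinski} (and its Borel consequence \cref{thm:loc-bor-sierpinski}), which pins down the generating objects $\#S^Y$ on both the localic and spatial sides so that the subobject correspondences of \cref{thm:loc-sub-im}(c) can be invoked.
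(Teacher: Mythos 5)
Your proposal is correct and follows essentially the same route as the paper: spatiality of standard objects (via Loomis--Sikorski) makes the counit an isomorphism, idempotency of the adjunction handles full faithfulness/sobriety, and the essential image is identified by presenting each standard object as a regular subobject of $\#S^Y$ and transporting through \cref{thm:loc-sub-im}(c), \eqref{eq:loc-ctbpres-bor}, and \cref{rmk:loc-sub-sp}. The extra care you flag for the positive case is warranted but resolves exactly as you describe — the paper simply \emph{defines} $\!{SBor^+}$ as the image, so no further verification is needed there.
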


In the rest of this subsection, we record some other key notions whose spatial and localic versions agree via the above equivalences, as well as some important examples:

\begin{remark}
\label{rmk:loc-ctbpres-bor}
As noted above \eqref{eq:loc-ctbpres-bor}, for a quasi-Polish space $X$, the localic and spatial $\sigma$-Borel hierarchy agree levelwise.

For example, this tells us that in $\#R$, a countable intersection of open sets $\bigcap_{n \in \#N} U_n$ is also a countable meet in $\@B_\infty(\#R)$ (which is not true for uncountable intersections; see \cref{ex:loc-baire-perfect}).

Also, standard $\sigma$-sublocales of $X$ correspond to localic $\sigma\Pi^0_2$ sets (\cref{thm:loc-sub-im}), hence to spatial $\*\Pi^0_2$ sets, hence to quasi-Polish subspaces (see \cite[Th.~23]{dBqpol}, or \cite[4.2]{Cqpol} for a proof via a spatial analog \cite[4.1]{Cqpol} of \cref{thm:cat-alg-cong-pres} used in the proof of \cref{thm:loc-emb-im}).
\end{remark}

\begin{remark}
Of course, all existing categorical limits and colimits transfer across the equivalences in \cref{thm:loc-ctbpres}.
In particular, this applies to $\kappa$-ary limits and $\kappa$-ary coproducts, which on the left-hand side are the usual limit and disjoint union of spaces, while on the right-hand side are the localic notions in $\!{\sigma (Bor^{(+)})Loc}$ (since $\!{\sigma (Bor^{(+)})Loc}_\sigma \subseteq \!{\sigma (Bor^{(+)})Loc}$ is closed under these operations).
\end{remark}

\begin{remark}
Also, (regular) monomorphisms and epimorphisms transfer across \cref{thm:loc-ctbpres}.
\begin{itemize}
\item
Monomorphisms are clearly injective maps in all three spatial categories, which hence correspond to monomorphisms in the localic categories.

\item
Regular monomorphisms $f : X -> Y$ are embeddings in all three spatial categories; this can be seen \emph{via} \cref{thm:loc-ctbpres}, since $f^*$ being surjective says precisely that each open/(positive) Borel set in $X$ is the restriction of such a set in $Y$.

\item
Epimorphisms in $\!{SBor}$ are easily seen to be surjective Borel maps (given a non-surjective $f : X -> Y$, duplicate some $y \in Y \setminus f(X)$ to get two distinct maps from $Y$ equalized by $f$); these thus correspond by \eqref{eq:loc-epi-surj} to $\sigma$-Borel surjective maps in $\!{\sigma BorLoc}_\sigma$.

(We do \emph{not} know if all such maps are $\infty$-Borel surjective; see \cref{rmk:sigma11-absolute}.)

\item
Epimorphisms in $\!{QPol}$ need not be surjective: consider $\-{\#N} = \#N \sqcup \{\infty\}$ with the Scott topology, with open sets $\emptyset, [n, \infty]$ for all $n \in \#N$ (which is indeed quasi-Polish; see \cite[\S8]{dBqpol}), and the inclusion $\#N `-> \-{\#N}$ where $\#N$ has the discrete topology.
Every continuous map $\-{\#N} -> X$ to a sober space $X$ is determined by its restriction to $\#N$, since directed joins are topologically definable in sober spaces (see \cite[II~1.9]{Jstone}).
\end{itemize}
\end{remark}

\begin{remark}
Polish spaces are precisely the regular (i.e., $T_3$) quasi-Polish spaces (see \cite[5.3]{Cqpol}).
Also, a space is clearly regular iff its underlying locale is, as defined in \cref{sec:upkzfrm}.
Thus, we have an equivalence of categories between Polish spaces and regular standard $\sigma$-locales.
\end{remark}

\begin{remark}
\label{rmk:loc-ctbpres-perfect}
It is a classical result that the only standard Borel spaces, up to Borel isomorphism, are $0, 1, 2, \dotsc, \#N$ with the discrete Borel structure, and $\#R$ with the usual Borel structure (see \cite[15.6]{Kcdst}).
Thus, any two uncountable standard Borel spaces are Borel isomorphic.

It follows that for any uncountable standard Borel space $X$, we have
\begin{align*}
\@B_\infty(X) \cong \ang{\#N}_\!{CBOOL}
\end{align*}
(which is a proper class, by the Gaifman--Hales \cref{thm:gaifman-hales}).
Indeed, $\@B_\infty(X) = \ang{\@B_\sigma(X)}_\!{CBOOL} \cong \ang{\@B_\sigma(\#S^\#N)}_\!{CBOOL} = \ang{\ang{\#N}_\!{\sigma Bool}}_\!{CBOOL} = \ang{\#N}_\!{CBOOL}$, since $X \cong \#S^\#N \in \!{\sigma BorLoc}_\sigma$ and by \cref{thm:loc-bor-sierpinski}.
(Several instances of this were already pointed out by Isbell~\cite{Iloc}, including $X = \#S^\#N, 2^\#N, \#R$.)
\end{remark}

\begin{example}
\label{ex:loc-ctbpres-dirint}
Consider the interval $X = [0, 1]$ with the Scott topology, with open sets $(r, 1]$; this is a quasi-Polish space (see \cite[\S8]{dBqpol}), whose specialization preorder is the usual linear order on $[0, 1]$.
Since it is uncountable, we thus have $\@B_\infty(X) \cong \ang{\#N}_\!{CBOOL}$.

However, $\@B^+_\infty(X)$ is much simpler: it is the same as $\@B^+_\sigma(X)$, the closure of the open sets $\@O(X)$ under countable unions and intersections, i.e., consisting of all open and closed intervals $(r, 1]$ and $[r, 1]$.
Indeed, note that every lower set $D \subseteq \@B^+_\sigma(X)$ is countably generated: it is either a principal ideal, or else contains a strictly increasing cofinal sequence $[r_0, 1] \subsetneq [r_1, 1] \subsetneq \dotsb$ for some $r_0 > r_1 > \dotsb$.
Thus, every $(\sigma, \sigma)$-frame homomorphism $\@B^+_\sigma(X) -> A$ already preserves arbitrary joins, and dually, already preserves arbitrary meets, which implies $\@B^+_\infty(X) = \ang{\@B^+_\sigma(X)}_\!{\infty\infty FRM} = \@B^+_\sigma(X)$.
\end{example}

\begin{example}
On the other hand, if $X$ is Polish (or just fit quasi-Polish; see \cref{sec:upkzfrm}), then $\@B^+_\sigma(X) = \@B_\sigma(X)$, since every closed set is a countable intersection of opens.
Hence in this case, $\@B^+_\infty(X) = \ang{\@B^+_\sigma(X)}_\!{\infty\infty FRM} = \ang{\@B_\sigma(X)}_\!{\infty\infty FRM} = \ang{\@B_\sigma(X)}_\!{CBOOL} = \@B_\infty(X)$ ($\cong \ang{\#N}_\!{CBOOL}$, if $X$ is uncountable).
\end{example}

\subsection{The internal logic}
\label{sec:loc-intlog}

In categorical logic, there is a method for formally interpreting ``pointwise'' expressions, such as
\begin{align}
\label{eq:loc-intlog-ker}
\ker(f) := \{(x, y) \in X^2 \mid f(x) = f(y)\},
\end{align}
in arbitrary categories.
This can be used to give intuitive-looking definitions of the localic analogs of various spatial notions such as $\ker$.
In this subsection, we briefly review this method, concentrating on the categories $\!{\kappa BorLoc}$; see \cite[D1.2]{Jeleph} for a more comprehensive reference.

Let $\!C$ be a category with $\kappa$-ary limits; we have in mind $\!C = \!{\kappa BorLoc}_{(\kappa)}$.
The \defn{internal language} of $\!C$ is the $\kappa$-ary multi-sorted infinitary first-order language $\@L(\!C)$ with
\begin{itemize}
\item  a sort for each object $X \in \!C$;
\item  a $\kappa$-ary function symbol for each morphism $f : \prod_i X_i -> Y \in \!C$, where $\vec{X} = (X_i)_i$ is a $\kappa$-ary family of objects in $\!C$;
\item  a $\kappa$-ary relation symbol for each subobject $R \subseteq \prod_i X_i$ in $\!C$.
\end{itemize}
Now let $\phi(\vec{x})$ be a formula in the \defn{$\kappa$-ary infinitary first-order logic $\@L_{\kappa\kappa}(\!C)$} over $\@L(\!C)$, with $<\kappa$-many free variables $\vec{x}$ with sorts $\vec{X}$ (for a $\kappa$-ary family of objects $\vec{X}$ in $\!C$), using $\kappa$-ary conjunctions $\bigwedge$ and disjunctions $\bigvee$ and $\exists$ over $\kappa$-ary families of variables; we treat $\forall$ as an abbreviation for $\neg \exists \neg$.
For certain such formulas $\phi$, we define its \defn{interpretation in $\!C$}, which is a certain subobject
\begin{align*}
\{\vec{x} \in \prod \vec{X} \mid \phi(\vec{x})\}_\!C \subseteq \prod \vec{X} \in \!C;
\end{align*}
the formulas which have an interpretation are called \defn{interpretable}.
\begin{itemize}

\item  First, each term $t(\vec{x})$ of sort $Y$ with variables $\vec{x}$ of sort $\vec{X}$ has an interpretation as a morphism $t_\!C : \prod \vec{X} -> Y$, defined by induction on $t$ in the obvious way.

\item  Atomic formulas are interpretable: for variables $\vec{x} = (x_i)_{i \in I}$ (where $I$ is a $\kappa$-ary set) of sorts $\vec{X} = (X_i)_{i \in I}$, for terms $\vec{t}(\vec{x}) = (t_j(\vec{x}))_{j \in J}$ (where $J$ is $\kappa$-ary) of sorts $(Y_j)_{j \in J}$, and for a relation $R \subseteq \prod_{j \in J} Y_j$, the atomic formula $R(\vec{t})$ is interpreted as the pullback
\begin{equation*}
\begin{tikzcd}
\{\vec{x} \in \prod_i X_i \mid R(\vec{t})\} \dar[hook] \rar & R \dar[hook] \\
\prod_i X_i \rar["((t_j)_\!C)_j"'] & \prod_j Y_j
\end{tikzcd}
\end{equation*}
(the binary equality relation $=$ is treated as the diagonal $X `-> X^2$ for each $X \in \!C$).

\item  $\kappa$-ary conjunction $\bigwedge_i \phi_i$ is interpreted as the meet (i.e., wide pullback) of the interpretations of the $\phi_i$.

\item  $\kappa$-ary disjunction $\bigvee_i \phi_i$ is interpreted as the join, in the subobject lattice, of the interpretations of the $\phi_i$, \emph{provided the join exists and is pullback-stable}.

In particular, the nullary disjunction $\bot$ is interpreted as a pullback-stable least subobject, which is the same thing (see \cite[A1.4.1]{Jeleph}) as a \defn{strict initial object} $\emptyset \in \!C$, i.e., an initial object such that every morphism to it is an isomorphism.

\item  If $\phi(\vec{x})$ has an interpretation $\{\vec{x} \mid \phi(\vec{x})\} \subseteq \prod \vec{X}$, and this interpretation has a \emph{pullback-stable complement}, meaning another subobject $\neg \{\vec{x} \mid \phi(\vec{x})\} \subseteq \prod \vec{X}$ whose meet with $\{\vec{x} \mid \phi(\vec{x})\}$ is the pullback-stable least subobject $\emptyset$ and whose join with $\{\vec{x} \mid \phi(\vec{x})\}$ is all of $\prod \vec{X}$ and such that this join is pullback-stable, then this complement $\neg \{\vec{x} \mid \phi(\vec{x})\}$ is defined to be $\{\vec{x} \mid \neg \phi(\vec{x})\}$.

(Note that pullback-stability of a join implies that meets distribute over it; thus pullback-stable complements that exist are unique.)

\item  Finally, for $\exists$: if $\phi(\vec{x}, \vec{y})$ has an interpretation $\{(\vec{x}, \vec{y}) \mid \phi(\vec{x}, \vec{y})\} \subseteq \prod \vec{X} \times \prod \vec{Y}$, and the projection $\pi_{\vec{X}} : \prod \vec{X} \times \prod \vec{Y} -> \prod \vec{X}$ is such that there is a least subobject $\pi_{\vec{X}}(\{(\vec{x}, \vec{y}) \mid \phi(\vec{x}, \vec{y})\}) \subseteq \prod \vec{X}$ whose pullback is $\ge \{(\vec{x}, \vec{y}) \mid \phi(\vec{x}, \vec{y})\}$, and furthermore it remains least when pulled back along any $Z -> \prod \vec{X}$, then we call $\pi_{\vec{X}}(\{(\vec{x}, \vec{y}) \mid \phi(\vec{x}, \vec{y})\})$ the \defn{pullback-stable image} of $\{(\vec{x}, \vec{y}) \mid \phi(\vec{x}, \vec{y})\}$ under $\pi_{\vec{x}}$, and define it to be $\{\vec{x} \mid \exists \vec{y}\, \phi(\vec{x}, \vec{y})\}$.

Alternatively, say that an arbitrary morphism $f : A -> X$ has \defn{pullback-stable image} $\im(f) \subseteq X$ when $\im(f)$ is least such that $f^*(\im(f)) = A$, and remains so when pulled back along any $Z -> X$, i.e., $A ->> \im(f) `-> X$ is a pullback-stable factorization of $f$ into an \defn{extremal epimorphism} (a morphism not factoring through any proper subobject of its codomain) followed by a monomorphism.
Then the image of a subobject, as defined above, is the same thing as the image of the composite
\begin{align}
\label{eq:loc-intlog-proj}
\{(\vec{x}, \vec{y}) \mid \phi(\vec{x}, \vec{y})\} `-> \prod \vec{X} \times \prod \vec{Y} --->{\pi_{\vec{X}}} \prod \vec{X}.
\end{align}

\end{itemize}
If $\phi$ is an interpretable formula with no free variables, then its interpretation $\{() \mid \phi\}$, if it exists, is a subobject of the terminal object $1 \in \!C$; if it is all of $1$, then we say that $\!C$ \defn{satisfies} $\phi$, denoted
\begin{align*}
\!C |= \phi  \coloniff  \{() \mid \phi\} = 1.
\end{align*}
For interpretable $\phi(\vec{x})$ with free variables of sorts $\vec{X}$, if $\forall \vec{x}\, \phi(\vec{x})$ is also interpretable, then it is easily seen that $\{\vec{x} \mid \phi(\vec{x})\} \subseteq \prod \vec{X}$ is all of $\prod \vec{X}$ iff $\!C |= \forall \vec{x}\, \phi(\vec{x})$; thus more generally, regardless of whether $\forall \vec{x}\, \phi(\vec{x})$ is interpretable, we put
\begin{align*}
\!C |= \forall \vec{x}\, \phi(\vec{x})  \coloniff  \{\vec{x} \mid \phi(\vec{x})\} = \prod \vec{X}.
\end{align*}
More generally still, for interpretable $\phi(\vec{x}), \psi(\vec{x})$ both with free variables of sorts $\vec{X}$, we put
\begin{align*}
\!C |= \forall \vec{x}\, (\phi(\vec{x}) -> \psi(\vec{x}))  \coloniff  \{\vec{x} \mid \phi(\vec{x})\} \subseteq \{\vec{x} \mid \psi(\vec{x})\} \subseteq \prod \vec{X};
\end{align*}
again if $\phi(\vec{x}) -> \psi(\vec{x})$ is in fact interpretable, this is consistent with the above.

\begin{example}
It is straightforward to see that for any morphism $f : X -> Y \in \!C$, \eqref{eq:loc-intlog-ker} above does define the same subobject of $X^2$ as the usual kernel, i.e., the pullback $X \times_Y X$.
\end{example}

Formulas such as \eqref{eq:loc-intlog-ker}, whose interpretation only uses ($\kappa$-ary) limits in $\!C$, are called \defn{($\kappa$-)limit formulas} (cf.\ \cref{sec:cat-lim}); they are always interpretable (provided $\!C$ has $\kappa$-ary limits), and moreover one may reason about them exactly as in $\!{Set}$ (since the Yoneda embedding $\!C -> \!{Set}^{\!C^\op}$ preserves limits).
Atomic formulas are clearly limit formulas, as are conjunctions thereof.
Certain existentials are also limit formulas: namely, if the composite \eqref{eq:loc-intlog-proj} above is already a monomorphism (which is expressible via pullbacks), then it is its own pullback-stable image, hence is the interpretation of $\exists \vec{y}\, \phi(\vec{x}, \vec{y})$ using only limits (and composition).
Such existentials are called \defn{provably unique}.

In order for \emph{all} $\@L_{\kappa\kappa}(\!C)$-formulas to be interpretable, $\!C$ needs to have (i) all pullback-stable images, or equivalently pullback-stable extremal epi--mono factorizations; and (ii) all pullback-stable joins and complements of subobjects.
Categories with (i) are called \defn{regular} (cf.\ \cref{sec:cat-alg}); in such categories, all extremal epimorphisms are in fact regular epimorphisms (see \cite[A1.3.4]{Jeleph}), hence one may equivalently speak of regular epi--mono factorizations.
By \cref{thm:loc-bor-epi-pullback,thm:loc-mono-epi}(b),

\begin{proposition}
\label{thm:loc-bor-reg}
For $\kappa < \infty$, $\!{\kappa BorLoc}$ is a regular category.
\qed
\end{proposition}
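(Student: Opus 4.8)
The plan is to verify that $\!{\kappa BorLoc}$ (for $\kappa < \infty$) satisfies the two defining properties of a regular category: it has all finite limits, and it has pullback-stable regular epi--mono (equivalently extremal epi--mono) factorizations. The first is immediate since $\!{\kappa BorLoc} = \!{\kappa Bool}^\op$ and $\!{\kappa Bool}$, being a category of infinitary algebraic structures monadic over $\!{Set}$, has all small colimits, hence $\!{\kappa BorLoc}$ has all small limits. So the content is in the factorization system and its pullback-stability.

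First I would identify the epi--mono factorization. By \cref{thm:loc-mono-epi}(b), every epimorphism in $\!{\kappa BorLoc}$ is regular, and by part (a) every monomorphism is regular; moreover (dualizing \cref{thm:bool-epi-surj}) epimorphisms in $\!{\kappa BorLoc}$ correspond precisely to injective homomorphisms in $\!{\kappa Bool}$ and monomorphisms to surjective ones. An extremal epimorphism in $\!{\kappa BorLoc}$ is in particular an epimorphism (any category with equalizers has this implication), and an epimorphism that factors through no proper subobject; since every mono is regular here, being an extremal epi is the same as being an epi. Thus the extremal epi--mono factorization of a $\kappa$-Borel map $f : X \to Y$ is obtained dually: factor $f^* : \@B_\kappa(Y) \to \@B_\kappa(X)$ as a surjection onto its image followed by the inclusion of that image, i.e. $\@B_\kappa(Y) \twoheadrightarrow \mathrm{im}(f^*) \hookrightarrow \@B_\kappa(X)$, which dualizes to $X \twoheadrightarrow Z \hookrightarrow Y$ with $Z$ the $\kappa$-Borel locale whose algebra is $\mathrm{im}(f^*)$; the first map is epic (injective homomorphism) and the second monic (surjective homomorphism). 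This is manifestly functorial and unique up to isomorphism.

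Next I would check pullback-stability of this factorization, which is the heart of the matter and the step I expect to be the main obstacle. It suffices to check that the class of epimorphisms is pullback-stable, since then the pullback of $X \twoheadrightarrow Z \hookrightarrow Y$ along any $g : Y' \to Y$ is $X' \to Z' \to Y'$ with $X' \to Z'$ a pullback of an epi (hence epi) and $Z' \to Y'$ a pullback of a mono (hence mono, as monos are pullback-stable in any category with pullbacks), and uniqueness of factorizations forces this to be the factorization of the composite $X' \to Y'$. Pullback-stability of epimorphisms is exactly \cref{thm:loc-bor-epi-pullback}, which was proved via \cref{thm:bool-pushout-inj} (LaGrange's theorem that pushouts in $\!{\kappa Bool}$ preserve monomorphisms). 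So the proof reduces to already-established results.

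In summary, I would write: $\!{\kappa BorLoc}$ has finite limits because $\!{\kappa Bool}$ has finite colimits; it has a functorial extremal epi--mono factorization given dually by the image factorization of homomorphisms in $\!{\kappa Bool}$, using \cref{thm:loc-mono-epi} to see that these are the extremal epis and the monos; and this factorization is pullback-stable by \cref{thm:loc-bor-epi-pullback} together with the pullback-stability of monomorphisms. Hence $\!{\kappa BorLoc}$ is regular. The only subtlety to be careful about is the precise matching between ``extremal epimorphism'' (the notion used in the definition of regularity via the internal logic) and ``epimorphism'' in $\!{\kappa BorLoc}$, which is why \cref{thm:loc-mono-epi}(a), giving mono $=$ regular mono, is needed rather than just \cref{thm:loc-mono-epi}(b).
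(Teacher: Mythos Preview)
Your proof is correct and follows essentially the same route as the paper, which simply cites \cref{thm:loc-bor-epi-pullback} and \cref{thm:loc-mono-epi}(b). The one minor point worth noting is your final remark that \cref{thm:loc-mono-epi}(a) is ``needed rather than just (b)'': in fact the paper uses only (b), which suffices since once every epimorphism is regular, the image factorization in $\!{\kappa Bool}$ dualizes directly to a (regular epi, mono) factorization, and pullback-stability of regular epis then gives regularity. Your route via (a), arguing that extremal epi $=$ epi because every mono is regular, is equally valid but not the only option.
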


\begin{remark}
$\!{\kappa BorLoc}_\kappa$ is \emph{not} regular, not being closed under images in $\!{\kappa BorLoc}$; see \cref{thm:sigma11-proper} below.
In particular, $\!{\infty BorLoc}$ (= $\!{\infty BorLoc}_\infty$) is \emph{not} regular.
Furthermore, for $\kappa \le \lambda$, images in $\!{\kappa BorLoc}$ need not be preserved in $\!{\lambda BorLoc}$, i.e., the forgetful functor from the former category to the latter is not regular (see \cref{rmk:loc-im-bad}, \cref{thm:loc-im-bad}).
\end{remark}

A category with (finite limits and) all pullback-stable $\kappa$-ary joins of \emph{pairwise disjoint} subobjects is called \defn{$\kappa$-subextensive} in \cite{Cborin}.
A \defn{Boolean} $\kappa$-subextensive category is one which furthermore has complements (which can be defined using meets and pairwise disjoint joins; cf.\ \cref{sec:upkzfrm}); this implies arbitrary $\kappa$-ary joins, by disjointifying.
There does not seem to be a standard name for a category with pullback-stable $\kappa$-ary joins in general; however, if such a category is also regular, it is called \defn{$\kappa$-coherent} (see \cite[A1.4]{Jeleph}).
In a $\kappa$-subextensive category $\!C$, $\kappa$-ary pairwise disjoint joins of subobjects are also coproducts; thus if every $\kappa$-ary family of objects $X_i$ embeds as pairwise disjoint subobjects of a common object $Y$, then $\!C$ has $\kappa$-ary coproducts, which can be defined purely in terms of (meets and) $\kappa$-ary pairwise disjoint joins of subobjects, hence are pullback-stable.
Such a category $\!C$ is also called \defn{$\kappa$-extensive}, and can be equivalently defined as a category with (finite limits and) pullback-stable $\kappa$-ary coproducts in which coproduct injections are pairwise disjoint and the initial object is strict.
(For these last facts, see \cite{CLWext}, \cite[A1.4.3]{Jeleph}, \cite[\S2]{Cborin}.)
A $\kappa$-extensive regular category is $\kappa$-coherent, i.e., again has arbitrary $\kappa$-ary joins, by taking an image of a coproduct.

\begin{proposition}[see {\cite[3.8--9]{Cborin}}]
\label{thm:loc-bor-ext}
\leavevmode
\begin{enumerate}
\item[(a)]  $\!{\kappa BorLoc}$ and $\!{\kappa BorLoc}_\kappa$ are $\kappa$-extensive.
\item[(b)]  For $\kappa < \infty$, $\!{\kappa BorLoc}$ is $\kappa$-coherent.
\item[(c)]  $\!{\kappa BorLoc}_\kappa$ is Boolean.
\end{enumerate}
\end{proposition}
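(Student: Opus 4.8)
The plan is to verify the three items essentially by dualizing results already established for the algebraic categories $\!{\kappa Bool}$ and $\!{\kappa Bool}_\kappa = \!{\kappa BorLoc}_\kappa^\op$. For (a), $\kappa$-extensivity requires: finite limits (automatic, since $\!{\kappa BorLoc}, \!{\kappa BorLoc}_\kappa$ are duals of categories with finite colimits, and $\!{\kappa Bool}_\kappa$ has finite colimits as a reasonable subcategory); pullback-stable $\kappa$-ary coproducts; pairwise-disjointness of coproduct injections; and strictness of the initial object. The $\kappa$-ary coproducts in $\!{\kappa BorLoc}$ are the disjoint unions $\bigsqcup_i X_i$ dual to products $\prod_i \@B_\kappa(X_i)$ in $\!{\kappa Bool}$; their pullback-stability is exactly \cref{thm:loc-coprod-pullback} (which dualizes \cref{thm:frm-prod-pushout}). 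For pairwise-disjointness of the coproduct injections: given the partition $(\delta_i)_i$ of $\top$ in $\prod_i \@B_\kappa(X_i)$ described at the start of \cref{sec:frm-prod}, the pullback $X_i \times_{\bigsqcup_j X_j} X_{i'}$ for $i \ne i'$ corresponds to the pushout of $\@B_\kappa(X_i) \cong \down\delta_i$ and $\@B_\kappa(X_{i'}) \cong \down\delta_{i'}$ over $\prod_j \@B_\kappa(X_j)$, which is the filterquotient identifying $\delta_i = \delta_{i'} = \top$; since $\delta_i \wedge \delta_{i'} = \bot$, this forces $\top = \bot$, i.e.\ the pullback is $\emptyset$. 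Strictness of $\emptyset \in \!{\kappa BorLoc}$ (with $\@B_\infty(\emptyset) = 1$) is immediate: a morphism into $\emptyset$ is a homomorphism $1 \to \@B_\kappa(X)$, forcing $\top = \bot$ in $\@B_\kappa(X)$, so $\@B_\kappa(X) = 1$ too. For $\!{\kappa BorLoc}_\kappa$, all these computations take place among $\kappa$-presented algebras by \cref{thm:bool-prod-pres} and the fact that pushouts and filterquotients of $\kappa$-presented algebras are $\kappa$-presented, so $\!{\kappa BorLoc}_\kappa$ inherits $\kappa$-extensivity.

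For (b), $\kappa$-coherence for $\kappa < \infty$ means $\!{\kappa BorLoc}$ is regular and $\kappa$-extensive. Regularity is \cref{thm:loc-bor-reg}, which has already been proved (from \cref{thm:loc-bor-epi-pullback} and \cref{thm:loc-mono-epi}(b)); $\kappa$-extensivity is (a). Then, as noted in the text preceding the proposition, a $\kappa$-extensive regular category automatically has arbitrary pullback-stable $\kappa$-ary joins of subobjects (take the image of a coproduct of the subobjects mapping to the ambient object), so nothing further is needed.

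For (c), that $\!{\kappa BorLoc}_\kappa$ is Boolean: it is $\kappa$-subextensive by (a), and I need complemented subobjects. A subobject of $X \in \!{\kappa BorLoc}_\kappa$ corresponds, by \cref{thm:loc-mono-epi}(a) and \cref{thm:loc-sub-im}(c)(iii), to a $\kappa$-Borel set $B \in \@B_\kappa(X)$; its complement is the $\kappa$-Borel set $\neg B \in \@B_\kappa(X)$, and one checks that the corresponding subobjects $B, \neg B \subseteq X$ have meet $\emptyset$ (the $\kappa$-Borel set $\bot$, whose induced sublocale is empty) and pullback-stable join $X$ (the $\kappa$-Borel set $\top$), using \cref{thm:loc-emb-im-inj} (meet of subobjects = meet of their $\infty$-Borel images) and \cref{thm:loc-sub-im-union}(a)(iii) (pairwise-disjoint join of $\kappa$-Borel subobjects is their coproduct, corresponding to the join in $\@B_\kappa$). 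Pullback-stability of this join and of the complement then follows from $\kappa$-subextensivity together with the disjointification. Alternatively and more cleanly, I can simply cite that all these facts are recorded in \cite[3.8--9]{Cborin}, which is exactly what the proposition attributes them to; so the "proof" is essentially a pointer to that reference, with the above sketch indicating how the ingredients assembled earlier in this paper ($\!{\kappa BorLoc}_\kappa \simeq \!{\kappa Bool}_\kappa^\op$ together with \cref{thm:frm-prod-pushout}, \cref{thm:bool-mono-reg}, \cref{thm:bool-epi-surj}, \cref{thm:loc-coprod-pullback}, \cref{thm:loc-bor-reg}, \cref{thm:loc-mono-epi}, \cref{thm:loc-sub-im}, \cref{thm:loc-sub-im-union}, \cref{thm:bool-prod-pres}) already suffice.

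The main obstacle I anticipate is not any single deep step but rather bookkeeping: carefully matching the abstract definitions of ``$\kappa$-extensive'', ``$\kappa$-coherent'', and ``Boolean $\kappa$-subextensive'' (which have several equivalent formulations, e.g.\ via pullback-stable coproducts with disjoint injections and strict initial object versus via pullback-stable disjoint joins of subobjects) against the concrete algebraic descriptions of products, pushouts, filterquotients, and the subobject lattice $\@B_\kappa(X)$. In particular, verifying pullback-stability uniformly — rather than case by case for binary pullbacks, wide pullbacks, and pullbacks along arbitrary maps — requires invoking \cref{thm:frm-prod-pushout} in the right generality, and checking that the $\kappa$-presentability side-conditions propagate through all the relevant colimit constructions so that the arguments restrict correctly to $\!{\kappa BorLoc}_\kappa$.
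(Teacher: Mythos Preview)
Your proposal is correct and follows essentially the same approach as the paper: the paper's proof cites exactly \cref{thm:loc-coprod-pullback}, the identification of subobjects with ($\infty(\@B_\kappa)_\delta$ or $\kappa$-)Borel sets via \cref{thm:loc-sub-im,thm:loc-mono-epi}, pullback as $f^*$ via \cref{thm:loc-im-factor}, coproduct-as-disjoint-union via \cref{thm:loc-sub-im-union}, and \cref{thm:loc-bor-reg} for (b). The only minor difference is that you verify disjointness of coproduct injections and strictness of the initial object by direct algebraic computation with the $\delta_i$, whereas the paper leaves these implicit in the subobject-lattice description; both routes are equivalent formulations of $\kappa$-extensivity.
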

\begin{proof}
(a) and (c) are by \cref{thm:loc-coprod-pullback} and the fact that subobjects in these categories are the same as $\infty(\@B_\kappa)_\delta$, respectively $\kappa$-Borel, sets (by \cref{thm:loc-sub-im,thm:loc-mono-epi}), with pullback along $f : X -> Y$ corresponding to $f^* : \@B_\infty(Y) -> \@B_\infty(X)$ (\cref{thm:loc-im-factor}) and $\kappa$-ary coproduct corresponding to pairwise disjoint union of subobjects (\cref{thm:loc-sub-im-union}).

(b) follows from \cref{thm:loc-bor-reg} and $\kappa$-extensivity.
\end{proof}

It follows that large fragments of $\@L_{\kappa\kappa}$ may be interpreted in the categories $\!{\kappa BorLoc}_{(\kappa)}$.
Namely, starting in $\!{\kappa BorLoc}_\kappa$, we may interpret any formula, as long as we ensure that whenever an $\exists$ appears, its interpretation still lands in $\!{\kappa BorLoc}_\kappa$; while arbitrary $\exists$ may be interpreted by passing to $\!{\kappa BorLoc}$, as long as we no longer take complements outside of such arbitrary $\exists$.
We will continue and extend this discussion of interpreting $\exists$ when we introduce $\kappa\Sigma^1_1$ sets in \cref{sec:sigma11} below.

\begin{remark}
In \cite{Cborin} we showed that $\!{\kappa BorLoc}_\kappa$ is the \emph{initial} Boolean $\kappa$-extensive category with $\kappa$-ary limits (in the $2$-category of all such categories).
\end{remark}

\begin{remark}
\label{rmk:loc-bor-creg}
\Cref{thm:loc-bor-epi-invlim} also gives a compatibility condition between \emph{arbitrary} limits and images in $\!{\kappa BorLoc}$.
In particular, it implies Makkai's \cite[2.2]{Mbarr} \emph{principle of dependent choices} (for all lengths $\lambda$), whence by the main result of that paper, we may also interpret \emph{$\infty$-regular formulas} (built using arbitrary $\bigwedge$ and $\exists$) in $\!{\kappa BorLoc}$, with the same implications between such formulas holding as in $\!{Set}$.
(This is stronger than Carboni--Vitale's \cite[\S4.5]{CVregex} notion of \defn{completely regular} category, i.e., a regular category in which regular epimorphisms are closed under arbitrary products, which $\!{\kappa BorLoc}$ is, by \cref{thm:loc-bor-epi-prod}; see also \cref{sec:cat-alg}.)
\end{remark}

\begin{remark}
For a $\kappa$-subextensive category, $\kappa$-extensivity imposes the existence of additional structure (disjoint coproducts) which is uniquely defined in terms of the $\kappa$-subextensive structure, if it exists.
There is a similar counterpart to regularity, called \defn{(Barr\nobreakdash-)exactness} (cf.\ \cref{sec:cat-alg}): every equivalence relation ${\sim} \subseteq X^2$ (i.e., $\!C \models{}$``$\sim$ is an equivalence relation on $X$'') has a coequalizer whose kernel is $\sim$.
In contrast to \cref{thm:loc-bor-reg}, the category $\!{\kappa BorLoc}$ is \emph{not} exact.
Its ``exact completion'' is related to the theory of \emph{Borel equivalence relations} in descriptive set theory; see e.g., \cite{Gidst}.
We leave a detailed study of the exact completion for future work.
\end{remark}

\subsection{Positive Borel locales and partial orders}
\label{sec:poloc}

Recall (see \cref{ex:loc-infbor+}) that our main motivation for positive $\kappa$-Borel locales is to provide a pointless approach to the specialization order on a locale.
We now make this precise.

By an \defn{(internal) partial order} on an object $X$ in a category $\!C$ with finite limits, we mean a subobject ${\le} \subseteq X^2$ which internally (see \cref{sec:loc-intlog}) obeys the axioms of reflexivity, transitivity, and antisymmetry (which are implications between finite limit formulas, thus interpretable in $\!C$).
Thus, an internal partial order on a $\kappa$-Borel locale $X \in \!{\kappa BorLoc}$ is an $\infty(\@B_\kappa)_\delta$ binary relation ${\le} \subseteq X^2$ obeying these axioms, which explicitly means
\begin{align*}
(X --->{\delta} X^2)^*({\le}) &= X, \\
(X^3 --->{\pi_{12}} X^2)^*({\le}) \cap (X^3 --->{\pi_{23}} X^2)^*({\le}) &\subseteq (X^3 --->{\pi_{13}} X^2)^*({\le}) \subseteq X^3, \\
({\le}) \cap (X^2 --->{\sigma} X^2)^*({\le}) &= ({=_X}) \subseteq X^2
\end{align*}
where $\delta$ is the diagonal, $\pi_{ij} : X^3 -> X^2$ is the projection onto the $i$th and $j$th coordinates, $\sigma$ is the ``swap'' map, and ${=_X}$ is the image of $\delta$; while an internal partial order on a standard $\kappa$-Borel locale $X \in \!{\kappa BorLoc}_\kappa$ is a $\kappa$-Borel such $\le$.
By a \defn{$\kappa$-Borel polocale}, we mean a $\kappa$-Borel locale $X$ equipped with a $\infty(\@B_\kappa)_\delta$ partial order ${\le} = {\le_X}$, while by a \defn{standard $\kappa$-Borel polocale}, we mean a standard $\kappa$-Borel locale equipped with a $\kappa$-Borel partial order.
Let $\!{\kappa BorPOLoc}, \!{\kappa BorPOLoc}_\kappa$ be the categories of these and $\kappa$-Borel order-preserving maps.
We have forgetful functors
\begin{equation}
\label{diag:poloc-forget}
\begin{tikzcd}
\!{\kappa BorPOLoc}_\kappa \dar \rar[rightarrowtail] & \!{\kappa BorPOLoc} \dar \\
\!{\kappa BorLoc}_\kappa \rar[rightarrowtail] & \!{\kappa BorLoc}
\end{tikzcd}
\end{equation}
Moreover, for $\kappa \le \lambda$, since the forgetful functor $\!{\kappa BorLoc} -> \!{\lambda BorLoc}$ preserves finite limits, it preserves internal partial orders; thus we have forgetful functors $\!{\kappa BorLoc}_{(\kappa)} -> \!{\lambda BorLoc}_{(\lambda)}$, compatible with \eqref{diag:poloc-forget} in the obvious sense.

Recall that for an object $X$ in a locally ordered category, such as $\!{\kappa Bor^+Loc}$, the \defn{order ${\le_X} \subseteq X^2$ on $X$} is the weighted limit defined as the universal object equipped with projections $\pi_1, \pi_2 : {\le_X} \rightrightarrows X$ such that $\pi_1 \le \pi_2$, which is to say the order-kernel ${\le_X} = \oker(1_X)$ of the identity, or equivalently the power ${\le_X} = X^\#S$ of $X$ by the poset $\#S = \{0 < 1\}$ (see paragraph before \cref{thm:loc-mono-epi}, \cref{sec:cat-ord}).
In $\!{\kappa Bor^+Loc}$, we also call $\le_X$ the \defn{specialization order} on $X$.
By a straightforward calculation (or by the enriched Yoneda lemma \cite{Kvcat}), the order on $X$ is indeed an internal partial order on $X$ in the above sense.
Moreover, every morphism $f : X -> Y$ is order-preserving.
Thus, by assigning to every (standard) positive $\kappa$-Borel locale $X$ its specialization order $\le_X$ in $\!{\kappa Bor^+Loc}_{(\kappa)}$, and then applying the forgetful functor to $\!{\kappa BorLoc}_{(\kappa)}$, we get a (faithful) forgetful functor
\begin{align*}
\!{\kappa Bor^+Loc}_{(\kappa)} --> \!{\kappa BorPOLoc}_{(\kappa)}
\end{align*}
which factors the forgetful functor $\!{\kappa Bor^+Loc}_{(\kappa)} -> \!{\kappa BorLoc}_{(\kappa)}$ through \eqref{diag:poloc-forget}.

Recall from \eqref{diag:loc-cat} that the forgetful functor $\!{\kappa Bor^+Loc} -> \!{\kappa BorLoc}$ has a left adjoint ``free functor'' $D : \!{\kappa BorLoc} -> \!{\kappa Bor^+Loc}$, dual to the forgetful functor $\!{\kappa Bool} -> \!{\kappa\kappa Frm}$ (or $\!{CBOOL}_\infty -> \!{\infty\infty FRM}_\infty$ when $\kappa = \infty$), which exhibits $\!{\kappa BorLoc}$ as equivalent to a coreflective subcategory of $\!{\kappa Bor^+Loc}$.
For $X \in \!{\kappa BorLoc}$, we can think of $D(X)$ as the ``discrete'' positive $\kappa$-Borel locale on $X$.
For $X \in \!{\kappa Bor^+Loc}$, the counit $\epsilon_X : D(X) -> X$ is dual to the inclusion $\epsilon_X^* : \@B^+_\kappa(X) `-> \@B^+_\kappa(D(X)) = \@B_\kappa(X)$; hence $\epsilon_X$ is an epimorphism.

\begin{lemma}
\label{thm:loc-pos-ord-disc}
For $X \in \!{\kappa Bor^+Loc}$, the specialization order ${\le_X} \subseteq X^2$, regarded as a subobject in $\!{\kappa BorLoc}$, is also the order-kernel in $\!{\kappa Bor^+Loc}$ of the counit $\epsilon_X : D(X) -> X$ regarded as in $\!{\kappa BorLoc}$.
\end{lemma}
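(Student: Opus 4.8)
The claim is essentially a compatibility statement between two a priori different weighted-limit-style constructions: the specialization order $\le_X$ built internally in $\!{\kappa Bor^+Loc}$ (i.e., $\oker(1_X)$ there, equivalently $X^{\#S}$), and the order-kernel $\oker(\epsilon_X)$ of the counit $\epsilon_X : D(X) \to X$ computed in $\!{\kappa BorLoc}$. Both should turn out to be the same $\kappa$-Borel locale, namely the one whose $\kappa$-Boolean algebra is a suitable quotient of $\@B_\kappa(X) \amalg \@B_\kappa(X)$. The cleanest route is to compute the dual algebraic object on each side and match presentations; I would work entirely in $\!{\kappa\kappa Frm}$ and $\!{\kappa Bool}$ via the dualities $\!{\kappa Bor^+Loc} = \!{\kappa\kappa Frm}^\op$ and $\!{\kappa BorLoc} = \!{\kappa Bool}^\op$, and for $\kappa = \infty$ use the small-presented variants throughout (this is harmless since everything in sight is a presented structure).

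\textbf{Step 1: identify $\oker(\epsilon_X)$ dually.} The counit $\epsilon_X : D(X) \to X$ in $\!{\kappa BorLoc}$ is dual to the inclusion $\epsilon_X^* : \@B^+_\kappa(X) \hookrightarrow \@B_\kappa(X)$, i.e., to the unit of the free/forgetful adjunction $\!{\kappa\kappa Frm} \rightleftarrows \!{\kappa Bool}$ (which is injective by \cref{thm:frm-neginf-bool-inj}, dualized in \cref{thm:loc-forget}(a)). The order-kernel $\oker(\epsilon_X)$ in $\!{\kappa BorLoc}$ is the comma object $D(X) \downarrow_X D(X)$; recall the internal-logic description of the order-kernel: it is the subobject of $D(X) \times D(X)$ on which "$\pi_1 \le \pi_2$" holds relative to $\epsilon_X$, which dualizes to the statement that $\@B_\kappa(\oker(\epsilon_X))$ is the quotient of $\@B_\kappa(X) \amalg \@B_\kappa(X)$ (coproduct in $\!{\kappa Bool}$) by the relations $\iota_1(b) \le \iota_2(b)$ for all $b$ in the image of $\epsilon_X^*$, i.e., for all $b \in \@B^+_\kappa(X)$. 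So I need: $\@B_\kappa(\oker(\epsilon_X)) = \ang{\@B_\kappa(X) \amalg \@B_\kappa(X) \mid \iota_1(b) \le \iota_2(b)\ \forall b \in \@B^+_\kappa(X)}_\!{\kappa Bool}$.

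\textbf{Step 2: identify the specialization order dually.} In $\!{\kappa Bor^+Loc}$, the order $\le_X = \oker(1_X)$ is the power $X^{\#S}$, dual to the cotensor (weighted colimit) $\@B^+_\kappa(X)^{\#S}$ in $\!{\kappa\kappa Frm}$ — concretely the coinserter / cocomma of $1_{\@B^+_\kappa(X)}$ with itself, which is the universal $(\kappa,\kappa)$-frame $\@B^+_\kappa(X) \amalg_{\le} \@B^+_\kappa(X)$ equipped with two maps $j_1, j_2$ and the relation $j_1 \le j_2$. Now I want $\le_X$ as a subobject in $\!{\kappa BorLoc}$, i.e., I apply the forgetful functor $\!{\kappa Bor^+Loc} \to \!{\kappa BorLoc}$, which dually means applying the free functor $\!{\kappa\kappa Frm} \to \!{\kappa Bool}$: so $\@B_\kappa(\le_X) = \ang{\@B^+_\kappa(X) \amalg_{\le} \@B^+_\kappa(X)}_\!{\kappa Bool}$. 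By \cref{thm:frm-prod-bool}-style reasoning — more precisely, because $\ang{-}_\!{\kappa Bool}$ is a left adjoint it preserves colimits, and in particular preserves the bilax/cocomma colimit presenting $\@B^+_\kappa(X) \amalg_{\le} \@B^+_\kappa(X)$ — this equals the cocomma of $\ang{\@B^+_\kappa(X)}_\!{\kappa Bool} = \@B_\kappa(X)$ with itself in $\!{\kappa Bool}$. But in $\!{\kappa Bool}$ inequalities between homomorphisms are equalities (since $\neg$ is definable), so the cocomma collapses to the ordinary coproduct; naively this would give $\@B_\kappa(X) \amalg \@B_\kappa(X)$ with no relations, which is wrong. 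The resolution — and the key subtlety — is that $\ang{-}_\!{\kappa Bool}$ applied to the \emph{underlying $(\kappa,\kappa)$-frame map} $j_i : \@B^+_\kappa(X) \to \@B^+_\kappa(X)\amalg_{\le}\@B^+_\kappa(X)$ does not surject, and one must track that $\@B^+_\kappa(X) \amalg_{\le} \@B^+_\kappa(X)$ is presented over two disjoint copies of $\@B^+_\kappa(X)$ with $j_1(b) \le j_2(b)$, so its Booleanization is $\ang{\@B_\kappa(X) \amalg \@B_\kappa(X) \mid \iota_1(b) \le \iota_2(b)\ \forall b \in \@B^+_\kappa(X)}_\!{\kappa Bool}$ — exactly matching Step 1. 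I expect this bookkeeping about which colimit $\ang{-}_\!{\kappa Bool}$ preserves, and in what category the cocomma lives, to be the main obstacle; the cleanest way to carry it out is to write down the presentation of the cocomma $\@B^+_\kappa(X) \amalg_{\le} \@B^+_\kappa(X)$ explicitly (two disjoint copies of $\@B^+_\kappa(X)$, plus relations $j_1(b) \le j_2(b)$), apply $\ang{-}_\!{\kappa Bool}$ presentation-by-presentation as in the proofs in \cref{sec:frm-prod}, and observe the resulting presentation is literally the one from Step 1.

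\textbf{Step 3: match the subobjects.} Having shown both $\@B_\kappa(\oker(\epsilon_X))$ and $\@B_\kappa(\le_X)$ (the latter computed in $\!{\kappa BorLoc}$) have the same presentation as a quotient of $\@B_\kappa(X)\amalg\@B_\kappa(X)$, I need to check that this identification respects the two projections to $X$ — i.e., that the square identifying them is one of subobjects of $X^2 = D(X) \times D(X)$, not merely an abstract isomorphism of $\kappa$-Borel locales. This is immediate since in both cases the two maps to $X$ are the two coproduct injections $\iota_1, \iota_2$ of $\@B_\kappa(X)$ into the presented quotient; by \cref{thm:loc-emb-im-inj} (or just the fact that the forgetful functors are faithful and preserve limits) a $\kappa$-Borel isomorphism over $X^2$ of two monomorphisms into $X^2$ exhibits them as the same subobject. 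One should also note the subobject $\le_X \hookrightarrow X^2$ really is a monomorphism: $\oker(\epsilon_X) \to D(X)^2 = X^2$ is monic because $\epsilon_X$ is an epimorphism in $\!{\kappa BorLoc}$ (being dual to an injective homomorphism) and order-kernels of epimorphisms in a category where monos are regular — which holds here by \cref{thm:loc-mono-epi} — embed into the square; alternatively cite that $\le_X$ as defined in $\!{\kappa Bor^+Loc}$ is already an internal partial order, hence in particular a mono, and monos are preserved by the faithful limit-preserving forgetful functor. Assembling Steps 1–3 gives the lemma.
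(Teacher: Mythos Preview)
Your approach is correct but takes a genuinely different route from the paper.  The paper gives a short Yoneda argument: for a test object $Y \in \!{\kappa BorLoc}$, it uses the adjunction $D \dashv \text{forgetful}$ to translate maps $Y \to {\le_X}$ into maps $D(Y) \to {\le_X}$ in $\!{\kappa Bor^+Loc}$, unpacks the universal property of $\le_X$ to get a pair $f \le g : D(Y) \rightrightarrows X$, uses coreflectivity to factor these through $\epsilon_X$, repacks as a map $D(Y) \to \oker(\epsilon_X)$, and transports back along the adjunction.  No presentations are ever written down.  Your approach instead computes the dual $\kappa$-Boolean algebras of both sides explicitly and matches them, which is more concrete but forces you to confront the ``does $\ang{-}_\!{\kappa Bool}$ preserve cocommas?'' issue; you resolve this correctly by passing to presentations rather than invoking preservation of weighted colimits (which would fail, since the left adjoint is not $\!{Pos}$-enriched).

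One point to tighten in Step~1: you write ``the order-kernel $\oker(\epsilon_X)$ in $\!{\kappa BorLoc}$'', but the comma object has no intrinsic meaning there (the hom-posets are discrete).  What the statement asks for is $\oker(\epsilon_X)$ computed in $\!{\kappa Bor^+Loc}$ and then forgotten to $\!{\kappa BorLoc}$.  Dually this is the cocomma of $\epsilon_X^*$ in $\!{\kappa\kappa Frm}$, followed by $\ang{-}_\!{\kappa Bool}$; to arrive at the presentation you write, you need the same ``presentation-by-presentation'' argument as in Step~2, together with the observation that since $\@B_\kappa(X)$ is already Boolean, the cocomma in $\!{\kappa\kappa Frm}$ is generated by complemented elements and hence already Boolean (so applying $\ang{-}_\!{\kappa Bool}$ is the identity).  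Once that is said, your Steps~1 and~2 literally produce the same presentation, and Step~3 is routine.
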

\begin{proof}
For any $Y \in \!{\kappa BorLoc}$,
a $\kappa$-Borel map $Y -> {\le_X} \in \!{\kappa BorLoc}$ is equivalently a positive $\kappa$-Borel map $D(Y) -> {\le_X} \in \!{\kappa Bor^+Loc}$,
which by definition of $\le_X$ is equivalently a pair $f, g : D(Y) \rightrightarrows X \in \!{\kappa Bor^+Loc}$ such that $f \le g$,
which (by coreflectivity) is equivalently a pair $f', g' : D(Y) \rightrightarrows D(X) \in \!{\kappa Bor^+Loc}$ such that $f = \epsilon_X \circ f' \le \epsilon_X \circ g' = g$,
which is equivalently a map $D(Y) -> \oker(\epsilon_X) \in \!{\kappa Bor^+Loc}$, which is equivalently a map $Y -> \oker(\epsilon_X) \in \!{\kappa BorLoc}$.
\end{proof}

Given any $\kappa$-Borel polocale $X$, we say that a $\kappa$-Borel set $B \subseteq X$ is \defn{upper} if it is internally so (with respect to the order $\le_X$) in $\!{\kappa BorLoc}$, i.e.,
\begin{align*}
({\le_X} --->{\pi_1} X)^*(B) \subseteq ({\le_X} --->{\pi_2} X)^*(B) \subseteq {\le_X}.
\end{align*}
(Note that since $\!{\kappa BorLoc} -> \!{\infty BorLoc}$ is conservative (\cref{thm:loc-forget}), i.e., injective on subobjects (\cref{thm:cat-funct-sub}), a $\kappa$-Borel set $B$ is upper in $\!{\kappa BorLoc}$ iff it is so as an $\infty$-Borel set in $\!{\infty BorLoc}$.)

We now have the main result connecting positive $\kappa$-Borel locales with $\kappa$-Borel polocales:

\begin{theorem}
\label{thm:loc-pos-upper}
For a positive $\kappa$-Borel locale $X$, a $\kappa$-Borel set $B \in \@B_\kappa(X)$ is upper (with respect to the specialization order) iff it is a positive $\kappa$-Borel set.
\end{theorem}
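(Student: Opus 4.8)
The plan is to show the two inclusions separately. One direction is easy: if $B \in \@B^+_\kappa(X)$ is a positive $\kappa$-Borel set, then $\eta_X^*(B) = B$ where $\eta_X : X `-> \ang{\@B^+_\kappa(X)}_\!{\kappa Bool}$ is the unit (so $B \in \@B^+_\kappa(X) \subseteq \@B_\kappa(X)$ via \cref{cvt:frm-cbool-incl}); since every morphism in $\!{\kappa Bor^+Loc}$ is order-preserving for the specialization order, $B$ being (the preimage of) a positive $\kappa$-Borel set is automatically upper. More precisely, I would invoke \cref{thm:loc-pos-ord-disc}: the specialization order ${\le_X} \subseteq X^2$ in $\!{\kappa BorLoc}$ is the order-kernel of $\epsilon_X : D(X) -> X$, so $B \subseteq X$ is upper iff $\epsilon_X^*(B)$, regarded inside $\@B_\kappa(D(X)) = \@B_\kappa(X)$ with its induced preorder, is a $\lesim$-upper set; and $\epsilon_X^* : \@B^+_\kappa(X) `-> \@B_\kappa(X)$ lands by construction inside the positive $\kappa$-Borel sets, which are upper because $\@B^+_\kappa(X) \subseteq \@B^+_\kappa(D(X))$ is a $(\kappa,\kappa)$-subframe on which all the relevant comparisons hold. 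I expect this direction to be a short unwinding of definitions.

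For the converse — the substantive direction — suppose $B \in \@B_\kappa(X)$ is upper with respect to $\le_X$. I want to produce $B \in \@B^+_\kappa(X)$, i.e., show $B$ lies in the image of $\epsilon_X^* : \@B^+_\kappa(X) `-> \@B_\kappa(X)$. Dualizing, this says: the $(\kappa,\kappa)$-frame map $\epsilon_X^* : \@B^+_\kappa(X) -> \ang{\@B^+_\kappa(X)}_\!{\kappa Bool}$ has image precisely the set of elements $b$ of the free $\kappa$-Boolean algebra that are ``upward closed with respect to the internal specialization order''. The key is to reinterpret ``upper'' algebraically. By \cref{thm:loc-pos-ord-disc}, being upper means: in $\@B_\kappa(X)$ with the preorder $\lesim := \oker$ of the quotient-like map coming from $\epsilon_X$, we have $B$ is $\lesim$-downward-closed (or upward, depending on variance). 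The plan is then to characterize, via a polyposet/sequent-calculus argument, exactly which elements of $\ang{\@B^+_\kappa(X) \qua \!{\kappa\kappa Frm}}_\!{\kappa Bool}$ are fixed by the relevant closure operator, and to identify these with $\@B^+_\kappa(X)$ itself. Concretely: take the canonical $(\kappa,\kappa)$-distributive polyorder $<|$ on $A := \@B^+_\kappa(X)$ from \cref{ex:frm-dpoly-bifrm}; by \cref{thm:frm-dpoly-bifrm-inj}, $A$ embeds into $\ang{A}_\!{\kappa Bool}$ and into $\ang{A}_\!{CBOOL}$, with $<|$ reflected; and the ``upper'' condition on a general element of $\ang{A}_\!{\kappa Bool}$ should translate, via the infinitary sequent calculus of \cref{sec:frm-cbool}, into the statement that the element is $\equiv$-equivalent to a term built from $A$ using only $\bigwedge, \bigvee$ (no $\neg$) — hence lies in the $(\kappa,\kappa)$-subframe generated by $A$, which is $A$ itself since $A$ is already a $(\kappa,\kappa)$-frame.

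The hard part will be making that last step rigorous: showing that an element of the free $\kappa$-Boolean algebra on $A$ which is ``upper for the internal order'' is necessarily represented by a negation-free (prenex) term over $A$. This is the localic analog of the classical fact (\cref{ex:loc-sbor+}, \cite[28.11]{Kcdst}) that an upward-closed Borel subset of $\#S^\#N$ lies in the closure of the open sets under countable Boolean combinations — and the classical proof goes through a separation theorem. So I would actually route the proof through the ordered separation/interpolation machinery already established: apply the $(\kappa,\kappa)$-frame interpolation theorem \cref{thm:bifrm-minterp} (or its specialization \cref{thm:bifrm-interp}) with $A := \@B^+_\kappa(X)$, $B := C := \ang{A}_\!{\kappa Bool}$, and suitable maps, to extract from the hypothesis ``$B$ is upper'' an actual element $a \in \@B^+_\kappa(X)$ with $B \le \iota(a)$ and $\iota(a) \le B$ in the appropriate bilax pushout, forcing $B = a \in \@B^+_\kappa(X)$. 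Setting up the correct bilax pushout diagram so that the order constraints $\iota_i \circ f_i \le \iota \le \iota_j \circ g_j$ encode exactly ``$B$ is closed under the specialization order'' is where the real work lies; once the diagram is right, \cref{thm:bifrm-minterp} and \cref{thm:frm-dpoly-inj} do the rest. I would also double-check the standard case separately using \cref{thm:loc-sub-im} and the fact that subobjects in $\!{\kappa Bor^+Loc}_\kappa$ correspond to $\kappa I(\@B^+_\kappa)_\delta$-sets, though upper $\kappa$-Borel \emph{sets} (not sublocales) should be exactly $\@B^+_\kappa(X)$ without needing the $\delta$-closure.
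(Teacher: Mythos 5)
Your proposal is correct and, in its final form, is essentially the paper's argument: both reduce to identifying the specialization order with the order-kernel of $\epsilon_X : D(X) \to X$ via \cref{thm:loc-pos-ord-disc}, and then applying the $(\kappa,\kappa)$-frame interpolation theorem to the cocomma of the inclusion $\@B^+_\kappa(X) \hookrightarrow \@B_\kappa(X)$ with itself to see that the inserter $\{B \mid \pi_1^*(B) \le \pi_2^*(B)\}$ (i.e., the upper sets) is exactly the image of $\@B^+_\kappa(X)$. The bilax-pushout setup you flag as ``where the real work lies'' is already packaged as \cref{thm:bifrm-mono-oreg} (monomorphisms in $\!{\kappa\kappa Frm}$ are order-regular), dually \cref{thm:loc-mono-epi}(b), so the paper just notes that $\epsilon_X$ is an epimorphism, hence order-regular, and reads off the conclusion; your speculative sequent-calculus detour is unnecessary.
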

\begin{proof}
Letting $D(X)$ as above be the ``discrete'' positive $\kappa$-Borel locale on the underlying $\kappa$-Borel locale of $X$, the counit $\epsilon_X : D(X) -> X$ is an epimorphism, hence an order-regular epimorphism by \cref{thm:loc-mono-epi}.
This means $\epsilon_X$ is the coinserter of its order-kernel, which by \cref{thm:loc-pos-ord-disc} is the underlying $\kappa$-Borel locale of the specialization order $\le_X$ in $\!{\kappa Bor^+Loc}$.
Dually, this means
\begin{align*}
\@B^+_\kappa(X)
&= \ins(\pi_1^*, \pi_2^* : \@B^+_\kappa(D(X)) \rightrightarrows \@B_\kappa(\le_X)) \\
&= \ins(\pi_1^*, \pi_2^* : \@B_\kappa(X) \rightrightarrows \@B_\kappa(\le_X)) \\
&= \{B \in \@B_\kappa(X) \mid \pi_1^*(B) \le \pi_2^*(B)\},
\end{align*}
which consists precisely of the upper sets by definition.
\end{proof}

Since the preimage of an upper set under an order-preserving map is clearly upper,

\begin{corollary}
\label{thm:loc-pos-poloc}
The forgetful functor $\!{\kappa Bor^+Loc} -> \!{\kappa BorPOLoc}$ is full.
\qed
\end{corollary}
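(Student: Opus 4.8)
The statement to prove is \cref{thm:loc-pos-poloc}: the forgetful functor $\!{\kappa Bor^+Loc} -> \!{\kappa BorPOLoc}$ is full. The plan is to reduce fullness to the content of \cref{thm:loc-pos-upper}. A morphism in $\!{\kappa BorPOLoc}$ from $X$ to $Y$ (between the images of positive $\kappa$-Borel locales under the forgetful functor) is by definition a $\kappa$-Borel map $f : X -> Y$ of the underlying $\kappa$-Borel locales which is order-preserving with respect to the specialization orders $\le_X, \le_Y$, i.e., $\!{\kappa BorLoc} \models \forall x, x'\, (x \le_X x' -> f(x) \le_Y f(x'))$. To show this $f$ is in the image of the forgetful functor, I must show that the $\kappa$-Boolean homomorphism $f^* : \@B_\kappa(Y) -> \@B_\kappa(X)$ restricts to a $(\kappa, \kappa)$-frame homomorphism $\@B^+_\kappa(Y) -> \@B^+_\kappa(X)$; since the forgetful functor $\!{\kappa Bor^+Loc} -> \!{\kappa BorLoc}$ is faithful (so there is at most one preimage), this suffices.

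The key step is the observation already flagged in the sentence preceding the corollary: the preimage of an upper set under an order-preserving map is upper. Concretely, for $B \in \@B^+_\kappa(Y)$, by \cref{thm:loc-pos-upper} applied to $Y$, $B$ is upper with respect to $\le_Y$. Since $f$ is order-preserving, internally in $\!{\kappa BorLoc}$ we have $x \le_X x' -> f(x) \le_Y f(x') -> (f(x) \in B -> f(x') \in B)$, i.e., $x \in f^*(B) -> x' \in f^*(B)$ whenever $x \le_X x'$; so $f^*(B)$ is upper with respect to $\le_X$. By \cref{thm:loc-pos-upper} applied to $X$, this means $f^*(B) \in \@B^+_\kappa(X)$. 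Thus $f^*$ maps $\@B^+_\kappa(Y)$ into $\@B^+_\kappa(X)$. This map automatically preserves finite meets and $\kappa$-ary joins (as it is the restriction of the $\kappa$-Boolean homomorphism $f^*$, and $\@B^+_\kappa \subseteq \@B_\kappa$ is a $\kappa$-subframe by \cref{cvt:frm-cbool-incl}); and it preserves $\kappa$-ary meets because those too are computed in $\@B_\kappa$ (the $(\kappa,\kappa)$-frame structure's meets agree with meets in $\@B_\kappa$, again by \cref{cvt:frm-cbool-incl}), and $f^*$ preserves $\kappa$-ary meets in $\@B_\kappa$. Hence the restriction $f^*|\@B^+_\kappa(Y) : \@B^+_\kappa(Y) -> \@B^+_\kappa(X)$ is a $(\kappa, \kappa)$-frame homomorphism, i.e., a positive $\kappa$-Borel map $X -> Y$ whose image under the forgetful functor is $f$.

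I expect the main (and essentially only) obstacle to be getting the internal-logic bookkeeping right: the statement ``$f$ order-preserving implies $f^*(B)$ upper when $B$ is upper'' is intuitively obvious in $\!{Set}$, but since $\le_X, \le_Y$ are internal partial orders (subobjects of $X^2$, $Y^2$) one should justify it via the limit-formula reasoning of \cref{sec:loc-intlog} — order-preservation is a limit formula, upperness is a limit formula, and the implication holds in $\!{Set}$, hence in $\!{\kappa BorLoc}$ by the Yoneda argument for limit formulas. One small point to be careful about: ``order-preserving'' in the definition of $\!{\kappa BorPOLoc}$ refers to the \emph{internal} orders $\le_X, \le_Y$ regarded as subobjects in $\!{\kappa BorLoc}$, while \cref{thm:loc-pos-upper} identifies these with the specialization orders coming from $\!{\kappa Bor^+Loc}$; this identification is exactly the content of the proof of \cref{thm:loc-pos-upper} (via \cref{thm:loc-pos-ord-disc}), so there is no circularity. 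Everything else is routine, and the proof is genuinely one line once \cref{thm:loc-pos-upper} is in hand — which is why the excerpt simply says ``Since the preimage of an upper set under an order-preserving map is clearly upper''.
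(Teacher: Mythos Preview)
Your proposal is correct and follows exactly the paper's approach: the paper's entire proof is the single sentence ``Since the preimage of an upper set under an order-preserving map is clearly upper'' preceding the \qed, which is precisely the reduction to \cref{thm:loc-pos-upper} that you spell out. Your additional remarks about faithfulness, the internal-logic justification, and the identification of the specialization order are all sound elaborations of what the paper leaves implicit.
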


\begin{remark}
Given any $\kappa$-Borel polocale $X$, the upper $\kappa$-Borel sets form a $(\kappa, \kappa)$-subframe of $\@B_\kappa(X)$, hence when $\kappa < \infty$ define a positive $\kappa$-Borel locale $X'$ equipped with a positive $\kappa$-Borel epimorphism $\eta : X -> X'$ (with $\eta^*$ given by the inclusion).
This $X'$ is the reflection of $X$ into $\!{\kappa Bor^+Loc}$: indeed, for any order-preserving $\kappa$-Borel map $f : X -> Y$ into a positive $\kappa$-Borel locale $Y$, $f^* : \@B_\kappa(Y) -> \@B_\kappa(X)$ must preserve upper sets, i.e., take positive $\kappa$-Borel maps in $Y$ to positive $\kappa$-Borel maps in $X'$, which yields a factorization of $f$ through $X'$.
So $\!{\kappa Bor^+Loc} \subseteq \!{\kappa BorPOLoc}$ is an epireflective subcategory, for $\kappa < \infty$.
\end{remark}

\begin{example}
Consider the interval $[0, 1]$ with the Scott topology (see \cref{ex:loc-ctbpres-dirint}), a quasi-Polish space, i.e., standard $\sigma$-locale.
Let $X = [0, 1] \times \#S$, regarded as a standard Borel space (standard $\sigma$-Borel locale) equipped with the \emph{lexicographical} order $\le$ (where $\#S = \{0 < 1\}$).
This is a linear order, i.e., it satisfies the axiom $\forall x, y\, ((x \le y) \vee (y \le x))$ internally in $\!{SBor}$, hence also in $\!{\infty BorLoc}$ since the forgetful functor preserves unions of subobjects (being given by join in $\@B_\sigma \subseteq \@B_\infty$).

We claim that \emph{$A := [0, 1] \times \{0\} \subseteq X$ is not in the complete Boolean subalgebra of $\@B_\infty(X)$ generated by the upper $\infty$-Borel sets}.
To see this, first recall from \cref{ex:loc-ctbpres-dirint} that $\@B^+_\infty([0, 1])$ consists only of the intervals $(r, 1]$ and $[r, 1]$.
Next, any upper $\infty$-Borel $B \subseteq X$ is of the form
\begin{align*}
\tag{$*$}
B = (C \times \{0\}) \cup (D \times \{1\}),
\end{align*}
where $C, D \in \@B_\infty([0, 1])$ are the preimages of $B$ under the two ``cross-section'' inclusions $[0, 1] `-> X$ which are order-preserving, whence $C, D$ are upper, hence intervals of the aforementioned form; by definition of the lexicographical order, it is easily seen that the only possibilities are (i) $C = D$ or (ii) $C = (r, 1]$ and $D = [r, 1]$ for some $r$.
Thus, the complete Boolean subalgebra of $\@B_\infty(X)$ generated by the upper sets is contained in the complete Boolean subalgebra consisting of all $B$ as in ($*$) such that $C \triangle D \in \@B_\infty([0, 1])$ is a union of points (and conversely, it is easy to see that every such $B$ can be generated from $\@B^+_\infty(X)$).
Since $[0, 1] \in \@B_\infty([0, 1])$ is not a union of points (by localic Baire category or Gaifman--Hales; cf.\ \cref{rmk:loc-epi-pullback}), it follows that $A$ is not in this complete Boolean subalgebra.

Thus, \emph{$X$ is a standard $\sigma$-Borel polocale which is not a positive $\kappa$-Borel locale, for any $\kappa \le \infty$}; that is, the full inclusion $\!{\kappa Bor^+Loc} -> \!{\kappa BorPOLoc}$ is not essentially surjective, for any $\kappa \ge \omega_1$.
Indeed, for $\kappa < \infty$, the positive $\kappa$-Borel locale reflection $\eta : X -> X'$ of $X$, as described in the preceding remark, is such that $\eta^* : \@B_\kappa(X') -> \@B_\kappa(X)$ is not surjective (since its image does not contain $A$), i.e., $\eta$ is not a monomorphism.

Moreover, when $\kappa = \omega_1$, the same argument as above shows that the $\sigma$-Boolean subalgebra of $\@B_\sigma(X)$ generated by the upper sets consists of all $B$ as in ($*$) such that $C \triangle D$ is a \emph{countable} union of points.
This algebra is not countably generated, since countably many such $B$ can only generate other such $B$ for which $C \triangle D$ is contained in a fixed countable set (the union of $C \triangle D$ for the generating $B$).
Thus, \emph{there is a standard $\sigma$-Borel polocale whose positive $\sigma$-Borel locale reflection is not standard}; that is, the free functor $\!{\sigma BorPOLoc} -> \!{\sigma Bor^+Loc}$ does not preserve standard objects.
It follows easily from this that \emph{the forgetful functor $\!{\sigma Bor^+Loc}_\sigma -> \!{\sigma BorPOLoc}_\sigma$ does not have a left adjoint $F$}: if it did, letting $X -> X'$ be the reflection of $X$ in $\!{\sigma Bor^+Loc}$ (so that $\@B^+_\sigma(X') \subseteq \@B_\sigma(X)$ is the non-countably generated $\sigma$-Boolean subalgebra generated by the upper sets as above), $X'$ would be a countably cofiltered limit of standard positive $\sigma$-Borel locales $X'_i$, and each $X -> X' -> X'_i$ would have to factor uniquely through $X -> F(X)$; these factorizations would yield a retraction $F(X) -> X'$ of the comparison map $X' -> F(X)$, whence $X'$ must already be standard.

A similar argument shows that the forgetful functor $\!{\kappa Bor^+Loc}_\kappa -> \!{\kappa BorPOLoc}_\kappa$ does not have a left adjoint, for any $\kappa$ which is \emph{not inaccessible}: replace $[0, 1]$ above with a $\kappa$-standard linear order with $\ge \kappa$-many points, e.g., $\#S^\lambda$ lexicographically ordered for some $\lambda < \kappa$ with $2^\lambda \ge \kappa$.
However, we do not know what happens for $\kappa$ inaccessible; in particular, we do not know if the forgetful functor $\!{\infty Bor^+Loc} -> \!{\infty BorPOLoc}$ has a left adjoint.

For more information on standard Borel posets in the classical context, especially the special role played by lexicographical orders such as $X$ above, see \cite{HMSborord}, \cite{Kborord}.
\end{example}

\subsection{Baire category}
\label{sec:loc-baire}

The classical \defn{Baire category theorem} states that in sufficiently nice topological spaces $X$, such as locally compact sober spaces (see \cite[I-3.40.9]{GHKLMSdom}), completely metrizable spaces (see \cite[8.4]{Kcdst}), or quasi-Polish spaces (see \cite[Cor.~52]{dBqpol}, \cite[7.3]{Cqpol}), the intersection of countably many dense open sets is dense.
It follows that the dense open sets generate a $\sigma$-filter $\COMGR(X)$, consisting of the \defn{comeager} sets, all of which are dense, hence nonempty whenever the space is.
This $\sigma$-filter is equivalently generated by the dense $\*\Pi^0_2$-sets (in the sense of Selivanov; see \cref{sec:loc-bor}), due to

\begin{lemma}
In a topological space or locale $X$, if a union $U \cup F$ of an open set $U$ and closed set $F$ is dense, then so is the open subset $U \cup F^\circ$.
\end{lemma}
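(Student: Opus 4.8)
The claim is a purely lattice-theoretic statement about an arbitrary locale (or topological space) $X$: if $U \in \@O(X)$, $F = \neg V$ is closed (with $V \in \@O(X)$), and $\-{U \cup F} = X \in \@B_\infty(X)$, then $\-{U \cup F^\circ} = X$ as well, where $F^\circ$ is the interior of $F$, i.e.\ the largest open set contained in $F$. The plan is to work entirely inside the frame $\@O(X)$, using the Heyting-type structure it carries (though, per \cref{cvt:frm-cbool-incl}, I will phrase everything via operations in $\@B_\infty(X)$). First I would note that since $F = \neg V$ is closed, $F^\circ = \neg \-V$ is again a well-defined open set: it is the pseudocomplement of $V$ in $\@O(X)$, i.e.\ the largest $W \in \@O(X)$ with $W \wedge V = \bot$. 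The key point is a general fact: for any open $U$ and any open $V$, the open set $U \vee \neg\-V$ has the same closure as $U \vee \neg V$. Granting this, applying it with $F = \neg V$ gives $\-{U \cup F} = \-{U \cup F^\circ}$, and since the left side is $X$ by hypothesis, so is the right.

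For the general fact, I would argue as follows. Density of an $\infty$-Borel set $B$ in a locale means $\-B = \top$, equivalently (taking complements) that the largest open set disjoint from $B$ is $\bot$; for $B = U \vee \neg V$ with $U, V$ open, an open $W$ is disjoint from $B$ iff $W \wedge U = \bot$ and $W \wedge \neg V = \bot$. The second condition, $W \wedge \neg V = \bot$, says $W \le \-V$ (the closure $\-V$ being $\neg\neg V$, the double pseudocomplement, which is the smallest closed set above $V$ — more precisely $W \wedge \neg V = \bot \iff W \le V'$ where $V'$ is the largest open with $V' \wedge \neg V = \bot$, i.e.\ $W$ lies in the "exterior-complement" of $\neg V$). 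Carefully: I want to show that the largest open $W$ with $W \wedge U = \bot$ and $W \wedge \neg V = \bot$ equals the largest open $W$ with $W \wedge U = \bot$ and $W \wedge \neg\-V = \bot$. Since $\neg\-V \le \neg V$ (as $V \le \-V$), the condition $W \wedge \neg V = \bot$ implies $W \wedge \neg\-V = \bot$, so the first "witness-of-non-density" set is contained in the second, giving $\-{U \vee \neg V} \le \-{U \vee \neg\-V}$; the reverse inequality $\-{U \vee \neg\-V} \le \-{U \vee \neg V}$ is automatic since $\neg\-V \le \neg V$ implies $U \vee \neg\-V \le U \vee \neg V$. Hence it suffices to show: if $W$ is open, $W \wedge U = \bot$, and $W \wedge \neg\-V = \bot$, then $W \wedge \neg V = \bot$. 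The second hypothesis says $W \le \neg\neg\-V = \-{\-V}^{\circ\circ}$ — better to say $W \wedge \neg\-V = \bot$ means $W \le (\neg\-V)' = $ the pseudocomplement of $\neg\-V$ in $\@O(X)$, which since $\-V$ is closed and $\neg\-V$ is its interior... this is getting delicate; the cleanest route is the direct one below.

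Here is the step I expect to carry the weight, stated directly for $F$ closed. Write $F = \neg V$, $F^\circ = \neg\-V$, where $\-V$ denotes the smallest closed set above $V$, so $\neg\-V$ is the largest open set whose meet with $\neg V$ is $\bot$... no — the clean identity I actually want is: $U \vee F$ and $U \vee F^\circ$ have the same closure because $\-{U \vee F^\circ} \ge \-{U \vee F}$ would be false in the wrong direction. Let me instead prove $\-{U \cup F} \le \-{U \cup F^\circ}$, which is what is needed (the hypothesis forces the larger one to be $X$, hence... no, we need the \emph{smaller}-looking one, $U \cup F^\circ$, to be dense). So the real content is $\-{U \cup F} \le \-{U \cup F^\circ}$, i.e.\ every closed set containing $U \cup F^\circ$ contains $U \cup F$, i.e.\ (contrapositively, in terms of open exteriors) every open $W$ disjoint from $U \cup F^\circ$ is disjoint from $U \cup F$. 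Suppose $W$ is open, $W \wedge U = \bot$, and $W \wedge F^\circ = \bot$. Since $F$ is closed, $F = \-{F^\circ} \vee (F \setminus \-{F^\circ})$... the crucial classical fact is that the boundary $F \wedge \neg F^\circ$ of a closed set is nowhere dense, hence disjoint from every open set that is already disjoint from $U$: indeed $W \wedge F \le W \wedge (F^\circ \vee \partial F) = (W \wedge F^\circ) \vee (W \wedge \partial F) = \bot \vee (W \wedge \partial F)$, and $W \wedge \partial F$ is an open set contained in the nowhere-dense $\partial F$, hence is $\bot$. This uses only that the boundary of a closed (equivalently open) set contains no nonempty open set, which holds in any frame: if $W \le \partial F = \-{F^\circ}\wedge \neg F^\circ$... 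I would verify this boundary lemma by a short frame computation ($W$ open, $W \le \-{F^\circ}$ and $W \le \neg F^\circ$ force $W \le \neg F^\circ \wedge \neg\neg F^\circ = \bot$). The main obstacle is bookkeeping: getting the closure/interior/pseudocomplement identities in the pointfree setting exactly right and making sure the boundary-is-nowhere-dense step is genuinely valid in an arbitrary frame rather than only in spatial locales. Once that boundary lemma is in hand, the rest is a two-line chase.
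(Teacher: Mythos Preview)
Your argument has a genuine gap: you try to prove the unconditional inequality $\-{U \cup F} \le \-{U \cup F^\circ}$, but this is false. Take $U = \emptyset$ and $F$ any nonempty closed set with empty interior (e.g.\ $F = \{0\} \subseteq \#R$): then $\-{U \cup F} = F \ne \emptyset$ while $\-{U \cup F^\circ} = \emptyset$. Concretely, with $W = (-1,1)$ you have $W$ open, $W \wedge U = \bot$, $W \wedge F^\circ = \bot$, yet $W \wedge F = \{0\} \ne \bot$.

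The specific error is a conflation of two boundaries. Your decomposition $F = F^\circ \vee \partial F$ needs $\partial F = F \wedge \neg F^\circ$; but your ``boundary is nowhere dense'' lemma proves instead that $\-{F^\circ} \wedge \neg F^\circ$ (the boundary of the \emph{open} set $F^\circ$) contains no nonempty open. For the correct $\partial F = F \setminus F^\circ$, the set $W \wedge \partial F$ is only locally closed, not open, so the lemma does not apply --- and indeed $\partial F$ can itself be all of $F$, as in the counterexample.

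The paper's proof uses the density hypothesis directly: $X = \-{U \cup F} = \-U \cup F$ gives $\neg\-U \subseteq F$; since $\neg\-U$ is open, $\neg\-U \subseteq F^\circ$, whence $X = \-U \cup F^\circ \subseteq \-{U \cup F^\circ}$. Your exterior approach can be repaired by the same idea --- from $W \wedge U = \bot$ with $W$ open, deduce $W \le \neg\-U \subseteq F$, hence $W \le F^\circ$, hence $W = W \wedge F^\circ = \bot$ --- but that is just the paper's argument in contrapositive form. The density hypothesis is not optional.
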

\begin{proof}
We have $X = \-{U \cup F} = \-U \cup \-F = \-U \cup F$, i.e., $\neg \-U \subseteq F$, whence $\neg \-U \subseteq F^\circ$ since $\neg \-U$ is open, whence $X = \-U \cup F^\circ \subseteq \-{U \cup F^\circ}$.
\end{proof}

A major result of locale theory, due to Isbell~\cite[1.5]{Iloc}, yields the localic analog of the Baire category theorem:

\begin{theorem}[Isbell]
\label{thm:loc-baire}
In every locale $X$, the intersection of all dense $\infty\Pi^0_2$ sets (i.e., sublocales, by \cref{thm:loc-sub-im}) is dense, i.e., there is a smallest dense $\infty\Pi^0_2$ set.
\end{theorem}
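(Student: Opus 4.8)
The plan is to work on the dual algebraic side, showing that for any frame $A = \@O(X)$, the meet in $\@B_\infty(X) = \@N^\infty(A)$ of all $\infty\Pi^0_2$ sets that are dense is itself dense. Since $\infty\Pi^0_2$ sets correspond (via \cref{thm:loc-sub-im}(a), equivalently \cref{thm:frm-neg-cong}) to elements of $\@N(A) = \@N_\infty(A)$, regarded inside $\@B_\infty(X)$ as $\neg\@N(A)$, and a sublocale $\neg c$ for $c \in \@N(A)$ is dense iff its closure $\-{\neg c}^1 = \neg c^\circ$ equals $\top$, i.e.\ iff $c$ has empty interior in $A$ (equivalently $\neg a \not\le c$ for any $a \ne \bot$ in $A$, equivalently the corresponding nucleus fixes $\bot$ only trivially). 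So the first step is to reduce the statement to: the set $J \subseteq \@N(A)$ of elements with empty $A$-interior is closed under arbitrary joins \emph{in $\@N(A)$}, and the join $\bigvee J$ still has empty $A$-interior. Dually, in terms of the nuclei/order-congruences picture of \cref{thm:frm-neg-cong}, a $\infty\Pi^0_2$ set is dense iff the associated nucleus $j$ on $A$ satisfies $j(\bot) = \bot$ (``dense nucleus''), and one wants: the \emph{join} of all dense nuclei on $A$ is still dense, i.e.\ the smallest nucleus above all dense ones fixes $\bot$.

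The key step is the following classical fact, which I would prove directly: \emph{for any frame $A$, the assignment $a \mapsto (\neg\neg a)$ (double pseudocomplement, taken in $\@N(A)$, equivalently the Booleanization nucleus) is itself a dense nucleus, and it is the \emph{largest} dense nucleus on $A$}. Indeed $\neg\neg\colon A \to A$ (where $\neg$ denotes pseudocomplement in $A$, i.e.\ $\neg a = a \to \bot$ in the Heyting structure, which in our $\@B_\infty$ notation is $(\neg a)^\circ$ reinterpreted) is inflationary, idempotent, preserves finite meets, and satisfies $\neg\neg\bot = \bot$; its fixed points form the complete Boolean algebra $A_{\neg\neg}$. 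Conversely, if $j$ is any dense nucleus, then for each $a$ one has $a \le j(a)$ and, using $j(\bot) = \bot$ together with $j$ preserving meets, $j(a) \wedge \neg a \le j(a) \wedge j(\neg a) = j(a \wedge \neg a) = j(\bot) = \bot$, hence $j(a) \le \neg\neg a$. Thus $j \le \neg\neg$ pointwise, so $\neg\neg$ dominates every dense nucleus. Therefore the join (in the frame of nuclei, i.e.\ in $\kappa\@I(\@N_\infty(A))$ with $\kappa = \infty$) of all dense nuclei is exactly $\neg\neg$, which is dense; translating back through \cref{thm:frm-neg-cong} and \cref{thm:loc-sub-im}, the meet of all dense $\infty\Pi^0_2$ sets of $X$ is the Booleanization sublocale $X_{\neg\neg} \hookrightarrow X$, which is dense. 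I should double-check the variance: ``join of nuclei'' corresponds to ``meet of sublocales'' (a larger nucleus means a smaller sublocale, since more elements get identified with $\top$), so the \emph{largest} dense nucleus gives the \emph{smallest} dense sublocale, which is precisely the existence claim in the theorem.

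The main obstacle — really the only delicate point — is bookkeeping the two layers of ``complement'': the Heyting pseudocomplement $\neg$ \emph{internal to the frame $A$} versus the genuine Boolean complement in $\@B_\infty(X) = \@N^\infty(A)$, given that \cref{cvt:frm-cbool-incl} has committed us to reading all $\neg$'s as Boolean complements in $\ang{A}_\!{CBOOL}$. So I would phrase everything using the explicit bijection of \cref{thm:frm-neg-cong} between $\@N_\infty(A)$ and $\infty$-generated frame order-congruences (nuclei) on $A$, where ``dense'' becomes the clean algebraic condition $j(\bot)=\bot$, prove the domination lemma $j \le \neg\neg$ at the level of nuclei on $A$ (here $\neg\neg$ is the honest Booleanization nucleus, no ambiguity), note that the frame of nuclei is a frame so this sup of nuclei exists and equals $\neg\neg$, and only at the very end translate the fixed-point algebra of $\neg\neg$ into an $\infty\Pi^0_2$ set of $X$ via \cref{thm:loc-sub-im}. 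A secondary sanity check: density of a $\infty\Pi^0_2$ set $B$ should be taken in the sense of \cref{sec:loc-bor}, $\-B = \-B^1 = X$; I would verify at the outset that for $B \in \infty\Pi^0_2(X)$ corresponding to nucleus $j$, $\-B = X$ iff $j(\bot) = \bot$, which is the standard fact that the closure of the sublocale $j$ is the open sublocale complementary to the open set $j(\bot)$.
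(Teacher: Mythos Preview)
Your proposal is correct and is precisely Isbell's classical argument: the double-negation nucleus $a \mapsto \neg\neg a$ (Heyting pseudocomplement) is the largest dense nucleus, hence corresponds to the smallest dense sublocale. The paper does not supply its own proof of this theorem, citing Isbell~\cite[1.5]{Iloc} instead; but the discussion immediately following the theorem statement gives the explicit formula $\Comgr(X) = \bigcap_{U \in \@O(X)} (\-U^\circ \to U)$ and identifies $\@O(\Comgr(X))$ with the regular open algebra, which is exactly the Booleanization sublocale your argument constructs. Your care about the clash between Heyting $\neg$ and the paper's \cref{cvt:frm-cbool-incl} is well-placed; routing the argument through nuclei (order-congruences) via \cref{thm:frm-neg-cong}, where the condition $j(\bot)=\bot$ and the comparison $j \le \neg\neg$ are unambiguous statements about maps $A \to A$, is the right way to stay consistent with the paper's conventions.
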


\begin{remark}
Unlike most localic analogs of classical results that we consider, the proof of the above localic Baire category theorem has nothing to do with the classical proof.
This is because the classical proof instead underlies the entire correspondence between locales and spaces in the countably presented setting (see \cref{sec:loc-ctbpres}).
\end{remark}

For a locale $X$, we denote the smallest dense $\infty\Pi^0_2$ set by $\Comgr(X) \subseteq X$, and call any other set $B \subseteq X$ \defn{comeager} if $\Comgr(X) \subseteq B$, and \defn{meager} if $\Comgr(X) \cap B = \emptyset$; thus $\Comgr(X)$ is the smallest comeager set, an improvement over the $\sigma$-filter $\COMGR(X)$ in the spatial context.
Explicitly, $\Comgr(X)$ is given by the equivalent formulas
\begin{align*}
\Comgr(X)
&= \neg \bigcup_{U \in \@O(X)} \partial U \quad\text{where $\partial U := \-U \setminus U$} \\
&= \bigcap_{U \in \@O(X)} (\-U -> U) \\
&= \bigcap_{U \in \@O(X)} (\-U^\circ -> U).
\end{align*}
The last formula means that $\@O(X) ->> \@O(\Comgr(X))$ is the quotient of $\@O(X)$ identifying each open set $U$ with its \defn{regularization} $\-U^\circ$.
Hence, $\@O(\Comgr(X))$ is isomorphic to the complete Boolean algebra of regular open sets $\{U \in \@O(X) \mid U = \-U^\circ\}$, sometimes called the \defn{(Baire) category algebra} of $X$ (see e.g., \cite[8.32]{Kcdst}).

\begin{example}
\label{ex:loc-baire-perfect}
For $X = \#R, 2^\#N$, or more generally any nonempty perfect (i.e., without isolated points) Polish space, $\@O(\Comgr(X))$ is the usual Cohen algebra (the unique nontrivial atomless complete Boolean algebra countably generated under joins; see \cite[8.32]{Kcdst}).
$\Comgr(X) \subseteq X$ is a dense, pointless sublocale; but its complement contains much more than just points, e.g., $\neg \Comgr(\#R)$ contains the usual Cantor set in $\#R$ (since it is nowhere dense), which is a perfect closed set, hence contains its own dense, pointless smallest comeager set which is disjoint from $\Comgr(\#R)$.
\end{example}

\begin{example}
For $X = [0, 1]$ with the \emph{Scott} topology (see \cref{ex:loc-ctbpres-dirint}), every nonempty open set is dense; thus $\Comgr(X) = \{1\}$.
\end{example}

Since $\@O(\Comgr(X))$ is Boolean, $\@B_\infty(\Comgr(X)) = \@O(\Comgr(X))$, i.e., every $\infty$-Borel set $B \subseteq \Comgr(X)$ is open, i.e., of the form $U \cap \Comgr(X)$ for some open $U \subseteq X$.
Thus, we have the following analog of the \defn{property of Baire} (see \cite[8.F]{Kcdst}) for $\infty$-Borel sets:

\begin{proposition}
\label{thm:loc-bor-baireprop}
Every $\infty$-Borel set in a locale differs from an open set by a meager set.
\qed
\end{proposition}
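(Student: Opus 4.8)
The plan is to reduce the statement to a fact about the sublocale $\Comgr(X) \subseteq X$ together with the observation, made just before the proposition, that $\@O(\Comgr(X))$ is a complete Boolean algebra. First I would fix a locale $X$ and an $\infty$-Borel set $B \in \@B_\infty(X)$, and consider the restriction of $B$ to $\Comgr(X)$, i.e.\ the $\infty$-Borel set $B \wedge \Comgr(X) \in \@B_\infty(\Comgr(X)) = \down \Comgr(X)$, using \cref{cvt:loc-sub-pi02} to identify $\@B_\infty(\Comgr(X))$ with the principal ideal $\down \Comgr(X) \subseteq \@B_\infty(X)$. Since $\@O(\Comgr(X))$ is Boolean and freely generates $\@B_\infty(\Comgr(X))$ qua complete Boolean algebra, we have $\@B_\infty(\Comgr(X)) = \@O(\Comgr(X))$; hence $B \wedge \Comgr(X)$ is \emph{open} in $\Comgr(X)$, meaning there is an open $U \in \@O(X)$ with $U \wedge \Comgr(X) = B \wedge \Comgr(X)$ (the quotient map $\@O(X) ->> \@O(\Comgr(X))$ being surjective).

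The second step is to translate ``$U \wedge \Comgr(X) = B \wedge \Comgr(X)$'' into ``$B$ differs from $U$ by a meager set,'' where by definition (given in the text) a set is meager iff it is disjoint from $\Comgr(X)$. The symmetric difference $B \bigtriangleup U = (B \wedge \neg U) \vee (U \wedge \neg B) \in \@B_\infty(X)$ satisfies $(B \bigtriangleup U) \wedge \Comgr(X) = (B \wedge \Comgr(X)) \bigtriangleup (U \wedge \Comgr(X)) = \bot$ by the previous paragraph, using that meets distribute over the Boolean operations in $\@B_\infty(X)$; so $B \bigtriangleup U$ is disjoint from $\Comgr(X)$, i.e.\ meager. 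Then $B = U \bigtriangleup (B \bigtriangleup U)$ exhibits $B$ as the symmetric difference of an open set and a meager set, which is exactly the assertion of the proposition (``differs from an open set by a meager set'' should be read as ``symmetric difference with an open set is meager'').

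There is essentially no main obstacle here — the work is entirely front-loaded into \cref{thm:loc-baire} and the identification $\@B_\infty(\Comgr(X)) = \@O(\Comgr(X))$, both of which are already established. The only point requiring a line of care is that the surjection $\@O(X) ->> \@O(\Comgr(X))$ is, after identifying $\@O(\Comgr(X))$ with a subset of $\@B_\infty(X)$ via the principal filterquotient $\Comgr(X) \wedge (-)$, literally the map $U \mapsto U \wedge \Comgr(X)$; this follows from \cref{thm:loc-sub-im}(a) and \cref{cvt:loc-sub-pi02}, according to which the inclusion $\Comgr(X) `-> X$ has $f^* = \Comgr(X) \wedge (-)$ on $\infty$-Borel sets and restricts to the frame quotient on open sets. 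With that identification in hand the argument is the three displayed equalities above, so I would keep the write-up to a single short paragraph.
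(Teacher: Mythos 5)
Your proposal is correct and follows essentially the same route as the paper: the paper's (implicit) proof is precisely the observation, stated in the sentence preceding the proposition, that $\@O(\Comgr(X))$ being Boolean forces $\@B_\infty(\Comgr(X)) = \@O(\Comgr(X))$, so every $\infty$-Borel $B$ restricts on $\Comgr(X)$ to $U \wedge \Comgr(X)$ for some open $U$, whence $B \bigtriangleup U$ is meager. Your extra care about identifying the quotient $\@O(X) ->> \@O(\Comgr(X))$ with $U \mapsto U \wedge \Comgr(X)$ is a correct but routine unwinding of \cref{cvt:loc-sub-pi02}.
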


We also have the following strengthening of the localic Baire category \cref{thm:loc-baire}, which follows from it by taking the $X_i$ to consist of all dense open sets in a fixed locale $X$.
For the classical analog (as well as a different ``geometric'' proof using \cref{thm:loc-baire}), see \cite[A.1]{Cbermd}.

\begin{theorem}
\label{thm:loc-baire-invlim}
Let $(X_i)_{i \in I}$ be a codirected diagram of locales, for a directed poset $I$, with morphisms $f_{ij} : X_i -> X_j$ for $i \ge j$.
If each $f_{ij}$ has dense image (i.e., its closed image $\-\im(f_{ij}) \in \infty\Pi^0_1(X_j)$ is all of $X_j$), then so does each limit projection $\pi_i : \projlim_j X_j -> X_i$.
\end{theorem}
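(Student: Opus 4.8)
The plan is to reduce this inverse-limit Baire category statement to the basic localic Baire category theorem (\cref{thm:loc-baire}) by a presentational argument, parallel to the proof of \cref{thm:loc-bor-epi-invlim} via \cref{thm:frm-colim-dir-ker}. Dualizing, we have a directed diagram $F : \!I -> \!{Frm}$ with $F(i) = \@O(X_i)$ and transition maps $F(j, i) = f_{ij}^* : \@O(X_j) -> \@O(X_i)$ for $j \le i$ (note the reversal of direction), and $\@O(\projlim_j X_j) = \injlim_i F$ with cocone maps $\iota_i = \pi_i^* : F(i) -> \injlim_i F$. The hypothesis that $f_{ij}$ has dense image means precisely that $f_{ij}^* : \@O(X_j) -> \@O(X_i)$ reflects $\bot$, i.e.\ $f_{ij}^*(U) = \bot \implies U = \bot$; we must show each $\pi_i^*$ also reflects $\bot$, i.e.\ $\pi_i^*(U) = \bot \implies U = \bot$ in $\injlim_i F$.

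The key obstacle, and the reason this requires genuine work rather than a direct appeal to \cref{thm:frm-colim-dir-ker}, is that \cref{thm:frm-colim-dir-ker} as stated gives control over the cocone maps only for $\kappa$-ary joins / $\bigvee$-lattice structure, and its hypothesis (that $a$ be the greatest element of its order-kernel principal ideal at each later stage) is much stronger than merely reflecting $\bot$. Indeed, as \cref{rmk:loc-bor-epi-invlim} emphasizes, the analogous statement for reflecting arbitrary joins fails for uncountable-length inverse limits; density (reflecting $\bot$) is exactly the weaker condition that survives arbitrary codirected limits, and the mechanism that makes it survive is Baire category, not a purely order-theoretic colimit computation. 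So the plan is: first, reduce to the universal case where $\!I$ is the directed poset of all dense open sets of a fixed locale $X$ ordered by reverse inclusion, with $X_V$ the open sublocale $V \subseteq X$ for dense open $V$, transition maps the inclusions $V \subseteq W$ (dense, hence dense image), and observe $\projlim_V X_V$ is the intersection $\bigcap_{V \text{ dense open}} V$ as an $\infty\Pi^0_2$-sublocale of $X$ — this is where \cref{thm:loc-baire} directly applies to give that $\pi_X : \projlim_V X_V -> X$ has dense image. Second, show the general case reduces to this: given an arbitrary codirected diagram $(X_i)$ with dense-image transition maps and a fixed index $i_0$, one forms, inside $X_{i_0}$, the directed family of closed images $\-\im(f_{i i_0})$ for $i \ge i_0$, which are all dense closed (= all of $X_{i_0}$ by hypothesis, so this is vacuous) — more carefully, one pulls back the whole diagram so that the relevant open set $U \in \@O(\projlim X_j)$ with $\pi_{i_0}^*(U) = \bot$ comes from some finite stage, using that $\@O(\injlim_i F)$ is computed via a posite presentation (\cref{sec:frm-colim}) so every element of the colimit is in the image of some $\iota_i$ after applying the coverage.

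Concretely, here is the intended core argument. Suppose $U \in \@O(\projlim_j X_j) = \injlim_i F$ with $\pi_i^*(U) = \bot$ for some fixed $i$; we want $U = \bot$. By the posite-presentation description of directed colimits of frames (\cref{thm:frm-colim-dir-post-vpost}, \cref{thm:frm-colim-dir-vlat}: the forgetful functor to $\bigvee$-lattices preserves directed colimits, and the coverage on the colimit is generated by the images of the coverages on the $F(i)$), after replacing $\!I$ by $\up i$ we may assume $i$ is least and write $U = \iota_j(U_j)$ for some $j \ge i$ and $U_j \in \@O(X_j)$. The condition $\pi_i^*(U) = \bot$, together with $U = \iota_j \circ F(i,j)^{-1}$... — here I track carefully that $\iota_i(V) \le \iota_j(U_j)$ for the relevant comparisons — forces, via the colimit computation, that for every $k \ge i, j$ the pullback of $U_j$ to $X_k$ becomes eventually disjoint from... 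This is the step where I must invoke the geometric content: build from the tail $\{X_k : k \ge j\}$ a directed family of dense open subsets of $X_j$ (namely the ``saturations'' witnessing why $\iota_j(U_j)$ is disjoint from $\iota_i$ of everything) and apply \cref{thm:loc-baire} to $X_j$ to conclude their intersection is dense, hence if it meets the open set $U_j$ nontrivially we get a contradiction unless $U_j = \bot$. I expect the main difficulty to be making this ``build dense opens from the tail'' step precise: it amounts to showing that the failure of $\pi_j^*$ to reflect $\bot$ would produce, coherently across the cofinal tail, a family of dense opens in $X_j$ with empty localic intersection, directly contradicting \cref{thm:loc-baire}. Once that is set up the remaining verifications (that the family really is directed, that each member is dense because the transition maps have dense image, that the intersection is the image of $U_j$ under the relevant projection) are routine frame-theoretic bookkeeping analogous to the proof of \cref{thm:frm-colim-dir-ker}.
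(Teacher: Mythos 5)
Your proposal goes wrong at the very first fork: the obstacle you identify to a direct appeal to \cref{thm:frm-colim-dir-ker} is illusory, and that direct appeal is in fact the whole proof. The hypothesis of \cref{thm:frm-colim-dir-ker} is about a \emph{single} element $a \in F(I)$, and taking $a = \bot$ it reads: for all $b \in F(J)$, $b \le F(I,J)(\bot) = \bot \iff F(J,K)(b) \le \bot$, i.e.\ $F(J,K)$ reflects $\bot$ --- which is exactly the statement that $f_{KJ}$ has dense image, as you yourself computed. The conclusion, that $\bot$ is the greatest element of its $\oker(\iota_I)$-principal ideal, is exactly that $\pi_I^*$ reflects $\bot$, i.e.\ $\pi_I$ has dense image. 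You appear to have conflated the hypothesis-for-one-$a$ with the ``in particular, if each $F(I,J)$ is injective'' clause (which is the hypothesis for all $a$ simultaneously, dual to the epimorphism case of \cref{thm:loc-bor-epi-invlim}). Together with \cref{thm:frm-colim-dir-vlat}, which identifies the directed colimit of frames with that of the underlying $\bigvee$-lattices so that the $2$-valued cocone built in the proof of \cref{thm:frm-colim-dir-ker} extends to $\injlim F$, this is a two-line proof. Your guiding intuition is also backwards: the mechanism here is precisely the order-theoretic colimit computation (the indicator of the complement of a principal ideal is a $\bigvee$-homomorphism to $2$), not Baire category; the logical dependence runs from \cref{thm:loc-baire-invlim} to \cref{thm:loc-baire} (specialize to the codirected diagram of all dense open sublocales of a fixed $X$), and classical Baire category is needed only to relate locales to spaces, not to prove the localic statements.

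The fallback strategy you sketch instead does not close. First, the ``universal case'' of all dense opens of a fixed locale is a special case, not one the general case reduces to: the $f_{ij}$ need not be monomorphisms, so $\projlim_j X_j$ is not an intersection of sublocales of any ambient locale, and the only candidates you extract from the tail are the closed images $\-\im(f_{ki_0}) = X_{i_0}$, which you concede are vacuous. Second, the element $U$ you must test lives in $\@O(X_i) = F(i)$, not in $\@O(\projlim_j X_j)$ (your ``Suppose $U \in \@O(\projlim_j X_j)$ with $\pi_i^*(U) = \bot$'' has the variance of $\pi_i^*$ reversed), so no appeal to ``every element of the colimit comes from a finite stage'' is needed --- and that claim is in any case false for merely directed colimits of frames, whose elements are arbitrary joins of elements in the images of the $\iota_j$. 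Finally, the step you flag as the main difficulty (``build dense opens from the tail'') is exactly the content of the theorem and is left entirely open, so the proposal as written contains no proof.
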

\begin{proof}
Each $f_{ij}$ having dense image means $\bot \in \@O(X_j)$ is the only element of its $\ker(f_{ij}^*)$-congruence class; by \cref{thm:frm-colim-dir-ker}, the same holds for each $\pi_i$.
\end{proof}

We conclude by giving, for the sake of completeness, a self-contained proof of a well-known application of localic Baire category: a localic ``cardinal collapse'', which says that any two infinite sets have a nonempty locale ``parametrizing bijections between them''.
See e.g., \cite[C1.2.8--9]{Jeleph}.

\begin{theorem}[folklore]
\label{thm:collapse}
Let $X, Y$ be $\kappa$-ary sets, with $Y$ infinite, regarded as $\kappa$-discrete locales.
\begin{enumerate}
\item[(a)]  There is a nonempty standard $\kappa$-locale $Z$ together with a $\kappa$-locale embedding $f : Z \times X `-> Z \times Y$ over $Z$ (i.e., commuting with the projections to $Z$).
\item[(b)]  If $X \ne \emptyset$, then $f$ in (a) has a $\kappa$-Borel retraction $g : Z \times Y ->> Z \times X$ over $Z$ with $g \circ f = 1_{Z \times X}$; in particular, there is an $\infty$-Borel surjective $\kappa$-Borel map $g : Z \times Y ->> Z \times X$ over $Z$.
\item[(c)]  If $X$ is infinite, then $f$ in (a) can be taken to be a $\kappa$-locale isomorphism.
\end{enumerate}
\end{theorem}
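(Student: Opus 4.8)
The plan is to build the ``collapsing'' locale $Z$ as a codirected limit of $\kappa$-discrete locales, using a posite-style presentation so that I can compute $\@O_\kappa(Z)$ explicitly, and then invoke the localic Baire category results (\cref{thm:loc-baire,thm:loc-baire-invlim}) to show $Z \ne \emptyset$. Concretely, for part (a) I would let $Z$ be the locale whose $\kappa$-frame $\@O_\kappa(Z)$ is presented by generators indexed by finite partial injections $p : X \rightharpoonup Y$ (ordered by reverse inclusion as ``approximations''), with coverage relations saying that a condition $p$ is covered by its one-step extensions $\{p \cup \{(x, y)\}\}$ (for fixed $x \notin \dom p$, ranging over $y \notin \ran p$) and dually by $\{p \cup \{(x, y)\}\}$ for fixed $y \notin \ran p$. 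This is exactly the standard Cohen-forcing-style poset for collapsing $\abs{Y}$ onto $\abs{X}$; being presented by $\kappa$-ary data, $Z$ is a standard $\kappa$-locale. The generic partial injection then assembles into a localic map: the embedding $f : Z \times X `-> Z \times Y$ is obtained by specifying, for each $x \in X$, the $\kappa$-open set of $Z$ consisting of conditions $p$ with $x \in \dom p$, stratified by the value $p(x) \in Y$; surjectivity of $f^*$ (i.e. $f$ being an embedding) follows because every generator of $\@O_\kappa(Z \times Y)$ lies in the image, using that the $x$-fibers over $Z$ partition $Z \times X$.

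The key point — and this is where Baire category enters — is that $Z \ne \emptyset$, equivalently $\bot \ne \top$ in $\@O_\kappa(Z)$, equivalently the presented $\kappa$-frame is nontrivial. I would argue this by exhibiting $Z$ as a codirected limit: for each finite $S \subseteq X$ and $T \subseteq Y$, let $Z_{S,T}$ be the $\kappa$-discrete locale of partial injections with domain $S$ and range inside $T$ (nonempty since $Y$ is infinite so $T$ can always be enlarged, and since $S, T$ finite), with the obvious restriction maps; these restriction maps are surjective on points, hence have dense image, so by \cref{thm:loc-baire-invlim} (or directly by \cref{thm:frm-colim-dir-ker}) the limit projections are dense, in particular $Z = \projlim Z_{S,T}$ surjects densely onto the nonempty $Z_{\emptyset,\emptyset} = 1$, so $Z \ne \emptyset$. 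One must check the presentation of $Z$ I gave really does compute this limit — that the coverage relations are exactly those generated by the cofinal system of levels — which is a routine posite bookkeeping argument; alternatively one can skip the explicit posite and just \emph{define} $Z$ as that codirected limit, then read off $f$ from the internal-logic description of the generic injection. I expect the main obstacle to be precisely this coherence check: reconciling the ``syntactic'' posite presentation needed to construct $f$ with the ``semantic'' inverse-limit description needed for density, and making sure the cover relations are $\wedge$-stable so that \cref{thm:frm-cov-meet} applies.

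For (b), given a basepoint $x_0 \in X$, the retraction $g : Z \times Y ->> Z \times X$ is defined by: $g$ sends $y$ to $p^{-1}(y)$ when the generic condition $p$ has $y$ in its range, and to $x_0$ otherwise. This is a well-defined $\kappa$-Borel map: the relevant sets $\{p : y \in \ran p\}$ and its complement are $\kappa$-open (the former) and closed (the latter), hence $\kappa\Sigma^0_2$, and on each piece $g$ is given by a $\kappa$-open partition; formally one lifts to a partial dissolution via \cref{thm:loc-mor-dissolv} (or \cref{thm:loc-bor-dissolv}) to make these sets open. That $g \circ f = 1_{Z \times X}$ holds because on the open set where $x \in \dom p$ we have $p^{-1}(p(x)) = x$, and these open sets cover $Z \times X$ by the coverage relations (every condition is covered by extensions deciding $p(x)$). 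Since $f$ is in particular an $\infty$-Borel embedding and $g$ is a $\kappa$-Borel retraction, $g$ is $\infty$-Borel surjective (its $\infty$-Borel image contains $\im(f) = Z \times X$), giving the last clause.

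For (c), when $X$ is infinite the symmetry between $X$ and $Y$ in the construction lets me use generic \emph{bijections} instead of injections: redefine $Z$ using finite partial injections $p : X \rightharpoonup Y$ with the two families of covering relations above (extending the domain at any unused $x$, extending the range at any unmissed $y$). The resulting generic $p$ is a total bijection, so $f$ becomes an isomorphism with inverse the $g$ of part (b) (now $g \circ f = 1$ and $f \circ g = 1$, since every $y$ is eventually in the range). Nonemptiness of this $Z$ is again by the same codirected-limit/density argument, now with levels $Z_{S,T}$ of partial injections with $\dom \supseteq S$, $\ran \supseteq T$ — these are cofinal because $X, Y$ both infinite means any finite partial injection extends to include any prescribed finite set on either side — and the restriction maps are still point-surjective, hence dense, so \cref{thm:loc-baire-invlim} gives $Z \ne \emptyset$.
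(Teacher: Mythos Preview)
Your approach is correct in outline, but the paper takes a more streamlined route. Rather than presenting $Z$ via a posite of finite partial injections and invoking the inverse-limit Baire theorem (\cref{thm:loc-baire-invlim}), the paper embeds everything inside the concrete standard $\kappa$-locale $\#S^{X \times Y}$: the closed sublocale $P \subseteq \#S^{X \times Y}$ cut out by the partial-injection conditions $\neg([x \mapsto y] \wedge [x \mapsto y'])$ and $\neg([x \mapsto y] \wedge [x' \mapsto y])$ is trivially nonempty (the empty function is a point); then $Z := P \wedge \bigwedge_x D_x$ with $D_x := \bigvee_y [x \mapsto y]$ is a $\kappa\Pi^0_2$ sublocale. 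A direct check shows each $P \wedge D_x$ is dense open in $P$, so by the ordinary localic Baire theorem (\cref{thm:loc-baire}) $Z$ is dense in $P$, hence nonempty. The map $f$ is then built exactly as you describe, from the clopen partition $Z = \bigsqcup_y (Z \wedge [x \mapsto y])$ for each fixed $x$. This avoids your posite/inverse-limit bookkeeping entirely, and in particular sidesteps the coherence check you flag (incidentally, your $Z_{S,T}$ indexing has a small glitch as written: restricting an injection $S' \to T'$ to $S \subseteq S'$ need not land in $T \subseteq T'$, though this is easily repaired by indexing over $S$ alone with $Z_S$ the injections $S \hookrightarrow Y$). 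Your approach gives a more synthetic construction of $Z$ independent of any ambient locale; the paper's gains simplicity by working inside a fixed $\#S^{X \times Y}$ where all the relevant sets are manifestly $\kappa$-open or $\kappa$-closed and where nonemptiness of the starting locale $P$ requires no Baire argument at all.
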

\begin{proof}
Consider the standard $\kappa$-locale $\#S^{X \times Y}$.
For $x \in X$ and $y \in Y$, let $[x |-> y] \in \@O(\#S^{X \times Y})$ denote the subbasic open set at the $(x, y)$th coordinate.
Let
\begin{align*}
P := \bigwedge_{x \in X; y \ne y' \in Y} \neg ([x |-> y] \wedge [x |-> y']) \wedge \bigwedge_{x \ne x' \in X; y \in Y} \neg ([x |-> y] \wedge [x' |-> y]) \in \kappa\Pi^0_1(\#S^{X \times Y})
\end{align*}
be the closed set ``of graphs of partial injections $X \lhook\joinrel\rightharpoonup Y$''.
Clearly $P$ has a point, namely the empty function; hence $P \ne \emptyset$.
Let
\begin{align*}
D_x &:= \bigvee_{y \in Y} [x |-> y] \in \@O(\#S^{X \times Y}), &
E_y &:= \bigvee_{x \in X} [x |-> y] \in \@O(\#S^{X \times Y}).
\end{align*}
Then each $P \wedge D_x \subseteq P$ is dense, since for a basic open set $U = [x_1 |-> y_1] \wedge \dotsb \wedge [x_n |-> y_n] \subseteq \#S^{X \times Y}$ which intersects $P$, we must have $x_i = x_j \iff y_i = y_j$ for all $1 \le i, j \le n$ by definition of $P$; taking $y = y_i$ if $x = x_i$ for some $i$ and $y \in Y \setminus \{y_1, \dotsc, y_n\}$ otherwise, the partial injection mapping $x_i |-> y_i$ and $x |-> y$ is a point of $P \wedge D_x \wedge U$.
Similarly, if $X$ is infinite, then each $P \wedge E_y \subseteq P$ is dense.

To prove (a), take $Z := P \wedge \bigwedge_{x \in X} D_x \subseteq P$, a dense and hence nonempty standard $\kappa$-sublocale.
Note that for each $x \in X$, the join defining $D_x$, when intersected with $P$, is pairwise disjoint, i.e., $Z$ is the disjoint union of open sublocales
\begin{align*}
Z = \bigsqcup_{y \in Y} (Z \wedge [x |-> y]).
\end{align*}
Thus $Z \times X$ is the disjoint union of open sublocales
\begin{align*}
Z \times X
&= \bigsqcup_{x \in X} (Z \times \{x\})
= \bigsqcup_{x \in X} \bigsqcup_{y \in Y} ((Z \wedge [x |-> y]) \times \{x\}).
\end{align*}
Define $f : Z \times X -> Z \times Y$ by taking its restriction to each $U_{x,y} := (Z \wedge [x |-> y]) \times \{x\}$ to be the inclusion $U_{x,y} \cong V_{x,y} := (Z \cap [x |-> y]) \times \{y\} \subseteq Z \times Y$.
Then $f$ restricted to each $U_{x,y}$ is an embedding; while each $U_{x,y}$ is itself equal to $f^*(V_{x,y})$: $\subseteq$ is by definition of $f$, while for $(x', y') \ne (x, y)$, $f(U_{x',y'}) = V_{x',y'}$ is disjoint from $V_{x,y}$, since either $y \ne y'$ in which case $\{y\} \cap \{y'\} = \emptyset$, or else $y = y'$ and $x \ne x'$ in which case $Z \cap [x |-> y], Z \cap [x' |-> y']$ are disjoint by definition of $P$; thus $\neg U_{x,y} = \bigsqcup_{(x',y') \ne (x,y)} U_{x',y'} \subseteq f^*(\neg V_{x,y})$.
Thus $f$ is an embedding, and clearly commutes with the projections to $Z$.

To prove (c), take $Z := P \wedge \bigwedge_{x \in X} D_x \wedge \bigwedge_{y \in Y} E_y$, define $f : Z \times X -> Z \times Y$ as before, and define $g : Z \times Y -> Z \times X$ in a symmetric manner, so that $f = g^{-1}$.

To prove (b), fix some $x \in X$, define $g$ on the $\infty$-Borel image $f(Z \times X) \subseteq Z \times Y$ (which is $\kappa$-Borel, since $Z \times X$ is standard $\kappa$-Borel) to be the inverse of the $\kappa$-Borel isomorphism $f : Z \times X \cong f(Z \times X)$, and define $g$ on $\neg f(Z \times X)$ to be the projection $\neg f(Z \times X) \subseteq Z \times Y -> Z$ followed by the inclusion $Z \cong Z \times \{x\} \subseteq Z \times X$.
\end{proof}

\section{Analytic sets and locales}
\label{sec:sigma11}

In this section, we study $\kappa$-analytic (i.e., $\kappa\Sigma^1_1$) sets in standard $\kappa$-Borel locales.
These are defined by ``completing'' the category $\!{\kappa BorLoc}_\kappa$ of standard $\kappa$-Borel locales under non-existing $\kappa$-Borel images.
There are two equivalent ways of performing this ``completion'': externally, by closing under images in the regular supercategory $\!{\kappa BorLoc}$ of possibly nonstandard $\kappa$-Borel locales, or internally, by freely ``adjoining'' an image for each morphism in $\!{\kappa BorLoc}_\kappa$.
In \cref{sec:sigma11-cat}, we give both definitions and derive some basic properties of the resulting category $\!{\kappa\Sigma^1_1BorLoc}$ of ``analytic $\kappa$-Borel locales''.

In the remaining subsections, we prove the analogs of some key classical results about analytic sets in standard Borel spaces.
In \cref{sec:sigma11-sep}, we state the Lusin--Novikov separation theorem for $\kappa$-analytic sets, as well as an ordered analog; these are simply translations of the algebraic interpolation theorems from \cref{sec:frm-interp}.  In \cref{sec:sigma11-invlim}, we discuss the connection between $\kappa$-analytic sets and well-foundedness; in particular, we prove that ``every $\kappa$-analytic set can be reduced to ill-foundedness of (generalized) trees'' (\cref{thm:sigma11-invlim-k}), that ``every $\kappa$-analytic set is a $\kappa$-length decreasing intersection of $\kappa$-Borel sets'', and that ``every $\kappa$-analytic well-founded (generalized) forest has bounded rank $<\kappa$'' (\cref{thm:sigma11-invlim-prune}).
Finally, in \cref{sec:sigma11-proper} we apply this boundedness theorem to prove that indeed, there exist $\kappa$-analytic, non-$\kappa$-Borel sets, thus justifying the careful treatment of $\kappa$-Borel images we have given up to this point.

\subsection{The main categories}
\label{sec:sigma11-cat}

Classically, an \defn{analytic set} or \defn{$\*\Sigma^1_1$ set} in a standard Borel space $X$ is a set which is the image of a Borel map $f : Y -> X$ from some standard Borel space $Y$.
An \defn{analytic Borel space} is an analytic set regarded as an abstract Borel space, equipped with the subspace Borel structure; these form a well-behaved category $\!{\*\Sigma^1_1Bor}$, containing the category $\!{SBor}$ of standard Borel spaces as a full subcategory.
See \cite[III]{Kcdst}.

In the localic setting, for $\kappa < \infty$, recall from \cref{thm:loc-bor-reg} that the category $\!{\kappa BorLoc}$ of all $\kappa$-Borel locales is regular, hence has a well-behaved notion of image (i.e., $\infty(\@B_\kappa)_\delta$-image).
We define an \defn{analytic $\kappa$-Borel locale} to mean a $\kappa$-Borel locale $X$ which is the image of a $\kappa$-Borel map $p : \~X -> X' \in \!{\kappa BorLoc}_\kappa$ between standard $\kappa$-Borel locales.
Equivalently, $X$ admits both a (regular) epimorphism ($\kappa$-Borel surjection) $p : \~X ->> X$ from a standard $\kappa$-Borel locale, as well as a monomorphism ($\kappa$-Borel embedding) $X `-> X'$ into a standard $\kappa$-Borel locale.
Dually, this means $\@B_\kappa(X)$ is a $\kappa$-generated $\kappa$-Boolean algebra which embeds into a $\kappa$-presented algebra.
We let $\!{\kappa\Sigma^1_1BorLoc} \subseteq \!{\kappa BorLoc}$ denote the full subcategory of analytic $\kappa$-Borel locales.

The following result summarizes the key properties of the category $\!{\kappa\Sigma^1_1BorLoc}$:

\begin{theorem}
\label{thm:sigma11-cat}
\leavevmode
\begin{enumerate}

\item[(a)]  $\!{\kappa\Sigma^1_1BorLoc} \subseteq \!{\kappa BorLoc}$ is closed under $\kappa$-ary limits, $\kappa$-ary coproducts, and images of morphisms.

\item[(b)]  $\!{\kappa\Sigma^1_1BorLoc}$ is a $\kappa$-extensive, $\kappa$-coherent category..

\item[(c)]  Let $(X_i)_{i \in I}$ be a $\kappa$-ary codirected diagram in $\!{\kappa\Sigma^1_1BorLoc}$, for a directed poset $I$, with morphisms $f_{ij} : X_i -> X_j$ for $i \ge j$.
If each $f_{ij}$ is a (regular) epimorphism, then so is each limit projection $\pi_i : \projlim_j X_j -> X_i$.

In particular, $\kappa$-ary products of epimorphisms in $\!{\kappa\Sigma^1_1BorLoc}$ are epimorphisms, i.e., $\!{\kappa\Sigma^1_1BorLoc}$ is a \defn{$\kappa$-completely regular} category.

\item[(d)]  The subobject lattices in $\!{\kappa\Sigma^1_1BorLoc}$ are $(\kappa, \kappa)$-frames.

\item[(e)]  All epimorphisms in $\!{\kappa\Sigma^1_1BorLoc}$ are regular.

\end{enumerate}
\end{theorem}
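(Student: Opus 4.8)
\textbf{Proof proposal for \cref{thm:sigma11-cat}.}

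The plan is to prove all five parts together, since they are tightly interlinked; I will establish (a) first, deduce (b), (c), (d) as straightforward consequences, and save (e) for last as the main obstacle. The starting point for (a) is the characterization of analytic $\kappa$-Borel locales as those $X \in \!{\kappa BorLoc}$ admitting both an epimorphism $\~X \twoheadrightarrow X$ from a standard $\kappa$-Borel locale and a monomorphism $X \hookrightarrow X'$ into one. For closure under images: if $f : X \to Y$ in $\!{\kappa BorLoc}$ with $X, Y$ analytic, then composing a standard cover $\~X \twoheadrightarrow X$ with $X \twoheadrightarrow \im(f)$ gives a standard cover of $\im(f)$, and $\im(f) \hookrightarrow Y \hookrightarrow Y'$ embeds it into a standard object; here I use that the epi--mono factorization in $\!{\kappa BorLoc}$ is well-behaved by \cref{thm:loc-bor-reg}. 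For closure under $\kappa$-ary coproducts: a $\kappa$-ary coproduct of standard objects is standard (by \cref{thm:bool-prod-pres}, dually \cref{thm:frm-prod-pres}), and a $\kappa$-ary coproduct of epimorphisms is an epimorphism by \cref{thm:loc-bor-epi-pullback} (pullback-stability gives finite products of epis; codirected limits from \cref{thm:loc-bor-epi-invlim} handle the rest, exactly as in \cref{thm:loc-bor-epi-prod}), while a $\kappa$-ary coproduct of monos into standard objects embeds into their standard coproduct; so the coproduct of analytic objects is sandwiched appropriately. For closure under $\kappa$-ary limits: the limit of standard objects over a $\kappa$-ary diagram need not be standard, but $\!{\kappa BorLoc}_\kappa \subseteq \!{\kappa BorLoc}$ is closed under $\kappa$-ary limits as a reflective-type subcategory argument? — no, I should instead argue that a $\kappa$-ary limit $L = \projlim X_i$ of analytic objects embeds into $\projlim X_i'$ (a $\kappa$-ary limit of standard objects, which is $\kappa$-Borel but possibly nonstandard), and admits a standard cover because $L$ is a $\kappa$-sublocale of a $\kappa$-ary product $\prod X_i$ of analytic objects, each covered by a standard object, so $\prod X_i$ is covered by $\prod \~X_i$ which is standard; then $L \hookrightarrow \prod X_i$ pulls back the cover, and by \cref{thm:loc-bor-epi-pullback} the pullback $\~L \to L$ is still an epimorphism, with $\~L$ a $\kappa$-sublocale of a standard object hence standard by \cref{thm:loc-sub-im}(c).

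With (a) in hand, (b) is immediate: $\!{\kappa\Sigma^1_1BorLoc}$ has $\kappa$-ary limits and pullback-stable images (inherited from $\!{\kappa BorLoc}$ by (a) plus \cref{thm:loc-bor-reg}), so it is regular; it has pullback-stable $\kappa$-ary coproducts with disjoint injections and strict initial object (inherited as a full subcategory closed under these), so it is $\kappa$-extensive; a $\kappa$-extensive regular category is $\kappa$-coherent. Part (c) follows because $\!{\kappa\Sigma^1_1BorLoc} \subseteq \!{\kappa BorLoc}$ is closed under $\kappa$-ary codirected limits (a special case of (a)) and epimorphisms in the subcategory coincide with those in $\!{\kappa BorLoc}$ (full subcategory closed under images), so \cref{thm:loc-bor-epi-invlim} applies verbatim; $\kappa$-complete regularity follows from the codirected-limit statement by the product-as-limit-of-finite-products trick of \cref{thm:loc-bor-epi-prod}. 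Part (d) follows because $\!{\kappa\Sigma^1_1BorLoc}$ is $\kappa$-coherent (so subobject lattices have $\kappa$-ary joins, distributing over finite meets), and I must also check $\kappa$-ary meets distribute over finite joins; this should come from the fact that subobjects of an analytic $X$ are $\kappa$-analytic subsets, sitting inside $\@B_\infty(X)$, and the $(\kappa,\kappa)$-frame structure is inherited from there — more carefully, a $\kappa$-ary meet of subobjects of $X$ is their wide pullback, which by the analogue of \cref{thm:loc-emb-im-inj} corresponds to a meet in $\@B_\infty(X)$, and finite joins are images of finite coproducts; the distributive law is then a computation in $\@B_\infty(X)$.

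The main obstacle is (e): every epimorphism in $\!{\kappa\Sigma^1_1BorLoc}$ is regular. An epimorphism $f : X \to Y$ in the full subcategory need not a priori be an epimorphism in $\!{\kappa BorLoc}$, since maps out of $Y$ in the big category are more abundant — so I cannot just invoke \cref{thm:loc-mono-epi}(b). The plan is to take the epi--regular mono factorization $X \twoheadrightarrow Z \hookrightarrow Y$ in $\!{\kappa BorLoc}$, which lies in $\!{\kappa\Sigma^1_1BorLoc}$ by (a); I must show $Z = Y$. The key point is that $Z \hookrightarrow Y$ is a regular mono in $\!{\kappa\Sigma^1_1BorLoc}$ as well (regular monos, being equalizers of pairs of parallel maps, are inherited by a full subcategory closed under the relevant limits), and $f$ factors through it; since $f$ is epi in $\!{\kappa\Sigma^1_1BorLoc}$ and $Z \hookrightarrow Y$ is mono there, if I can show $Z \hookrightarrow Y$ is also epi in $\!{\kappa\Sigma^1_1BorLoc}$ then it is an iso. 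To see that, observe that any map $g : Y \to W \in \!{\kappa\Sigma^1_1BorLoc}$ agreeing with another map on $Z$ must agree on all of $Y$: this reduces, dually, to showing that $\@B_\kappa(Y) \to \@B_\kappa(Z)$ is injective on the relevant generators, i.e.\ that the $\kappa$-Boolean congruence cutting out $Z$ inside $Y$ is trivial — but $Z$ is by construction the $\infty(\@B_\kappa)_\delta$-image of $f$, and $f$ being epi in the subcategory forces (via a Yoneda-style argument testing against $2$-valued maps, which suffice because analytic $\kappa$-Borel locales have enough points? — no, they need not; so instead test against the pushout $Y \amalg_Z Y$ or equivalently the cokernel pair) that this image is all of $Y$. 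Concretely: form the pushout $Y \sqcup_Z Y$ in $\!{\kappa BorLoc}$; it is analytic since it is a quotient (image of the coproduct $Y \sqcup Y$) of analytic objects by (a); the two coprojections $Y \rightrightarrows Y \sqcup_Z Y$ agree after precomposition with $f$ (since they agree on $Z \supseteq \im f$), hence agree since $f$ is epi in $\!{\kappa\Sigma^1_1BorLoc}$; but two coprojections into a pushout agree iff the original mono was an epi, forcing $Z = Y$. The delicate part is verifying that this pushout in $\!{\kappa BorLoc}$ remains a pushout in $\!{\kappa\Sigma^1_1BorLoc}$ and that its being analytic is legitimate — this is where closure under images in (a) does the real work, since a pushout of monos is computed as a coequalizer which is an image.
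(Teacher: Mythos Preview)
Your treatment of (a), (b), (c) is broadly sound and close to the paper's, though with one sloppy step: in the limit case you claim that your pullback $\~L$ is standard because it is ``a $\kappa$-sublocale of a standard object hence standard by \cref{thm:loc-sub-im}(c).'' That reference concerns regular subobjects \emph{in the subcategory} $\!{\kappa BorLoc}_\kappa$; an arbitrary subobject of a standard object in $\!{\kappa BorLoc}$ is only an $\infty(\@B_\kappa)_\delta$-set, not necessarily $\kappa$-Borel. Your $\~L$ does turn out to be standard, but you need to argue it: it is a $\kappa$-ary intersection of equalizers of maps from $\prod_i \~X_i$ into standard targets $X_j'$ (after embedding each $X_j$), and each such equalizer is $\kappa$-Borel. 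The paper avoids this by treating products and binary pullbacks separately and then invoking that these generate all $\kappa$-ary limits.

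The genuine gap is in (d). The embedding $\kappa\Sigma^1_1(X) \hookrightarrow \@B_\infty(X)$ via $\infty(\@B_\kappa)_\delta$-image preserves $\kappa$-ary meets (they are wide pullbacks) but does \emph{not} preserve finite joins: the join $A \vee B$ in $\kappa\Sigma^1_1(X)$ is the $\infty(\@B_\kappa)_\delta$-image of $A \sqcup B \to X$, which is the $\infty(\@B_\kappa)_\delta$-closure of the Boolean join in $\@B_\infty(X)$, in general strictly larger. So the inequality $\bigwedge_i (Y \vee Z_i) \le Y \vee \bigwedge_i Z_i$ cannot be read off from $\@B_\infty(X)$; the closure goes the wrong way. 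The paper's proof of (d) is substantially harder than you anticipate: after reducing to the case $Y \vee Z_i = X$ with $X$ standard and using the lemma $\bigwedge_i \im(f_i) = \im(\prod_X (f_i)_i)$ (a consequence of (c)), it invokes the Novikov-type interpolation theorem \cref{thm:bool-minterp} to show the relevant map is $\kappa$-Borel surjective. This use of interpolation is the key idea you are missing.

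For (e), your cokernel-pair argument has a hole: you have not shown that the pushout $Y \sqcup_Z Y$ computed in $\!{\kappa BorLoc}$ is analytic. Your justification ``a pushout of monos is computed as a coequalizer which is an image'' conflates coequalizers with images; closure under images in (a) does not give closure under coequalizers. The paper's argument is both simpler and avoids this: since the two-element discrete locale $2$ is standard (hence analytic), any epimorphism $f$ in $\!{\kappa\Sigma^1_1BorLoc}$ must be left-cancellable with respect to maps $Y \to 2$; but such maps correspond bijectively to elements of $\@B_\kappa(Y)$, so this says exactly that $f^* : \@B_\kappa(Y) \to \@B_\kappa(X)$ is injective, i.e., $f$ is already an epimorphism in $\!{\kappa BorLoc}$, whence regular by \cref{thm:loc-mono-epi}, with kernel in $\!{\kappa\Sigma^1_1BorLoc}$ by (a).
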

\begin{proof}
(a)
Closure under $\kappa$-ary products follows from the fact that product in $\!{\kappa BorLoc}$ preserves regular epimorphisms (i.e., complete regularity; see \cref{rmk:loc-bor-creg}), whence the $\kappa$-ary product of $X_i = \im(p_i : \~X_i -> X_i') \in \!{\kappa\Sigma^1_1BorLoc}$, where $p_i \in \!{\kappa BorLoc}_\kappa$, is the image of $\prod_i p_i : \prod_i \~X_i -> \prod_i X_i'$.
Likewise, closure under $\kappa$-ary coproducts follows from the fact that $\kappa$-ary coproduct in $\!{\kappa BorLoc}$ preserves monomorphisms (which follows from $\kappa$-extensivity, or more concretely from the dual fact that product in $\!{\kappa Bool}$ preserves surjective homomorphisms).
Closure under images is obvious.
Closure under pullback also follows easily from regularity of $\!{\kappa BorLoc}$: the pullback $X \times_Z Y$ of $f : X -> Z \in \!{\kappa\Sigma^1_1BorLoc}$ and $g : Y -> Z \in \!{\kappa\Sigma^1_1BorLoc}$ admits a regular epimorphism from a standard $\kappa$-Borel locale, namely $\~X \times_{Z'} \~Y$ for any $\~X ->> X$, $\~Y ->> Y$, and $Z' \supseteq Z$ with $\~X, \~Y, Z' \in \!{\kappa BorLoc}_\kappa$, and also embeds into $X \times Y$ which embeds into a standard $\kappa$-Borel locale since $X, Y$ do.
This is enough, since $\kappa$-products and (binary) pullbacks generate $\kappa$-ary limits (see e.g., \cite[2.8.1--2]{Bcat}).

(b) and (c) now follow from the corresponding properties of $\!{\kappa BorLoc}$ (\cref{thm:loc-bor-ext,thm:loc-bor-epi-invlim}).
We record the following consequence of (c):

\begin{lemma}
\label{thm:sigma11-meet}
For $<\kappa$-many morphisms $f_i : X_i -> Y \in \!{\kappa\Sigma^1_1BorLoc}$, the meet of their images is given by the image of their wide pullback
\begin{align*}
\bigwedge_i \im(f_i) = \im(\prod_Y (f_i)_i : \prod_Y (X_i)_i -> Y).
\end{align*}
\end{lemma}
\begin{proof}
For two morphisms, hence finitely many, this follows from pullback-stability of image (cf.\ \cref{thm:loc-bor-im-pullback}).
In general, the $\ge$ inequality is trivial; so it suffices to prove $\le$.
By considering the pullback of all the $f_i$ to $\bigwedge_i \im(f_i) \subseteq Y$, it is enough to assume that each $f_i$ is an epimorphism, and prove that $\prod_Y (X_i)_i -> Y$ is still an epimorphism.
For finite sets of indices $K \subseteq K'$, the projection $\prod_Y (X_i)_{i \in K'} -> \prod_Y (X_i)_{i \in K}$ is an epi by pullback-stability.
Thus the claim follows from (c).
\end{proof}

(d)
Let $X \in \!{\kappa\Sigma^1_1BorLoc}$.
That finite meets in $\Sub(X)$ distribute over $\kappa$-ary joins follows from pullback-stability of $\kappa$-ary joins by (b).
Let $Y, Z_i \subseteq X$ be $<\kappa$-many subobjects; we must show $\bigwedge_i (Y \vee Z_i) \le Y \vee \bigwedge_i Z_i$.
Again by replacing $X$ with $\bigwedge_i (Y \vee Z_i)$, we may assume each $Y \vee Z_i = X$, and prove $Y \vee \bigwedge_i Z_i = X$.
Now $X$ is an image of some $\~X \in \!{\kappa BorLoc}_\kappa$; by pulling everything back to $\~X$, we may assume $X$ was already standard $\kappa$-Borel.
And $Y, Z_i \subseteq X$ are the images of some $f : \~Y -> X$ and $g_i : \~Z_i -> X$ with $\~Y, \~Z_i$ standard $\kappa$-Borel.
By \cref{thm:sigma11-meet}, $\bigwedge_i Z_i = \im(\prod_X (g_i)_i : \prod_X (\~Z_i)_i -> X)$; while by the general construction of joins of subobjects in extensive categories (see before \cref{thm:loc-bor-ext}), each $Y \vee Z_i \subseteq X$ is the image of the map $[f, g_i] : \~Y \sqcup \~Z_i -> X$ induced by $f, g_i$, with a similar description of $Y \vee \bigwedge_i Z_i \subseteq X$.
All told, we are given $\kappa$-Borel maps $f : \~Y -> X$ and $g_i : \~Z_i -> X$ between standard $\kappa$-Borel locales, such that each $[f, g_i] : \~Y \sqcup \~Z_i -> X$ is $\kappa$-Borel surjective; and we must show that $[f, \prod_X (g_i)_i] : \~Y \sqcup \prod_X (\~Z_i)_i -> X$ is $\kappa$-Borel surjective.

Let $B \in \@B_\kappa(X)$ such that $[f, \prod_X (g_i)_i]^*(B) = \top$, i.e., $f^*(B) = \top$ and also $(\prod_X (g_i)_i)^*(B) = \top$.
The latter preimage map $(\prod_X (g_i)_i)^* : \@B_\kappa(X) -> \@B_\kappa(\prod_X (\~Z_i)_i)$ is the wide pushout of the $g_i^* : \@B_\kappa(X) -> \@B_\kappa(\~Z_i) \in \!{\kappa Bool}$, so by \cref{thm:bool-minterp} (applied to $b_i := \top$ and $a := \neg (\prod_X (g_i)_i)^*(B)$), there are $C_i \in \@B_\kappa(X)$ such that $B \wedge \bigwedge_i C_i = \emptyset$ and $g_i^*(C_i) = \~Z_i$ for each $i$; in particular, $g_i^*(B) \le g_i^*(C_i)$ for each $i$.
But since each $[f, g_i]$ is $\kappa$-Borel surjective, and we also have $f^*(B) = \emptyset \le f^*(C_i)$ for each $i$, we get $B \le C_i$ for each $i$, whence $B = B \wedge \bigwedge_i C_i = \emptyset$.
Thus $[f, \prod_X (g_i)_i]$ is $\kappa$-Borel surjective, as desired.

(e)
Given an epimorphism $f : X -> Y \in \!{\kappa\Sigma^1_1BorLoc}$, $f$ must also be an epimorphism in $\!{\kappa BorLoc}$, since the discrete $\kappa$-Borel locale $2$ is in $\!{\kappa\Sigma^1_1BorLoc}$, whence by considering the composite of $f$ with maps $Y -> 2$ we get that $f^* : \@B_\kappa(Y) -> \@B_\kappa(X)$ is injective.
Thus $f$ must be a regular epimorphism in $\!{\kappa BorLoc}$ (by \cref{thm:loc-mono-epi}), i.e., the coequalizer of its kernel which is in $\!{\kappa\Sigma^1_1BorLoc}$ by (a).
\end{proof}

\begin{remark}
\label{rmk:sigma11-lim-std}
We record the following useful fact about limits in $\!{\kappa\Sigma^1_1BorLoc}$, which was implicitly used in the proof of \cref{thm:sigma11-cat}(a) for pullbacks.
Given a $\kappa$-ary diagram $F : \!I -> \!{\kappa\Sigma^1_1BorLoc}$, for some $\kappa$-ary category $\!I$, if every $I \in \!I$ admits a morphism from some $\~I \in \!I$ such that $F(\~I)$ is a standard $\kappa$-Borel locale (i.e., the set of such $\~I$ is cofinal in $\!I^\op$ \emph{in the weaker order-theoretic sense}, not necessarily the categorical sense of a final functor), then $\projlim F$ is also standard $\kappa$-Borel.

To see this, first note that $F$ has the same limit as its restriction $F'$ to the subcategory $\!I' \subseteq \!I$ consisting of all nonidentity morphisms $f : I -> J$ such that $F(I)$ is standard $\kappa$-Borel (this subcategory \emph{is} initial in the categorical sense).
Now $F'$ is a diagram consisting only of (nonidentity) morphisms whose domain is standard $\kappa$-Borel; each such morphism can be composed with an embedding from its codomain to a standard $\kappa$-Borel locale, again without changing the limit.
\end{remark}

We call a subobject $A$ of an analytic $\kappa$-Borel locale $X \in \!{\kappa\Sigma^1_1BorLoc}$ a \defn{$\kappa$-analytic set} or \defn{$\kappa\Sigma^1_1$ set} of $X$, and denote the $(\kappa, \kappa)$-frame of all such by
\begin{align*}
\kappa\Sigma^1_1(X) := \Sub_\!{\kappa\Sigma^1_1BorLoc}(X).
\end{align*}
When $X$ is a standard $\kappa$-Borel locale, $\@B_\kappa(X) \subseteq \kappa\Sigma^1_1(X)$ is a $\kappa$-Boolean subalgebra.
We call images in $\!{\kappa\Sigma^1_1BorLoc}$ \defn{$\kappa\Sigma^1_1$-images}, denoted $\im^\kappa(f)$ for the image of a $\kappa$-Borel map $f$ or $f^\kappa(A)$ for the image of a $\kappa$-analytic set $A$ under a map $f$.
Note that this is the same as the $\infty(\@B_\kappa)_\delta$-image from \cref{sec:loc-im}, for a $\kappa$-Borel map that happens to be between analytic $\kappa$-Borel locales.
In particular, it agrees with existing $\kappa$-Borel images.

\begin{remark}
When $\kappa = \omega_1$, the spatialization functor $\Sp : \!{\sigma BorLoc} -> \!{\sigma Bor}$ restricts to an equivalence of categories
\begin{align*}
\!{\sigma\Sigma^1_1BorLoc} \simeq \!{\*\Sigma^1_1Bor}
\end{align*}
between analytic $\sigma$-Borel locales and analytic Borel spaces.
It follows that the localic notion of $\sigma$-analytic set also corresponds to the classical notion.

To see this, recall (\cref{thm:loc-ctbpres}) that $\Sp$ restricts to an equivalence $\!{\sigma BorLoc}_\sigma \simeq \!{SBor}$ between standard $\sigma$-Borel locales and standard Borel spaces.
It follows that analytic $\sigma$-Borel locales are spatial, since if $\~X ->> X$ is an epimorphism from a standard $\sigma$-Borel locale, then $\@B_\sigma(X) \subseteq \@B_\sigma(\~X)$ is a $\sigma$-Boolean subalgebra, hence admits enough homomorphisms to $2$ since $\@B_\sigma(\~X)$ does; thus $\Sp$ restricts to a full and faithful functor on $\!{\sigma\Sigma^1_1BorLoc}$.
Now given a Borel map $f : X -> Y$ between standard $\sigma$-Borel locales, we just need to check that the regular epi--mono factorization $X ->> \im(f) `-> Y$ of $f$ in $\!{\sigma BorLoc}$ is preserved by $\Sp$.
Since $\Sp$ is right adjoint, it preserves the mono.
To check that it preserves the epi $X ->> \im(f)$, we need to check that the induced Borel map $\Sp(f) : X -> \Sp(\im(f))$ is surjective.
Indeed, for a point $y$ in the standard Borel space $Y$, if $y \not\in \im(\Sp(f))$, then the singleton $\{y\} \in \@B_\sigma(Y)$ witnesses that the $\sigma$-Boolean homomorphism $y^* : \@B_\sigma(Y) -> 2$ does not descend to the image $\@B_\sigma(\im(f))$ of $f^* : \@B_\sigma(Y) -> \@B_\sigma(X)$, i.e., that $y \not\in \Sp(\im(f))$, as desired.
\end{remark}

We will show below (\cref{thm:sigma11-proper}) that for every $\kappa$, there are $\kappa$-analytic sets in standard $\kappa$-Borel locales which are not $\kappa$-Borel.
In particular, there are analytic $\kappa$-Borel locales which are not standard $\kappa$-Borel, i.e., the inclusion $\!{\kappa BorLoc}_\kappa \subseteq \!{\kappa\Sigma^1_1BorLoc}$ is proper.
This generalizes the classical fact when $\kappa = \omega_1$ (see \cite[14.2]{Kcdst}).

\begin{remark}
In contrast to \cref{thm:loc-mono-epi}(b), \emph{not every monomorphism in $\!{\kappa\Sigma^1_1BorLoc}$ is regular}.
This follows from the existence of $\kappa$-analytic, non-$\kappa$-Borel sets in standard $\kappa$-Borel locales $X$, since every equalizer of some $X \rightrightarrows Y \in \!{\kappa\Sigma^1_1BorLoc}$ must be a $\kappa$-Borel set, by \cref{rmk:sigma11-lim-std}.
\end{remark}

\begin{remark}
\label{rmk:sigma11-inf}
The only reason why the above definition of $\!{\kappa\Sigma^1_1BorLoc}$ requires $\kappa < \infty$ is that we defined (in \cref{sec:loc-cat}) $\infty$-Borel locales $X$ to always be standard, i.e., $\@B_\infty(X)$ must be a \emph{small-presented} large complete Boolean algebra.
As a result, the category $\!{\infty BorLoc}$ is not regular, so we could not take $\!{\infty\Sigma^1_1BorLoc}$ to be the closure under existing images.
By relaxing this restriction, we could define an \defn{analytic $\infty$-Borel locale} $X$ to mean the dual of a \emph{small-generated} large complete Boolean algebra $\@B_\infty(X)$ which embeds into a small-presented algebra $\@B_\infty(\~X)$ (note that small-generatability is still enough to ensure only a small set of $\infty$-Borel maps $Y -> X$ for any $Y$).
However, some foundational care would be needed to work with $\!{\infty\Sigma^1_1BorLoc}$ in this way.
\end{remark}

Using generalities about regular categories, we now give an equivalent ``synthetic'' description of $\!{\kappa\Sigma^1_1BorLoc}$, as a ``completion'' of the subcategory $\!{\kappa BorLoc}_\kappa$, which does not require the ambient category $\!{\kappa BorLoc}$ in which to take images.
In particular, this description works equally well when $\kappa = \infty$, thereby circumventing some of the aforementioned foundational issues.
\begin{itemize}

\item  An object $X \in \!{\kappa\Sigma^1_1BorLoc}$ is the $\kappa\Sigma^1_1$-image of some $p : \~X -> X' \in \!{\kappa BorLoc}_\kappa$, thus may be represented by such $p$.

In other words, we can \emph{define} an \defn{analytic $\kappa$-Borel locale} to mean a $\kappa$-Borel map $p : \~X -> X'$ between standard $\kappa$-Borel locales, thought of formally as representing its ``$\kappa\Sigma^1_1$-image''.

\item  Given analytic $\kappa$-Borel locales $X, Y \in \!{\kappa\Sigma^1_1BorLoc}$, represented as the ``formal $\kappa\Sigma^1_1$-images'' of $p : \~X -> X'$ and $q : \~Y -> Y' \in \!{\kappa BorLoc}_\kappa$ respectively, a morphism $f : X -> Y$ can be represented as the composite $f' : \~X --->>{p} X --->{f} Y \subseteq Y'$, which uniquely determines $f$ since $p$ is an epimorphism and $Y `-> Y'$ is a monomorphism.
\begin{equation}
\label{diag:sigma11-mor}
\begin{tikzcd}
\~X \times_{Y'} \~Y \dar[two heads, "q'"'] \drar[dashed, "f''"] \\
\~X \dar[two heads, "p"'] \drar["f'"{inner sep=1pt}] & \~Y \dar[two heads, "q"] \\
\mathllap{X' \supseteq{}} X \rar[dashed, "f"{pos=.3}] & Y \mathrlap{{} \subseteq Y'}
\end{tikzcd}
\end{equation}
Moreover, any $f' : \~X -> Y'$ descends to such $f : X -> Y$ iff
\begin{itemize}
\item[(i)]  $\ker(p) \subseteq \ker(f')$ (which ensures $f'$ descends along $p$ to $X -> Y'$, by the universal property of the coequalizer $\ker(p) \rightrightarrows \~X ->> X$), and
\item[(ii)]  the pullback of $q$ along $f'$ is a regular epi, i.e., $\kappa$-Borel surjection, $q' : \~X \times_{Y'} \~Y ->> \~X$ (which ensures that $f'$ lands in $Y \subseteq Y'$, since $q'$ lands in the pullback $f^{\prime*}(Y) \subseteq \~X$ of $Y \subseteq Y'$, which must hence be all of $\~X$ since $q'$ is an extremal epi).
\end{itemize}
Thus, we can \emph{define} a $\kappa$-Borel map $f : X -> Y$ to mean an $f' : \~X -> Y'$ obeying (i) and (ii).

\item  The identity at $X = \im^\kappa(p : \~X -> X')$ is represented by $p$.

\item  To compose $f : X -> Y$ represented by $f' : \~X -> Y'$ as above with $g : Y -> Z$ represented by $g' : \~Y -> Z'$, form the pullback $f'' : \~X \times_{Y'} \~Y -> \~Y$ of $f'$ as in \eqref{diag:sigma11-mor}, then compose it with $g'$; since $\ker(q') \subseteq \ker(f' \circ q') = \ker(q \circ f'') = f^{\prime\prime*}(\ker(q)) \subseteq f^{\prime\prime*}(\ker(g')) = \ker(g' \circ f'')$, the composite $g' \circ f'' : \~X \times_{Y'} \~Y -> Z'$ descends to $\~X -> Z'$ which represents $g \circ f$.

\end{itemize}
We may also describe subobjects in $\!{\kappa\Sigma^1_1BorLoc}$, i.e., $\kappa$-analytic sets, purely in terms of $\!{\kappa BorLoc}_\kappa$:
\begin{itemize}

\item  $\kappa$-analytic sets $A \subseteq X$ in a standard $\kappa$-Borel locale $X \in \!{\kappa BorLoc}_\kappa$ are represented by $\kappa$-Borel maps $f : \~A -> X$ from some other $\~A \in \!{\kappa BorLoc}_\kappa$, thought of as their ``formal $\kappa\Sigma^1_1$-images'', preordered by $A \subseteq B = \im^\kappa(g : \~B -> X)$ iff the pullback $\~A \times_X \~B -> \~A$ of $g$ along $f$ is $\kappa$-Borel surjective (by pullback-stability of regular epis).

$\kappa$-analytic sets in an analytic $\kappa$-Borel locale $X$ are the $\kappa$-analytic sets in a standard $\kappa$-Borel $X' \supseteq X$ which are contained in $X$.

\item  $\kappa$-ary meets are given by $\bigwedge_i \im^\kappa(f_i) = \im^\kappa(\prod_X (f_i)_i)$ (by \cref{thm:sigma11-meet}).

\item  $\kappa$-ary joins are given by image of disjoint union (by the general construction of joins of subobjects in extensive regular categories; see before \cref{thm:loc-bor-ext}).

\item  Preimage is given by pullback (by pullback-stability of image).

\item  $\kappa$-\emph{Borel} sets in an analytic $\kappa$-Borel locale $X = \im^\kappa(p : \~X -> X')$ are the restrictions of $\kappa$-Borel sets $B' \subseteq X'$, with two such sets having the same restriction iff they have the same $p$-preimage.

Alternatively, a $\kappa$-Borel set in $X$ may be uniquely represented by its $p$-preimage, which is a $\kappa$-Borel set $\~B \subseteq \~X$ which is invariant with respect to the internal equivalence relation $\ker(p) \subseteq \~B^2$ (by \cref{thm:bor-blackwell} below).

\end{itemize}

\begin{remark}
\label{rmk:sigma11-site}
The above construction of $\!{\kappa\Sigma^1_1BorLoc}$ from $\!{\kappa BorLoc}_\kappa$ is an instance of the general technique of \emph{regular completion of unary sites}; see \cite{Sksite}, \cite{CVregex}.
A \defn{unary site} is a category $\!C$ with finite limits%
\footnote{In \cite{Sksite}, a more general theory is developed where $\!C$ needs only have ``weak finite limits with respect to $\!E$''.}
equipped with a subcategory $\!E \subseteq \!C$ containing all objects in $\!C$ and whose morphisms are closed under pullback and right-cancellation with arbitrary morphisms in $\!C$; one thinks of $\!E$ as morphisms which are ``declared to be regular epimorphisms''.
The \defn{regular completion} of a unary site $(\!C, \!E)$ is the free regular category $\!C'$ generated by $\!C$ qua category with finite limits, subject to every morphism in $\!E$ becoming a regular epi in $\!C'$.
If every morphism in $\!E$ is already a regular epi in $\!C$, then the canonical unit functor $\eta : \!C -> \!C'$ is full and faithful, and $\!C'$ can be constructed explicitly as the category of ``formal images'' of morphisms in $\!C$, exactly as above (see \cite[8.22, 10.19]{Sksite}).
Moreover, $\!C'$ inherits various other existing limit and colimit properties from $\!C$, such as $\kappa$-ary limits and $\kappa$-extensivity (see \cite[11.13]{Sksite}, \cite[\S4.5]{CVregex}).

By applying this to $\!C = \!{\kappa BorLoc}_\kappa$ and $\!E =$ $\kappa$-Borel surjections (i.e., all regular epimorphisms in $\!C$), we obtain $\!{\kappa\Sigma^1_1BorLoc}$ as the free regular category obeying \cref{thm:sigma11-cat}(b--e) generated by $\!{\kappa BorLoc}_\kappa$ while remembering $\kappa$-ary limits, $\kappa$-ary coproducts, and regular epimorphisms.
In particular, \cref{thm:sigma11-cat}(b--e) can all be proved without referring to $\!{\kappa BorLoc}$, hence remaining in the small-presented context when $\kappa = \infty$.
\end{remark}

\begin{remark}
We can also describe $\!{\kappa\Sigma^1_1BorLoc}$ as the free regular category obeying \cref{thm:sigma11-cat}(b--e) generated by $\!{\kappa Loc}_\kappa$ while remembering $\kappa$-ary limits, $\kappa$-ary coproducts, and that $\kappa$-Borel surjections in $\!{\kappa Loc}_\kappa$ (equivalently by \cref{thm:loc-epi-pullback}, pullback-stable epimorphisms) become regular epimorphisms in $\!{\kappa\Sigma^1_1BorLoc}$.
This follows from combining the universal property of the regular completion as in \cref{rmk:sigma11-site} with the description of $\!{\kappa BorLoc}_\kappa$ as the localization of $\!{\kappa Loc}$ inverting the $\kappa$-continuous bijections (\cref{rmk:loc-bor-localization}).
\end{remark}

\begin{remark}
\emph{For $\kappa \le \lambda < \infty$, the forgetful functor $\!{\kappa BorLoc} -> \!{\lambda BorLoc}$ does not restrict to $\!{\kappa\Sigma^1_1BorLoc} -> \!{\lambda\Sigma^1_1BorLoc}$.}
This follows from the existence of $\sigma$-Borel maps with no $\infty$-Borel image (see \cref{thm:loc-im-bad} below, and also \cref{rmk:loc-im-bad}).

We can interpret this as saying that the embedding $\!{\kappa\Sigma^1_1BorLoc} \subseteq \!{\kappa BorLoc}$, despite offering a convenient first definition of $\!{\kappa\Sigma^1_1BorLoc}$, is in fact ``unnatural''.
In other words, a $\kappa$-analytic set $A \subseteq X$ should not really be identified with an $\infty(\@B_\kappa)_\delta$-set, i.e., subobject in $\!{\kappa BorLoc}$.
Indeed, \cref{thm:sigma11-proper} below says that there is a ``$\sigma$-analytic, non-$\infty$-Borel'' set; this only makes sense if we do not regard $\sigma\Sigma^1_1$ as a subset of $\infty(\@B_\sigma)_\delta$ (hence of $\@B_\infty$).
We will therefore mostly avoid using the embedding $\!{\kappa\Sigma^1_1BorLoc} \subseteq \!{\kappa BorLoc}$ henceforth.
\end{remark}

\begin{remark}
\label{rmk:sigma11-absolute}
Despite the preceding remark, \emph{we do not know if for $\kappa \le \lambda$, the forgetful functor $\!{\kappa BorLoc}_\kappa -> \!{\lambda BorLoc}_\lambda$ between the categories of \emph{standard} Borel locales preserves regular epimorphisms.}
If this were so, we would nonetheless have a natural forgetful functor
\begin{align*}
\!{\kappa\Sigma^1_1BorLoc} --> \!{\lambda\Sigma^1_1BorLoc},
\end{align*}
namely the extension of the forgetful functor $\!{\kappa BorLoc}_\kappa -> \!{\lambda BorLoc}_\lambda$ to the regular completions of the respective unary sites as in \cref{rmk:sigma11-site}.

In particular, for a standard Borel space $X$, \emph{we do not know if every classical analytic set $A \subseteq X$ yields an $\infty$-analytic set}.
We can take the $\infty$-analytic image of any Borel $f : \~A -> X$ with classical image $A$; but we do not know if the result depends on $f$ (even if $A = X$).
\end{remark}

\subsection{Separation theorems}
\label{sec:sigma11-sep}

The classical \defn{Novikov separation theorem} \cite[28.5]{Kcdst} states that countably many analytic sets $A_i \subseteq X$ in a standard Borel space (or analytic Borel space \cite[28.6]{Kcdst}) with empty intersection $\emptyset = \bigcap_i A_i$ must be contained in Borel sets $A_i \subseteq B_i \subseteq X$ with empty intersection $\emptyset = \bigcap_i B_i$ (the \defn{Lusin separation theorem} \cite[14.7]{Kcdst} is the special case of two analytic sets).
The localic analog is given by the interpolation theorems from \cref{sec:frm-interp}:

\begin{theorem}
\label{thm:sigma11-sep}
Let $X$ be an analytic $\kappa$-Borel locale, $A_i \subseteq X$ be $<\kappa$-many $\kappa$-analytic sets.
If $\bigcap_i A_i = \emptyset$, then there are $\kappa$-Borel $A_i \subseteq B_i \subseteq X$ with $\bigcap_i B_i = \emptyset$.

In particular, if $A_1, A_2 \subseteq X$ are disjoint $\kappa$-analytic sets, then there is a $\kappa$-Borel set $B \subseteq X$ with $A_1 \subseteq B$ and $A_2 \cap B = \emptyset$.
\end{theorem}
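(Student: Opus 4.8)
The plan is to reduce the statement to the interpolation theorem \cref{thm:bifrm-minterp} (or rather its specialization to Boolean algebras, \cref{thm:bool-minterp}), after first unwinding the definitions of $\kappa$-analytic set and $\kappa\Sigma^1_1$-image in terms of the dual algebraic picture. The key observation is that by \cref{thm:sigma11-meet}, an intersection $\bigcap_i A_i$ of $\kappa$-analytic sets is itself $\kappa$-analytic and is given by the $\kappa\Sigma^1_1$-image of a wide pullback; so $\bigcap_i A_i = \emptyset$ exactly means that this wide pullback is the empty locale, i.e.\ its dual algebra is trivial. The task is then to produce $\kappa$-Borel sets $B_i$ with $\bigcap_i B_i = \emptyset$ and $A_i \subseteq B_i$; dually, $B_i$ will be elements of $\@B_\kappa(X)$ (or of $\@B_\kappa(X')$ for a standard $X' \supseteq X$) whose meet is $\bot$ and whose preimages under the maps presenting the $A_i$ are all of the respective domains.

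First I would reduce to the case where $X$ is a standard $\kappa$-Borel locale: by definition of $\kappa$-analytic set in an analytic $\kappa$-Borel locale, each $A_i$ is a $\kappa$-analytic set in some standard $\kappa$-Borel $X' \supseteq X$ which happens to lie inside $X$, and it suffices to separate them there. Then write each $A_i = \im^\kappa(f_i)$ for $\kappa$-Borel maps $f_i : \~A_i -> X$ with $\~A_i$ standard $\kappa$-Borel, as in the ``synthetic'' description of $\!{\kappa\Sigma^1_1BorLoc}$. The hypothesis $\bigcap_i A_i = \emptyset$ together with \cref{thm:sigma11-meet} tells us that the wide pullback $\prod_X (\~A_i)_i$ is the empty locale, whose dual $\kappa$-Boolean algebra is the terminal algebra $1$. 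Dually, $\prod_X (f_i)_i$ corresponds to the wide pushout of the homomorphisms $f_i^* : \@B_\kappa(X) -> \@B_\kappa(\~A_i)$ in $\!{\kappa Bool}$, and saying the pushout is trivial means $\bigwedge_i b_i \le \bot$ in that pushout, where we take $b_i := \top \in \@B_\kappa(\~A_i)$.

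Now I apply \cref{thm:bool-minterp} (the $\kappa$-Boolean specialization of the interpolation theorem) with $A := \@B_\kappa(X)$, $B_i := \@B_\kappa(\~A_i)$, $f_i := f_i^*$, $a := \bot \in \@B_\kappa(X)$, and $b_i := \top$. This yields elements $a_i \in \@B_\kappa(X)$ with $\bigwedge_i a_i \le \bot$ in $\@B_\kappa(X)$ and $\top = b_i \le f_i^*(a_i)$, i.e.\ $f_i^*(a_i) = \top$ in $\@B_\kappa(\~A_i)$. Setting $B_i := a_i \in \@B_\kappa(X)$, the first relation gives $\bigcap_i B_i = \emptyset$, while $f_i^*(B_i) = \top$ means exactly that $f_i$ factors through $B_i `-> X$ (\cref{thm:loc-im-factor}(b)), hence $A_i = \im^\kappa(f_i) \subseteq B_i$. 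The binary case is just $i \in \{1,2\}$ together with $B := B_1$ and the observation that $A_2 \cap B = \emptyset$ follows from $B_1 \cap B_2 = \emptyset$ and $A_2 \subseteq B_2$. I do not anticipate a serious obstacle here: the content is entirely in \cref{thm:bifrm-minterp}/\cref{thm:bool-minterp}, and the only care needed is in the bookkeeping of the reduction to standard objects and the translation between the localic statement about $\kappa\Sigma^1_1$-images and the algebraic statement about wide pushouts — in particular verifying that \cref{thm:sigma11-meet} correctly identifies $\bigcap_i A_i$ with the image of the wide pullback, which in turn dualizes to the wide pushout appearing in the hypothesis of \cref{thm:bool-minterp}.
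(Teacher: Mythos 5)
Your proposal is correct and follows essentially the same route as the paper: embed $X$ into a standard $X'$, present each $A_i$ as the $\kappa\Sigma^1_1$-image of a map from a standard $\~A_i$, use \cref{thm:sigma11-meet} to see that the wide pullback over $X'$ is empty, and apply \cref{thm:bool-minterp} with $a = \bot$ and $b_i = \top$ to extract the separating $\kappa$-Borel sets. The only difference is expository (you spell out the dualization to wide pushouts and the factorization criterion \cref{thm:loc-im-factor}(b), which the paper leaves implicit).
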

\begin{proof}
Let $X \subseteq X'$ with $X'$ standard $\kappa$-Borel, and
let $f_i : \~A_i ->> A_i$ be $\kappa$-Borel surjections from standard $\kappa$-Borel locales $\~A_i$.
Then (by \cref{thm:sigma11-meet}) $\bigcap_i \~A_i$ is the $\kappa$-analytic image of the wide pullback $\prod_{X'} (f_i)_i$, which must hence be empty.
By \cref{thm:bool-minterp} applied to $a := \bot \in \@B_\kappa(X')$ and $b_i := \top \in \@B_\kappa(\~A_i)$, we get that there are $B_i \in \@B_\kappa(X')$ with $\bigcap_i B_i \subseteq \emptyset$ and $\top \le f_i^*(B_i)$, i.e., $A_i = f^\kappa(\~A_i) \subseteq B_i$.
\end{proof}

\begin{remark}
In particular, this gives, modulo the Baire category-based spatiality results from \cref{sec:loc-ctbpres}, a completely lattice-theoretic proof of the classical Lusin--Novikov separation theorems.
(However, the classical proof from \cite[28.5]{Kcdst} and especially \cite[28.1, proof~II]{Kcdst} is somewhat analogous to the proof of \cref{thm:bifrm-minterp}, with the tree $\#N^{<\#N}$, which forms a posite presenting $\@O(\#N^\#N)$, playing a role analogous to that of the distributive polyposets in \cref{thm:bifrm-minterp}.)
\end{remark}

\begin{corollary}[Suslin's theorem for locales]
\label{thm:bor-suslin}
Let $X$ be an analytic $\kappa$-Borel locale.
Then $\@B_\kappa(X) = \kappa\Sigma^1_1(X)_\neg$, i.e., the $\kappa$-Borel sets in $X$ are precisely the complemented $\kappa$-analytic sets.
\qed
\end{corollary}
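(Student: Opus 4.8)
The plan is to deduce this from the Novikov separation theorem (\cref{thm:sigma11-sep}) applied to a $\kappa$-analytic set and its complement, together with the observation that a complemented $\kappa$-analytic set must in fact be $\kappa$-Borel. First I would note the easy inclusion: every $\kappa$-Borel set in $X$ is a $\kappa$-analytic set (since $\@B_\kappa(X) \subseteq \kappa\Sigma^1_1(X)$, as $X$ is analytic $\kappa$-Borel) and is complemented in $\kappa\Sigma^1_1(X)$ (since $\@B_\kappa(X)$ is a $\kappa$-Boolean subalgebra, closed under $\neg$), so $\@B_\kappa(X) \subseteq \kappa\Sigma^1_1(X)_\neg$. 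The content is the reverse inclusion.

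For the reverse inclusion, suppose $A \in \kappa\Sigma^1_1(X)$ is complemented, with complement $A' \in \kappa\Sigma^1_1(X)$, so that $A \wedge A' = \emptyset$ and $A \vee A' = X$ (joins and meets taken in the $(\kappa,\kappa)$-frame $\kappa\Sigma^1_1(X)$, which exists by \cref{thm:sigma11-cat}(d)). Then $A$ and $A'$ are disjoint $\kappa$-analytic sets, so by the two-set case of \cref{thm:sigma11-sep} there is a $\kappa$-Borel set $B \subseteq X$ with $A \subseteq B$ and $A' \cap B = \emptyset$. I would then argue that $A = B$: since $A' \cap B = \emptyset$ we have $B \le \neg A' = A$ (using that $A$ is the complement of $A'$ in the subobject lattice, and that joins there are pullback-stable so meets distribute over them, making complements genuine Boolean complements), and combined with $A \subseteq B$ this gives $A = B \in \@B_\kappa(X)$. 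Hence $\kappa\Sigma^1_1(X)_\neg \subseteq \@B_\kappa(X)$, completing the proof.

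The one point requiring a little care — and the likeliest source of friction — is the interaction between ``complemented element of the subobject lattice $\kappa\Sigma^1_1(X)$'' and the genuine Boolean complement needed to run the inequality $A' \cap B = \emptyset \implies B \le A$. In a general $(\kappa,\kappa)$-frame a complemented element need not satisfy the full distributive laws one wants, but here \cref{thm:sigma11-cat}(b) gives $\kappa$-coherence (pullback-stable $\kappa$-ary joins), which forces binary meets to distribute over finite joins in $\kappa\Sigma^1_1(X)$; that is exactly enough to conclude that if $A \wedge A' = \emptyset$, $A \vee A' = X$, and $A' \wedge B = \emptyset$, then $B = B \wedge (A \vee A') = (B \wedge A) \vee (B \wedge A') = B \wedge A \le A$. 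So I would spell out this short distributivity computation explicitly rather than invoke ``complement'' as a black box. Everything else is immediate from the cited results, so the proof should be just a few lines.
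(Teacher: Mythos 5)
Your proof is correct and is exactly the argument the paper intends (the corollary is stated with \qed as an immediate consequence of \cref{thm:sigma11-sep}): separate a complemented $\kappa$-analytic set from its complement by a $\kappa$-Borel set and use distributivity in the $(\kappa,\kappa)$-frame $\kappa\Sigma^1_1(X)$ to conclude the separating set equals $A$. Your explicit distributivity computation is a reasonable bit of extra care, though it follows directly from \cref{thm:sigma11-cat}(d) since any $(\kappa,\kappa)$-frame is in particular a distributive lattice.
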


\begin{corollary}
\label{thm:bor-blackwell}
Let $f : X -> Y \in \!{\kappa\Sigma^1_1BorLoc}$ be a $\kappa$-Borel map between analytic $\kappa$-Borel locales.
Then a $\kappa$-Borel set $B \subseteq X$ is $f^*(C)$ for some $\kappa$-Borel $C \subseteq Y$ iff $B$ is invariant with respect to the internal equivalence relation $\ker(f) \subseteq X^2$.
\end{corollary}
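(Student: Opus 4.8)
The plan is to derive this as a straightforward consequence of the Novikov separation theorem \cref{thm:sigma11-sep}, following the classical pattern of the Blackwell--Mackey-type theorem that a $\ker(f)$-invariant Borel set is the preimage of a Borel set. The ``only if'' direction is trivial: if $B = f^*(C)$, then pulling back along the two projections $\pi_1, \pi_2 : \ker(f) = X \times_Y X \rightrightarrows X$, which by definition satisfy $f \circ \pi_1 = f \circ \pi_2$, gives $\pi_1^*(B) = \pi_1^*(f^*(C)) = (f\pi_1)^*(C) = (f\pi_2)^*(C) = \pi_2^*(B)$, i.e.\ $B$ is $\ker(f)$-invariant. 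So all the content is in the ``if'' direction.

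For the ``if'' direction, first I would reduce to the surjective case: replace $Y$ with the $\kappa\Sigma^1_1$-image $\im^\kappa(f) \subseteq Y$, which is still analytic $\kappa$-Borel by \cref{thm:sigma11-cat}(a), so that $f : X \twoheadrightarrow \im^\kappa(f)$ is a $\kappa$-Borel surjection (regular epimorphism in $\!{\kappa\Sigma^1_1BorLoc}$); a $\kappa$-Borel $C \subseteq \im^\kappa(f)$ extends to a $\kappa$-Borel set on $Y$ since $\@B_\kappa(\im^\kappa(f))$ is a quotient of $\@B_\kappa(Y)$ (indeed $\im^\kappa(f)$ is a $\kappa$-Borel subobject by \cref{thm:bor-suslin}, being the complement of the $\kappa$-analytic set $\neg\,\im^\kappa(f)$ only if that complement is itself $\kappa$-analytic --- more carefully, since $f$ factors as $X \twoheadrightarrow \im^\kappa(f) \hookrightarrow Y$ with the second map a monomorphism, invariance on $X$ and recovery from $\im^\kappa(f)$ suffice). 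So assume $f$ is a $\kappa$-Borel surjection. Now given a $\ker(f)$-invariant $\kappa$-Borel $B \subseteq X$, consider the $\kappa\Sigma^1_1$-images $C := f^\kappa(B)$ and $C' := f^\kappa(\neg B)$ in $Y$; these are $\kappa$-analytic sets. The key claim is that $C$ and $C'$ are disjoint: an element of $C \wedge C'$ would pull back, via the Beck--Chevalley-type reasoning of \cref{thm:loc-bor-im-pullback} and pullback-stability of images, to witness a $\ker(f)$-related pair $(x_1, x_2)$ with $x_1 \in B$ and $x_2 \in \neg B$, contradicting invariance. Since $f$ is surjective, $C \vee C' = f^\kappa(B \vee \neg B) = f^\kappa(X) = Y$, so $C$ and $C'$ are complementary $\kappa$-analytic sets, hence $\kappa$-Borel by \cref{thm:bor-suslin}. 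Finally $f^*(C) \supseteq B$ and $f^*(C') \supseteq \neg B$, so $f^*(C) = B$.

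The main obstacle I expect is making the disjointness argument $C \wedge C' = \emptyset$ fully rigorous in the internal-logic / pullback formalism: one wants to say ``$y \in C \cap C'$ means there exist $x_1 \in B$, $x_2 \in \neg B$ with $f(x_1) = y = f(x_2)$, hence $(x_1,x_2) \in \ker(f)$, hence $x_1, x_2$ are both in $B$ by invariance, contradiction.'' To turn this into a clean diagram chase, I would form the pullback $\~B \times_Y \~{B'}$ where $\~B \twoheadrightarrow B$ and $\~{B'} \twoheadrightarrow \neg B$ are $\kappa$-Borel surjections from standard $\kappa$-Borel locales witnessing $B, \neg B$ as $\kappa$-analytic subobjects of $X$ (using that $B$, being $\kappa$-Borel in an analytic $\kappa$-Borel locale, is itself analytic $\kappa$-Borel); the induced map $\~B \times_Y \~{B'} \to X \times_Y X = \ker(f)$ composed with the two legs $\ker(f) \rightrightarrows X$ lands in $B$ and in $\neg B$ respectively, so its domain must be empty, and by pullback-stability of the $\kappa\Sigma^1_1$-image ($\kappa$-extensivity, \cref{thm:sigma11-cat}(b)) together with \cref{thm:sigma11-meet} this forces $C \wedge C' = \emptyset$. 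Once disjointness is established the rest is immediate, but getting this pullback square set up with the right maps and invoking \cref{thm:sigma11-sep} (or directly \cref{thm:bool-minterp}) in the two-set case is where the care is needed; this is essentially the localic version of the classical Lusin separation argument, and indeed one could alternatively appeal directly to the $\kappa$-Boolean interpolation \cref{thm:bool-minterp} applied to suitable pushouts of $\@B_\kappa$-algebras rather than arguing geometrically.
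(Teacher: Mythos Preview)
Your approach is correct and essentially matches the paper's second proof: show $f^\kappa(B) \cap f^\kappa(\neg B) = \emptyset$ from $\ker(f)$-invariance, then invoke separation. The paper streamlines this by skipping the reduction to surjective $f$: once disjointness is established, \cref{thm:sigma11-sep} gives a $\kappa$-Borel $C$ with $f^\kappa(B) \subseteq C$ and $f^\kappa(\neg B) \subseteq \neg C$, and then $B \subseteq f^*(C)$, $\neg B \subseteq \neg f^*(C)$ immediately yield $B = f^*(C)$, with no need for $f^\kappa(B) \vee f^\kappa(\neg B) = Y$. (Your parenthetical about $\im^\kappa(f)$ being $\kappa$-Borel via Suslin is off---its complement need not be $\kappa$-analytic---but as you then note, all that is needed is that $\kappa$-Borel sets in $\im^\kappa(f)$ are restrictions of $\kappa$-Borel sets in $Y$.) The paper also offers a shorter first proof you do not mention: embed $\!{\kappa\Sigma^1_1BorLoc} \subseteq \!{\kappa BorLoc}$, replace $Y$ by $\im^\kappa(f)$ so $f$ is regular epi, and then $\@B_\kappa(Y)$ is literally the equalizer of $\@B_\kappa(X) \rightrightarrows \@B_\kappa(\ker(f))$, i.e., the $\ker(f)$-invariant sets. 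Your pullback argument for disjointness is correct; the paper compresses it to ``$\ker(f)$-invariance of $B$ implies $B = f^*(f^\kappa(B))$, whence $f^\kappa(B) \cap f^\kappa(\neg B) = \emptyset$ by Frobenius reciprocity.''
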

The classical analog of this is a theorem of Blackwell (see \cite[14.16]{Kcdst}).
\begin{proof}
This follows from regarding $\!{\kappa\Sigma^1_1BorLoc} \subseteq \!{\kappa BorLoc}$, then replacing $Y$ with $\im^\kappa(f)$, so that $f : X ->> Y$ is a regular epi, i.e., $\@B_\kappa(Y)$ is the equalizer of $\@B_\kappa(X) \rightrightarrows \@B_\kappa(\ker(f))$, which is easily seen to consist of precisely the $\ker(f)$-invariant sets $B \in \@B_\kappa(X)$.

We also give a different proof, avoiding mention of $\!{\kappa BorLoc}$.
($\Longrightarrow$) is a general fact about categories with finite limits (e.g., reason as in $\!{Set}$ using the Yoneda lemma).
For ($\Longleftarrow$): in any regular category, $\ker(f)$-invariance of $B$ implies $B = f^*(f^\kappa(B))$; this easily yields that $f^\kappa(B) \cap f^\kappa(\neg B) = \emptyset$ (by Frobenius reciprocity), whence there is a $\kappa$-Borel $C \subseteq Y$ separating them, whence $B = f^*(C)$.
\end{proof}

\begin{remark}
For many other classical consequences of the Lusin separation theorem, we have already seen their localic analog, which ``secretly'' used the localic Lusin separation theorem in the form of the various consequences of the interpolation theorem from \cref{sec:frm-interp}.
This includes, e.g., the Lusin--Suslin theorem that injective Borel maps have Borel image (see \cref{rmk:lusin-suslin}).
\end{remark}

We also have the following dual version of the general interpolation \cref{thm:bifrm-minterp}, which generalizes \cref{thm:sigma11-sep} to the positive setting.
Recall from \cref{sec:poloc} that a standard positive $\kappa$-Borel locale $X$ has a specialization order ${\le_X} \subseteq X^2$, which is a $\kappa$-Borel partial order on $X$.

\begin{theorem}
\label{thm:bor+-sep}
Let $X$ be a standard positive $\kappa$-Borel locale, and let $A_i, B_j \subseteq X$ be $<\kappa$-many $\kappa$-analytic sets such that ``${}\bigcap_i \up A_i \cap \bigcap_j \down B_j = \emptyset$'', i.e.,
\begin{align*}
\!{\kappa\Sigma^1_1BorLoc} |= \nexists x \in X,\, a_i \in A_i,\, b_j \in B_j\, (\bigwedge_i (a_i \le x) \wedge \bigwedge_j (x \le b_j)).
\end{align*}
Then there are positive $\kappa$-Borel $A_i \subseteq C_i \subseteq X$ and negative (= complement of positive) $\kappa$-Borel $B_j \subseteq D_j \subseteq X$ such that
$
\bigcap_i C_i \cap \bigcap_j D_j = \emptyset.
$
\end{theorem}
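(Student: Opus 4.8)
The strategy is to reduce \Cref{thm:bor+-sep} to the general interpolation \Cref{thm:bifrm-minterp}, exactly as \Cref{thm:sigma11-sep} was reduced to the Boolean version \Cref{thm:bool-minterp}. First I would dualize: a standard positive $\kappa$-Borel locale $X$ is dual to a $\kappa$-presented $(\kappa,\kappa)$-frame $\@B^+_\kappa(X)$, whose underlying $\kappa$-Boolean algebra $\@B_\kappa(X)$ contains it as a $(\kappa,\kappa)$-subframe. The $\kappa$-analytic sets $A_i, B_j \subseteq X$ are represented by $\kappa$-Borel surjections $f_i : \~A_i ->> A_i$ and $g_j : \~B_j ->> B_j$ from standard $\kappa$-Borel locales (here I only need the \emph{underlying} $\kappa$-Borel structure, so $\~A_i, \~B_j$ can be taken to be plain standard $\kappa$-Borel locales). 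The hypothesis about ``${}\bigcap_i \up A_i \cap \bigcap_j \down B_j = \emptyset$'' should, by tracing through the internal-logic interpretation of the existential quantifier (\Cref{sec:loc-intlog}) together with \Cref{thm:sigma11-meet}, translate into the statement that a certain wide pullback built from the $\~A_i$, $\~B_j$, and the order object ${\le_X}$ is empty. Specifically, consider the diagram whose vertices include $\@B^+_\kappa(X)$ (playing the role of $A$ in \Cref{thm:bifrm-minterp}), together with the $\kappa$-Boolean algebras $\@B_\kappa(\~A_i)$ (playing the role of the $B_i$, receiving the map $f_i^*$ which is $\le$ the canonical map because $\up A_i$ is an \emph{upper} set, hence the composite of $X \to A_i$ with the inclusion is order-\emph{increasing} on opens) and the $\@B_\kappa(\~B_j)$ (playing the role of the $C_j$, with $g_j^*$ now $\ge$ the canonical map since $\down B_j$ is a \emph{lower} set).

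The key observation is that the ``bilax pushout'' $D$ in \Cref{thm:bifrm-minterp}, built from $\@B^+_\kappa(X)$ with the $f_i$ below and $g_j$ above and forced order relations $f_i^*(a) \le a \le g_j^*(a)$, is precisely (the $(\kappa,\kappa)$-frame of positive $\kappa$-Borel sets of) the localic object ``$\{(x, a_i, b_j) \mid \bigwedge_i (a_i \le x) \wedge \bigwedge_j (x \le b_j)\}$'' fibered over the various $\~A_i$, $\~B_j$; its emptiness is exactly the hypothesis. Then I would apply \Cref{thm:bifrm-minterp} with $a^L = \top$, $a^R = \bot$, $b^L_i = \top \in \@B_\kappa(\~A_i)$, $b^R_i = \bot$, $c^L_j = \top \in \@B_\kappa(\~B_j)$, $c^R_j = \bot$. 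The conclusion ($\dagger$) gives elements $a^L_i, a^R_j \in \@B^+_\kappa(X)$ with $f_i^*(a^L_i) = \top$ (i.e.\ $A_i \subseteq C_i$ where $C_i := a^L_i$ is positive $\kappa$-Borel), with $g_j^*(a^R_j) = \top$ in $\@B_\kappa(\~B_j)$ (i.e.\ $B_j \subseteq D_j$ where $D_j := \neg a^R_j$ is the complement of a positive set, hence negative $\kappa$-Borel), and with $\bigwedge_i a^L_i \le \bigvee_j a^R_j$ in $\@B^+_\kappa(X) \subseteq \@B_\kappa(X)$, which says exactly $\bigcap_i C_i \cap \bigcap_j D_j = \emptyset$.

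\textbf{Main obstacle.} The substantive work is setting up the dictionary between the internal-logic hypothesis ``$\bigcap_i \up A_i \cap \bigcap_j \down B_j = \emptyset$'' and the algebraic hypothesis ($*$) of \Cref{thm:bifrm-minterp}, i.e.\ identifying the right bilax pushout and checking that its emptiness corresponds to the stated nonexistence of $(x, a_i, b_j)$. This requires: (i) correctly interpreting $\up A_i = \pi_2(\pi_1^*(A_i) \cap {\le_X})$ as a pullback-stable image in $\!{\kappa\Sigma^1_1BorLoc}$, using \Cref{thm:loc-pos-ord-disc} which describes ${\le_X}$ as the order-kernel of the counit $\epsilon_X : D(X) -> X$; (ii) recognizing that the forced inequalities in the bilax pushout come from the fact that $\up A_i$ is upper (so the relevant map factors through ${\le_X}$ in the ``increasing'' direction) and $\down B_j$ is lower; and (iii) dealing with the mismatch that \Cref{thm:bifrm-minterp} has a single algebra $A$ at the center receiving maps from \emph{all} the $B_i$ and $C_j$, whereas the geometric picture has the order object ${\le_X}$ mediating. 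I expect that the cleanest route is to first reduce to the case where each $A_i = \im^\kappa(f_i)$ and $B_j = \im^\kappa(g_j)$ are honest $\kappa\Sigma^1_1$-images, then form the standard positive $\kappa$-Borel locale $\prod_X (\~A_i)_i \times_X (\~B_j)_j$ (a wide pullback over $X$, standard by \Cref{rmk:sigma11-lim-std}), observe its positive $\kappa$-Borel structure is the bilax pushout of the $f_i^*$, $g_j^*$ over $\@B^+_\kappa(X)$ with the order-constraints built in, and note the hypothesis says this locale — after further intersecting with the constraint ``$\bigwedge_i(a_i \le x) \wedge \bigwedge_j(x \le b_j)$'' — is empty; the positivity/negativity bookkeeping of $C_i$, $D_j$ then falls out of ($\dagger$) as above. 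A secondary point to verify is that taking complements $D_j := \neg a^R_j$ is legitimate: $a^R_j \in \@B^+_\kappa(X)$ is positive, so $D_j$ is genuinely negative $\kappa$-Borel as claimed, and the final inequality $\bigwedge_i C_i \le \bigvee_j a^R_j = \bigvee_j \neg D_j$ rearranges to $\bigcap_i C_i \cap \bigcap_j D_j = \emptyset$.
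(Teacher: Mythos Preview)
Your overall strategy matches the paper's: reduce to \cref{thm:bifrm-minterp} by identifying the bilax pushout with the dual of the object $\{(x, a_i, b_j) \mid \bigwedge_i (f_i(a_i) \le x) \wedge \bigwedge_j (x \le g_j(b_j))\}$, then apply interpolation with $a^L = b^L_i = c^L_j = \top$ and $a^R = b^R_i = c^R_j = \bot$. Your extraction of $C_i := a^L_i$ and $D_j := \neg a^R_j$ from ($\dagger$) is exactly right.

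However, your ``cleanest route'' paragraph contains a confusion that would derail the argument. The wide pullback $\prod_X (\~A_i)_i \times_X (\~B_j)_j$ over $X$ imposes \emph{equalities} $f_i(a_i) = x = g_j(b_j)$, so its dual $(\kappa,\kappa)$-frame is the ordinary pushout, not the bilax one; and ``further intersecting'' with the weaker inequality constraint then does nothing. The paper instead works in the full product $X \times \prod_i \~A_i \times \prod_j \~B_j$: regard $\~A_i, \~B_j$ as \emph{discrete} positive $\kappa$-Borel locales via the free functor $D : \!{\kappa BorLoc} \to \!{\kappa Bor^+Loc}$, and interpret the inequality formula directly in $\!{\kappa Bor^+Loc}$. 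By the enriched Yoneda lemma (reasoning as in $\!{Pos}$), this interpretation \emph{is} the bilax pullback dual to the bilax pushout of \cref{thm:bifrm-minterp}. The bridge from the hypothesis (which lives in $\!{\kappa\Sigma^1_1BorLoc}$, hence in $\!{\kappa BorLoc}_\kappa$ after pulling back along the surjections $f_i, g_j$) to emptiness of this bilax pullback is then a one-liner: the forgetful functor $\!{\kappa Bor^+Loc} \to \!{\kappa BorLoc}$ takes the bilax pullback to the $\kappa$-Borel interpretation of the same formula, precisely because the specialization order $\le_X$ is \emph{defined} as the forgetful image of the internal order on $X \in \!{\kappa Bor^+Loc}$. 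Your detours through $\up A_i$, $\down B_j$ as pullback-stable images, and through ``the order object $\le_X$ mediating'', are unnecessary and obscure this direct identification.
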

\begin{proof}
Note first that the above formula in the internal logic should be interpreted as ``$\exists \dotsb \implies \bot$'' (which makes sense in the coherent category $\!{\kappa\Sigma^1_1BorLoc}$), or equivalently
\begin{align*}
\{(x, a_i, b_j)_{i,j} \in X \times \prod_i A_i \times \prod_j B_j \mid \bigwedge_i (a_i \le x) \wedge \bigwedge_j (x \le b_j)\}_\!{\kappa\Sigma^1_1BorLoc} = \emptyset.
\end{align*}
Let $f_i : \~A_i ->> A_i$, $g_j : \~B_j ->> B_j$ be $\kappa$-Borel surjections from standard $\kappa$-Borel $\~A_i, \~B_j$.
Then using that $1_X \times \prod_i f_i \times \prod_j g_j$ is $\kappa$-Borel surjective (by $\kappa$-complete regularity, \cref{thm:sigma11-cat}(d)), the above is equivalent to
\begin{align*}
\tag{$*$}
\{(x, \~a_i, \~b_j)_{i,j} \in X \times \prod_i \~A_i \times \prod_j \~B_j \mid \bigwedge_i (f_i(a_i) \le x) \wedge \bigwedge_j (x \le g_j(b_j))\}_{\!{\kappa BorLoc}_\kappa} = \emptyset.
\end{align*}
Regard $\~A_i, \~B_j$ as ``discrete'' positive $\kappa$-Borel locales via the free functor $\!{\kappa BorLoc} -> \!{\kappa Bor^+Loc}$, as in \cref{sec:poloc} (there denoted $D$).
Then the left-hand side of ($*$), but interpreted instead in $\!{\kappa Bor^+Loc}$, yields an order-embedded subobject of $X \times \prod_i \~A_i \times \prod_j \~B_j \in \!{\kappa Bor^+Loc}$ which the forgetful functor takes to the left-hand side of ($*$), since the specialization order $\le_X$ is by definition the forgetful functor applied to the internal order on $X \in \!{\kappa Bor^+Loc}$.
But this subobject interpreted in $\!{\kappa Bor^+Loc}$ is easily seen (e.g., as in $\!{Pos}$, using the enriched Yoneda lemma) to be the \defn{bilax pullback}
\begin{equation*}
\begin{tikzcd}
\~A_i \dar["f_i"'] \drar[phantom, "\le"{pos=.2,sloped}, "\le"{pos=.8,sloped}] &
\mathrlap{\{(x, \~a_i, \~b_j)_{i,j} \mid \bigwedge_i (f_i(a_i) \le x) \wedge \bigwedge_j (x \le g_j(b_j))\}_{\!{\kappa Bor^+Loc}}} \hphantom{D} \lar["\pi_i"'] \dar["\pi_j"] \dlar["\pi"{pos=.8,inner sep=1pt}] \\
X & \~B_j \lar["g_j"]
\end{tikzcd}
\hphantom{\bigwedge_i (f_i(a_i) \le x) \wedge \bigwedge_j (x \le g_j(b_j))}
\end{equation*}
which is dual to the bilax pushout from \cref{thm:bifrm-minterp}.
Thus, ($*$) implies that this bilax pullback is empty, i.e., $\bigwedge_i \pi_i^*(\top) \le \bigvee_j \pi_j^*(\bot)$.
By \cref{thm:bifrm-minterp}, there are $C_i, \neg D_j \in \@B^+_\kappa(X)$ such that $\bigwedge_i C_i \le \bigvee_j \neg D_j$, $\top \le f_i^*(C_i)$, and $g_j^*(\neg D_j) \le \bot$, which exactly yields the conclusion.
\end{proof}

The classical analog is a simultaneous generalization of the Novikov separation theorem and \cite[28.12]{Kcdst} (see \cref{ex:loc-sbor+}), the latter of whose localic analog (the binary case of \cref{thm:bor+-sep}, or dually \cref{thm:bifrm-interp}) we already used (via \cref{thm:loc-mono-epi}) to prove \cref{thm:loc-pos-upper}.

\subsection{Inverse limit representations}
\label{sec:sigma11-invlim}

Classically, a basic representation for a standard Borel space is as the space of infinite branches through a countably branching tree of height $\omega$; see \cite[7.8--9, 13.9]{Kcdst}.
A suitable ``fiberwise'' version of this yields a representation of any analytic set $A \subseteq X$ via a Borel family of trees $(T_x)_{x \in X}$ such that $x \in A$ iff $T_x$ has an infinite branch; see \cite[25.2--13]{Kcdst}.
In this subsection, we consider localic generalizations of such tree representations.

First, note that given a tree $T$, if we let $T_n$ be the nodes on level $n$, equipped with the predecessor maps $T_{n+1} -> T_n$ for each $n$, then the inverse limit $\projlim_n T_n$ is precisely the space of infinite branches.
Thus an arbitrary codirected diagram of sets $(T_i)_{i \in I}$, for a directed poset $I$ and ``predecessor'' functions $p_{ij} : T_i -> T_j$ for each $i \ge j \in I$, might be viewed as a generalization of a tree, with $\projlim_i T_i$ as the space of ``branches''.
Note that if $I$ is countable, then it has a cofinal monotone sequence $(i_n)_{n \in \#N}$, whence the subdiagram $(T_{i_n})_{n \in \#N}$ is a genuine tree with the same inverse limit as $(T_i)_{i \in I}$.
For $\kappa > \omega_1$, general codirected diagrams are more convenient than trees in the localic setting.

The generalization of the ``tree representation'' for a single standard $\kappa$-Borel locale is given by \cref{thm:loc-bor-upkz}, which we recall here:

\begin{proposition}
\label{thm:loc-bor-invlim}
Every standard $\kappa$-Borel locale $X$ is a limit, in $\!{\kappa BorLoc}_\kappa$, of a $\kappa$-ary codirected diagram of $\kappa$-ary sets.
\qed
\end{proposition}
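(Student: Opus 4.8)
The statement \cref{thm:loc-bor-invlim} is that every standard $\kappa$-Borel locale $X$ is a limit, in $\!{\kappa BorLoc}_\kappa$, of a $\kappa$-ary codirected diagram of $\kappa$-ary sets ($\kappa$-discrete locales).

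The plan is to invoke the algebraic results already established, specifically \cref{thm:upkzfrm-bool-pres}, which says that every $\kappa$-presented $\kappa$-Boolean algebra $B$ is freely generated by a $\kappa$-presented ultraparacompact zero-dimensional $\kappa$-frame $A$, and moreover $B$ is thereby a $\kappa$-ary directed colimit, in $\!{\kappa Bool}$, of complete $\kappa$-atomic Boolean algebras (i.e.\ powersets $\@P(Y)$ for $\kappa$-ary $Y$). Dualizing directly: given a standard $\kappa$-Borel locale $X$, its dual algebra $\@B_\kappa(X)$ is $\kappa$-presented, hence by \cref{thm:upkzfrm-bool-pres} is a $\kappa$-ary directed colimit in $\!{\kappa Bool}$ of a diagram of complete $\kappa$-atomic Boolean algebras $\@P(Y_i)$. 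Taking opposite categories, $X$ is the limit in $\!{\kappa BorLoc}$ of the $\kappa$-ary codirected diagram of the $\kappa$-discrete locales $Y_i$ (which are dual to $\@P(Y_i)$, as recalled in the discussion of $\!{\kappa CABool}_\kappa$ in \cref{sec:upkzfrm} and of $\!{Set}_\kappa \subseteq \!{\kappa BorLoc}_\kappa$ in \cref{rmk:loc-sp-forget}). Since each $Y_i$ is a $\kappa$-ary set, hence standard $\kappa$-Borel, and since $\kappa$-ary limits preserve $\kappa$-copresentability (as noted at the end of \cref{sec:loc-cat}), this limit is taken in $\!{\kappa BorLoc}_\kappa$, which is what is claimed.

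The only point requiring any care is the translation of "free functor / forgetful functor" bookkeeping: \cref{thm:upkzfrm-bool-pres} is phrased for the free $\kappa$-Boolean algebra generated by a $\kappa$-frame, but the parenthetical "hence is a $\kappa$-ary directed colimit, in $\!{\kappa Bool}$, of complete $\kappa$-atomic Boolean algebras" is exactly the statement needed, obtained by combining \cref{thm:upkzfrm-kcabool-dircolim} (which expresses the ultraparacompact zero-dimensional $\kappa$-frame $A$ as such a colimit in $\!{\kappa Frm}$) with the fact that the free functor $\!{\kappa Frm} -> \!{\kappa Bool}$ preserves colimits. One then simply passes to opposite categories throughout, using $\!{\kappa BorLoc} = \!{\kappa Bool}^\op$ and the identification $\!{Set}_\kappa = (\!{\kappa CABool}_\kappa)^\op$ of $\kappa$-discrete locales with complete $\kappa$-atomic Boolean algebras. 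I do not expect any genuine obstacle here: the content lies entirely in the already-proved \cref{thm:upkzfrm-bool-pres,thm:upkzfrm-kcabool-dircolim}, and \cref{thm:loc-bor-invlim} is their routine dualization — indeed it is essentially a restatement of \cref{thm:loc-bor-upkz} with the ultraparacompactness and zero-dimensionality of the approximating objects specialized to the extreme case of discrete locales. Thus the proof is simply: "By \cref{thm:upkzfrm-bool-pres} and \cref{thm:upkzfrm-kcabool-dircolim}, dualized."
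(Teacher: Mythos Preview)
Your proposal is correct and matches the paper's approach exactly: the paper states \cref{thm:loc-bor-invlim} as a recall of \cref{thm:loc-bor-upkz}, which in turn is stated (with \qed) as the dual of \cref{thm:upkzfrm-bool-pres}, precisely the chain of reductions you describe.
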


Now given a family of trees $(T_x)_{x \in X}$, we can let each $T_n$ be the set of pairs $(x, t)$ where $x \in X$ and $t$ is a node at level $n$ in $T_x$.
Then each projection $p_n : T_n -> X$ is a countable-to-one function whose fiber over each $x \in X$ is the $n$th level of $T_n$, and the system of all such $T_n, p_n$, with the projections $p_{(n+1)n} : T_{n+1} -> T_n$, forms a codirected \defn{diagram over $X$} (i.e., $p_n \circ p_{(n+1)n} = p_{n+1}$), such that $\projlim_n T_n$ has fibers over each $x \in X$ consisting of the branches through $T_x$.
Thus, we can regard the system of $T_n, p_n$ as a diagrammatic representation of the family of trees $(T_x)_{x \in X}$.
\begin{equation*}
\begin{tikzcd}
\projlim_n T_n \rar["\pi_n"] \ar[drrrr, "\pi"'] & \dotsb \rar["p_{32}"] & T_2 \rar["p_{21}"] \ar[drr, "p_2"{inner sep=1pt,pos=.4}] & T_1 \rar["p_{10}"] \drar["p_1"{inner sep=2pt}] & T_0 \dar["p_0"] \\
&&&& X
\end{tikzcd}
\end{equation*}

For the localic generalization, we say that a $\kappa$-Borel map $f : X -> Y$ between $\kappa$-Borel locales is \defn{$\kappa$-ary-to-one}, or that $X$ is \defn{fiberwise $\kappa$-ary over $Y$}, if $X$ has a $\kappa$-ary cover by $\kappa$-Borel sets to which the restriction of $f$ is a monomorphism (this condition in the classical setting follows from the Lusin--Novikov uniformization theorem; see \cite[18.10]{Kcdst}).
We now have

\begin{theorem}
\label{thm:sigma11-invlim-k}
Let $f : X -> Y$ be a $\kappa$-Borel map between standard $\kappa$-Borel locales.
Then there is a $\kappa$-ary codirected diagram $(f_i : X_i -> Y)_{i \in I}$ over $Y$, for a $\kappa$-directed poset $I$ and $\kappa$-Borel maps $f_{ij} : X_i -> X_j$ for each $i \ge j \in I$, such that each $f_i$ is $\kappa$-ary-to-one and $X \cong \projlim_i X_i$ over $Y$.

In particular, every $\kappa$-analytic set $A \subseteq Y$ is the $\kappa$-analytic image of a $\kappa$-ary codirected limit of fiberwise $\kappa$-ary standard $\kappa$-Borel locales over $Y$.
\end{theorem}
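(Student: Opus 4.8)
The plan is to reduce, by a change of topology, to a combinatorially transparent situation and then build the diagram by hand from the clopen partitions of $X$, mimicking the classical passage from a closed set to the tree of its ``consistent finite approximations''. First I would arrange that $X$ and $Y$ are standard ultraparacompact zero-dimensional $\kappa$-locales and that $f$ is $\kappa$-continuous: by \cref{thm:loc-bor-loc} and \cref{thm:loc-bor-dissolv-upkz} (with any $\alpha \ge 2$) we may put such a $\kappa$-locale structure on $Y$, at the cost of only a $\kappa$-Borel isomorphism of $Y$; then, viewing $f$ as a $\kappa$-Borel map from the $\kappa$-locale $X$ (any standard $\kappa$-locale structure, \cref{thm:loc-bor-loc}) to $Y$, \cref{thm:loc-mor-dissolv}(a),(c) replaces $X$ by a $\kappa$-Borel-isomorphic standard ultraparacompact zero-dimensional $\kappa$-locale on which $f$ is $\kappa$-continuous. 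The payoff of this reduction is that the Boolean algebras $\@O_\kappa(X)_\neg$, $\@O_\kappa(Y)_\neg$ of clopen sets are now $\kappa$-ary (being $\kappa$-presented Boolean algebras) and generate $\@O_\kappa(X)$, $\@O_\kappa(Y)$ as $\kappa$-frames, and that $f^*$ carries clopen sets of $Y$ to clopen sets of $X$.

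Next I would build the diagram. Let $C_0$ be the clopen partition of $\top \in \@O_\kappa(X)$ generated by $\{f^*(V),\neg f^*(V) : V \in \@O_\kappa(Y)_\neg\}$, and let $I$ be the set of clopen partitions of $X$ refining $C_0$, ordered so that finer partitions are larger; since all partitions are $\kappa$-ary and $\kappa$ is regular, $I$ is $\kappa$-directed, and by \cref{thm:upkzfrm-cabool-dircolim} (taking $\up C_0$, which is cofinal) the canonical maps exhibit $X$ as the codirected limit $\projlim_{C \in I} C$ of the $\kappa$-discrete locales $C$. For $C \in I$ and clopen $V \subseteq Y$, let $f^*(V)_C \subseteq C$ be the clopen set of pieces of $C$ covering $f^*(V)$, and set
\[
X_C \;:=\; \bigcap_{V \in \@O_\kappa(Y)_\neg}\bigl(\pi_Y^*(V) \leftrightarrow \rho_C^*(f^*(V)_C)\bigr) \;\subseteq\; Y \times C,
\]
where $\pi_Y, \rho_C$ are the projections of $Y \times C$. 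This is a $\kappa$-ary intersection of clopen sets in the standard $\kappa$-Borel locale $Y \times C$ (a $\kappa$-ary product of standards, \cref{thm:bool-prod-pres}), hence is itself standard $\kappa$-Borel by \cref{thm:loc-sub-im}(c). The transition maps $X_C \to X_{C'}$ ($C$ refining $C'$) are induced by $1_Y \times (C \to C')$, the structure maps $X_C \to Y$ by $\pi_Y$; and each of these is $\kappa$-ary-to-one, since $X_C = \bigsqcup_{c \in C}(X_C \cap \rho_C^*\{c\})$ is a $\kappa$-ary disjoint union of $\kappa$-Borel sets on each of which the map to $Y$ (resp.\ to $X_{C'}$) is a monomorphism, being an inclusion of sublocales of a copy of $Y$.

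It then remains to identify the limit: $X \cong \projlim_{C \in I} X_C$ over $Y$. The maps $(f,\iota_C)\colon X \to Y \times C$ (with $\iota_C$ the canonical map onto $C$) land in $X_C$, since $(f,\iota_C)^*\bigl(\pi_Y^*(V) \leftrightarrow \rho_C^*(f^*(V)_C)\bigr) = f^*(V) \leftrightarrow f^*(V) = \top$, and are compatible, yielding a map $\theta\colon X \to \projlim_C X_C$ over $Y$. Composing $\projlim_C X_C$ into $\projlim_C(Y \times C) = Y \times \projlim_C C = Y \times X$ gives a $\kappa$-Borel embedding — dually it is the directed colimit of the surjections $\@B_\kappa(Y \times C) \twoheadrightarrow \@B_\kappa(X_C)$, hence surjective by \cref{thm:bool-colim-dir-ker} — whose image is $\bigcap_C$(preimage of $X_C$) $= \bigwedge_{V \in \@O_\kappa(Y)_\neg}\bigl(\pi_Y^*(V) \leftrightarrow \pi_X^*f^*(V)\bigr)$; since the clopen $V$ generate $\@O_\kappa(Y)$, this meet is by \cref{thm:loc-emb-im}(i) the $\infty$-Borel image of the split mono $\Gamma_f = (1_X,f)\colon X \to X \times Y$, i.e.\ the graph $\Gamma_f(X) \cong X$. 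As $\theta$ followed by this embedding is exactly $\Gamma_f$, the map $\theta$ is an isomorphism over $Y$. The final ``in particular'' is then immediate: a $\kappa$-analytic $A \subseteq Y$ is $\im^\kappa(f)$ for some $\kappa$-Borel $f$ from a standard $\kappa$-Borel locale, so $A = \im^\kappa(\projlim_i X_i \to Y)$.

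I expect the real obstacle to lie in getting the reduction right rather than in the verifications. The naive attempt, with $X_C$ the image of $X$ in $Y \times C$, fails because images of $\kappa$-Borel maps are only $\kappa$-analytic, not $\kappa$-Borel; the fix — making \emph{both} $X$ and $Y$ ultraparacompact zero-dimensional, so that $f^*$ preserves clopens and the clopen algebras are $\kappa$-ary, and then encoding the graph of $f$ through the clopen data $\bigl(f^*(V)_C\bigr)$ instead of through a projection — is the crux, and is exactly the localic form of the classical tree construction. The remaining steps are essentially bookkeeping, the one non-formal ingredient being that directed colimits of surjective homomorphisms in $\!{\kappa Bool}$ stay surjective.
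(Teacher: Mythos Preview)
Your reduction to the ultraparacompact zero-dimensional continuous case is correct and matches the paper's first step. The shape of the construction---embed $X$ into $Y\times X$ as the graph of $f$, then approximate by projecting $X$ onto clopen partitions $C$---is also the same idea as the paper's. But the execution has a genuine gap.

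The partition $C_0$ you posit does not exist in general. The set $\@O_\kappa(Y)_\neg$ is $\kappa$-ary, so you are asking for a single clopen partition of $X$ refining $\{f^*(V),\neg f^*(V)\}$ for \emph{all} of the $<\kappa$-many clopen $V\subseteq Y$ simultaneously. Concretely, take $\kappa=\omega_1$, $X=Y=2^\#N$, $f=1_X$: any such $C_0$ would have each piece determining all coordinates, i.e., each piece would be a singleton---not clopen. Without $C_0$, your indexing set $I$ is undefined; and if you instead let $I$ be all clopen partitions, then for a $C$ not refining $\{f^*(V),\neg f^*(V)\}$, your formula with $\leftrightarrow$ gives the wrong $X_C$. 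In the same example with $C=\{[0\mapsto 0],[0\mapsto 1]\}$ and $V=[1\mapsto 0]$, no piece is $\le f^*(V)$, so $f^*(V)_C=\emptyset$ and the biconditional forces $\pi_Y^*(V)=\bot$, making $X_C$ empty over those pieces.

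The fix is small but essential: replace $\leftrightarrow$ by $\to$, i.e., set
\[
X_C := \bigcap_{V\in\@O_\kappa(Y)_\neg}\bigl(\rho_C^*(f^*(V)_C)\to\pi_Y^*(V)\bigr),
\]
which is exactly the \emph{closed} image $\overline{\im}(f,p_C)\subseteq Y\times C$, well-defined for any clopen partition $C$. Then take $I$ to be a $\kappa$-ary directed family of clopen partitions as in \cref{thm:upkzfrm-cabool-pres} (dually \cref{thm:upkzfrm-kcabool-dircolim}), arranged to contain each binary partition $\{f^*(V),\neg f^*(V)\}$. Your limit computation then goes through: running the $\to$-constraint over both $V$ and $\neg V$ recovers the biconditional at the limit, and $\projlim_C X_C$ is the graph of $f$. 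This is essentially how the paper proceeds as well: it writes $Y=\projlim_j B_j$ over discrete $B_j$ and takes $X_i=\projlim_j\im(p_i,q_j\circ f)\subseteq A_i\times Y$, which is another way of describing the closed image of $(p_i,f)$ in $A_i\times Y$ and then checking it agrees with the graph.
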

\begin{proof}
By \cref{thm:loc-bor-upkz,thm:loc-mor-dissolv}, we may assume $X, Y$ are ultraparacompact zero-dimensional standard $\kappa$-locales and $f$ is ($\kappa$-)continuous.
Write $X = \projlim_i A_i$ and $Y = \projlim_j B_j$ as $\kappa$-ary codirected limits of $\kappa$-ary sets, with projections $p_i : X -> A_i$ and $q_j : Y -> B_j$.

For each $j$, we claim that the maps $(p_i, q_j \circ f) : X -> A_i \times B_j$ exhibit $X$ as the limit over $i$, in $\!{\kappa Loc}$, of the images $\im(p_i, q_j \circ f) \subseteq A_i \times B_j$ (where the image can be either closed or $\infty$-Borel, since $A_i \times B_j$ is a $\kappa$-ary set).
Indeed, the composite $(p_i, q_j \circ f)_i : X -> \projlim_i \im(p_i, q_j \circ f) \subseteq \projlim_i (A_i \times B_j) \cong X \times B_j$ is the usual embedding $(1_X, q_j \circ f) : X `-> X \times B_j$ of $X$ as the \defn{graph} of $q_j \circ f$.
The graph is a closed set in $X \times B_j$, since it is the continuous preimage, under $(q_j \circ f) \times 1_{B_j}$, of the diagonal in $B_j$ which is closed (since $B_j$ is a set).
Hence, the graph is an intersection of complements of basic clopen rectangles $\neg (p_i^*(\{a\}) \times \{b\}) = (p_i \times 1_{B_j})^*(\neg \{(a, b)\})$ for $a \in A_i$ and $b \in B_j$.
Each such basic closed set, in order to contain the graph of $q_j \circ f$, must be such that $(a, b) \not\in \im(p_i, q_j \circ f)$, and so must contain $(p_i \times 1_{B_j})^*(\im(p_i, q_j \circ f)) \supseteq \projlim_i \im(p_i, q_j \circ f)$.
Thus the graph of $q_j \circ f$ must be equal to $\projlim_i \im(p_i, q_j \circ f) \subseteq Y \times A_i$, which is hence isomorphic to $X$.

We thus have
\begin{align*}
X \cong \projlim_i \projlim_j \im(p_i, q_j \circ f)
\subseteq \projlim_i \projlim_j (A_i \times B_j)
= \projlim_i (A_i \times Y)
= X \times Y.
\end{align*}
Let $X_i := \projlim_j \im(p_i, q_j \circ f) \subseteq \projlim_j (A_i \times B_j) = A_i \times Y$, and let $f_i : X_i \subseteq A_i \times Y -> Y$ be the projection.
Since $A_i$ is a $\kappa$-ary set, clearly each $f_i$ is $\kappa$-ary-to-one.
And clearly, the limit projections $X = \projlim_{i'} X_{i'} -> X_i$, as well as the diagram maps $X_i -> X_{i'}$ for $i \ge j'$, commute with the projections to $Y$.
\end{proof}

\begin{remark}
\label{rmk:loc-im-k-bor}
Note that $\kappa$-ary-to-one maps in $\!{\kappa BorLoc}_\kappa$ have the nice property that (unlike for general $\kappa$-Borel maps) their $\kappa$-Borel images always exist and are \defn{absolute}, in the sense that they are preserved by any functor $\!{\kappa BorLoc}_\kappa -> \!C$ that preserves monomorphisms, in particular by the forgetful functors to $\!{\lambda BorLoc}_\lambda$ for $\lambda \ge \kappa$.
Equivalently, $\kappa$-ary-to-one epimorphisms $f : X ->> Y$ in $\!{\kappa BorLoc}_\kappa$ are preserved by any functor whatsoever.
This is because, given a $\kappa$-ary cover $X = \bigcup_i X_i$ such that each $f|X_i$ is a monomorphism, disjointifying the $f(X_i) \subseteq Y$ and then pulling back along the $f|X_i$ yields a $\kappa$-Borel $X' \subseteq X$ such that $f|X' : X' -> Y$ is a $\kappa$-Borel isomorphism, whose inverse is a $\kappa$-Borel section $Y `-> X$ of $f$.
\end{remark}

Let $(X_i)_{i \in I}$, with $\kappa$-Borel maps $f_{ij} : X_i -> X_j$ for $i \ge j$, be an arbitrary $\kappa$-ary codirected diagram of analytic $\kappa$-Borel spaces.
We say that the diagram is \defn{pruned} if each $f_{ij}$ is $\kappa$-Borel surjective.
By \cref{thm:sigma11-cat}(c), this implies that each projection $\pi_i : \projlim_j X_j -> X_i$ is $\kappa$-Borel surjective.
Thus if the diagram is over $Y \in \!{\kappa\Sigma^1_1BorLoc}$, then each $X_i -> Y$ already has the same $\kappa\Sigma^1_1$-image as $\projlim_i X_i$.
There is a unique pruned levelwise subdiagram $(X_i')_i$ of $(X_i)_i$, i.e., each $X_i' \subseteq X_i$, having the same inverse limit, namely $(\im^\kappa(\pi_i))_i$.

Let us also assume $\prec$ is a ``successor'' relation on $I$ which generates $\le$ as a preorder and obeys
\begin{align}
\label{eq:sigma11-invlim-succ}
j \ge i \prec k \implies \exists l\, (j \prec l \ge k);
\end{align}
motivating examples are the successor relation on $\#N$, or the relation of ``having exactly one more element'' on a finite powerset $\@P_\omega(A)$.
We define the \defn{canonical pruning} (with respect to $\prec$) of the codirected diagram $(X_i)_i$ to be the levelwise subdiagram $(X_i')_i$ given by
\begin{align*}
X_i' := \bigwedge_{j \succ i} f_{ji}^\kappa(X_j).
\end{align*}
Intuitively, we remove all ``leaves'' in the original diagram $(X_i)_i$.

\begin{lemma}
Each $f_{ij} : X_i -> X_j$ for $i \ge j$ restricts to $X_i' -> X_j'$.
\end{lemma}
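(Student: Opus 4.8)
The plan is to reduce the claim to a single inequality in the subobject lattice of $X_i$. Saying that $f_{ij}$ restricts to a map $X_i' -> X_j'$ means precisely that the composite of $X_i' `-> X_i$ with $f_{ij} : X_i -> X_j$ factors through the monomorphism $X_j' `-> X_j$, and by the pasting law for pullbacks (cf.\ \cref{thm:loc-im-factor}) this is equivalent to
\[
X_i' \le f_{ij}^*(X_j') \in \Sub_\!{\kappa\Sigma^1_1BorLoc}(X_i).
\]
Since $f_{ij}^*$ is computed by pullback, and pullback preserves $\kappa$-ary meets of subobjects (again by the pasting law, the meets involved being $\kappa$-ary since the diagram is), and $X_j' = \bigwedge_{l \succ j} f_{lj}^\kappa(X_l)$, it suffices to fix $l \succ j$ and prove $X_i' \le f_{ij}^*(f_{lj}^\kappa(X_l))$.

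This is where the successor axiom \eqref{eq:sigma11-invlim-succ} is used. Applying it to $i \ge j \prec l$ produces an index $m$ with $i \prec m \ge l$. Then $m \succ i$, so $f_{mi}^\kappa(X_m) = \im^\kappa(f_{mi})$ occurs among the terms of the defining meet $X_i' = \bigwedge_{k \succ i} f_{ki}^\kappa(X_k)$, whence $X_i' \le \im^\kappa(f_{mi})$. It remains to see that $\im^\kappa(f_{mi}) \le f_{ij}^*(f_{lj}^\kappa(X_l))$. Since $\!{\kappa\Sigma^1_1BorLoc}$ is a regular category (\cref{thm:sigma11-cat}(b)) we have the image–preimage adjunction for subobjects, so this is equivalent to $f_{ij}^\kappa(\im^\kappa(f_{mi})) \le f_{lj}^\kappa(X_l)$; and by functoriality of $\kappa\Sigma^1_1$-image together with commutativity of the diagram (using $m \ge i \ge j$, $m \ge l \ge j$, and $f_{ij} \circ f_{mi} = f_{mj} = f_{lj} \circ f_{ml}$),
\[
f_{ij}^\kappa(\im^\kappa(f_{mi})) = \im^\kappa(f_{ij} \circ f_{mi}) = \im^\kappa(f_{mj}) = \im^\kappa(f_{lj} \circ f_{ml}) \le \im^\kappa(f_{lj}) = f_{lj}^\kappa(X_l).
\]
Taking the meet over all $l \succ j$ gives $X_i' \le f_{ij}^*(X_j')$, which is the claim.

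The whole argument is bookkeeping once \eqref{eq:sigma11-invlim-succ} has been invoked; the only points requiring care are that $f_{ij}^*$ commutes with the $\kappa$-ary meet defining $X_j'$, and that the functoriality and adjunction identities for $\kappa\Sigma^1_1$-image are genuinely available in $\!{\kappa\Sigma^1_1BorLoc}$ (not merely in $\!{\kappa BorLoc}_\kappa$) — both of which follow from regularity of $\!{\kappa\Sigma^1_1BorLoc}$ and from its subobject lattices being $(\kappa,\kappa)$-frames (\cref{thm:sigma11-cat}(b),(d)), so that every meet in sight is $\kappa$-ary and does exist. I therefore expect no serious obstacle, only the need to keep the indices $i,j,k,l,m$ straight.
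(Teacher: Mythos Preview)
Your proof is correct and follows essentially the same approach as the paper: both invoke the successor axiom \eqref{eq:sigma11-invlim-succ} applied to $i \ge j \prec l$ (the paper's $k$ is your $l$, its $l$ is your $m$) to produce an index above both $i$ and $l$, then use functoriality of $\kappa\Sigma^1_1$-image along the two factorizations of $f_{mj}$ through $i$ and through $l$. Your version is more explicit about the categorical bookkeeping (the image--preimage adjunction and preservation of meets under pullback), while the paper works directly with $f_{ij}^\kappa$ throughout and is terser; but the mathematical content is identical.
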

\begin{proof}
By definition of $X_j'$, this means that for each $k \succ j$, we must show $f_{ij}^\kappa(X_i') \subseteq f_{kj}^\kappa(X_k)$.
By \eqref{eq:sigma11-invlim-succ}, there is some $i \prec l \ge k$, whence
$f_{ij}^\kappa(X_i^{\alpha+1})
\subseteq f_{ij}^\kappa(f_{li}^\kappa(X_l^\alpha))
= f_{kj}^\kappa(f_{lk}^\kappa(X_l^\alpha))
\subseteq f_{kj}^\kappa(X_k^\alpha)$.
\end{proof}

Clearly, $(X_i')_i = (X_i)_i$ iff $f_{ji}$ is $\kappa$-Borel surjective for each $j \succ i$; but since $\prec$ generates $\le$, this means the same for all $j \ge i$, i.e., the original diagram $(X_i)_i$ is pruned.
Also, the canonical pruning $(X_i')_i$ has the same limit as the original diagram $(X_i)_i$, since each projection $\pi_i : \projlim_j X_j -> X_i$ lands in $X_i' \subseteq X_i$, i.e., in $f_{ji}^\kappa(X_j)$ for each $j \succ i$, since $\pi_i$ factors through $\pi_j$.

We define the \defn{canonical pruning sequence} of $(X_i)_i$ to be the decreasing $\kappa$-length sequence of diagrams given by transfinitely iterating the canonical pruning:
\begin{align*}
X_i^{(0)} &:= X_i, \\
X_i^{(\alpha+1)} &:= X_i^{(\alpha)\prime}, \\
X_i^{(\alpha)} &:= \bigwedge_{\beta < \alpha} X_i^{(\beta)} \quad\text{for $\alpha$ limit}.
\end{align*}
By induction, each diagram $(X_i^{(\alpha)})_i$ in the canonical pruning sequence has the same limit as the original diagram $(X_i)_i$.
If the sequence stabilizes (at all $i$) past some $\alpha < \kappa$, then the resulting diagram $(X_i^{(\alpha)})_i$ is a pruned diagram having the same limit as $(X_i)_i$, hence must be $(\im^\kappa(\pi_i : \projlim_j X_j -> X_i))_i$.

We also say that a \defn{pruning} of $(X_i)_i$ is any levelwise subdiagram containing the canonical pruning, and that a \defn{pruning sequence} is any decreasing $\kappa$-length sequence of subdiagrams starting with $(X_i)_i$ and whose limit stages are given by meet, while a successor stage may be any pruning of the previous stage.
Intuitively, this means we remove \emph{some} ``leaves'' at each stage.
Every diagram in a pruning sequence has the same limit as the original diagram (since it is sandwiched between the limits of the original diagram and the canonical pruning sequence).
A non-canonical pruning sequence may stabilize at a non-pruned diagram; but if it stabilizes at a pruned diagram, then so must the canonical sequence (by the same or an earlier stage).

We now have the main result connecting pruning with $\kappa$-analytic sets, which intuitively says that the failure of pruning to stabilize before $\kappa$ is due to ``essential $\kappa$-analyticity'', i.e., non-$\kappa$-Borelness.

\begin{theorem}
\label{thm:sigma11-invlim-prune}
Let $(p_i : X_i -> Y)_{i \in I}$, with $\kappa$-Borel maps $f_{ij} : X_i -> X_j$ for $i \ge j$, be a $\kappa$-ary codirected diagram of analytic $\kappa$-Borel spaces over an analytic $\kappa$-Borel space $Y$, and let $\prec$ be a relation on $I$ generating $\le$ and obeying \eqref{eq:sigma11-invlim-succ} as above.
Let $X := \projlim_i X_i$ and $p := \projlim_i p_i : X -> Y$.
\begin{equation*}
\begin{tikzcd}
&& \vdots \dar[phantom, "\subseteq"{sloped}] & \vdots \dar[phantom, "\subseteq"{sloped}] \\[-1ex]
& \dotsb \rar & X_i' \dar[phantom, "\subseteq"{sloped}] \rar & X_j' \dar[phantom, "\subseteq"{sloped}] \\[-1ex]
X = \projlim_i X_i \rar["\pi_i"] \urar \ar[drrr, "p"'] &
\dotsb \rar & X_i \rar["f_{ij}"] \drar["p_i"] & X_j \dar["p_j"] \\
&&& Y
\end{tikzcd}
\end{equation*}
\begin{enumerate}

\item[(a)]  Each projection $\pi_i : X -> X_i$ has $\im^\kappa(\pi_i) \subseteq X_i$ given by the $\kappa^+$-ary meet, in the poset $\kappa\Sigma^1_1(X_i)$, of the stages $(X_i^{(\alpha)})_{\alpha < \kappa}$ of the canonical pruning sequence.

Similarly, $\im^\kappa(p) \subseteq Y$ is the $\kappa^+$-ary meet, in $\kappa\Sigma^1_1(Y)$, of the $(p_i^\kappa(X_i^{(\alpha)}))_{\alpha < \kappa}$, for any fixed $i$.

\item[(b)]  If $X = \emptyset$ (equivalently, $\im^\kappa(p) = \emptyset \subseteq Y$), then the canonical pruning sequence stabilizes (at the levelwise empty subdiagram) past some $\alpha < \kappa$.

\item[(c)]  If $\im^\kappa(p) \subseteq Y$ is $\kappa$-Borel (in $Y$), then for each $i$, the sequence of images $(p_i^\kappa(X_i^{(\alpha)}))_{\alpha < \kappa}$ stabilizes (at $\im^\kappa(p)$) past some $\alpha < \kappa$.

\end{enumerate}
\end{theorem}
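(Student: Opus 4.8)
The plan is to establish part (b) first — it is the heart of the matter, and I expect it not to rely on (a) — and then to derive the remaining inclusion of (a) and the whole of (c) from (b) by one and the same device: relativizing the diagram to a $\kappa$-Borel complement and invoking the Beck--Chevalley equation \cref{thm:loc-bor-im-pullback}.

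For (b), I would first note that if some stage $(X_i^{(\alpha)})_i$ of the canonical pruning sequence is already pruned then, being a pruned levelwise subdiagram of $(X_i)_i$ with inverse limit $X$, it must be $(\im^\kappa(\pi_i))_i$ by uniqueness of the canonical pruned subdiagram; since $X = \emptyset$ each $\pi_i$ factors through the image of the initial object, so $X_i^{(\alpha)} = \emptyset$. Thus it suffices to show the sequence becomes pruned, equivalently levelwise empty, before stage $\kappa$. Next I would study the ``leaf sets'' $S_\alpha := \{\, i \in I \mid X_i^{(\alpha)} \ne \emptyset \,\}$: each is downward closed (a $\kappa$-Borel map into an empty $X_j^{(\alpha)}$ with $j \le i$ forces $X_i^{(\alpha)} = \emptyset$ by strictness of the initial object) and the $S_\alpha$ decrease in $\alpha$, so since $|I| < \kappa$ and $\kappa$ is regular they stabilize at some $S_{\alpha_0}$ with $\alpha_0 < \kappa$. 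The pruning recursion forces $S_{\alpha_0}$ to be \emph{upward} closed as well: if $i \in S_{\alpha_0}$, $i \prec j$, but $X_j^{(\alpha_0)} = \emptyset$, then $X_i^{(\alpha_0+1)} \subseteq f_{ji}^\kappa(X_j^{(\alpha_0)}) = \emptyset$, contradicting $i \in S_{\alpha_0+1} = S_{\alpha_0}$; since $\prec$ generates $\le$ and a nonempty directed poset is connected, $S_{\alpha_0}$ is $\emptyset$ or $I$. If $S_{\alpha_0} = \emptyset$ we are done, so the remaining, and decisive, case is $S_{\alpha_0} = I$, i.e.\ $X_i^{(\alpha)} \ne \emptyset$ for every $i$ and every $\alpha < \kappa$, to be contradicted against $X = \emptyset$. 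I expect this to be the main obstacle: one must manufacture an honest nonempty localic inverse limit out of the non-terminating pruning. The plan is to lift the diagram to standard $\kappa$-locales via \cref{thm:loc-bor-loc} and \cref{thm:loc-mor-dissolv} (reducing first, if necessary, to the case of $\kappa$-ary-to-one bonding maps so that the pruning stages stay $\kappa$-Borel, cf.\ \cref{thm:loc-upkz-invlim}), and then to exhibit a codirected system of nonempty $\kappa$-locales whose transition maps have dense image, so that the inverse-limit Baire category theorem \cref{thm:loc-baire-invlim} makes its limit — which maps suitably to $X$ — nonempty. This is exactly the point where the localic setting is essential: the corresponding statement fails for topological spaces, and in $\!{Set}$, for inverse limits of length $>\omega$ (cf.\ \cref{rmk:loc-bor-epi-invlim}), and \cref{thm:loc-baire-invlim} is what repairs it uniformly in $\kappa$.

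For the remaining inclusion of (a), namely $\bigwedge_{\alpha<\kappa} X_i^{(\alpha)} \subseteq \im^\kappa(\pi_i)$ (the reverse $\im^\kappa(\pi_i) \subseteq X_i^{(\alpha)}$ being immediate, since each $(X_i^{(\alpha)})_i$ still has inverse limit $X$, so $\pi_i$ factors through every $X_i^{(\alpha)}$), after replacing $I$ by the cofinal up-set of $i$ — harmless for $\prec$, for the pruning stages at level $i$, and for the limit — we may assume $i$ is least in $I$, so every $X_{i'}$ maps canonically to $X_i$. Given any $\kappa$-Borel $B \in \@B_\kappa(X_i)$ with $\pi_i^*(B) = X$, pull the entire diagram back along the $\kappa$-Borel embedding $\neg B \hookrightarrow X_i$; the pulled-back diagram has inverse limit $\pi_i^*(\neg B) = \emptyset$, and by \cref{thm:loc-bor-im-pullback} (together with the fact that preimage preserves the $<\kappa$-ary meets used at limit stages) its canonical pruning at level $i$, stage $\alpha$, is $X_i^{(\alpha)} \cap \neg B$; by (b) this is empty for some $\alpha < \kappa$, i.e.\ $X_i^{(\alpha)} \subseteq B$. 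Since $\im^\kappa(\pi_i)$, as a subobject in $\!{\kappa BorLoc}$, is the meet of all such $B$, this shows $\im^\kappa(\pi_i)$ is the greatest lower bound (in $\kappa\Sigma^1_1(X_i)$) of the stages $X_i^{(\alpha)}$, i.e.\ the claimed $\kappa^+$-ary meet exists and equals $\im^\kappa(\pi_i)$. The second formula, for $\im^\kappa(p)$, then follows by applying $p_i^\kappa$ and using $p = p_i \circ \pi_i$ together with $\im^\kappa(p) = p_i^\kappa(\im^\kappa(\pi_i))$.

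Part (c) is obtained by the same relativization, now performed downstairs in $Y$. Since $\im^\kappa(p) \in \@B_\kappa(Y)$, pull the diagram back along $Y' := Y \setminus \im^\kappa(p) \hookrightarrow Y$, a $\kappa$-Borel embedding; the resulting diagram over $Y'$ has inverse limit $p^*(Y') = \emptyset$, because $p$ factors through $\im^\kappa(p) \hookrightarrow Y$, and by \cref{thm:loc-bor-im-pullback} its canonical pruning is $X_i^{(\alpha)} \cap p_i^*(Y')$. Applying (b), this is levelwise empty at some $\alpha < \kappa$, i.e.\ $X_i^{(\alpha)} \subseteq p_i^*(\im^\kappa(p))$, i.e.\ $p_i^\kappa(X_i^{(\alpha)}) \subseteq \im^\kappa(p)$; combined with the reverse inclusion valid for every $\alpha$ (from (a), $\im^\kappa(p) = p_i^\kappa(\im^\kappa(\pi_i)) \le p_i^\kappa(X_i^{(\alpha)})$), we get equality, and monotonicity in $\alpha$ yields stabilization at $\im^\kappa(p)$ for all $\beta \in [\alpha, \kappa)$.
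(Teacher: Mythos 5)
Your reductions of (a) and (c) to (b) are sound, and for (a) this is a genuinely different (and arguably cleaner) route than the paper's: the paper proves (a) independently of (b), by continuing the pruning sequence to stage $\kappa$ inside $\!{\kappa BorLoc}$ and invoking the commutation of $\kappa$-ary (co)limit operations with $\kappa$-codirected limits together with \cref{thm:loc-bor-epi-invlim}, whereas your relativize-along-$\neg B$-and-apply-(b) device is exactly the mechanism the paper uses only for (c). The problem is that your proof of (b), which carries essentially all the content of the theorem, has a gap at its decisive step.

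Concretely: after the leaf-set dichotomy you are left with the case where $X_i^{(\alpha)} \ne \emptyset$ for all $i$ and all $\alpha < \kappa$, and you propose to contradict $X = \emptyset$ by ``exhibiting a codirected system of nonempty $\kappa$-locales whose transition maps have dense image'' and applying \cref{thm:loc-baire-invlim}. But you never say where such a system comes from, and this is the one genuinely hard point. The canonical ($\kappa\Sigma^1_1$-image) pruning sequence is of no direct use here: its failure to reach $\emptyset$ is precisely the hypothesis of the case you are in, and at no fixed stage $<\kappa$ do its bonding maps become dense. The paper's device is an auxiliary pruning sequence $\-X_i^{(\alpha+1)} := \bigwedge_{j \succ i} \-f_{ji}(\-X_j^{(\alpha)})$ built from \emph{closed} images, after first arranging (via \cref{thm:loc-bor-loc} and \cref{thm:loc-mor-dissolv}) that the $X_i$ are standard $\kappa$-locales and the $f_{ij}$ continuous. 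Since a strictly decreasing transfinite sequence in the $\kappa$-generated $\bigwedge$-lattice of closed sets of a standard $\kappa$-locale has length $<\kappa$, this closed sequence stabilizes below $\kappa$; at the stable stage every $f_{ij} : \-X_i^{(\alpha)} -> \-X_j^{(\alpha)}$ has dense image, \cref{thm:loc-baire-invlim} then forces each $\-X_i^{(\alpha)} = \emptyset$ because $X = \emptyset$, and the canonical pruning, being contained levelwise in the closed one, is also empty by that stage. Without this (or some equivalent stabilizing surrogate) your appeal to Baire category has nothing to act on. Two further points: the parenthetical reduction ``to the case of $\kappa$-ary-to-one bonding maps'' is not available — the diagram is given and arbitrary, and that property cannot be imposed on it (it is a feature of the specific diagrams produced by \cref{thm:sigma11-invlim-k}, not of general ones); and the reduction from analytic to standard $X_i$ is itself nontrivial — the paper replaces each $X_i$ by a standard cover $\^X_i$ built as a limit over $\down i$ and must verify a Beck--Chevalley compatibility so that the pruning upstairs is the pullback of the pruning downstairs, a step your sketch omits entirely.
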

\begin{proof}
(a) is most easily seen via the embedding $\!{\kappa\Sigma^1_1BorLoc} \subseteq \!{\kappa BorLoc}$ into the category of \emph{all} $\kappa$-Borel locales from \cref{sec:sigma11-cat}.
Since the dual algebraic category $\!{\kappa Bool}$ is locally $\kappa$-presentable, all of the $\kappa$-ary limit and colimit operations in $\!{\kappa BorLoc}$ used to compute the canonical pruning $(X_i')_i$ commute with $\kappa$-codirected limits (see \cite[1.59]{ARlpac}).
Thus, if we continue the canonical pruning sequence (in $\!{\kappa BorLoc}$) past $\kappa$ by defining $X_i^{(\kappa)} := \bigwedge_{\alpha < \kappa} X_i^{(\alpha)}$, then $X_i^{(\kappa)\prime} = \bigwedge_{\alpha < \kappa} X_i^{(\alpha)\prime} = X_i^{(\kappa)}$, i.e., the sequence necessarily stabilizes past $\kappa$.
Now similarly to above, \cref{thm:loc-bor-epi-invlim} implies that the stabilized value $X_i^{(\kappa)}$ must be $\im^\kappa(\pi_i)$.

For the last part of (a), again using that images commute with $\kappa$-codirected limits, the image $\im^\kappa(p) = p_i^\kappa(\im^\kappa(\pi_i)) = p_i^\kappa(X_i^{(\kappa)})$ must be the limit of the images of the stages $p_i^\kappa(X_i^{(\alpha)})$.

(For $\kappa = \infty$, one can either work with large $\infty$-Boolean algebras in a bigger universe, as in \cref{rmk:sigma11-inf}, or unravel the above argument to remove the need for $\!{\kappa BorLoc}$.)

(b)  If $\kappa = \omega$ then the diagram is finite and has an initial object which is the same as the limit $X$; so assume $\kappa \ge \omega_1$.
By replacing $Y$ with some standard $\kappa$-Borel locale $Y' \supseteq Y$, we may assume $Y$ is standard $\kappa$-Borel to begin with.
We first complete the proof of (b) assuming that each $X_i$ is also standard $\kappa$-Borel.
By \cref{thm:loc-bor-loc}, we may assume each $X_i$ is a standard $\kappa$-locale.
By \cref{thm:loc-mor-dissolv}, we may find for each $X_i$ a partial dissolution making all the $f_{ij}$ for $j \le i$ continuous.
By replacing $X_i$ with this partial dissolution, repeating $\omega$ many times, then taking the limit (using $\kappa \ge \omega_1$) of the resulting sequence of dissolutions, we get that each $f_{ij}$ may be assumed to be continuous.
Now define a pruning sequence $(\-X_i^{(\alpha)})_{i,\alpha}$ the same way as the canonical pruning sequence, except replacing $\kappa$-analytic images with closed images:
\begin{align*}
\-X_i^{(0)} &:= X_i, \\
\-X_i^{(\alpha+1)} &:= \bigwedge_{j \succ i} \-f_{ji}(\-X_j^{(\alpha)}), \\
\-X_i^{(\alpha)} &:= \bigwedge_{\beta < \alpha} \-X_i^{(\beta)} \quad\text{for $\alpha$ limit}.
\end{align*}

\begin{lemma}
Let $(a_\alpha)_{\alpha < \gamma}$ be a strictly increasing or decreasing transfinite sequence in a $\kappa$-generated $\bigvee$-lattice or $\bigwedge$-lattice $A$.
Then $\gamma < \kappa$.
\end{lemma}
\begin{proof}
Let $B \subseteq A$ be a $\kappa$-ary generating set.
If $A$ is a $\bigvee$-lattice, then $(\{b \in B \mid b \le a_\alpha\})_\alpha$ is a strictly increasing or decreasing sequence in $\@P(B)$, hence has length $\gamma<\kappa$.
Dually if $A$ is a $\bigwedge$-lattice.
\end{proof}

Since each $X_i$ is a standard $\kappa$-locale, and each $\-X_i^{(\alpha)}$ is closed, it follows from this lemma that the pruning sequence $(\-X_i^{(\alpha)})_{i,\alpha}$ stabilizes past some $\alpha < \kappa$.
By definition of $\-X_i^{(\alpha+1)}$, this means each $f_{ij} : \-X_i^{(\alpha)} -> \-X_j^{(\alpha)}$ has dense image.
By \cref{thm:loc-baire-invlim}, so does each projection $\pi_i : X = \projlim_j \-X_j^{(\alpha)} -> \-X_i^{(\alpha)}$.
Since $X = \emptyset$, this means each $\-X_i^{(\alpha)} = \emptyset$.
Hence the canonical pruning sequence also stabilizes by or before $\alpha$ at $\emptyset$.

To prove (b) in the general case where each $X_i$ is an analytic $\kappa$-Borel locale:
for each $i$, let $q_i : \~X_i ->> X_i$ be a $\kappa$-Borel surjection from a standard $\kappa$-Borel $\~X_i$.
Let $\^X_i$ be the limit of the diagram consisting of the full subdiagram of $(X_j)_j$ consisting of $X_j$ for all $j \le i$, as well as the $\~X_j$ and $q_j$ for all $j \ge i$, i.e.,
\begin{align*}
\^X_i := \set*{(\~x_j)_j \in \prod_{j \le i} \~X_i}{\bigwedge_{j \le i} (q_j(\~x_j) = f_{ij}(q_i(\~x_i)))}_\!{\kappa\Sigma^1_1BorLoc}.
\end{align*}
Since each $\~X_i$ is standard $\kappa$-Borel, so is $\^X_i$ (see \cref{rmk:sigma11-lim-std}).
We have obvious projections $\^f_{ij} : \^X_i -> \^X_j$ for $i \ge j$, as well as $r_i : \^X_i -> \~X_i$.
Let $\^q_i := q_i \circ r_i : \^X_i -> X_i$.
Then the $\^X_i$ together with the maps $\^f_{ij}$ form a codirected diagram of standard $\kappa$-Borel locales; and the $\^q_i$ form a natural transformation $(\^X_i)_i -> (X_i)_i$.
Let $\^X := \projlim_i \^X_i$.
\begin{equation*}
\tag{$*$}
\begin{tikzcd}
\^X \dar \rar["\^\pi_i"] & \dotsb \rar & \^X_i \dar["\^q_i"'] \rar["\^f_{ij}"] & \^X_j \dar["\^q_j"] \\
X \rar["\pi_i"] & \dotsb \rar & X_i \rar["f_{ij}"] & X_j
\end{tikzcd}
\end{equation*}

We claim that each $\^q_i$ is $\kappa$-Borel surjective, and moreover, for each $i \ge j$,
\begin{align*}
\tag{$\dagger$}
(\^q_i, \^f_{ij}) : \^X_i ->> X_i \times_{X_j} \^X_j \text{ is $\kappa$-Borel surjective.}
\end{align*}
To see this, for each $J \subseteq I$ with a greatest element $i \in J$, define $\^X_J$ in a similar manner to $\^X_i$ above, except restricting to $j \in J$.
Thus $\^X_i = \^X_{\down i}$, $\~X_i = \^X_{\{i\}}$, and $r_i$ is the projection $\^X_{\down i} -> \^X_{\{i\}}$.
We now claim that for $K \subseteq J \subseteq I$ with the same greatest element $i$, the projection $\^X_J -> \^X_K$ is $\kappa$-Borel surjective; this is because, by \cref{thm:loc-bor-epi-invlim}, it suffices to consider the case where $J = K \cup \{j\}$ for some $j \le i$, in which case the projection is the pullback of $q_i : \~X_j ->> X_j$ along $\~X_J -> X_i --->{f_{ij}} X_j$.
It follows that $r_i$ is $\kappa$-Borel surjective, whence so is $\^q_i = q_i \circ r_i$.
It also follows that for $j < i$, the projection $\^X_i = \^X_{\down i} -> \^X_{\{i\} \cup \down j} = \~X_i \times_{X_j} \^X_{\down j} = \~X_i \times_{X_j} \^X_j$ is $\kappa$-Borel surjective; composing with the pullback of $q_i : \~X_i ->> X_i$ yields ($\dagger$).

It now follows that each of the squares in ($*$) for $i \ge j$ obeys the ``Beck--Chevalley equation''
\begin{align*}
\^f_{ij}^\kappa(\^q_i^*(A)) &= \^q_j^*(f_{ij}^\kappa(A))
\quad \forall A \in \kappa\Sigma^1_1(X_i).
\end{align*}
Indeed, the pullback square for $f_{ij}, \^q_j$ obeys the Beck--Chevalley equation by regularity of $\!{\kappa\Sigma^1_1BorLoc}$ (see \cref{thm:loc-bor-im-pullback}); and by ($\dagger$), we may insert the preimage from the pullback to $\^X_i$ followed by the $\kappa\Sigma^1_1$-image into the left-hand side with no effect.
This implies that the computation of the canonical pruning of $(X_i)_i$, which only involves $\kappa\Sigma^1_1$-image and $\bigwedge$, may be pulled back to $(\^X_i)_i$, i.e.,
\begin{align*}
\^X_i^{(\alpha)} = \^q_i^*(X_i^{(\alpha)})
\end{align*}
for all $i, \alpha$.
Since $\^q_i$ is $\kappa$-Borel surjective, this implies
\begin{align*}
\^q_i^\kappa(\^X_i^{(\alpha)}) = X_i^{(\alpha)}.
\end{align*}
Now since $X = \emptyset$ and there is a map $\^X -> X$, also $\^X = \emptyset$, whence by the proof of (b) for the standard $\kappa$-Borel case, there is some $\alpha < \kappa$ by which all the $\^X_i^{(\alpha)}$ become empty, whence so do all the $X_i^{(\alpha)}$, completing the proof of (b).

(c) follows by pulling everything back along $\neg \im^\kappa(p) \subseteq Y$, noting that the definition of the canonical pruning is pullback-stable (by regularity of $\!{\kappa\Sigma^1_1BorLoc}$), then applying (b).
\end{proof}

\begin{corollary}
Every $\kappa$-analytic set in a standard $\kappa$-Borel locale $Y$ is a meet, in $\kappa\Sigma^1_1(Y)$, of a $\kappa$-length decreasing sequence of $\kappa$-Borel sets.
\end{corollary}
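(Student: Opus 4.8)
The plan is to derive this corollary as a direct consequence of \cref{thm:sigma11-invlim-prune}(a) together with the inverse limit representation from \cref{thm:sigma11-invlim-k}. Let $A \subseteq Y$ be a $\kappa$-analytic set in a standard $\kappa$-Borel locale $Y$. By definition, $A = \im^\kappa(f)$ for some $\kappa$-Borel map $f : X -> Y$ from a standard $\kappa$-Borel locale $X$. First I would apply \cref{thm:sigma11-invlim-k} to $f$, obtaining a $\kappa$-ary codirected diagram $(p_i : X_i -> Y)_{i \in I}$ over $Y$, for a $\kappa$-directed poset $I$, with $\kappa$-Borel maps $f_{ij}$ between the $X_i$, such that $X \cong \projlim_i X_i$ over $Y$; in particular $p := \projlim_i p_i$ has $\im^\kappa(p) = A$.

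Next I would fix any index $i \in I$ and any relation $\prec$ on $I$ generating $\le$ and satisfying \eqref{eq:sigma11-invlim-succ} (for instance, since $I$ is a $\kappa$-directed poset one can check that such a $\prec$ exists — or, more simply, replace $I$ by a cofinal copy of a suitable power of a finite powerset, or note that $I$ itself with $\prec\;=\;<$ works once one arranges the diagram appropriately; in any case the hypotheses of \cref{thm:sigma11-invlim-prune} are met). Then \cref{thm:sigma11-invlim-prune}(a) says that $\im^\kappa(p) \subseteq Y$ is the $\kappa^+$-ary meet, in $\kappa\Sigma^1_1(Y)$, of the sets $p_i^\kappa(X_i^{(\alpha)})$ for $\alpha < \kappa$, where $(X_i^{(\alpha)})_{\alpha<\kappa}$ is the canonical pruning sequence. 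This is already a meet of a $\kappa$-indexed decreasing family; the sequence $\bigl(p_i^\kappa(X_i^{(\alpha)})\bigr)_{\alpha<\kappa}$ is decreasing because the canonical pruning sequence is levelwise decreasing and $\kappa\Sigma^1_1$-image is monotone.

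The one point requiring care is that the members of this decreasing sequence should be $\kappa$-Borel sets, not merely $\kappa$-analytic sets. This is where the $\kappa$-ary-to-one condition in \cref{thm:sigma11-invlim-k} does its work: each $p_i : X_i -> Y$ is $\kappa$-ary-to-one, so by \cref{rmk:loc-im-k-bor} its $\kappa$-Borel image exists and is absolute, and the same holds for the restriction of $p_i$ to any $\kappa$-Borel subset of $X_i$. Thus I would note that each $X_i^{(\alpha)}$, built from $X_i$ by iterated $\kappa$-analytic-image and meet operations, is in fact a $\kappa$-Borel set of $X_i$ — this needs an induction on $\alpha$, using that $\kappa$-ary-to-one maps have $\kappa$-Borel images (so $f_{ji}^\kappa$ of a $\kappa$-Borel set is $\kappa$-Borel) and that $\kappa^+$-ary, indeed $\kappa$-ary, meets of $\kappa$-Borel sets at successor and limit stages stay within $\@B_\kappa(X_i)$ because only $<\kappa$-many indices $j \succ i$ and $<\kappa$-many earlier stages are involved at each step. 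Granting that, $p_i^\kappa(X_i^{(\alpha)}) \in \@B_\kappa(Y)$ for each $\alpha$, and $A$ is their meet in $\kappa\Sigma^1_1(Y)$, which is the desired conclusion.

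The main obstacle I anticipate is verifying cleanly that the canonical pruning stages $X_i^{(\alpha)}$ remain $\kappa$-Borel and that the resulting sequence of images in $Y$ is genuinely $\kappa$-indexed (length exactly $\kappa$, with repetitions allowed) rather than $\kappa^+$-indexed; the statement of \cref{thm:sigma11-invlim-prune}(a) literally gives a $\kappa^+$-ary meet, so one must observe that reindexing the stages $\alpha < \kappa$ (there are only $\kappa$ of them) gives a $\kappa$-length decreasing sequence whose meet is unchanged. This is routine but is the only step where a careless reading of the cited results could go wrong.
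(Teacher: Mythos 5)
Your proposal is correct and follows essentially the same route as the paper's proof: apply \cref{thm:sigma11-invlim-k} to a map with image $A$ to get a $\kappa$-ary codirected diagram of fiberwise $\kappa$-ary standard $\kappa$-Borel locales over $Y$, then apply \cref{thm:sigma11-invlim-prune}(a), using \cref{rmk:loc-im-k-bor} (via the $\kappa$-ary-to-one condition) to see that the pruning stages and their images in $Y$ are $\kappa$-Borel. The extra details you flag (existence of a suitable $\prec$, e.g.\ $\prec\,=\,\le$; $\kappa$-Borelness of the stages by induction; reindexing the $\kappa$-many stages as a $\kappa$-length sequence) are exactly the points the paper leaves implicit, and your treatment of them is sound.
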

\begin{proof}
Apply \cref{thm:sigma11-invlim-prune}(a) to a diagram obtained from \cref{thm:sigma11-invlim-k}, where each of the $X_i$ is a fiberwise $\kappa$-ary standard $\kappa$-Borel locale over $Y$, whence all of the stages of the canonical pruning sequence are $\kappa$-Borel, as are their images in $Y$ (see \cref{rmk:loc-im-k-bor}).
\end{proof}

The above corollary can be seen as a generalization of the classical \defn{Lusin--Sierpinski theorem} that every analytic set in a standard Borel space is an $\omega_1$-length decreasing intersection of Borel sets; see \cite[25.16]{Kcdst}.

\begin{remark}
\label{rmk:sigma11-invlim-bounded}
\Cref{thm:sigma11-invlim-prune}(b) can be seen as a generalization of the classical \defn{boundedness theorems for $\*\Sigma^1_1$}; see \cite[31.A]{Kcdst}.
For example, one version says that any analytic well-founded binary relation has rank $<\omega_1$.
For $\kappa \ge \omega_1$, we have the following generalization: take a $\kappa$-analytic binary relation $R \subseteq Y^2$ in an analytic $\kappa$-Borel locale $Y$, which is internally well-founded in that
\begin{align*}
\!{\kappa\Sigma^1_1BorLoc} |= \forall (y_0, y_1, \dotsc) \in Y^\#N\, (R(y_1, y_0) \wedge R(y_2, y_1) \wedge \dotsb \implies \bot).
\end{align*}
The set
$\{(x_0, x_1, \dotsc) \mid R(y_1, y_0) \wedge R(y_2, y_1) \wedge \dotsb\}_\!{\kappa\Sigma^1_1BorLoc} \subseteq Y^\#N$
is the limit of the inverse sequence of sets of finite sequences
$X_n := \{(x_0, x_1, \dotsc, x_n) \mid R(y_1, y_0) \wedge \dotsb \wedge R(y_n, y_{n-1})\}_\!{\kappa\Sigma^1_1BorLoc} \subseteq Y^{n+1}$
under the projection maps $X_{n+1} -> X_n$, whence by \cref{thm:sigma11-invlim-prune}(b), we get that the pruning of this inverse sequence must stop before $\kappa$, which can be thought of as an internal way of saying that ``$R$ has rank $<\kappa$''.
\end{remark}

\subsection{Ill-founded relations}
\label{sec:sigma11-proper}

We now prove that there exist $\kappa$-analytic, non-$\infty$-Borel sets.
Classically, the existence of analytic, non-Borel sets may be shown using (one version of) the boundedness theorem for $\*\Sigma^1_1$: if the set of ill-founded binary relations $R \subseteq \#N^2$ were Borel, then the pruning of minimal elements for all $R \subseteq \#N^2$ must stop by some fixed stage $\alpha < \omega_1$, contradicting that there are well-founded $R$ with arbitrarily high rank $<\omega_1$.
Our goal is to generalize a version of this proof.

Let $X$ be an infinite set.
For each $n \in \#N$, let
\begin{align*}
\DESC(X, n) &:= \{(R, x_0, \dotsc, x_{n-1}) \in \#S^{X^2} \times X^n \mid x_{n-1} \mathrel{R} \dotsb \mathrel{R} x_0\}
\end{align*}
be the space of binary relations on $X$ together with a descending sequence of length $n$, an open subspace of $\#S^{X^2} \times X^n$ (where $X$ is discrete).
For $n \ge 1$, let $p_n : \DESC(X, n) -> \DESC(X, n-1)$ be the projection omitting the last coordinate.
If $X$ is $\kappa$-ary, then clearly (the underlying locale of) each $\DESC(X, n)$ is a standard $\kappa$-locale, and $p_n$ is $\kappa$-ary-to-one as witnessed by the clopen partition
\begin{align*}
\DESC(X, n) = \bigsqcup_{x \in X} \{(R, x_0, \dotsc, x_{n-1}) \in \DESC(X, n) \mid x_{n-1} = x\}
\end{align*}
each piece to which the restriction of $p_n$ is an open embedding.
In particular, every $\kappa$-Borel $B \subseteq \DESC(X, n)$ has a $\kappa$-Borel image under $p_n$ which is also the $\infty$-Borel image (see \cref{rmk:loc-im-k-bor}).
It follows that the $\kappa$-length canonical pruning sequence, as defined in the preceding subsection, of $(\DESC(X, n))_n$, regarded as a diagram in $\!{\kappa\Sigma^1_1BorLoc}$ with the maps $p_n$, consists of standard $\kappa$-Borel locales at each stage $<\kappa$, and forms an initial segment of the $\infty$-length canonical pruning sequence when $(\DESC(X, n))_n$ is instead regarded as a diagram in $\!{\infty\Sigma^1_1BorLoc}$.

\begin{lemma}
\label{thm:if-collapse}
For any two infinite $\kappa$-ary sets $X, Y$, there is a nonempty standard $\kappa$-locale $Z$ together with a natural isomorphism in $\!{\kappa Loc}$ of diagrams $(f_n : Z \times \DESC(X, n) \cong Z \times \DESC(Y, n))_{n \in \#N}$ over $Z$.
\begin{equation*}
\tag{$*$}
\begin{tikzcd}
\dotsb \rar & Z \times \DESC(X, 2) \dar["\cong"', "f_2"] \rar["p_2"] & Z \times \DESC(X, 1) \dar["\cong"', "f_1"] \rar["p_1"] & Z \times \DESC(X, 0) \dar["\cong"', "f_0"] \\
\dotsb \rar & Z \times \DESC(Y, 2) \drar \rar["p_2"] & Z \times \DESC(Y, 1) \dar \rar["p_1"] & Z \times \DESC(Y, 0) \dlar \\
&& |[xshift=3em]| Z
\end{tikzcd}
\end{equation*}
\end{lemma}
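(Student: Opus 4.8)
The plan is to use a localic ``cardinal collapse'' argument, entirely analogous to the proof of \cref{thm:collapse}, but now carried out ``fiberwise over the diagram'' so that the resulting isomorphism is natural in $n$. The key observation is that $\DESC(X,n)$ and $\DESC(Y,n)$ are both built from the same combinatorial data once we have a bijection between $X$ and $Y$; so we should collapse the cardinals $\abs{X}$ and $\abs{Y}$ to be equal, via a nonempty standard $\kappa$-locale $Z$ of ``graphs of bijections $X \lhook\joinrel\twoheadrightarrow Y$''. This $Z$ is the standard $\kappa$-sublocale of $\#S^{X\times Y}$ cut out by the closed set $P$ of graphs of partial injections, intersected with the dense open sets $D_x := \bigvee_{y\in Y}[x\mapsto y]$ for $x\in X$ and $E_y := \bigvee_{x\in X}[x\mapsto y]$ for $y\in Y$. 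Exactly as in the proof of \cref{thm:collapse}(c), each $P\wedge D_x$ and (since $Y$ is infinite, and by symmetry $X$ is infinite too) each $P\wedge E_y$ is dense in $P$, so by localic Baire category (\cref{thm:loc-baire}) the intersection $Z := P\wedge\bigwedge_{x\in X}D_x\wedge\bigwedge_{y\in Y}E_y$ is dense and hence nonempty; and it is standard because $X,Y$ are $\kappa$-ary.

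The next step is to construct, for each $n$, the isomorphism $f_n : Z\times\DESC(X,n)\xrightarrow{\ \cong\ }Z\times\DESC(Y,n)$ over $Z$, using the ``universal bijection'' living over $Z$. Concretely, I would first note that over $Z$ there is a $\kappa$-locale isomorphism $Z\times X\cong Z\times Y$ (over $Z$), built just as the map $f$ in the proof of \cref{thm:collapse}(c): decompose $Z\times X=\bigsqcup_{x\in X}\bigsqcup_{y\in Y}\bigl((Z\wedge[x\mapsto y])\times\{x\}\bigr)$ and $Z\times Y$ symmetrically, and match the clopen piece indexed by $(x,y)$ on the left with the one indexed by $(x,y)$ on the right. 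Call this isomorphism $\beta : Z\times X\cong Z\times Y$. The analogous reasoning gives an isomorphism $Z\times\#S^{X^2}\cong Z\times\#S^{Y^2}$ over $Z$: since $\beta$ is an isomorphism over $Z$ it induces a bijection between the $(x,x')$-indexed subbasic opens over $Z$ and the $(\beta x,\beta x')$-indexed ones, hence (using \cref{thm:loc-sierpinski}, i.e.\ $\@O(\#S^{X^2})=\ang{X^2}_\!{Frm}$) an isomorphism of the corresponding free frames over $\@O(Z)$. Then $f_n$ is just the product over $Z$ of this isomorphism on the $\#S^{X^2}$-coordinate with the $n$-fold power of $\beta$ on the $X^n$-coordinates, restricted to the subspaces $\DESC(X,n)$ and $\DESC(Y,n)$: one checks that $f_n$ carries the defining relations $x_{n-1}\mathrel{R}\dotsb\mathrel{R}x_0$ to $\beta(x_{n-1})\mathrel{\beta_*(R)}\dotsb\mathrel{\beta_*(R)}\beta(x_0)$, so it indeed restricts to an isomorphism between the two $\DESC$ spaces over $Z$. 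Naturality in $n$ — commutativity of the squares with the $p_n$ — is immediate, since $p_n$ just forgets the last $X$- (resp.\ $Y$-) coordinate, and $f_n$ acts coordinatewise via the same $\beta$; commutativity of the triangles over $Z$ is built into the construction.

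I expect the main obstacle to be the careful bookkeeping in showing that $\beta$ genuinely induces an isomorphism on the $\#S^{X^2}$-coordinate compatible with everything else — one has to be slightly careful that the clopen-partition description of $Z\times X$ over $Z$ really does make $\beta$ an isomorphism of $\kappa$-locales (not merely a bijection of points), which is where the densely-many conditions $D_x$ and $E_y$ defining $Z$ get used (so that each $D_x\wedge Z$ and $E_y\wedge Z$ is a genuine pairwise-disjoint cover, exactly as in \cref{thm:collapse}(c)), and that passing to the subspaces $\DESC(X,n)\subseteq\#S^{X^2}\times X^n$ is compatible with the isomorphism. Once $\beta$ and its $\#S^{X^2}$-counterpart are in hand, everything else — that the $f_n$ restrict correctly, commute with the $p_n$, and live over $Z$ — is a routine verification of the kind done in \cref{thm:collapse}. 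No genuinely new idea beyond the Baire-category collapse is needed; the only real content is organizing the construction uniformly across the diagram.
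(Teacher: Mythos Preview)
Your proposal is correct and follows essentially the same approach as the paper: obtain the collapsing locale $Z$ and the isomorphism $\beta : Z \times X \cong Z \times Y$ over $Z$ from \cref{thm:collapse}(c), then build each $f_n$ as the fiber product over $Z$ of the induced isomorphism on the $\#S^{X^2}$-coordinate with the $n$-fold fiber power of $\beta$, and check it restricts to $\DESC$ and commutes with the $p_n$. The paper simply cites \cref{thm:collapse} rather than reconstructing $Z$, and carries out the ``$\beta$ induces an isomorphism on $\#S^{X^2}$'' step---which you rightly flag as the main bookkeeping---via an explicit internal-logic formula for the fiberwise exponential $\#S^{g^2_Z}_Z$ and a direct verification that it is inverse to $\#S^{(g^2_Z)^{-1}}_Z$.
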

\begin{proof}
By \cref{thm:collapse}, there is a nonempty standard $\kappa$-locale $Z$ ``collapsing $\abs{X}, \abs{Y}$'', so that there is an isomorphism $g : Z \times X \cong Z \times Y \in \!{Loc}$ over $Z$.
The result now follows by internalizing the definitions of $\DESC(X, n), p_n$ over $Z$.
Namely, let $g^n_Z : Z \times X^n \cong Z \times Y^n$ be the $n$-fold fiber product of $g$ over $Z$, and let
\begin{align*}
\#S^{g^n_Z}_Z : Z \times \#S^{X^n} &--> Z \times \#S^{Y^n} \in \!{Loc}
\end{align*}
be the ``fiberwise exponential of $g^n_Z$ over $Z$'', i.e., $\#S^{g^n_Z}_Z$ commutes with the projections to $Z$, while for the $\vec{y}$th subbasic open set in $\#S^{Y^n}$ where $\vec{y} \in Y^n$, i.e., the $\vec{y}$th free generator of $\ang{Y^n}_\!{Frm}$, we put
\begin{align*}
(\#S^{g^n_Z}_Z)^*(\{(z, S) \in Z \times \#S^{Y^n} \mid S(\vec{y})\})
&:= \{(z, R) \in Z \times \#S^{X^n} \mid \bigvee_{\vec{x} \in X^n} ((g^n_Z(z, \vec{x}) = (z, \vec{y})) \wedge R(\vec{x}))\}.
\end{align*}
(Here we are using the internal logic in $\!{Loc}$, where $R(\vec{x})$ is really an abbreviation for the atomic formula $[\vec{x}](R)$ where $[\vec{x}] \subseteq \#S^{X^n}$ is the $\vec{x}$th subbasic open set.
Note that the formula $g^n_Z(z, \vec{x}) = (z, \vec{y})$ defines an open set of $z$, since $\{\vec{y}\} \subseteq Y^n$ is open.)
Similarly, let
\begin{align*}
\#S^{(g^n_Z)^{-1}}_Z : Z \times \#S^{Y^n} &--> Z \times \#S^{X^n} \in \!{Loc}
\end{align*}
be the ``fiberwise exponential of $(g^n_Z)^{-1}$ over $Z$''.
Using the internal logic, it is straightforward to check that these two maps are inverses:
\begin{align*}
\MoveEqLeft
(\#S^{(g^n_Z)^{-1}}_Z)^*((\#S^{g^n_Z}_Z)^*(\{(z, S) \mid S(\vec{y})\})) \\
&= \set[\Big]{(z, S)}{\bigvee_{\vec{x} \in X^n} \paren[\Big]{\paren[\big]{g^n_Z(z, \vec{x}) = (z, \vec{y})} \wedge \paren[\big]{(z, S) \in (\#S^{(g^n_Z)^{-1}}_Z)^*(\{(z, R) \mid R(\vec{x})\})}}} \\
&= \set[\big]{(z, S)}{\bigvee_{\vec{x} \in X^n} \paren[\big]{\paren{g^n_Z(z, \vec{x}) = (z, \vec{y})} \wedge \bigvee_{\vec{y}' \in Y^n} \paren{((g^n_Z)^{-1}(z, \vec{y}') = (z, \vec{x})) \wedge S(\vec{y'})}}} \\
&= \set{(z, S)}{\bigvee_{\vec{x} \in X^n} \paren{\paren{g^n_Z(z, \vec{x}) = (z, \vec{y})} \wedge S(\vec{y})}}
= \{(z, S) \mid S(\vec{y})\}
\end{align*}
whence $\#S^{g^n_Z}_Z \circ \#S^{(g^n_Z)^{-1}}_Z = 1$, and similarly $\#S^{(g^n_Z)^{-1}}_Z \circ \#S^{g^n_Z}_Z = 1$.
Now let
\begin{align*}
f_n := \#S^{g^2_Z}_Z \times_Z g^n_Z : Z \times \#S^{X^2} \times X^n \cong Z \times \#S^{Y^2} \times Y^n.
\end{align*}
It is again straightforward to check using the internal logic that these maps commute with the projections $p_n : Z \times \#S^{X^2} \times (-)^n -> Z \times \#S^{X^2} \times (-)^{n-1}$ as in the diagram ($*$), and that they take $Z \times \DESC(X, n)$ to $Z \times \DESC(Y, n)$:
\begin{align*}
\MoveEqLeft
f_n^*(Z \times \DESC(Y, n)) \\
&= f_n^*(\{(z, S, y_0, \dotsc, y_{n-1}) \mid y_{n-1} \mathrel{S} \dotsb \mathrel{S} y_0\}) \\
&= \set[\Big]{(z, R, \vec{x})}{\bigvee_{\vec{y} \in Y^n} \paren[\Big]{\paren[\big]{g^n_Z(z, \vec{x}) = (z, \vec{y})} \wedge \paren[\big]{(z, R) \in (\#S^{g^2_Z}_Z)^*(\{(z, S) \mid y_{n-1} \mathrel{S} \dotsb \mathrel{S} y_0\})}}} \\
&= \set[\big]{(z, R, \vec{x})}{\bigvee_{\vec{y} \in Y^n} \paren[\big]{(g^n_Z(z, \vec{x}) = (z, \vec{y})) \wedge \bigvee_{\vec{x}' \in X^n} ((g^n_Z(z, \vec{x'}) = (z, \vec{y})) \wedge (x_{n-1}' \mathrel{R} \dotsb \mathrel{R} x_0'))}} \hspace{-1em} \\
&= \set{(z, R, \vec{x})}{\bigvee_{\vec{y} \in Y^n} \paren{(g^n_Z(z, \vec{x}) = (z, \vec{y})) \wedge (x_{n-1} \mathrel{R} \dotsb \mathrel{R} x_0)}} \\
&= \set{(z, R, \vec{x})}{x_{n-1} \mathrel{R} \dotsb \mathrel{R} x_0}
= Z \times \DESC(X, n).
\qedhere
\end{align*}
\end{proof}

\begin{theorem}
\label{thm:if-strict}
For any infinite set $X$, the $\infty$-length canonical pruning sequence of $(\DESC(X, n))_n$ is strictly decreasing at each level $n$: for each $n$ and ordinal $\alpha < \infty$, we have
\begin{align*}
\DESC(X, n)^{(\alpha)} \supsetneq \DESC(X, n)^{(\alpha+1)}.
\end{align*}
\end{theorem}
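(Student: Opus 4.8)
The plan is to prove the strict decrease first for a large auxiliary set $Y$ by a direct point-calculation on $\DESC(Y,\bullet)$, and then transfer it to the given infinite $X$ using the localic cardinal collapse of \cref{thm:if-collapse}. Fix an ordinal $\alpha$ and a level $n$, and choose a regular cardinal $\kappa$ with $X$ $\kappa$-ary, $|\alpha|^{+}<\kappa$, and with $Y$ $\kappa$-ary for some infinite set $Y$ with $|Y|\ge|\alpha|+\aleph_0$. We take $\prec$ to be the successor relation on $\#N$, which generates $\le$ and obeys \eqref{eq:sigma11-invlim-succ}. Each $p_{m+1}\colon\DESC(Y,m+1)\to\DESC(Y,m)$ is $\kappa$-ary-to-one, witnessed by the clopen partition of $\DESC(Y,m+1)$ into the pieces on which the last coordinate is a fixed element of $Y$ (on each such piece $p_{m+1}$ is an open embedding); hence, by absoluteness of images of $\kappa$-ary-to-one maps (\cref{rmk:loc-im-k-bor}), the canonical pruning sequence of $(\DESC(Y,m))_m$ consists of standard $\kappa$-Borel locales at each stage $<\kappa$ and may be computed entirely inside $\!{\kappa\Sigma^1_1BorLoc}$, agreeing there with the restriction to stages $<\kappa$ of the $\infty$-length sequence; the same applies with $X$ in place of $Y$. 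So it suffices to establish everything in $\!{\kappa\Sigma^1_1BorLoc}$.

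The heart is the following combinatorial identification. The locale $\DESC(Y,n)$ is an open sublocale of $\#S^{Y^2}\times Y^n=\bigsqcup_{\vec y\in Y^n}\#S^{Y^2}$, hence spatial by \cref{thm:loc-sierpinski}, with genuine points the pairs $(R,\vec y)$ consisting of a relation $R\subseteq Y^2$ and a descending chain $y_{n-1}\mathrel{R}\dotsb\mathrel{R}y_0$. Writing $\mathrm{rk}_R(x)$ for the usual well-founded rank of $x$ under $R$ (with $\mathrm{rk}_R(x)=\infty$ if $x$ lies above an infinite $R$-descending chain), I claim that for each such point and each $\beta$, $(R,\vec y)$ factors through $\DESC(Y,n)^{(\beta)}$ iff $\mathrm{rk}_R(y_{n-1})\ge\beta$. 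This is a transfinite induction on $\beta$: the base case is trivial, limit stages follow because pulling a point back commutes with $\kappa$-ary meets of subobjects, and for successors one uses that $\DESC(Y,n)^{(\beta+1)}=p_{n+1}^{\kappa}(\DESC(Y,n+1)^{(\beta)})$, whose $\kappa\Sigma^1_1$-image is computed fiberwise along the clopen partition above — so the point $(R,\vec y)$ lies in it iff there is some $c\in Y$ with $c\mathrel{R}y_{n-1}$ and $(R,\vec y,c)\in\DESC(Y,n+1)^{(\beta)}$, i.e.\ (by induction) with $\mathrm{rk}_R(c)\ge\beta$, i.e.\ $\mathrm{rk}_R(y_{n-1})\ge\beta+1$. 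Now pick a subset $\{w_\gamma:\gamma\le\alpha+n\}\subseteq Y$ and let $R_0$ order it by $w_\gamma\mathrel{R_0}w_{\gamma'}\iff\gamma<\gamma'$; then $\mathrm{rk}_{R_0}(w_\gamma)=\gamma$, and putting $y_i:=w_{\alpha+(n-1-i)}$ gives a point $(R_0,\vec y)$ of $\DESC(Y,n)$ with $\mathrm{rk}_{R_0}(y_{n-1})=\alpha$, hence lying in $\DESC(Y,n)^{(\alpha)}$ but not in $\DESC(Y,n)^{(\alpha+1)}$. Thus $\DESC(Y,n)^{(\alpha)}\supsetneq\DESC(Y,n)^{(\alpha+1)}$.

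Finally the transfer. Since $Z$ is nonempty, $Z\to1$ is an epimorphism, hence a regular and pullback-stable one, in $\!{\kappa\Sigma^1_1BorLoc}$ (by \cref{thm:sigma11-cat}(e) together with the $\kappa$-Borel facts \cref{thm:loc-mono-epi}, \cref{thm:loc-bor-epi-pullback}); consequently for any $W$ the projection $\pi_W\colon Z\times W\to W$ is a regular epimorphism, so $\pi_W^{*}$ is injective on subobjects, and the functor $Z\times(-)$ both preserves and reflects strict inclusions of subobjects. Moreover $Z\times(-)$, being pullback along $Z\to1$, preserves $\kappa\Sigma^1_1$-images (pullback-stability of image, \cref{thm:sigma11-cat}/\cref{thm:loc-bor-im-pullback}) and meets of subobjects, so it carries the canonical pruning sequence of $(\DESC(X,m))_m$ (resp.\ of $(\DESC(Y,m))_m$) levelwise to $(Z\times\DESC(X,m)^{(\beta)})_\beta$ (resp.\ $(Z\times\DESC(Y,m)^{(\beta)})_\beta$), since the canonical pruning is defined purely from $\kappa\Sigma^1_1$-images and meets of the diagram. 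The natural isomorphism of diagrams $f_\bullet\colon(Z\times\DESC(X,m))_m\xrightarrow{\ \cong\ }(Z\times\DESC(Y,m))_m$ from \cref{thm:if-collapse} is an isomorphism of objects with all of this structure, so it restricts to isomorphisms $Z\times\DESC(X,n)^{(\beta)}\cong Z\times\DESC(Y,n)^{(\beta)}$ compatible with the inclusions of stages. Chaining: by the previous paragraph $\DESC(Y,n)^{(\alpha)}\supsetneq\DESC(Y,n)^{(\alpha+1)}$, hence $Z\times\DESC(Y,n)^{(\alpha)}\supsetneq Z\times\DESC(Y,n)^{(\alpha+1)}$ (as $Z\times(-)$ preserves strictness), hence $Z\times\DESC(X,n)^{(\alpha)}\supsetneq Z\times\DESC(X,n)^{(\alpha+1)}$ (by the isomorphism), hence $\DESC(X,n)^{(\alpha)}\supsetneq\DESC(X,n)^{(\alpha+1)}$ (as $Z\times(-)$ reflects strictness). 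As $\alpha$ and $n$ were arbitrary, this is exactly the assertion.

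The step I expect to be the main obstacle is the transfinite induction in the second paragraph: carefully justifying that the canonical pruning of $(\DESC(Y,m))_m$ really is computed fiberwise along the clopen partitions of the $p_{m+1}$ (so that the recursion on subobjects matches the recursion defining $\mathrm{rk}_R$), and that a genuine point of the \emph{ambient} spatial locale $\DESC(Y,n)$ suffices to detect the strict inclusion even though the pruned stages themselves need not be spatial — this is where \cref{rmk:loc-im-k-bor} (absolute, fiberwise images of $\kappa$-ary-to-one maps) does the essential work. The surrounding cardinal bookkeeping (choice of $\kappa$; commutation of pruning with $Z\times(-)$ and with the diagram isomorphism) is routine once those preservation lemmas are invoked.
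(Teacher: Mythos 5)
Your proof is correct and takes essentially the same route as the paper's: reduce to a large auxiliary set $Y$ via the collapse locale $Z$ (with $Z\times(-)$ preserving and reflecting the pruning sequence and strict inclusions of its stages), then detect strictness of the $Y$-sequence by a pointwise rank computation, legitimized by the absoluteness of images of $\kappa$-ary-to-one maps (\cref{rmk:loc-im-k-bor}). The only cosmetic differences are that the paper runs the point computation through the spatialization functor and witnesses strictness with forests of height $\omega$ rather than reversed well-orders, and that both your argument and the paper's silently skip $n=0$ (where the rank claim must refer to the whole relation rather than to $y_{n-1}$) --- an edge case that is harmless for the intended application in \cref{thm:sigma11-proper}.
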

\begin{proof}
Let $Y$ be an infinite set of cardinality $\ge \abs{\alpha}$, and let $Z, f_n$ be given by \cref{thm:if-collapse}.
By pullback-stability along $Z -> 1$, the canonical pruning sequence of $(Z \times \DESC(X, n))_n$ is the product of $Z$ with the canonical pruning sequence of $(\DESC(X, n))_n$.
By the natural isomorphism $(f_n)_n$, it is also isomorphic (in $\!{\infty BorLoc}$) to the canonical pruning sequence of $(Z \times \DESC(Y, n))_n$, which again by pullback-stability, is the product of $Z$ with that of $(\DESC(Y, n))_n$.
So it suffices to show that $\DESC(Y, n)^{(\alpha)} \supsetneq \DESC(Y, n)^{(\alpha+1)}$ for each $n, \alpha$, since that implies $Z \times \DESC(Y, n)^{(\alpha)} \supsetneq Z \times \DESC(Y, n)^{(\alpha+1)}$ because $Z \ne \emptyset$ (see \cref{thm:bool-coprod}), which in turn implies $\DESC(X, n)^{(\alpha)} \supsetneq \DESC(X, n)^{(\alpha+1)}$.

Since each $p_n$ is $\infty$-ary-to-one, by \cref{rmk:loc-im-k-bor}, the construction of the canonical pruning sequence of $(\DESC(Y, n))_n$ is preserved by the spatialization functor $\Sp : \!{\infty BorLoc} -> \!{Set} \subseteq \!{\infty Bor}$.
An easy induction shows that if $S \subseteq Y^2$ is the predecessor relation of a forest of height $\omega$, and $y_{n-1} \mathrel{S} \dotsb \mathrel{S} y_0$ is a finite path through $S$, then $(S, y_0, \dotsc, y_{n-1}) \in \Sp(\DESC(Y, n)^{(\alpha)})$ iff the subtree below $y_{n-1}$ (or the entire forest, if $n = 0$) has rank $\ge \alpha$.
Thus if we let $S$ be a forest of rank $\alpha$, together with a finite terminal segment $y_{n-1} \mathrel{S} \dotsb \mathrel{S} y_0$ attached if $n \ge 1$, then $(S, y_0, \dotsc, y_{n-1})$ is a point in $\DESC(Y, n)^{(\alpha)} \setminus \DESC(Y, n)^{(\alpha+1)}$.
\end{proof}

Let $\DESC(X, \omega) := \projlim_n \DESC(X, n)$, with projections $\pi_n : \DESC(X, \omega) -> \DESC(X, n)$.
Clearly
\begin{align*}
\DESC(X, \omega) \cong \{(R, x_0, x_1, \dotsc) \in \#S^{X^2} \times X^\#N \mid \bigwedge_n (x_{n+1} \mathrel{R} x_n)\}_\!{Loc}
\end{align*}
is the locale ``of binary relations on $X$ with an infinite descending sequence''.

\begin{theorem}
\label{thm:sigma11-proper}
There exists a continuous map $\pi_0 : \DESC(\#N, \omega) -> \DESC(\#N, 0)$ between quasi-Polish spaces which does not have a $\kappa$-Borel image for any $\omega_1 \le \kappa \le \infty$.

In particular, for all $\omega_1 \le \kappa \le \infty$, we have $\@B_\kappa(\#S^\#N) \subsetneq \kappa\Sigma^1_1(\#S^\#N)$.
\end{theorem}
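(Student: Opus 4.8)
The plan is to show that the continuous map $\pi_0 : \DESC(\#N,\omega) \to \DESC(\#N,0) = \#S^{\#N^2}$ has no $\kappa$-Borel image, and then transport this to $\#S^\#N$ using that $\DESC(\#N,0) = \#S^{\#N^2}$ is $\sigma$-Borel isomorphic to $\#S^\#N$ (both being standard $\sigma$-Borel locales whose dual algebras are free on countably many generators; cf.\ \cref{thm:loc-bor-sierpinski}). The core of the argument is \cref{thm:sigma11-invlim-prune}(c) combined with \cref{thm:if-strict}.

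First, I would set up the inverse limit data: $(\DESC(\#N,n))_{n\in\#N}$ with the projection maps $p_n$ forms a codirected diagram over $\DESC(\#N,0)$ (taking $p_0 := 1$), with $\DESC(\#N,\omega) = \projlim_n \DESC(\#N,n)$ and $\pi_0 = \projlim$ of the $p_n$. As noted in the text just before \cref{thm:if-collapse}, each $p_n$ is $\kappa$-ary-to-one (for any $\kappa$, including $\kappa = \omega_1$), witnessed by the clopen partition according to the value of the last coordinate; hence by \cref{rmk:loc-im-k-bor} all $\kappa$-Borel images along the $p_n$ exist and are absolute, so the $\kappa$-length canonical pruning sequence of this diagram (in the sense of \cref{sec:sigma11-invlim}), taken with respect to the successor relation $\prec$ on $\#N$, consists of standard $\kappa$-Borel locales at each stage $<\kappa$ and forms an initial segment of the $\infty$-length canonical pruning sequence.

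Next, suppose for contradiction that $\pi_0$ has a $\kappa$-Borel image $\im^\kappa(\pi_0) \subseteq \DESC(\#N,0)$ for some $\omega_1 \le \kappa \le \infty$. By \cref{thm:sigma11-invlim-prune}(c) (applied to $Y := \DESC(\#N,0)$, $p := \pi_0$, $i := 0$), the sequence of images $(p_0^\kappa(\DESC(\#N,0)^{(\alpha)}))_{\alpha<\kappa}$, i.e.\ the sequence $(\DESC(\#N,0)^{(\alpha)})_{\alpha<\kappa}$ itself since $p_0 = 1$, stabilizes at $\im^\kappa(\pi_0)$ by some $\alpha_0 < \kappa$. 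When $\kappa = \infty$ one must first observe that the $\infty$-Borel image of $\pi_0$, if it existed, would be $\lambda$-Borel for $\lambda := (2^{\aleph_0})^+$ (since $\abs{\@B_\sigma(\DESC(\#N,0))} \le 2^{\aleph_0}$ and $\DESC(\#N,0)^{(\alpha)}$ are all meets of $\sigma$-Borel sets), so the hypothesis of \cref{thm:sigma11-invlim-prune}(c) applies with $\lambda$ in place of $\kappa$, giving stabilization of $(\DESC(\#N,0)^{(\alpha)})_{\alpha<\lambda}$ past some $\alpha_0 < \lambda < \infty$. Either way, we obtain $\alpha_0 < \infty$ with $\DESC(\#N,0)^{(\alpha_0)} = \DESC(\#N,0)^{(\alpha_0+1)}$. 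But \cref{thm:if-strict} (with $X := \#N$, $n := 0$, $\alpha := \alpha_0$) says the $\infty$-length canonical pruning sequence of $(\DESC(\#N,n))_n$ is strictly decreasing at level $0$ at every ordinal stage: $\DESC(\#N,0)^{(\alpha_0)} \supsetneq \DESC(\#N,0)^{(\alpha_0+1)}$, a contradiction. Hence $\pi_0$ has no $\kappa$-Borel image, so $\im^\kappa(\pi_0)$ (which exists in $\kappa\Sigma^1_1(\DESC(\#N,0))$, being the $\kappa\Sigma^1_1$-image of $\pi_0$ viewed in $\!{\kappa\Sigma^1_1BorLoc}$) is a $\kappa$-analytic set that is not $\kappa$-Borel, witnessing $\@B_\kappa(\DESC(\#N,0)) \subsetneq \kappa\Sigma^1_1(\DESC(\#N,0))$; transporting across the $\sigma$-Borel isomorphism $\DESC(\#N,0) \cong \#S^\#N$ gives $\@B_\kappa(\#S^\#N) \subsetneq \kappa\Sigma^1_1(\#S^\#N)$.

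The main obstacle I expect is the bookkeeping around the $\kappa = \infty$ case: since $\!{\infty BorLoc}$ is not regular and $\infty$-Borel locales are required to be small-presented, one cannot directly invoke \cref{thm:sigma11-invlim-prune} at $\kappa = \infty$, and must instead argue through a large enough regular $\lambda < \infty$ (here $\lambda = (2^{\aleph_0})^+$ works by the cardinality bound on $\@B_\sigma$) and check that the hypothesized $\infty$-Borel image would automatically be $\lambda$-Borel. A secondary point requiring care is confirming that the canonical pruning sequences computed in $\!{\kappa\Sigma^1_1BorLoc}$ and in $\!{\infty\Sigma^1_1BorLoc}$ agree as far as they both make sense, which follows from the absoluteness of images of $\kappa$-ary-to-one maps (\cref{rmk:loc-im-k-bor}) applied to each $p_n$; everything else is a direct application of the cited results.
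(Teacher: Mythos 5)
Your proof is correct and follows essentially the same route as the paper's: combine the strict decrease of the canonical pruning sequence (\cref{thm:if-strict}) with the boundedness statement \cref{thm:sigma11-invlim-prune}(c), using the absoluteness of images along the $\kappa$-ary-to-one maps $p_n$ to identify the $\kappa$-length and $\infty$-length pruning sequences. Your extra care at $\kappa=\infty$ (reducing to $\lambda=(2^{\aleph_0})^+$ via the cardinality bound on $\@B_\sigma$) is a reasonable way to sidestep the non-regularity of $\!{\infty BorLoc}$, and matches the argument the paper uses for \cref{thm:loc-im-bad}.
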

\begin{proof}
By \cref{thm:if-strict}, $(p_1(\DESC(\#N, 1)^{(\alpha)}))_\alpha = (\DESC(\#N, 0)^{(\alpha+1)})_\alpha$ does not stabilize below any $\kappa$, whence by \cref{thm:sigma11-invlim-prune}(c), the $\kappa\Sigma^1_1$-image $\im^\kappa(\pi_0) \subseteq \DESC(\#N, 0)$ is not $\kappa$-Borel.

The last statement follows because $\DESC(\#N, 0) = \#S^{\#N^2} \cong \#S^\#N$.
\end{proof}

\begin{corollary}[Gaifman--Hales]
\label{cor:gaifman-hales-2}
$\ang{\#N}_\!{CBOOL}$ is a proper class.
\end{corollary}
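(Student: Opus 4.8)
\textbf{Proof proposal for \Cref{cor:gaifman-hales-2}.}
The plan is to derive the Gaifman--Hales theorem (in the form: the free complete Boolean algebra $\ang{\#N}_\!{CBOOL}$ on countably many generators is a proper class) from \Cref{thm:sigma11-proper} together with the basic structural facts about the $\infty$-Borel hierarchy established in \cref{sec:frm-neg,sec:loc-bor}. The key observation is that \Cref{thm:sigma11-proper} produces, for \emph{every} regular cardinal $\kappa$ (and $\kappa = \infty$), a $\kappa$-analytic set in $\#S^\#N$ which is not $\kappa$-Borel; in particular, the ascending chain of $\kappa$-Borel sets approximating $\im^\kappa(\pi_0)$ never stabilizes. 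I would like to convert this ``non-stabilization at every $\kappa$'' into the statement that the $\infty$-Borel hierarchy of $\#S^\#N$ is a proper-class-length strictly increasing chain, which forces $\@B_\infty(\#S^\#N) = \ang{\#N}_\!{CBOOL}$ to be a proper class.

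Concretely, first I would recall (from \cref{sec:loc-ctbpres}, \cref{thm:loc-sierpinski}, \cref{thm:loc-bor-sierpinski}) that $\@O(\#S^\#N) = \ang{\#N}_\!{Frm}$ and hence $\@B_\infty(\#S^\#N) = \ang{\@O(\#S^\#N)}_\!{CBOOL} = \ang{\#N}_\!{CBOOL}$, so it suffices to show $\@B_\infty(\#S^\#N)$ is a proper class. Suppose toward a contradiction that it is a set. Then $\@B_\infty(\#S^\#N)$, being a small complete Boolean algebra, is $\lambda$-presented for some regular $\lambda < \infty$; equivalently, $\#S^\#N$ is a \emph{standard} $\infty$-Borel locale in the sense that $\@B_\infty(\#S^\#N) = \ang{A}_\!{CBOOL}$ for some $\lambda$-presented $\lambda$-frame $A$ (this is exactly the hypothesis built into our definition of $\infty$-Borel locales in \cref{sec:loc-cat}). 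Now the crucial point: under this hypothesis, $\kappa\Sigma^1_1(\#S^\#N)$ for $\kappa = \lambda$ would have to agree with the corresponding $\infty$-analytic sets, and more to the point, the $\infty(\@B_\lambda)_\delta$-image of the map $\pi_0 : \DESC(\#N, \omega) -> \DESC(\#N, 0) \cong \#S^\#N$ from \Cref{thm:sigma11-proper} would be a meet of $\lambda$-Borel sets living inside a \emph{set} $\@B_\infty(\#S^\#N)$. Since every such meet is $\mu$-ary for $\mu := (\abs{\@B_\infty(\#S^\#N)})^+ < \infty$, the $\infty$-Borel image $\im(\pi_0)$ exists and is $\mu$-Borel, hence in particular equals $\im^\mu(\pi_0)$ — contradicting \Cref{thm:sigma11-proper} applied with $\kappa = \mu$, which says $\im^\mu(\pi_0)$ does not exist as a $\mu$-Borel set.

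An alternative, perhaps cleaner, route avoids invoking a single offending $\kappa$ and instead uses the strict growth of the whole hierarchy: by \Cref{thm:if-strict}, the $\infty$-length canonical pruning sequence of $(\DESC(\#N,n))_n$ is strictly decreasing at each level for \emph{all} ordinals $\alpha < \infty$, and by \Cref{thm:sigma11-invlim-prune}(a) each stage of this sequence, and its projection to $\DESC(\#N,0) \cong \#S^\#N$, lies in $\@B_\infty(\#S^\#N)$ (the relevant images are $\infty$-Borel since the $p_n$ are $\infty$-ary-to-one, cf.\ \cref{rmk:loc-im-k-bor}). Thus $\@B_\infty(\#S^\#N)$ contains a strictly decreasing chain indexed by \emph{all} ordinals, hence is a proper class. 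Since $\@B_\infty(\#S^\#N) \cong \ang{\#N}_\!{CBOOL}$, we conclude the latter is a proper class.

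The main obstacle I anticipate is bookkeeping around the case $\kappa = \infty$: the clean statement of \Cref{thm:sigma11-proper} already includes $\kappa = \infty$, but one must be careful that ``$\kappa$-Borel image for $\kappa = \infty$'' means ``$\infty$-Borel image exists (as an element of the small-presented algebra)'', and that the non-existence of this image is genuinely what obstructs $\ang{\#N}_\!{CBOOL}$ from being small. The second route sidesteps this by working entirely with the transfinite pruning sequence and only needs that each of its stages is a member of $\@B_\infty(\#S^\#N)$ together with the strictness from \Cref{thm:if-strict}; the only thing to check there is that the pruning operations (image under $p_n$, then pullback/pushforward to level $0$) stay inside $\@B_\infty$, which follows from $\infty$-ary-to-oneness of the $p_n$ and hence existence and absoluteness of their $\infty$-Borel images. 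I would present the second route as the main argument, with the first as a remark.
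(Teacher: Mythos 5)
Both of your routes are correct, and they sit on opposite sides of the paper's own proof. The paper's argument is exactly your first route, stripped of the cardinality bookkeeping: if $\ang{\#N}_\!{CBOOL} = \@B_\infty(\#S^\#N)$ were a set it would be a small complete lattice, so the $\infty$-Borel image of \emph{any} map into $\#S^\#N$ would exist as the minimum of a set of elements, contradicting \cref{thm:sigma11-proper} applied directly at $\kappa = \infty$ (your detour through $\mu = (\abs{\@B_\infty(\#S^\#N)})^+$ is harmless but unnecessary — the theorem already covers $\kappa = \infty$). Your second route, which you rightly prefer, is a genuinely different and logically lighter argument: it bypasses \cref{thm:sigma11-proper} and hence the entire boundedness machinery of \cref{thm:sigma11-invlim-prune}, needing only \cref{thm:if-strict} together with the observation (made in the paragraph preceding \cref{thm:if-collapse}) that the level-$0$ pruning stages $\DESC(\#N,0)^{(\alpha)}$ remain $\infty$-Borel for all ordinals $\alpha$ — via $\infty$-ary-to-oneness of the $p_n$ (\cref{rmk:loc-im-k-bor}) at successor stages and closure of $\@B_\infty$ under small meets at limits; the citation of \cref{thm:sigma11-invlim-prune}(a) for this point is slightly misplaced, and no "projection to level $0$" is needed since the level-$0$ stages already live in $\DESC(\#N,0) = \#S^{\#N^2} \cong \#S^\#N$. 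What each buys: the paper's route is a one-liner once \cref{thm:sigma11-proper} is in hand; yours makes the reason for largeness transparent — $\ang{\#N}_\!{CBOOL}$ contains a strictly decreasing chain indexed by all ordinals (the rank approximations to ill-foundedness) — and is independent of the separation/boundedness results. Neither route is circular, since \cref{thm:if-strict} rests only on the cardinal collapse and localic Baire category.
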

\begin{proof}
If $\ang{\#N}_\!{CBOOL} = \@B_\infty(\#S^\#N)$ were a set, it would be a small complete lattice, whence every $\infty$-Borel map $X -> \#S^\#N$ would have an $\infty$-Borel image.
\end{proof}

The following consequence was mentioned in \cref{rmk:loc-im-bad}:

\begin{corollary}
\label{thm:loc-im-bad}
There exist $\kappa = \omega_1 < \lambda = (2^{\aleph_0})^+$ and a $\sigma$-Borel map $\pi_0 : \DESC(\#N, \omega) -> \DESC(\#N, 0) \in \!{\kappa BorLoc}_\kappa$ between standard $\kappa$-Borel locales, such that the (regular) epi--mono factorization of $\pi_0$ in $\!{\kappa BorLoc}$ is not preserved by the forgetful functor $\!{\kappa BorLoc} -> \!{\lambda BorLoc}$.

In particular, the forgetful functor $\!{\kappa BorLoc} -> \!{\lambda BorLoc}$ does not preserve (regular) epimorphisms, and does not map the embedding $\!{\kappa\Sigma^1_1BorLoc} \subseteq \!{\kappa BorLoc}$ into $\!{\lambda\Sigma^1_1BorLoc} \subseteq \!{\lambda BorLoc}$.
\end{corollary}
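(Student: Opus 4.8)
The plan is to deduce \cref{thm:loc-im-bad} directly from \cref{thm:sigma11-proper}, which provides a $\sigma$-Borel map $\pi_0 : \DESC(\#N, \omega) \to \DESC(\#N, 0)$ between standard $\sigma$-Borel locales (both are standard $\kappa$-Borel locales for $\kappa = \omega_1$, via \eqref{eq:intro-loc-ctbpres}, since $\DESC(\#N, 0) \cong \#S^\#N$ is quasi-Polish) with no $\kappa$-Borel image for any $\omega_1 \le \kappa \le \infty$. First I would set $\kappa = \omega_1$ and recall from \cref{thm:loc-bor-reg} that $\!{\kappa BorLoc}$ is a regular category, so $\pi_0$ has a (regular) epi--mono factorization $\DESC(\#N, \omega) \twoheadrightarrow Z \hookrightarrow \DESC(\#N, 0)$ in $\!{\kappa BorLoc}$, where $Z = \im^{\infty(\@B_\kappa)_\delta}(\pi_0) \in \@B_\infty(\DESC(\#N, 0))$ is the $\infty(\@B_\kappa)_\delta$-image (by \cref{thm:loc-im-factor}(c)). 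By \cref{thm:sigma11-proper}, this $Z$ is not $\kappa$-Borel, i.e., $Z \notin \@B_\kappa(\DESC(\#N, 0))$ — for if it were, it would be the $\kappa$-Borel image $\im^\kappa(\pi_0)$, contradicting nonexistence of the latter.

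Next I would pin down $\lambda$. Since $Z$ is a meet of $\kappa$-Borel sets in the set-sized algebra $\@B_\kappa(\DESC(\#N, 0))$, which has cardinality at most $2^{\aleph_0}$ (it is the Borel $\sigma$-algebra of a standard Borel space), $Z$ is a $(2^{\aleph_0})^+$-ary meet, hence lies in $\@B_\lambda(\DESC(\#N, 0))$ for $\lambda := (2^{\aleph_0})^+$. Applying the forgetful functor $\!{\kappa BorLoc} \to \!{\lambda BorLoc}$ (from diagram \eqref{diag:loc-cat}) to the factorization $\DESC(\#N, \omega) \twoheadrightarrow Z \hookrightarrow \DESC(\#N, 0)$: the functor preserves limits, hence monomorphisms (so $Z \hookrightarrow \DESC(\#N, 0)$ remains mono), but I claim it cannot preserve the epimorphism $\DESC(\#N, \omega) \twoheadrightarrow Z$. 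Indeed, if it did, then the image of the factorization would be an epi--mono factorization of $\pi_0$ in $\!{\lambda BorLoc}$, which (again by regularity of $\!{\lambda BorLoc}$, \cref{thm:loc-bor-reg}, and uniqueness of such factorizations) would force $Z$ to be the $\infty(\@B_\lambda)_\delta$-image of $\pi_0$ in $\!{\lambda BorLoc}$, hence the $\lambda$-Borel image $\im^\lambda(\pi_0)$, since $Z \in \@B_\lambda(\DESC(\#N, 0))$. But $\pi_0$ has no $\lambda$-Borel image by \cref{thm:sigma11-proper} — contradiction. This gives the main assertion: the forgetful functor $\!{\kappa BorLoc} \to \!{\lambda BorLoc}$ does not preserve the epi--mono factorization of $\pi_0$.

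For the two consequences in the ``in particular'' clause: first, since the functor preserves monos but not the factorization, and epi--mono factorizations are unique, it must fail to preserve the (regular) epimorphism $\DESC(\#N, \omega) \twoheadrightarrow Z$; this is the statement that the forgetful functor does not preserve (regular) epimorphisms. Second, $\DESC(\#N, \omega)$ and the image sublocale $Z$ are both analytic $\kappa$-Borel locales (the former is standard, hence analytic; $Z$ is the $\kappa\Sigma^1_1$-image of a $\kappa$-Borel map between standard $\kappa$-Borel locales, so $Z \in \!{\kappa\Sigma^1_1BorLoc}$ under the embedding $\!{\kappa\Sigma^1_1BorLoc} \subseteq \!{\kappa BorLoc}$ of \cref{sec:sigma11-cat}); but $Z$, viewed in $\!{\lambda BorLoc}$, is not the $\lambda\Sigma^1_1$-image of $\pi_0$ (that image, if it behaved as an image in $\!{\lambda\Sigma^1_1BorLoc}$, would again be $\im^\lambda(\pi_0)$, which does not exist), so the forgetful functor $\!{\kappa BorLoc} \to \!{\lambda BorLoc}$ does not restrict to a functor $\!{\kappa\Sigma^1_1BorLoc} \to \!{\lambda\Sigma^1_1BorLoc}$.

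I expect the only real subtlety to be bookkeeping: carefully distinguishing the $\infty(\@B_\kappa)_\delta$-image (which always exists, by the adjoint functor theorem since $\@B_\kappa$ is a set) from the $\kappa$-Borel image (which may not), and making sure the cardinality estimate $\abs{\@B_\kappa(\DESC(\#N, 0))} \le 2^{\aleph_0}$ is invoked correctly to land $Z$ in $\@B_\lambda$. The conceptual content is entirely supplied by \cref{thm:sigma11-proper}; everything here is a routine consequence of regularity of $\!{\kappa BorLoc}$ and $\!{\lambda BorLoc}$ together with uniqueness of epi--regular mono factorizations, so there is no genuine obstacle beyond keeping the notation straight.
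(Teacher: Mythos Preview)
Your proposal is correct and follows essentially the same approach as the paper: both use the cardinality bound $\abs{\@B_\sigma(\DESC(\#N,0))} \le 2^{\aleph_0}$ to conclude that the $\infty(\@B_\sigma)_\delta$-image $Z$ is $\lambda$-Borel for $\lambda = (2^{\aleph_0})^+$, and then observe that if the forgetful functor preserved the factorization, $Z$ would be the $\lambda$-Borel image of $\pi_0$, contradicting \cref{thm:sigma11-proper}. Your write-up is more detailed on the categorical bookkeeping (regularity, uniqueness of factorizations) than the paper's terse proof, but the argument is the same.
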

\begin{proof}
Since $\abs{\@B_\sigma(\DESC(\#N, 0))} = 2^{\aleph_0}$ (because $\@B_\sigma(\DESC(\#N, 0))$ is standard Borel), the $\infty(\@B_\sigma)_\delta$-image of $\pi_0$ is the $\lambda$-ary intersection of all $B \in \@B_\sigma(\DESC(\#N, 0))$ such that $\pi_0^*(B) = \top$, hence is $\lambda$-Borel.
Thus it cannot also be the $\infty(\@B_\lambda)_\delta$-image, or else it would be the $\lambda$-Borel image, which means the $\infty(\@B_\lambda)_\delta$-image must be strictly smaller.
\end{proof}

We also state the purely Boolean-algebraic dual result, which is new as far as we know, and somewhat surprising given the good behavior of monomorphisms in $\!{\kappa Bool}$ in other respects (e.g., \cref{thm:bool-pushout-inj,thm:bool-mono-reg}):

\begin{corollary}
\label{thm:bool-free-mono-bad}
There exist $\kappa = \omega_1 < \lambda = (2^{\aleph_0})^+$ such that the free functor $\!{\kappa Bool} -> \!{\lambda Bool}$ does not preserve injective homomorphisms.
\qed
\end{corollary}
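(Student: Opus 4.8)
The plan is to dualize \Cref{thm:loc-im-bad} (equivalently, \Cref{thm:loc-bor-epi-invlim} and \Cref{thm:sigma11-proper}), using the correspondence $\!{\kappa BorLoc} = \!{\kappa Bool}^\op$. Concretely, the free functor $\ang{-}_\!{\lambda Bool} : \!{\kappa Bool} -> \!{\lambda Bool}$ is, by definition, the dual of the forgetful functor $\!{\kappa BorLoc} -> \!{\lambda BorLoc}$, so preservation of injective homomorphisms by $\ang{-}_\!{\lambda Bool}$ is exactly preservation of regular monomorphisms (equivalently, plain monomorphisms, by \Cref{thm:bool-mono-reg}) by the forgetful functor, which is exactly preservation of (regular) epimorphisms by $\!{\kappa BorLoc} -> \!{\lambda BorLoc}$. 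Thus the statement is literally the dual of the last assertion of \Cref{thm:loc-im-bad}, and it suffices to exhibit the witnessing morphism.

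First I would recall the explicit example from \Cref{thm:sigma11-proper} and \Cref{thm:loc-im-bad}: take $\kappa = \omega_1$, $\lambda = (2^{\aleph_0})^+$, and the $\sigma$-Borel map $\pi_0 : \DESC(\#N, \omega) -> \DESC(\#N, 0) \in \!{\kappa BorLoc}_\kappa$, which is the projection of the locale of binary relations on $\#N$ equipped with an infinite descending sequence onto the locale $\#S^{\#N^2} \cong \#S^\#N$ of binary relations. By \Cref{thm:sigma11-proper} this map has no $\kappa$-Borel image, while (as in the proof of \Cref{thm:loc-im-bad}) its $\infty(\@B_\sigma)_\delta$-image is $\lambda$-Borel since $\abs{\@B_\sigma(\DESC(\#N,0))} = 2^{\aleph_0}$; hence the (regular) epi--mono factorization of $\pi_0$ in $\!{\kappa BorLoc}$ is not preserved by $\!{\kappa BorLoc} -> \!{\lambda BorLoc}$, and since that functor does preserve monomorphisms (it preserves limits and is faithful, by \Cref{thm:loc-forget}), it fails to preserve the epimorphism in that factorization. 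Dualizing, if we let $X ->> Z `-> Y$ be that factorization in $\!{\kappa BorLoc}$ and apply $\@B_\kappa(-)$, we obtain $\@B_\kappa(Y) ->> \@B_\kappa(Z) `-> \@B_\kappa(X)$ with the second arrow an injective homomorphism in $\!{\kappa Bool}$; applying $\ang{-}_\!{\lambda Bool}$ does not keep this an injection, because the image of the composite in $\!{\lambda Bool}$ is the $\lambda$-Borel algebra of the strictly smaller $\infty(\@B_\lambda)_\delta$-image.

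The only real content beyond transcription is checking the two numerical claims that make $\kappa, \lambda$ concrete, namely that $\abs{\@B_\sigma(\DESC(\#N,0))} = 2^{\aleph_0}$ (immediate: it is a standard Borel space, so \Cref{rmk:loc-ctbpres-perfect} gives $\@B_\sigma \cong \ang{\#N}_\!{\sigma Bool}$, which has size $2^{\aleph_0}$) and hence that every meet of $\sigma$-Borel sets in $\DESC(\#N,0)$ is $\lambda$-ary for $\lambda = (2^{\aleph_0})^+$, so the $\infty(\@B_\sigma)_\delta$-image lands in $\@B_\lambda$; both of these are already spelled out in \Cref{rmk:loc-im-bad}. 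I do not expect a genuine obstacle here: the theorem is stated precisely so that it records the Boolean-algebraic shadow of \Cref{thm:sigma11-proper}, and the proof is a one-line appeal ``dual to \Cref{thm:loc-im-bad}.'' If anything, the point to be careful about is making sure the dualization of ``does not preserve epimorphisms'' reads correctly as ``does not preserve injective homomorphisms'' (rather than, say, surjective ones) — this is straightforward since regular epis in $\!{\kappa BorLoc}$ dualize to regular monos, i.e.\ injections, in $\!{\kappa Bool}$, and by \Cref{thm:bool-mono-reg} all monos in $\!{\kappa Bool}$ are regular, so there is no ambiguity between monomorphisms and regular monomorphisms on the algebraic side.
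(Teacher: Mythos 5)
Your proposal is correct and is essentially the paper's own argument: the corollary is stated with no proof precisely because it is the literal dual of \cref{thm:loc-im-bad} (free functor $\!{\kappa Bool} \to \!{\lambda Bool}$ = forgetful functor $\!{\kappa BorLoc} \to \!{\lambda BorLoc}$, injective homomorphism = monomorphism = regular monomorphism by \cref{thm:bool-mono-reg} = regular epimorphism on the locale side), witnessed by the map $\pi_0$ from \cref{thm:sigma11-proper}. Your dualization and the cardinality bookkeeping both match what the paper intends.
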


\begin{remark}
The easiest classical proof of the existence of analytic, non-Borel sets is by diagonalizing a \defn{universal analytic set} $A \subseteq 2^\#N \times 2^\#N$, i.e., an analytic set whose vertical fibers over each $x \in 2^\#N$ yield all analytic sets in $2^\#N$; see \cite[14.2]{Kcdst}.
Such a technique cannot directly yield the full strength of \cref{thm:sigma11-proper}, since every locale has only a small set of points, whereas $2^\#N$ has a proper class of $\infty$-analytic sets (since it has a proper class of $\infty$-Borel sets), so there cannot be a universal $\infty$-analytic set.
However, for each \emph{fixed} non-inaccessible $\kappa \ge \omega_1$, say with $\lambda < \kappa$ such that $2^\lambda \ge \kappa$, a universal $\kappa$-analytic set in $2^\lambda \times 2^\lambda$ can probably be used to show the weaker statement that $\@B_\kappa(X) \subsetneq \kappa\Sigma^1_1(X)$ for some $\kappa$-locale $X = 2^\lambda$ depending on $\kappa$.
\end{remark}

\appendix
\section{Appendix: nice categories of structures}
\label{sec:cat}

In this appendix, we briefly review the basic theory of locally presentable and monadic categories, as well as their ordered analogs.
These are well-behaved categories of (infinitary) first-order structures which are sufficiently ``algebraic'', meaning they are defined by axioms of certain restricted forms, such that the usual universal categorical constructions are available: products, coproducts, quotients, presentations via generators and relations, etc.

See \cref{sec:frm} for our general conventions on categories and cardinalities.

\subsection{Limit theories and locally presentable categories}
\label{sec:cat-lim}

Let $\@L$ be an infinitary first-order signature, consisting of function and relation symbols, each with an associated arity which may be an arbitrary set (usually a cardinal).%
\footnote{The general theory also allows multi-sorted signatures; see below.}
If every symbol in $\@L$ has arity of size $<\kappa$, then we say that $\@L$ is $\kappa$-ary.
For now, we require $\@L$ to be a set, rather than a proper class (but see the end of this subsection).

\begin{example}
The $\omega$-ary signature of distributive lattices may be taken as $\{\bot, \vee, \top, \wedge, \le\}$ where $\bot, \top$ are nullary function symbols, $\vee, \wedge$ are binary relation symbols, and $\le$ is a binary relation symbol.
We may also omit $\le$, since it can be defined from $\vee$ or $\wedge$.
\end{example}

\begin{example}
The $\omega_1$-ary signature of $\sigma$-frames may be taken as $\{\bot, \bigvee, \top, \wedge, \le\}$, where $\bigvee$ has arity $\omega$ and the other symbols are as before.
\end{example}

For $\kappa < \infty$ and a $\kappa$-ary signature $\@L$, a \defn{$\kappa$-limit axiom} over $\@L$ is an axiom (in the infinitary first-order logic $\@L_{\kappa\kappa}$) of the form
\begin{align*}
\forall \vec{x} \left(\bigwedge_i \phi_i(\vec{x}) -> \exists! \vec{y}\, \bigwedge_j \psi_j(\vec{x}, \vec{y})\right)
\tag{$*$}
\end{align*}
where $\phi_i, \psi_j$ are atomic formulas (i.e., equations or atomic relations between terms), $\bigwedge_i, \bigwedge_j$ are $\kappa$-ary conjunctions, and $\vec{x}, \vec{y}$ are $\kappa$-ary families of variables.
A \defn{$\kappa$-limit theory} is a set of $\kappa$-limit axioms.
The category of all (set-based) models of a $\kappa$-limit theory, together with all homomorphisms between them, is called a \defn{locally $\kappa$-presentable category}.  See \cite{ARlpac} for a comprehensive reference.

\begin{example}
\label{ex:cat-lim-alg}
An \defn{algebraic theory} is a universal-equational theory, i.e., $\@L$ has only function symbols, and each axiom as in ($*$) has empty left-hand side (no $\phi_i$'s) and empty $\vec{y}$.
Examples include the theories of $\kappa$-frames, $\kappa$-$\bigvee$-lattices, $\kappa$-Boolean algebras, or groups, rings, etc.
\end{example}

\begin{example}
\label{ex:cat-lim-horn}
More generally, a \defn{universal Horn theory} is a limit theory with no existentials ($\vec{y}$ in ($*$) is empty), e.g., the theory of posets.
\end{example}
 
\begin{example}
\label{ex:cat-lim-essalg}
More generally, we may allow structures with partial operations, where the domain of each operation must be defined by a conjunction of atomic formulas; such theories are sometimes called \defn{essentially algebraic}.
To axiomatize such structures via a limit theory, replace each partial operation with its graph relation, and use an axiom with $\exists!$ to say that it is a function.
For example, $\wedge$-lattices with pairwise disjoint $\kappa$-ary joins (disjunctive $\kappa$-frames, as defined in \cref{sec:upkzfrm}; see \cref{ex:cat-ord-djfrm} below) may be axiomatized in this way.
\end{example}

Let $\!C$ be a locally $\kappa$-presentable category of structures, axiomatized by some $\kappa$-limit theory.
Then $\!C$ has arbitrary small limits and colimits.
Limits of structures are constructed in the usual way, by taking the limit of the underlying sets in $\!{Set}$, equipped with the coordinatewise operations and relations.
Colimits are generally not constructed as in $\!{Set}$; however, $\kappa$-directed colimits (meaning colimits of diagrams indexed by a $\kappa$-directed preordered set) are constructed as in $\!{Set}$, with a relation holding in the colimit iff it already holds at some stage of the diagram.

Structures in $\!C$ may be presented via generators and relations: for any set $G$ of generators, and any set $R$ of relations between the elements of $G$ (i.e., atomic formulas with variables from $G$),%
\footnote{More generally, $R$ can contain existentials which are already known to be unique; see \cref{sec:loc-intlog}, \cite[V~1.12]{Jstone}.}
there is a \defn{structure presented by $G, R$}, denoted
\begin{align*}
\ang{G \mid R} = \ang{G \mid R}_\!C \in \!C,
\end{align*}
which is the universal structure obeying the axioms and equipped with a map $\eta : G -> \ang{G \mid R}$ making all of the relations in $R$ hold.
That is, for any $A \in \!C$, composition with $\eta$ gives a bijection
\begin{align*}
\!C(\ang{G \mid R}, A) &\cong \{f : G -> A \mid \text{every relation in $R$ holds in $A$ after substituting $f$}\}.
\end{align*}
The structure $\ang{G \mid R}$ may be constructed via a standard transfinite iteration of length $<\kappa^+$, starting with the algebra of terms generated from $G$ by the function symbols in $\@L$, and then at each stage, enforcing the relations in $R$ as well as the $\kappa$-limit axioms in the theory relative to the previous stage, by adjoining new elements (to satisfy $\exists$), taking a quotient (to satisfy $\exists!$ and equations), and enlarging the relations (to satisfy other atomic formulas).
Every $A \in \!C$ has a presentation, i.e., $A \cong \ang{G \mid R}$ for some $G, R$, for example $G :=$ underlying set of $A$ and $R :=$ all relations which hold in $A$.

Colimits in $\!C$ may be constructed via presentations: to construct a colimit of a diagram, in which each object is given by a presentation, take the disjoint union of the presentations (which presents the coproduct of the objects), then add equations identifying each $a \in A$ with $f(a) \in B$ for each morphism $f : A -> B$ in the diagram.

If $A \in \!C$ is presented by some $G, R$ which are $\lambda$-ary (respectively, just $G$ is $\lambda$-ary), then $A$ is called \defn{$\lambda$-presented} (\defn{$\lambda$-generated}).
Let $\!C_\lambda \subseteq \!C$ denote the full \defn{subcategory of $\lambda$-presented structures}.
By the construction of colimits described above, $\!C_\lambda$ is closed under $\lambda$-ary colimits.
The notion of $\lambda$-presentability in locally $\kappa$-presentable $\!C$ is most robust when $\lambda \ge \kappa$, in which case $\!C_\lambda \subseteq \!C$ can be intrinsically characterized as all objects $A$ for which the representable functor $\!C(A, -) : \!C -> \!{Set}$ preserves $\lambda$-directed colimits.

There is a \defn{(special) adjoint functor theorem} for locally presentable categories: any functor $\!D -> \!C$ between locally presentable categories which preserves limits as well as $\lambda$-directed colimits for sufficiently large $\lambda$ has a left adjoint $\!C -> \!D$.  In particular, forgetful functors between $\!D, \!C$ axiomatized by $\kappa$-limit theories (which preserve limits and $\kappa$-directed colimits) have left adjoints, which construct the \defn{free $\!D$-structure generated by a $\!C$-structure}:
\begin{align*}
\!C &--> \!D \\
A &|--> \ang{A \mid \text{all $\!C$-structure relations which hold in } A}_\!D =: \ang{A \qua \!C}_\!D = \ang{A}_\!D.
\end{align*}
We adopt this ``\defn{qua}'' notation from \cite{JVpfrm}, but omit it when context makes clear which category $\!C$ we are starting from.  More generally, we may add further generators or impose further $\!D$-structure relations, via
\begin{align*}
\ang{A \qua \!C,\, G \mid R}_\!D := \ang{A \sqcup G \mid \{\text{all $\!C$-relations which hold in } A\} \cup R}_\!D.
\end{align*}
Any left adjoint $F : \!C -> \!D$ between locally $\kappa$-presentable categories whose right adjoint $G : \!D -> \!C$ preserves $\kappa$-directed colimits (in particular, any free functor $F$) preserves $\kappa$-presented objects, i.e., restricts to a functor $F : \!C_\kappa -> \!D_\kappa$.
There is also an adjoint functor theorem for colimit-preserving functors between locally presentable categories: they all have right adjoints.

Two special types of adjunctions $F : \!C -> \!D \dashv G : \!D -> \!C$ are the \defn{reflective} and \defn{coreflective} adjunctions, where $G$, respectively $F$, is full and faithful, hence can be regarded (up to equivalence of categories) as a full subcategory inclusion.
When $\!C, \!D$ are locally $\kappa$-presentable and $G$ preserves $\kappa$-directed colimits (e.g., $G$ is forgetful), these interact with presentations as follows.
In the coreflective case, as mentioned above, $F$ preserves $\kappa$-presented objects; but since $F$ is full and faithful, it also reflects them, i.e.,
\begin{align*}
\!C_\kappa \simeq F^{-1}(\!D_\kappa).
\end{align*}
In the reflective case, we instead have

\begin{proposition}
\label{thm:cat-lim-refl}
Given a reflective adjunction $F \dashv G : \!D -> \!C$ between locally $\kappa$-presentable categories as above, where $G$ preserves $\kappa$-directed colimits, every $B \in \!D_\kappa$ is a retract of some $F(A)$ for $A \in \!C_\kappa$.
If furthermore $\kappa$ is uncountable, then every $B$ is isomorphic to some $F(A)$, i.e.,
\begin{align*}
F(\!C_\kappa) \simeq \!D_\kappa.
\end{align*}
\end{proposition}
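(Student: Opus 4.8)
The plan is to prove the ``retract'' statement for all $\kappa < \infty$ and then upgrade it to an isomorphism using that $\kappa$ is uncountable.

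First I would observe that $F$ restricts to a functor $\!C_\kappa -> \!D_\kappa$: for $A \in \!C_\kappa$ and a $\kappa$-directed diagram $(D_i)$ in $\!D$, the adjunction together with the hypothesis that $G$ preserves $\kappa$-directed colimits gives
\[
\!D(FA, \injlim_i D_i) \cong \!C(A, G\injlim_i D_i) \cong \!C(A, \injlim_i GD_i) \cong \injlim_i \!C(A, GD_i) \cong \injlim_i \!D(FA, D_i),
\]
so $FA \in \!D_\kappa$. Now fix $B \in \!D_\kappa$. Since $\!C$ is locally $\kappa$-presentable, $GB$ is a $\kappa$-directed colimit $GB \cong \injlim_{i \in I} A_i$ of objects $A_i \in \!C_\kappa$; applying the left adjoint $F$ and using the counit isomorphism $FGB \cong B$ of the reflective adjunction, we get $B \cong \injlim_{i \in I} FA_i$, a $\kappa$-directed colimit of objects of $\!D_\kappa$. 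As $B$ is $\kappa$-presented and $I$ is $\kappa$-directed, the identity $1_B \colon B -> \injlim_i FA_i$ factors through some colimit leg $FA_{i_0} -> B$; this exhibits $B$ as a retract of $FA_{i_0}$ with $A_{i_0} \in \!C_\kappa$, proving the first claim.

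For the second claim, write $B$ as the splitting of the idempotent $e \colon FA_{i_0} -> FA_{i_0}$ arising from the retraction above, with $s \colon B -> FA_{i_0}$, $r \colon FA_{i_0} -> B$, $rs = 1_B$, $sr = e$ (so also $re = r$ and $es = s$). Transposing $e$ to $\hat e \colon A_{i_0} -> GFA_{i_0}$ and presenting $GFA_{i_0}$ as a $\kappa$-directed colimit of objects of $\!C_\kappa$, the $\kappa$-presentability of $A_{i_0}$ lets $\hat e$ and the unit $\eta_{A_{i_0}}$ both factor through a single stage $A^{(1)} \in \!C_\kappa$; the triangle identities then yield a split epimorphism $t_1 \colon FA^{(1)} -> FA_{i_0}$ together with $p_1 \colon FA_{i_0} -> FA^{(1)}$ satisfying $t_1 p_1 = e$, and a direct computation shows that $B$ is again a retract of $FA^{(1)}$, cut out by an idempotent which factors through $t_1$. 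Iterating this $\omega$ many times produces a chain $A_{i_0} -> A^{(1)} -> A^{(2)} -> \dotsb$ in $\!C$ with colimit $A := \injlim_{n < \omega} A^{(n)}$, which still lies in $\!C_\kappa$ precisely because $\omega < \kappa$ (a colimit of a $\kappa$-small diagram of $\kappa$-presented objects is $\kappa$-presented). Since $F$ preserves this $\omega$-colimit and the transition maps are arranged so that the successive idempotents are absorbed in the limit (realizing $FA$ as the colimit of the idempotent sequence $FA_{i_0} \xrightarrow{e} FA_{i_0} \xrightarrow{e} \dotsb$, whose colimit in $\!D$ is the splitting $B$), we obtain $FA \cong B$, as required.

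The main obstacle is this $\omega$-fold iteration: bookkeeping the maps so that the successive split epimorphisms and idempotents fit into one coherent $\omega$-chain whose colimit realizes the splitting. This is the familiar fact that idempotents split along $\omega$-chains, and hence along $\kappa$-directed systems once $\kappa > \omega$; uncountability of $\kappa$ enters only here, and the statement genuinely fails for $\kappa = \omega$. Everything else reduces to routine adjunction manipulations and the standard construction of $\kappa$-directed colimit presentations in locally $\kappa$-presentable categories (see \cite{ARlpac}).
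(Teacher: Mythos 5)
Your proof of the retract statement is correct and is exactly the paper's argument: write $GB$ as a $\kappa$-directed colimit of objects of $\!C_\kappa$, apply $F$, use the counit isomorphism of the reflection, and factor $1_B$ through a colimit leg using $\kappa$-presentability of $B$.

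The isomorphism statement, however, has a genuine gap in the iteration. You want $F(\injlim_n A^{(n)}) = \injlim_n FA^{(n)} \cong B$, and you justify this by saying the chain $\bigl(FA^{(n)}, F(u_{n+1})\bigr)$ realizes the idempotent chain $FA_{i_0} \xrightarrow{e} FA_{i_0} \xrightarrow{e} \dotsb$, whose colimit is indeed the splitting $B$. But the identities your construction actually provides --- $t_{n+1} \circ F(u_{n+1}) = e_n$, and $B$ a retract of each $FA^{(n)}$ via $r_{n+1} = r_n \circ t_{n+1}$, $s_{n+1} = F(u_{n+1}) \circ s_n$ --- are not enough. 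The cocone $(r_n)$ induces $\rho : \injlim_n FA^{(n)} \to B$ with a section $\sigma$ satisfying $\rho \circ \sigma = 1_B$; but $\sigma \circ \rho = 1$ requires $\iota_n \circ e_n = \iota_n$ for each colimit leg $\iota_n$, i.e., that $e_n$ and $1_{FA^{(n)}}$ are eventually coequalized by the chain maps. What one can compute is $F(u_{n+1}) \circ e_n = e_{n+1} \circ F(u_{n+1})$, which does not give this; what is needed is rather something like $F(u_{n+1}) \circ t_{n+1} = e_{n+1}$, i.e., that the chain map $FA^{(n)} \to FA^{(n+1)}$ factors on the nose as ``descend to $B$, then include back in,'' and nothing in your construction forces it. (Relatedly, $t_1$ is not a split epimorphism: $t_1 \circ p_1 = e \ne 1$, and the map obtained by factoring $\eta_{A_{i_0}}$ through $A^{(1)}$ is in general a \emph{different} map $FA^{(1)} \to FA_{i_0}$ than $t_1$.) The paper closes exactly this gap by never leaving the original $\kappa$-directed diagram: given the section $s : B \to FA_{i_0}$, it uses $\kappa$-presentability of $FA_{i_j}$ (not just of $B$) at each step to find $i_{j+1} \ge i_j$ at which the two maps $f_{i_0 i_{j+1}} \circ s \circ \iota_{i_j}$ and $f_{i_j i_{j+1}}$ --- which agree after composing into the colimit --- become equal at a finite stage; these identities are precisely what make the two comparison maps mutually inverse (\cref{thm:cat-lim-dircolim-retr}). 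To repair your version you would need an analogous forcing step at each stage, which essentially requires carrying along an ambient $\kappa$-directed diagram with colimit $B$, at which point you have reproduced the paper's lemma. A small side remark: your claim that the isomorphism statement ``genuinely fails for $\kappa = \omega$'' is an overstatement --- the paper merely does not assert it there, and for instance it holds for $\!{DLat} \rightleftarrows \!{Bool}$ since finite Boolean algebras are finitely presented distributive lattices.
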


\begin{remark}
More generally, the conclusion above holds for the functor induced by $F$ between coslice categories $A \down \!C_\kappa -> F(A) \down \!D_\kappa$ for every $A \in \!C_\kappa$.
In fact, this generalized statement completely characterizes when a $\kappa$-directed-colimit-preserving right adjoint between locally $\kappa$-presentable categories is full and faithful; see \cite[2.6]{MPlfp}.
\end{remark}

\Cref{thm:cat-lim-refl} is a consequence of

\begin{lemma}
\label{thm:cat-lim-dircolim-retr}
Let $\!D$ be a locally $\kappa$-presentable category.
If a $\kappa$-presented object $B \in \!D_\kappa$ is a $\kappa$-directed colimit $B = \injlim_{i \in I} A_i$ of $\kappa$-presented objects $A_i \in \!D_\kappa$ with morphisms $f_{ij} : A_i -> A_j$ for $i \le j \in I$, for some $\kappa$-directed preorder $I$, then $B$ is a retract of some $A_{i_0}$, as well as the colimit of some subsequence $A_{i_0} -> A_{i_1} -> \dotsb$ of the $A_i$, where $i_0 \le i_1 \le \dotsb \in I$.
\end{lemma}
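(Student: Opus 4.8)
# Proof Proposal for Lemma~\ref{thm:cat-lim-dircolim-retr}

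The plan is to exploit the intrinsic characterization of $\kappa$-presented objects in a locally $\kappa$-presentable category: namely, $B \in \!D_\kappa$ iff the representable functor $\!D(B, -)$ preserves $\kappa$-directed colimits. First I would apply this to the colimit $B = \injlim_{i \in I} A_i$ itself. Since the cocone maps $\iota_i : A_i -> B$ present $B$ as a $\kappa$-directed colimit, the identity $1_B \in \!D(B, B) = \!D(B, \injlim_i A_i) \cong \injlim_i \!D(B, A_i)$ must factor through some stage, i.e., there is $i_0 \in I$ and a morphism $s : B -> A_{i_0}$ such that $\iota_{i_0} \circ s = 1_B$. This already exhibits $B$ as a retract of $A_{i_0}$, with $s$ a section of $\iota_{i_0}$.

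Next I would build the subsequence witnessing $B$ as a colimit of a $\kappa$-chain. The idea is to iterate the retraction argument cofinally and coherently. Starting from $i_0$ and the section $s_0 := s : B -> A_{i_0}$, at each successor stage I would use $\kappa$-directedness of $I$ together with the fact that $\!D(B, -)$ preserves $\kappa$-directed colimits to choose $i_{n+1} \ge i_n$ large enough that the section $s_n$ and the composite $f_{i_0 i_n} \circ s_0$ become ``compatible'' in the sense needed for the colimit comparison to work; at limit stages $\lambda < \kappa$ I would use $\kappa$-directedness to pick $i_\lambda$ above all earlier $i_n$. Concretely, I want to arrange that the subsequence $A_{i_0} -> A_{i_1} -> \dotsb$ is cofinal in the original diagram (so that it has the same colimit $B$) — but since $I$ need not have a cofinal $\kappa$-chain in general, the cleaner route is: the retract $s : B -> A_{i_0}$ together with $\iota_{i_0} : A_{i_0} -> B$ makes $B$ a retract, and a retract of an object in a $\kappa$-presentable category that is itself the colimit of a chain can be split off as the colimit of the chain of partial composites $(\iota_{i_0} \circ s \circ \iota_{i_n})_n$ of idempotents — but idempotents need not split into chains without extra care. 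Instead, I would invoke the standard fact (see \cite[1.3, 1.29]{ARlpac}) that in a locally $\kappa$-presentable category every $\kappa$-directed colimit of $\kappa$-presented objects over $I$ can be rewritten as a $\kappa$-directed colimit over a $\kappa$-directed poset in which the $\kappa$-chains are cofinal, or more directly: choose recursively $i_0 \le i_1 \le \dotsb$ such that $s : B -> A_{i_0}$ and, for each $n$, the map $\iota_{i_n} : A_{i_n} -> B$ postcomposed with $s$ and then $f_{i_0 i_{n+1}}$ agrees with $f_{i_n i_{n+1}}$ after a further advance — using at each step that two maps out of the $\kappa$-presented $A_{i_n}$ agreeing in the colimit must agree at some stage.

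The main obstacle I expect is the bookkeeping in this recursive construction: ensuring that the chosen $i_n$'s simultaneously (a) make the idempotent $\iota_{i_0} \circ s$ coherent along the chain, and (b) remain cofinal enough that $\injlim_n A_{i_n}$ recovers $B$ rather than a proper retract. The key technical input is that both ``a morphism out of a $\kappa$-presented object factors through a colimit stage'' and ``two such morphisms agreeing in the colimit agree at a stage'' hold, and that $\kappa$-directedness lets us take upper bounds at limit ordinals $<\kappa$; one then checks that the colimit of the resulting chain, equipped with the induced maps to and from $B$, satisfies the universal property of $B$ because the composite $B \to A_{i_0} \to \injlim_n A_{i_n} \to B$ is the identity and the reverse composite is the identity on the colimit by cofinality. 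Once the subsequence is in hand, the retraction statement is immediate from it (take $i_0$), so the two conclusions of the lemma are really one. I would then note that \Cref{thm:cat-lim-refl} follows by writing $A \in \!C_\kappa$ as a $\kappa$-directed colimit of finitely/$\kappa$-presented pieces, applying $F$ (which preserves $\kappa$-directed colimits and $\kappa$-presented objects), applying this lemma in $\!D$, and — when $\kappa$ is uncountable — using that $\!D_\kappa$ is closed under retracts and that a retract of $F(A_{i_0})$ can itself be presented over $\!C_\kappa$ by splitting the corresponding idempotent on $A_{i_0}$ inside $\!C_\kappa$ (idempotents split in $\!C_\kappa$ since $\kappa > \omega$ makes it closed under the required countable colimits).
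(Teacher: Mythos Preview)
Your first paragraph is correct and matches the paper exactly: the retract $s : B \to A_{i_0}$ with $\iota_{i_0} \circ s = 1_B$ comes straight from $\kappa$-presentability of $B$.

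The second part has a genuine conceptual gap. You repeatedly invoke \emph{cofinality} of the chosen sequence $(i_n)_n$ in $I$ as the reason $\injlim_n A_{i_n} \cong B$, but this is false in general: the sequence is countable, and for $\kappa > \omega_1$ a countable chain cannot be cofinal in a $\kappa$-directed poset. The paper's proof does not use cofinality at all. Instead, one builds only an $\omega$-chain $i_0 \le i_1 \le \dotsb$ (no transfinite recursion, no limit stages) satisfying exactly the coherence condition you wrote down, $f_{i_0 i_{j+1}} \circ s \circ \iota_{i_j} = f_{i_j i_{j+1}}$, and then \emph{directly} exhibits an inverse to the comparison map $g : \injlim_j A_{i_j} \to B$: namely $h := \iota'_0 \circ s$. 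One checks $g \circ h = \iota_{i_0} \circ s = 1_B$, and the coherence equations give $h \circ g \circ \iota'_j = \iota'_{j+1} \circ f_{i_0 i_{j+1}} \circ s \circ \iota_{i_j} = \iota'_{j+1} \circ f_{i_j i_{j+1}} = \iota'_j$, whence $h \circ g = 1$. So the isomorphism is witnessed explicitly by the section $s$, not by any cofinality.

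This also explains why your detours through $\kappa$-chains, idempotent splitting, and the rewriting results from \cite{ARlpac} are unnecessary: the $\omega$-chain suffices precisely because the section $s$ is doing the work. For the application to \cref{thm:cat-lim-refl}, the point of ``$\kappa$ uncountable'' is simply that $\!C_\kappa$ is then closed under countable colimits, so $\injlim_j A_{i_j} \in \!C_\kappa$ directly; no idempotent-splitting argument is needed.
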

\begin{proof}
Since $B$ is $\kappa$-presented, the identity $1_B : B -> \injlim_i A_i$ factors through the cocone map $\iota_{i_0} : A_{i_0} -> \injlim_i A_i$ for some $i_0 \in I$, say as $s : B -> A_{i_0}$, so that $\iota_{i_0} \circ s = 1_B$, whence $B$ is a retract of $A_{i_0}$.
Then the composite $A_{i_0} --->{\iota_{i_0}} B --->{s} A_{i_0} --->{\iota_{i_0}} B = \injlim_i A_i$ is equal to $\iota_{i_0}$, so since $A_{i_0}$ is $\kappa$-presented, the two maps $s \circ \iota_{i_0}, 1_{A_{i_0}} : A_{i_0} -> A_{i_0}$ are already equal when composed with the diagram map $f_{i_0i_1} : A_{i_0} -> A_{i_1}$ for some $i_1 \ge i_0$.
Then we have $\iota_{i_1} \circ f_{i_0i_1} \circ s = \iota_{i_0} \circ s = 1_B$, so the composite $A_{i_1} --->{\iota_{i_1}} B --->{s} A_{i_0} --->{f_{i_0i_1}} A_{i_1} --->{\iota_{i_1}} B = \injlim_i A_i$ is equal to $\iota_{i_1}$, so as before, $f_{i_0i_1} \circ s \circ \iota_{i_1}, 1_{A_{i_1}} : A_{i_1} -> A_{i_1}$ are already equal when composed with $f_{i_1i_2} : A_{i_1} -> A_{i_2}$, i.e., we have $f_{i_0i_2} \circ s \circ \iota_{i_1} = f_{i_1i_2} : A_{i_1} -> A_{i_2}$, for some $i_2 \ge i_1$.
Continue finding $i_2 \le i_3 \le \dotsb$ in this manner, such that for each $j$, we have
\begin{align*}
f_{i_0i_{j+1}} \circ s \circ \iota_{i_j} = f_{i_ji_{j+1}} : A_{i_j} -> A_{i_{j+1}}.
\end{align*}
Let $\iota'_j : A_{i_j} -> \injlim_j A_{i_j}$ be the cocone maps.
We have a colimit comparison map $g : \injlim_j A_{i_j} -> \injlim_i A_i = B$, such that $g \circ \iota'_j = \iota_{i_j}$, as well as a map $h := \iota'_0 \circ s : B -> \injlim_j A_{i_j}$.
We have $g \circ h = g \circ \iota'_0 \circ s = \iota_{i_0} \circ s = 1_B$, and conversely, $h \circ g = 1_{\injlim_j A_{i_j}}$ because for each $j$, we have $h \circ g \circ \iota'_j = \iota'_0 \circ s \circ \iota_{i_j} = \iota'_{j+1} \circ f_{i_0i_{j+1}} \circ s \circ \iota_{i_j} = \iota'_{j+1} \circ f_{i_ji_{j+1}} = \iota'_j$.
Thus $g^{-1} = h : B \cong \injlim_j A_{i_j}$.
\end{proof}

\begin{proof}[Proof of \cref{thm:cat-lim-refl}]
Write $G(B) \in \!C$ as a $\kappa$-directed colimit $\injlim_{i \in I} A_i$, for a $\kappa$-directed poset $I$, of $\kappa$-presented objects $A_i \in \!C_\kappa$ and morphisms $f_{ij} : A_i -> A_j$ for $i \le j$ (e.g., $A_i = \ang{G_i \mid R_i}$, for all $\kappa$-ary $G_i \subseteq G(A)$ and $\kappa$-ary sets $R_i$ of relations which hold between them in $G(A)$).
Then the adjunction counit $F(G(B)) \cong \injlim_{i \in I} F(A_i) -> B$ is an isomorphism.
Since $G$ preserves $\kappa$-directed colimits, each $F(A_i) \in \!D_\kappa$.
By \cref{thm:cat-lim-dircolim-retr}, $B$ is a retract of some $F(A_{i_0})$, as well as isomorphic to $\injlim_j F(A_{i_j}) \cong F(\injlim_j A_{i_j})$ for some $i_0 \le i_1 \le \dotsb$; if $\kappa$ is uncountable, then $\injlim_j A_{i_j} \in \!C_\kappa$.
\end{proof}

Many naturally occurring adjunctions between locally $\kappa$-presentable categories are not manifestly free/forgetful.
For example, the usual forgetful functor $\!{Bool} -> \!{DLat}$ also has a \emph{right} adjoint, taking each distributive lattice $A$ to the Boolean subalgebra $A_\neg$ of complemented elements.
However, up to modifying the theories axiomatizing the categories involved, it is always possible to regard such adjunctions as free/forgetful.
In this example, we may regard a distributive lattice $A$ as a \defn{two-sorted structure}, with its usual underlying set $A$ as well as $A_\neg$ as an additional ``underlying set'', with the inclusion $A_\neg `-> A$ regarded as a unary operation.
The resulting class of structures is still $\omega$-limit axiomatizable;
the key point is that complements are uniquely defined by an $\omega$-ary conjunction of atomic formulas $(x \wedge y = \bot) \wedge (x \vee y = \top)$.
We can then regard $A |-> A_\neg$ as forgetting the usual underlying set $A$; the usual forgetful functor $\!{Bool} -> \!{DLat}$ then takes $B \in \!{Bool}$ to the ``free distributive lattice $A$ generated by $B \subseteq A_\neg$''.
Any adjunction between locally presentable categories can be described as free/forgetful in a similar manner, so that the concrete constructions in terms of presentations described above can in fact be applied to arbitrary adjunctions.

For limit theories over a \emph{large} signature, one can try to develop the above notions in the same way; the only issue is that the transfinite construction of presented structures may result in a proper class, hence small-presented structures need not be small.
If it is known (by some other means) that small-presented structures are always small, e.g., in the case of $\!{{\bigvee}Lat}$ or $\!{Frm}$ (by \cref{thm:frm-small}), then the above facts all go through.

On the other hand, for example, the result of Gaifman--Hales (\cref{thm:gaifman-hales}) implies that the forgetful functor $\!{CBool} -> \!{Set}$ does not have a left adjoint.
In this case, we denote the category of large models in all-caps, e.g., $\!{CBOOL}$ is the category of large complete Boolean algebras, and let $\!{CBOOL}_\infty \subseteq \!{CBOOL}$ denote the full subcategory of small-presented structures; these are all the large structures we will ever need in this paper.
Note that $\!{CBOOL}$, being a collection of possibly proper classes, is an ``extra-large'' category (hence would require extra foundational care to work with rigorously); however, $\!{CBOOL}_\infty \subseteq \!{CBOOL}$ is only a large category, since we may equivalently define it to consist of small presentations, together with homomorphisms between the presented structures which are determined by their values on generators.

\subsection{Algebraic theories and monadic categories}
\label{sec:cat-alg}

Categories of structures axiomatized by algebraic theories (in the sense of \cref{ex:cat-lim-alg}) over a small signature are \defn{monadic over $\!{Set}$}.%
\footnote{Multi-sorted algebraic theories are monadic over $\!{Set}^S$ where $S$ is the set of sorts.}
See \cite[II~Ch.~2--4]{Bcat} for a detailed reference.
In such categories, in addition to the notions from the preceding subsection, the following universal-algebraic notions can be categorically formulated.

Recall that a \defn{monomorphism} in an arbitrary category $\!C$ is a morphism $f$ such that $f \circ g = f \circ h \implies g = h$; the dual notion is an \defn{epimorphism}.
A \defn{subobject} of an object $X \in \!C$ is an equivalence class of monomorphisms $A -> X$ with respect to the preorder
\begin{align*}
(f : A `-> X) \subseteq (g : B `-> X) \coloniff \exists h : A -> B\, (f = g \circ h)
\end{align*}
(such an $h$ is necessarily unique); this preorder descends to a partial order on the \defn{poset of subobjects} $\Sub(X) = \Sub_\!C(X)$.
It is common to abuse notation regarding monomorphisms: we often treat single monomorphisms $A -> X$ as if they were subobjects, denoted $A \subseteq X$, and refer to the representing monomorphism as the \defn{inclusion}.
In $\!C$ monadic over $\!{Set}$, monomorphisms are precisely the embeddings of subalgebras; hence, $\Sub(X) \cong \{\text{subalgebras of $X$}\}$.%
\footnote{In categories of models of arbitrary limit theories, monomorphisms are injective homomorphisms, not necessarily embeddings (which are not an intrinsic categorical notion, since they depend on the choice of signature).}

For a morphism $f : X -> Y$ in an arbitrary category $\!C$ with finite limits, its \defn{kernel} is the pullback $\ker(f) := X \times_Y X$ of $f$ with itself.
It is an instance of a \defn{congruence} (also known as \defn{equivalence relation}) on $X$, meaning a subobject ${\sim} \subseteq X^2$ which internally satisfies the axioms of an equivalence relation.
In $\!C$ monadic over $\!{Set}$, these correspond to congruences in the usual universal-algebraic sense (i.e., equivalence relations which are also subalgebras).

Given a congruence ${\sim} \subseteq X^2$, we may take the coequalizer of the projections $\pi_1, \pi_2 : {\sim} \rightrightarrows X$.
In $\!C$ monadic over $\!{Set}$, this is the usual \defn{quotient algebra} $X/{\sim}$.
The quotient map $q : X ->> X/{\sim}$ has kernel $\sim$, and is a \defn{regular epimorphism}, meaning that it is the coequalizer of its kernel (which implies that it is an epimorphism).
So we have a bijection
\begin{align*}
\{\text{congruences on } X\} &\overunderset{\coeq}{\ker}\rightleftarrows \{\text{equiv.\ classes of regular epimorphisms from $X$}\} =: \RSub_{\!C^\op}(X) \subseteq \Sub_{\!C^\op}(X).
\end{align*}
Any morphism $f : X -> Y \in \!C$ factors uniquely-up-to-unique-isomorphism into a regular epimorphism $X ->> X/\ker(f)$ followed by a monomorphism $X/\ker(f) `-> Y$; the subobject of $Y$ represented by this monomorphism is called the \defn{image} of $f$.
Moreover, regular epimorphisms are closed under arbitrary products, as well as under pullback across arbitrary morphisms; thus these same operations also preserve image factorizations of arbitrary morphisms.

Categories satisfying the above compatibility conditions between subobjects, congruences, and regular epimorphisms are called \defn{(Barr-)exact completely regular} \cite{CVregex}.
Thus, categories monadic over $\!{Set}$ are exact completely regular.
Moreover, in addition to limits, also images and coequalizers of congruences are computed as for the underlying sets (unlike in arbitrary locally presentable categories, e.g., the quotient of $\{0 < 1 < 2\}$ by $0 \sim 2$ in $\!{Pos}$).

Presented algebras $\ang{G \mid R}$ may be computed in a single step, as the quotient of the free algebra $\ang{G}$ by the congruence generated by $R$ (where each equation between terms in $R$ is identified with a pair of elements in $\ang{G}$); there is no need for the transfinite iteration described in the preceding subsection (except in order to generate the congruence).
It follows that an algebraic theory over a large signature defines a well-behaved category $\!C$ as soon as we know that free algebras $\ang{G}$ over small generating sets $G$ are small, i.e., that the left adjoint to the forgetful functor $\!C -> \!{Set}$ exists; this then implies that small-presented algebras are small and $\!C$ is monadic over $\!{Set}$, so all of the facts in this and the preceding section go through.
For example, this applies to the categories $\!{{\bigvee}Lat}, \!{Frm}$ (\cref{thm:frm-small}).

We will need the following standard technical facts:

\begin{lemma}
\label{thm:cat-alg-cong-pres}
Let $A$ be a $\kappa$-generated algebra for some algebraic theory, and let $\sim$ be a congruence on $A$ such that the quotient algebra $A/{\sim}$ has a presentation with $<\kappa$-many relations.
Then $\sim$ is a $\kappa$-generated congruence.
\end{lemma}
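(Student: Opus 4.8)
The plan is to reduce the statement to \cref{thm:cat-alg-pres}, which says that a $\kappa$-presented algebra is $\kappa$-presented using any prescribed $\kappa$-ary generating set. First I would fix a $\kappa$-ary generating set $G \subseteq A$, giving a surjection $q \colon \ang{G} \twoheadrightarrow A$ with kernel $K$, and let $q' \colon \ang{G} \twoheadrightarrow A \twoheadrightarrow A/{\sim}$ be the composite, with kernel $L \supseteq K$. Under the standard correspondence between congruences on $A = \ang{G}/K$ and congruences on $\ang{G}$ containing $K$, the congruence $\sim$ corresponds to $L$; and if $L$ is the congruence generated by $K$ together with a $\kappa$-ary set $T$ of pairs, then $\sim$ is the congruence on $A$ generated by $q(T)$ (using that the congruence generated by $K \cup T$ is $L$, since $K \subseteq L$ and $T$ generates $L$), hence $\kappa$-generated. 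Since $q'$ exhibits $A/{\sim}$ as $\kappa$-generated with the $\kappa$-ary generating set $q'(G)$, \cref{thm:cat-alg-pres} guarantees exactly such a $T$ \emph{provided $A/{\sim}$ is $\kappa$-presented}. So the whole problem reduces to showing that $A/{\sim}$ is a $\kappa$-presented algebra.

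Now $A/{\sim}$ is $\kappa$-generated, being a quotient of $A$, and by hypothesis has a presentation $\ang{H \mid S}$ with $|S| < \kappa$, although $H$ may be large; the substantive step is to cut $H$ down to a $\kappa$-ary subset without changing the presented algebra. I would choose a $\kappa$-ary generating set $G_0 \subseteq A/{\sim}$; each $g \in G_0$ is the value of some term $t_g$ over $H$, and each relation in $S$ is an equation between terms over $H$. Since terms have finite support and both $G_0$ and $S$ are $\kappa$-ary and $\kappa$ is regular, the union of the supports of the $t_g$ and of the terms appearing in $S$ is a $\kappa$-ary subset $H' \subseteq H$, which I arrange to contain every generator occurring in $S$. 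Then I claim the homomorphism $\phi \colon \ang{H' \mid S} \to \ang{H \mid S} = A/{\sim}$ induced by the inclusion $H' \hookrightarrow H$ (well-defined since every relation in $S$ is a relation over $H'$ that holds in $\ang{H \mid S}$) is an isomorphism. To produce an inverse, observe that each $h \in H$, as an element of $A/{\sim}$, is the value of a term over $G_0$, hence—substituting $t_g$ for each $g$—the value of a term $r_h$ over $H'$; define $\psi \colon \ang{H \mid S} \to \ang{H' \mid S}$ by $\psi(h) := h$ for $h \in H'$ and $\psi(h) := r_h^{\ang{H' \mid S}}$ for $h \in H \setminus H'$. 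Because every relation in $S$ involves only generators from $H'$, which $\psi$ fixes, $\psi$ respects $S$ and extends to a homomorphism, and a check on generators gives $\phi \circ \psi = \mathrm{id}$ and $\psi \circ \phi = \mathrm{id}$. Hence $A/{\sim} \cong \ang{H' \mid S}$ with $H'$ and $S$ both $\kappa$-ary, so $A/{\sim}$ is $\kappa$-presented, which closes the reduction.

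The hard part will be this cut-down step, and specifically the construction and verification of $\psi$: one must choose, for each leftover generator $h \in H \setminus H'$, a representing term over $H'$ (possible because $G_0$ generates $A/{\sim}$ and its chosen representing terms are supported inside $H'$), and then confirm that this assignment is compatible with all of $S$—which is precisely why $H'$ was enlarged to contain the generators occurring in $S$. Everything else is routine bookkeeping: the correspondence theorem for congruences, the identity ``congruence generated by $K \cup T$ equals the congruence generated by $T$'' when $K$ is contained in the congruence generated by $T$, and the final appeal to \cref{thm:cat-alg-pres}. (If the appendix already records that a $\kappa$-generated algebra admitting a presentation with $<\kappa$ relations is $\kappa$-presented, the second paragraph can be replaced by a citation and the whole proof is just the reduction in the first paragraph.)
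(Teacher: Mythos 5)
Your second paragraph is correct as far as it goes (modulo one slip noted below), but the overall proof has a genuine hole at its last step: in this paper, \cref{thm:cat-alg-pres} is a \emph{corollary} of the lemma you are proving --- it is obtained by applying the lemma to the kernel of $\ang{G} \twoheadrightarrow A$ --- so appealing to it here is circular. Nor is the deferred step ``routine bookkeeping'': knowing that $A/{\sim}$ admits \emph{some} $\kappa$-ary presentation $\ang{H' \mid S}$ does not by itself tell you that the kernel $L$ of $\ang{G} \twoheadrightarrow A/{\sim}$ is generated by $K$ together with $<\kappa$-many pairs. To get that you must change generators from $H'$ to $G$, i.e.\ choose lifts, substitute terms, and verify a presentation --- which is essentially the lemma itself in the free-algebra case, and of the same size as everything you have already written. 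So as structured, the crux of the argument has been pushed into a citation that is not available.

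The fix is to run the substitution argument directly on the given (possibly large) $H$, which also makes your cut-down step unnecessary. This is what the paper does: fix a $\kappa$-ary generating set $G \subseteq A$ and the presentation $A/{\sim} = \ang{H \mid S}$ with quotient map $q : \ang{H} \twoheadrightarrow A/{\sim}$; choose a homomorphism $v : \ang{H} \to A$ lifting $q$ through $p : A \twoheadrightarrow A/{\sim}$ (pick $v(h) \in p^{-1}(q(h))$ on generators), and for each $g \in G$ an element $u(g) \in \ang{H}$ with $q(u(g)) = p(g)$. Then the $\kappa$-ary set $v(S) \cup \{(g, v(u(g))) \mid g \in G\}$ generates $\sim$: the pairs $v(S)$ generate $v(\ker q)$ on the subalgebra $v(\ang{H})$, whose quotient is already $A/{\sim}$, and the pairs $(g, v(u(g)))$ identify every element of $A$ with an element of $v(\ang{H})$. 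Note that $H$ is never required to be $\kappa$-ary. Two smaller remarks: (i) ``terms have finite support'' is false for the infinitary signatures at issue --- a term over a $\kappa$-ary signature has $<\kappa$-ary support (by induction, using regularity of $\kappa$), which is still enough for your bound on $H'$; (ii) your $\phi/\psi$ argument does correctly establish that $A/{\sim}$ is $\kappa$-presented, but this intermediate fact is not needed once the lifting argument is run directly.
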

\begin{proof}
Let $G \subseteq A$ be a $\kappa$-ary generating set, let $p : A ->> A/{\sim}$ be the quotient map, and let $A/{\sim} = \ang{H \mid S}$ be some presentation with $\abs{S} < \kappa$, so that $S \subseteq \ang{H}^2$ generates the kernel of the quotient map $q : \ang{H} ->> A/{\sim}$.
For each $g \in G$, pick some $u(g) \in A$ such that $p(g) = q(u(g)) \in A/{\sim}$, and for each $h \in H$, pick some $v(h) \in A$ such that $q(h) = p(v(h)) \in A/{\sim}$.
Extend $v$ to a homomorphism $v : \ang{H} -> A$; by definition of $v$ on generators, $v$ lifts $q$ along $p$:
\begin{equation*}
\begin{tikzcd}
\ang{H} \drar[two heads, "q"'] \rar["v"] & A \dar[two heads, "p"] \\
& A/{\sim}
\end{tikzcd}
\end{equation*}
Then $\sim$ is generated by
\begin{align*}
R := v(S) \cup \{(g, v(u(g))) \mid g \in G\}.
\end{align*}
Indeed, $R$ is contained in ${\sim} = \ker(p)$: $v(S) \subseteq v(\ker(q))$ is, because $q = p \circ v$; and for $g \in G$, we have $p(v(u(g))) = q(u(g)) = p(g)$.
Conversely, it follows from $q = p \circ v$ that $A/{\sim}$ is the quotient of the image $v(\ang{H}) \subseteq A$ by $v(\ker(q))$ which is the congruence generated by $v(S)$; while the extra pairs $(g, v(u(g)))$ for $g \in G$ ensure that every element of $A$ is identified with some element of $v(\ang{H})$.
\end{proof}

\begin{corollary}
\label{thm:cat-alg-pres}
Let $A$ be an algebra for some algebraic theory.
Suppose $A$ has $<\kappa$-many generators $G \subseteq A$, as well as a presentation with $<\kappa$-many relations.
Then $A$ has a $\kappa$-ary presentation using the generators $G$ and $<\kappa$-many relations.
\end{corollary}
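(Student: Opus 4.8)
\textbf{Proof of \cref{thm:cat-alg-pres}.}
The plan is to obtain the desired $\kappa$-ary presentation by running \cref{thm:cat-alg-cong-pres} on a carefully chosen congruence. Write $A = \ang{H \mid S}$ for the hypothesized presentation with $\abs{S} < \kappa$, and let $q : \ang{H} ->> A$ be the corresponding quotient map, whose kernel $\ker(q)$ is the congruence on $\ang{H}$ generated by $S$; in particular $\ker(q)$ is a $\kappa$-generated congruence. On the other hand, since $G \subseteq A$ generates $A$, the inclusion $G `-> A$ extends to a surjective homomorphism $p : \ang{G} ->> A$. The idea is to transport the presentation of $A$ from the generating set $H$ to the generating set $G$, which amounts to showing $\ker(p)$ is a $\kappa$-generated congruence on $\ang{G}$; then $A \cong \ang{G}/\ker(p)$ is a $\kappa$-ary presentation using the generators $G$ and a $\kappa$-ary set of relations generating $\ker(p)$.

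First I would observe that $\ang{G}$ is itself $\kappa$-generated (by $G$, which is $\kappa$-ary), so to apply \cref{thm:cat-alg-cong-pres} with ``$A$'' there taken to be $\ang{G}$ and ``$\sim$'' taken to be $\ker(p)$, it suffices to check that the quotient $\ang{G}/\ker(p) \cong A$ has a presentation with $<\kappa$-many relations — which is exactly our hypothesis $A = \ang{H \mid S}$ with $\abs{S} < \kappa$. Thus \cref{thm:cat-alg-cong-pres} applies directly and yields that $\ker(p)$ is a $\kappa$-generated congruence, say generated by a $\kappa$-ary set $R \subseteq \ang{G}^2$ of relations between terms over $G$. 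Then $A \cong \ang{G}/\ker(p) = \ang{G \mid R}$ is the required presentation: it uses the $\kappa$-ary generating set $G$ and the $\kappa$-ary relation set $R$.

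I do not expect any serious obstacle here: the entire content has been isolated into \cref{thm:cat-alg-cong-pres}, and the only thing to verify is that its hypotheses are met, which is immediate once one notes that $\ang{G}$ is $\kappa$-generated and that $\ang{G}/\ker(p)$ is (isomorphic to) $A$, which has a $<\kappa$-relation presentation by assumption. The one minor point worth stating carefully is that the surjection $p : \ang{G} ->> A$ exists precisely because $G$ generates $A$ (this is the universal property of the free algebra $\ang{G}$ applied to the inclusion $G `-> A$), and that a surjective homomorphism in a category monadic over $\!{Set}$ is a regular epimorphism exhibiting its codomain as the quotient by its kernel, so $A \cong \ang{G}/\ker(p)$ genuinely holds as algebras. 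With that in hand the argument is complete.
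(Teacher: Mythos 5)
Your proof is correct and takes exactly the same route as the paper, whose entire proof is ``Apply \cref{thm:cat-alg-cong-pres} to the kernel of $\ang{G} \to\joinrel\to A$''; you have simply spelled out the verification that the lemma's hypotheses hold (with the free algebra $\ang{G}$, which is $\kappa$-generated, playing the role of the ambient algebra and $\ker(p)$ the congruence). Nothing is missing.
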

\begin{proof}
Apply \cref{thm:cat-alg-cong-pres} to the kernel of $\ang{G} ->> A$.
\end{proof}

\begin{lemma}
\label{thm:cat-alg-mono-epi-pres}
Let $\!C$ be the category of algebras for some $\kappa$-algebraic theory.
Then a morphism $f : X -> Y \in \!C_\kappa$ between $\kappa$-presented algebras is a monomorphism, epimorphism, or regular epimorphism, respectively, in $\!C_\kappa$ iff it is so in $\!C$.
\end{lemma}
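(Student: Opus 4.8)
The plan is to use only two ingredients: (i) $\!{C}_\kappa \subseteq \!{C}$ is a full subcategory which, since $\!{C}$ is locally $\kappa$-presentable, is closed under $\kappa$-small colimits (in particular finite ones such as coequalizers and pushouts), with the inclusion creating these; and (ii) every object of $\!{C}$ is a $\kappa$-directed colimit of objects of $\!{C}_\kappa$, and an object $A \in \!{C}_\kappa$ is characterized by $\!{C}(A,-)$ preserving $\kappa$-directed colimits (see \cref{sec:cat-lim}). Throughout, ``$\Rightarrow$'' (a property in $\!{C}$ implies it in $\!{C}_\kappa$) is immediate for monomorphisms and epimorphisms, since passing to a full subcategory only shrinks the class of test morphisms; the content is the converse, and the regular-epi case in both directions.

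For \emph{monomorphisms}, suppose $f : X \to Y$ is a monomorphism in $\!{C}_\kappa$, and let $g,h : A \rightrightarrows X$ in $\!{C}$ satisfy $f\circ g = f\circ h$. Write $A = \injlim_i A_i$ as a $\kappa$-directed colimit of $\kappa$-presented $A_i$ with cocone maps $\iota_i : A_i \to A$. Then $f\circ(g\circ\iota_i) = f\circ(h\circ\iota_i)$ with $A_i, X \in \!{C}_\kappa$, so $g\circ\iota_i = h\circ\iota_i$ for each $i$; since the cocone maps of a colimit are jointly epimorphic, $g = h$.

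For \emph{epimorphisms}, suppose $f : X \to Y$ is an epimorphism in $\!{C}_\kappa$, and let $g,h : Y \rightrightarrows W$ in $\!{C}$ satisfy $g\circ f = h\circ f$. Write $W = \injlim_i W_i$ as a $\kappa$-directed colimit of $\kappa$-presented $W_i$ with cocone maps $\iota_i$. Since $Y \in \!{C}_\kappa$, both $g$ and $h$ factor through some $W_i$ (after passing to a common upper bound by $\kappa$-directedness), say $g = \iota_i\circ g'$ and $h = \iota_i\circ h'$ with $g',h' : Y \rightrightarrows W_i$. Now $\iota_i\circ(g'\circ f) = \iota_i\circ(h'\circ f)$, and $X \in \!{C}_\kappa$, so $g'\circ f$ and $h'\circ f$ become equal after composing with some diagram map $W_i \to W_k$; replacing $W_i$ by $W_k$ we may assume $g'\circ f = h'\circ f$, whence $g' = h'$ since $f$ is an epimorphism in $\!{C}_\kappa$ and $Y, W_k \in \!{C}_\kappa$, so $g = h$. (Alternatively one can argue via the cokernel pair $Y \amalg_X Y$, which is a pushout hence $\kappa$-presented and computed the same way in $\!{C}_\kappa$ and $\!{C}$.)

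For \emph{regular epimorphisms}: if $f : X \to Y$ is a regular epimorphism in $\!{C}$, then $Y \cong X/{\sim}$ for $\sim := \ker(f)$, and by \cref{thm:cat-alg-cong-pres} (applicable since $X$ is $\kappa$-generated and $X/{\sim} \cong Y$ is $\kappa$-presented) $\sim$ is a $\kappa$-generated congruence, generated by a $\kappa$-ary family $R = \{(a_r,b_r)\}_{r<\lambda} \subseteq X^2$. Let $F$ be the free algebra (cf.\ \cref{ex:cat-lim-alg}) on generators $\{z_r\}_{r<\lambda}$ — a $\kappa$-presented object — and define $u,v : F \rightrightarrows X$ by $u(z_r) = a_r$, $v(z_r) = b_r$; then $f = \coeq(u,v)$ in $\!{C}$, and this coequalizer is computed identically in the full subcategory $\!{C}_\kappa$ (universal property inherited by fullness), so $f$ is a regular epimorphism in $\!{C}_\kappa$. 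Conversely, if $f = \coeq(u,v : Z \rightrightarrows X)$ in $\!{C}_\kappa$, form $q : X \to Q := \coeq(u,v)$ in $\!{C}$; since the diagram $Z \rightrightarrows X$ is finite with $\kappa$-presented vertices, $Q \in \!{C}_\kappa$, and by fullness $q$ is also the coequalizer of $u,v$ in $\!{C}_\kappa$; uniqueness of coequalizers then gives an isomorphism $Y \cong Q$ commuting with $f$ and $q$, so $f$ is a regular epimorphism in $\!{C}$.

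The only non-formal step is the invocation of \cref{thm:cat-alg-cong-pres} in the regular-epi ``$\Rightarrow$'' direction, which is what lets us replace the (possibly non-$\kappa$-presented) kernel pair by a $\kappa$-presented coequalizer witness; everything else is routine manipulation of $\kappa$-directed colimits and closure of $\!{C}_\kappa$ under finite colimits, so I expect that to be the main obstacle, and it is already resolved by the cited lemma.
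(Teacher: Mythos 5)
Your proof is correct, and the substantive part — the two directions for regular epimorphisms — is essentially identical to the paper's: both rely on \cref{thm:cat-alg-cong-pres} to produce a $\kappa$-ary coequalizer witness from a free $\kappa$-presented algebra for the ``regular epi in $\!C$ implies regular epi in $\!C_\kappa$'' direction, and on closure of $\!C_\kappa$ under finite colimits (plus fullness) for the converse. Where you diverge is in the reflection of plain monomorphisms and epimorphisms from $\!C_\kappa$ to $\!C$: you argue purely formally by writing the external test object as a $\kappa$-directed colimit of $\kappa$-presented objects and using that cocone maps are jointly epimorphic (for monos) resp.\ that $\!C(X,-)$ and $\!C(Y,-)$ preserve $\kappa$-directed colimits (for epis). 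The paper instead handles monos concretely by testing against the free algebra on one generator $\ang{1}_\!C \in \!C_\kappa$, so that injectivity of the underlying function is detected inside $\!C_\kappa$, and handles epis via the cokernel pair (which you note as an alternative). Your route is more general — it works in any locally $\kappa$-presentable category with no appeal to underlying sets — at the cost of being slightly longer; the paper's mono argument is specific to the algebraic/monadic setting but shorter. Both are valid.
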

\begin{proof}
Since the inclusion $\!C_\kappa \subseteq \!C$ is faithful, it reflects monos and epis.
Since $\!C_\kappa \subseteq \!C$ is closed under finite colimits, hence cokernels and coequalizers, epis and regular epis in $\!C_\kappa$ are also so in $\!C$.
If $f : X -> Y \in \!C_\kappa$ is a mono, then since the free algebra $\ang{1}_\!C$ on one generator is in $\!C_\kappa$, we get that the underlying function of sets $\!C_\kappa(\ang{1}_\!C, f)$ is injective, whence $f$ is a mono in $\!C$.
Finally, if $f : X -> Y \in \!C_\kappa$ is a regular epi in $\!C$, i.e., a surjective homomorphism, then by \cref{thm:cat-alg-cong-pres} its kernel is a $\kappa$-generated congruence ${\sim} \subseteq X^2$, whence letting $(g, h) : \ang{Z}_\!C -> X^2$ for some $\kappa$-ary $Z$ map generators to generators of $\sim$, we have that $f$ is the coequalizer of $g, h : \ang{Z}_\!C \rightrightarrows X \in \!C_\kappa$.
\end{proof}

\begin{lemma}
\label{thm:cat-funct-sub}
Let $F : \!C -> \!D$ be a finite-limit-preserving functor between categories with finite limits.
\begin{enumerate}
\item[(a)]
$F$ is faithful iff for each $X \in \!C$, $F$ induces an embedding $\RSub_\!C(X) -> \RSub_\!D(F(X))$.
\item[(b)]
$F$ is conservative iff for each $X \in \!C$, $F$ induces an embedding $\Sub_\!C(X) -> \Sub_\!D(F(X))$.
\end{enumerate}
\end{lemma}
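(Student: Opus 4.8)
The plan is to observe first that, since $F$ preserves finite limits, it preserves monomorphisms (characterized by a pullback square $A = A \times_X A$) and regular monomorphisms (equalizers), so the maps $\Sub_\!C(X) -> \Sub_\!D(F(X))$ and $\RSub_\!C(X) -> \RSub_\!D(F(X))$ in the statement are well-defined and automatically order-preserving. Here ``embedding'' means order-embedding, i.e.\ order-preserving and order-reflecting; note that for a finite-limit-preserving $F$, mere injectivity of these maps already upgrades to an order-embedding via $F(A \wedge B) = FA \wedge FB$ (meets of subobjects being pullbacks over $X$), so the result is insensitive to that reading. The two parts are dual: faithfulness is about cancelling $F$ from an equalizer-type equation, while conservativity is about descending invertibility of a pullback projection.

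For (a), in the forward direction, take regular monos $m : A `-> X$ and $n : B `-> X$, and write $n$ as the equalizer $\eq(g,h)$ of some $g, h : X \rightrightarrows Z$; then $m$ factors through $n$ iff $gm = hm$. Applying $F$, which preserves this equalizer so that $Fn = \eq(Fg, Fh)$, we get that $Fm$ factors through $Fn$ iff $F(gm) = F(hm)$, and faithfulness lets us cancel $F$. This is exactly order-reflection, hence the embedding (injectivity following by antisymmetry). For the backward direction, given $f, g : X \rightrightarrows Y$ with $Ff = Fg$, let $E := \eq(f,g) `-> X$; then $FE = \eq(Ff, Fg) = \eq(Ff, Ff)$ is the top regular subobject of $FX$, which also equals $F$ applied to the top regular subobject $[1_X]$, so injectivity of $\RSub_\!C(X) -> \RSub_\!D(FX)$ forces $E \cong X$, i.e.\ $f = g$.

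For (b) the analogous moves use pullbacks in place of equalizers. Forward: given monos $m : A `-> X$ and $n : B `-> X$, form $P := A \times_X B$ with projection $p : P -> A$; then $p$ is a mono, and $m$ factors through $n$ iff $p$ is an isomorphism (with inverse built from a section of $p$, using that $n$ is mono). Since $F$ preserves this pullback, $Fm$ factors through $Fn$ iff $Fp$ is an isomorphism, and conservativity descends this to $p$, giving order-reflection. Backward: given $f : X -> Y$ with $Ff$ an isomorphism, first apply injectivity of $\Sub_\!C(X \times_Y X) -> \Sub_\!D(F(X \times_Y X))$ to the diagonal $\delta : X -> X \times_Y X$: since $Ff$ is in particular mono, $F\delta = \delta_{FX}$ is an isomorphism, so its subobject class is top $= F[1_{X \times_Y X}]$, whence $\delta$ is an isomorphism and $f$ is a monomorphism; then apply injectivity of $\Sub_\!C(Y) -> \Sub_\!D(FY)$ to $[f] \in \Sub_\!C(Y)$, whose image is top in $\Sub_\!D(FY)$ because $Ff$ is iso, to conclude $[f] = [1_Y]$, i.e.\ $f$ is an isomorphism.

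There is no real obstacle here; the only point requiring care is the bookkeeping of which finite limit encodes which factorization relation --- ``$m$ factors through a regular mono $n = \eq(g,h)$'' as the equation $gm = hm$ versus ``$m$ factors through a general mono $n$'' as invertibility of the pullback projection $A \times_X B -> A$ --- and, in the backward directions, correctly identifying the relevant subobject as the top one so that injectivity can be applied.
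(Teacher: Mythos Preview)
Your proof is correct and follows essentially the same approach as the paper's. The arguments for (a) are identical; for (b)($\Longleftarrow$) you work directly with the kernel pair $X \times_Y X$ and then with $[f] \in \Sub_\!C(Y)$, whereas the paper routes the same two steps through the graph $G \subseteq X \times Y$ of $f$, but this is only a cosmetic difference.
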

\begin{proof}
(a) ($\Longrightarrow$)
Let $A, B \subseteq X$ be regular subobjects, the equalizers of parallel pairs $X \rightrightarrows Y$ and $X \rightrightarrows Z$ respectively, such that $F(A) \subseteq F(B) \subseteq F(X)$.
Then $F(A)$ equalizes $F(X) \rightrightarrows F(Z)$, whence by faithfulness, $A$ equalizes $X \rightrightarrows Z$, whence $A \subseteq B$.

(a) ($\Longleftarrow$)
Let $f, g : X \rightrightarrows Y \in \!C$ with $F(f) = F(g) \in \!D$.
Then $F(\eq(f, g)) = \eq(F(f), F(g)) \subseteq F(X)$ is all of $F(X)$, whence since $F : \RSub_\!C(X) -> \RSub_\!D(F(X))$ is injective, $\eq(f, g)$ is all of $X$, whence $f = g$.

(b) ($\Longrightarrow$)
Since $F$ preserves finite limits, $F$ induces a $\wedge$-lattice homomorphism $F : \Sub_\!C(X) -> \Sub_\!D(F(X))$ for each $X \in \!C$.
Thus by taking meets, it suffices to check that whenever $A \subseteq B \subseteq X$ with $F(A) = F(B) \subseteq F(X)$, then $A = B \subseteq X$.
This follows from conservativity.

(b) ($\Longleftarrow$)
In any category with finite limits, morphisms $f : X -> Y$ are in bijection with their \defn{graphs} $G \subseteq X \times Y$, which are arbitrary subobjects such that the first projection $G -> X$ is an isomorphism.
So for any $f : X -> Y \in \!C$ such that $F(f)$ is invertible, letting $G \subseteq X \times Y$ be the graph of $f$, the second projection $F(G) -> F(Y)$ must be an isomorphism.
In particular, it is monic, i.e., its kernel is the diagonal $F(G) `-> F(G) \times F(G) \cong F(G \times G)$, whence since $F : \Sub_\!C(G \times G) -> \Sub_\!D(F(G \times G))$ is injective, so must be the kernel of the second projection $G -> Y$, which is thus monic.
Now regarding $G$ as a subobject of $Y$, since $F : \Sub_\!C(Y) -> \Sub_\!D(F(Y))$ is injective and $F(G) \cong F(Y)$, we have $G \cong Y$, i.e., $f$ is invertible.
\end{proof}

\subsection{Ordered theories and categories}
\label{sec:cat-ord}

A \defn{locally ordered category} is a category $\!C$, each of whose hom-sets is equipped with a partial order, such that composition is order-preserving on both sides.
In other words, it is a category \defn{enriched} over the category $\!{Pos}$ of posets.
See \cite{Kvcat} for the standard reference on general enriched category theory.
There are also enriched analogs of locally presentable and monadic categories; see \cite{Kvlfp}, \cite{Pvlaw}.
In this subsection, we give an elementary description of the locally ordered case, in syntactic terms largely analogous to the previous two subsections.
The basic idea will be to ``replace $=$ with $\le$ throughout''.

Let $\@L$ be an infinitary first-order signature in the usual sense.
An (infinitary first-order) \defn{ordered $\@L$-formula} will mean an $(\@L \sqcup \{\le\})$-formula in the usual sense, where $\le$ is a binary relation symbol.
Equivalently, we may think of $\le$ replacing $=$ as the only extra allowed relation symbol, by treating $x = y$ as an abbreviation for $(x \le y) \wedge (y \le x)$.
An \defn{ordered $\@L$-structure} will mean an $\@L$-structure in the usual sense, but with the underlying set replaced by an underlying poset, and with all function symbols required to be interpreted as order-preserving maps (from the product partial order).
We define the interpretation of ordered $\@L$-formulas in an ordered $\@L$-structure in the expected manner, with $\le$ always interpreted as the partial order in the underlying poset.

To define ordered $\kappa$-limit axioms, we need to replace the $\exists!$ quantifier in ordinary $\kappa$-limit axioms, which involves a hidden $=$ to assert uniqueness, with an ordered analog.
Note that a $\kappa$-limit axiom
\begin{align*}
\forall \vec{x} \left(\bigwedge_i \phi_i(\vec{x}) -> \exists! \vec{y}\, \bigwedge_j \psi_j(\vec{x}, \vec{y})\right)
\end{align*}
is equivalent to the combination of the two axioms
\begin{gather*}
\forall \vec{x}, \vec{x}', \vec{y}, \vec{y}' \left(\bigwedge_i \phi_i(\vec{x}) \wedge \bigwedge_i \phi_i(\vec{x}') \wedge \bigwedge_j \psi_j(\vec{x}, \vec{y}) \wedge \bigwedge_j \psi_j(\vec{x}', \vec{y}') \wedge (\vec{x} = \vec{x}') -> (\vec{y} = \vec{y}')\right), \\
\forall \vec{x} \left(\bigwedge_i \phi_i(\vec{x}) -> \exists \vec{y}\, \bigwedge_j \psi_j(\vec{x}, \vec{y})\right).
\end{gather*}
By an \defn{ordered $\kappa$-limit theory}, we mean a set of pairs of axioms of the form
\begin{gather*}
\forall \vec{x}, \vec{x}', \vec{y}, \vec{y}' \left(\bigwedge_i \phi_i(\vec{x}) \wedge \bigwedge_i \phi_i(\vec{x}') \wedge \bigwedge_j \psi_j(\vec{x}, \vec{y}) \wedge \bigwedge_j \psi_j(\vec{x}', \vec{y}') \wedge (\vec{x} \le \vec{x}') -> (\vec{y} \le \vec{y}')\right), \\
\forall \vec{x} \left(\bigwedge_i \phi_i(\vec{x}) -> \exists \vec{y}\, \bigwedge_j \psi_j(\vec{x}, \vec{y})\right)
\end{gather*}
(where the $\phi_i$ and $\psi_j$ are atomic and the variable tuples and $\bigwedge$'s are $\kappa$-ary).
Clearly, these imply the previous two axioms with $=$.
We will refer to the latter pair of axioms as a single \defn{ordered $\kappa$-limit axiom}, and abbreviate it by
\begin{align*}
\forall \vec{x} \left(\bigwedge_i \phi_i(\vec{x}) -> \exists!_\le^{\vec{x}} \vec{y}\, \bigwedge_j \psi_j(\vec{x}, \vec{y})\right)
\end{align*}
where $\exists!_\le^{\vec{x}} \vec{y}$ is read ``there exists unique $\vec{y}$, monotone in $\vec{x}$, s.t.''; but note that $\exists!_\le^{\vec{x}}$ has no independent meaning in a formula.
The category of all models of an ordered $\kappa$-limit theory, together with all homomorphisms (which are required to be monotone), is called a \defn{locally $\kappa$-presentable locally ordered category}.
These are in particular locally $\kappa$-presentable categories in the ordinary sense, since any ordered $\kappa$-limit theory over $\@L$ is an ordinary $\kappa$-limit theory over $\@L \sqcup \{\le\}$.

\begin{example}
An ordered $\kappa$-limit axiom with empty $\vec{y}$ is just an ordinary $\kappa$-Horn axiom.
\end{example}

\begin{example}
\label{ex:cat-ord-djfrm}
The axiom $\forall \vec{x}\, \exists!_\le^{\vec{x}} \vec{y}\, \bigwedge_j \psi_j(\vec{x}, \vec{y})$ says that $\bigwedge_j \psi_j$ is the graph of a monotone function.
One can similarly axiomatize ordered structures with partial monotone operations, as in \cref{ex:cat-lim-essalg}.
For example, the axioms
\begin{gather*}
\forall x, y, z\, (J(x, y, z) -> (x \le z) \wedge (y \le z) \wedge (x \wedge y = \bot)), \\
\forall x, y, z, w\, (J(x, y, z) \wedge (x \le w) \wedge (y \le w) -> (z \le w)), \\
\forall x, y\, ((x \wedge y = \bot) -> \exists!_\le^{x,y} z\, J(x, y, z))
\end{gather*}
axiomatize disjunctive $\omega$-frames, where $J$ is the graph of the partial join operation.
\end{example}

\begin{example}
\label{ex:cat-ord-bool}
Boolean algebras \emph{cannot} be axiomatized by an ordered $\omega$-limit theory.
We cannot include $\neg$ in the signature, since $\neg$ is not a monotone function in Boolean algebras.
If we were allowed ordinary limit axioms, we could say $\forall x\, \exists! y\, ((x \wedge y = \bot) \wedge (x \vee y = \top))$; but the corresponding ordered limit axiom is false in Boolean algebras, again because $\neg$ is not monotone.
\end{example}

In locally ordered categories $\!C$, one has \defn{weighted limits and colimits} (called \defn{indexed} in older literature), which are universal objects equipped with morphisms to/from a diagram satisfying specified inequalities rather than (or in addition to) equations; see \cite[Ch.~3]{Kvcat}, \cite[B1.1.3]{Jeleph}.
We will not need the general notion, but will use the following particular types of weighted limits:
\begin{itemize}

\item  The \defn{inserter} $\ins(f, g)$ of a parallel pair of morphisms $f, g : A \rightrightarrows B$ is (if it exists) a universal object equipped with a morphism $\pi : \ins(f, g) -> A$ such that $f \circ \pi \le g \circ \pi$.
Thus, inserters are ordered analogs of equalizers.

If inserters exist, then so do equalizers, via $\eq(f, g) = \ins(g \circ \pi, f \circ \pi : \ins(f, g) \rightrightarrows B)$.

In $\!{Pos}$, inserters are given by $\ins(f, g) = \{a \in A \mid f(a) \le g(a)\}$.

\item  The \defn{comma object} $A \downarrow_C B$ of two objects $A, B$ equipped with morphisms $f : A -> C$ and $g : B -> C$ to a third object is (if it exists) a universal object equipped with morphisms $\pi_1 : A \downarrow_C B -> A$ and $\pi_2 : A \downarrow_C B -> B$ such that $f \circ \pi_1 \le g \circ \pi_2$.
These are ordered analogs of fiber products.

In $\!{Pos}$, comma objects are given by $A \downarrow_C B = \{(a, b) \in A \times B \mid f(a) \le g(b)\}$.

\item  The \defn{power} $A^I$ of an object $A$ by a poset $I$ is (if it exists) a universal object equipped with a monotone $I$-indexed family of morphisms $(\pi_i)_{i \in I} : I -> \!C(A^I, A)$.  These are ordered analogs of ordinary powers (iterated products).

In $\!{Pos}$, powers are given by $A^I = \!{Pos}(I, A)$ with the pointwise partial ordering.

In particular, when $I = \#S = \{0 < 1\}$, we call $A^\#S =: {\le_A}$ the \defn{(internal) (partial) order on $A$}; it plays a role analogous to the diagonal $A `-> A^2$ in ordinary categories.

\end{itemize}
Duals of these are called \defn{coinserters}, denoted $\coins(f, g)$, \defn{cocomma objects}, and \defn{copowers} (or sometimes \defn{tensors}).
Much as ordinary limits may be constructed from products and equalizers, so too may weighted limits in locally ordered categories be constructed from products and inserters (or from products and comma objects, or from products, equalizers, and powers by $\#S$); dually for weighted colimits.
Locally presentable locally ordered categories have arbitrary small weighted limits and colimits;
weighted limits of models of ordered $\kappa$-limit theories are constructed as in $\!{Pos}$.

\begin{example}
If we regard $\!{Bool}$ as locally discretely ordered, then it has inserters coinciding with equalizers; note that these are \emph{not} computed as in $\!{Pos}$.
This is another reflection of the fact (\cref{ex:cat-ord-bool}) that $\!{Bool}$ is not axiomatizable by an ordered limit theory.
\end{example}

Presentations of models of ordered $\kappa$-limit theories are the same as in the unordered context (of course, relations may now use $\le$), and may be used to construct colimits as well as left adjoints of forgetful functors (which are locally order-preserving on morphisms).
The notion of $\lambda$-presentability is also the same as before, and is preserved by coinserters, cocomma objects, and copowers by $\lambda$-ary posets.
The same caveats as before regarding large signatures continue to apply.

By an \defn{ordered algebraic theory}, we will mean an ordered limit theory whose signature has no relation symbols and axioms have empty left-hand side and no existentials, in analogy with \cref{ex:cat-lim-alg}.
Note that $\le$ may still be used; hence, these do not reduce to unordered algebraic theories
(unless there are operations like $\wedge, \vee$ which may be used to define $\le$ in terms of $=$).
Locally ordered categories axiomatized by small ordered algebraic theories admit the following ordered universal-algebraic notions (see \cite{St2sh}, where they are defined more generally for 2-categories).%
\footnote{These categories are strongly (i.e., $\!{Pos}$-enriched) monadic over $\!{Pos}$; see \cite{Pvlaw}.  However, unlike in the unordered setting, not every category strongly monadic over $\!{Pos}$ is axiomatized by an ordered algebraic theory (in our sense): in general, one would need to allow operations whose ``arity'' is a poset, i.e., partial operations with domain defined by a system of inequalities.
Categories axiomatized by such theories are less well-behaved: they do not admit a good notion of order-congruence which corresponds to surjective homomorphisms.}

An \defn{order-monomorphism} will mean a morphism $f$ such that $f \circ g \le f \circ h \implies g \le h$.
In $\!{Pos}$, these are order-embeddings, whereas monomorphisms are injective monotone maps.
An \defn{order-embedded subobject} will mean a subobject which is represented by an order-monomorphism.
Let $\OSub(X) = \OSub_\!C(X) \subseteq \Sub(X)$ denote the poset of order-embedded subobjects.
Then $\OSub_\!C(X) \cong \{\text{subalgebras of $X$}\}$ (which is $\cong \Sub_\!C(X)$ if $\!C$ is also monadic over $\!{Set}$, e.g., $\!C = \!{{\bigvee}Lat}$; but $\Sub_\!{Pos}(X)$ consists of subsets of $X$ equipped with a finer partial order, for example).

The \defn{order-kernel} of $f : X -> Y$ will mean the comma object
\begin{align*}
\oker(f) := (X \downarrow_Y X) = \{(x, x') \in X^2 \mid f(x) \le f(x')\}.
\end{align*}
Thus $\oker(f)$ is an order-embedded subobject of $X^2$ containing $\ker(f)$, and is an \defn{order-congruence} on $X$, meaning a subobject ${\lesim} \subseteq X^2$ which is transitive and contains the partial order $\le_X$ on $X$ (which implies reflexivity).

Conversely, given any order-congruence $\lesim$, let ${\sim} := {\lesim} \cap {\lesim}^{-1}$ denote its symmetric part, which is a congruence (in the unordered sense) on $X$.
Then $\lesim$ descends to a partial order on the quotient algebra $X/{\sim}$, which is characterized as the coinserter of the projections $\pi_1, \pi_2 : {\lesim} \rightrightarrows X$, meaning that any homomorphism $f : X -> Y$ such that $x \lesim x' \implies f(x) \le f(x')$, i.e., ${\lesim} \subseteq \oker(f)$, descends to a monotone homomorphism $X/{\sim} -> Y$.
The quotient map $q : X ->> X/{\sim}$ is an \defn{order-regular epimorphism}, meaning the coinserter of its order-kernel.
So we have a bijection
\begin{align*}
\{\text{order-congruences on } X\} &\overunderset{\coins}{\oker}\rightleftarrows \{\text{equiv.\ classes of order-regular epimorphisms from $X$}\}.
\end{align*}
Any morphism $f : X -> Y \in \!C$ factors uniquely-up-to-unique-isomorphism into an order-regular epimorphism $X ->> X/\ker(f)$ followed by an order-monomorphism $X/\ker(f) `-> Y$; the order-embedded subobject of $Y$ represented by this monomorphism is called the \defn{image} of $f$.
Moreover, order-regular epimorphisms are closed under products and pullback across arbitrary morphisms.

\bigskip\noindent
Department of Mathematics \\
University of Illinois at Urbana--Champaign \\
Urbana, IL 61801

\medskip\noindent
\nolinkurl{ruiyuan@illinois.edu}


\begin{thebibliography}{00000000}

\bibitem[AV93]{AVquant}  S.~Abramsky and S.~Vickers, \emph{Quantales, observational logic and process semantics}, Math.\ Structures Comput.\ Sci.\ \textbf{3}~(1993), no.~2, 161--227.

\bibitem[AHS90]{AHScats}  J.~Adámek, H.~Herrlich, and G.~E.~Strecker, \emph{Abstract and concrete categories: the joy of cats}, Wiley Interscience, New York, 1990; reprinted as Repr.\ Theory Appl.\ Categ.\ \textbf{17}~(2006), 1--507.

\bibitem[AR97]{ARlpac}  J.~Adámek and J.~Rosický, \emph{Locally presentable and accessible categories}, London Math Society Lecture Note Series, vol.~189, Cambridge University Press, 1997.

\bibitem[Bal18]{Bbaire}  R.~N.~Ball, \emph{Pointfree pointwise convergence, Baire functions, and epimorphisms in truncated archimedean $\ell$-groups}, Topology Appl.\ \textbf{235}~(2018), 492--522.

\bibitem[BWZ11]{BWZpfrm}  R.~N.~Ball, J.~Walters-Wayland, and E.~Zenk, \emph{The $P$-frame reflection of a completely regular frame}, Topology Appl.\ \textbf{158}~(2011), no.~14, 1778--1794.

\bibitem[Bor]{Bcat}  F.~Borceux, \emph{Handbook of categorical algebra}, Encyclopedia of Mathematics and its Applications, vol.~50--52, Cambridge University Press, 1994.

\bibitem[CLW93]{CLWext}  A.~Carboni, S.~Lack, and R.~F.~C.~Walters, \emph{Introduction to extensive and distributive categories}, J.\ Pure Appl.\ Algebra \textbf{84}~(1993), no.~2, 145--158.

\bibitem[CV98]{CVregex}   A.~Carboni and E.~M.~Vitale, \emph{Regular and exact completions}, J.\ Pure Appl.\ Algebra \textbf{125}~(1998), no.~1--3, 79--116.

\bibitem[Ch18a]{Cbermd}  R.~Chen, \emph{Decompositions and measures on countable
Borel equivalence relations}, submitted, \url{https://arxiv.org/abs/1801.02767}, 2018.

\bibitem[Ch18b]{Cqpol}  R.~Chen, \emph{Notes on quasi-Polish spaces}, preprint, \url{https://arxiv.org/abs/1809.07440}, 2018.

\bibitem[Ch19a]{Cscc}  R.~Chen, \emph{Borel functors, interpretations, and strong conceptual completeness for $\@L_{\omega_1\omega}$}, Trans.\ Amer.\ Math.\ Soc.\ \textbf{372}~(2019), no.~12, 8955--8983.

\bibitem[Ch19b]{Cborin}  R.~Chen, \emph{A universal characterization of standard Borel spaces}, preprint, \url{https://arxiv.org/abs/1908.10510}, 2019.

\bibitem[CS01]{CSsigmapi}  J.~R.~B.~Cockett and R.~A.~G.~Seely, \emph{Finite sum--product logic}, Theory Appl.\ Categ.\ \textbf{8}~(2001), no.~5, 63--99.

\bibitem[Day74]{Dfactor}  B.~Day, \emph{On adjoint-functor factorisation}, In: \emph{Category Seminar (Proc.\ Sem., Sydney, 1972/1973)}, pp.~1--19, Lecture Notes in Math., vol.~420, Springer, Berlin, 1974.

\bibitem[deB13]{dBqpol}  M.~de~Brecht, \emph{Quasi-Polish spaces}, Ann.\ Pure Appl.\ Logic \textbf{164}~(2013), no.~3, 356--381.

\bibitem[dBK19]{dBKpowsp}  M.~de~Brecht and T.~Kawai, \emph{On the commutativity of the powerspace constructions}, Log.\ Methods Comput.\ Sci.\ \textbf{15}~(2019), no.~3, paper~13, 25pp.

\bibitem[Ell67]{Eultra}  R.~L.~Ellis, \emph{A non-Archimedean analogue of the Tietze-Urysohn extension theorem}, Indag.\ Math.\ \textbf{70}~(1967), 332--333.

\bibitem[FG82]{FGformal}  M.~P.~Fourman and R.~J.~Grayson, \emph{Formal spaces}, In: \emph{The L.~E.~J.~Brouwer Centenary Symposium (Noordwijkerhout, 1981)}, 107--122, Stud.\ Logic Found.\ Math., vol.~110, North-Holland, Amsterdam, 1982.

\bibitem[Fre06]{Fmeas}  D.~H.~Fremlin, \emph{Measure theory. Vol.~4}, Torres Fremlin, Colchester, 2006. Topological measure spaces. Part I, II; Corrected second printing of the 2003 original.

\bibitem[Gai64]{Gcbool}  H.~Gaifman, \emph{Infinite Boolean polynomials.\ I},  Fund.\ Math.\ \textbf{54}~(1964), 229--250.

\bibitem[Gao09]{Gidst}  S.~Gao, \emph{Invariant descriptive set theory}, Pure and Applied Mathematics (Boca Raton), vol.~293, CRC Press, Boca Raton, FL, 2009.

\bibitem[G$^+$03]{GHKLMSdom}  G.~Gierz, K.~H.~Hofmann, K.~Keimel, J.~D.~Lawson, M.~Mislove, and D.~S.~Scott, \emph{Continuous lattices and domains}, Encyclopedia of Mathematics and its Applications, vol.~93, Cambridge University Press, Cambridge, 2003.

\bibitem[Hal64]{Hcbool}  A.~W.~Hales, \emph{On the non-existence of free complete Boolean algebras}, Fund.\ Math.\ \textbf{54}~(1964), 45--66.

\bibitem[HMS88]{HMSborord}  L.~Harrington, D.~Marker, and S.~Shelah, \emph{Borel orderings}, Trans.\ Amer.\ Math.\ Soc.\ \textbf{310}~(1988), no.~1, 293--302.

\bibitem[Hec15]{Hqpol}  R.~Heckmann, \emph{Spatiality of countably presentable locales (proved with the Baire category theorem)}, Math.\ Structures Comput.\ Sci.\ \textbf{25(7)}~(2015), 1607--1625.

\bibitem[Isb72]{Iloc}  J.~R.~Isbell, \emph{Atomless parts of spaces}, Math.\ Scand.\ \textbf{31}~(1972), 5--32.

\bibitem[Isb75]{Ifun}  J.~R.~Isbell, \emph{Function spaces and adjoints}, Math.\ Scand.\ \textbf{36}~(1975), no.~2, 317--339.

\bibitem[Isb91]{Idst}  J.~R.~Isbell, \emph{First steps in descriptive theory of locales}, Trans.\ Amer.\ Math.\ Soc.\ \textbf{327}~(1991), no.~1, 353--371.

\bibitem[IKPR88]{IKPRgrp}  J.~Isbell, I.~Kříž, A.~Pultr, and J.~Rosický, \emph{Remarks on localic groups}, In: \emph{Categorical algebra and its applications}, Lecture Notes in Mathematics, vol.~1348, Springer-Verlag, Berlin, 1988.

\bibitem[Joh82]{Jstone}  P.~T.~Johnstone, \emph{Stone spaces}, Cambridge Studies in Advanced Mathematics, vol.~3, Cambridge University Press, Cambridge, England, 1982.

\bibitem[Joh90]{Jsep}  P.~T.~Johnstone, \emph{Fibrewise separation axioms for locales}, Math.\ Proc.\ Cambridge Philos.\ Soc.\ \textbf{108}~(1990), no.~2, 247--256.

\bibitem[Joh89]{Jgpd}  P.~T.~Johnstone, \emph{A constructive ``closed subgroup theorem'' for localic groups and groupoids}, Cahiers Topologie Géom.\ Différentielle Catég.\ \textbf{30}~(1989), no.~1, 3--23.

\bibitem[Joh02]{Jeleph}  P.~T.~Johnstone, \emph{Sketches of an elephant: a topos theory compendium}, Oxford Logic Guides, vol.~43--44, Oxford University Press, Oxford, 2002.

\bibitem[JV91]{JVpfrm}  P.~Johnstone and S.~Vickers, \emph{Preframe presentations present}, In: \emph{Category theory (Como, 1990)}, 193--212,
Lecture Notes in Math., 1488, Springer, Berlin, 1991.

\bibitem[JT84]{JTgpd}  A.~Joyal and M.~Tierney, \emph{An extension of the Galois theory of Grothendieck}, Mem.\ Amer.\ Math.\ Soc.\ \textbf{51}~(1984), no.~309, vii+71.

\bibitem[Kan98]{Kborord}  V.~Kanovei, \emph{When a partial Borel order is linearizable}, Fund.\ Math.\ \textbf{155}~(1998), no.~3, 301--309.

\bibitem[Kec95]{Kcdst}  A.~S.~Kechris, \emph{Classical descriptive set theory}, Graduate Texts in Mathematics, vol.~156, Springer-Verlag, 1995.

\bibitem[Kec99]{Knew}  A.~S.~Kechris, \emph{New directions in descriptive set theory}, Bull.\ Symbolic Logic \textbf{5}~(1999), no.~2, 161--174.

\bibitem[KM20]{KMgraph}  A.~S.~Kechris and A.~S.~Marks, \emph{Descriptive graph combinatorics}, preprint, \url{http://math.caltech.edu/~kechris/papers/combinatorics20book.pdf}, 2020.

\bibitem[Kel82]{Kvcat}  G.~M.~Kelly, \emph{Basic concepts of enriched category theory}, London Math Society Lecture Note Series, vol.~64, Cambridge University Press, 1982.

\bibitem[Kel82a]{Kvlfp}  G.~M.~Kelly, \emph{Structures defined by finite limits in the enriched context, I}, Cah.\ Topol.\ Géom.\ Différ.\ Catég.\ \textbf{23}~(1982), no.~1, 3--42.

\bibitem[Koc95]{Kkzmon}  A.~Kock, \emph{Monads for which structures are adjoint to units}, J.\ Pure Appl.\ Algebra \textbf{104}~(1995), no.~1, 41--59.

\bibitem[LaG74]{Lamalg}  R.~LaGrange, \emph{Amalgamation and epimorphisms in $\mathfrak{m}$ complete Boolean algebras}, Algebra Unversalis \textbf{4}~(1974), 277--279.

\bibitem[MM94]{MMshv}  S.~Mac~Lane and I.~Moerdijk, \emph{Sheaves in geometry and logic: A first introduction to topos theory}, corrected reprint of the 1992 edition, Universitext, Springer-Verlag, New York, 1994.

\bibitem[Mad91]{Mkfrm}  J.~J.~Madden, \emph{$\kappa$-frames}, J.\ Pure Appl.\ Algebra \textbf{70}~(1991), no.~1--2, 107--127.

\bibitem[MM91]{MMfrmepi}  J.~J.~Madden and A.~Molitor, \emph{Epimorphisms of frames}, J.\ Pure Appl.\ Algebra \textbf{70}~(1991), no.~1--2, 129--132.

\bibitem[MV86]{MVlind}  J.~Madden and J.~Vermeer, \emph{Lindelöf locales and realcompactness}, Math.\ Proc.\ Cambridge Philos.\ Soc.\ \textbf{99}~(1986), no.~3, 473--480.

\bibitem[Mak90]{Mbarr}  M.~Makkai, \emph{A theorem on Barr-exact categories, with an infinitary generalization}, Ann.\ Pure Appl.\ Logic \textbf{47}~(1990), no.~3, 225--268.

\bibitem[MP87]{MPlfp}  M.~Makkai and A.~M.~Pitts, \emph{Some results on locally finitely presentable categories}, Trans.\ Amer.\ Math.\ Soc.\ \textbf{299}~(1987), no.~2, 473--496.

\bibitem[Mes06]{Mmoncomon}  B.~Mesablishvili, \emph{Monads of effective descent type and comonadicity}, Theory Appl.\ Categ.\ \textbf{16}~(2006), no.~1, 1--45.

\bibitem[Mos09]{Mdst}  Y.~N.~Moschovakis, \emph{Descriptive set theory}, Mathematical Surveys and Monographs, vol.~155, second edition, American Mathematical Society, Providence, RI, 2009.

\bibitem[Pas92]{Pupk}  I.~Paseka, \emph{Boolean algebras and ultracompactness}, Cah.\ Topol.\ Géom.\ Différ.\ Catég.\ \textbf{33}~(1992), no.~1, 15--20.

\bibitem[Pav20]{Pmeasloc}  D.~Pavlov, \emph{Gelfand-type duality for commutative von Neumann algebras}, preprint, \url{https://arxiv.org/abs/2005.05284}, 2020.

\bibitem[Pic94]{Pbifrm}  J.~Picado, \emph{Join-continuous frames, Priestley's duality and biframes}, Appl.\ Categ.\ Structures \textbf{2}~(1994), no.~3, 297--313.

\bibitem[PP12]{PPloc}  J.~Picado and A.~Pultr, \emph{Frames and locales: Topology without points}, Frontiers in Mathematics, Birkhäuser/Springer Basel AG, Basel, 2012.

\bibitem[Pl00a]{Pdissolv}  T.~Plewe, \emph{Higher order dissolutions and Boolean coreflections of locales}, J.\ Pure Appl.\ Algebra \textbf{154}~(2000), no.~1--3, 273--293.

\bibitem[Pl00b]{Pquotloc}  T.~Plewe, \emph{Quotient maps of locales}, Appl.\ Categ.\ Structures \textbf{8}~(2000), no.~1--2, 17--44.

\bibitem[Pl02]{Psubloc}  T.~Plewe, \emph{Sublocale lattices}, J.\ Pure Appl.\ Algebra \textbf{168}~(2002), no.~2--3, 309--326.

\bibitem[Pow99]{Pvlaw}  J.~Power, \emph{Enriched Lawvere theories}, Theory Appl.\ Categ.\ \textbf{6}~(1999), no.~7, 83--93.

\bibitem[Sch93]{Spowloc}  A.~Schalk, \emph{Algebras for generalized power constructions}, Ph.D.\ thesis, TU Darmstadt, \url{http://www.cs.man.ac.uk/~schalk/publ/diss.ps.gz}, 1993.

\bibitem[Sel06]{Sdom}  V.~Selivanov, \emph{Towards a descriptive set theory for domain-like structures}, Theoret.\ Comput.\ Sci.\ \textbf{365}~(2006), no.~3, 258--282.

\bibitem[Shu12]{Sksite}  M.~Shulman, \emph{Exact completions and small sheaves}, Theory Appl.\ Categ.\ \textbf{27}~(2012), no.~7, 97--173.

\bibitem[Shu20]{Schu}  M.~Shulman, \emph{The 2-Chu-Dialectica construction and the polycategory of multivariable adjunctions}, Theory Appl.\ Categ.\ \textbf{35}~(2020), no.~4, 89--136.

\bibitem[Sik69]{Sbool}  R.~Sikorski, \emph{Boolean algebras}, Springer-Verlag, third edition, 1969.

\bibitem[Sim12]{Smeas}  A.~Simpson, \emph{Measure, randomness and sublocales}, Ann.\ Pure Appl.\ Logic \textbf{163}~(2012), no.~11, 1642--1659.

\bibitem[Str82]{St2sh}  R.~Street, \emph{Two-dimensional sheaf theory}, J.\ Pure Appl.\ Algebra \textbf{23}~(1982), no.~3, 251--270.

\bibitem[TS00]{TSprf}  A.~S.~Troelstra and H.~Schwichtenberg, \emph{Basic proof theory}, Cambridge Tracts in Theoretical Computer Science, vol.~43, second edition, Cambridge University Press, Cambridge, 2000.

\bibitem[vN13]{vNultra}  J.~van~Name, \emph{Ultraparacompactness and ultranormality}, preprint, \url{https://arxiv.org/abs/1306.6086}, 2013.

\bibitem[Wil94]{Wasm}  J.~T.~Wilson, \emph{The assembly tower and some categorical and algebraic aspects of frame theory}, unpublished thesis, 1994.

\end{thebibliography}
\end{document}